\definecolor{darkred}{RGB}{139,0,0}
\definecolor{darkblue}{RGB}{0,0,139}
\definecolor{darkgreen}{RGB}{0,100,0}
   \def\MR#1{}
\newcommand\smallsquare{{\mathbin{\text{\raise0.17ex\hbox{\scalebox{.7}{$\blacksquare$}}}}}}
\newcommand{\mylabel}[2]{#2\def\@currentlabel{#2}\label{#1}}
\newcommand{\circled}[1]{\raisebox{.5pt}{\textcircled{\raisebox{-.9pt} {#1}}}}
\newcommand{\BO}{\ensuremath{\mathrm{BO}}}
\newcommand{\BSO}{\ensuremath{\mathrm{BSO}}}
\newcommand{\Fr}{\ensuremath{\mathrm{Fr}}}
\newcommand{\Diff}{{\ensuremath{\mathrm{Diff}}}}
\newcommand{\Homeo}{\ensuremath{\mathrm{Homeo}}}
\newcommand{\BHomeo}{\ensuremath{\mathrm{BHomeo}}}
\newcommand{\BhAut}{\ensuremath{\mathrm{BhAut}}}
\newcommand{\Map}{\ensuremath{\mathrm{Map}}}
\newcommand{\Emb}{\ensuremath{\mathrm{Emb}}}
\newcommand{\Imm}{\ensuremath{\mathrm{Imm}}}
\newcommand{\BAut}{\ensuremath{\mathrm{BAut}}}
\newcommand{\BTop}{\ensuremath{\mathrm{BTop}}}
\newcommand{\BSpin}{\ensuremath{\mathrm{BSpin}}}
\newcommand{\E}{\ensuremath{\mathrm{E}}}
\newcommand{\BC}{\ensuremath{\mathrm{BC}}}
\newcommand{\interior}{\ensuremath{\mathrm{int}}}
\newcommand{\inc}{\ensuremath{\mathrm{inc}}}
\newcommand{\ev}{\ensuremath{\mathrm{ev}}}
\newcommand{\res}{\ensuremath{\mathrm{res}}}
\newcommand{\inj}{\ensuremath{\mathrm{inj}}}
\newcommand{\op}{\mathrm{op}}
\newcommand{\red}{\mathrm{red}}
\newcommand{\geo}{\mathrm{geo}}
\newcommand{\nsur}{\mathrm{nsur} }
\newcommand{\ninj}{\mathrm{ninj}}
\DeclareMathAlphabet{\mathpzc}{OT1}{pzc}{m}{it}
\newcommand{\catsingle}[1]{\ensuremath{\mathscr{#1}}}
\newcommand{\cat}[1]{\ensuremath{\mathsf{#1}}}
\newcommand{\icat}[1]{\ensuremath{\mathscr{#1}}}
\newcommand{\oH}{\ensuremath{\mathrm{H}}}
\newcommand{\oO}{\ensuremath{\mathrm{O}}}
\newcommand{\oB}{\ensuremath{\mathrm{B}}}
\newcommand{\oP}{\ensuremath{\mathrm{P}}}
\newcommand{\ot}{t}
\newcommand{\oo}{o}
\newcommand{\bfC}{\ensuremath{\mathbf{C}}}
\newcommand{\bfN}{\ensuremath{\mathbf{N}}}
\newcommand{\bfR}{\ensuremath{\mathbf{R}}}
\newcommand{\bfZ}{\ensuremath{\mathbf{Z}}}
\newcommand{\cA}{\ensuremath{\catsingle{A}}}
\newcommand{\cB}{\ensuremath{\catsingle{B}}}
\newcommand{\cC}{\ensuremath{\catsingle{C}}}
\newcommand{\cD}{\ensuremath{\catsingle{D}}}
\newcommand{\cE}{\ensuremath{\catsingle{E}}}
\newcommand{\cF}{\ensuremath{\catsingle{F}}}
\newcommand{\cG}{\ensuremath{\catsingle{G}}}
\newcommand{\cM}{\ensuremath{\catsingle{M}}}
\newcommand{\cO}{\ensuremath{\catsingle{O}}}
\newcommand{\cP}{\ensuremath{\catsingle{P}}}
\newcommand{\cR}{\ensuremath{\catsingle{R}}}
\newcommand{\cS}{\ensuremath{\catsingle{S}}}
\newcommand{\cU}{\ensuremath{\catsingle{U}}}
\newcommand{\CM}{\ensuremath{\catsingle{CM}}}
\newcommand{\PSh}{\ensuremath{\mathrm{PSh}}}
\newcommand{\colim}{\ensuremath{\mathrm{colim}}}
\newcommand{\holim}{\ensuremath{\mathrm{holim}}}
\newcommand{\laxlim}{\ensuremath{\mathrm{laxlim}}}
\newcommand{\ra}{\rightarrow}
\newcommand{\la}{\leftarrow}
\newcommand{\lra}{\longrightarrow}
\newcommand{\xra}[1]{\xrightarrow{#1}}
\newcommand{\xsra}[1]{\overset{#1}{\rightarrow}}
\newcommand{\xsla}[1]{\overset{#1}{\leftarrow}}
\newcommand{\xlra}[1]{\overset{#1}{\longrightarrow}}
\newcommand{\xlratwo}[2]{\smashoperator[l]{\mathop{\longrightarrow}_{#2}^{#1}}}
\newcommand{\xlhra}[1]{\overset{#1}{\longhookrightarrow}}
\newcommand{\xlla}[1]{\overset{#1}{\longleftarrow}}
\newcommand{\longhookrightarrow}{\lhook\joinrel\longrightarrow}
\newcommand{\Aut}{\mathrm{Aut}}
\newcommand{\End}{\mathrm{End}}
\newcommand{\Sym}{\mathrm{Sym}}
\renewcommand{\Top}{\mathrm{Top}}
\newcommand{\ext}{\mathrm{ext}}
\newcommand{\im}{\mathrm{im}}
\newcommand{\id}{\mathrm{id}}
\newcommand{\pr}{\mathrm{pr}}
\newcommand{\fib}{\mathrm{fib}}
\newcommand{\fr}{\mathrm{fr}}
\newcommand{\lax}{\mathrm{lax}}
\newcommand{\cgr}{\mathrm{cgr}}
\newcommand{\grtp}{\mathrm{grtp}}
\newcommand{\oplax}{\mathrm{oplax}}
\newcommand{\holink}{\mathrm{holink}}
\newcommand{\FM}{\mathrm{FM}}
\newcommand{\Barc}{\mathrm{Bar}}
\newcommand{\BG}{\mathrm{B}G}
\newcommand{\cOmega}{\smash{\overline{\Omega}}}
\newcommand{\Disc}{\ensuremath{\cat{Disc}}}
\newcommand{\DiscInf}{\ensuremath{\icat{D}\mathrm{isc}}}
\newcommand{\Man}{\ensuremath{\cat{Man}}}
\newcommand{\ManInf}{\ensuremath{\icat{M}\mathrm{an}}}
\newcommand{\Fin}{\ensuremath{\mathrm{Fin}}}
\newcommand{\Fun}{\ensuremath{\mathrm{Fun}}}
\newcommand{\ALG}{\ensuremath{\mathrm{ALG}}}
\newcommand{\Alg}{\ensuremath{\mathrm{Alg}}}
\newcommand{\Mon}{\ensuremath{\mathrm{Mon}}}
\newcommand{\CMon}{\ensuremath{\mathrm{CMon}}}
\newcommand{\BordInf}{\ensuremath{\icat{B}\mathrm{ord}}}
\newcommand{\ncBordInf}{\ensuremath{\mathrm{nc}\icat{B}\mathrm{ord}}}
\newcommand{\BM}{\ensuremath{\icat{BM}}}
\newcommand{\RM}{\ensuremath{\icat{RM}}}
\newcommand{\LM}{\ensuremath{\icat{LM}}}
\newcommand{\Kan}{\ensuremath{\cat{Kan}}}
\newcommand{\cTop}{\ensuremath{\cat{Top}}}
\newcommand{\BMod}{\ensuremath{\mathrm{BMod}}}
\newcommand{\Cocart}{\ensuremath{\mathrm{Cocart}}}
\newcommand{\Cart}{\ensuremath{\mathrm{Cart}}}
\newcommand{\Assoc}{\ensuremath{\icat{A}\mathrm{ssoc}}}
\newcommand{\Ass}{\ensuremath{\mathrm{Ass}}}
\newcommand{\Cat}{\ensuremath{\icat{C}\mathrm{at}}}
\newcommand{\twoCat}{\ensuremath{\mathbb{C}\mathrm{at}}}
\newcommand{\CCat}{\ensuremath{\mathrm{Cat}}}
\newcommand{\Cosp}{\ensuremath{\mathrm{Cosp}}}
\newcommand{\ModInf}{\ensuremath{\mathpzc{r}\icat{M}\mathrm{od}}}
\newcommand{\RMod}{\ensuremath{\mathrm{rMod}}}
\newcommand{\RRMod}{\ensuremath{\mathrm{RMod}}}
\newcommand{\LLMod}{\ensuremath{\mathrm{LMod}}}
\newcommand{\oMod}{\ensuremath{\mathrm{Mod}}}
\newcommand{\Opd}{\ensuremath{{\icat{O}\mathrm{p}}}}
\newcommand{\Com}{\ensuremath{{\icat{C}\mathrm{om}}}}
\newcommand{\st}{\ensuremath{\mathrm{st}}}
\newcommand{\Mul}{\ensuremath{\mathrm{Mul}}}
\newcommand{\col}{\ensuremath{\mathrm{col}}}
\newcommand{\const}{\ensuremath{\mathrm{const}}}
\newcommand{\Env}{\ensuremath{\mathrm{Env}}}
\newcommand{\Fam}{\ensuremath{\mathrm{Fam}}}
\newcommand{\un}{\ensuremath{\mathrm{un}}}
\newcommand{\gn}{\ensuremath{\mathrm{gn}}}
\newcommand{\rf}{\ensuremath{\mathrm{rf}}}
\newcommand{\gc}{\ensuremath{\mathrm{gc}}}
\newcommand{\Tow}{\ensuremath{\mathrm{Tow}}}
\newcommand{\Assem}{\ensuremath{\mathrm{Assem}}}
\newcommand{\seg}{\ensuremath{\mathrm{seg}}}
\newcommand{\com}{\ensuremath{\mathrm{c}}}
\newcommand{\cl}{\ensuremath{\mathrm{cl}}}
\newcommand{\rec}{\ensuremath{\mathrm{rec}}}
\newcommand{\dec}{\ensuremath{\mathrm{dec}}}
\newcommand{\wc}{\ensuremath{\mathrm{wc}}}
\newcommand{\free}{\ensuremath{\mathrm{free}}}
\newcommand{\frep}{\ensuremath{\mathrm{frep}}}
\newcommand{\cyl}{\ensuremath{\mathrm{cyl}}}
\newtheorem{bigthm}{Theorem}
\newtheorem{thm}{Theorem}[section]
\newtheorem{lem}[thm]{Lemma}
\newtheorem{lemdfn}[thm]{Lemma and Definition}
\newtheorem{prop}[thm]{Proposition}
\newtheorem{cor}[thm]{Corollary}
\theoremstyle{definition}
\newtheorem{dfn}[thm]{Definition}
\newtheorem*{nconvention}{Convention}
\theoremstyle{remark}
\newtheorem{ex}[thm]{Example}
\newtheorem{rem}[thm]{Remark}
\newtheorem*{nrem}{Remark}
\newcommand{\ul}[1]{\underline{#1}}
\begin{document}

\title{Infinity-operadic foundations for embedding calculus}
\author{Manuel Krannich}
\address{Department of Mathematics, Karlsruhe Institute of Technology, 76131 Karlsruhe, Germany}
\email{krannich@kit.edu}
\author{Alexander Kupers}
\address{Department of Computer and Mathematical Sciences, University of Toronto Scarborough, 1265 Military Trail, Toronto, ON M1C 1A4, Canada}
\email{a.kupers@utoronto.ca}

\begin{abstract}
Motivated by applications to spaces of embeddings and automorphisms of manifolds, we consider a tower of $\infty$-categories of truncated right-modules over a unital $\infty$-operad $\cO$. We study monoidality and naturality properties of this tower, identify its layers, describe the difference between the towers as $\cO$ varies, and generalise these results to the level of Morita $(\infty,2)$-categories. Applied to the $\BO(d)$-framed $E_d$-operad, this extends Goodwillie--Weiss' embedding calculus and its layer identification to the level of bordism categories. Applied to other variants of the $E_d$-operad, it yields new versions of embedding calculus, such as one for topological embeddings---based on $\BTop(d)$---or one similar to Boavida de Brito--Weiss' configuration categories---based on $\BAut(E_d)$. In addition, we prove a delooping result in the context of embedding calculus, establish a convergence result for topological embedding calculus, improve upon the smooth convergence result of Goodwillie, Klein, and Weiss, and deduce an Alexander trick for homology $4$-spheres. 
\end{abstract}

\maketitle
\thispagestyle{empty}

\vspace{-0.27cm}

\enlargethispage{0.36cm}

The study of smooth embeddings between smooth manifolds $M$ and $N$, and families thereof, plays a central role in geometric topology. From a homotopy-theoretic point of view, it corresponds to the study of the homotopy type of the space $\Emb^{\oo}(M,N)$ of smooth embeddings, equipped with the smooth topology. More generally, one often considers spaces $\Emb^{\oo}_{\partial_0}(M,N)$ of embeddings that extend a given embedding $e_{\partial_0}\colon \partial_0 M\hookrightarrow \partial N$ on a fixed submanifold $\partial_0M\subset \partial M$ of codimension $0$. There is an approach due to Goodwillie and Weiss \cite{WeissEmbeddings,GoodwillieWeiss} to analyse the homotopy types of such spaces of embeddings through their restriction maps to spaces of embeddings of submanifolds of $M$ diffeomorphic to a finite disjoint union of open discs. This goes under the name of \emph{embedding calculus} and takes the form of a tower of homotopy-theoretic approximations to $\Emb^{\oo}_{\partial_0}(M,N)$
\vspace{-0.35cm}
\begin{equation}\label{equ:embcalc-tower-intro}
	\begin{tikzcd}[row sep=0.2cm]
	&\vdots\dar\\
	&T_2\Emb^{\oo}_{\partial_0}(M,N)\dar\\
	&T_1\Emb^{\oo}_{\partial_0}(M,N)\dar\\
	\Emb^{\oo}_{\partial_0}(M,N)\rar\arrow[ur, bend left=20]\arrow[uur, bend left=20]\arrow[uuur, bend left=20,shift left=1]&T_0\Emb^{\oo}_{\partial_0}(M,N)&[-1cm]\simeq&[-1cm] \ast.\\
	\end{tikzcd}\vspace{-0.35cm}
\end{equation}
whose \emph{layers}---the fibres of the consecutive maps $T_k\Emb^{\oo}_{\partial_0}(M,N)\ra T_{k-1}\Emb^{\oo}_{\partial_0}(M,N)$---can be described explicitly in terms of the frame bundles and configuration spaces of $M$ and $N$; see \cite{WeissEmbeddings}. Goodwillie, Klein, and Weiss \cite{GoodwillieWeiss,GoodwillieKlein} proved that this tower often \emph{converges}, that is, the map \vspace{-0.05cm}
\begin{equation}\label{equ:emb-calc-intro}
	\Emb^{\oo}_{\partial_0}(M,N)\lra T_\infty\Emb^{\oo}_{\partial_0}(M,N)\coloneqq \holim_kT_k\Emb^{\oo}_{\partial_0}(M,N)\vspace{-0.05cm}
\end{equation}
to the limit of the tower is a weak homotopy equivalence in many cases. The combination of these results has led to a number of applications to spaces of embeddings, and more recently also to spaces of diffeomorphisms; see \cite{KupersFiniteness,KRWEvenDiscs,WeissDalian,GRWContractible,BKK} for examples. 

\medskip

\noindent Among the various constructions of the tower \eqref{equ:embcalc-tower-intro} \cite{WeissEmbeddings,GKWHaefliger,BoavidaWeiss,TurchinContextFree,AroneTurchin}, the one closest to this work is that by Boavida de Brito--Weiss. Let us briefly outline it, for simplicity in the case where $M$ and $N$ have no boundary and are of \smash{equal} dimension $d$. They consider the functor\vspace{-0.05cm}
\begin{equation}\label{equ:e-functor-single}
	\begin{aligned}
			E\colon \ManInf^{\oo}_d &\lra \PSh(\DiscInf^{\oo}_d) \\
			M &\longmapsto E_M \coloneqq \Emb^{\oo}(-,M)
	\end{aligned}\vspace{-0.05cm}
\end{equation}
from the $\infty$-category $\ManInf^{\oo}_d$ of smooth $d$-manifolds without boundary and spaces of embeddings between them, to the $\infty$-category of space-valued presheaves on the full subcategory $\smash{\DiscInf^{\oo}_d\subset\ManInf^{\oo}_d}$ of manifolds diffeomorphic to $S\times\bfR^d$ for finite sets $S$. On mapping spaces, this functor induces  \eqref{equ:emb-calc-intro}, and the tower \eqref{equ:embcalc-tower-intro} is obtained by combining it with the tower of $\infty$-categories induced by restriction \vspace{-0.05cm}
\begin{equation}\label{equ:intro-tower-discs}
	\PSh(\DiscInf_d^{\oo})=\PSh(\DiscInf_{d,\le \infty}^{\oo})\lra\cdots\lra \PSh(\DiscInf_{d,\le 2}^{\oo})\lra \PSh(\DiscInf_{d,\le 1}^{\oo}) \vspace{-0.05cm}
\end{equation}
where ${\DiscInf_{d,\le k}^{\oo}\subset \DiscInf_d^{\oo}}$ is the full subcategory on manifolds diffeomorphic to $S\times\bfR^d$ for $|S|\le k$.

\medskip

\noindent When studying spaces of embeddings, it is often useful to compare them for different choices of source and target manifold---e.g.\,by pre- or postcomposition or by gluing along the fixed part of the boundary---so it would be desirable to extend the tower \eqref{equ:embcalc-tower-intro} to coherently incorporate these comparison maps. As part of \cite{KKDisc}, we did this for the limit of the tower \eqref{equ:emb-calc-intro} in the equidimensional case. This took the form of a functor of symmetric monoidal $(\infty,2)$-categories
\vspace{-0.05cm}
\begin{equation}\label{equ:e-functor-tower}
	E\colon \ncBordInf^{\oo}(d)^{(\infty,2)}\lra \icat{M}\mathrm{od}^{\oo}(d)^{(\infty,2)}.\vspace{-0.05cm}
\end{equation}
whose source \emph{$\ncBordInf^{\oo}(d)^{(\infty,2)}$} is a bordism $(\infty,2)$-category with (potentially noncompact) smooth $(d-1)$-manifolds as objects, (potentially noncompact) smooth bordisms as  $1$-morphisms, and smooth embeddings between bordisms that are fixed on the boundary as $2$-morphisms. The target $\smash{\icat{M}\mathrm{od}^{\oo}(d)^{(\infty,2)}}$ is a $(\infty,2)$-Morita category of the $\infty$-category $\smash{\PSh(\DiscInf^{\oo}_d)}$ equipped with the Day convolution monoidal structure induced by disjoint union in $\DiscInf^{\oo}_d$: it has associative algebras in $\PSh(\DiscInf^{\oo}_d)$ as objects, bimodules as $1$-morphisms, and  maps of bimodules as $2$-morphisms. We refer to \cite{KKDisc} for a detailed description of the functor \eqref{equ:e-functor-tower}; for now it suffices to know, firstly, that it recovers \eqref{equ:e-functor-single} on the $\infty$-category of $1$-morphisms from the empty manifold to itself, and secondly---after replacing $M$ and $N$ up to isotopy equivalence with the nullbordisms $M'\coloneqq \interior(M)\cup\interior(\partial_0M)$ and $N'\coloneqq\interior(N)\cup\interior(e(\partial_0M))$ of $\interior(\partial_0 M)$---that it recovers the limit of the embedding calculus tower \eqref{equ:emb-calc-intro} for $\Emb^o_{\partial_0}(M,N)$ on the space of $2$-morphisms from $M'$ to $N'$ and thus equips the map \eqref{equ:emb-calc-intro} with coherent composition and gluing maps.
 
\medskip

\noindent However, with an eye towards future applications of embedding calculus, the construction of the functor \eqref{equ:e-functor-tower} from \cite{KKDisc} left several desiderata. For instance, one ought to
\begin{enumerate}[leftmargin=0.7cm]
\item\label{equ:desidarata-1} extend the functor \eqref{equ:e-functor-tower} to incorporate, firstly, the tower \eqref{equ:embcalc-tower-intro} and not just its limit \eqref{equ:emb-calc-intro}, secondly, the description of the layers of the tower, and thirdly, embedding calculus in positive codimension,
\item develop embedding calculus for (locally flat) topological embeddings between topological manifolds, and establish analogues in this setting, firstly, of the functor \eqref{equ:e-functor-tower}, secondly, of extensions as in \ref{equ:desidarata-1}, and thirdly, of Goodwillie, Klein, and Weiss' convergence result, 
\item\label{equ:desidarata-3} establish an analogue of parametrised smoothing theory (see \cite[Essay V]{KirbySiebenmann}) in the context of embedding calculus that describes the difference between the smooth and topological variants.
\end{enumerate}
In this work, we in particular establish \ref{equ:desidarata-1}-\ref{equ:desidarata-3}, by placing embedding calculus in a new framework based on the theory of $\infty$-operads that allows us to treat previous and new variants of embedding calculus---e.g.\,smooth and topological versions, or a version reminiscent of Boavida de Brito--Weiss' \emph{configuration categories} \cite{BoavidaWeissConfiguration}---as instances of the same construction, describe the layers in this generality, prove a smoothing theory result that describes the difference between the variants, and establish a convergence result for topological embedding calculus as well as improve upon the smooth one. We now outline these results in more detail, starting with the $\infty$-operadic setup.

\subsection*{A calculus for right-modules over an $\infty$-operad} Informally, an $\infty$-operad $\cO$ consists of a set of colours, spaces of multi-operations $\Mul_\cO((c_s)_{s\in S};c)$ from a collection of colours $(c_s)_{s\in S}$ indexed by a finite set $S$ to a colour $c$, and composition maps between them that satisfy the axioms of an ordinary coloured operad up to higher coherent homotopy. Restricting to collections consisting of a single colour, an $\infty$-operad $\cO$ has an underlying $\infty$-category $\cO^{\col}$ of colours. The latter is a full subcategory of a symmetric monoidal $\infty$-category $\Env(\cO)$ associated to $\cO$, called the \emph{monoidal envelope} (or \emph{associated \emph{PROP}}), with objects given by collections $(c_s)_{s\in S}$ as above, morphism spaces by $\smash{\Map_{\Env(\cO)}((c_s)_{s\in S},(d_t)_{t\in T})\simeq \sqcup_{\varphi\colon S\ra T}\sqcap_{t\in T}\Mul_{\cO}((c_s)_{s\in \varphi^{-1}(t)};d_t)}$, and monoidal structure given by disjoint union. The \emph{$\infty$-category of right-modules}\footnote{The terminology stems from the case of ordinary $1$-coloured operads $\cO$: viewing $\cO$ as a monoid in symmetric sequences with composition product, the notion of a right-module over this monoid specialises to a presheaf on the PROP of $\cO$.} 
over $\cO$ is the symmetric monoidal $\infty$-category
\[\RMod(\cO)\coloneqq \PSh(\Env(\cO))\] 
of presheaves on $\Env(\cO)$ with values in the $\infty$-category $\cS$ of spaces; the monoidal structure is inherited from $\Env(\cO)$ by Day convolution. This $\infty$-category fits into a tower of $\infty$-categories: writing $\Env(\cO)_{\le k}\subset \Env(\cO)$ for the full subcategory on the collections $(c_s)_{s\in S}$ with $|S|\le k$ and defining the \emph{$\infty$-category of $k$-truncated right-modules} as $\RMod_k(\cO)\coloneqq \PSh(\Env(\cO)_{\le k})$, the filtration $\Env(\cO)_{\le 1}\subset \Env(\cO)_{\le 2}\subset \cdots\subset \Env(\cO)$ induces by restriction a tower of $\infty$-categories \vspace{-0.05cm}
\begin{equation}\label{equ:tower-intro}
	\RMod(\cO)=\RMod_\infty(\cO)\lra \cdots\lra \RMod_2(\cO)\lra \RMod_1(\cO)\vspace{-0.05cm}
\end{equation}
that can be seen to specialise to \eqref{equ:intro-tower-discs} for a suitable choice of $\cO$; see below. This first main part of this work consists of a detailed analysis of this tower. Most of our results assume that the $\infty$-operad $\cO$ is \emph{unital}, i.e.\,its spaces of $0$-ary multi-operations $\Mul_{\cO}(\varnothing;c)$ are contractible for all colours $c$, and some of our results simplify when passing to the full subcategory $\RMod^{\un}_k(\cO)\subset \RMod_k(\cO)$ of right-modules that are \emph{unital}, i.e.\,their value at $\varnothing\in\Env(\cO)$ is contractible. Specialised to unital right-modules, our main results regarding this tower can be summarised as follows (see \cref{sec:operadic-framework}):

\begin{bigthm} \label{bigthm:calc}For any unital $\infty$-operad $\cO$, the tower of $\infty$-categories
	\begin{equation}\label{equ:tower-intro-un}\RMod^\un(\cO) = \RMod^\un_{\infty}(\cO) \lra \cdots\lra \RMod^\un_2(\cO)\lra\RMod^\un_1(\cO)\end{equation}
induced by \eqref{equ:tower-intro} satisfies the following properties:
\begin{enumerate}[leftmargin=3cm]
	\item[(Convergence)] The induced functor $\RMod^\un(\cO)\ra \lim_k\RMod^\un_k(\cO)$ is an equivalence.
	\item[(Monoidality)]The Day convolution monoidal structure on $\RMod(\cO)$ induces a lift of \eqref{equ:tower-intro-un} to a tower of symmetric monoidal $\infty$-categories.
	\item[(Naturality)] The tower \eqref{equ:tower-intro-un} and its symmetric monoidal lift can be made functorial in maps of unital $\infty$-operads via left Kan extension.
	\item[(First layer)] There is a natural equivalence of symmetric monoidal $\infty$-categories \vspace{-0.01cm}\begin{equation}\label{equ:first-layer-thmA-intro}\hspace{3cm}\RMod^\un_1(\cO) \simeq \PSh(\cO^\col)\vspace{-0.01cm}\end{equation} where $ \PSh(\cO^\col)$ is equipped with the cocartesian symmetric monoidal structure.
	\item[(Higher layers)] Writing $\cO^\col\wr \Sigma_k$ for the wreath product of the $\infty$-category $\cO^\col$ with the symmetric group on $k$ letters, there is a commutative diagram of $\infty$-categories for $1< k<\infty$ 
	\[\hspace{3cm}\begin{tikzcd}[column sep=1cm]
		\RMod^\un_k(\cO) \rar{\Lambda} \dar & \PSh(\cO^\col\wr \Sigma_k)^{[2]} \arrow[d,"(0\le 2)^*",pos=0.4] \rar{\colim} & \cS^{[2]} \arrow[d,"(0\le 2)^*",pos=0.4] \\[-3pt]
		\RMod^\un_{k-1}(\cO) \rar{\Omega} & \PSh(\cO^\col\wr \Sigma_k)^{[1]}\rar{\colim}& \cS^{[1]}\end{tikzcd}\]
		whose left square is cartesian. Moreover, if $\cO^\col$ is an $\infty$-groupoid, then the right square is cartesian as well, and hence so is the outer one. Moreover, this diagram can be made functorial in operadic right-fibrations via left Kan extension.
	\item[(Smoothing theory)]For an operadic right-fibration $\varphi\colon \cO\ra \cU$ and $1\le k\le \infty$, the square 
	\[\hspace{3cm}\begin{tikzcd}[]
	\RMod^\un_k(\cO)\rar{\varphi_!}\dar&\RMod^\un_k(\cU)\dar\\[-3pt]
	\PSh(\cO^\col)\rar{\varphi_!}&\PSh(\cU^\col)
	\end{tikzcd}
	\]	
	induced by the naturality of \eqref{equ:tower-intro-un} is a pullback of symmetric monoidal $\infty$-categories.
	\item[(Morita categories)] The tower \eqref{equ:tower-intro-un} induces a tower of symmetric monoidal Morita $(\infty,2)$-categories
		\[\hspace{2.7cm}\ModInf^\un(\cO)^{(\infty,2)} = \ModInf^\un_{\infty}(\cO)^{(\infty,2)} \ra \cdots\ra\ModInf^\un_2(\cO)^{(\infty,2)}\ra\ModInf^\un_1(\cO)^{(\infty,2)}\]
		 defined in terms of algebras and bimodules in $\RMod^\un_k(\cO)$. This tower recovers \eqref{equ:tower-intro-un} on the $\infty$-category of $1$-morphisms between the monoidal units. There are analogues of all of the above properties for this tower, for example an identification
		\[\hspace{3cm}\ModInf^\un_1(\cO)^{(\infty,2)}\simeq \Cosp(\PSh(\cO^\col))^{(\infty,2)}\]
		of the first layer as the symmetric monoidal $(\infty,2)$-category of cospans in $\PSh(\cO^\col)$, or natural pullbacks of symmetric monoidal $(\infty,2)$-categories 
		\[\hspace{3cm}\begin{tikzcd}[]
	\ModInf^\un_k(\cO)^{(\infty,2)}\rar{\varphi_!}\dar&\ModInf^\un_k(\cU)^{(\infty,2)}\dar\\[-3pt]
	\Cosp(\PSh(\cO^\col))^{(\infty,2)}\rar{\varphi_!}&\Cosp(\PSh(\cU^\col))^{(\infty,2)}
	\end{tikzcd}
	\]	
	for $1\le k\le \infty$ and any operadic right-fibration $\varphi\colon \cO\ra\cU$.
\end{enumerate}
\end{bigthm}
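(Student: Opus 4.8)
The plan is to regard \cref{bigthm:calc} as a compendium of the main results of \cref{sec:operadic-framework}, and to establish its eight displayed properties one at a time, reducing each to standard features of presheaf $\infty$-categories---cocompleteness, Day convolution, left and right Kan extension---applied to the arity filtration $\Env(\cO)_{\le 1}\subset\Env(\cO)_{\le 2}\subset\cdots$ of the monoidal envelope. Unitality of $\cO$ enters throughout via the contractibility of the spaces $\Mul_\cO(\varnothing;c)$, which makes $\varnothing$ an initial object of $\Env(\cO)$, renders every representable unital, and---crucially---equips $\Env(\cO)$ with an (active, inert)-type factorisation system whose inert maps ``pad with $0$-ary operations''.

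I would dispatch the formal properties first. For Convergence, $\Env(\cO)=\colim_k\Env(\cO)_{\le k}$ along fully faithful inclusions, so $\PSh(-)$ turns this into the limit $\RMod(\cO)\simeq\lim_k\RMod_k(\cO)$; the unitality constraint only concerns the value at $\varnothing$ and therefore passes to the limit. For the First layer, unitality identifies $\Env(\cO)_{\le 1}$ with the left cone $(\cO^\col)^{\triangleleft}$, and a presheaf on this cone with contractible value at the cone point is precisely a presheaf on $\cO^\col$; evaluating the descended Day product on the representables of $\cO^\col$ yields their coproduct, so the induced structure is cocartesian. For Naturality, a map $\varphi\colon\cO\to\cU$ of unital operads induces a symmetric monoidal functor $\Env(\varphi)$ preserving the arity filtration, and left Kan extension along the $\Env(\varphi)_{\le k}$ is strong symmetric monoidal for Day convolution and commutes with the tower maps; coherence of left Kan extension supplies the functoriality. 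Monoidality is the statement that each restriction $\RMod(\cO)\to\RMod_k(\cO)$ is strong symmetric monoidal for Day convolution: the monoidal unit and the Day product of unital presheaves stay unital, and a direct comparison of the two convolution formulas---using unitality to replace targets of arity $>k$ by their relevant arity-$\le k$ sub-collections---shows the restriction intertwines the two Day structures; these assemble into the asserted tower of symmetric monoidal $\infty$-categories.

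The heart of the argument, and the step I expect to be hardest, is Higher layers. The plan is to analyse $\RMod^\un_k(\cO)\to\RMod^\un_{k-1}(\cO)$ by a generalised-Reedy decomposition of $\Env(\cO)_{\le k}$ over $\Env(\cO)_{\le k-1}$: any morphism between arity-$k$ objects whose underlying map of finite sets is not a bijection factors through lower arity, so the genuinely new datum of a $k$-truncated module over a given $(k-1)$-truncated one is its restriction to the wide subcategory of arity-$k$ objects and bijective maps---which, thanks to unitality, organises into $\cO^\col\wr\Sigma_k$---together with the requirement that this restriction factor the canonical ``latching-to-matching'' map of $\cO^\col\wr\Sigma_k$-presheaves determined by the $(k-1)$-truncation (latching from the active merges, matching from the inert sub-collections). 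This is exactly the cartesianness of the left square, with $\Lambda$ recording the $2$-step filtration $[\text{latching}\to\text{arity-}k\text{ value}\to\text{matching}]$ and $\Omega$ the latching-to-matching arrow. Applying $\colim$ over $\cO^\col\wr\Sigma_k$ levelwise transports this to $\cS$; when $\cO^\col$, hence $\cO^\col\wr\Sigma_k$, is an $\infty$-groupoid, $\PSh(\cO^\col\wr\Sigma_k)$ is a slice of $\cS$ and $\colim$ is the forgetful functor out of it, which preserves and reflects pullbacks, so the right square---and hence the outer one---is cartesian as well; for general $\cO^\col$ it need not be. The functoriality in operadic right-fibrations is inherited from that of the whole construction under left Kan extension. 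The difficulties I anticipate here are identifying the arity-$k$ stratum and the $\Sigma_k$-action exactly right in full operadic generality, and pinning down the latching and matching objects so the square is \emph{literally} cartesian rather than cartesian up to a correction.

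Lastly, Smoothing theory follows by induction on $k$: the base case $k=1$ is the First layer identification, and the inductive step feeds the Higher layers pullback squares for $\cO$ and $\cU$ into one another, using that an operadic right-fibration $\varphi\colon\cO\to\cU$ induces right-fibrations $\cO^\col\to\cU^\col$ and $\cO^\col\wr\Sigma_k\to\cU^\col\wr\Sigma_k$, so the relevant presheaf $\infty$-categories are slices and the comparison squares are visibly cartesian; Monoidality upgrades this to a pullback of symmetric monoidal $\infty$-categories. For Morita categories, I would apply the functorial algebras-and-bimodules construction $\cC\mapsto\ModInf(\cC)^{(\infty,2)}$ to the symmetric monoidal tower---this needs only that each $\RMod^\un_k(\cO)$ admits geometric realisations compatible with its tensor product---read off the identification on $1$-morphisms between units directly, use that in a cocartesian monoidal category every object is canonically a commutative algebra and bimodules are cospans to obtain $\ModInf(\PSh(\cO^\col)^{\mathrm{cocart}})^{(\infty,2)}\simeq\Cosp(\PSh(\cO^\col))^{(\infty,2)}$, and deduce the pullback squares from the fact that $\cC\mapsto\ModInf(\cC)^{(\infty,2)}$ preserves the relevant pullbacks. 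A secondary obstacle is developing enough of the symmetric monoidal $(\infty,2)$-Morita formalism---its functoriality and the cospan identification---to license these steps.
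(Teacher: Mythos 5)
Your proposal is correct and follows essentially the same route as the paper: each clause is reduced to the arity filtration of $\Env(\cO)$, with Monoidality/Naturality handled by localising Day convolution and passing between lax right adjoints and oplax left adjoints, the First layer by identifying $\Env^{\neq 0}(\cO)_{\le 1}\simeq\cO^\col$, the Higher layers by exactly the decomposition-pair/recollement argument with complement $\cO^\col\wr\Sigma_k$ and $\Lambda,\Omega$ built from $\iota_!\iota^*\to\id\to\iota_*\iota^*$, Smoothing theory by pasting that pullback with the right-fibration square, and the Morita clause by applying the algebras-and-bimodules functor to the $\cgr$-tower and the cospan identification for cocartesian monoidal structures. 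The steps you flag as delicate (constructing the truncated Day structure, and verifying that the counit onto the non-surjective mapping components is an equivalence so the left square is genuinely cartesian) are indeed where the paper's technical work lies, but your outline identifies the correct mechanisms for both.
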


\begin{nrem}Some comments on the statement of \cref{bigthm:calc}:
\begin{enumerate}[leftmargin=*]
\item \emph{Operadic right-fibrations} $\cO\ra \cP$ are maps of unital $\infty$-operads for which the induced functor $\Env(\cO)\ra \Env(\cP)$ is a right-fibration on underlying $\infty$-categories, i.e.\,equivalent to the unstraightening of a functor $\Env(\cP)^{\op}\ra \cS$; see Sections \ref{sec:cocart-oprightfib} and \ref{sec:cocartesian}.
\item The functors $\Lambda$ and $\Omega$ in the part on the higher layers are constructed in terms of the restriction $\RMod_{k}(\cO)\ra \RMod_{k-1}(\cO)$, its two adjoints given by left and right Kan extension along $\Env(\cO)_{\le k-1}\subset \Env(\cO)_{\le k}$, and the restriction $\RMod_k(\cO)\ra \PSh(\cO^\col\wr \Sigma_k)$ along the inclusion of $\cO^\col\wr \Sigma_k$ in $\Env(\cO)_{\le k}$ as the subcategory on those collections $(c_s)_{s\in S}$ with $|S|=k$ and maps between them that are bijections on the indexing sets; see \cref{sec:layers}.
\item The proof that the left square in the part on the higher layers is cartesian is an application of a general decomposition result for $\infty$-categories $\PSh(\cC)$ of space-valued presheaves on an $\infty$-category $\cC$ in terms of the $\infty$-categories of presheaves on certain full subcategories $\cC_0\subset \cC$ and their complements $\cC\backslash\cC_0$; see \cref{sec:recollement}.
\item If $\cO^\col$ is a $\infty$-groupoid, then by (un)straightening the $\infty$-category $\PSh(\cO^\col)$ is equivalent to the overcategory $\cS_{/\cO^\col}$. We will implicitly make this identification in what follows.
\end{enumerate}
\end{nrem}

\subsection*{Variants of embedding calculus}\label{sec:intro-variants-emb} \cref{bigthm:calc} and embedding calculus are closely related. To explain this, let us recall the $\infty$-operad $E_d$ of little $d$-discs, given as the underlying $\infty$-operad of the ordinary $1$-coloured operad in topological spaces whose $k$-ary operations are embeddings $\sqcup_k\interior(D^d)\hookrightarrow \interior(D^d)$ of open unit $d$-discs whose restriction to each disc is a composition of a scaling and a translation. Given a \emph{tangential structure} for $E_d$, that is, a map of $\infty$-groupoids $\theta\colon B\ra \BAut(E_d)$ for some $\infty$-groupoid $B$, one can form the \emph{$\theta$-framed $E_d$-operad} as a colimit
\[\smash{E_d^{\theta}\coloneqq \colim\big(B\xra{\theta}\BAut(E_d)\hookrightarrow \Opd\big)}\]
 in the $\infty$-category of operads $\Opd$, using that $\Opd$ contains $\BAut(E_d)$ as the core of the full subcategory of $\infty$-operads equivalent to $E_d$. As we will show, the colours and multi-operations of $\smash{E_d^{\theta}}$ are given as 
\[\textstyle{(E_d^\theta)^{\col}\simeq B\quad\text{and}\quad\Mul_{E_d^\theta}\big((b_s)_{s\in S}; b\big)\simeq \Mul_{E_d}\big((\ast)_{s\in S};\ast\big)\times \bigsqcap_{s\in S}\Map_{B}\big(b_s,b\big)},\]
and any map of $\infty$-operads $E_d^\theta\ra E_d^{\theta'}$ induced by a \emph{change of tangential structure} (a map of $\infty$-groupoids over $\BAut(E_d)$) is an operadic right-fibration; see \cref{sec:gc-operads}. Specialising \cref{bigthm:calc} to these $\infty$-operads, each choice of tangential structure $\theta$ gives rise to a variant of embedding calculus together with a layer identification, an extension to bordism categories, and a description of the effect of a change of tangential structure. We now discuss the three most important cases:

\begin{enumerate}[leftmargin=*]
\item[\mylabel{equ:intro-smooth-tan}{(\oo)}]a tangential structure $\oo\colon \BO(d)\ra\BAut(E_d)$ defined on the classifying space for real vector bundles of rank $d$, giving rise to classical embedding calculus for smooth embeddings,
\item[\mylabel{equ:intro-top-tan}{(\ot)}] a tangential structure $\ot\colon\BTop(d)\ra\BAut(E_d)$ defined on the classifying space for fibre bundles with fibre $\bfR^d$, giving rise to a variant of embedding calculus for topological embeddings,
\item[\mylabel{equ:intro-part-tan}{(p)}] the universal tangential structure $p\coloneqq \id\colon \BAut(E_d)\ra\BAut(E_d)$ giving rise to a variant of embedding calculus which is part of a new \emph{particle embedding calculus}.\end{enumerate}

\subsubsection*{\ref{equ:intro-smooth-tan} Smooth embedding calculus} Conjugation with the standard action of the orthogonal group $\oO(d)$ on the open unit $d$-disc yields an $\oO(d)$-action on $E_d$ which induces a tangential structure $\oo\colon \BO(d)\ra\BAut(E_d)$. Its associated $\oo$-framed $E_d$-operad $E_d^\oo$ can be shown to satisfy (see \cref{sec:ed-variants})\footnote{The $\infty$-operad $\smash{E_d^{\oo}}$ is commonly known as the \emph{framed $\smash{E_d}$-operad}---the variant of $E_d$ where one allows rotations and reflections in addition to scalings and translations---but we prefer to avoid this terminology.} \[\smash{(E_d^{\oo})^\col\simeq \BO(d)}\quad\text{and}\quad\smash{\Env(E_d^\oo)\simeq \DiscInf_d^{\oo}},\]
so its $\infty$-category of right-modules $
\smash{\RMod(E_d^{\oo})}$ agrees with the $\infty$-category $\PSh(\DiscInf^{\oo}_d)$ considered above and the tower \eqref{equ:tower-intro} recovers the previous tower \eqref{equ:intro-tower-discs}. Moreover, since $\Emb(\varnothing,M)\simeq\ast$ for all manifolds $M\in\ManInf_d$, the functor \eqref{equ:e-functor-single} lands in the full subcategory $\smash{\RMod^{\un}(E_d^{\oo})\subset \RMod(E_d^{\oo})}$ of unital right-modules which features in the tower from \cref{bigthm:calc}. Similarly, on the level of Morita categories, the target $\smash{\icat{M}\mathrm{od}^{\oo}(d)^{(\infty,2)}}$ of the functor \eqref{equ:e-functor-tower} agrees with $\smash{\ModInf(E_d^{\oo})^{(\infty,2)}}$ and the functor lands in $\smash{\ModInf^{\un}(E_d^{\oo})^{(\infty,2)}}$, so the tower of symmetric monoidal $(\infty,2)$-categories \begin{equation}\label{equ:intro-smooth-tower}
	\cdots\lra \ModInf^\un_2(E_d^{\oo})^{(\infty,2)}\lra\ModInf^\un_1(E_d^{\oo})^{(\infty,2)}\simeq\Cosp(\cS_{/\BO(d)})^{(\infty,2)}
\end{equation}
from \cref{bigthm:calc} for $\cO=E_d^{\oo}$ becomes via \eqref{equ:e-functor-tower} a tower under the bordism category $\ncBordInf^{\oo}(d)^{(\infty,2)}$. We will see that on spaces of $2$-morphisms, (a) this tower recovers the classical embedding calculus tower \eqref{equ:embcalc-tower-intro} for the space of smooth embeddings between $d$-manifolds, and (b) the abstract layer identification from \cref{bigthm:calc} recovers the explicit layer identification for \eqref{equ:embcalc-tower-intro} in terms of the frame bundles and configuration spaces of $M$ and $N$; see \cref{sec:emb-calc}. The tower \eqref{equ:intro-smooth-tower} may thus be viewed as an extension of smooth embedding calculus to the level of bordism categories.

\subsubsection*{\ref{equ:intro-top-tan} Topological embedding calculus} The tangential structure $\oo\colon \BO(d)\ra\BAut(E_d)$ from \ref{equ:intro-smooth-tan} can be shown to factor through the forgetful map $\BO(d)\ra\BTop(d)$ via a tangential structure $\ot\colon \BTop(d)\ra\BAut(E_d)$. As for \ref{equ:intro-smooth-tan}, the associated $\ot$-framed $E_d$-operad $\smash{E_d^{\ot}}$ satisfies (see \cref{sec:ed-variants})\vspace{-0.02cm}
\[\smash{(E_d^{\ot})^{\col}\simeq \BTop(d)\quad\text{and}\quad \Env(E_d^{\ot})\simeq \DiscInf^{\ot}_d}\vspace{-0.02cm}\] where $\smash{\DiscInf^{\ot}_d}$ is the symmetric monoidal $\infty$-category of \emph{topological} manifolds homeomorphic to $S\times\bfR^d$ for finite sets $S$, \emph{topological} embeddings between them, and disjoint union as monoidal structure. Combining the resulting equivalence  $\PSh(\DiscInf^{\ot})\simeq \RMod(E_d^{\ot})$ with \cref{bigthm:calc} for $\smash{\cO=E_d^{\ot}}$, the functor \eqref{equ:e-functor-single} and its extension \eqref{equ:e-functor-tower} to a bordism category as well as everything discussed in \ref{equ:intro-smooth-tan} admit analogues for topological manifolds and topological embeddings between them; basically one replaces all smooth manifolds by topological ones, $E_d^{\oo}$ by $E_d^{\ot}$, and $\BO(d)$ by $\BTop(d)$; see \cref{sec:emb-calc}. There are comparison maps between the smooth and topological variants induced by forgetting smooth structures and the change of tangential structures $\BO(d)\ra \BTop(d)$, and the smoothing theory part of \cref{bigthm:calc} applied to $E^{\oo}_{d}\ra E^{\ot}_{d}$ describes the differences: for instance, it implies that the tower \eqref{equ:intro-smooth-tower} is pulled back from the corresponding tower for $E_d^{\ot}$ along the functor $\smash{\Cosp(\cS_{/\BO(d)})^{(\infty,2)}\ra \Cosp(\cS_{/\BTop(d)})^{(\infty,2)}}$ of cospan $(\infty,2)$-categories induced by the map $\BO(d)\ra \BTop(d)$. In particular, on spaces of $2$-morphisms, this yields an analogue for topological embeddings between $d$-manifolds of the smooth embedding calculus tower \eqref{equ:embcalc-tower-intro}, such that the latter is pulled back from the former along a map between the first stages of the towers.

\subsubsection*{\ref{equ:intro-part-tan} Particle embedding calculus} Any tangential structure for $E_d$ is obtained by a change of tangential structure from the identity $\smash{p\coloneqq\id_{\BAut(E_d)}\colon \BAut(E_d)\ra \BAut(E_d)}$. The associated $p$-framed operad $\smash{E_d^p}$ does not seem to have received any attention in the literature yet, but plays a distinguished role in our setup since the smoothing theory part of \cref{bigthm:calc} implies that the towers for $\smash{\RMod^{\un}(E_d^{\oo})}$ and $\smash{\RMod^{\un}(E_d^{\ot})}$ from \ref{equ:intro-smooth-tan} and \ref{equ:intro-top-tan} as well as their extensions to Morita categories are pulled back from the corresponding towers for $\smash{E_d^{p}}$ along the maps between the first stages induced by the changes of tangential structures $\BO(d)\ra \BTop(d)\ra \BAut(E_d)$. In particular, the classical embedding calculus tower \eqref{equ:embcalc-tower-intro} is pulled back from the tower for $\smash{\ModInf^{\un}(E_d^{p})}$ on spaces of $2$-morphisms along the map between the first stages. In fact, there is a ``deeper'' tower it is pulled back from: the $\infty$-operad $E_d^p$ fits into a sequence of unital $\infty$-operads $\smash{E_d^p\ra \cdots \ra E_{d,\le 3}^p\ra E_{d,\le 2}^p\ra E_{d,\le 1}^p}$ where $\smash{E_{d,\le k}^p}$ is constructed in the same way as $\smash{E_d^{p}}$ but replacing the role of $E_d$ by a certain $k$-truncated variant $\smash{E_{d,{\le k}}}$, which roughly speaking only remembers the multi-operations with $\le k$ inputs (see \cref{sec:tangential-structures-operads}), and hence $\BAut(E_d)$ with $\BAut(E_{d,\leq k})$. Applying \cref{bigthm:calc} to this tower of $\infty$-operads yields a $(\bfN_{>0}\cup\{\infty\},\le)^{\times 2}$-indexed diagram $\smash{\RMod^\un_{\bullet}(E^p_{d,\le \smallsquare})}$ of $\infty$-categories which recovers the previous tower by setting $\smallsquare=\infty$. Setting $\bullet=\smallsquare$ yields a diagonal tower $\smash{\RMod^\un_{\bullet}(E^p_{d,\le \bullet})}$ and setting $\bullet=1$ a tower $\cS_{/\BAut(E_{d,\le \bullet})}$ induced by truncating the $E_d$-operad. These fit into a pullback 
\begin{equation}\label{equ:intro-particle-calculus-pb}
 \begin{tikzcd}
\RMod^
 \un_{\bullet}(E^p_{d})\dar\rar&\RMod^
 \un_{\bullet}(E^p_{d,\le \bullet})\dar\\
\cS_{/\BAut(E_d)}\rar& \cS_{/\BAut(E_{d,\le \bullet})}
 \end{tikzcd}
\end{equation}
of towers. The same discussion applies to the respective Morita categories. We refer to the tower $\smash{\RMod_{\bullet}(E^p_{d,\le \bullet})}$ and its extension to Morita categories $\smash{\ModInf_{\bullet}(E^p_{d,\le \bullet})}$ as \emph{particle embedding calculus}.

\begin{nrem}Given smooth $d$-manifolds $M$ and $N$ with $d\neq 4$, a codimension $0$ submanifold $\partial_0M\subset M$ and an embedding $e_{\partial_0}\colon \partial_0M\hookrightarrow \partial N$, classical smoothing theory implies that the map induced by taking derivatives $\smash{\Emb^\oo_{\partial_0}(M,N)\ra \Map^{/\BO(d)}_{\partial_0}(M,N)}$ to the space of maps over $\BO(d)$ and under $\partial_0M$ between the classifiers of the tangent bundles, is pulled back from the analogous map  $\smash{\Emb^\ot_{\partial_0}(M,N)\ra \Map^{/\BTop(d)}_{\partial_0}(M,N)}$ taking topological derivatives; see \cref{sec:smoothing-embcalc}. By the convergence results described below, the maps $\smash{\Emb^c_{\partial_0}(M,N)\ra T_\infty\Emb^c_{\partial_0}(M,N)}$ for $c\in\{\oo,\ot\}$ are often equivalences, so the discussion in \ref{equ:intro-part-tan} shows that in this case the topological derivative map is itself pulled back, namely from the map $\smash{T_\infty\Emb^p_{\partial_0}(M,N)\ra \Map^{/\BAut(E_d)}_{\partial_0}(M,N)}$ featuring in particle embedding calculus. This may come as a surprise, since the latter is, informally speaking, constructed purely in terms of configuration space data and no longer involves the homotopy type of $\BO(d)$ or $\BTop(d)$. In the smooth case, such pullbacks were known from Boavida de Brito--Weiss' theory of configuration categories \cite{BoavidaWeissConfiguration}, but the advantage of our approach is that smooth, topological, and particle embedding calculus are all instances of the same construction and come with compatible layer identifications as well as many naturality properties due to their extensions to bordism categories. In upcoming work \cite{KKPontryagin}, we will make use of this for applications to the homotopy type of $\Top(d)$ and to that of spaces of homeomorphisms of compact manifolds. 
\end{nrem}

\begin{nrem}[Configuration categories]We expect particle embedding calculus to be closely related to the theory of configuration categories \cite{BoavidaWeissConfiguration}. We offer two pieces of evidence: firstly, as already hinted at in the previous remark, one may note that the pasting of the pullback \eqref{equ:intro-particle-calculus-pb} with the square that expresses $\smash{\RMod_\bullet^\un(E_d^{\oo})}$ as pulled back from $\smash{\RMod_\bullet^\un(E_d^{p})}$ is on mapping spaces reminiscent of the pullbacks of Theorem 5.1 loc.cit.. Secondly, the final  equivalence in \eqref{equ:delooping-special-cases} below is analogous to the Alexander trick for configuration categories; see Theorem 10.1 loc.cit.. \end{nrem}

\subsection*{Positive codimension}We focus on embedding calculus in codimension $0$ in this work, because it is the case that is relevant to the study of automorphism spaces of manifolds, and since questions about positive codimension embeddings and their embedding calculi can often be reduced to the codimension $0$ case (see e.g.\,the proof of \cref{thm:conv-smooth-general}). Nevertheless, in \cref{sec:positive-codim} we briefly explain how smooth embedding calculus in positive codimension can be incorporated into the setup of \cref{bigthm:calc}, by considering an $\infty$-operad whose monoidal envelope consists of finite disjoint unions of Euclidean spaces of \emph{potentially different} dimensions and smooth embeddings between them. Replacing it with the similar $\infty$-operad involving locally flat topological embeddings yields an embedding calculus for topological locally flat embeddings of positive codimension.

\subsection*{Delooping embedding calculus} Initiated by Dwyer and Hess \cite{DwyerHess}, there has been considerable interest in delooping results in the context of embedding calculus; see \cite{TurchinMultiplicative,BoavidaWeissConfiguration,BataninDeLeger,WeissTruncated,DucoulombierTurchinWillwacher,DucoulombierTurchin}. Our set-up turns out to be well-suited for obtaining new (and recovering former) results in this direction as well, especially when combined with Lurie's theory of centralisers \cite[5.3]{LurieHA}. We illustrate this by carrying out one example of such a delooping result in \cref{sec:delooping}, namely the analogue of Morlet's equivalence $\Diff_\partial(D^d) \simeq \Omega^{d+1} \fib(\BO(d)\ra\BTop(d))$ for the diffeomorphism group of the closed disc of dimension  $d \neq 4$ \cite[Theorem V.3.4]{KirbySiebenmann} in the context of the variants of embedding calculus for  tangential structures $\theta$. For the three tangential structures \ref{equ:intro-smooth-tan}, \ref{equ:intro-top-tan}, and \ref{equ:intro-part-tan} discussed above, it specialises for any $d\ge0$ and $1\le k\le\infty$ to equivalences 
\begin{equation}\label{equ:delooping-special-cases}
\def\arraystretch{1.3}
\begin{array}{c@{\hskip 0.1cm} l@{\hskip 0.1cm} l@{\hskip 0cm} c@{\hskip 0.1cm} l@{\hskip 0.1cm} l@{\hskip 0cm}} 
T_k\Emb^\oo_\partial(D^d,D^d)&\simeq&  \Omega^{d+1}\fib\big(&\BO(d)&\xlra{\oo}&\BAut(E_{d,\le k})\big)\\
 T_k\Emb^\ot_\partial(D^d,D^d)&\simeq& \Omega^{d+1}\fib\big(&\BTop(d)&\xlra{\ot}&\BAut(E_{d,\le k})\big)\\
   T_k\Emb^p_\partial(D^d,D^d)&\simeq& \Omega^{d+1}\fib\big(&\BAut(E_{d,\le k})&\xlra{\id}&\BAut(E_{d,\le k})\big)\simeq\ast\\
\end{array}
\end{equation}
closely related to some of the previous delooping results mentioned above; see \cref{rem:relation-to-existing-delooping}.

\subsection*{Convergence results for embedding calculus}\label{sec:intro-convergence}
Having set up a topological variant of embedding calculus, the missing ingredient for it to be well-applicable in the study of spaces of topological embeddings is an analogue of Goodwillie, Klein, and Weiss' convergence result. We establish such an analogue, assuming that the target manifold is smoothable and of dimension $d\ge5$. The proof is by reduction to the smooth case using the smoothing theory part of \cref{bigthm:calc} and classical smoothing theory. For this deduction, it is convenient to first establish an improvement of the existing smooth convergence result. We now state a special case of these results, for which we fix a quadruple $(M,\partial_0M,N,e_{\partial_0})$ consisting of topological $d$-manifolds $M$ and $N$ where $M$ is compact, a compact codimension $0$ submanifold $\partial_0M\subset M$, and an embedding $e_{\partial_0}\colon \partial_0M\hookrightarrow \partial N$. We call such a quadruple \emph{smooth}, if all manifolds and embeddings involved are smooth. In this setting, the condition in the known convergence result for \eqref{equ:emb-calc-intro} to be a weak homotopy equivalence is that $M$ can be obtained from a closed collar from $\partial_0M$ by attaching handles of index at most $\dim(N)-3$, or equivalently from a closed collar on $\partial_1M\coloneqq\partial M\backslash \interior(\partial_0 M)$ by attaching handles of index at least $3$. In the improved result, this assumption is replaced by the weaker condition that the inclusion $\partial_1M\subset M$ is an \emph{equivalence on tangential $2$-types}, which means that the inclusion $\partial_1M\subset M$ is an equivalence on fundamental groupoids and that for all components the spherical Stiefel--Whitney class $w_2\colon \pi_2(M)\ra\bfZ/2$ is nontrivial if and only if $w_2\colon \pi_2(\partial_1M)\ra\bfZ/2$ is nontrivial. Both the improved smooth convergence result and its topological counterpart involve this condition:

\begin{bigthm}\label{bigthm:convergence}Fix a quadruple $(M,\partial_0M,N,e_{\partial_0})$ as above. Assume that $d\ge5$ and that the inclusion $\partial_1M\subset M$ is an equivalence on tangential $2$-types.
\begin{enumerate}[leftmargin=*]
\item If the quadruple is smooth, then the smooth embedding calculus approximation \[\smash{\Emb^{\oo}_{\partial_0}(M,N)\lra T_\infty\Emb^{\oo}_{\partial_0}(M,N)}\] is a weak homotopy equivalence.
\item If the target manifold $N$ is smoothable, then the topological embedding calculus approximation \[\smash{\Emb^{\ot}_{\partial_0}(M,N)\lra T_\infty\Emb^{\ot}_{\partial_0}(M,N)}\] is a weak homotopy equivalence. Moreover, for $d=5$ the smoothability assumption on $N$ can be weakened to assuming that the topological tangent bundle $N\ra\BTop(5)$ lifts to $\BO(5)$.
\end{enumerate}
\end{bigthm}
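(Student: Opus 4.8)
The plan is to bootstrap from the Goodwillie--Klein--Weiss convergence theorem, which in the equidimensional case asserts that $\Emb^{\oo}_{\partial_0}(M,N)\to T_\infty\Emb^{\oo}_{\partial_0}(M,N)$ is an equivalence as soon as $M$ is obtained from a closed collar on $\partial_1M$ by attaching handles of index $\ge 3$, together with the multiple disjunction estimates of Goodwillie--Klein underlying it. The first step is to use the hypothesis $d\ge 5$ to run a handle-trading argument---this is where the dimension restriction enters, via Wall's handle exchange below the middle dimension: the assumption that $\partial_1M\subset M$ is an equivalence on tangential $2$-types lets one cancel all handles of index $\le 1$, and then bring each residual handle of index $2$ into standard position, attached along an unknotted, trivially framed $S^1\times D^{d-2}$ in the boundary. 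The key point is that the sole invariant distinguishing such handles, a framing class in $\pi_1(\SO(d-2))\cong\bfZ/2$, is exactly detected by whether the $\pi_2$-class it creates carries nontrivial $w_2$, so the tangential $2$-type hypothesis forces it to vanish. Concretely, this exhibits $M$ relative to a collar on $\partial_1M$ as obtained by boundary-connect-summing with finitely many copies of $S^2\times D^{d-2}$ and then attaching handles of index $\ge 3$.

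It then remains to show that passing from $M''$ to $M''\,\natural\,(S^2\times D^{d-2})$ preserves convergence, which I would establish by a direct disjunction argument for the single, now standardly attached, $2$-handle involved: one re-examines the relevant Goodwillie--Klein excision estimate and observes that, because the attaching sphere is unknotted and the $2$-handle is tangentially trivial, the estimate gains one extra unit of cocartesianness---just enough for the subdivision induction of the original proof to go through. Combined with the reduction of the previous paragraph and the Goodwillie--Klein--Weiss theorem applied to the index-$\ge 3$ part, this yields (i); at each stage one uses that $T_\infty$ of the embedding functors in play converts the relevant handle cubes into genuine limits, which is part of the analyticity formalism and requires no hypothesis on $M$.

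\textbf{Part (ii).} Here the plan is to reduce to (i) by smoothing theory. We may equip $N$ with a smooth structure, since neither $\Emb^{\ot}_{\partial_0}(M,N)$ nor $T_\infty\Emb^{\ot}_{\partial_0}(M,N)$ depends on it; and we may assume $M$ smoothable, since if $\Emb^{\ot}_{\partial_0}(M,N)$ is non-empty then $M$ is a codimension-$0$ submanifold of the smooth manifold $N$, hence its tangent microbundle has a smooth reduction and (as $d\ge 5$) $M$ is smoothable, while if $\Emb^{\ot}_{\partial_0}(M,N)$ is empty the target is readily shown to be empty as well under our hypotheses. For a fixed smooth structure on $M$ (restricting to a fixed one near $\partial_0M$ induced by $e_{\partial_0}$), the smoothing theory part of \cref{bigthm:calc} applied to the operadic right-fibration $E_d^{\oo}\to E_d^{\ot}$ identifies $T_\infty\Emb^{\oo}_{\partial_0}(M,N)$ with the fibre product of $T_\infty\Emb^{\ot}_{\partial_0}(M,N)\to \Map^{/\BTop(d)}_{\partial_0}(M,N)$ and the topological derivative map $\Map^{/\BO(d)}_{\partial_0}(M,N)\to\Map^{/\BTop(d)}_{\partial_0}(M,N)$, using that the first stages of these towers are the mapping spaces over $\BO(d)$ respectively $\BTop(d)$; and classical parametrised smoothing theory gives the same fibre-product description with $\Emb$ in place of $T_\infty\Emb$, compatibly with the former. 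Since by (i) the smooth approximation map is an equivalence, $\Emb^{\ot}_{\partial_0}(M,N)\to T_\infty\Emb^{\ot}_{\partial_0}(M,N)$ becomes an equivalence after base change along the derivative map, for each choice of smooth structure on $M$; as the images of these derivative maps jointly exhaust the path components of $\Map^{/\BTop(d)}_{\partial_0}(M,N)$---every topological tangential structure on $M$ underlies some smooth structure when $d\ge 5$---the map is an equivalence. For $d=5$ the handle-theoretic input behind smoothing theory is supplied instead by Quinn's work, and what it requires of $N$ is a lift of its topological tangent bundle to $\BO(5)$ rather than a full smooth structure, which is the source of the weakened hypothesis.

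\textbf{The main obstacle.} I expect the crux to be the new geometric input in part (i): the refined disjunction estimate for a tangentially trivial $2$-handle---equivalently, a $(d-2)$-handle of standard type relative to $\partial_0M$, which lies just outside the Goodwillie--Klein range and must be extracted by a careful analysis of their proof---together with the preceding handle-trading step, where one must pin down that the framing obstruction for the residual low-index handles is precisely the stated condition on $w_2$. In part (ii) the delicate points are the compatibility of the two fibre-product descriptions over a common first stage, and the treatment of $d=5$.
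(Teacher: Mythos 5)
There is a genuine gap at the crux of part (i). Your handle-trading reduction is essentially the paper's \cref{lem:trivial-handles} (including the identification of the residual framing obstruction with the $w_2$-condition), but the way you then dispose of the trivial $2$-handles does not work as stated. You propose to re-examine the Goodwillie--Klein excision estimates and assert that for a trivially attached, tangentially trivial $2$-handle "the estimate gains one extra unit of cocartesianness---just enough for the subdivision induction of the original proof to go through". No argument is given for this, and it is not a plausible black box: if the multiple disjunction estimates improved in this way, the maps $T_k\Emb^{\oo}_{\partial_0}(M,N)\ra T_{k-1}\Emb^{\oo}_{\partial_0}(M,N)$ would have connectivity increasing in $k$, whereas in the situation of \cref{thm:conv-smooth-codim0} this need not happen (see \cref{rem:rem-smooth-conv}); convergence here is not of the "increasing connectivity" type. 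The paper avoids any new disjunction input by a purely formal device: because the $2$-handles are \emph{trivially} attached, one can attach cancelling $3$-handles to form a complementary bordism $M^-_{\le 2}$ with $M^-_{\le 2}\cup_K M_{\le 2}$ a collar on $\partial_1M$, and then two applications of isotopy extension (for embedding spaces and for $T_\infty$) exhibit the problematic fibre $\Emb^{\oo}_K(M_{\le 2},N\backslash e)\ra T_\infty\Emb^{\oo}_K(M_{\le 2},N\backslash e)$ as the map on fibres of a map of fibre sequences whose other two terms are covered by the classical Goodwillie--Klein--Weiss theorem (handle codimension $\ge 3$, reading the handle decompositions backwards). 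You need either this cancellation argument or an actual proof of the claimed refined estimate; as written, the step is missing.

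Part (ii) follows the paper's strategy in outline (reduce to (i) via the operadic smoothing-theory pullback and classical smoothing theory), and for $d\ge 6$ your argument can be completed; the paper packages the "exhaust all smoothings of $M$" step via a categorical criterion applied to the pullback of mapping categories, which also dispenses with your separate (and somewhat hand-waved) discussion of non-smoothable $M$ and of emptiness of the target. However, your treatment of $d=5$ is a gap rather than a proof: the assertion that "every topological tangential structure on $M$ underlies some smooth structure when $d\ge 5$" fails rel boundary, because smoothing theory breaks down on the $4$-dimensional boundary $\partial N$. The paper's argument here is genuinely different: it invokes the Freedman--Quinn sum-stable smoothing theorem to smooth $\partial N\sharp(S^2\times S^2)^{\sharp g}$, runs the $d\ge6$-style argument for $N\natural(S^2\times D^3)^{\natural g}$, and then deduces the result for $N$ by a retract argument, with an auxiliary tube construction to arrange that the connected sums can be taken away from $\partial_0M$. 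Citing "Quinn's work" does not substitute for this.
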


\begin{nrem}Both parts of \cref{bigthm:convergence} have generalisations to positive codimension, but the statements are more involved (see \cref{sec:convergence}). We only mention two consequences of them at this point:
\begin{enumerate}[leftmargin=*]
\item In codimension $\ge3$, Goodwillie, Klein, and Weiss' result shows that smooth embedding calculus always converges. Our results imply that the same holds for topological embedding calculus whenever the target is smoothable and of dimension $\ge5$; see \cref{rem:simpler-cond-top}.
\item As an example of our convergence result in codimension $2$, we show that embedding calculus converges for the spaces of smooth or locally flat topological embeddings $\bfC P^{2n}\hookrightarrow \bfC P^{2n+1}$ when restricted to the component of the standard inclusion $\bfC P^{2n}\subset \bfC P^{2n+1}$; see \cref{ex:convergence-cp2n}. 
\end{enumerate}
\end{nrem}

\subsection*{The Alexander trick for homology $4$-spheres}As a sample application of \cref{bigthm:convergence}, we extend work of Galatius--Randal-Williams \cite{GRWContractible} on homeomorphisms of contractible manifolds and embeddings of one-sided h-cobordisms from dimensions $d\ge6$ to $d=5$. In particular, we show that the topological group $\smash{\Homeo_{\partial}(\Delta)}$ of homeomorphisms of a compact contractible topological $5$-manifold $\Delta$ that fix the boundary (a homology $4$-sphere) is weakly contractible (see \cref{sec:grw-alexander-extension}).

\subsection*{Truncated $\infty$-operads and the classification of groupoid-coloured $\infty$-operads} The proof of \cref{bigthm:calc} and the analysis of the resulting variants of embedding calculus involves various general results on $\infty$-operads. Some of them might be of independent interest, such as the following:
\begin{enumerate}[leftmargin=*]
\item Assigning to a tangential structure $\smash{\theta\colon B\ra \BAut(E_d)}$ for $E_d$ its associated $\infty$-operad $\smash{E_d^{\theta}}$ extends to an $\infty$-operad-valued functor $\smash{\cS_{/\BAut(E_d)}\ra \Opd}$ out of the $\infty$-category of spaces over $\BAut(E_d)$. This  turns out to be a replete subcategory inclusion, so in particular we have \[\smash{\Aut_{\Opd}(E_d^{\theta})\simeq \Aut_{\cS_{/\BAut(E_d)}}(\theta)}.\] The analogous statement also holds for $E_d$ replaced by any \emph{reduced $\infty$-operad} (a unital $\infty$-operad $\cO$ with $\cO^{\col}\simeq \ast$). This follows from a general classification result for \emph{groupoid-coloured operads} (unital $\infty$-operads $\cO$ for which $\cO^{\col}$ is an $\infty$-groupoid): writing $\Opd^{\red}\subset \Opd^{\gc}\subset \Opd$ for the full subcategories of reduced and groupoid-coloured $\infty$-operads respectively, assigning to a functor $F\colon B\ra\Opd^{\red}$ out of an $\infty$-groupoid $B$ its colimit in $\Opd$ induces an equivalence 
\[\smash{\textstyle{\int_{\cS}\Fun(-,\Opd^{\red})\simeq \Opd^{\gc}}}\]
between $\Opd^{\gc}$ and the unstraightening of $\Fun(-,\Opd^{\red})\colon \cS\ra\Cat$. We deduce this equivalence from Lurie's \emph{assembly and disintegration} for $\infty$-operads \cite[2.3]{LurieHA} (see \cref{sec:gc-operads}).
\item To construct the tower $\smash{E_d^p\ra \cdots \ra E_{d,\le 2}^p\ra E_{d,\le 1}^p}$ from above, we consider an $\infty$-category of $\smash{\Opd^{\le k}}$ of \emph{$k$-truncated $\infty$-operads} and show that a certain truncation functor $\smash{\tau_k\colon \Opd\ra \Opd^{\le k}}$ admits a fully faithful right adjoint on the full subcategories of unital $\infty$-operads (see \cref{sec:truncation-operads} and \cref{sec:truncation}). This was recently independently proved as part of \cite{DubeyLiu}.
\end{enumerate}

\subsection*{Acknowledgements} We would like to thank Elden Elmanto, Rune Haugseng, Fabian Hebestreit, Gijs Heuts, Nick Rozenblyum, Jan Steinebrunner for helpful comments and discussions. {MK} acknowledges funding from the European Union through an ERC grant (MaFC, 101221003). {AK} acknowledges the support of the Natural Sciences and Engineering Research Council of Canada (NSERC) [funding reference number 512156 and 512250]. {AK} was supported by an Alfred P.~Sloan Research Fellowship. 

\tableofcontents

\section{Categorical preparations}\label{sec:categorical-preliminaries}
We begin by summarising several $\infty$-categorical concepts used in the body of this work, alongside with proofs of various statements we could not locate in the literature. The topics are:

\begin{minipage}[c]{1\textwidth}
\begin{multicols}{2}
\begin{enumerate}[leftmargin=0.1cm]
	\item[\ref{sec:monoids}] Category and monoid objects
	\item[\ref{sec:towers}] Towers
	\item[\ref{sec:presheaves-yoneda}] Categories of presheaves
	\item[\ref{sec:operads}] Operads
	\item[\ref{sec:lax}] (Op)lax monoidality
	\item[\ref{sec:bimod}] Categories of bimodules
	\item[\ref{sec:morita}] Morita categories
	\item[\ref{sec:algebras-modules-intro}]Algebras over operads and their modules
		\item[\ref{sec:mate-calculus}] (Op)lax natural transformations and lax limits
	\item[\ref{sec:rfib}] Miscellaneous on right-fibrations
\end{enumerate}
\end{multicols}
\end{minipage}

\begin{nconvention}
We work in the setting of $\infty$-categories throughout this work, but we drop the $\infty$-prefix: a ``category'' is an $\infty$-category, a ``space'' or ``groupoid'' is an $\infty$-groupoid, an ``operad'' is an $\infty$-operad and so on. Whenever we refer to a concept in the ordinary sense, we explicitly say so. Regarding the model of $\infty$-categories, we in principle follow Lurie \cite{LurieHTT, LurieHA} and use quasi-categories, but most of our statements and arguments are model-independent and in line with this, we differ from loc.cit.\,in that we use the equivalence-invariant variants of the $\infty$-categorical concepts that appear, as opposed to their point-set quasi-categorical incarnations. For instance, a \emph{cocartesian fibration} or an \emph{operad} is defined as recalled in \cite[Definitions 2.1 and 2.9]{KKDisc} as opposed to as defined in \cite[2.4.2.1]{LurieHTT} and \cite[2.1.1.10]{LurieHA}. We ignore size issues throughout and leave adding the appropriate adjectives (``small'', ``large'', etc.) to the reader, c.f.~\cite[1.2.15]{LurieHTT}. Unless said otherwise, we adopt the notation from \cite[Section 2]{KKDisc}.
\end{nconvention}

\subsection{Category and monoid objects}\label{sec:monoids}A \emph{category object} in a category $\cC$ with finite limits is a simplicial object $X\in \Fun(\Delta^{\op},\cC)$ such that the Segal maps $X_{[p]}\ra X_{[1]}\times_{X_{[0]}} \cdots \times_{X_{[0]}} X_{[1]}$ are equivalences for $p\ge1$. These span a full subcategory $\CCat(\cC)\subset \Fun(\Delta^{\op},\cC)$. Requiring in addition that $X_{[0]}$ be the terminal object yields the full subcategory $\Mon(\cC)\subset \CCat(\cC)$ of \emph{monoid objects}. Replacing the role of $\Delta^{\op}$ in the definition of $\Mon(\cC)$ with the category $\Fin_\ast$ of finite pointed sets and pointed maps between them gives the full subcategory $\CMon(\cC)\subset \Fun(\Fin_\ast,\cC)$ of \emph{commutative monoid objects}. The latter is related to $\Mon(\cC)$ by the functor $\CMon(\cC)\ra \Mon(\cC)$ induced by precomposition with the functor $\Delta^{\op}\ra\Fin_\ast$ as recalled e.g.\,in \cite[Section 2.4]{KKDisc}. Writing $\Cat$ for the category of categories \cite[3.0.0.1]{LurieHTT}, relevant examples for us are:
\begin{enumerate}
	\item $\Mon(\Cat)$, the category of \emph{monoidal categories},
	\item $\CMon(\Cat)$, the category of \emph{symmetric monoidal categories},
	\item \label{enum:double-cat} $\CCat(\Cat)$,  the category of \emph{double categories}, and
	\item $\CMon(\CCat(\Cat))$, the category of \emph{symmetric monoidal double categories}.
\end{enumerate}
Regarding \ref{enum:double-cat}, we adopt the following terminology (see \cite[Section 2.5]{KKDisc} for more details): given a double category $\cM\in \CCat(\Cat)\subset\Fun(\Delta^{\op},\Cat)$, its \emph{category of objects} and \emph{category of morphisms} are the values $\cM_{[0]}$ and $\cM_{[1]}$ at $[0], [1]\in \Delta$, respectively,  its \emph{mapping category} $\cM_{A,B}$ between objects $A,B\in\cM_{[0]}$ is the fibre over $(A,B)$ of the \emph{source-target} functor $(s,t)\colon \cM_{[1]}\ra \cM_{[0]}\times \cM_{[0]}$ given as the value of the two maps $0,1\colon [0]\ra [1]$, and its \emph{composition functor} $\cM_{A,B}\times \cM_{B,C}\ra \cM_{A,C}$ for objects $A,B,C\in\cM_{[0]}$ is the functor induced by the face map $(0\le 2)^*\colon \cM_{[2]}\ra \cM_{[1]}$ and the equivalence $\cM_{[2]}\simeq \cM_{[1]}\times_{\cM_{[0]}}\cM_{[1]}$ resulting from the Segal condition.

\subsubsection{Relative mapping spaces in double categories} \label{sec:relative-mapping-spaces} Given objects $M,N\in \cM_{[1]}$ in the category of morphisms in a double category $\cM$ and maps $\varsigma\colon s(M)\ra s(N)$ and $\tau\colon t(M)\ra t(N)$ between their sources and targets, the \emph{mapping space between $M$ and $N$ relative to $\varsigma$ and $\tau$} is the fibre 
\[\Map_{\cM}(M,N;\varsigma,\tau)\coloneqq \fib_{(\varsigma,\tau)}\Big(\Map_{\cM_{[1]}}\big(M,N\big)\ra \Map_{\cM_{[0]}}\big(s(M),s(N)\big)\times\Map_{\cM_{[0]}}\big(t(M),t(N)\big)\Big).\]
If $\varsigma$ and $\tau$ are equivalences, these relative mapping spaces recover the usual mapping spaces between $M$ and $N$ in the mapping category of $\cM$ between $s(M)\simeq s(M)$ and $t(N)\simeq t(N)$. 

\begin{rem}\label{rem:st-cartesian}For many double categories $\cM$, the source-target functor $(s,t)\colon \cM_{[1]}\ra \cM_{[0]}\times \cM_{[0]}$ is a cartesian fibration (see e.g.\,\cref{rem:morita-st-cartesian} for a class of examples). In this case, any relative mapping space $\Map_{\cM}(M,N;\varsigma,\tau)$ is equivalent to an actual mapping space in the mapping category $\cM_{s(M),t(M)}$ since postcomposition with a cartesian lift $(\varsigma,\tau)^*N\ra N$ of $(\varsigma,\tau)\colon (s(M),t(M))\ra (s(N),t(N))$ induces an equivalence $\smash{\Map_{\cM_{s(M),t(M)}}(M,(\varsigma,\tau)^*N)\simeq \Map_{\cM}(M,N;\varsigma,\tau)}$.
\end{rem}

\subsubsection{Double and $(\infty,2)$-categories}\label{sec:infty-two} Double categories are closely related to $(\infty,2)$-categories in that the category $\CCat(\Cat)$ of double categories contains the category $\Cat_{(\infty,2)}$ of $(\infty,2)$-categories as a full subcategory. Moreover, any $\cM\in \CCat(\Cat)$ has an underlying $(\infty,2)$-category $\cM^{(\infty,2)}$, giving rise to a functor $(-)^{(\infty,2)}\colon\CCat(\Cat)\ra \Cat_{(\infty,2)}$. The latter has the following properties:

\begin{enumerate}[leftmargin=*]
\item\label{item:oo-2-products} It preserves finite products, and thus also symmetric monoidal structures, i.e.\,induces a functor of the form $\CMon(\CCat(\Cat))\ra \CMon(\Cat_{(\infty,2)})$,
\item\label{item:oo-2-pullbacks} More generally, it preserves pullback squares with bottom right corner in $\Cat_{(\infty,2)}\subset \CCat(\Cat)$. Since the forgetful functor $\CMon(\cC)\ra \cC$ preserves and detects limits, this also implies that $\CMon(\CCat(\Cat))\ra \CMon(\Cat_{(\infty,2)})$ preserves pullbacks squares whose bottom right corner maps via the forgetful functor $\CMon(\CCat(\Cat))\ra \CCat(\Cat)$ to $\Cat_{(\infty,2)}\subset \CCat(\Cat)$.
\item\label{item:oo-2-objects} The objects of $\cM^{(\infty,2)}$ can be identified with those of $\cM$ and the mapping category between two objects in $\cM$ with the mapping category between the corresponding objects in $\cM^{(\infty,2)}$.
\end{enumerate}

One way to define $\Cat_{(\infty,2)}\subset \CCat(\Cat)$, to implement the functor $(-)^{(\infty,2)}$, and to justify properties \ref{item:oo-2-products}-\ref{item:oo-2-objects} is as follows: recall from \cite[Theorem 4.11]{JoyalTierney} that the \emph{nerve} $N\colon \Cat\ra \PSh(\Delta)=\Fun(\Delta^\op,\cS)$ (the Yoneda embedding followed by restriction along $\Delta\subset\Cat$) is fully faithful and thus yields an equivalence $\Cat\simeq \mathrm{CSS}(\cS)$ to its essential image $\mathrm{CSS}(\cS)\subset \PSh(\Delta)$, also called the category of \emph{complete Segal spaces}. Applying $\CCat(-)$ this gives an equivalence 
$\CCat(\Cat)\simeq \CCat(\mathrm{CSS}(\cS))$. The category $\CCat(\mathrm{CSS}(\cS))$ contains a full subcategory $\mathrm{CSS}(\mathrm{CSS}(\cS))\subset \CCat(\mathrm{CSS}(\cS))$ of complete Segal objects in $\mathrm{CSS}(\cS))$; this is one of the standard models for $(\infty,2)$-categories (see e.g.\,\cite[Section 7]{HaugsengSpans}). Via the equivalence $\CCat(\Cat)\simeq \CCat(\mathrm{CSS}(\cS))$, this yields a full subcategory $\Cat_{(\infty,2)}\subset \CCat(\Cat)$. The functor $(-)^{(\infty,2)}\colon\CCat(\Cat)\ra \Cat_{(\infty,2)}$ is given by ``Segalfication followed by completion'', as explained for example in \cite[Remark 3.15]{HaugsengMorita}. This remark in loc.cit.\,also explains why this functor satisfies \ref{item:oo-2-products}. Property \ref{item:oo-2-pullbacks} holds by a minor extension of this argument: arguing as in loc.cit.\,(and adopting the notation), it suffices to show that completion $L_2\colon\mathrm{Seg}_2(\cS)\ra \mathrm{CSS}_2(\cS)$ preserves pullbacks over $2$-fold Segal spaces that are already complete. The latter holds by the argument in the proof of \cite[Lemma 7.10]{HaugsengSpans}. 
Property \ref{item:oo-2-objects} is explained in \cite[Section 2.5.6]{KKDisc}.

\begin{rem}In fact, a double category $\cM\in\CCat(\Cat)$ has \emph{two} underlying $(\infty,2)$-categories: the one given by the functor $(-)^{(\infty,2)}$ as described above, and the one given by the composition
\[\smash{
\CCat(\Cat)\xlra{\CCat(N)}\CCat(\CCat(\cS))\overset{\tau}{\underset{\simeq}\lra}\CCat(\CCat(\cS))\lra \Cat_{(\infty,2)}}
\]
where the final arrow is given by ``Segalfication followed by completion'' (see e.g.\,\cite[Remark 3.15]{HaugsengMorita}) and the middle arrow is the restriction of the self-equivalence of $\Fun(\Delta^\op\times\Delta^\op,\cS)$ given by permuting the $\Delta^\op$. However, we will only make use of the first way of extracting an $(\infty,2)$-category.
\end{rem}

\subsection{Towers}\label{sec:towers}
For a category $\cC$, we write $\smash{\Tow(\cC) \coloneqq \Fun((\bfN_{>0}\cup\{\infty\})^{\op},\cC)}$ for the category of \emph{towers} in $\cC$, where $(\bfN_{>0}\cup\{\infty\})^{\op}$ is the opposite of the poset  $(\bfN_{>0}\cup\{\infty\},\le)$. A tower $X\in\Tow(\cC)$ thus consists of an object $x\in \cC$ (the value at $\infty$) and a sequence of morphisms $\smash{\ldots\ra x_3 \ra x_2\ra x_1}$ in the undercategory $\smash{\cC_{x/}}$ (the values at $1,2,3,\ldots \in\bfN_{>0}$). If this expresses $x$ as the limit of the $x_i$s (i.e.\,if $X$ is a limit-cone), then the tower $X\in\Tow(\cC)$ is said to \emph{converge}.

\subsection{Categories of presheaves}\label{sec:presheaves-yoneda} Recall that for a category $\cC$, we write $\PSh(\cC)=\Fun(\cC^\op,\cS)$ for the category of presheaves on $\cC$ with values in the category $\cS$ of spaces. It is the target of the Yoneda embedding $y_{\cC}\colon \cC\ra \PSh(\cC)$ which is fully faithful by the Yoneda lemma \cite[5.1.3.1]{LurieHTT}. Given a functor $\varphi\colon \cC\ra \cD$, we often consider the functors 
\[\varphi^*\colon \PSh(\cD)\ra\PSh(\cC),\quad \varphi_!\colon \PSh(\cC)\ra\PSh(\cD), \quad \varphi_*\colon \PSh(\cC)\ra\PSh(\cD)\]
where $\varphi^*$ is induced by restriction and $\varphi_!$ respectively $\varphi_*$ is its left respectively right adjoint which is given by taking left respectively right Kan extension. There is one more related functor we will make frequent use of: writing  $\Fun^\colim(\PSh(\cC),\cD)\subset \Fun(\PSh(\cC),\cD)$ for the full subcategory of colimit-preserving functors into a category $\cD$ with colimits, restriction along $y_{\cC}$ induces an equivalence $\smash{\Fun^\colim(\PSh(\cC),\cD)\simeq\Fun(\cC,\cD)}$; see 5.1.5.6 loc.cit.. Given $\varphi\in\Fun(\cC,\cD)$ we write 
\begin{equation}\label{equ:colim-pres-ex}
	\smash{\lvert-\rvert_\varphi\colon \PSh(\cC)\ra \cD}
\end{equation} 
for its preimage under this equivalence---the unique colimit preserving extension of $\varphi$ along $y_\cC$. The functor $\varphi_!(-)$ is a special case of \eqref{equ:colim-pres-ex}: we have $\varphi_!(-)\simeq \lvert-\rvert_{(y_{\cD}\circ \varphi)}$. The functor $\lvert-\rvert_\varphi$ can also be expressed as a coend: we have $\lvert-\rvert_\varphi\simeq (-)\otimes_{\cC}\varphi$.

\subsection{Operads}\label{sec:operads}\label{sec:operads-intro}
We use Lurie's model for operads $\cO$: functors $\cO^{\otimes}\ra \Fin_\ast$ to the category $\Fin_\ast$ of pointed finite sets and pointed maps between these, satisfying certain properties \cite[2.1.1.10]{LurieHA}, among them the existence of cocartesian lifts over \emph{inert maps} (i.e.\,maps $\smash{\varphi\in\Map_{\Fin_\ast}(S\sqcup\ast,T\sqcup\ast)}$ with $\lvert\varphi^{-1}(t)\rvert=1$ for all $t\in T$) and the property that the functor $\smash{\cO^{\otimes}_{S\sqcup *}\ra \sqcap_{s\in S}\cO^{\otimes}_{\{s\}\sqcup \ast}}$ from the fibre over $S\sqcup\ast$ to the product of the fibres over the $\{s\}\sqcup\ast$'s induced by the cocartesian lifts of the Segal maps is an equivalence for all $S\in\Fin$. A \emph{morphism of operads} is a functor over $\Fin_\ast$ that preserves cocartesian lifts over inert maps. This defines a subcategory $\Opd$ of ${\Cat}_{/\Fin_\ast}$. The fibre $\smash{\cO^\col\coloneqq \cO^{\otimes}_{\langle 1\rangle}\in\Cat}$ over $\langle 1\rangle=\{1,\ast\}\in \Fin_\ast$ of an operad $\cO^{\otimes}\ra \Fin_\ast$ is its \emph{category of colours}. The objects in $\cO^{\otimes}$ can be identified with tuples $(c_s)_{s\in S}$ of objects in $\cO^\col$ indexed by finite sets $S\in\Fin$ (see 2.1.1.5 loc.cit.). For such a tuple $(c_s)_{s\in S}$ and a further object $d$ in $\cO^\col$, the subspace $\Mul_{\cO}((c_s)_{s\in S};d)\subset \Map_{\cO^\otimes}((c_s)_{s\in S}, d)$ of those components that map to the unique map $\varphi\colon S\sqcup\ast\ra \langle 1\rangle$ with $\varphi^{-1}(\ast)=\ast$ is called the \emph{space of multi-operations} from $(c_s)_{s\in S}$ to $d$ (c.f.\,2.1.1.16 and 2.1.1.1 loc.cit.). These spaces of multi-operations are related by composition maps that satisfy the axioms of a coloured operad in the classical $1$-categorical sense up to higher coherent homotopy (see 2.1.1.17 loc.cit.). The category of operads $\Opd$ has a terminal object, the \emph{commutative operad} $\Com$, given by $\id_{\Fin_\ast}$. From the axioms of an operad $\cO=(\cO^{\otimes}\ra \Fin_\ast)$, it follows that mapping spaces in $\cO^\otimes$ can be expressed in terms of spaces of multi-operations via natural equivalences
\begin{equation}\label{equ:maps-total-space-operads}
	\textstyle{\Map_{\cO^{\otimes}}((c_s)_{s\in S},(d_t)_{t\in T})\simeq\bigsqcup_{\varphi\in\Map_{\Fin_\ast}(S\sqcup \ast, T\sqcup \ast)}\bigsqcap_{t\in T}}\Mul_{\cO}((c_s)_{s\in\varphi^{-1}(t)};d_t).
\end{equation}
The latter in particular implies the following:

\begin{lem}\label{lem:operad-equivalence-criterion}A map of operads $\varphi\colon \cO\ra\cU$ is an equivalence if and only if it induces an equivalence on categories of colours and all spaces of multi-operations.
\end{lem}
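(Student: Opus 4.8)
For the forward implication I would simply observe that an equivalence of operads has underlying functor $\cO^\otimes\ra\cU^\otimes$ an equivalence of $\infty$-categories commuting with the structure maps to $\Fin_\ast$; this restricts to an equivalence on fibres over $\langle 1\rangle$, i.e.\ on categories of colours, and, being compatible with the projections to $\Fin_\ast$, it matches up the decomposition of each mapping space into components lying over a fixed map of $\Fin_\ast$, hence induces equivalences on all the subspaces $\Mul_{\cO}((c_s)_{s\in S};d)\subset\Map_{\cO^\otimes}((c_s)_{s\in S},d)$.

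The substance is the converse, and the plan is to show that the underlying functor $\cO^\otimes\ra\cU^\otimes$ is an equivalence of $\infty$-categories; this will suffice, because that functor commutes with the structure maps to $\Fin_\ast$, so its inverse does too, and an equivalence of $\infty$-categories over a fixed base preserves and reflects cocartesian edges, so the inverse again preserves cocartesian edges over inert maps and is a morphism of operads---whence $\varphi$ is an equivalence in $\Opd$. To prove $\cO^\otimes\ra\cU^\otimes$ is an equivalence I would check essential surjectivity and full faithfulness. Essential surjectivity: every object of $\cU^\otimes$ is a tuple $(d_t)_{t\in T}$ of colours (as recalled above); choosing colours $c_t\in\cO^\col$ with $\varphi(c_t)\simeq d_t$, and using that $\varphi$ preserves the cocartesian lifts of the inert maps $T\sqcup\ast\ra\{t\}\sqcup\ast$ exhibiting objects as tuples of colours, one gets $\varphi\big((c_t)_{t\in T}\big)\simeq(d_t)_{t\in T}$. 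Full faithfulness: for objects $(c_s)_{s\in S},(d_t)_{t\in T}$ of $\cO^\otimes$, apply \eqref{equ:maps-total-space-operads} to both $\cO$ and $\cU$; since $\varphi$ commutes with the projections to $\Fin_\ast$ it respects the coproduct decomposition indexed by $\psi\in\Map_{\Fin_\ast}(S\sqcup\ast,T\sqcup\ast)$, and since it preserves cocartesian lifts over inert maps it respects the product decomposition over $t\in T$, so the induced map on mapping spaces is the disjoint union over $\psi$ of the products over $t\in T$ of the maps $\Mul_{\cO}((c_s)_{s\in\psi^{-1}(t)};d_t)\ra\Mul_{\cU}((\varphi c_s)_{s\in\psi^{-1}(t)};\varphi d_t)$, each of which is an equivalence by hypothesis.

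The one point I expect to require care is the naturality of \eqref{equ:maps-total-space-operads} with respect to $\varphi$ in the strong term-by-term sense invoked in the full faithfulness step. But this should be exactly what the construction of \eqref{equ:maps-total-space-operads} yields: the splitting over $\psi$ comes from the structure map to $\Fin_\ast$, which a morphism of operads respects on the nose; the reduction to $|T|=1$ comes from cocartesian lifts over inert maps, which morphisms of operads preserve by definition; and $\Mul_{\cO}(-;-)$ is by definition the union of those path components of a mapping space that lie over a specified map of $\Fin_\ast$, which is again preserved for free. So the proof needs no input beyond \eqref{equ:maps-total-space-operads} and the definition of a morphism of operads.
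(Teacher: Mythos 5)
Your proposal is correct and follows essentially the same route as the paper: reduce to checking that the underlying functor $\cO^\otimes\ra\cU^\otimes$ is an equivalence of categories (the paper invokes conservativity of the forgetful functors $\Opd\ra\Cat_{/\Fin_\ast}\ra\Cat$ where you verify the reduction by hand, but the content is the same), then deduce essential surjectivity from the Segal decomposition of objects into tuples of colours and full faithfulness from the decomposition \eqref{equ:maps-total-space-operads} of mapping spaces into products of spaces of multi-operations. The point you flag about naturality of \eqref{equ:maps-total-space-operads} is handled exactly as you expect, so no gap remains.
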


\begin{proof}Since both forgetful functors $\smash{\Opd\ra\Cat_{/\Fin_\ast}\ra\Cat}$ are conservative, it suffices to show that $\cO^{\otimes}\ra \cU^{\otimes}$ is an equivalence in $\Cat$. Using $\smash{\cU^{\otimes}_{S\sqcup *}\ra \sqcap_{s\in S}\cU^{\otimes}_{\{s\}\sqcup \ast}}$ and that $\varphi^\col$ is an equivalence, it follows that $\cO^{\otimes}\ra \cU^{\otimes}$ is essentially surjective. That it is fully faithful follows from \eqref{equ:maps-total-space-operads} and the assumption that $\varphi$ is an equivalence on spaces of multi-operations. 
\end{proof}

\noindent The following will be useful at several occasions. For a proof, see e.g.\,\cite[Lemma 1.13]{AyalaFrancisTanaka}.

\begin{lem}\label{lem:operad-limits}The forgetful functor $\Opd\ra \Cat_{/\Fin_*}$ preserves and detects limits.
\end{lem}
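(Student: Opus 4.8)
The plan is to prove \cref{lem:operad-limits} by showing that the subcategory inclusion $\Opd\subset \Cat_{/\Fin_*}$ creates limits, using the standard criterion that a fully faithful functor creates limits as soon as it is closed under the relevant limits in the target and the target has them. First I would recall why $\Cat_{/\Fin_*}$ has all limits: it is a slice of $\Cat$, which is complete, and slices of complete categories are complete, with limits computed as limits in $\Cat$ equipped with the induced map to $\Fin_*$. So it suffices to verify two things: (a) that if $p\colon \cC\to \Cat_{/\Fin_*}$ is a diagram landing in $\Opd$, then its limit $\cO^\otimes\to\Fin_*$ in $\Cat_{/\Fin_*}$ is again an operad; and (b) that the limit cone, viewed in $\Opd$, is again a cone in $\Opd$ — but since $\Opd\subset\Cat_{/\Fin_*}$ is a (non-full) subcategory and morphisms of operads are exactly the morphisms over $\Fin_*$ that preserve inert-cocartesian lifts, once (a) holds one checks that each projection and each comparison map preserves inert-cocartesian edges, which is automatic because these are limits of functors that each do.

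The heart of the matter is (a), checking the operad axioms for the limit. Writing $\cO^\otimes\coloneq \lim_{\cC} p$ computed in $\Cat_{/\Fin_*}$, one uses that in the model where limits of $\infty$-categories are computed (e.g.\ as homotopy limits of fibrant objects), the fibre $\cO^\otimes_{\langle n\rangle}$ over an object $\langle n\rangle\in\Fin_*$ is the limit of the fibres $p(c)^\otimes_{\langle n\rangle}$, and mapping spaces in $\cO^\otimes$ are limits of the corresponding mapping spaces. Then: (i) the existence of cocartesian lifts over inert maps is inherited because a cocartesian edge in a limit category can be assembled from compatible cocartesian edges in each $p(c)^\otimes$ — each $p(c)^\otimes\to\Fin_*$ supplies one over the given inert map and the transition functors preserve them, so these glue to a cocartesian edge in $\cO^\otimes$ over the inert map; (ii) the Segal condition $\cO^\otimes_{\langle n\rangle}\xrightarrow{\ \sim\ }\prod_{i=1}^n\cO^\otimes_{\langle 1\rangle}$ holds because it is a limit over $c\in\cC$ of the equivalences $p(c)^\otimes_{\langle n\rangle}\simeq\prod_i p(c)^\otimes_{\langle 1\rangle}$, and limits commute with products and preserve equivalences. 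One should state these as the two clauses of \cite[2.1.1.10]{LurieHA} (or the reformulation used in \cite[Definitions 2.1 and 2.9]{KKDisc}) and check each is closed under limits in $\Cat_{/\Fin_*}$.

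For the ``detects'' half, suppose $X\colon \cC^\triangleleft\to\Opd$ is a cone whose image in $\Cat_{/\Fin_*}$ is a limit cone. We want $X$ to be a limit cone in $\Opd$. Since $\Opd\hookrightarrow\Cat_{/\Fin_*}$ is faithful (indeed a subcategory inclusion), and we have just shown that the $\Cat_{/\Fin_*}$-limit of the underlying diagram lies in $\Opd$ and is computed there, the given cone $X$ must be equivalent to that limit cone and hence is itself a limit in $\Opd$; this is the usual two-out-of-three style argument for conservative-plus-limit-creating functors. The one genuinely delicate point — and the step I expect to be the main obstacle — is making precise that ``mapping spaces and fibres of a limit of $\infty$-categories over $\Fin_*$ are the limits of the mapping spaces and fibres'', i.e.\ that the operad axioms, which are conditions on fibres, on mapping spaces, and on the existence of certain edges, are each preserved under $\infty$-categorical limits in the slice. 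This is routine but requires either a careful model-categorical computation of the limit or an appeal to the fact that evaluation-at-an-object and the relevant mapping-space functors $\Cat_{/\Fin_*}\to\cS$ preserve limits; since this is a known lemma, I would simply cite \cite[Lemma 1.13]{AyalaFrancisTanaka} as indicated rather than reproving it in detail.
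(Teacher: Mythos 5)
Your proposal is correct and matches the paper, which gives no argument of its own and simply cites \cite[Lemma 1.13]{AyalaFrancisTanaka} — the very reference you fall back on at the end. The sketch you give (limits in $\Cat_{/\Fin_*}$ are computed in $\Cat$, fibres and mapping spaces of a limit are limits of fibres and mapping spaces, so the inert-cocartesian-lift and Segal conditions are closed under limits and the projections jointly detect cocartesian edges) is exactly the standard argument behind that citation.
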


\subsubsection{Relation to symmetric monoidal categories}\label{sec:operads-as-symmon}
Unstraightening induces a (non-full; see \cref{sec:lax}) subcategory inclusion $\CMon(\Cat)\hookrightarrow \Opd$, so a symmetric monoidal category can be viewed as a special case of an operad. The essential image of this inclusion consists of those operads that are also cocartesian fibrations, i.e.\,have cocartesian lifts over all morphisms in $\Fin_\ast$ instead just over inert morphisms. When viewing a symmetric monoidal category $\cC$ as an operad, the category of colours $\cC^\col$ corresponds to the underlying category of $\cC$ and the spaces of multi-operations $\Mul_{\cO}((c_s)_{s\in S};d)$ to the mapping spaces $\Map_\cC(\otimes_{s\in S}c_s,d)$ where $\otimes$ is the monoidal product in $\cC$.

\subsubsection{Monoidal envelopes}\label{sec:env}
The inclusion $\CMon(\Cat)\hookrightarrow \Opd$ from \cref{sec:operads-as-symmon} has a left adjoint  \[\Env(-)\colon \Opd\lra \CMon(\Cat),\] 
the \emph{symmetric monoidal envelope}, see 2.2.4 loc.cit. The underlying category of $\Env(\cO)$ for an operad $\cO=(\cO^\otimes\ra\Fin_\ast)$ is the wide subcategory of $\cO^\otimes$ containing the morphisms that map to an \emph{active} map in $\Fin_\ast$, i.e.\,a map in the image of the subcategory inclusion $\Fin\hookrightarrow \Fin_\ast$ that adds a disjoint basepoint (see 2.2.4.3 loc.cit.). In particular, objects of $\Env(\cO)$ are finite collections $(c_s)_{s\in S}$ of colours, the monoidal structure is given by disjoint union, the mapping spaces are (using \eqref{equ:maps-total-space-operads}) given by 
\begin{equation}\label{equ:mapping-space-env}
		\textstyle{\Map_{\Env(\cO)}\big((c_s)_{s\in S}, (d_t)_{t\in T}\big)\simeq \bigsqcup_{\varphi\in\Map_{\Fin}(S,T)}\bigsqcap_{t\in T}\Mul_{\cO}\big((c_s)_{s\in \varphi^{-1}(t)}; d_t\big)},
\end{equation} 
and the composition is induced by the operad composition. For example, the symmetric monoidal envelope of the commutative operad $\Com$ is the symmetric monoidal category of finite sets with disjoint union as monoidal structure, $\Env(\Com)\simeq \Fin$. Since $\Com$ is the terminal operad we have $\Opd\simeq \Opd_{/\Com}$, so the functor $\Env(-)$ lifts to a functor with target $\CMon(\Cat)_{/\Fin}$. In particular, there is a preferred symmetric monoidal functor $\pi_\cO\colon \Env(\cO)\lra \Fin$ for any operad $\cO$. On underlying categories, this functor is the restriction of $\cO^\otimes\ra\Fin_\ast$ to the preimage of $\Fin\subset \Fin_*$, so it sends an object $(c_i)_{i\in S}\in \Env(\cO)$ to the indexing set $S$.

\subsubsection{Unital operads} \label{sec:unital} An operad $\cO$ is \emph{unital} if the space $\Mul_{\cO}(\varnothing;c)$ of 0-ary operations with output colour $c$ is contractible for all colours $c\in \cO^\col$ (see 2.3.1.1 loc.cit.). We write $\smash{\Opd^\un\subset \Opd}$ 
for the full subcategory of unital operads. The inclusion $\smash{\Opd^\un\subset \Opd}$ admits both adjoints, so it preserves all (co)limits (see 2.3.1 loc.cit.). There are several other characterisations of unital operads; for instance, \eqref{equ:mapping-space-env} implies that an operad $\cO$ is unital if and only if the monoidal unit $\varnothing\in \Env(\cO)$ in the symmetric monoidal envelope is initial. Another characterisation of unital operads is:

\begin{lem}\label{lem:unital-cartesian-injections} $\cO$ is unital if and only if $\pi_{\cO}\colon \Env(\cO)\ra \Fin$ has cartesian lifts for injections.
\end{lem}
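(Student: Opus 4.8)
The plan is to characterize when $\pi_\cO\colon \Env(\cO)\to\Fin$ admits cartesian lifts for injections by unwinding the formula \eqref{equ:mapping-space-env} for mapping spaces in $\Env(\cO)$. Recall that a morphism $f\colon A\to (c_s)_{s\in S}$ in $\Env(\cO)$ over an injection $\iota\colon R\hookrightarrow S$ (so $A$ lies over $R$) is $\pi_\cO$-cartesian if and only if for every object $B\in\Env(\cO)$ lying over some $Q\in\Fin$, the square
\[\begin{tikzcd}[ampersand replacement=\&]
\Map_{\Env(\cO)}(B,A)\rar\dar\&\Map_{\Env(\cO)}(B,(c_s)_{s\in S})\dar\\
\Map_{\Fin}(Q,R)\rar{\iota_*}\&\Map_{\Fin}(Q,S)
\end{tikzcd}\]
is a pullback. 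Since $\iota$ is injective, $\iota_*\colon\Map_\Fin(Q,R)\to\Map_\Fin(Q,S)$ is an inclusion of path components, picking out those $\psi\colon Q\to S$ with image contained in $\iota(R)$; so the cartesian condition says the upper horizontal map is an equivalence onto the union of the corresponding path components of $\Map_{\Env(\cO)}(B,(c_s)_{s\in S})$.

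First I would treat the ``only if'' direction: suppose $\cO$ is unital. The natural candidate for the cartesian lift of $\iota\colon R\hookrightarrow S$ at $(c_s)_{s\in S}$ is the subcollection $(c_s)_{s\in R}$ together with the map $(c_s)_{s\in R}\to (c_s)_{s\in S}$ which is the identity on the indexed colours over $R$ and, on each $s\in S\setminus\iota(R)$, the unique (up to contractible choice, by unitality) $0$-ary operation $\varnothing\to c_s$ in $\Mul_\cO(\varnothing;c_s)$. To check this is cartesian, apply \eqref{equ:mapping-space-env}: for $B=(b_q)_{q\in Q}$ over $Q$, the right-hand mapping space decomposes over $\varphi\colon Q\to S$ as $\bigsqcap_{s\in S}\Mul_\cO((b_q)_{q\in\varphi^{-1}(s)};c_s)$, while the left-hand one decomposes over $\varphi'\colon Q\to R$ as $\bigsqcap_{s\in R}\Mul_\cO((b_q)_{q\in\varphi'^{-1}(s)};c_s)$. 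Composition with our candidate map corresponds, on the component indexed by $\varphi'$, to the component indexed by $\iota\circ\varphi'\colon Q\to S$, where for $s\notin\iota(R)$ the fibre $\varphi^{-1}(s)=\varnothing$ and the factor $\Mul_\cO(\varnothing;c_s)$ is contractible by unitality. Hence composition induces an equivalence from the left-hand space onto exactly the union of components of the right-hand space indexed by maps factoring through $\iota$ — precisely the pullback condition above. (The one bookkeeping point is that post-composition with a $0$-ary operation really does act as the identity on the contractible factor; this follows from the operad composition being unital in the coherent sense, i.e.\ the $0$-ary operations being units, which for unital $\cO$ is automatic.)

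For the converse, suppose $\pi_\cO$ has cartesian lifts for injections, and apply this to the unique injection $\iota\colon\varnothing\hookrightarrow\langle s\rangle$ (a one-element set) and the object $(c)\in\Env(\cO)$ for an arbitrary colour $c\in\cO^\col$. A cartesian lift is then a map $\varnothing\to (c)$ in $\Env(\cO)$, i.e.\ (by \eqref{equ:mapping-space-env} with $S=\varnothing$) a point of $\Mul_\cO(\varnothing;c)$, and the cartesian property applied to $B=\varnothing$ forces $\Map_{\Env(\cO)}(\varnothing,\varnothing)\to$ (the relevant component of) $\Map_{\Env(\cO)}(\varnothing,(c))=\Mul_\cO(\varnothing;c)$ to be an equivalence; since the source is contractible (it is $\Map_\Fin(\varnothing,\varnothing)$, using that $\varnothing$ has no colours), we get $\Mul_\cO(\varnothing;c)\simeq\ast$, so $\cO$ is unital. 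The main obstacle is the ``only if'' direction — specifically, verifying carefully via \eqref{equ:mapping-space-env} that composition with the $0$-ary operations identifies the left mapping space with the correct sub-union of components of the right one, and that this identification is natural enough to conclude cartesian-ness rather than just a bijection on components; this is where one must be a little careful with the coherences in the operad composition, though everything is forced by the structure already recorded in Lurie's model.
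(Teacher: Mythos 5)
Your proposal is correct and follows essentially the same route as the paper: for the ``only if'' direction both construct the candidate cartesian lift $(c_s)_{s\in\iota(R)}\to(c_s)_{s\in S}$ from identity operations over $R$ and the (contractible spaces of) nullary operations over $S\setminus\iota(R)$, and verify cartesianness by the same fibrewise analysis of \eqref{equ:mapping-space-env}; for the converse both specialise to the injection $\varnothing\hookrightarrow\{\ast\}$ tested against $B=\varnothing$ and use $\Map_{\Env(\cO)}(\varnothing,\varnothing)\simeq\ast$. The ``bookkeeping point'' you flag (that composing with identity operations is an equivalence) is exactly the step the paper also dispatches by unitality and the operadic composition, so there is no gap.
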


\begin{proof}We first prove the ``only if''-direction. Given an object $(c_{s})_{s\in S}\in \Env(\cO)$ and an injection $\iota\colon S'\rightarrow S$, we have to find a cartesian lift of $\iota$ along $\pi_{\cO}$ with target $(c_s)_{s\in S}$. The contractibility of $\Mul_{\cO}(\varnothing;c_s)$ together with the equivalence \eqref{equ:mapping-space-env} yields an equivalence 
\[\smash{\textstyle{\fib_\iota\big(\Map_{\Env(\cO)}((c_{\iota(s')})_{s'\in S'},(c_s)_{s\in S})\ra \Map_{\Fin}(S',S)\big)\simeq \bigsqcap_{s'\in S'}\Mul_{\cO}(c_{\iota(s')};c_{\iota(s')})}},\] so $(\id_{\iota(s')})_{s'\in S'}$ yields a lift $\smash{\bar{\iota}\in \Map_{\Env(\cO)}((c_{\iota(s')})_{s'\in S'},(c_s)_{s\in S})}$. We claim that $\smash{\bar{\iota}}$ is cartesian, i.e.\,that for all objects $(d_t)_{t\in T}\in\Env(\cO)$ the following square is cartesian:
\begin{equation}\label{eqn:unital-cartesian}
\begin{tikzcd}[row sep=0.3cm]
\Map_{\Env(\cO)}\big((d_t)_{t\in T},(c_{\iota(s')})_{s'\in S'}\big)\dar\rar{\bar{\iota}\circ(-)}&\Map_{\Env(\cO)}\big((d_t)_{t\in T},(c_{s})_{s\in S}\big)\dar\\
\Map_{\Fin}\big( T, S'\big)\rar{\iota\circ(-)}&\Map_{\Fin}\big( T, S\big).
\end{tikzcd}
\end{equation}
Arguing as in the construction of the lift $\bar{\iota}$ using \eqref{equ:mapping-space-env} and unitality of $\cO$, it follows that the maps on vertical fibres are equivalences, so the square is indeed cartesian and the  ``only if''-direction of the claim follows. For the ``if''-direction, we consider the pullback \eqref{eqn:unital-cartesian} for $T=S'=\varnothing$, $S$ a singleton, the injection $\iota \colon \varnothing \to S$ and a cartesian lift $\bar{\iota}$ with target $c$. Taking vertical fibres over $\ast\simeq \Map_{\Fin}(\varnothing;\varnothing)$ shows that $\Mul_{\cO}(\varnothing;\varnothing)\simeq \Mul_{\cO}(\varnothing;c)$ which implies the claim since $\Mul_{\cO}(\varnothing;\varnothing)\simeq\ast$ holds by the Segal condition in the definition of an operad.
\end{proof}

\begin{cor}\label{cor:unital-nonsurj}For a unital operad $\cO$ and an object $(c_s)_{s\in S}\in\Env(\cO)$, the assignment that sends a subset $S'\subseteq S$ to $(c_s)_{s\in S'}\in \Env(\cO)$ extends to a cubical diagram (here $2^S$ denotes the powerset of $S$) \[(c_s)_{s\in\bullet}\colon (2^S,\subseteq)\lra \Env(\cO)\] which is natural in $(c_s)_{s\in S}$ and $\cO$. Moreover, for all subsets $S'\subseteq S$ the functor
\[
\Env(\cO)_{/(c_s)_{s\in S'}}\lra\Env(\cO)_{/(c_s)_{s\in S}}
\]
induced by postcomposition with the map in the diagram $(c_s)_{s\in\bullet}$ is an equivalence onto the full subcategory spanned by those morphisms whose underlying map of finite sets has image in $S'\subseteq S$.
\end{cor}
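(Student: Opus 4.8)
The plan is to realise the cube $(c_s)_{s\in\bullet}$ as a \emph{cartesian section} of a suitable cartesian fibration, and then to deduce the second statement from the slice-category reformulation of the cartesian-morphism condition applied to the maps in this cube.

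\textbf{Constructing the cube.} By \cref{lem:unital-cartesian-injections}, the functor $\pi_{\cO}\colon\Env(\cO)\to\Fin$ admits cartesian lifts for injections. First I would form the functor $(2^S,\subseteq)\to\Fin$ sending a subset to itself and an inclusion to the corresponding injection of finite sets, and pull $\pi_{\cO}$ back along it to obtain a functor $q\colon\cD\to(2^S,\subseteq)$. Since every morphism of $(2^S,\subseteq)$ lies over an injection in $\Fin$, the functor $q$ is a cartesian fibration, and a morphism of $\cD$ is $q$-cartesian precisely when its image in $\Env(\cO)$ is $\pi_{\cO}$-cartesian. Because $(2^S,\subseteq)$ has the terminal object $S$, evaluation at $S$ is an equivalence from the category of cartesian sections of $q$ to the fibre of $q$ over $S$, which is the fibre of $\pi_{\cO}$ over $S\in\Fin$, i.e.\ the objects of $\Env(\cO)$ lying over $S$ and the morphisms between them lying over $\id_S$. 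Hence each object $(c_s)_{s\in S}\in\Env(\cO)$ over $S$ determines, naturally in it, a cartesian section of $q$; postcomposing that section with the projection $\cD\to\Env(\cO)$ yields the desired functor $(c_s)_{s\in\bullet}\colon(2^S,\subseteq)\to\Env(\cO)$. Its value at $S'\subseteq S$ is the cartesian pullback of $(c_s)_{s\in S}$ along the inclusion $S'\hookrightarrow S$, which by the explicit computation in the proof of \cref{lem:unital-cartesian-injections} is $(c_s)_{s\in S'}$; and every inclusion $S'\subseteq S''$ of the cube is sent to a $\pi_{\cO}$-cartesian morphism. Naturality in $(c_s)_{s\in S}$ is built into this recipe. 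For naturality in $\cO$, I would observe that a morphism of unital operads $\cO\to\cU$ induces a functor $\Env(\cO)\to\Env(\cU)$ over $\Fin$ preserving cartesian lifts of injections: by the same proof, the cartesian lift of $\iota\colon S'\hookrightarrow S$ with target $(c_s)_{s\in S}$ corresponds under \eqref{equ:mapping-space-env} to $\iota$ together with the tuple of identities $(\id_{c_{\iota(s')}})_{s'\in S'}$, and identities are preserved by any map of operads. Thus we obtain a map of cartesian fibrations over $(2^S,\subseteq)$ carrying cartesian sections to cartesian sections, compatibly with evaluation at $S$.

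\textbf{The slice equivalence.} For the second claim, fix $S'\subseteq S$ and let $\iota_{S',S}\colon(c_s)_{s\in S'}\to(c_s)_{s\in S}$ denote the corresponding edge of the cube, which by the above is $\pi_{\cO}$-cartesian over the inclusion $S'\hookrightarrow S$. By the standard characterisation of cartesian morphisms via slice categories, postcomposition with $\iota_{S',S}$ fits into an equivalence
\[
\Env(\cO)_{/(c_s)_{s\in S'}}\ \xrightarrow{\ \sim\ }\ \Env(\cO)_{/(c_s)_{s\in S}}\times_{\Fin_{/S}}\Fin_{/S'}
\]
whose composite with the projection to $\Env(\cO)_{/(c_s)_{s\in S}}$ is precisely the postcomposition functor. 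Now $S'\hookrightarrow S$ is a monomorphism in $\Fin$, so $\Fin_{/S'}\to\Fin_{/S}$ is fully faithful onto the full subcategory of those maps of finite sets whose image lies in $S'$. Base change along $\Env(\cO)_{/(c_s)_{s\in S}}\to\Fin_{/S}$ preserves full faithfulness, so the projection $\Env(\cO)_{/(c_s)_{s\in S}}\times_{\Fin_{/S}}\Fin_{/S'}\to\Env(\cO)_{/(c_s)_{s\in S}}$ is fully faithful onto the full subcategory spanned by the morphisms into $(c_s)_{s\in S}$ whose underlying map of finite sets has image in $S'$. Combining the two displayed facts identifies the postcomposition functor with an equivalence onto this full subcategory, as claimed.

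\textbf{Main obstacle.} The genuine input is the reduction to cartesian fibrations provided by \cref{lem:unital-cartesian-injections}; once it is available, both assertions are formal. The points needing a little care are purely organisational: checking that the pulled-back functor $q$ is a cartesian fibration and that morphisms of monoidal envelopes preserve the relevant cartesian lifts (both handled above via the explicit identity-multi-operation description of the lift), and invoking the correct slice-category reformulation of the cartesian condition. I do not expect any real difficulty here.
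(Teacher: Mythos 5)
Your proof is correct and follows essentially the same route as the paper: both construct the cube by coherently choosing the cartesian lifts supplied by \cref{lem:unital-cartesian-injections} (the paper via a cartesian lift of $\inc\to\const_{S}$ in the functor category $\Fun((2^S,\subseteq),\Env(\cO)^{\inj})$, you via cartesian sections of the pulled-back fibration over $(2^S,\subseteq)$ using its terminal object --- two formulations of the same manoeuvre). The second claim is likewise deduced, exactly as in the paper, from the cartesian property of the edge together with the observation that $\Fin_{/S'}\to\Fin_{/S}$ is an equivalence onto the full subcategory of maps with image in $S'$.
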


\begin{proof}By the uniqueness of cartesian lifts, sending an inclusion $S'\subseteq S''$ to the cartesian lift $(c_{s'})_{s\in S'}\ra (c_{s''})_{s\in S''}$ constructed in the proof of \cref{lem:unital-cartesian-injections} defines a cubical diagram as claimed. In more detail, by \cref{lem:unital-cartesian-injections}, the pullback $\pi_\cO\colon \Env(\cO)^{\inj}\ra \Fin^{\inj}$ of $\pi_\cO$ along the inclusion of the wide subcategory of $\Fin$ consisting of injections, is a cartesian fibration, so $(\pi_\cO)_*\colon \Fun((2^S,\subseteq),\Env(\cO)^{\inj})\ra \Fun((2^S,\subseteq),\Fin^\inj)$ is a cartesian fibration as well \cite[3.1.2.1]{LurieHTT}. We may thus choose a cartesian lift of the natural transformation $\inc\ra \const_{S}$ in $\Fun((2^S,\subseteq),\Fin^\inj)$ with target $\const_{(c_s)_{s\in S}}\in \Fun((2^S,\subseteq),\Env(\cO)^\inj)$. The source of this lift is the desired cubical diagram. Moreover, from the fact that the lift $(c_{s'})_{s\in S'}\ra (c_{s''})_{s\in S''}$ of $S'\subseteq S''$ is cartesian, the second claim in the statement follows by observing that the functor $\Fin_{/S'} \to \Fin_{/S}$ induced by the inclusion $S'\subseteq S$ is an equivalence onto the full subcategory of those morphisms with image in $S'\subseteq S$. 
\end{proof}

We call the maps in the cube $(c_s)_{s\in\bullet}$ of \cref{cor:unital-nonsurj} \emph{inclusions} and write $(c_s)_{s\in S'}\subseteq (c_{s})_{s\in S''}$.

\subsubsection{Truncation of unital operads} \label{sec:truncation-operads} In the ordinary $1$-categorical setting, there is a notion of a \emph{$k$-truncated operad} for $1\le k\le \infty$ given by from modifying the definition of an operad to only include multi-operations with at most $k$ inputs. For $k=\infty$ this recovers (non-truncated) operads and for $k=1$ it recovers categories if one restricts to unital operads. Note that by discarding some of the multi-operations, any $k$-truncated coloured classical operad in this sense induces a $j$-truncated one for any $1\leq j\leq k \leq \infty$. As shall we explain in \cref{sec:truncation}, this can be also implemented $\infty$-categorically: restricting to unital operads, we construct a converging tower of categories
\begin{equation}\label{eqn:truncation-tower}
	\Opd^{\le \bullet,\un}=\Big(\Opd^\un=\Opd^{\le \infty,\un} \ra \cdots \ra  \Opd^{\le 2,\un}\ra \Opd^{\le 1,\un}\simeq\Cat\Big)\in\Tow(\Cat)
\end{equation}
which factors the functor $(-)^\col\colon \Opd^{\un}\ra \Cat$, and we show that all functors in this tower (called \emph{truncation functors}) are localisations, i.e.\,admit fully faithful right adjoints. Writing $\tau_k\colon \Opd^\un\ra \Opd^{\le k,\un}$ for the $k$th truncation functor, $\tau_{k*}$ for its right adjoint, and $\cO_{\le k}\coloneqq \tau_{k*}\tau_k(\cO)\in \Opd^{\un}$, the counits of the adjunctions in \eqref{eqn:truncation-tower} yield a converging tower of unital operads \begin{equation}\label{equ:truncation-counit-tower}
\cO_{\le \bullet}=\Big(\cO\simeq\cO_{\le \infty}\ra \cdots\ra \cO_{\le 2}\ra\cO_{\le 1}\Big)\in\Tow(\Opd^{\un})
,
\end{equation}
for any unital operad $\cO\in\Opd^{\un}$. By restricting the tower \eqref{eqn:truncation-tower} to the cores of the full subcategories on operads equivalent to a fixed operad $\cO\in\Opd^{\un} $ and its truncations we also obtain a tower \begin{equation}\label{equ:truncation-automorphism-tower}
\BAut(\cO_{\le \bullet})=\Big(\BAut(\cO)\simeq\BAut(\cO_{\le \infty})\ra  \cdots\ra \BAut(\cO_{\le 2})\ra\BAut(\cO_{\le 1})\Big)\in\Tow(\cS).
\end{equation}
In the mentioned appendix we will also show that the counit $\cO\ra  \cO_{\le k}$ induces an equivalence on categories of colours and that it is on spaces of multi-operations given by
\begin{equation}\label{equ:counit-multioperations}
	\smash{\Mul_{\cO}((c_s)_{s \in S};c) \lra \lim_{S' \subseteq S,\,\lvert S'\rvert\leq k} \Mul_{\cO}((c_s)_{s \in S'};c)};
\end{equation}
here the limit is induced by the cubical diagram $(c_s)_{s\in \bullet}$ from \cref{cor:unital-nonsurj}. We say that a unital operad $\cO\in\Opd^{\un}$  is \emph{$k$-truncated} if the counit $\cO\ra  \cO_{\le k}$ is an equivalence, which is by \cref{lem:operad-equivalence-criterion} equivalent to the maps \eqref{equ:counit-multioperations} on multi-operations being equivalences. 

\medskip

\noindent From the formula \eqref{equ:counit-multioperations}, one can also deduce the above claimed convergence of \eqref{equ:truncation-counit-tower}:

\begin{lem}\label{lem:counits-converge}The tower \eqref{equ:truncation-counit-tower} converges.
\end{lem}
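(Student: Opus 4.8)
The plan is to show that the cone \eqref{equ:truncation-counit-tower} is a limit cone, i.e.\ that the canonical map $\cO\simeq\cO_{\le\infty}\to\lim_{k}\cO_{\le k}$ is an equivalence of operads. By \cref{lem:operad-equivalence-criterion} it suffices to check that this map induces an equivalence on categories of colours and on all spaces of multi-operations, so the argument reduces to understanding how these two invariants interact with the limit over the tower.

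First I would note that, since $\Opd^{\un}\subset\Opd$ preserves limits and $\Opd\to\Cat_{/\Fin_*}$ preserves and detects them by \cref{lem:operad-limits}, the limit $\lim_k\cO_{\le k}$ is computed in $\Cat_{/\Fin_*}$; and because the indexing category $(\bfN_{>0})^{\op}$ is connected (indeed cofiltered), this limit is in turn computed in $\Cat$. As $\cO^{\col}$ is the fibre of $\cO^{\otimes}\to\Fin_*$ over $\langle1\rangle$ and the spaces $\Mul_{\cO}((c_s)_{s\in S};c)$ are unions of connected components of mapping spaces in $\cO^{\otimes}$ (see \eqref{equ:maps-total-space-operads}), both invariants are extracted from the underlying category by limit constructions and hence commute with the cofiltered limit $\lim_k\cO^{\otimes}_{\le k}$. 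Thus $(\lim_k\cO_{\le k})^{\col}\simeq\lim_k(\cO_{\le k})^{\col}$ and $\Mul_{\lim_k\cO_{\le k}}((c_s)_{s\in S};c)\simeq\lim_k\Mul_{\cO_{\le k}}((c_s)_{s\in S};c)$ for every finite set $S$ and colours $(c_s)_{s\in S}$, $c$.

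It then remains to analyse these two towers of spaces. On colours, each counit $\cO\to\cO_{\le k}$ is an equivalence on categories of colours, compatibly in $k$, so the tower $k\mapsto(\cO_{\le k})^{\col}$ is equivalent to the constant tower on $\cO^{\col}$ and its limit is $\cO^{\col}$, with the map from $\cO$ the evident equivalence. On multi-operations, the formula \eqref{equ:counit-multioperations} identifies $\Mul_{\cO_{\le k}}((c_s)_{s\in S};c)$ with $\lim_{S'\subseteq S,\,\lvert S'\rvert\le k}\Mul_{\cO}((c_s)_{s\in S'};c)$; once $k\ge\lvert S\rvert$ the indexing poset $\{S'\subseteq S:\lvert S'\rvert\le k\}$ has maximal element $S$, so this limit is $\Mul_{\cO}((c_s)_{s\in S};c)$ and the counit is an equivalence in these degrees. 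Since $\cO\to\cO_{\le k-1}$ factors through $\cO\to\cO_{\le k}$, the two-out-of-three property then shows that the structure map $\cO_{\le k}\to\cO_{\le k-1}$ is an equivalence on $\Mul((c_s)_{s\in S};c)$ for all $k>\lvert S\rvert$, i.e.\ the tower $k\mapsto\Mul_{\cO_{\le k}}((c_s)_{s\in S};c)$ is eventually constant (here $\lvert S\rvert<\infty$ is used), so its limit is $\Mul_{\cO}((c_s)_{s\in S};c)$ and the canonical map from $\cO$ is again an equivalence. By \cref{lem:operad-equivalence-criterion} this proves $\cO\to\lim_k\cO_{\le k}$ is an equivalence, which is precisely the assertion that \eqref{equ:truncation-counit-tower} converges.

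The only points requiring any care are the bookkeeping in the second paragraph --- that passing to colours and to multi-operation spaces commutes with this cofiltered limit of operads --- and the elementary fact that a tower of spaces whose structure maps are eventually equivalences has limit its eventual value; the latter follows from a standard cofinality argument, the inclusion $(\{k\ge N\},\le)^{\op}\hookrightarrow(\bfN_{>0})^{\op}$ being coinitial. Neither is a genuine obstacle, and the whole proof is formal once \eqref{equ:counit-multioperations} is in hand.
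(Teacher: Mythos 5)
Your argument is correct and follows essentially the same route as the paper: reduce to $\Cat_{/\Fin_*}$ via the limit-detecting forgetful functors, then apply \cref{lem:operad-equivalence-criterion} using that the tower is constant on colours and, by \eqref{equ:counit-multioperations}, eventually constant on each space of multi-operations. Your version merely spells out the commutation of colours and multi-operations with the limit in more detail than the paper does.
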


\begin{proof}Since the forgetful functors $\smash{\Opd^{\un}\ra \Opd\ra\Cat_{/\Fin_\ast}}$ detect limits (the first by the discussion in \cref{sec:unital} and the second by \cref{lem:operad-limits}), it suffices to show that $\cO_{\le \bullet}$ converges in $\Cat_{/\Fin_\ast}$. As $\Cat_{/\Fin_\ast}$ admits all limits and taking mapping spaces preserves limits, this follows from \cref{lem:operad-equivalence-criterion} together with the fact that \eqref{equ:truncation-counit-tower} is constant on categories of colours and is eventually constant on spaces of multi-operations between any fixed collection of colours as a result of \eqref{equ:counit-multioperations}.
\end{proof}

\begin{rem}\label{rem:restricted-env-depends-on-truncation}The formula \eqref{equ:counit-multioperations} combined with the description of $\Env(\cO)$ in \cref{sec:env} also shows that the functor $\Env(\cO)\ra \Env(\cO_{\le k})$ is an equivalence when restricting source and target to the subcategories of objects that map to sets of cardinality $\le k$ under the functor to $\Fin$. 
\end{rem}

\subsubsection{Cocartesian operads and operadic right-fibrations}\label{sec:cocart-oprightfib}
The functor $(-)^\col\colon \Opd^\un\ra \Cat$ that sends a unital operad to its category of colours admits a fully faithful right adjoint $(-)^\sqcup\colon \Cat\ra  \Opd^\un$ which sends a category $\cC$ to its \emph{cocartesian operad} $\cC^\sqcup$ which satisfies (see \cite[2.4.3]{LurieHA}) \begin{equation}\label{equ:operations-cocartesian-operad}
(\cC^{\sqcup})^\col\simeq \cC\quad\text{and}\quad 
	\Mul_{\cC^\sqcup}((c_i)_{i\in S};c)\simeq \sqcap_{i\in S}\Map_{\cC}(c_i;c).
\end{equation} 
The operadic composition in $\cC^\sqcup$ is induced by composition in $\cC$. By uniqueness of adjoints, $(-)^\sqcup$ agrees with the right adjoint to the truncation functor $\Opd^\un\ra \Opd^{\le1,\un}\simeq\Cat$ from \cref{sec:truncation-operads}. 

\medskip

\noindent An operad $\cO$ is called \emph{cocartesian} if it is equivalent to $\cC^{\sqcup}$ for some category $\cC$. This is equivalent to $\cO$ being unital and $1$-truncated in the sense of \cref{sec:truncation-operads}. It is also equivalent to the counit $\cO\ra (\cO^{\col})^\sqcup\eqcolon \cO^{\col,\sqcup}$ being an equivalence, which in turn is equivalent to the induced functor on symmetric monoidal envelopes $\Env(\cO)\ra \Env(\cO^{\col,\sqcup})$ being an equivalence. Note that the latter is always an equivalence if one restricts source and target to the preimages of the wide subcategory $\Fin_{\inj}\subset \Fin$ spanned by injective maps (this follows directly from \eqref{equ:mapping-space-env} and \eqref{equ:operations-cocartesian-operad}).

\medskip

\noindent The cocartesian operad $\cC^{\sqcup}$ is a symmetric monoidal category if and only if $\cC$ admits finite coproducts (see 2.4.3.12 loc.cit.). In this case the induced symmetric monoidal structure on $\cC$ is the unique symmetric monoidal structure on $\cC$ that is given by taking coproducts on the level of homotopy categories (see 2.4.3.19 loc.cit.); it is called the \emph{cocartesian symmetric monoidal structure}. A class of examples of cocartesian symmetric monoidal categories are monoidal envelopes $\Env(\cC^\sqcup)$ of any cocartesian operad; this can be seen from the description of the homotopy category of the monoidal envelope from \cref{sec:env}.

\medskip

\noindent The notion of a cocartesian operad allows us to characterise a class of maps of operads that will play an important role in the body of this work, \emph{operadic right-fibrations}:
\begin{lemdfn}\label{lem:operadic-right-fib}\label{dfn:oprightfib}Given a map of unital operads $\varphi\colon \cO\ra \cP$, the following are equivalent:
\begin{enumerate}
\item\label{enum:orf-i} The functor $\varphi^\col\colon \cO^\col\ra \cP^\col$ is a right-fibration and the square
\[\begin{tikzcd}
\cO\rar{\varphi}\dar&\cP\dar\\
\cO^{\col,\sqcup}\rar{\varphi^{\col,\sqcup}}&\cP^{\col,\sqcup}
\end{tikzcd}
\]
induced by the unit of $(-)^\col\dashv(-)^\sqcup$ is a pullback of operads.
\item\label{enum:orf-ii} The underlying functor of categories of $\Env(\varphi)\colon \Env(\cO)\ra\Env(\cP)$ is a right-fibration.
\end{enumerate}
The map $\varphi\colon \cO\ra\cP$ is called operadic right-fibration if it satisfies one (and hence both) of these properties. The wide subcategory on these maps is denoted by $\smash{\Opd^{\rf}\subset \Opd^\un}$.
\end{lemdfn}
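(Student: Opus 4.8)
The plan is to derive both implications from a comparison of the monoidal envelopes with the ``family categories'' $\Fam(\cC)\coloneq\Env(\cC^\sqcup)$, where throughout I write $\cC\coloneq\cO^\col$ and $\cD\coloneq\cP^\col$. By \eqref{equ:mapping-space-env} and \eqref{equ:operations-cocartesian-operad}, $\Fam(\cC)$ has finite collections $(c_s)_{s\in S}$ of objects of $\cC$ as objects, mapping spaces $\bigsqcup_{f\colon S\to T}\bigsqcap_{s\in S}\Map_\cC(c_s,d_{f(s)})$, and a short direct check shows $\pi_{\cC^\sqcup}\colon\Fam(\cC)\to\Fin$ is a cartesian fibration classified by $T\mapsto\cC^{\times T}$. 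The units of $(-)^\col\dashv(-)^\sqcup$ induce functors $u_\cO\colon\Env(\cO)\to\Fam(\cC)$ and $u_\cP\colon\Env(\cP)\to\Fam(\cD)$ over $\Fin$ that are the identity on objects and send the $f$-component of a mapping space to the tuple of its restrictions to the individual inputs in the sense of \cref{cor:unital-nonsurj}. I will use two observations. First, $\cO^\col\subset\Env(\cO)$ is the full subcategory that is the preimage of $\cP^\col\subset\Env(\cP)$ under $\Env(\varphi)$, with restriction $\varphi^\col$; so $\varphi^\col$ is always a base change of $\Env(\varphi)$. Second, since the forgetful functor $\Opd^\un\to\Opd\to\Cat_{/\Fin_*}$ detects limits (\cref{lem:operad-limits} and \cref{sec:unital}) and mapping spaces in a limit of categories are computed as the corresponding limits, the multi-operation spaces of the pullback $\cP\times_{\cP^{\col,\sqcup}}\cO^{\col,\sqcup}$ are $\Mul_{\cP}((\varphi c_s)_{s\in S};\varphi c)\times_{\bigsqcap_s\Map_\cD(\varphi c_s,\varphi c)}\bigsqcap_s\Map_\cC(c_s,c)$; so by \cref{lem:operad-equivalence-criterion} (the relevant map being automatically an equivalence on colours) the square in \ref{enum:orf-i} is a pullback of operads if and only if, for every collection $(c_s)_{s\in S}$ and colour $c$ of $\cO$, the natural map
\begin{equation*}
	\Map_{\Env(\cO)}\big((c_s)_{s\in S},(c)\big)\lra\Map_{\Env(\cP)}\big((\varphi c_s)_{s\in S},(\varphi c)\big)\times_{\bigsqcap_s\Map_\cD(\varphi c_s,\varphi c)}\bigsqcap_s\Map_\cC(c_s,c)\tag{$\star$}
\end{equation*}
induced by $\Env(\varphi)$ and $u_\cO$ is an equivalence; here I use $\Map_{\Env(\cO)}((c_s)_{s\in S},(c))\simeq\Mul_\cO((c_s)_{s\in S};c)$ and $\Map_{\Fam(\cC)}((c_s)_{s\in S},(c))\simeq\bigsqcap_s\Map_\cC(c_s,c)$.

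For \ref{enum:orf-ii}$\Rightarrow$\ref{enum:orf-i}: if $\Env(\varphi)$ is a right fibration then so is $\varphi^\col$ by the first observation, and it remains to show that $(\star)$ is an equivalence. I would compute the fibre of the source of $(\star)$ over a point $g\in\Map_{\Env(\cP)}((\varphi c_s)_{s\in S},(\varphi c))$: since $\Env(\varphi)$ has cartesian lifts, this fibre is $\Map_{\Env(\cO)_{(\varphi c_s)_{s\in S}}}((c_s)_{s\in S},g^*(c))$, where $g^*(c)\to(c)$ is the cartesian lift of $g$ and $\Env(\cO)_{(\varphi c_s)_{s\in S}}\simeq\bigsqcap_s(\varphi^\col)^{-1}(\varphi c_s)$ is the (groupoid) fibre of $\Env(\varphi)$. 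The key step is to identify the $s$-th entry of $g^*(c)$: precomposing the cartesian morphism $g^*(c)\to(c)$ with the cartesian inclusion $(c_s)\subseteq(c_s)_{s\in S}$ of \cref{cor:unital-nonsurj}, and using uniqueness of cartesian lifts, shows that this entry is the cartesian lift of $g|_s\in\Map_\cD(\varphi c_s,\varphi c)$ along $\varphi^\col$ with target $c$ --- this uses crucially that $g|_s$ lies in $\cP^\col$ and that $\cO^\col\subset\Env(\cO)$ is the \emph{full} preimage of $\cP^\col$. Substituting, the fibre of the source of $(\star)$ over $g$ becomes $\bigsqcap_s\Map_{(\varphi^\col)^{-1}(\varphi c_s)}(c_s,(g|_s)^*c)$, and the same computation applied to $\varphi^\col$ (now known to be a right fibration) identifies this, compatibly over $g$, with the fibre over $g$ of the target of $(\star)$; hence $(\star)$ is an equivalence.

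For \ref{enum:orf-i}$\Rightarrow$\ref{enum:orf-ii}: assume $\varphi^\col$ is a right fibration and the square in \ref{enum:orf-i} is a pullback. The functor $\Env(-)$ sends pullbacks of operads to pullbacks of categories: $\Env(\cX)$ is the wide subcategory of $\cX^\otimes$ on the active morphisms, limits of operads are computed on total categories over $\Fin_*$ (\cref{lem:operad-limits}), and activeness is detected componentwise in a fibre product over $\Fin_*$. Applying this to the square in \ref{enum:orf-i} and using $\Env(\cC^\sqcup)=\Fam(\cC)$ gives $\Env(\cO)\simeq\Env(\cP)\times_{\Fam(\cD)}\Fam(\cC)$ with projections $\Env(\varphi)$ and $u_\cO$, which exhibits $\Env(\varphi)$ as the base change of $\Fam(\varphi^\col)\colon\Fam(\cC)\to\Fam(\cD)$ along $u_\cP$. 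Finally, $\Fam(\varphi^\col)$ is a morphism of cartesian fibrations over $\Fin$ that preserves cartesian edges (it applies $\varphi^\col$ entrywise) and is fibrewise $(\varphi^\col)^{\times T}\colon\cC^{\times T}\to\cD^{\times T}$, a right fibration since $\varphi^\col$ is; as any such morphism is a right fibration (see \cref{sec:rfib}), both $\Fam(\varphi^\col)$ and its base change $\Env(\varphi)$ are right fibrations.

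I expect the main obstacle to be the entrywise identification of the cartesian lift $g^*(c)$ in the proof of \ref{enum:orf-ii}$\Rightarrow$\ref{enum:orf-i}, since this is the step where unitality of $\cO$ genuinely enters --- via the existence, naturality, and cartesianness of the inclusions $(c_s)_{s\in S'}\subseteq(c_s)_{s\in S}$ from \cref{cor:unital-nonsurj} and \cref{lem:unital-cartesian-injections}. The two supporting facts --- that $\Env$ carries pullbacks of operads to pullbacks of categories, and that a cartesian-edge-preserving morphism of cartesian fibrations over a common base which is fibrewise a right fibration is itself a right fibration --- are routine and should be isolated as separate lemmas (the latter presumably already available in \cref{sec:rfib}).
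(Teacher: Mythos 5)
Your proposal is correct and its overall architecture matches the paper's: both directions rest on (a) the equivalence between $\varphi^\col$ being a right fibration and $\Env((\varphi^\col)^\sqcup)$ being one, (b) the fact that $\Env(-)$ preserves and detects pullbacks of operads, and (c) a fibrewise comparison whose input is unitality via \cref{cor:unital-nonsurj}. Your \ref{enum:orf-i}$\Rightarrow$\ref{enum:orf-ii} is essentially the paper's argument verbatim. For \ref{enum:orf-ii}$\Rightarrow$\ref{enum:orf-i} the paper takes a shorter route: it applies $\Env(-)$ to the square, notes that both horizontal functors are then right fibrations (the top by hypothesis, the bottom by the auxiliary lemma), checks that the induced map on fibres \emph{over objects} of $\Env(\cP)$ is an equivalence --- both fibres being $\bigsqcap_{s}\fib_{\varphi(c_s)}(\varphi^\col)$ by \eqref{equ:mapping-space-env} and \eqref{equ:operations-cocartesian-operad} --- and concludes by the criterion that a map of right fibrations which is an equivalence on fibres is a pullback (\cref{lem:test-fib-pullback}); detection of pullbacks by $\Env(-)$ then finishes. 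You instead verify the operadic pullback condition directly on multi-operation spaces via $(\star)$ and compute fibres \emph{over morphisms} $g$, which forces you through the entrywise identification of the cartesian lift $g^*(c)$; this works, but your ``compatibly over $g$'' step is precisely the diagram chase that the paper's object-level fibre comparison sidesteps. Two smaller points. First, the general principle you invoke in \ref{enum:orf-i}$\Rightarrow$\ref{enum:orf-ii} --- that a cartesian-edge-preserving map of cartesian fibrations over $\Fin$ which is fibrewise a right fibration is itself a right fibration --- is true but is \emph{not} in \cref{sec:rfib}; the paper instead proves exactly the instance it needs as a standalone lemma (\cref{lem:rfib-cocart}), constructing the $\Env(\psi^\sqcup)$-cartesian lifts explicitly from $\psi$-cartesian lifts and computing the fibres as $\bigsqcap_s\fib_{d_s}(\psi)$; you would need to supply one of these two arguments. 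Second, the inclusion you precompose with should be that of the $s$-th entry of $g^*(c)$ into $g^*(c)$ itself, not of $(c_s)$ into $(c_s)_{s\in S}$ --- a harmless notational slip, since the entries of $g^*(c)$ are exactly what is being identified.
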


In order to prove Lemma \ref{lem:operadic-right-fib}, we first establish a characterisation of right-fibrations. In the proof of it, and in the subsequent proof of Lemma \ref{lem:operadic-right-fib}, we will repeatedly use that right-fibrations and cartesian fibrations are both stable under pullback, and that a functor is a right-fibration if and only if it is a cartesian fibration and all fibres are groupoids (see 2.4.2.4 loc.cit.).
 
\begin{lem}\label{lem:rfib-cocart}A functor $\psi\colon \cC\ra\cD$ is a right-fibration if and only if the underlying functor of $\Env(\psi^\sqcup)\colon \Env(\cC^\sqcup)\ra\Env(\cD^\sqcup)$ is a right-fibration.
\end{lem}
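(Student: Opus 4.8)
The plan is to reduce the statement to a cardinality-graded analysis of the source and target categories. By \cref{sec:cocart-oprightfib}, the monoidal envelope $\Env(\cC^\sqcup)$ can be described explicitly: its objects are finite collections $(c_s)_{s\in S}$ of objects of $\cC$, and by combining \eqref{equ:mapping-space-env} with \eqref{equ:operations-cocartesian-operad}, the mapping spaces are
\[
\textstyle{\Map_{\Env(\cC^\sqcup)}\big((c_s)_{s\in S},(d_t)_{t\in T}\big)\simeq \bigsqcup_{\varphi\in\Map_{\Fin}(S,T)}\bigsqcap_{t\in T}\bigsqcap_{s\in \varphi^{-1}(t)}\Map_{\cC}(c_s,d_t)},
\]
and likewise for $\cD$; the functor $\Env(\psi^\sqcup)$ is the evident one, acting as $\psi$ on each entry and as the identity on the indexing maps $\varphi$. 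The key structural observation is that the functor $\pi\colon \Env(\cC^\sqcup)\ra \Fin$ from \cref{sec:env} is compatible with $\Env(\psi^\sqcup)$: there is a commuting triangle over $\Fin$, so it suffices to analyse $\Env(\psi^\sqcup)$ fibrewise-and-relatively over $\Fin$.

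First I would prove the ``only if'' direction. Assume $\psi$ is a right-fibration. I claim $\Env(\psi^\sqcup)$ is a cartesian fibration with groupoid fibres. For the fibres: the fibre of $\Env(\psi^\sqcup)$ over an object $(d_t)_{t\in T}$ is, by the mapping space formula above restricted to identity indexing maps, equivalent to $\bigsqcap_{t\in T}(\text{fibre of }\psi\text{ over }d_t)$, which is a groupoid since $\psi$ has groupoid fibres. For the cartesian lifts: given $(d_t)_{t\in T}$ in the target and a morphism $(\varphi, (g_t)_{t})\colon (e_u)_{u\in U}\ra \Env(\psi^\sqcup)((d_t)_{t\in T})=(\psi d_t)_{t\in T}$ — i.e.\ $\varphi\colon U\ra T$ in $\Fin$ and $g_u\colon e_u\ra \psi d_{\varphi(u)}$ — I would build a cartesian lift by taking, for each $u\in U$, a $\psi$-cartesian lift $\tilde{g}_u\colon \tilde{e}_u\ra d_{\varphi(u)}$ of $g_u$ (which exists and is unique since $\psi$ is a right-fibration), and assembling these into a morphism $(\varphi,(\tilde g_u)_u)\colon (\tilde e_u)_{u\in U}\ra (d_t)_{t\in T}$ in $\Env(\cC^\sqcup)$. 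That this assembled morphism is $\Env(\psi^\sqcup)$-cartesian is a direct unwinding: a lifting problem against it decomposes, via the product-over-$T$-of-product-over-fibres form of the mapping spaces, into the separate $\psi$-cartesian lifting problems for the $\tilde g_u$, each of which has a contractible space of solutions. Hence $\Env(\psi^\sqcup)$ is a cartesian fibration with groupoid fibres, so a right-fibration by 2.4.2.4 loc.cit.

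For the ``if'' direction, I would argue that $\psi$ is recovered from $\Env(\psi^\sqcup)$ by restricting to the full subcategories of objects indexed by singletons — concretely, $\cC\simeq \pi^{-1}(\{*\})\subset \Env(\cC^\sqcup)$ as the fibre over a one-element set, compatibly for $\cC$ and $\cD$, and $\psi$ is the induced map on these fibres. Since right-fibrations are stable under pullback (recalled just before the lemma) and $\pi$ is itself a functor to $\Fin$, pulling $\Env(\psi^\sqcup)$ back along the inclusion $\{*\}\hookrightarrow \Fin$ (or rather along $[0]\xra{\langle 1\rangle}\Fin$) exhibits $\psi$ as a base change of a right-fibration, hence a right-fibration. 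Alternatively, and perhaps cleaner, one can cite \cref{lem:unital-cartesian-injections}/\cref{cor:unital-nonsurj} to identify $\cC$ with the subcategory of $\Env(\cC^\sqcup)$ on singleton-indexed objects and morphisms lying over bijections, which is manifestly preserved by $\Env(\psi^\sqcup)$.

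The main obstacle I anticipate is purely bookkeeping: verifying carefully that the assembled morphism $(\varphi,(\tilde g_u)_u)$ is genuinely cartesian for $\Env(\psi^\sqcup)$, since the mapping spaces in $\Env(\cC^\sqcup)$ are indexed sums over $\Fin$ of products, and one must check that the cartesian lifting condition interacts correctly with the $\Fin$-indexing — i.e.\ that varying the indexing map $\varphi$ does not obstruct the decomposition of lifting problems. Once one observes that $\Env(\psi^\sqcup)$ lies over $\id_{\Fin}$ and that over a fixed indexing datum everything splits as a product over the fibres of $\psi$, this reduces to the already-known fact that products and the fibrewise structure of right-fibrations behave well, so I expect no genuine difficulty beyond careful indexing.
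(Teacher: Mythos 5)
Your proposal is correct and follows essentially the same route as the paper's proof: identify the fibres of $\Env(\psi^\sqcup)$ as products of fibres of $\psi$ (reducing the groupoid condition), construct cartesian lifts componentwise using the explicit sum-of-products description of mapping spaces in $\Env(\cC^\sqcup)$, and recover $\psi$ in the converse direction as a pullback of $\Env(\psi^\sqcup)$ along the inclusion of singleton-indexed objects $\cD\subset\Env(\cD^\sqcup)$. The only cosmetic difference is that you phrase the converse as a base change over $\Fin$ rather than directly along $\cD\subset\Env(\cD^\sqcup)$, which amounts to the same thing.
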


\begin{proof}
Using \eqref{equ:mapping-space-env} and \eqref{equ:operations-cocartesian-operad} one checks that $\fib_{(d_s)_{s\in S}}(\Env(\psi^\sqcup))\simeq \sqcap_{s\in S}\fib_{d_s}(\psi)$, so since a product of categories is a groupoid if and only if its factors are, it suffices to show that $\psi$ is a cartesian fibration if and only if $\Env(\psi^\sqcup)$ is. The backward direction follows by noting that $\psi$ is pulled back from $\Env(\psi^\sqcup)$ along the inclusion $\cD \subset \Env(\cD^\sqcup)$. To prove the forward direction, we need to find an $\Env(\psi^\sqcup)$-cartesian lift of any morphism $(c_s)_{s\in S}\ra (\psi(d_t))_{t\in T}$ in $\Env(\cD^\sqcup)$. The latter consists of a map $\alpha\colon S\ra T$ of finite sets and morphisms $\alpha_s\colon c_s\ra \psi(d_{\alpha(s)})$ in $\cD$ for $s \in S$. Choosing $\psi$-cartesian lifts $\widetilde{\alpha_s}\colon \widetilde{c_s}\ra d_{\alpha(s)}$ of $\alpha_s$ for $s\in S$ gives a morphism $(\widetilde{c_s})_{s\in S}\ra (d_t)_{t\in T}$ in $\Env(\cC^\sqcup)$ which one checks using \eqref{equ:mapping-space-env} and \eqref{equ:operations-cocartesian-operad} to be $\Env(\psi^\sqcup)$-cartesian.
\end{proof}

\begin{proof}[Proof of Lemma \ref{lem:operadic-right-fib}] Assume $\varphi$ satisfies \ref{enum:orf-i}. Since $\varphi^\col$ is a right-fibration, the same holds for $\Env((\varphi^\col)^\sqcup)$ by \cref{lem:rfib-cocart}. The functor $\Env(-)\colon  \Opd\ra \Cat$ preserves and detects pullbacks (use \cref{lem:operad-equivalence-criterion} and \eqref{equ:mapping-space-env} to see this), so it in particular preserves the pullback in \ref{enum:orf-i}. $\Env(\varphi)$ is thus pulled back from the right-fibration $\Env((\varphi^\col)^\sqcup)$ and is therefore a right-fibration itself. Conversely, if $\Env(\varphi)$ satisfies \ref{enum:orf-ii}, then $\varphi^\col$ is a right-fibration since it is pulled back from $\Env(\varphi)$ along the inclusion $\cP^\col\subset \Env(\cP)$. Since $\Env(-)$ detects pullbacks, it thus remains to show that $\Env(-)$ applied to the square in \ref{enum:orf-i} is a pullback. By assumption and \cref{lem:rfib-cocart}, both rows in the square are right-fibrations. From \eqref{equ:mapping-space-env} and \eqref{equ:operations-cocartesian-operad}, we see that the map $\fib_{(c_s)_{s\in S}}(\Env(\varphi))\ra \fib_{(c_s)_{s\in S}}(\Env(\varphi^{\col,\sqcup}))$ on fibres is an equivalence, so the claim follows from \cref{lem:test-fib-pullback}.
\end{proof}

\subsubsection{Wreath products} \label{sec:wreath-product}Cocartesian operads can be used to define \emph{wreath products} in a very general setting (see \cite[2.4.4]{LurieHA}). We will only need a special case of this: given a category $\cC$ and $k\ge0$, we consider the preimage of the groupoid $\Fin_k^{\simeq}\subset \Fin$ of sets of cardinality $k$ and bijections between them under the functor $\pi_{\cC^\sqcup}\colon \Env(\cC^\sqcup) \ra\Fin$
\[\cC\wr\Sigma_k\coloneqq \Env(\cC^{\sqcup})\times_{\Fin}\Fin_k^{\simeq}.\]
From \eqref{equ:mapping-space-env} and \eqref{equ:operations-cocartesian-operad}, we see that objects in this category are ordered $k$-tuples $(c_i)_{1\le i\le k}$ of objects in $\cC$, and the morphisms $(c_i)_{1\le i\le k}\ra (d_i)_{1\le i\le k}$ are given by a permutation $\sigma\in\Sigma_k$ and a $k$-tuple $(c_i\ra d_{\sigma(i)})_{1\le i\le k}$ of morphisms in $\cC$. Note that as a result of the discussion in \cref{sec:cocart-oprightfib} the counit $\cO\ra \cO^{\col,\sqcup}$ induced for any unital operad $\cO$ an equivalence \begin{equation}\label{equ:operad-wreath-subcat}\cO^\col\wr\Sigma_k\simeq \Env(\cO)\times_{\Fin}\Fin_k^{\simeq}.\end{equation}

\subsection{(Op)lax monoidality}\label{sec:lax} By unstraightening, the category of symmetric monoidal categories $\CMon(\Cat)\subset \Fun(\Fin_\ast,\Cat)$ can be viewed as a full subcategory of the category $\Cocart(\Fin_\ast)$ of cocartesian fibrations over $\Fin_\ast$ and thus as a subcategory of $\smash{{\Cat}_{/\Fin_\ast}}$, or as a full subcategory of the category $\smash{\Cart(\Fin^{\op}_\ast)}$ of cartesian fibrations and thus as a subcategory of $\smash{{\Cat}_{/\Fin^{\op}_\ast}}$. 

\subsubsection{Lax monoidality}
Viewing $\CMon(\Cat)$ as a subcategory of $\smash{{\Cat}_{/\Fin_\ast}}$, the condition on morphisms can be weakened to only having to preserve cocartesian lifts over inert maps (i.e.\,being a morphism of operads) instead of all cocartesian lifts, to arrive at the notion of a \emph{lax symmetric monoidal functor}. We write $\CMon^\lax(\Cat)$ for the category of symmetric monoidal categories with lax symmetric monoidal functors between them. By construction, $\CMon^\lax(\Cat)$ is a full subcategory of $\Opd$ which is itself a (non-full) subcategory of ${\Cat}_{/\Fin_\ast}$. 

\medskip

\noindent When viewing a symmetric monoidal category $\cC$  as a cocartesian fibration of $\Fin_\ast$, the monoidal product of two objects $c,c'\in\cC$ can be recovered as the target of the unique cocartesian lift $(c,c')\ra c\otimes c'$ in the source of the associated cocartesian fibration $\cC^\otimes\ra\Fin_\ast$ that has source $(c,c')$ and whose underlying map $\{1,2,\ast\} \ra \{1,\ast\} $ in $\Fin_\ast$ sends both $1$ and $2$ to $1$. Taking the image of this morphism under a lax monoidal functor $\varphi\colon \cC\ra \cD$ gives a morphism $(\varphi(c),\varphi(c'))\ra \varphi(c\otimes c')$ which need not longer be cocartesian. Taking its cocartesian factorisation gives a morphism $\varphi(c)\otimes\varphi(c')\ra \varphi(c\otimes c')$. These morphisms can be used to characterise those lax symmetric monoidal functors that are (strong) symmetric monoidal (i.e.\,preserve \emph{all} cocartesian morphisms) namely as those $\varphi$ for which the morphisms $\varphi(c)\otimes\varphi(c')\ra \varphi(c\otimes c')$ are equivalences.

\subsubsection{Oplax monoidality}
Similarly, viewing $\CMon(\Cat)$ as a subcategory of $\smash{{\Cat}_{/\Fin^{\op}_\ast}}$, the condition on morphisms can be weakened to only having to preserve cartesian lifts over inert maps; this defines \emph{oplax symmetric monoidal functors}. The resulting category is denoted by $\CMon^\oplax(\Cat)$. Similarly as before, an oplax symmetric monoidal functor $\varphi\colon \cC\ra\cD$ induces morphisms $\varphi(c\otimes c')\ra \varphi(c)\otimes\varphi(c')$ such that $\varphi$ is symmetric monoidal if and only if they are equivalences.

\medskip

\noindent The opposite $\varphi^\op\colon \cC^\op\ra\cD^\op$ of a lax symmetric monoidal functor is oplax monoidal with respect to the opposite symmetric monoidal structures \cite[2.4.2.7]{LurieHA}. This extends to an equivalence
\begin{equation}\label{equ:op-equivalence}
	(-)^{\op}\colon \CMon^\lax(\Cat)\xlra{\simeq }\CMon^\oplax(\Cat).
\end{equation}

\subsubsection{Adjunctions and (op)lax monoidality}
Given a lax symmetric monoidal functor $R\colon \cC\ra \cD$ whose underlying functor in $\Cat$ is the right adjoint to a functor $L\colon \cD\ra\cC$, one obtains for $d,d'\in \cD$ morphisms $L(d\otimes d')\ra L(d)\otimes L(d')$ by using (co)unit of the adjunction and the morphisms $R(c)\otimes R(c')\ra R(c\otimes c')$ from the lax structure of $R$ to form the composition
\begin{equation}\label{equ:oplax-from-lax}
	L(d\otimes d'\big)\ra L( RL(d)\otimes RL(d'))\ra LR(L(d)\otimes L(d'))\ra L(d)\otimes L(d').
\end{equation}
Conversely, starting with an oplax monoidal structure on $L\colon \cD\ra\cC$, one obtain morphisms $R(c)\otimes R(c')\ra R(c\otimes c')$ for $c,c'\in\cC$ as the compositions
\begin{equation}\label{equ:lax-from-oplax}
	R(c)\otimes R(c')\ra RL(R(c)\otimes R(c'))\ra R(LR(c)\otimes LR(c'))\ra R(c\otimes c')
\end{equation}
This suggests that the left adjoint of a lax symmetric monoidal functor can be enhanced to an oplax monoidal functor and vice versa. This is indeed the case: by \cite[Corollary C]{HHLN} or \cite[Theorem 1.1]{Torii}, the construction of \eqref{equ:oplax-from-lax} and \eqref{equ:lax-from-oplax} extends an equivalence of categories
\begin{equation}\label{equ:mates}
	\CMon^{\oplax,L}(\Cat)^\op \simeq \CMon^{\lax,R}(\Cat)
\end{equation}
where $\CMon^{\oplax,L}(\Cat)$ and $\CMon^{\lax,R}(\Cat)$ are the wide subcategories of $\CMon^{\oplax}(\Cat)$ and $\CMon^{\lax}(\Cat)$, respectively, on those (op)lax symmetric monoidal functors whose underlying functors in $\Cat$ are left and right adjoints, respectively. 

\subsection{Categories of bimodules}\label{sec:bimod}Recall (e.g.\,from the summary in \cite[Section 2.8]{KKDisc}) the functors $\Ass(-), \BMod(-)\colon\Mon(\Cat)\ra \Cat$ that send a monoidal category $\cC$ to the categories $\Ass(\cC)$ and $\BMod(\cC)$ of \emph{associative algebras} and \emph{bimodules} in $\cC$. These categories are related by a forgetful functor $\BMod(\cC)\ra \Ass(\cC)\times\cC\times\Ass(\cC)$ whose pullback along the inclusion $\{A\}\times\cC\times \{B\}\subset \Ass(\cC)\times\cC\times\Ass(\cC)$ for associative algebras $A$ and $B$ in $\cC$ is the category $\BMod_{A,B}(\cC)$ of \emph{$(A,B)$-bimodules in $\cC$} which comes with a forgetful functor $U_{A,B}\colon \BMod_{A,B}(\cC)\ra \cC$. As mentioned in  \cite[Remark 2.18]{KKDisc} there are at least two models for these categories and the functors between them that will be relevant for us: one by Haugseng via \emph{nonsymmetric generalised operads} \cite[Section 4]{HaugsengMorita} and one by Lurie via \emph{symmetric operads} \cite[4.1, 4.3]{LurieHA}. As explained in the cited remark, they are known to be equivalent.

\medskip

\noindent There is a canonical associative algebra $1\in\Ass(\cC)$ whose underlying object in $\cC$ is the monoidal unit. It has the property that $U_{11} \colon \BMod_{1,1}(\cC) \to \cC$ is an equivalence (see \cite[Corollary 4.50]{HaugsengMorita}). 

\subsubsection{Left- and right-modules}The algebra $1\in\Ass(\cC)$ allows one to define the category $\LLMod(\cC)$ and $\RRMod(\cC)$ of \emph{left-} or \emph{right-modules} in $\cC$ as the fibre at $1$ of the composition $\BMod(\cC)\ra \Ass(\cC)\times\Ass(\cC)\ra\Ass(\cC)$ whose final functor is the projection onto the right or left factor. The fibres at $A\in\Ass(\cC)$ of the maps $\LLMod(\cC)\ra \Ass(\cC)$ and $\RRMod(\cC)\ra \Ass(\cC)$ to the other factors are the categories $\LLMod_A(\cC)$ and $\RRMod_A(\cC)$ of \emph{left-} or \emph{right-modules}s over $A$. By definition we have $\LLMod_A(\cC)= \BMod_{A,1}(\cC)$ and $\RRMod_A(\cC)= \BMod_{1,A}(\cC)$. There is also a direct construction of these categories not in terms of bimodules (see \cite[4.2.1.13, 4.2.1.36, 4.3.2.8]{LurieHA}). 

\subsubsection{Free bimodules}\label{sec:free-bimodules}
The forgetful functor $U_{AB}\colon \BMod_{A,B}(\cC)\ra \cC$ is conservative and admits a left adjoint $F_{AB}\colon \cC\ra \BMod_{A,B}(\cC)$ whose essential image is the subcategory of \emph{free $(A,B)$-bimodules} \cite[4.3.3.14]{LurieHA}. The component of the unit $\id\ra U_{AB}F_{AB}$ at an object $M\in\cC$ is given by the map $M\ra A\otimes M\otimes B$ induced by tensoring with the unit maps $1\ra A$ and $1\ra B$ (see 4.3.3.14 loc.cit.) The induced monad $(U_{AB}F_{AB})\colon \cC\ra\cC$ exhibits the category $\BMod_{A,B}(\cC)$ as monadic over $\cC$ (see 4.7.3.9 loc.cit.). This implies that any bimodule $M\in \BMod_{A,B}(\cC)$ is the geometric realisation of the monadic bar construction $\Barc_{U_{AB}F_{AB}}(U_{AB}F_{AB},U_{AB}(M))_\bullet\in \Fun(\Delta^{\op},\BMod_{A,B}(\cC))$ whose $p$-simplices are $(F_{AB}U_{AB})^{p+1} (M)$ (see 4.7.2.7 loc.cit.), so after applying $U_{AB}$ given by $A^{\otimes p+1}\otimes M\otimes B^{\otimes p+1}$. In particular, this shows that every bimodule $M\in  \BMod_{A,B}(\cC)$ is a sifted colimit of free bimodules.
\medskip

\noindent A lax monoidal functor $\varphi\colon \cC\ra\cD$ induces a functor $\varphi_{AB}\colon \BMod_{A,B}(\cC)\ra \BMod_{\varphi(A),\varphi(B)}(\cD)$ which makes the left-hand square in the following diagram of categories commute
\begin{equation}\label{equ:general-bimodule-diagram}\begin{tikzcd}[column sep=2cm]
\BMod_{A,B}(\cC)\rar{U_{AB}}\dar{\varphi_{AB}}&\cC\dar{\varphi}\rar{F_{AB}}&\BMod_{A,B}(\cC)\dar{\varphi_{AB}}\\
\BMod_{\varphi(A),\varphi(B)}(\cD)\rar{U_{\varphi(A),\varphi(B)}}&\cD\rar{F_{\varphi(A),\varphi(B)}}&\BMod_{\varphi(A),\varphi(B)}(\cD).
\end{tikzcd}\end{equation}
The right-hand square does not commute in general, but its two compositions are related by a Beck-Chevalley transformation $F_{\varphi(A),\varphi(B)}\varphi\ra \varphi_{AB}F_{AB}$
that results from taking horizontal left adjoints in the commutative left-hand square.

\begin{lem}\label{lem:monoidal-functors-preserve-free-bimodules}For monoidal $\varphi$, the right square commutes: $F_{\varphi(A),\varphi(B)}\varphi\ra \varphi_{AB}F_{AB}$ is an equivalence. 
\end{lem}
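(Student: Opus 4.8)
The plan is to exploit that $U_{\varphi(A),\varphi(B)}$ is conservative, so that it suffices to show that the image of the Beck--Chevalley transformation $\beta\colon F_{\varphi(A),\varphi(B)}\varphi\to\varphi_{AB}F_{AB}$ under $U_{\varphi(A),\varphi(B)}$ is an equivalence. By commutativity of the left-hand square in \eqref{equ:general-bimodule-diagram} we have $U_{\varphi(A),\varphi(B)}\varphi_{AB}\simeq\varphi U_{AB}$, and the recalled formula for the unit of $F_{AB}\dashv U_{AB}$ identifies the monad $U_{AB}F_{AB}$ with $A\otimes(-)\otimes B$; hence $U_{\varphi(A),\varphi(B)}\beta$ becomes a natural transformation $\varphi(A)\otimes\varphi(-)\otimes\varphi(B)\to\varphi\bigl(A\otimes(-)\otimes B\bigr)$ between functors $\cC\to\cD$. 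The heart of the argument is the claim that this transformation is the one assembled from the lax structure maps of $\varphi$; granting it, the lemma is immediate, since $\varphi$ being (strong) monoidal means exactly that those structure maps are equivalences, and then $\beta$ is an equivalence by conservativity of $U_{\varphi(A),\varphi(B)}$.

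To prove the claim I would unwind the mate construction defining $\beta$. Abbreviating $U'=U_{\varphi(A),\varphi(B)}$ and $F'=F_{\varphi(A),\varphi(B)}$, letting $\eta$ be the unit of $F_{AB}\dashv U_{AB}$ and $\epsilon'$ the counit of $F'\dashv U'$, one has $\beta=\bigl(F'\varphi\xrightarrow{F'\varphi\eta}F'\varphi U_{AB}F_{AB}\simeq F'U'\varphi_{AB}F_{AB}\xrightarrow{\epsilon'\varphi_{AB}F_{AB}}\varphi_{AB}F_{AB}\bigr)$. Applying $U'$ and using that $U'\epsilon'$ evaluated on a bimodule $N$ is the $\bigl(\varphi(A)\otimes(-)\otimes\varphi(B)\bigr)$-action on $U'(N)$, one reads off that $U'\beta$ at $M\in\cC$ is the composite which first applies $\operatorname{id}\otimes\varphi\eta_M\otimes\operatorname{id}$ to pass from $\varphi(A)\otimes\varphi(M)\otimes\varphi(B)$ to $\varphi(A)\otimes\varphi(A\otimes M\otimes B)\otimes\varphi(B)$, and then applies the action map of the pushed-forward free bimodule $\varphi_{AB}F_{AB}(M)$. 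Since $\varphi_{AB}$ is induced by the lax monoidal $\varphi$, that action map is the ternary lax structure map $\varphi(A)\otimes\varphi(A\otimes M\otimes B)\otimes\varphi(B)\to\varphi\bigl(A\otimes(A\otimes M\otimes B)\otimes B\bigr)$ followed by $\varphi$ applied to the multiplications of $A$ and $B$. Sliding $\varphi\eta_M$ past the lax structure map using its naturality, and then invoking the left/right unit axioms for the algebras $A$ and $B$, the composite collapses to the single lax structure map $\varphi(A)\otimes\varphi(M)\otimes\varphi(B)\to\varphi(A\otimes M\otimes B)$, which is the claim.

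The main obstacle is carrying out this identification coherently at the level of $\infty$-categories rather than merely on homotopy categories: the mate calculus, the description of $\varphi_{AB}$ as the functor on module categories induced by the lax morphism of monads $\varphi(A)\otimes\varphi(-)\otimes\varphi(B)\to\varphi\bigl(A\otimes(-)\otimes B\bigr)$, and the unit axioms for $A$ and $B$ all have to be used as equivalences of natural transformations, not just objectwise. A cleaner route, if one prefers to avoid the explicit chase, is to isolate once and for all the general fact that a functor between monadic categories induced by a morphism of monads carries free algebras to free algebras exactly when that morphism of monads is an equivalence, and then apply it to $\varphi(A)\otimes\varphi(-)\otimes\varphi(B)\to\varphi\bigl(A\otimes(-)\otimes B\bigr)$; this merely relocates the coherence bookkeeping into one self-contained statement. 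All remaining ingredients---conservativity of $U_{\varphi(A),\varphi(B)}$, the monad formula for $U_{AB}F_{AB}$, and the monadicity of $\BMod_{A,B}(-)$ over the underlying category---are recorded in the material preceding the statement.
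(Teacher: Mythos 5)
Your proposal is correct and follows essentially the same route as the paper's proof: check after applying the conservative functor $U_{\varphi(A),\varphi(B)}$, identify the resulting transformation with the lax structure map $\varphi(A)\otimes\varphi(-)\otimes\varphi(B)\to\varphi(A\otimes(-)\otimes B)$, and conclude from $\varphi$ being strong monoidal. The paper simply asserts this identification where you unwind the mate construction explicitly, but the argument is the same.
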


\begin{proof}By conservativity of $U_{\varphi(A),\varphi(B)}$, we may check this after applying $U_{\varphi(A),\varphi(B)}$ in which case the Beck-Chevalley transformation becomes the transformation $\varphi(A)\otimes\varphi(-)\otimes\varphi(B)\ra \varphi(A\otimes(-)\otimes B)$ induced by the lax monoidality of $\varphi$. This is an equivalence if $\varphi$ is (strong) monoidal.\end{proof}

\subsubsection{Presentable monoidal categories}\label{sec:presentable-monoidal}
The category $\BMod(\cC)$ of bimodules in a monoidal category $\cC$ is particularly well-behaved if $\cC$ is a \emph{presentable monoidal category}, which means that the underlying category of $\cC$ is presentable in the sense of \cite[5.5.0.1]{LurieHTT} and the monoidal product $\otimes\colon \cC\times \cC\ra \cC$ preserves small colimits in both variables \cite[3.4.4.1]{LurieHA}. A \emph{presentable symmetric monoidal category} is a symmetric monoidal category which is presentable as a monoidal category. As an example, for a small category $\cC$, the category $\PSh(\cC)=\Fun(\cC^\op,\cS)$ of space-valued presheaves is presentable \cite[5.5.1.1]{LurieHTT} and any symmetric monoidal structure on $\cC$ turns $\PSh(\cC)$ into a  presentable symmetric monoidal category via Day convolution \cite[4.8.1.12]{LurieHA}.

\medskip

\noindent One way in which $\BMod(\cC)$ is better behaved if $\cC$ is presentable monoidal is the following:
\begin{lem}\label{lem:left-adjoint-bimodules}If a monoidal functor  $\varphi\colon \cC\ra\cD$ between presentable monoidal categories admits both adjoints, then so does $\varphi_{AB}$. In this case, if the left adjoint of $\varphi$ is fully faithful then so is that of $\varphi_{AB}$.
\end{lem}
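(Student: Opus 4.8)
The plan is to obtain both adjoints from the adjoint functor theorem, using the left (commuting) square of \eqref{equ:general-bimodule-diagram}, and to obtain the fully faithfulness from the fact recalled in \cref{sec:free-bimodules} that every bimodule is a sifted colimit of free bimodules.

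First I would recall the ambient facts: $\BMod_{A,B}(\cC)$ and $\BMod_{\varphi(A),\varphi(B)}(\cD)$ are presentable, the forgetful functors $U_{AB}$ and $U_{\varphi(A),\varphi(B)}$ are conservative and preserve small limits and small colimits, and they admit the left adjoints $F_{AB}$ and $F_{\varphi(A),\varphi(B)}$ from \cref{sec:free-bimodules}; these are standard properties of bimodule categories over presentable monoidal categories, using that the tensor product preserves small colimits in each variable as noted in \cref{sec:presentable-monoidal} (cf.\ \cite[Section 4.3.3]{LurieHA}). Now the left square of \eqref{equ:general-bimodule-diagram} gives $U_{\varphi(A),\varphi(B)}\circ\varphi_{AB}\simeq\varphi\circ U_{AB}$. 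Since $\varphi$ preserves small colimits (it has a right adjoint by hypothesis) and so does $U_{AB}$, the composite $U_{\varphi(A),\varphi(B)}\circ\varphi_{AB}$ preserves small colimits; as $U_{\varphi(A),\varphi(B)}$ is conservative, preserves small colimits, and its target has small colimits, it follows formally that $\varphi_{AB}$ preserves small colimits. Being a colimit-preserving functor between presentable categories, $\varphi_{AB}$ is accessible and admits a right adjoint by \cite[5.5.2.9]{LurieHTT}. Running the same argument with limits in place of colimits---now using that $\varphi$ also has a left adjoint $L$ and hence preserves small limits---shows that $\varphi_{AB}$ preserves small limits; together with its accessibility, \cite[5.5.2.9]{LurieHTT} then provides a left adjoint $L_{AB}$. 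This proves the first assertion.

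For the second assertion, assume $L$ is fully faithful, i.e.\ the unit $\eta\colon\id_{\cD}\Rightarrow\varphi L$ is an equivalence, and let $u\colon\id\Rightarrow\varphi_{AB}L_{AB}$ be the unit of $L_{AB}\dashv\varphi_{AB}$; I want $u$ to be an equivalence. Taking left adjoints of the two equivalent composite functors in $U_{\varphi(A),\varphi(B)}\circ\varphi_{AB}\simeq\varphi\circ U_{AB}$ yields $L_{AB}\circ F_{\varphi(A),\varphi(B)}\simeq F_{AB}\circ L$; composing this with the equivalence $\varphi_{AB}\circ F_{AB}\simeq F_{\varphi(A),\varphi(B)}\circ\varphi$ of \cref{lem:monoidal-functors-preserve-free-bimodules} (which applies since $\varphi$ is monoidal) and with $\varphi L\simeq\id_{\cD}$ gives $\varphi_{AB}L_{AB}F_{\varphi(A),\varphi(B)}\simeq F_{\varphi(A),\varphi(B)}$, and a chase with the triangle identities identifies $u$ precomposed with $F_{\varphi(A),\varphi(B)}$ with $F_{\varphi(A),\varphi(B)}(\eta)$; as $\eta$ is an equivalence, $u$ is an equivalence on every free bimodule. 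Finally, the full subcategory of $\BMod_{\varphi(A),\varphi(B)}(\cD)$ on those $M$ with $u_M$ an equivalence is closed under colimits---since $u$ is a natural transformation between $\id$ and the colimit-preserving functor $\varphi_{AB}L_{AB}$ ($L_{AB}$ being a left adjoint and $\varphi_{AB}$ colimit-preserving by the previous paragraph)---and it contains the free bimodules, which generate $\BMod_{\varphi(A),\varphi(B)}(\cD)$ under sifted colimits by \cref{sec:free-bimodules}. Hence $u$ is an equivalence and $L_{AB}$ is fully faithful.

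The main obstacle is the triangle-identity chase in the third paragraph: verifying that the unit $u$ of $L_{AB}\dashv\varphi_{AB}$, restricted to free bimodules, really is $F_{\varphi(A),\varphi(B)}$ applied to the unit $\eta$ of $L\dashv\varphi$, rather than merely connecting abstractly equivalent objects. The cleanest way to pin this down is to transpose along $F_{\varphi(A),\varphi(B)}\dashv U_{\varphi(A),\varphi(B)}$, using $U_{\varphi(A),\varphi(B)}\circ\varphi_{AB}\simeq\varphi\circ U_{AB}$ together with the compatibility of the equivalence $L_{AB}\circ F_{\varphi(A),\varphi(B)}\simeq F_{AB}\circ L$ with the relevant units and counits; this is routine bookkeeping but needs to be carried out carefully.
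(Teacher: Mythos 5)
Your proposal is correct and follows essentially the same route as the paper: deduce (co)limit preservation of $\varphi_{AB}$ from the commuting square $U_{\varphi(A),\varphi(B)}\varphi_{AB}\simeq \varphi U_{AB}$ together with conservativity and (co)limit preservation of the forgetful functors, apply the adjoint functor theorem, and then check fully faithfulness of the left adjoint by reducing to free bimodules via the bar resolution and \cref{lem:monoidal-functors-preserve-free-bimodules}. The "triangle-identity chase" you flag is exactly the bookkeeping the paper also performs (it identifies $U_{\varphi(A),\varphi(B)}(\eta_{AB})F_{\varphi(A),\varphi(B)}$ with $U_{\varphi(A),\varphi(B)}F_{\varphi(A),\varphi(B)}(\eta)$ using the left-adjoint square), so there is no gap.
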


\begin{proof}
Both source and target of $\varphi_{AB}$ are presentable by \cite[4.3.3.10 (1)]{LurieHA} and the forgetful functors $U_{AB}$ and $U_{\varphi(A)\varphi(B)}$ preserve and reflect limits as well as colimits by 4.3.3.3 and 4.3.3.9 loc.cit., so it follows that $\varphi_{AB}$ preserves limits and colimits. This implies the first part of the claim by the adjoint functor theorem \cite[5.5.2.9]{LurieHTT}. The second part is equivalent to showing that the unit $\smash{\eta_{AB}\colon \id\ra \varphi_{AB}\varphi_{AB}^L}$ of the adjunction $\varphi_{AB}^L\dashv\varphi_{AB}$ is an equivalence if the unit $\smash{\eta\colon \id\ra \varphi\varphi^L}$  of $\varphi^L\dashv\varphi$ is one. Both $\varphi_{AB}$ and $\varphi_{AB}^L$ preserve colimits since they are left adjoints, so using that any bimodule is a colimit of free bimodules by the discussion above and that $U_{\varphi(A)\varphi(B)}$ is conservative, $\eta_{AB}$ is an equivalence if $U_{\varphi(A),\varphi(B)}(\eta_{AB})F_{\varphi(A),\varphi(B)}$ is one. Using \cref{lem:monoidal-functors-preserve-free-bimodules} and commutativity of the square obtained from the left square in \eqref{equ:general-bimodule-diagram} by taking left adjoints, one sees that this transformation is equivalent to $U_{\varphi(A),\varphi(B)}F_{\varphi(A),\varphi(B)}(\eta)$, which is an equivalence if $\eta$ is one. \end{proof}

\subsubsection{Compatibility with geometric realisations and naturality of bimodules}\label{sec:cgr-subcat}There is another assumption on the monoidal category $\cC$ that is relevant in the context of bimodules and weaker than being presentably monoidal: $\cC$ is called \emph{compatible with geometric realisations} if the underlying category of $\cC$ has geometric realisations (colimits indexed by $\Delta^{\op}$) and the monoidal product $\otimes\colon \cC\times \cC\ra \cC$ preserves these geometric realisations separately in both variables. A monoidal functor is called \emph{compatible with geometric realisations} if it preserves geometric realisations. A symmetric monoidal category (or functor) is called compatible with geometric realisations if the underlying monoidal category (or functor) is. This defines (non-full) subcategories $\Mon(\Cat)^{\cgr}$ and $\CMon(\Cat)^{\cgr}$ of $\Mon(\Cat)$ and $\CMon(\Cat)$ respectively.

\medskip

\noindent One benefit of being compatible with geometric realisations is that in this case there is a relative tensor product $(-)\otimes_B(-)\colon \BMod_{A,B}(\cC)\times \BMod_{B,C}(\cC)\ra \BMod_{A,C}(\cC)$ for associative algebras $A,B,C\in\Ass(\cC)$ which is preserved by monoidal functors compatible with geometric realisations \cite[4.4.2.11, 4.4.2.8]{LurieHA}. This can in particular be used to make the category of bimodules $\BMod_{A,B}(\cC)$ natural in triples $(\cC,A,B)$, i.e.\,to extend it to a functor 
\begin{equation}\label{equ:bimodule-naturality-functor}
	\textstyle{\BMod_{-,-}(-) \colon \int_{\Mon(\Cat)^\cgr} \Ass(-)^{\times 2} \lra \Cat}.
\end{equation}
on the cocartesian unstraightening of the restriction of $\Ass(-)^{\times 2} \colon \Mon(\Cat)\ra \Cat$ to $\Mon(\Cat)^\cgr$:

\begin{lem}\label{lem:bmod-naturality} The assignment $(\cC,A,B)\mapsto \BMod_{A,B}(\cC)$ lifts to a functor of the form \eqref{equ:bimodule-naturality-functor}. On morphisms it sends $(\varphi\colon \cC\ra \cD, (\varphi(A),\varphi(B))\ra (A',B'))$ to the composition
\[ {\BMod_{A,B}(\cC)\xlra{\varphi_{AB}}\BMod_{\varphi(A),\varphi(B)}(\cD)\xra{A'\otimes_{\varphi(A)} (-)\otimes_{\varphi(B)}B'}\BMod_{\varphi(A),\varphi(B)}(\cD)}.\]
\end{lem}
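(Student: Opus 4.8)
The plan is to obtain the functor $\BMod_{-,-}(-)$ of \eqref{equ:bimodule-naturality-functor} as the straightening of a suitable cocartesian fibration. Write $\cE\coloneq\Mon(\Cat)^\cgr$ and abbreviate the target of \eqref{equ:bimodule-naturality-functor} by $\cX\coloneq\int_{\cE}\Ass(-)^{\times 2}$, the cocartesian unstraightening of $\Ass(-)^{\times 2}|_{\cE}$; its objects are triples $(\cC,A,B)$ with $\cC\in\cE$ and $A,B\in\Ass(\cC)$, and its morphisms to $(\cD,A',B')$ are the pairs $(\varphi\colon\cC\ra\cD,\,(\varphi(A),\varphi(B))\ra(A',B'))$ appearing in the statement. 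First I would note that assigning to a bimodule its underlying pair of algebras defines a natural transformation $\BMod(-)\Rightarrow\Ass(-)^{\times 2}$ of functors $\Mon(\Cat)\ra\Cat$; restricting it along $\Mon(\Cat)^\cgr\hookrightarrow\Mon(\Cat)$ and applying cocartesian unstraightening turns it into a functor $\pi\colon\cZ\ra\cX$ over $\cE$, where $\cZ\coloneq\int_{\cE}\BMod(-)$. Being the unstraightening of a natural transformation, $\pi$ automatically preserves $\cE$-cocartesian morphisms, and its fibre over $(\cC,A,B)$ is the fibre of the forgetful functor $\BMod(\cC)\ra\Ass(\cC)^{\times 2}$ over $(A,B)$, that is, $\BMod_{A,B}(\cC)$. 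It therefore suffices to show that $\pi$ is a cocartesian fibration, and then to define $\BMod_{-,-}(-)$ as its straightening.

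To prove that $\pi$ is a cocartesian fibration I would invoke the standard recognition criterion for a cocartesian fibration assembled from a fibrewise family: a functor $\pi\colon\cZ\ra\cX$ over $\cE$ between cocartesian fibrations that preserves $\cE$-cocartesian morphisms is itself a cocartesian fibration---with $\pi$-cocartesian morphisms being exactly those factoring as an $\cE$-cocartesian morphism followed by a morphism that is cocartesian in a fibre of $\pi$---as soon as (i) for each $\cC\in\cE$ the induced functor $\pi_\cC\colon\BMod(\cC)\ra\Ass(\cC)^{\times 2}$ is a cocartesian fibration, and (ii) for each morphism $\varphi\colon\cC\ra\cD$ in $\cE$ the functor $\BMod(\varphi)\colon\BMod(\cC)\ra\BMod(\cD)$ carries $\pi_\cC$-cocartesian morphisms to $\pi_\cD$-cocartesian ones. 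Condition (i) is where compatibility with geometric realisations enters: by the existence of the relative tensor product \cite[4.4.2]{LurieHA}, the forgetful functor $\BMod(\cC)\ra\Ass(\cC)^{\times 2}$ is a cocartesian fibration whose pushforward along a morphism $(A\ra A',\,B\ra B')$ of algebras is $M\mapsto A'\otimes_A M\otimes_B B'$. Condition (ii) is precisely the fact that monoidal functors compatible with geometric realisations preserve relative tensor products, i.e.\,\cite[4.4.2.8]{LurieHA}: on fibres $\BMod(\varphi)$ restricts to $\varphi_{AB}$, and it sends the $\pi_\cC$-cocartesian morphism out of an $(A,B)$-bimodule $M$ to the morphism out of $\varphi_{AB}(M)$ exhibiting $\varphi(A')\otimes_{\varphi(A)}\varphi_{AB}(M)\otimes_{\varphi(B)}\varphi(B')$, which is $\pi_\cD$-cocartesian.

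Finally I would read off the effect of $\BMod_{-,-}(-)$ on morphisms. A morphism $(\cC,A,B)\ra(\cD,A',B')$ in $\cX$ factors canonically as the $\cE$-cocartesian lift $(\cC,A,B)\ra(\cD,\varphi(A),\varphi(B))$ of $\varphi$, followed by the morphism $(\cD,\varphi(A),\varphi(B))\ra(\cD,A',B')$ lying in the fibre $\Ass(\cD)^{\times 2}$ over $\cD$. By the characterisation of $\pi$-cocartesian morphisms in the recognition criterion, the induced functor $\BMod_{A,B}(\cC)\ra\BMod_{A',B'}(\cD)$ is the composite of the pushforward along the first factor---which is $\varphi_{AB}$ by construction of $\cZ$ and the functoriality of $\BMod(-)$---with the pushforward along the second factor---which is the relative tensor product $A'\otimes_{\varphi(A)}(-)\otimes_{\varphi(B)}B'$ by condition (i). This is exactly the composite in the statement.

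I expect the main obstacle to be verifying conditions (i) and (ii): both concern the good behaviour of the relative tensor product under monoidal functors, and are precisely the reason the statement is restricted to $\Mon(\Cat)^\cgr$ rather than all of $\Mon(\Cat)$. Once they are in place, assembling the fibrewise cocartesian structures into the global one---and so producing the functor \eqref{equ:bimodule-naturality-functor}---is formal, via the recognition criterion.
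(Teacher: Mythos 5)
Your proposal is correct and follows essentially the same route as the paper: unstraighten the transformation $\BMod(-)\Rightarrow\Ass(-)^{\times 2}$ over $\Mon(\Cat)^{\cgr}$ and verify that the resulting map of cocartesian fibrations is itself a cocartesian fibration via the standard recognition criterion (the paper's version of your criterion is the dual of \cite[Proposition 5.45]{HaugsengMorita}, with your conditions (i) and (ii) being its Conditions (3) and (4), supplied by Haugseng's Proposition 4.53 and the preservation of relative tensor products under monoidal functors compatible with geometric realisations). The only cosmetic difference is that you source the fibrewise cocartesian structure from \cite[4.4.2]{LurieHA} where the paper uses Haugseng's model, but the paper itself notes these are equivalent.
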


\begin{proof}The natural transformation $\eta\colon \BMod(-)\ra \Ass(-)^{\times 2}$ induces by unstraightening a map \begin{equation}\label{equ:unstraightened-algebra-fibration}
		\smash{\textstyle{\int_{\Mon(\Cat)^{\cgr}}\BMod(-)\lra \int_{\Mon(\Cat)^{\grtp}}\Ass(-)^{\times 2}}}
\end{equation} 
of cocartesian fibrations over $\Mon(\Cat)^{\cgr}$. By construction fibres of  \eqref{equ:unstraightened-algebra-fibration} are $\BMod_{A,B}(\cC)$, so first part of the claim follows by showing that \eqref{equ:unstraightened-algebra-fibration} is itself a cocartesian fibration which we do by checking Conditions (1)-(4) in the dual version of \cite[Proposition 5.45]{HaugsengMorita}. Conditions (1) and (2) are satisfied since \eqref{equ:unstraightened-algebra-fibration} is a map of cocartesian fibrations over $\Mon(\Cat)^{\cgr}$ by construction. Condition (3) says that the functor $\BMod(\cC)\ra \Ass(\cC)^{\times 2}$ is a cocartesian fibration for any $\cC\in \Mon(\Cat)^{\cgr}$ which holds by Proposition 4.53 loc.cit. (note, firstly, that, as a result of Lemma 4.19 loc.cit., being compatible with geometric realisations implies having ``good relative tensor products'' in the sense of Definition 4.18 loc.cit., and secondly, that Haugseng's relative tensor product is equivalent to that of Lurie; see \cite[Remark 2.18]{KKDisc}). Finally, Condition (4) says that for all $\varphi\colon \cC\ra\cD$ in $\Mon(\Cat)^{\cgr}$ the induced functor $\varphi\colon \BMod(\cC)\ra \BMod(\cD)$ sends $\eta_\cC$-cocartesian morphisms to $\eta_\cD$-cocartesian morphisms. This follows from the description of $\eta_\cC$-cocartesian morphisms in \cite[Proposition 4.53]{HaugsengMorita} since $\varphi$ preserves relative tensor products. This finishes the proof of the first part of the claim. The second part follows from the construction of cocartesian lifts in the proof of Proposition 5.45 loc.cit., together with the description of $\eta_\cC$-cocartesian morphisms in Proposition 4.53 loc.cit..
\end{proof}

\subsection{Morita categories}\label{sec:morita}\label{sec:morita-functor} The relative tensor products $(-)\otimes_B(-)\colon \BMod_{A,B}(\cC)\times \BMod_{B,C}(\cC)\ra \BMod_{A,C}(\cC)$ for varying choices of  $A,B,C\in \Ass(\cC)$ in a monoidal category compatible with geometric realisations satisfy various coherences. This can be packaged conveniently into a double category $\ALG(\cC)\in\CCat(\Cat)$, called the \emph{Morita category} of $\cC$, whose category of objects and morphisms are $\Ass(\cC)$ and $\BMod(\cC)$ respectively and the source-target functor is the forgetful functor $\BMod(\cC)\ra\Ass(\cC)\times \Ass(\cC)$ so that the mapping categories are $\BMod_{A,B}(\cC)$ for $A,B\in\cC$. The composition functors are given by taking relative tensor products. 

\medskip

\noindent There are several constructions of the Morita category $\ALG(\cC)$. As in \cite{KKDisc} we will use the one by Haugseng \cite{HaugsengMorita} (denoted $\mathfrak{ALG}_1(\cC)$ in loc.cit.) which is equivalent to a construction by Lurie \cite[Corollary 5.14]{HaugsengComparison} that has some advantages (see \cite[Remark 2.18]{KKDisc} for more explanation). As recalled in \cite[2.9.4]{KKDisc}, Haugseng's model extends the assignment $\cC\mapsto \ALG(\cC)$ to a functor $\ALG(-)\colon \Mon(\Cat)^{\cgr}\lra \CCat(\Cat)$ which preserves products and thus induces a functor
\begin{equation}\label{equ:alg-functor-com}
	\ALG(-)\colon \CMon(\Cat)^{\cgr}\simeq \CMon(\Mon(\Cat)^{\cgr})\lra \CMon(\CCat(\Cat)).
\end{equation}
whose leftmost equivalence is the restriction of the instance of the equivalence $\CMon(\Mon(\cC))\simeq\CMon(\cC)$ for categories $\cC$ with finite products in the case $\cC=\Cat$ (see e.g.\,\cite[Remark 2.4]{KKDisc}). 

\medskip

\noindent The functor \eqref{equ:alg-functor-com} behaves well with limits:

\begin{lem}\label{lem:limits-of-morita}The functor \eqref{equ:alg-functor-com} sends left-cone diagrams in $\CMon(\Cat)^{\cgr}$ that are limits in $\CMon(\Cat)$ to limits in  $\CMon(\CCat(\Cat))$.   
\end{lem}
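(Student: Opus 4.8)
The plan is to reduce, through a chain of standard ``detection of limits'' arguments, to the single fact that $\Ass(-)$ and $\BMod(-)$ preserve limits. First I would pass to underlying data. Since the forgetful functor $\CMon(\CCat(\Cat))\to\CCat(\Cat)$ preserves and detects limits (\cref{sec:infty-two}), and \eqref{equ:alg-functor-com} is obtained by applying $\CMon(-)$ to the functor $\ALG(-)\colon\Mon(\Cat)^{\cgr}\to\CCat(\Cat)$ of \cref{sec:morita}, it suffices to show that the underlying cone in $\CCat(\Cat)$, which is $\ALG(-)$ applied to the underlying cone in $\Mon(\Cat)^{\cgr}$, is a limit cone. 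That underlying cone is moreover a limit in $\Mon(\Cat)$: the forgetful functor $\CMon(\Cat)\to\Mon(\Cat)$ is induced by precomposition along $\Delta^{\op}\to\Fin_\ast$ and hence preserves limits, both $\CMon(\Cat)$ and $\Mon(\Cat)$ being full subcategories of functor categories closed under limits. Thus it remains to prove that $\ALG(-)\colon\Mon(\Cat)^{\cgr}\to\CCat(\Cat)$ sends any cone which lies in $\Mon(\Cat)^{\cgr}$ and is a limit in $\Mon(\Cat)$ to a limit in $\CCat(\Cat)$.

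Next I would compute this limit levelwise. The inclusion $\CCat(\Cat)\hookrightarrow\Fun(\Delta^{\op},\Cat)$ is a full subcategory closed under limits (a levelwise limit of category objects is again a category object) and limits in $\Fun(\Delta^{\op},\Cat)$ are levelwise, so it is enough that for each $[n]\in\Delta$ the functor $\cC\mapsto\ALG(\cC)_{[n]}$ sends our cone to a limit in $\Cat$. The Segal condition, which is natural in $\cC$, identifies $\ALG(\cC)_{[n]}$ with the iterated fibre product $\ALG(\cC)_{[1]}\times_{\ALG(\cC)_{[0]}}\cdots\times_{\ALG(\cC)_{[0]}}\ALG(\cC)_{[1]}$, where $\ALG(\cC)_{[0]}=\Ass(\cC)$ and $\ALG(\cC)_{[1]}=\BMod(\cC)$. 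As a finite iterated fibre product commutes with the limit over the indexing diagram, it suffices to treat $n=0$ and $n=1$, that is, to know that $\Ass(-)$ and $\BMod(-)\colon\Mon(\Cat)\to\Cat$ preserve limits; granting this, $\Ass$ and $\BMod$ applied to our cone are limit cones in $\Cat$, and fibre products of limit cones are again limit cones.

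The remaining input, which I expect to be the only non-formal point, is that $\Ass(-)$ and $\BMod(-)$ preserve limits as functors of the monoidal category --- a standard property of categories of algebras and bimodules (cf.~\cite[4.1, 4.3]{LurieHA}), reflecting the fact that both are obtained by taking maps of (generalised) operads out of a fixed operad, and that such functors preserve limits in their target. Alternatively, one can read this off the model of $\ALG(-)$ used here: in Haugseng's construction \cite[Section 4]{HaugsengMorita} every $\ALG(\cC)_{[n]}$ is exhibited as a full subcategory of a functor category built functorially from $\cC$ and cut out by conditions stable under limits, so that $\cC\mapsto\ALG(\cC)_{[n]}$ manifestly preserves limits --- which also bypasses the Segal reduction of the previous paragraph. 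Apart from isolating this preservation statement in whichever model one adopts, the argument is pure bookkeeping with forgetful functors that detect limits and with levelwise limits of functors.
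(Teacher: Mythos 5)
Your reduction is sound and is essentially the paper's: forget from $\CMon(\CCat(\Cat))$ to $\CCat(\Cat)$, use that $\CCat(\Cat)\subset\Fun(\Delta^{\op},\Cat)$ is closed under limits and that the Segal condition lets you test levelwise at $[0]$ and $[1]$, so everything comes down to $\Ass(-)$ and $\BMod(-)$ preserving limits. (Your extra detour through $\Mon(\Cat)$ is harmless; the paper stays in $\CMon(\Cat)$ and instead factors $\Ass(-),\BMod(-)$ through the limit-preserving forgetful functor $\CMon(\Cat)\ra\Opd$, which has the left adjoint $\Env(-)$.)

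The issue is that the step you isolate as "the only non-formal point" is exactly where the paper's proof lives, and you assert it rather than prove it. The claim that $\Alg_{\cO}(-)\colon\Opd\ra\Cat$ preserves limits in the \emph{operad variable} is not stated in \cite[4.1, 4.3]{LurieHA} (those sections only define $\Ass(-)=\Alg_{\Assoc}(-)$ and $\BMod(-)=\Alg_{\BM}(-)$), and "functors of maps out of a fixed operad preserve limits in the target" is not automatic: $\Alg_\cO(\cU)$ is a category, not a mapping space, so corepresentability does not apply directly. The paper closes this by testing against $\Map_{\Cat}(\cD,\Alg_\cO(-))$ for all $\cD$, identifying this with $\Map_{\Opd}\big(\cO,\,\Fun(\cD,\cU^{\otimes})\times_{\Fun(\cD,\Fin_\ast)}\Fin_\ast\big)$ via \cite[2.2.5.4]{LurieHA}, and then checking that the cotensor $\cU\mapsto \Fun(\cD,\cU^{\otimes})\times_{\Fun(\cD,\Fin_\ast)}\Fin_\ast$ is a composite of limit-preserving functors $\Opd\ra\Cat_{/\Fin_\ast}$ (using that $\Opd\ra\Cat_{/\Fin_\ast}$ preserves and detects limits, \cref{lem:operad-limits}). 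Your alternative route through Haugseng's model has the same hidden content: you would need to verify that the full subcategory conditions (preservation of cocartesian lifts over inert maps) are \emph{detected} in a limit of the ambient functor categories, not merely stable under it. Either route can be completed, but as written the proposal defers the entire substance of the lemma to an uncited "standard" fact.
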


\begin{proof}Since the forgetful functor $\CMon(\CCat(\Cat))\ra \CCat(\Cat)$ detects limits and $\CCat(\Cat)\subset\Fun(\Delta^{\op},\Cat)$ is a full subcategory closed under limits, the Segal condition implies  that it suffices to show that the functors $\Ass(-),\BMod(-) \colon \CMon(\Cat) \to \Cat$ preserves limits. These functors factor as the composition of the forgetful functor $\CMon(\Cat)\ra \Opd$ which preserves limits since it admits a left adjoint (see \cref{sec:env}), with functors of the form $\Alg_\cO(-)\colon \Opd\ra \Cat$ for fixed operads $\cO\in\Opd$ where $\Alg_\cO(\cU)\subset \Fun_{\Fin_*}(\cO^\otimes, \cU^\otimes)$ for an operad $\cU$ is the full subcategory of those functors over $\Fin_*$ that are operad maps (in Lurie's notation: $\Alg_{\icat{A}\mathrm{ssoc}}(-)=\Ass(-)$ and $\Alg_\icat{BM}(-)=\BMod(-)$; see \cite[4.1.1.6, 4.3.1.11]{LurieHA}). To show that $\Alg_{\cO}(-)$ preserves limits, it suffices to show this for the functor $\Map_{\Cat}(\cD,\Alg_{\cO}(-))\colon \Opd\ra\cS$ for all $\cD\in\Cat$. The latter is equivalent to the functor that sends $(\cU^{\otimes}\ra\Fin_*)\in\Opd$ to $\Map_{\Opd}(\cO, \Fun(\cD,\cU^{\otimes})\times_{\Fun(\cD,\Fin_\ast)}\Fin_*)$ (c.f.\,2.2.5.4 loc.cit.). Since $\Map_{\Opd}(\cO,-)\colon \Opd\ra \cS$ preserves limits and the forgetful functor $\Opd\ra \Cat_{/\Fin_*}$ preserves and detects limits, it suffices to show that the functor $\Opd\ra \Cat_{/\Fin_*}$ that sends $\smash{(\cU^{\otimes}\ra\Fin_*)}$ to $\smash{\Fun(\cD,\cU^{\otimes})\times_{\Fun(\cD,\Fin_\ast)}\Fin_*\in\Cat_{/\Fin_*}}$ preserves limits. The latter is a composition 
	 \[\smash{\Opd\xra{\mathrm{forget}} \Cat_{/\Fin_*}\xra{\Fun(\cD,-)} \Cat_{/\Fun(\cD,\Fin_*)}\xra{\Fin_*\times_{\Fun(\cD,\Fin_*)}(-)}\Cat_{/\Fin_*}}\]
of limit-preserving functors, so it preserves limits itself and the claim follows.
\end{proof}

\begin{rem} \label{rem:morita-st-cartesian}The double category $\ALG(\cC)$ for any $\cC\in \CMon(\Cat)^{\cgr}$ satisfies the condition from \cref{rem:st-cartesian} since the forgetful functor $\BMod(\cC)\ra \Ass(\cC)\times \Ass(\cC)$ is a cartesian fibration as a result of \cite[4.3.2.2]{LurieHA}. Given an $(A,B)$-bimodule $M$, an $(A',B')$-bimodule $N$ and maps of associative algebras $\varsigma\colon A\ra A'$ and $\tau\colon B\ra B'$, the cartesian lift $(\varsigma,\tau)^*N\ra N$ of $(\varsigma,\tau)$ can be thought as given by $N$ with the $(A,B)$-bimodule structure induced by restriction along $(\varsigma,\tau)$.
\end{rem}

\subsubsection{Categories of cospans as Morita categories}\label{sec:cospan-alg}
Given a category $\cC$ with finite colimits, we can equip it with the cocartesian symmetric monoidal structure whose monoidal product $\sqcup$ is given by taking coproducts (see \cref{sec:cocart-oprightfib}). As explained in \cite[Section 6]{HaugsengMelaniSafronov}, the associated Morita category $\ALG(\cC^{\sqcup})\in\CMon(\CCat(\Cat))$ has in this case a simple description: it is the symmetric monoidal double category $\Cosp(\cC)\in\CMon(\CCat(\Cat))$ of \emph{cospans} in $\cC$: the category of objects is $\cC$, the mapping categories are overcategories $\Cosp(\cC)_{A,B}\simeq \cC_{A\sqcup B/}$, and the composition functors is given by taking pushouts (see also \cite[2.10]{KKDisc} for a summary). 

\begin{rem}\label{rem:remarks-on-cospans}Two remarks on this construction:
\begin{enumerate}[leftmargin=*]
	\item Strictly speaking, the functor $\ALG(-)$ as presented above is not applicable to $\cC^{\sqcup}$ since the latter need not be compatible with geometric realisations if $\cC$ is only assumed to have finite colimits. However there is a weaker condition on a monoidal category $\cC$ under which $\ALG(\cC)$ is defined: having \emph{good relative tensor products} (see \cite[Definition 4.18]{HaugsengMorita}). This is satisfied for $\cC^{\sqcup}$ is $\cC$ has finite colimits (see \cite[Remark 2.5.14]{HaugsengMelaniSafronov}).
	\item\label{enum:cospans-complete} By \cite[Corollary 8.5]{HaugsengSpans} the underlying double category $\Cosp(\cC)\in \CCat(\Cat)$ of a cospan category lies in the full subcategory $\Cat_{(\infty,2)}\subset \CCat(\Cat)$ described in \cref{sec:infty-two}.
\end{enumerate}
\end{rem}

\subsection{Algebras over operads and their modules}\label{sec:algebras-modules-intro}
We fix a symmetric monoidal category $\cC\in\CMon(\Cat)$ and an operad $\cO\in\Opd$ throughout this subsection.

\subsubsection{Algebras over operads}\label{sec:algebra-over-operads}
The category $\Alg_{\cO}(\cC)$ of \emph{$\cO$-algebras} in the symmetric monoidal category $\cC$ is defined as the category of operad maps from $\cO$ to the underlying operad of $\cC$ (see \cref{sec:operads}), i.e.\,as the full subcategory
\begin{equation}\label{equ:o-algebras}
\Alg_{\cO}(\cC)\subset \Fun_{\Fin_*}(\cO^\otimes,\cC^\otimes)
\end{equation} 
of the category of functors over $\Fin_*$ spanned by those functors that preserve cocartesian lifts of inert morphisms in $\Fin_*$. By construction, its core is the space of operad maps from $\cO$ to the underlying operad of $\cC$ which in turn agrees with the space of maps in $\CMon(\Cat)$ from $\Env(\cO)$ to $\cC$ by the adjunction from \cref{sec:env}, i.e.\,we have equivalences $\Alg_{\cO}(\cC)^\simeq\simeq\Map_{\Opd}(\cO,\cC)\simeq \Map_{\CMon(\Cat)}(\Env(\cO),\cC)$. This lifts to an equivalence of categories: precomposition with the counit $\cO\ra\Env(\cO)$ induces an equivalence of categories 
\begin{equation}\label{equ:universal-prop-env-on-cats}
	\Fun^\otimes(\Env(\cO),\cC)\simeq \Alg_{\cO}(\cC)
\end{equation} 
whose left-hand side is the category of symmetric monoidal functors, i.e.\,the full subcategory of $\Fun_{\Fin_*}(\Env(\cO)^\otimes,\cC^\otimes)$ on those functors that preserve all cocartesian lifts  \cite[2.2.4.9]{LurieHA}.

\begin{ex}
The categories $\Ass(\cC)$, $\BMod(\cC)$, $\LLMod(\cC)$, $\RRMod(\cC)$ of associative algebras and bi-, left-, or right-modules discussed in \cref{sec:bimod} are the special cases of \eqref{equ:o-algebras} where $\cO$ is the associative operad $\Assoc$ or one of the operads $\BM$, $\LM$, $\RM$ encoding bi-, left-, or right-modules respectively (see 4.1.1.3, 4.1.1.6, 4.2.1.7, 4.2.1.13, 4.2.1.36, 4.3.1.6, 4.3.1.12 loc.cit.).
\end{ex}

\subsubsection{Lax monoidal colimit-preserving extensions}\label{equ:lax-monoidal-colimi-extension}
Recall from \cref{sec:presheaves-yoneda} that the Yoneda embedding induces an equivalence $\Fun(\cC,\cD)\simeq \Fun^\colim(\PSh(\cC),\cD)$ when $\cD$ is cocomplete. This has a lax symmetric monoidal refinement: if $\cC$ and $\cD$ are symmetric monoidal and the monoidal product in $\cD$ preserves colimits in both variables, then the equivalence lifts to an equivalence  $\smash{\Alg^\colim_{\PSh(\cC)}(\cD) \simeq \Alg_\cC(\cD)}$,
where $\PSh(\cD)$ is equipped with the Day convolution symmetric monoidal structure (see \cref{sec:day} below), $\smash{\Alg^\colim_{\PSh(\cC)}(\cD)\subset \Alg_{\PSh(\cC)}(\cD)}$ is the full subcategory of those lax monoidal functors that preserve colimits, and the equivalence is induced by precomposition with the symmetric monoidal lift of the Yoneda embedding \cite[4.8.1.10, 4.8.1.12]{LurieHA}. In particular, for a lax symmetric monoidal functor $\varphi\colon \cC\ra \cD$, there is a unique colimit preserving extension $|-|_{\varphi}\colon \PSh(\cC)\ra\cD$ to a lax symmetric monoidal functor.

\subsubsection{Modules over algebras over operads}
For an $\cO$-algebra $A\in\Alg_{\cO}(\cC)$ in $\cC$, there is a category $\smash{\oMod_A^\cO(\cC)}$ of $\cO$-modules over $A$. There are two constructions of this category, one due to Lurie \cite[3.3.3.8]{LurieHA} and one due to Hinich \cite[Definition 5.2.1]{HinichRect}, and they agree for groupoid-coloured operads $\cO$ by Appendix B loc.cit., in particular B.1.2. For our purposes Hinich's construction is more convenient. To recall it, we write $\CM$ for the operadic nerve  \cite[2.1.1.27]{LurieHA} of the ordinary $2$-coloured operad in sets encoding pairs of a commutative monoid and a module over it (see e.g.\,\cite[Example 3.1.10 (1)]{HarpazNuitenPrasma}), and write $\cM\cO\coloneqq \CM\times \cO$ for its product in $\Opd$ with $\cO$ (as a result of \cref{lem:operad-limits}, products in $\Opd$ can be computed as pullbacks in $\Cat$ over $\Fin_*$). There are maps of operads $\cO\ra \cM\cO\ra \cO$ which compose to the identity: the second is induced by the projection $\CM \to \Fin_*$ and the first by the map $\Com\ra \CM$ that encodes forgetting the module. Precomposition induces functors $\smash{\Alg_\cO(\cC)\ra \Alg_{\cM\cO}(\cC)\ra \Alg_\cO(\cC)}$. The fibre 
\begin{equation}\label{equ:o-modules}
	\smash{\oMod_A^\cO(\cC)\coloneqq \fib_{A}\big(\Alg_{\cM\cO}(\cC)\ra\Alg_{\cO}(\cC)\big)\in\Cat},
\end{equation}
of the second functor at $A$ is category $\smash{\oMod_A^\cO(\cC)}$ of $\cO$-modules over $A$. It contains a preferred object $\smash{A\in \oMod^\cO_A(\cC)}$ induced by the image of $A$ under the first functor (informally speaking: $A$ is a $\cO$-module over itself). The functor $\smash{\Alg_{\cM\cO}(\cC)\ra\Alg_{\cO}(\cC)}$ is a cartesian fibration (e.g.\,by \cite[2.12.2.1]{HinichYoneda}), so the categories $\smash{\oMod_A^\cO(\cC)}$ extend to a functor $\smash{\oMod_{(-)}^\cO(\cC)\colon \Alg_{\cO}(\cC)^\op\ra\Cat}$. In particular, any map $\varphi\colon A\ra B$ in $\smash{\Alg_{\cO}(\cC)}$ yields a restriction functor $\smash{\varphi^*\colon \oMod_{B}^\cO(\cC)\ra \oMod_{A}^\cO(\cC)}$.

\begin{ex}Here are two examples of the above construction:\,
\begin{enumerate}[leftmargin=*]
\item For $\cO=\Assoc$, there is an equivalence $\smash{\oMod^{\Assoc}_A(\cC)\simeq\BMod_{A,A}(\cC)}$, i.e.\,an $\Assoc$-module over an associative algebra $A$ is an $(A,A)$-bimodule \cite[4.4.1.28]{LurieHA}.
\item For $\cO=\Com$, there is an equivalence $\smash{\oMod^{\Com}_A(\cC)\simeq\LLMod_{A}(\cC)}$, i.e.\,a $\Com$-module over a commutative algebra $A$ in $\cC$ is a left-module over $A$ \cite[4.5.1.4]{LurieHA}.
\end{enumerate}
\end{ex}

\noindent Under some assumptions on $\cC$ and $\cO$, the category $\smash{\oMod^{\cO}_A(\cC)}$ is equivalent to a category of left-modules in $\cC$ over a certain associative algebra in $\cC$ associated to $A$. The statement (which is stated in the special case $\cO=E_d$ in a slightly different form in \cite[5.5.4.16]{LurieHA} without proof) will involve the following construction: given an $\cO$-algebra $A$ in a presentable symmetric monoidal category $\cC$, we may view it via \eqref{equ:universal-prop-env-on-cats} as a symmetric monoidal functor $A\colon \Env(\cO)\ra \cC$. Since $\cC$ has colimits, this extends to a lax monoidal functor $|{-}|_A\colon \PSh(\Env(\cO))\ra \cC$ by the discussion in \cref{equ:lax-monoidal-colimi-extension}.

\begin{thm}\label{thm:ed-modules-as-leftmodules}For an operad $\cO$ such that $(\cO^\col)^\simeq$ is connected, there is an associative algebra $U_{\cO}$ in $\PSh(\Env(\cO))$ and a left $U_{\cO}$-module $M_\cO$, such that given any $\cO$-algebra $A$ in a presentable symmetric monoidal category $\cC$, there is an equivalence of categories
	\[\oMod_A^{\cO}(\cC) \simeq \LLMod_{|{U_\cO}|_{A}} (\cC).\]
such that
	\begin{enumerate}[leftmargin=*]
	\item $A$, viewed as a $\cO$-module over itself, is mapped to the left-module $|M_\cO|_{A}$ over $|U_\cO|_{A}$,
	\item the equivalence is compatible with postcomposition with (individual) symmetric monoidal left adjoints $g\colon \cC\ra \cC'$ between presentable symmetric monoidal categories.
	\end{enumerate}
\end{thm}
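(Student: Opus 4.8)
The plan is to present both sides as monadic over $\cC$ via a forgetful functor and to identify the two monads; the associative algebra $U_\cO$ will be an operadic enveloping algebra. Since $(\cO^\col)^\simeq$ is connected we may fix a colour $c_0\in\cO^\col$ (different choices will give equivalent data). Recall that $\cM\cO=\CM\times\cO$ has colours the pairs with one entry in $\{\mathfrak a,\mathfrak m\}$ and one in $\cO^\col$, and that $\{\mathfrak a\}\times\cO^\col$ is the image of $\cO\ra\cM\cO$. Let $G\colon\oMod_A^\cO(\cC)\ra\cC$ send an $\cM\cO$-algebra over $A$ to its value at $(\mathfrak m,c_0)$; this is the usual underlying-object functor. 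I would verify the hypotheses of the Barr--Beck--Lurie theorem for $G$: it is conservative, since a map of $\cM\cO$-algebras over a fixed $A$ that is an equivalence at $(\mathfrak m,c_0)$ is an equivalence at every $(\mathfrak m,c)$ (via the unary $\cM\cO$-operations corresponding to equivalences $c_0\simeq c$ in $\cO^\col$) and is already an equivalence on algebra colours; it admits a left adjoint $F$ (the free $\cO$-module functor) since $\oMod_A^\cO(\cC)$ is presentable and $G$ preserves limits and filtered colimits; and it preserves $G$-split geometric realisations, since sifted colimits of $\cO$-modules are computed on underlying diagrams $\cO^\col\ra\cC$ and $G$ is an evaluation. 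Hence $\oMod_A^\cO(\cC)\simeq\LLMod_{T_A}(\cC)$ for the monad $T_A=GF$.

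\emph{Next}, adapting Lurie's construction of the enveloping algebra of an $\mathbb E_d$-algebra, I would build an associative algebra $U_\cO$ in $\PSh(\Env(\cO))$ whose underlying presheaf is the restriction of the representable $y_{\Env(\cM\cO)}\bigl((\mathfrak m,c_0)\bigr)$ along $\Env(\cO)\ra\Env(\cM\cO)$, $(c_s)_{s\in S}\mapsto((\mathfrak a,c_s))_{s\in S}\sqcup(\mathfrak m,c_0)$ — so it sends $(c_s)_{s\in S}$ to $\Mul_\cO((c_s)_{s\in S},c_0;c_0)$ — and whose multiplication encodes operadic composition into the $c_0$-slot. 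The decisive computation is that, for an $\cO$-algebra $A$ regarded via \eqref{equ:universal-prop-env-on-cats} as a symmetric monoidal functor $\Env(\cO)\ra\cC$ with colimit extension $\lvert{-}\rvert_A\simeq(-)\otimes_{\Env(\cO)}A$, the free $\cO$-module $F(X)$ over $A$ has underlying object
\[
GF(X)\;\simeq\;\Bigl(\int^{(c_s)_{s\in S}\in\Env(\cO)}\Mul_\cO((c_s)_{s\in S},c_0;c_0)\cdot{\textstyle\bigotimes_{s\in S}}A(c_s)\Bigr)\otimes X\;\simeq\;\lvert U_\cO\rvert_A\otimes X ,
\]
the coend unwinding the left Kan extension that computes free $\cO$-modules. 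Upgrading this from an equivalence of endofunctors to one of monads — the multiplication of $T_A$ being matched with that of the associative algebra $\lvert U_\cO\rvert_A$, which is an algebra in $\cC$ because $\lvert{-}\rvert_A$ is lax symmetric monoidal (see \cref{equ:lax-monoidal-colimi-extension}) — then gives $\LLMod_{T_A}(\cC)\simeq\LLMod_{\lvert U_\cO\rvert_A}(\cC)$, and hence the asserted equivalence.

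\emph{Finally}, let $\mathbb A_\cO\in\Alg_\cO(\PSh(\Env(\cO)))$ correspond to the symmetric monoidal Yoneda embedding, so $\lvert{-}\rvert_A(\mathbb A_\cO)\simeq A$ for all $A$ and $\lvert U_\cO\rvert_{\mathbb A_\cO}\simeq U_\cO$; define $M_\cO\in\LLMod_{U_\cO}(\PSh(\Env(\cO)))$ as the image of the canonical object ``$\mathbb A_\cO$ over itself'' under the equivalence in this universal case, noting that its underlying presheaf is then $y_{\Env(\cO)}(c_0)$. For naturality (ii): $G$, $F$, the monad $T_{(-)}$, and the identification $T_{(-)}\simeq\lvert U_\cO\rvert_{(-)}\otimes(-)$ are all natural along symmetric monoidal left adjoints $g\colon\cC\ra\cC'$, using that $g$ preserves the relevant colimits and monoidal structure and that $\lvert U_\cO\rvert_{g(A)}\simeq g(\lvert U_\cO\rvert_A)$ — the latter because $g\circ\lvert{-}\rvert_A$ and $\lvert{-}\rvert_{g\circ A}$ are both the colimit-preserving lax symmetric monoidal extension of $g\circ A$ — so the equivalence is compatible with each such $g$; and since the canonical module over $A$ is the image of ``$\mathbb A_\cO$ over itself'' under postcomposition with $\lvert{-}\rvert_A$, property (i) follows from (ii). The main obstacle is the middle step: constructing $U_\cO$ together with its associative-algebra structure as an honest object of $\PSh(\Env(\cO))$ (rather than the point-set formula above), and matching $T_A$ with $\lvert U_\cO\rvert_A\otimes(-)$ coherently and naturally in $A$ and $g$; the monadicity and the naturality bookkeeping are by comparison routine.
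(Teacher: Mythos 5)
Your route is genuinely different from the paper's. The paper explicitly declines to give "a clean $\infty$-categorical proof" and instead rigidifies everything: it replaces $\cO$ by a $\Sigma$-cofibrant $1$-coloured simplicial operad $\cat{O}$ (this is where connectedness of $(\cO^\col)^\simeq$ enters, just as your choice of $c_0$ does), replaces $\cC$ by a simplicial combinatorial symmetric monoidal model category via Nikolaus--Sagave, and then uses the \emph{strict} isomorphism of $1$-categories $\cat{Mod}^{\cat{O}}_{\cat{A}}(\cat{C}) \cong \cat{LMod}_{\cat{U_O(A)}}(\cat{C})$ for the strict enveloping algebra $\cat{U_O(A)} = \cat{U_O}\otimes_{\cat{Env(O)}}\cat{A}$ with $\cat{U_O}(S)=\cat{O}(S\sqcup\ast)$, before passing back to underlying $\infty$-categories (Harpaz--Nuiten--Prasma, Pavlov--Scholbach, and HA 4.3.3.17) and identifying strict with $\infty$-categorical coends. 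Your $U_\cO$ and $M_\cO$ agree with theirs ($\Mul_\cO((c_s),c_0;c_0)$ and the representable on $c_0$), and your derivation of (i) from the universal case $\mathbb A_\cO$ is a nice alternative to the paper's explicit $\cat{M_O}(S)=\cat{O}(S)$. If completed, your argument would be cleaner and would avoid the rectification machinery entirely.

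However, there is a genuine gap, and it sits exactly where you flag it: the step you call "by comparison routine" versus the step you call "the main obstacle" has the difficulty distributed the wrong way. Producing the associative algebra structure on $U_\cO$ in $(\PSh(\Env(\cO)),\otimes_{\mathrm{Day}})$ and, above all, upgrading the objectwise equivalence $GF(X)\simeq \lvert U_\cO\rvert_A\otimes X$ to an equivalence of \emph{monads} requires specifying an infinite tower of coherences; "the multiplication of $T_A$ being matched with that of $\lvert U_\cO\rvert_A$" is a structure to be constructed, not a property to be checked, and no mechanism for constructing it is given. This is precisely the higher-coherence problem that the paper's model-categorical detour exists to solve (there the matching of multiplications is a strict identity). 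A workable $\infty$-categorical fix would be to avoid comparing monads by hand, e.g.\,by exhibiting both $\oMod^\cO_A(\cC)$ and $\LLMod_{\lvert U_\cO\rvert_A}(\cC)$ as left-module categories over a single monoidal functor category and invoking uniqueness of monadic adjunctions (HA 4.7.3.16 / 4.7.1.34--4.7.1.41), but as written the central identification is asserted rather than proved. Secondary, smaller gaps: monadicity of $G$ needs the statement that $\Alg_{\cM\cO}(\cC)\to\Fun((\cM\cO)^\col,\cC)$ detects sifted colimits (HA 3.2.3.1) together with an identification of the left adjoint in the \emph{fibre} over $A$ (the free $\cM\cO$-algebra does not obviously live over $A$), and the coend formula for that fibrewise left adjoint is exactly the computation you defer.
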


\begin{proof}There ought to be a clean $\infty$-categorical proof of this equivalence, but we will content ourselves with a more ad-hoc argument using model categories. We temporarily change our conventions for this proof: a ``category'' is now an ordinary category, an  ``operad'' is now an ordinary operad, etc.. We add $\infty$-prefixes to emphasise $\infty$-categorical objects.

\medskip

\noindent Fix a $\Sigma$-cofibrant (i.e.~$\Sigma_k$ acts freely on $\cat{O}(k)$ for all $k \geq 0$) $1$-coloured operad $\cat{O}$ in Kan complexes whose operadic nerve $N^\otimes(\cO)$ is equivalent to $\cO$ (use \cite[Corollary 1.2]{ChuHaugsengHeuts} to find a cofibrant coloured operad whose operadic nerve is equivalent to $\cO$ and pass to the suboperad generated by a single colour). Recall that any simplicial model category $\cat{C}$ induces an $\infty$-category $\cat{C}_\infty\in\Cat$ by either taking the coherent nerve of the $\Kan$-enriched category of cofibrant-fibrant objects in $\cat{C}$, or equivalently by taking the coherent nerve of the discrete category of cofibrant objects in $\cat{C}$ followed by inverting weak equivalences \cite[1.3.4.15, 1.3.4.20]{LurieHA}. By \cite[Theorems 1.1 and 2.8]{NikolausSagave}, any presentable symmetric monoidal $\infty$-category $\cC$ arises in this way from a simplicial combinatorial symmetric monoidal model category $\cat{C}$, and any symmetric monoidal left adjoint $g \colon \cC \to \cC'$ from a symmetric monoidal left Quillen functor $\cat{g} \colon \cat{C} \to \cat{C}'$. Moreover, by Theorem 2.5 (i) loc.cit., the model categories $\cat{C}$ appearing can be chosen so that every 1-coloured simplicial operad $\cat{O}$ is \emph{$\cat{C}$-admissible}, meaning that the category $\cat{Alg}_{\cat{O}}(\cat{C})$ of algebras in $\cat{C}$ over $\cat{O}$ carries the transferred model structure (the unique model structure in which weak equivalences and fibrations are detected by the forgetful functor to $\cat{C}$). Since $\cat{O}$ is $\Sigma$-free, it is projectively cofibrant as a symmetric sequence, so by \cite[Theorem 7.11]{PavlovScholbach} natural functor of $\infty$-categories	
$\smash{\cat{Alg}_{\cat{O}}(\cat{C})_\infty \ra \Alg_{O}(\cat{C}_\infty)}$
is an equivalence. We fix a cofibrant representative $\cat{A}\in \cat{Alg}_{\cat{O}}(\cat{C})$ for $A\in \Alg_{O}(\cat{C}_\infty)$ which in particular implies that the underlying object in $\cat{C}$ is cofibrant (combine that a cofibration is a retract of a transfinite composition of relative cell attachments with \cite[Lemma V.20.1.A]{FresseBook} which says that the underlying map of a relative cell attachment is a cofibration when $\cat{O}$ is $\Sigma$-cofibrant). We now prove the claim by constructing a zig-zag of equivalences of categories
\[
	\smash{\oMod^{\cO}_A(\cat{C}_\infty)  \xsla{\circled{1}} \cat{Mod}^{\cat{O}}_\cat{A}(\cat{C})_\infty \overset{\circled{2}}{\simeq}  \cat{LMod}_{\cat{U_O(A)}}(\cat{C})_\infty \xsra{\circled{3}}   \LLMod_{U_\cO(A)}(\cat{C}_\infty)\overset{\circled{4}}{\simeq}   \LLMod_{\lvert U_\cO \rvert_A}(\cat{C}_\infty)}
\]
 which is natural in left Quillen functors $\cat{g} \colon \cat{C} \to \cat{C}'$.
 
\begin{enumerate}[label=$\circled{\arabic{*}}$,leftmargin=0.5cm]
	\item The equivalence $\circled{1}$ is an instance of \cite[Proposition 4.3.2]{HarpazNuitenPrasma}. The source is the underlying $\infty$-category of the model category $\cat{Mod}^{\cat{O}}_\cat{A}(\cat{C})$ of strict $\cat{O}$-modules over $\cat{A}$ in $\cat{C}$ as defined e.g.\,in \cite[Definition 3.1.17]{HarpazNuitenPrasma} or in \cite[4.2.2]{FresseBook} (in loc.cit.\,strict $\cat{O}$-modules over $\cat{A}$ are called \emph{representations} of the $\cat{O}$-algebra $A$), equipped with the transferred model structure with respect to the forgetful functor to $\cat{C}$.
	\item Unwinding the definition of $\cat{Mod}^{\cat{O}}_\cat{A}(\cat{C})$, one sees that there is an associative algebra $\cat{U_O(A)}$ in $\cat{C}$ such that there is a strict isomorphism of categories $\smash{\cat{Mod}^{\cat{O}}_\cat{A}(\cat{C}) \cong \cat{LMod}_\cat{U_O(A)} (\cat{C})}$ compatible with the forgetful functors to $\cat{C}$, where the right-hand side is the category of strict left-modules over $\cat{U_O(A)}$ in $\cat{C}$ (see e.g.\,\cite[4.3.1, 4.3.2]{FresseBook}). Equipping both sides with the transferred model structure yields the equivalence $\circled{2}$. The algebra $\cat{U_O(A)}$ and the $\cat{U_O(A)}$-module $\cat{M_O(A)}$ that corresponds to $A$ as an $\cat{O}$-module over $\cat{A}$ can be described as follows: recalling that $\cat{S}$ denotes the category of simplicial sets, there are simplicial presheaves $\cat{U_O}\colon \cat{Env(O)}^\op\ra \cat{S}$ and $\cat{M_O}\colon \cat{Env(O)}^\op\ra \cat{S}$ on the simplicial envelope of the simplicial operad $\cat{O}$, given by $(\cat{U_O})(S) \coloneqq \cat{O}(S\sqcup\ast)$ and $(\cat{M_O})(S) \coloneqq \cat{O}(S)$ for finite sets $S$. The presheaf $\cat{U_O}$ carries a canonical strict associative algebra structure with respect to the Day convolution symmetric monoidal structure on presheaves, and $\cat{M_O}$ carries a left $\cat{U_O}$-action; both actions are induced by operadic composition at the input labelled by $\ast$. Viewing $\cat{A}$ as a symmetric monoidal functor $\cat{A}\colon \cat{Env(O)}\ra \cat{C}$, the associative algebra $\cat{U_O(A)}$ and the $\cat{U_O(A)}$-module $\cat{M_O(A)}$ are given by the simplicial coends $\cat{U_O(A)} = \cat{U_O} \otimes_{\cat{Env(O)}} \cat{A}$ and $\cat{M_O(A)}=\cat{M_O}\otimes_{\cat{Env(O)}} \cat{A}$ (its underlying object is isomorphic to $\cat{A}$), with algebra and module structure induced by that of $\cat{U_O}$ and $\cat{A}$. The underlying object of the coend $\cat{U_O(A)}$ is cofibrant (this can be seen e.g.\,by rewriting it as a relative composition product, c.f.~5.1.3 loc.cit., and applying Lemma 15.2.C loc.cit.~taking $M = \varnothing$, $N = \cat{U_O}$ which is $\Sigma$-cofibrant because $\cat{O}$ is, $A$ initial, and $B = \cat{A}$ which is cofibrant).
	\item This is an instance of \cite[4.3.3.17]{LurieHA} and uses that the underlying object of $\cat{U_O(A)}$ is cofibrant. 
	\item The strict coends $\cat{U_O(A)} = \cat{U_O} \otimes_{\cat{Env(O)}} \cat{A}$ and $\cat{M_O(A)}=\cat{M_O}\otimes_{\cat{Env(O)}} \cat{A}$ agree with the homotopy coends, using \cite[Theorem 15.2.A]{FresseBook}, that $\cat{U_O}$ and $\cat{M_O}$ are $\Sigma$-cofibrant, and that $\cat{A}$ is cofibrant. They hence model the corresponding $\infty$-categorical coends, which are in turn equivalent to $\lvert
	U_\cO\rvert_A$ and $\lvert M_\cO\rvert_A$ (see \cref{sec:presheaves-yoneda}). The algebra and module structure agree because the lax monoidal lift of the coend construction is unique, so the one constructed model-categorically agrees with the one constructed $\infty$-categorically.
	\qedhere
\end{enumerate}
\end{proof}

\subsection{(Op)lax natural transformations and lax limits}\label{sec:mate-calculus} On an informal level, an \emph{oplax} respectively \emph{lax natural transformation} $\eta\colon F\ra G$ between $\Cat$-valued functors $F,G\colon \cC\ra\Cat$ consists of squares
\begin{equation}\label{equ:oplax-diagram-informal}
	\begin{tikzcd}
	F(c)\arrow["F(\varphi)", swap,d]\rar{\eta_{c}}&G(c)\dar{G(\varphi)}\\
	F(d)\arrow[r,"\eta_{d}",swap]\arrow[shorten <=10pt,shorten >=10pt,Rightarrow]{ru}{\eta_\varphi} &G(d)
	\end{tikzcd}
	\quad \text{respectively}\quad
	\begin{tikzcd}
	F(c)\arrow["F(\varphi)", swap,d]\rar{\eta_{c}}&G(c)\dar{G(\varphi)}\arrow[shorten <=10pt,shorten >=10pt,Rightarrow,swap]{dl}{\eta_\varphi} \\
	F(d)\arrow[r,"\eta_{d}",swap]&G(d)
	\end{tikzcd}
\end{equation}
for each morphism $\varphi\colon c\ra d$ in $\cC$ where $\eta_\varphi$ is a natural transformation  $\eta_dF(\varphi)\ra G(\varphi)\eta_c$ respectively $G(\varphi)\eta_c\ra \eta_dF(\varphi)$. Following \cite[Section 1.3]{HHLN}, here is how to make this  precise: writing $\twoCat$ for the $2$-category of categories, an oplax (respectively lax) natural transformation $\eta\colon F\ra G$ between functors $F,G\colon \cC\ra\Cat$ is a $2$-functor $\cC \boxtimes [1] \to \twoCat_2$ (respectively $[1] \boxtimes \cC \to \twoCat_2$) where $(-)\boxtimes(-)$ is the Gray tensor product of $2$-categories, with equivalences between its restriction to $\cC \boxtimes \{0\}\simeq \cC$ respectively $\cC \boxtimes \{1\}\simeq \cC$ to $F$ respectively $G$ (see loc.cit.\,for further explanations and references). The relation to the informal description above is as follows: given a morphism in $\cC$ viewed as a functor $[1]\ra \cC$, a $2$-functor of the form $\cC \boxtimes [1] \to \twoCat_2$ (or $ [1]\boxtimes\cC\to \twoCat_2$) gives by precomposition a $2$-functor $[1] \boxtimes [1]\to \twoCat_2$. The latter corresponds to diagram as in \eqref{equ:oplax-diagram-informal} since the Gray tensor product $[1]\boxtimes [1]$ is the discrete $2$-category indicated by 
\[
	\begin{tikzcd}[row sep=0.7cm, column sep=0.7cm]
	\cdot\arrow[d]\rar&\cdot\dar\arrow[shorten <=10pt,shorten >=10pt,Rightarrow,swap]{dl} \\
	\cdot\rar&\cdot
\end{tikzcd}\]
There are categories $\Fun^\lax(\cC,\twoCat)$ and $\Fun^\oplax(\cC,\twoCat)$ of functors $\cC\ra \Cat$ and (op)lax natural transformations between them. These categories are uniquely characterised by equivalences 
\begin{equation}\label{equ:adjunction-oplax-cat}
\begin{split}
\Map_{\Cat}(-,\Fun^\lax(\cC,\twoCat))\simeq &\ \Map_{\Cat_2}(- \boxtimes \cC,\twoCat), \\ \Map_{\Cat}(-,
\Fun^\oplax(\cC,\twoCat))\simeq &\ \Map_{\Cat_2}(\cC \boxtimes -,\twoCat)
\end{split}
\end{equation}
where $\Cat_2$ is the category of $2$-categories. Both $\Fun^\lax(\cC,\twoCat)$ and $\Fun^\oplax(\cC,\twoCat)$ contain the usual functor category $\Fun(\cC,\Cat)$ as the wide subcategory on those (op)lax natural transformations for which all $2$-cells $\eta_\varphi$ are equivalences. Via the characterising equivalences, this corresponds to the fact that functors $\cC \times \cD\ra \Cat$ for $\cC,\cD\in\Cat$ are the same as $2$-functors  $\cC \boxtimes \cD\ra \twoCat$ that send all $2$-morphisms to equivalences, i.e.\,factor through the $2$-functor $\cC \boxtimes \cD\ra \cC\times\cD$ given by inverting all $2$-morphisms in $\cC \boxtimes \cD$. More generally, there is a $2$-functor $(\cC \times \cC') \boxtimes \cD \to \cC \times (\cC' \boxtimes \cD)$ that inverts all $2$-morphisms in the image of $(\cC \times \{c'\}) \boxtimes \cD$ for $c' \in \cC'$, so $2$-functors out of $(\cC \times \cC') \boxtimes \cD$ that map these $2$-morphisms to equivalences are the same as $2$-functors out of $\cC \times (\cC' \boxtimes \cD)$.
\subsubsection{The mate correspondence}There are wide subcategories $\smash{\Fun^{\oplax,L}(\cC,\twoCat)}$ of $\smash{\Fun^{\oplax}(\cC,\twoCat)}$, and $\smash{\Fun^{\lax,R}(\cC,\twoCat)}$ of $\smash{\Fun^{\lax,R}(\cC,\twoCat)}$, given by those lax respectively oplax transformations for which $\eta_c$ is a left or right adjoint for all $c\in\cC$, respectively. As a result of \cite[Corollary F]{HHLN}  there is an equivalence of categories 
\begin{equation}\label{equ:mate-equ}
 	\smash{\Fun^{\oplax,L}(\cC,\twoCat)}\simeq \smash{\Fun^{\lax,R}(\cC,\twoCat)}^\op
\end{equation} 
which is natural in $\cC\in\Cat$ with respect to precomposition. It is the identity on objects and sends an oplax natural transformation $\eta\colon F\ra G$ to a lax natural transformation $\overline{\eta}\colon G\ra F$ where $\overline{\eta}_c\colon G(c)\ra F(c)$ is for $c\in\cC$ the right adjoint to $\eta_c$ and $\overline{\eta}_\varphi$ is the Beck--Chevalley transformation
\begin{equation}\label{equ:mate-bc}
	F(\varphi) \overline{\eta}_c \ra  \overline{\eta}_d\eta_dF(\varphi) \overline{\eta}_c\ra  \bar{\eta}_dG(\varphi)\eta_c \overline{\eta}_c\ra  \overline{\eta}_dG(\varphi)
\end{equation}
whose outer maps are induced by the (co)units of the adjunctions $\overline{\eta}_c\dashv \eta_c$ and $\overline{\eta}_d\dashv \eta_d$ for $e=c,d$ and the middle one by $\eta_\varphi$. Diagrammatically, it turns
\[\begin{tikzcd}
	F(c) \dar[swap]{F(\varphi)} \rar{\eta_c} & G(c) \dar{G(\varphi)} \\
	F(d) \arrow[r,"\eta_{d}",swap] \arrow[shorten <=10pt,shorten >=10pt,Rightarrow]{ru}{\eta_\varphi} & G(d)
\end{tikzcd} \qquad \text{into} \qquad \begin{tikzcd}
	G(c) \dar[swap]{G(\varphi)} \rar{\overline{\eta}_c}& F(c) \arrow[shorten <=10pt,shorten >=10pt,Rightarrow,swap]{dl}{\overline{\eta}_\varphi}\dar{F(\varphi)}  \\
	G(d)  \arrow[r,"\overline{\eta}_{d}",swap]  & F(d).
\end{tikzcd}\] 

\subsubsection{Lax limits}\label{sec:laxlim}Given a category $\cC$, there is a functor $\laxlim_{\cC}\colon \Fun(\cC,\Cat)\ra\Cat$ that sends a functor $F\colon \cC\ra\Cat$ to its \emph{lax limit} $\laxlim_{\cC}(F)\in\Cat$. Roughly speaking, the lax limit of a $\Cat$-valued functor is a variant of the limit that takes the $2$-categorical structure of $\Cat$ into account. There are several equivalent models for the functor $\laxlim_{\cC} $: one way to define it is via weighted limits as in \cite[Definition 2.9]{GepnerHaugsengNikolaus}, another one by sending $F\colon \cC\ra\Cat$ to the category $\smash{\Fun_{\cC}(\cC,\int_{\cC}F)}$ of sections of its cocartesian unstraightening $\smash{\int_{\cC}F\ra \cC}$, as in Corollary 7.7 loc.cit.. 
\begin{ex}\label{ex:functor-cats-are-lax-limits}Functor categories are lax limits: for a category $\cD$, the cocartesian unstraightening of $\const_{\cD}\colon \cC\ra\Cat$ is $\pr_2\colon \cC\times\cD\ra \cC$ so we have $\smash{\laxlim_{\cC}(\const_\cD)\simeq \Fun_{\cC}(\cC,\cC\times\cD)\simeq \Fun(\cC,\cD)}$.
\end{ex}

\noindent The lax limit $\laxlim_{\cC}(F)$ of a functor turns out to not only be functorial in natural transformations, it is also functorial in lax natural transformations, i.e.\,$\laxlim_{\cC}\colon \Fun(\cC,\Cat)\ra\Cat$ extends to a functor $\laxlim_{\cC}\colon \Fun^\lax(\cC,\twoCat)\ra\Cat$. One way to implement this is to combine the description of $\laxlim_{\cC}(F)$ as sections of the cocartesian unstraightening with \cite[Theorem E]{HHLN}. In this model, it is also clear that given a functor $\varphi\colon \cC\ra\cF$, there is a canonical natural transformation of functors $\Fun^\lax(\cF,\twoCat)\ra\Cat$ from $\laxlim_{\cF}$ to $\laxlim_{\cC}\circ\varphi^*$. For instance, coming back to \cref{ex:functor-cats-are-lax-limits}, the components of the natural transformation $\laxlim_{\cF}$ to $\laxlim_{\cC}\circ\varphi^*$ at $\const_{\cD}\in \Fun^\lax(\cC,\twoCat)$ is given by the precomposition functor $\varphi^*\colon \Fun(\cF,\cD)\ra \Fun(\cC,\cD)$. There are similar notions of \emph{oplax limits} and \emph{(op)lax colimits}, but they will not appear in this work.

\subsection{Miscellaneous on right-fibrations} \label{sec:rfib}We record five facts on right-fibrations (to recall, right-fibrations are cartesian fibrations whose straightening takes values in groupoids). To state these facts, we fix a commutative square of categories
\begin{equation}\label{equ:BC-original-square}
	\begin{tikzcd}
	\cA\dar[swap]{\gamma}\rar{\alpha} & \cC\dar{\delta}\\
	\cB\rar{\beta} & \cD
	\end{tikzcd}
\end{equation}
and consider the square of space-valued presheaf categories $\PSh(-)=\Fun((-)^{\op},\cS)$ 
\begin{equation}\label{equ:BC-presheaf-square}
	\begin{tikzcd}
	\PSh(\cB)\dar[swap]{\gamma^*}\rar{\beta_!} & \PSh(\cD)\dar{\delta^*}\\
	\PSh(\cA)\rar{\alpha_!} & \PSh(\cC),
	\end{tikzcd}
\end{equation}
given vertically by restricting and horizontally by left Kan extending. The two compositions in this square are related by a \emph{Beck--Chevalley transformation} \begin{equation}\label{equ:BC-trafo}
	\alpha_! \gamma^* \lra \delta^* \beta_!
\end{equation} 
which can be obtained in two ways that agree as a result of the triangle identities: either as the composition $\alpha_! \gamma^* \ra \delta^*\delta_!\alpha_! \gamma^*\simeq \delta^*\beta_!\gamma_! \gamma^*\ra \delta^*\beta_!$ involving commutativity of \eqref{equ:BC-original-square}, the unit of the adjunction $\delta_!\dashv\delta^*$, and the counit of $\gamma_!\dashv\gamma^*$, or as the composition $\alpha_! \gamma^* \ra \alpha_! \gamma^*\beta^*\beta_!\simeq\alpha_! \alpha^*\delta^*\beta_! \ra \delta^*\beta_!$ involving commutativity of \eqref{equ:BC-original-square}, the unit of $\beta_!\dashv\beta^*$, and the counit of $\alpha_!\dashv\alpha^*$. If $\beta$ is right-fibration and the square \eqref{equ:BC-original-square} is cartesian, then \eqref{equ:BC-trafo} is an equivalence:

\begin{lem}\label{lem:right-fib-BC}If the square \eqref{equ:BC-original-square} of categories is cartesian and $\beta$ a right-fibration, then the square of presheaf categories \eqref{equ:BC-presheaf-square} commutes in that the Beck--Chevalley transformation \eqref{equ:BC-trafo} is an equivalence.
\end{lem}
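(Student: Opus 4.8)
The plan is to reduce the statement to a pointwise check, evaluating both sides of the Beck--Chevalley transformation \eqref{equ:BC-trafo} at an object $c\in\cC$ and at a presheaf $P\in\PSh(\cB)$. Since colimits in presheaf categories are computed pointwise and left Kan extension is a pointwise colimit, I would first recall the coend/colimit formula: for $\varphi\colon \cX\ra\cY$ and $Q\in\PSh(\cX)$, one has $(\varphi_!Q)(y)\simeq \colim_{(x,\,\varphi(x)\ra y)\in \cX\times_{\cY}\cY_{/y}} Q(x)$, i.e.\,the colimit of $Q$ restricted along the projection $\cX_{/y}\coloneqq \cX\times_{\cY}\cY_{/y}\ra\cX$. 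Thus evaluating $\alpha_!\gamma^*P$ at $c$ gives $\colim_{\cA_{/c}} (\gamma^*P)\circ(\text{proj})$ where $\cA_{/c}\coloneqq \cA\times_{\cC}\cC_{/c}$, while evaluating $\delta^*\beta_!P$ at $c$ gives $(\beta_!P)(\delta(c))\simeq \colim_{\cB_{/\delta(c)}} P\circ(\text{proj})$ where $\cB_{/\delta(c)}\coloneqq \cB\times_{\cD}\cD_{/\delta(c)}$. Unwinding the two constructions of \eqref{equ:BC-trafo}, the induced map on these colimits is the one induced by the canonical functor $\cA_{/c}\ra \cB_{/\delta(c)}$ coming from the square \eqref{equ:BC-original-square} (it sends $(a,\alpha(a)\ra c)$ to $(\gamma(a),\beta\gamma(a)=\delta\alpha(a)\ra\delta(c))$), together with the equivalence of the restricted diagrams over this functor, which holds because $\gamma^*P$ restricted to $\cA_{/c}$ is by definition $P\circ\gamma$ restricted, matching $P$ pulled back along $\cA_{/c}\ra\cB_{/\delta(c)}\ra\cB$.

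The crux is therefore to show that the functor $\cA_{/c}\ra \cB_{/\delta(c)}$ is \emph{cofinal} (so that it induces an equivalence on the colimits of any diagram pulled back from $\cB_{/\delta(c)}$). Here is where the hypotheses enter. Because \eqref{equ:BC-original-square} is cartesian, $\cA\simeq \cB\times_{\cD}\cC$, and hence $\cA_{/c}=\cA\times_{\cC}\cC_{/c}\simeq \cB\times_{\cD}\cC_{/c}\simeq \cB\times_{\cD}\cD_{/\delta(c)}\times_{\cD_{/\delta(c)}}\cC_{/c}$; but $\cD_{/\delta(c)}\times_{\cD_{/\delta(c)}}(-)$ is nothing, so more usefully $\cA_{/c}\simeq (\cB\times_{\cD}\cD_{/\delta(c)})\times_{\cD_{/\delta(c)}}\cC_{/c}=\cB_{/\delta(c)}\times_{\cD_{/\delta(c)}}\cC_{/c}$ using that $\delta\colon\cC\ra\cD$ sends the projection $\cC_{/c}\ra\cC$ to $\cD_{/\delta(c)}\ra\cD$ via $\delta(c)$. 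So $\cA_{/c}\ra\cB_{/\delta(c)}$ is the base change of $\cC_{/c}\ra\cD_{/\delta(c)}$ along $\cB_{/\delta(c)}\ra\cD_{/\delta(c)}$. Now $\beta$ a right-fibration means $\beta$ is a cartesian fibration with groupoid fibres; hence the base change $\cB_{/\delta(c)}\ra\cD_{/\delta(c)}$ is again a right-fibration (right-fibrations are stable under base change, as recalled before Lemma~\ref{lem:operadic-right-fib}). Cofinality is preserved under base change along a cartesian fibration — more precisely, since $\cC_{/c}$ has a terminal object $\id_c$, the functor $\cC_{/c}\ra\cD_{/\delta(c)}$ is cofinal (it has a left adjoint? no — rather, $\cD_{/\delta(c)}$ also has a terminal object $\id_{\delta(c)}$ and the functor preserves it, so it is cofinal by the criterion that a terminal-object-preserving functor between categories with terminal objects, which is also ... ). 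Let me instead argue: a functor from a category with a terminal object which sends that terminal object to a terminal object is cofinal (Lurie, \cite[4.1.1.8 ff.]{LurieHTT}, or directly: both $\cC_{/c}$ and $\cD_{/\delta(c)}$ admit terminal objects and the functor preserves them). Then I would invoke that the base change of a cofinal functor along a cartesian (indeed a left, or a right) fibration is cofinal — this is the key technical input, and is exactly the content of \cite[4.1.2.11 / 4.1.2.13]{LurieHTT} (cofinality of $\cA_{/c}\ra\cB_{/\delta(c)}$ follows from cofinality of $\cC_{/c}\ra\cD_{/\delta(c)}$ together with the fact that $\cB_{/\delta(c)}\ra\cD_{/\delta(c)}$ is a cartesian fibration).

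Granting the cofinality claim, the map on pointwise colimits is an equivalence for every $c\in\cC$ and every $P\in\PSh(\cB)$, hence \eqref{equ:BC-trafo} is an equivalence, proving the lemma. The step I expect to be the main obstacle is the clean justification of the cofinality of the base-changed functor $\cA_{/c}\simeq \cB_{/\delta(c)}\times_{\cD_{/\delta(c)}}\cC_{/c}\ra\cB_{/\delta(c)}$: one must be careful that it is genuinely a pullback (using the universal property of slices and that \eqref{equ:BC-original-square} is cartesian), and then cite the correct stability statement for cofinal maps under base change along (co)cartesian fibrations — the hypothesis that $\beta$ is a right-fibration (not merely a cartesian fibration) is what guarantees the relevant fibration $\cB_{/\delta(c)}\ra\cD_{/\delta(c)}$ is of the required type and that everything is compatible. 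An alternative, perhaps slicker, route that avoids slices altogether: note $\delta^*\beta_! P$ and $\alpha_!\gamma^*P$ are both colimit-preserving in $P$, and every presheaf is a colimit of representables $y_{\cB}(b)$; for $P=y_{\cB}(b)$ one has $\beta_! y_{\cB}(b)\simeq y_{\cD}(\beta(b))$ and $\gamma^* $ of a representable... this does not simplify as cleanly, so I would keep the pointwise-colimit/cofinality argument as the primary line.
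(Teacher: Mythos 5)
Your overall strategy---reduce the Beck--Chevalley transformation to a pointwise comparison of colimits and prove that the induced functor between indexing categories is cofinal, by exhibiting it as a base change of a cofinal functor between (co)slices along a fibration obtained from $\beta$---is viable and genuinely different from the paper's proof. The paper instead passes through straightening--unstraightening: presheaves correspond to right-fibrations, restriction corresponds to pullback of right-fibrations, and (by \cite[Propositions 9.7, 9.8]{GepnerHaugsengNikolaus}) $\beta_!$ for a right-fibration $\beta$ corresponds to postcomposition with $\beta$, i.e.\,to the forgetful functor $\PSh(\cD)_{/F_\beta}\ra\PSh(\cD)$; the square then commutes by inspection from the stability of right-fibrations under pullback. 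Your route is more elementary and does not need the identification of $\beta_!$, at the cost of the cofinality bookkeeping; the paper's is a two-line reduction once the dictionary is in place.

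As written, however, your argument has a variance error that invalidates the crucial step. The pointwise formula for left Kan extension of \emph{presheaves} is $(\varphi_!Q)(y)\simeq\colim Q(x)$ over (the opposite of) $\cX\times_\cY\cY_{y/}$, whose objects are pairs $(x,\,y\ra\varphi(x))$; the formula you wrote, with $(x,\,\varphi(x)\ra y)$, is the one for covariant functors. (Test case: $\varphi\colon\ast\ra[1]$ hitting $1$ and $Q$ the presheaf with value $Z$; then $\varphi_!Q\simeq Z\times y_{[1]}(1)$ has value $Z$ at $0$, whereas your formula gives $\varnothing$.) This is not merely cosmetic, because the stability statement you then invoke---``the base change of a cofinal functor along a cartesian (indeed a left, or a right) fibration is cofinal''---is \emph{false} for right-fibrations: pull back the cofinal inclusion $\{1\}\hookrightarrow[1]$ along the right-fibration over $[1]$ classified by a presheaf with $F(0)=\ast$ and $F(1)=S^1$; the result is the inclusion of the fibre $S^1$ into the total category, and the relevant comma category over the object in the fibre over $0$ is equivalent to $S^1$, which is not weakly contractible. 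The correct statements are that cofinal functors are stable under base change along \emph{cocartesian} fibrations (smoothness, \cite[4.1.2]{LurieHTT}), and dually initial functors along cartesian fibrations. With the correct variance everything lines up: the comparison functor is the base change of $(\cC_{c/})^\op\ra(\cD_{\delta(c)/})^\op$---cofinal because it preserves terminal objects---along the \emph{left} fibration $(\cB\times_\cD\cD_{\delta(c)/})^\op\ra(\cD_{\delta(c)/})^\op$, which is the true stability statement. So the proof is repairable by a mechanical correction, but the cofinality claim you actually rely on, in the form stated, does not hold.
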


\begin{proof}
Once rephrased via the straightening-unstraightening equivalence in terms of categories of right-fibrations as opposed to categories of presheaves, this follows from the fact that right-fibrations are stable under pullback \cite[p.~58]{LurieHTT} along with \cite[Proposition 9.7, 9.8]{GepnerHaugsengNikolaus}.
%
\end{proof}

Given a functor $\varphi\colon \cC\ra \cD$ and an object $d\in\cD$, we abbreviate $\cC_d\coloneqq \fib_{d}(\varphi)$ for the fibre. 

\begin{lem}\label{lem:criterion-mapping-space-equivalence}Fix $k\ge0$. If the square of categories \eqref{equ:BC-original-square} is cartesian and $\beta$ a right-fibration, then given objects $c,c'\in\cC$, the map between mapping spaces
\[\Map_{\cC}(c,c')\ra \Map_{\cD}(\delta(c),\delta(c'))\]
is $k$-connected if the following condition is satisfied: there exists a lift $\overline{c}'\in \cA_{c'}$ of $c'$ along $\alpha$ such that for all lifts $\overline{c}\in \cA_{c}$ of $c$ along $\alpha$, the map $\Map_{\cA}(\overline{c},\overline{c}')\ra \Map_{\cB}(\gamma(\overline{c}),\gamma(\overline{c}'))$ is $k$-connected.
\end{lem}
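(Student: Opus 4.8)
The plan is to compute both mapping spaces explicitly as homotopy fibres, using the standard description of mapping spaces in the total space of a right-fibration, and to reduce the connectivity assertion to a single cartesian square of spaces. First I would observe that, since \eqref{equ:BC-original-square} is cartesian and $\beta$ is a right-fibration, the functor $\alpha$ is a right-fibration as well, and that its straightening is $\delta^*G$, where $G = \mathrm{St}(\beta)\colon \cD^{\op}\to\cS$ is the straightening of $\beta$. In particular, for every object $c\in\cC$ the functor $\gamma$ restricts to an equivalence $\cA_c\simeq G(\delta(c))$, so the lifts of $c$ along $\alpha$ are precisely the objects of $G(\delta(c))$; under this identification a lift $\overline{c}$ corresponds to $\gamma(\overline{c})$, which lies in $\fib_{\delta(c)}(\beta)$ because $\beta\gamma = \delta\alpha$.

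Next I would invoke the standard formula for mapping spaces in a right-fibration $p\colon\cE\to\cB$ with straightening $F$: for $e\in\fib_b(p)$ and $e'\in\fib_{b'}(p)$ there is a natural equivalence $\Map_{\cE}(e,e')\simeq\fib_e\big(\Map_{\cB}(b,b')\xra{F(-)(e')}F(b)\big)$, compatible with the projections down to $\Map_{\cB}(b,b')$; one way to produce this is to regard $p$ as a cartesian fibration with discrete fibres and use cartesian transport. Applying this to $\alpha$ and to $\beta$, writing $\delta_*\colon\Map_{\cC}(c,c')\to\Map_{\cD}(\delta(c),\delta(c'))$ for the map under consideration and $u\coloneqq G(-)(\gamma(\overline{c}'))\colon\Map_{\cD}(\delta(c),\delta(c'))\to G(\delta(c))$, and using that $(\delta^*G)(g)=G(\delta g)$ so that $(\delta^*G)(-)(\overline{c}')=u\circ\delta_*$, I obtain natural identifications $\Map_{\cA}(\overline{c},\overline{c}')\simeq\fib_{\gamma(\overline{c})}(u\circ\delta_*)$ and $\Map_{\cB}(\gamma(\overline{c}),\gamma(\overline{c}'))\simeq\fib_{\gamma(\overline{c})}(u)$, under which the map of the hypothesis becomes the map $\fib_{\gamma(\overline{c})}(u\circ\delta_*)\to\fib_{\gamma(\overline{c})}(u)$ induced by $\delta_*$.

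The point that makes the argument go through is that the commutative square
\[
\begin{tikzcd}
\fib_{\gamma(\overline{c})}(u\circ\delta_*)\rar\dar & \Map_{\cC}(c,c')\dar{\delta_*}\\
\fib_{\gamma(\overline{c})}(u)\rar & \Map_{\cD}(\delta(c),\delta(c'))
\end{tikzcd}
\]
is cartesian, since $\fib_{\gamma(\overline{c})}(u\circ\delta_*)\simeq\Map_{\cC}(c,c')\times_{\Map_{\cD}(\delta(c),\delta(c'))}\fib_{\gamma(\overline{c})}(u)$ by the pasting law for pullbacks. Hence the homotopy fibre of $\fib_{\gamma(\overline{c})}(u\circ\delta_*)\to\fib_{\gamma(\overline{c})}(u)$ over a point lying over $y\in\Map_{\cD}(\delta(c),\delta(c'))$ is $\fib_y(\delta_*)$. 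To finish, I would fix an arbitrary $y\colon\delta(c)\to\delta(c')$, set $z\coloneqq u(y)=G(y)(\gamma(\overline{c}'))$, and use the first step to choose a lift $\overline{c}$ of $c$ with $\gamma(\overline{c})\simeq z$; then $(y,\const_z)$ is a point of $\fib_{\gamma(\overline{c})}(u)$ lying over $y$. By hypothesis the map $\fib_{\gamma(\overline{c})}(u\circ\delta_*)\to\fib_{\gamma(\overline{c})}(u)$ is $k$-connected, hence all of its homotopy fibres are $(k-1)$-connected; in particular $\fib_y(\delta_*)$ is $(k-1)$-connected. As $y$ was arbitrary, $\delta_*$ is $k$-connected.

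I expect the main obstacle to be assembling the right-fibration mapping-space formula in a sufficiently functorial form and checking that, under the resulting identifications, the map induced by $\gamma$ on relative mapping spaces is literally the induced map of homotopy fibres $\fib_{\gamma(\overline{c})}(u\circ\delta_*)\to\fib_{\gamma(\overline{c})}(u)$; once this compatibility is in place the rest is a short diagram chase. A subtle but crucial point is that the lift $\overline{c}$ must be chosen in dependence of the target morphism $y$ — namely so that $\gamma(\overline{c})$ is the source of the $\beta$-cartesian transport of $y$ with target $\gamma(\overline{c}')$ — which is exactly why the hypothesis fixes a single lift $\overline{c}'$ of $c'$ while quantifying over all lifts of $c$.
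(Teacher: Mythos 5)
Your argument is correct and is essentially the paper's own proof: both rest on expressing $\Map_{\cA}(\overline{c},\overline{c}')$ and $\Map_{\cB}(\gamma(\overline{c}),\gamma(\overline{c}'))$ as fibres of the transport-and-evaluate-at-$\overline{c}'$ maps landing in the equivalent fibres $\cA_{c}\simeq\cB_{\delta(c)}$ (using that $\alpha$ is pulled back from the right-fibration $\beta$), and then transferring connectivity fibrewise. The only difference is the order in which fibres are taken in the resulting commutative square of spaces — you take fibres of $\delta_*$ and identify them with fibres of the map between relative mapping spaces, whereas the paper takes fibres of the horizontal evaluation maps over the equivalence $\cA_c\simeq\cB_{\delta(c)}$ and applies the fibrewise criterion directly.
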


\begin{proof}
Consider the commutative diagram in $\cS$
\[
	\begin{tikzcd}
	\Map_{\cC}(c,c')\dar{\delta}\rar{\cA_{(-)}}&[5pt]\Map_{\cS}(\cA_{c'},\cA_{c})\rar{\ev_{\overline{c}'}}\dar{\gamma}&[10pt] \cA_{c}\dar{\gamma}\\
	\Map_{\cD}(\delta(c),\delta(c'))\rar{\cB_{(-)}}&\Map_{\cS}(\cB_{\delta(c')},\cB_{\delta(c)})\rar{\ev_{\gamma(\overline{c}')}}&\cB_{\delta(c')}.
	\end{tikzcd}
\]
Since the square of categories in the statement is cartesian, the rightmost vertical arrow is an equivalence, so to show the claim it suffices to show that the map on horizontal fibres between the two horizontal compositions are $k$-connected. For $\overline{c}\in \cA_{c}$, this map between horizontal fibres is the map $\gamma\colon \Map_{\cA}(\overline{c},\overline{c}')\ra \Map_{\cB}(\gamma(\overline{c}),\gamma(\overline{c}'))$, so it is $k$-connected by assumption.
\end{proof}

\begin{lem}\label{lem:left-kan-limit}Left Kan extension $\beta_!\colon \PSh(\cB)\ra\PSh(\cD)$ along a right-fibration $\beta\colon\cB\ra\cD$ preserves limits over weakly contractible categories.
\end{lem}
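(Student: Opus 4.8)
The statement is that $\beta_!\colon \PSh(\cB)\to\PSh(\cD)$ preserves limits indexed by weakly contractible categories, when $\beta\colon \cB\to\cD$ is a right-fibration. I would first reduce to a pointwise statement: limits in presheaf categories are computed objectwise, and the value of $\beta_! F$ at an object $d\in\cD$ is the colimit $\colim_{(b,\alpha)\in \cB_{/d}}F(b)$ over the comma category $\cB_{/d} = \cB\times_{\cD}\cD_{/d}$. So it suffices to show that for every weakly contractible category $\cI$, the functor $\colim_{\cB_{/d}}\colon \Fun(\cB_{/d},\cS)\to \cS$ commutes with $\cI$-indexed limits, i.e.\ that $\cB_{/d}$-colimits commute with $\cI$-limits in $\cS$. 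The key input is that $\beta$ is a \emph{right}-fibration: this forces the comma category $\cB_{/d}$ to be an $\infty$-\emph{groupoid}. Indeed, by straightening $\beta$ corresponds to a functor $G\colon \cD^{\op}\to\cS$, and $\cB_{/d}\simeq \cB\times_\cD \cD_{/d}$ is the unstraightening of $G|_{\cD_{/d}}$ over $\cD_{/d}$; since $\cD_{/d}$ has a terminal object $\id_d$ and $G$ lands in spaces, $\cB_{/d}$ is equivalent to the space $G(d)$. (Alternatively: a pullback of a right-fibration is a right-fibration, $\cD_{/d}\to\cD$ is a right-fibration with contractible source, hence $\cB_{/d}\to\cD_{/d}$ is a right-fibration with space-like fibres, and the total space of a right-fibration over a contractible base with groupoid fibres — in fact over $\cD_{/d}$ which has a terminal object — is a space.)

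So the problem becomes: for a space $X$ (playing the role of $\cB_{/d}$) and a weakly contractible category $\cI$, does the colimit functor $\colim_X\colon \Fun(X,\cS)=\cS_{/X}\to\cS$ (this last identification via the discussion in the remark after Theorem~A, or directly since $X$ is a groupoid) preserve $\cI$-indexed limits? Colimits over a space $X$ are just the functor $\cS_{/X}\to\cS$ given by total space, equivalently $(-)\otimes_X \ast$, i.e.\ $X$-indexed colimit $= \colim_X$. The claim is then the statement that in the $\infty$-category $\cS$ of spaces, $\cI$-limits commute with $X$-colimits for $X$ a space and $\cI$ weakly contractible. Now $X$-colimits in $\cS$ are computed as $X_+\wedge(-)$-type constructions, but more usefully: since $X$ is a space, $\colim_X\colon \Fun(X,\cS)\to\cS$ has a right adjoint given by the constant-diagram functor $\cS\to\Fun(X,\cS)$ followed by nothing — wait, rather $\colim_X$ is \emph{left} adjoint to $\const_X$, and what we want is that it also preserves a class of limits. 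The cleanest route: use that for a space $X$, the colimit $\colim_X F$ can be identified with $(\colim_X \ast)\times_{?}$ — better, invoke Quillen's Theorem A / the fact that sifted colimits commute with finite limits, but $\cI$ is only weakly contractible, not sifted.

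**The main obstacle, and how to get past it.** The crux is exactly the statement ``weakly contractible colimits commute with weakly contractible limits'' — no, that is false in general. What \emph{is} true is: \emph{colimits over a weakly contractible space} $X$ commute with \emph{all limits over weakly contractible diagrams}? Still not obviously. The correct and well-known fact is narrower: the colimit over a space $X$, viewed as a functor $\cS_{/X}\to\cS$, preserves \emph{weakly contractible limits} precisely because it is equivalent to the forgetful functor $\cS_{/X}\to\cS$ composed with nothing fancy — and $\cS_{/X}\to\cS$ (total space) preserves connected limits, since for $X$ a space, $\cS_{/X}$ is itself an $\infty$-topos (it is spaces over $X$) and the étale/base-change map makes the projection a left and right adjoint in the relevant sense; concretely, the forgetful functor from an overcategory $\cE_{/X}$ of an $\infty$-topos preserves all limits that are weakly contractible (connected), because it can be computed as a composite of base-change functors and its own structure, and pullback functors preserve limits while the ``sum over fibers'' nature makes connected limits commute. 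I would therefore run the argument as: (1) pointwise reduction; (2) identify $\beta_!F(d)=\colim_{\cB_{/d}}F$; (3) show $\cB_{/d}$ is a space using that $\beta$ is a right-fibration; (4) invoke/prove that for a space $Y$, $\colim_Y\colon \Fun(Y,\cS)\to\cS$ preserves connected (equivalently, weakly contractible) limits — this is the statement that $\cS_{/Y}\to\cS$ preserves connected limits, which follows from descent in $\cS$ (a pullback of a colimit decomposition), or cite \cite[Proposition~9.7,~9.8]{GepnerHaugsengNikolaus} / \cite[5.5.8.4]{LurieHTT}-type results; (5) conclude. Step (4) is where I expect to have to be careful — the precise formulation ``connected limits commute with $Y$-colimits in $\cS$'' needs either a direct descent argument (write the weakly contractible limit as a totalization or iterated pullback and use that $\cS$ is an $\infty$-topos so colimits are universal and descent holds, noting connectedness is what guarantees the bookkeeping of basepoints/sections works out) or a citation. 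The right-fibration hypothesis is used \emph{only} in step (3), to collapse the comma categories to spaces; that is the whole point, and everything else is formal.
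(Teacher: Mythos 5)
The architecture of your argument is sound and is, in effect, a pointwise version of the paper's proof, but step (3) --- the step that actually uses the hypothesis --- contains a genuine error. First, a variance slip: the colimit formula for the left Kan extension of a \emph{presheaf} along $\beta$ reads $(\beta_!F)(d)\simeq\colim_{(\cB_{d/})^{\op}}F$, where $\cB_{d/}=\cB\times_\cD\cD_{d/}$ has objects $(b,\alpha\colon d\ra\beta(b))$; your $\cB_{/d}=\cB\times_\cD\cD_{/d}$ is the indexing category for the \emph{right} Kan extension. More importantly, neither comma category is an $\infty$-groupoid when $\beta$ is a right-fibration: take $\beta=\id_{\cD}$ for $\cD$ not a groupoid (a perfectly good right-fibration, with constant straightening at the point), and the comma categories are slices of $\cD$. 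Both of your justifications break at the same spot: the \emph{total category} of the unstraightening of a functor over a base with a terminal object is not equivalent to the value at that object (only its groupoidification is, and only for the appropriate variance), and a right-fibration over a weakly contractible but non-groupoidal base such as $\cD_{/d}$ can have non-groupoidal total category. What is true --- and this is precisely where right-fibrancy enters --- is that the inclusion of the fibre $\cB_d\simeq F_\beta(d)$ into $(\cB_{d/})^{\op}$ is cofinal: by the criterion of \cite[4.1.3.1]{LurieHTT} one must check that, for each $(b,\alpha)$, the category of pairs $(b',f)$ with $b'\in\cB_d$ and $f\colon b'\ra b$ lifting $\alpha$ is weakly contractible, and this is exactly the essential uniqueness of cartesian lifts with prescribed target for a right-fibration. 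Hence $(\beta_!F)(d)\simeq\colim_{\cB_d}F|_{\cB_d}$, a colimit over a genuine space.

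With that repair the remainder of your argument goes through: $\Fun(\cB_d,\cS)\simeq\cS_{/\cB_d}$, the colimit functor corresponds to the forgetful functor $\cS_{/\cB_d}\ra\cS$, and forgetful functors of overcategories preserve weakly contractible limits --- no descent or topos theory is needed, and the detours through siftedness in your step (4) can be deleted. You would also owe a short check that the objectwise comparison maps assemble to the global one. The paper avoids all of this by applying the same two facts once, globally: by \cite[Proposition 9.8]{GepnerHaugsengNikolaus} the straightening $F_\beta\in\PSh(\cD)$ of $\beta$ induces an equivalence $\PSh(\cB)\simeq\PSh(\cD)_{/F_\beta}$ under which $\beta_!$ becomes the forgetful functor $\PSh(\cD)_{/F_\beta}\ra\PSh(\cD)$, and such forgetful functors preserve weakly contractible limits. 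I recommend the global route; it is the same idea with none of the comma-category bookkeeping.
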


\begin{proof}
Writing $F_{\beta}\in\PSh(\cD)$ for the unstraightening of $\beta$, the functor $\beta_!$ is by \cite[Proposition 9.8]{GepnerHaugsengNikolaus} equivalent to the forgetful functor $\PSh(\cD)_{/F_{\beta}}\ra \PSh(\cD)$, so the claim follows from the fact that forgetful functors $p \colon \cC_{/c}\ra\cC$ of overcategories commute with weakly contractible limits. 
\end{proof}

For categories $\cC$ and $\cD$, we abbreviate $\cC^\cD\coloneqq\Fun(\cD,\cC)$ for the functor category.

\begin{lem}\label{lem:pullback-composition-rfib-fact}Given categories $\cE$ and $\cF$ with final objects, a functor $\varphi\colon \cE\ra\cF$ preserving the final objects, and a right-fibration $\beta\colon\cB\ra\cD$, the following commutative square of categories is cartesian
\[
	\begin{tikzcd}
	\PSh(\cB)^\cF\dar{(\beta_!)^\cF}\rar[swap]{\varphi^*}& \PSh(\cB)^\cE\dar{(\beta_!)^\cE}\\
	\PSh(\cD)^\cF\rar{\varphi^*}& \PSh(\cD)^\cE.
	\end{tikzcd}
\]
\end{lem}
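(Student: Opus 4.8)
The plan is to reduce the claim to the case $\cF = \ast$ (equivalently $\cE=\ast$) by a pointwise argument, and then to invoke \cref{lem:left-kan-limit}. More precisely, the square in question is a square of functor categories, and both vertical functors are induced by postcomposition with $\beta_!\colon \PSh(\cB)\ra\PSh(\cD)$ while both horizontal functors are precomposition with $\varphi$. Since limits and colimits of diagrams valued in functor categories are computed objectwise, it suffices to check that the square becomes cartesian after evaluating at each object of $\cE$ (for the top-right and bottom-right corners) and each object of $\cF$ (for the top-left and bottom-left corners) in a compatible way; concretely, one checks that for every object the induced map from $\PSh(\cB)^\cF$ to the pullback $\PSh(\cB)^\cE\times_{\PSh(\cD)^\cE}\PSh(\cD)^\cF$ is an equivalence.

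The actual content is the following: for a \emph{single} functor $\psi\colon \cC\ra\cD$, when is the naturality square
\[
\begin{tikzcd}
\PSh(\cB)^{[1]}\dar\rar & \PSh(\cB)\dar\\
\PSh(\cD)^{[1]}\rar& \PSh(\cD)
\end{tikzcd}
\]
(evaluation at target, say, versus at source) cartesian after applying $\beta_!$? This is equivalent to asking that $\beta_!$ preserves pullbacks along the relevant maps, and more to the point that $\beta_!$ commutes with the particular limit that reconstructs the arrow category. The cleanest route is: an object of the pullback $\PSh(\cB)^\cE\times_{\PSh(\cD)^\cE}\PSh(\cD)^\cF$ consists of a functor $G\colon \cF\ra\PSh(\cD)$, a functor $H\colon \cE\ra\PSh(\cB)$, and an equivalence $\beta_!\circ H\simeq G\circ\varphi$. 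Using that $\cF$ has a final object $1_\cF$ and $\varphi$ preserves it, one recovers a candidate preimage by taking the value $H(1_\cE)\in\PSh(\cB)$ together with the cone data exhibiting $G$; the hypothesis that $\varphi$ preserves final objects ensures $\varphi(1_\cE)=1_\cF$, so the coherence data match up. The key point where \cref{lem:left-kan-limit} enters is showing that this assignment is fully faithful and essentially surjective: one must identify $\PSh(\cB)^\cF$ with an appropriate limit (a lax/iterated limit over $\cF$ built from $\PSh(\cB)$ and the slice $\PSh(\cD)^\cF$), and check $\beta_!$ commutes with it; since $\cF$ has a final object it is weakly contractible, and the relevant limit diagrams are indexed by weakly contractible categories (slices of $\cF$, cotensors), so \cref{lem:left-kan-limit} applies termwise.

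Concretely I would argue as follows. Write $\cF^\triangleright$-type notation aside; instead use that for any category $\cF$ with final object, the functor $\ev_{1}\colon \PSh(\cB)^\cF\ra\PSh(\cB)$ together with the restriction $\PSh(\cB)^\cF\ra\PSh(\cD)^\cF$ exhibits $\PSh(\cB)^\cF$ as the pullback $\PSh(\cB)\times_{\PSh(\cD)}\PSh(\cD)^\cF$ along $\ev_1\colon\PSh(\cD)^\cF\ra\PSh(\cD)$ --- this is just the statement that $\PSh(\cB)^\cF\simeq\PSh(\cB)\times_{\PSh(\cD)}\PSh(\cD)^\cF$ when $\cB\to\cD$ is $\beta$ and we use $\beta_!$. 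Hmm, that is not quite right as stated; the honest statement is that $\PSh(\cB)^\cF$ is the limit of the diagram $\cF^{\op}\ni f\mapsto \PSh(\cB)$ glued appropriately, but the pointwise formula for left Kan extension, $(\beta_! G)(d)\simeq \colim_{\cB_{/d}} G$, together with \cref{lem:left-kan-limit} (which says $\beta_!$ preserves weakly contractible limits, and $\cF$ with a final object is weakly contractible) lets us commute $\beta_!$ past the limit defining $\cF$-indexed diagrams. So the plan is: (i) express $\Fun(\cF,\PSh(\cB))$ as a weakly contractible limit using the final object of $\cF$; (ii) apply \cref{lem:left-kan-limit} to commute $\beta_!$ past this limit; (iii) do the same for $\cE$ and use that $\varphi$ preserves final objects so the two limit presentations are compatible; (iv) conclude the square is cartesian.

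The main obstacle I expect is step (i)--(iii): pinning down precisely which limit presentation of $\Fun(\cF,\PSh(\cB))$ to use so that (a) the indexing diagram is genuinely over a weakly contractible category so \cref{lem:left-kan-limit} applies, and (b) the presentation is natural enough in $\cF$ that the map $\varphi^*$ and the final-object-preservation hypothesis interact correctly. A clean way around this is to prove the dual/easier statement directly on underlying right-fibrations: via straightening, $\beta_!$ corresponds to the forgetful functor $\PSh(\cD)_{/F_\beta}\ra\PSh(\cD)$ (as in the proof of \cref{lem:left-kan-limit}), and the square becomes a square of slice categories of functor categories; slice categories are stable under the relevant base changes, and one checks the square is cartesian by comparing fibres over each object, using that $\varphi$ preserves final objects to identify the fibre of $\PSh(\cD)^\cF_{/(\ldots)}\ra\PSh(\cD)^\cE_{/(\ldots)}$ with the fibre of $\PSh(\cD)^\cF\to\PSh(\cD)^\cE$. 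Either route works; the slice-category route sidesteps the bookkeeping of explicit limit diagrams and is probably the one I would write up.
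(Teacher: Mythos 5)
The slice-category route you settle on at the end is exactly the paper's proof: one rewrites $(\beta_!)^\cF$ as the forgetful functor $(\PSh(\cD)^\cF)_{/\const_{F_\beta}}\to\PSh(\cD)^\cF$ and then observes that evaluation at the terminal object $e_t\in\cE$ identifies both slice projections with $\PSh(\cD)_{/F_\beta}\to\PSh(\cD)$, the hypothesis that $\varphi(e_t)$ is terminal in $\cF$ being what makes the $\cF$-side rectangle cartesian as well, so the left square is cartesian by pasting. Your earlier pointwise/weakly-contractible-limit attempts can be discarded; the one detail to make explicit when writing up the fibre comparison is why it suffices (the slice projections are right-fibrations, so a fibrewise equivalence gives a pullback, or equivalently argue by two-out-of-three for cartesian squares as above).
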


\begin{proof}
By the argument from the proof of \cref{lem:left-kan-limit}, the square is equivalent to the left square in the commutative diagram of categories whose vertical arrows are the forgetful functors
\[
	\begin{tikzcd}[row sep=0.5cm]
	(\PSh(\cD)^\cF)_{/\const_{F_\beta}}\dar\rar{\varphi^*}& (\PSh(\cD)^\cE)_{/\const_{F_\beta}}\dar\rar{\ev_{e_t}}&\PSh(\cD)_{/F_\beta}\dar\\
	\PSh(\cD)^\cF\rar{\varphi^*}& \PSh(\cD)^\cE\rar{\ev_{e_t}}&\PSh(\cD);
	\end{tikzcd}
\]
here $e_t\in\cE$ is the terminal object. The right square is cartesian because $e_t$ is terminal and the outer rectangle is cartesian because $\varphi(e_t)\in\cF$ is terminal, so the left square is cartesian as well.\end{proof}

\begin{lem}\label{lem:test-fib-pullback}If $\alpha$ and $\beta$ in the square of categories \eqref{equ:BC-original-square} are right-fibrations, then the square is cartesian if and only if the functor $\fib_c(\alpha)\ra \fib_{\delta(c)}(\beta)$ on fibres is an equivalence for all $c\in \cC$.
\end{lem}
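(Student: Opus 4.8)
The plan is to reduce everything to the observation that the square \eqref{equ:BC-original-square} is cartesian if and only if the canonical comparison functor $\Phi\colon \cA\ra \cB\times_{\cD}\cC$ is an equivalence, and to detect this fibrewise over $\cC$.

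First I would note that the projection $\cB\times_{\cD}\cC\ra \cC$ is a right-fibration, being a pullback of the right-fibration $\beta$ along $\delta$ (right-fibrations are stable under pullback \cite[p.~58]{LurieHTT}). Since $\alpha$ is also a right-fibration and $\Phi$ lies over $\cC$, it is a morphism of right-fibrations over $\cC$; any functor between right-fibrations over a common base automatically preserves cartesian morphisms, so under the straightening-unstraightening equivalence between right-fibrations over $\cC$ and presheaves in $\PSh(\cC)$ the functor $\Phi$ corresponds to a natural transformation. A natural transformation of $\cS$-valued presheaves is an equivalence if and only if it is an equivalence on each value, i.e.\ if and only if $\Phi$ restricts to an equivalence on the fibre over every object $c\in\cC$.

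It then remains to identify this fibrewise comparison. The fibre of $\alpha$ over $c$ is $\fib_c(\alpha)$ by definition, while the fibre of $\cB\times_{\cD}\cC\ra\cC$ over $c$ is $\{c\}\times_\cC(\cB\times_\cD\cC)\simeq\cB\times_\cD\{\delta(c)\}=\fib_{\delta(c)}(\beta)$. Writing $\Phi$ as $a\mapsto(\gamma(a),\alpha(a))$ and using the commutativity of \eqref{equ:BC-original-square} to see that $\gamma$ carries $\fib_c(\alpha)$ into $\fib_{\delta(c)}(\beta)$, the induced map on fibres over $c$ is precisely the functor $\fib_c(\alpha)\ra\fib_{\delta(c)}(\beta)$ appearing in the statement. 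Combined with the previous paragraph, this shows that $\Phi$---and hence the square---is cartesian if and only if $\fib_c(\alpha)\ra\fib_{\delta(c)}(\beta)$ is an equivalence for all $c\in\cC$.

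The argument is essentially formal, so I do not expect a genuine obstacle; the only point meriting a moment's care is verifying that $\Phi$ really is a map of right-fibrations over $\cC$ (so that equivalences may be detected on fibres), which holds because in a right-fibration every morphism is cartesian.
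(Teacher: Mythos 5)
Your proposal is correct and follows the same route as the paper: both reduce to showing that the comparison functor $\cA\ra\cB\times_{\cD}\cC$ is a map of right-fibrations over $\cC$ and then detect the equivalence fibrewise (the paper cites \cite[2.2.3.3]{LurieHTT} for this step, which is the unstraightened form of your "equivalences of presheaves are detected objectwise"). Your extra care in identifying the fibre of $\cB\times_{\cD}\cC\ra\cC$ with $\fib_{\delta(c)}(\beta)$ is fine but not a point of divergence.
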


\begin{proof}
The task is to show that the functor $\cA\ra \cB\times_{\cD}\cC$ is an equivalence under the stated hypothesis. Since right-fibrations are stable under pullback this functor is a map of right-fibrations over $\cC$, so it is an equivalence if it is an equivalence on all fibres over objects $c\in \cC$ \cite[2.2.3.3]{LurieHTT}. The latter is equivalent to the hypothesis.
\end{proof}

\section{Groupoid-coloured operads and tangential structures}\label{sec:gc-operads}
Recall from \cref{sec:unital} that an operad $\cO\in\Opd$ is unital if $\Mul_{\cO}(\varnothing;c)$ is contractible for all $c\in \cO^\col$. The full subcategory $\Opd^\un\subset\Opd$ of unital operad contains two further full subcategories 
\begin{equation}\label{equ:operad-subcats}
	\Opd^\red \subset \Opd^\gc\subset\Opd^\un
\end{equation}
that will be important for us: $\Opd^\red$, is the category of \emph{reduced operads}---unital operads $\cO$ whose category of colours $\cO^\col$ is trivial, i.e.\,$\Mul_{\cO}(c;d)\simeq\ast$ for all colours $c$ and $d$---and $\Opd^\gc$, is the category of \emph{groupoid-coloured operads}---unital operads $\cO$ for which $\cO^\col$ is a groupoid. 

\medskip

\noindent In this section we explain an alternative point of view on groupoid-coloured operads as ``families of reduced operads indexed over a groupoid'' which motivates a theory of \emph{tangential structures} for operads that will be convenient for later applications to embedding calculus. But before getting into this, we note on passing that from the description of $\Env(\cO)$ in \cref{sec:env}, one can deduce the following characterisation of the subcategories \eqref{equ:operad-subcats}. 

\begin{lem}\label{lem:chara-unital-gc}A unital operad $\cO\in\Opd^\un$ 
\begin{enumerate}
	\item is groupoid-coloured if and only if the functor $\pi_{\cO}\colon \Env(\cO)\ra \Fin$ is conservative, and it
	\item is reduced if and only if $\pi_{\cO}\colon \Env(\cO)\ra \Fin$ is conservative and an equivalence on cores. 
\end{enumerate}
\end{lem}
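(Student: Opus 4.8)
The plan is to unwind both conditions through the explicit description of the monoidal envelope in \cref{sec:env}, in particular the mapping-space formula \eqref{equ:mapping-space-env} and the fact that $\pi_{\cO}\colon \Env(\cO)\ra \Fin$ sends $(c_s)_{s\in S}$ to $S$ and acts on morphisms by the underlying map of finite sets.

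\medskip

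\emph{Proof sketch for (i).} First suppose $\cO^{\col}$ is a groupoid; I want to show $\pi_{\cO}$ is conservative. Let $f\colon (c_s)_{s\in S}\ra (d_t)_{t\in T}$ be a morphism of $\Env(\cO)$ whose image $\varphi\colon S\ra T$ under $\pi_{\cO}$ is a bijection. By \eqref{equ:mapping-space-env} the morphism $f$ is the data of $\varphi$ together with multi-operations $\Mul_{\cO}((c_s)_{s\in\varphi^{-1}(t)};d_t)$ for $t\in T$; since $\varphi$ is a bijection each $\varphi^{-1}(t)$ is a singleton, so $f$ amounts to a family of $1$-ary operations $c_{\varphi^{-1}(t)}\ra d_t$ in $\cO^{\col}$. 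As $\cO^{\col}$ is a groupoid each of these is an equivalence, and it follows (again via \eqref{equ:mapping-space-env}, fitting together inverse morphisms over $\varphi^{-1}$) that $f$ admits a two-sided inverse in $\Env(\cO)$; hence $\pi_{\cO}$ is conservative. Conversely, if $\pi_{\cO}$ is conservative, then since $\cO^{\col}\subset \Env(\cO)$ is the full subcategory on singleton-indexed collections and $\pi_{\cO}$ carries all of $\cO^{\col}$ to the terminal object $\ast\in\Fin$, every morphism of $\cO^{\col}$ maps to the identity of $\ast$, hence is an equivalence; so $\cO^{\col}$ is a groupoid.

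\medskip

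\emph{Proof sketch for (ii).} Suppose $\cO$ is reduced, i.e.\ $\cO^{\col}\simeq \ast$. Then $\cO$ is in particular groupoid-coloured, so $\pi_{\cO}$ is conservative by (i). For the statement about cores: an object of $\Env(\cO)$ is a collection $(c_s)_{s\in S}$, and since $\cO^{\col}$ is contractible there is up to equivalence a unique choice of each $c_s$; moreover by unitality $\varnothing\in\Env(\cO)$ is initial and by \eqref{equ:mapping-space-env} together with $\Mul_{\cO}(c;d)\simeq \ast$ one checks that the full subgroupoid of $\Env(\cO)$ on collections indexed by a fixed set $S$ is equivalent to $\mathrm{B}\Sigma_{|S|}$, exactly as for $\Fin$; hence $\pi_{\cO}$ restricts to an equivalence $\Env(\cO)^{\simeq}\xra{\simeq}\Fin^{\simeq}$. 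Conversely, assume $\pi_{\cO}$ is conservative and an equivalence on cores. Conservativity gives that $\cO^{\col}$ is a groupoid by (i). Now $\cO^{\col}\subset\Env(\cO)^{\simeq}$ is the full subgroupoid on singleton-indexed collections, which under the core-equivalence $\Env(\cO)^{\simeq}\simeq\Fin^{\simeq}$ corresponds to the full subgroupoid of $\Fin^{\simeq}$ on singletons, namely $\ast$; hence $\cO^{\col}\simeq\ast$. Finally $\Mul_{\cO}(c;d)\subseteq\Map_{\Env(\cO)}(c,d)$; since $c\simeq d$ in the contractible $\cO^{\col}$ and conservativity forces every morphism of $\cO^{\col}$ to be an equivalence, $\Map_{\cO^{\col}}(c,d)\simeq\ast$, and one extracts $\Mul_{\cO}(c;d)\simeq\ast$ from \eqref{equ:mapping-space-env} in the case $|S|=|T|=1$. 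Together with $\cO^{\col}\simeq\ast$, this says $\cO$ is reduced.

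\medskip

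The main obstacle I anticipate is the bookkeeping in (ii) identifying the core of $\Env(\cO)$ with that of $\Fin$: one must verify that for a reduced operad the only morphisms between two $S$-indexed collections that are invertible in $\Env(\cO)$ are those lying over bijections of $S$, and that all such are automatically invertible (the permutations giving the $\Sigma_{|S|}$-component), which is where unitality and the contractibility of all multi-operation spaces both get used. The rest is a direct application of \eqref{equ:mapping-space-env} and the elementary observation that $\cO^{\col}$ sits inside $\Env(\cO)$ over $\ast\in\Fin$.
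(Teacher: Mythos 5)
Your proof is correct and is exactly the deduction the paper has in mind: the lemma is stated without proof as an immediate consequence of the description of $\Env(\cO)$ in \cref{sec:env}, and your unwinding of \eqref{equ:mapping-space-env} (equivalences lie over bijections, morphisms over bijections are tuples of $1$-ary operations, and the core over a fixed $S$ is $\mathrm{B}\Sigma_{|S|}$ when $\cO$ is reduced) is precisely that argument. The only cosmetic point is that unitality is not actually needed in the core computation for (ii) — the empty product over $T=\varnothing$ is already a point — so your closing worry about where unitality enters can be dropped.
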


\subsection{Groupoid-coloured operads as families of reduced operads}\label{sec:disintegration}The alternative perspective on groupoid-coloured operads as families of reduced operads is made precise in the following result, which we deduce from Lurie's theory of \emph{disintegration} and \emph{assembly} of operads \cite[2.3]{LurieHA}:

\begin{thm}\label{thm:operad-grp-type}
Given a functor $\theta\colon B\ra\Opd^\red$ whose domain $B$ is a groupoid, there is an equivalence of categories (here the colimit is taken in $\Opd$, not in $\Opd^\red$) 
\[
	(\colim _B \theta)^\col\simeq B.
\] 
Moreover, sending such functors $\theta$ to their colimits in $\Opd$ induces an equivalence 
\[
	\begin{tikzcd}[row sep=.3cm,column sep={3cm,between origins}]
	\int_{\cS}\Fun(-,\Opd^\red)\arrow[rr,pos=0.4,"\colim","\simeq"']\arrow[dr,"\pi",swap]& & \Opd^\gc\arrow[dl,"(-)^\col"]\\
	&\cS&
	\end{tikzcd}
\]
in ${\Cat}_{/\cS}$ between the cartesian unstraightening of $\Fun(-,\Opd^\red)\colon \cS^\op\ra \Cat$ and $(-)^\col\colon \Opd^\gc\ra \cS$.
\end{thm}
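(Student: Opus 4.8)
The plan is to deduce this from Lurie's assembly/disintegration machinery for operads. Recall from \cite[2.3]{LurieHA} that Lurie establishes an equivalence between operads and certain families of reduced operads; more precisely, for a reduced operad one has the \emph{disintegration} and these can be reassembled. The first claim—that $(\colim_B \theta)^\col \simeq B$ for $\theta \colon B \to \Opd^\red$ with $B$ a groupoid—should be handled first, since it is also the key computational input for the second claim.

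\textbf{Step 1: The colours of a colimit of reduced operads.} First I would observe that the functor $(-)^\col \colon \Opd \to \Cat$ preserves colimits, since it is given by pulling back along $\langle 1 \rangle \to \Fin_\ast$ and, after restricting to unital operads (which is closed under colimits in $\Opd$ by \cref{sec:unital}), this can be checked to commute with the relevant colimits—alternatively, $(-)^\col$ on $\Opd^\un$ has a right adjoint $(-)^\sqcup$ (see \cref{sec:cocart-oprightfib}), hence preserves colimits. Since $\Opd^\red \subset \Opd^\un$ consists of operads with trivial colour category, for $\theta \colon B \to \Opd^\red$ with $B$ a groupoid we get $(\colim_B \theta)^\col \simeq \colim_B (\theta(-)^\col) \simeq \colim_B \ast \simeq B$, where the last equivalence uses that the colimit of the constant diagram on $\ast$ over a groupoid $B$ is $B$ itself. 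Some care is needed: the colimit on the left is taken in $\Opd$, not $\Opd^\red$, so one must know the inclusion $\Opd^\red \hookrightarrow \Opd^\un \hookrightarrow \Opd$ behaves appropriately; but since $\Opd^\un \subset \Opd$ is closed under colimits and $(-)^\col$ preserves colimits on $\Opd^\un$, the computation goes through as long as $\colim_B \theta$ lands in $\Opd^\gc$, which follows because its colour category is the groupoid $B$.

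\textbf{Step 2: The equivalence of unstraightenings.} Next I would construct the functor $\colim \colon \int_\cS \Fun(-, \Opd^\red) \to \Opd^\gc$ over $\cS$. An object of the source is a pair $(B, \theta\colon B \to \Opd^\red)$; send it to $\colim_B \theta \in \Opd^\gc$, which by Step 1 has colour space $B$, so the triangle over $\cS$ commutes. Functoriality in $B$ (along maps of spaces) and in natural transformations of the $\theta$'s is formal from the universal property of colimits. To prove this is an equivalence, since both are categories over $\cS$ and the functor commutes with the projections to $\cS$, it suffices by \cite[2.2.3.3]{LurieHTT} (right-fibration/cartesian-fibration fibrewise criterion—or the analogous statement for the relevant class of fibrations) to check it is an equivalence on fibres over each $B \in \cS$. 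The fibre of the source over $B$ is $\Fun(B, \Opd^\red)$; the fibre of the target over $B$ is the subcategory of $\Opd^\gc$ on operads with colour space (equivalent to) $B$, with morphisms the identity on colours. So the content is: \emph{an operad $\cO$ with $\cO^\col \simeq B$ a groupoid is the same data as a functor $B \to \Opd^\red$.} This is precisely Lurie's disintegration: the functor sends $\cO$ to the family $b \mapsto \cO_b$ of ``fibres'' of $\cO$ over points $b \in B$, each of which is reduced, and assembly recovers $\cO$ as $\colim_B (b \mapsto \cO_b)$. I would cite \cite[2.3.2.5, 2.3.2.11]{LurieHA} (or thereabouts) for the statement that assembly and disintegration are mutually inverse equivalences between operads-with-a-given-colour-space and families of reduced operads over that space, and check that under these identifications the fibrewise functor above matches Lurie's.

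\textbf{Main obstacle.} The hard part will be the bookkeeping in Step 2: matching our colimit-in-$\Opd$ construction with Lurie's ``assembly'' functor, which in loc.cit.\ is phrased via an explicit operadic construction ($\cO^\otimes$ built from a family) rather than as a categorical colimit, and verifying that disintegration (taking fibres over points of $B$) is inverse to it on the nose—including that the fibres $\cO_b$ are genuinely reduced and that naturality in $B$ is compatible. One subtlety: Lurie's disintegration is typically set up for operads over a \emph{set} of colours or requires $B$ to be (the nerve of) a category; ensuring the statement and its proof work when $B$ is an arbitrary $\infty$-groupoid—so that $\Fun(-, \Opd^\red)$ is restricted to $\cS \subset \Cat$—may require either invoking a version of assembly/disintegration already in the $\infty$-categorical literature or a short argument reducing the general groupoid case to the discrete one by writing $B$ as a colimit of points. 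I would also need to confirm that $\colim$ over a groupoid of reduced operads, computed in $\Opd$, does not leave $\Opd^\gc$, which again follows from Step 1.
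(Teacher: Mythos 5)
Your overall strategy coincides with the paper's: both reduce the second claim to Lurie's assembly/disintegration for operads from \cite[2.3]{LurieHA}. Your Step 1 is correct and is in fact a cleaner route to the first claim than the paper takes (the paper derives $(\colim_B\theta)^\col\simeq B$ as a byproduct of the global equivalence): the inclusion $\Opd^\un\subset\Opd$ preserves colimits, $(-)^\col\colon\Opd^\un\ra\Cat$ is a left adjoint of $(-)^\sqcup$, and $\colim_B\const_\ast\simeq B$ since $\cS\subset\Cat$ is closed under colimits. This part stands on its own.

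The gap is in Step 2, in the sentence ``since both are categories over $\cS$ and the functor commutes with the projections to $\cS$, it suffices \ldots to check it is an equivalence on fibres.'' That reduction is \emph{not} available for an arbitrary functor over a base: \cite[2.2.3.3]{LurieHTT} and its variants require both sides to be (co)cartesian fibrations over $\cS$ and the comparison functor to preserve (co)cartesian edges. For the source $\pi\colon\int_\cS\Fun(-,\Opd^\red)\ra\cS$ this is true by construction, but for $(-)^\col\colon\Opd^\gc\ra\cS$ it is a nontrivial assertion (the cartesian lift of $f\colon A\ra B$ at $\cO$ should be the ``restriction'' $\cO\times_{(\cO^\col)^\sqcup}A^\sqcup$, and one must check this is cartesian in $\Opd^\gc$), and establishing it is essentially the content of the first half of the paper's proof: there, $(-)_{\langle 0\rangle}\colon\Opd^\gn\ra\Cat$ on generalised operads is shown to be a cartesian fibration with straightening $\Fam(-)$, the fibre $\Fam(B)$ is identified with $\Fun(B,\Opd)$ for groupoids $B$ via unstraightening, and only then is Lurie's assembly functor used to transport everything to $\Opd^\gc$. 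Without this (or some substitute, e.g.\ a direct computation of mapping spaces between colimits of reduced operads, which is what the fibration structure is buying you), the fibrewise argument does not get off the ground. A second, smaller caveat: the disintegration statement you want to cite ultimately rests on \cite[2.4.3.6]{LurieHA}, which as stated is incorrect for a general category of colours and only holds when the colours form a groupoid; the paper devotes a remark to patching this, so a bare citation ``2.3.2.5, 2.3.2.11 or thereabouts'' would not suffice. Your identification of the remaining work (matching the categorical colimit with Lurie's explicitly constructed assembly functor) is accurate; the paper discharges it by citing the proof of \cite[Proposition 2.2]{HorelKrannichKupers}.
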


\begin{proof}The proof involves Lurie's notion of \emph{generalised operads} which are functors  $\cO^{\otimes}\ra\Fin_*$ that satisfy a weakening of the axioms of an operad \cite[2.3.2.1]{LurieHA}. In particular, the fibre $\smash{\cO^{\otimes}_{\langle 0\rangle}}$ over the singleton $\langle0\rangle=\{\ast\}\in\Fin_\ast$ for a generalised operad need no longer be trivial. The category of generalised operads $\smash{\Opd^\gn}$ is a subcategory of ${\Cat}_{/\Fin_\ast}$ which it contains $\Opd$ as a full subcategory. As preparation for the proof of the claim, we show that taking fibres $(-)_{\langle 0\rangle}\colon \Opd^\gn\ra\Cat$ is a cartesian fibration whose straightening has an explicit description. By 2.3.2.9 loc.cit., this functor has a fully faithful right adjoint $(-)\times \Fin_\ast\colon \Cat\hookrightarrow\Opd^\gn$ given by sending $\cC\in\Cat$ to the projection $\pr_2\colon \cC\times\Fin_\ast\ra\Fin_\ast$. Writing $\cE$ for the pullback of $\smash{\ev_{[1]}\colon (\Opd^{\gn})^{[1]}\ra \Opd^{\gn}}$ along $\smash{(-)\times\Fin_\ast\colon \Cat\ra \Opd^\gn}$, we consider the full subcategory $\smash{\overline{\cE}\subset \cE}$ spanned by those $(\cO^\otimes\ra \cC\times \Fin_*)\in \cE$ for which the map $\smash{\cO^\otimes_{\langle0\rangle}\ra \cC}$ on fibres over $\smash{\langle 0\rangle}$ is an equivalence. The composition $\smash{\overline{\cE}\subset \cE\hookrightarrow  (\Opd^{\gn})^{[1]}\ra\Opd^{\gn}}$ whose final map is evaluation at $0 \in [1]$ is an equivalence; its inverse induced by the counit of the adjunction $\smash{((-)_{\langle 0\rangle})\dashv((-)\times\Fin_\ast)}$. Since $\smash{\ev_{[1]}\colon (\Opd^{\gn})^{[1]}\ra \Opd^{\gn}}$ is the unstraightening of $\smash{(\Opd^{\gn})_{/(-)}\colon \Opd^{\gn}\ra\Cat}$, we see that $\smash{\Opd^{\gn}}\simeq \overline{\cE}\ra \Cat$ is the unstraightening of the functor that sends $\cC\in \Cat$ to the full subcategory $\Fam(\cC)\subset\smash{(\Opd^\gn)_{/\cC\times\Fin_\ast}}$ on the \emph{$\cC$-families of operads}, i.e.\,those maps $\cO^\otimes\ra \cC\times\Fin_\ast$ of generalised operads that are equivalences on fibres over $\langle0\rangle$ (see 2.3.2.10, 2.3.2.11 loc.cit.). In sum, this shows that there is an equivalence over $\Cat$ between $(-)_{\langle 0\rangle}\colon \Opd^\gn\ra \Cat$ and the cartesian unstraightening $\smash{\pi\colon \int_{\Cat}\Fam(-)\ra \Cat}$.

\medskip

\noindent If $\cC=B$ is a groupoid, then the proof of \cite[Proposition 2.2]{HorelKrannichKupers} shows that the unstraightening equivalence $\Fun(B,{\Cat}_{/\Fin_\ast})\simeq{\Cat}_{/B\times\Fin_\ast}$ restricts to an equivalence $\Fun(B,\Opd)\simeq \Fam(B)$. This equivalence is natural in $B$ with respect to precomposition on the left-hand side and taking pullbacks on the right-hand side, so combining this with the above discussion restricted to full subcategory $\cS\subset\Cat$ yields an equivalence over $\cS$ between $\smash{\int_\cS \Fun(-,\Opd)}$ and the full subcategory of $\Opd^{\gn}$ on those generalised operads whose fibre over $\langle 0\rangle$ is a groupoid. Restricting $\Opd$ to the full subcategory $\Opd^\red$ of reduced operads, and $\Fam(B)$ correspondingly to the full sub-category $\Fam^\red(B)$ on those families of operads whose fibres are reduced operads, we arrive at an equivalence $\smash{\int_\cS \Fun(-,\Opd^\red)\simeq \int_\cS \Fam^\red(-)\simeq \Opd^{\gn,\red}}$ over $\cS$ where $\smash{\Opd^{\gn,\red}\subset \Opd^{\gn}}$ is the full subcategory of \emph{reduced generalised operads} in the sense of \cite[2.3.4.2]{LurieHA}. 

\medskip

\noindent To relate this to the statement of the claim, we consider the left adjoint $\smash{\Assem\colon \Opd^\gn\ra\Opd}$ to the inclusion $\smash{\Opd\subset \Opd^\gn}$ from 2.3.3.3 loc.cit., which sends a generalised operad to its \emph{assembly}. By 2.3.4.4 loc.cit., it restricts to an equivalence $\smash{\Opd^{\gn,\red}\simeq \Opd^\gc}$. We will now give a description of the inverse of this equivalence. To this end, we consider the natural transformation $\gamma\colon (-)\times \Fin_*\ra  \inc\circ (-)^\sqcup$ of functors $\Cat\ra \Opd^\gn$ that sends a category $\cC$ to the map of generalised operads $\smash{\gamma\colon \cC\times\Fin_\ast\ra (\cC)^\sqcup}$ of 2.4.3.6 loc.cit.; here $(-)^\sqcup\colon \Cat\ra \Opd$ is the cocartesian operad functor from \cref{sec:cocart-oprightfib} and $\inc$ denotes the inclusion $\Opd\subset\Opd^\gn$. Precomposing $\gamma$ with the functor $(-)^\col\colon \Opd^\un\ra \Cat$, we get a natural transformation $\gamma\circ (-)^\col\colon (-)^\col\times\Fin_*\ra \inc\circ ((-)^\col)^\sqcup$ of functors $\Opd^\un\ra \Opd^\gn$. Pulling it back along the natural transformation $\inc\ra \inc\circ ((-)^\col)^\sqcup$ induced by the counit $\smash{\cO\ra(\cO^\col)^\sqcup}$ of the adjunction described in \cref{sec:cocart-oprightfib}, we obtain a natural transformation $G\ra \inc$ of functors $\Opd^\un\ra \Opd^\gn$ where $G\colon \Opd^\un\ra \Opd^{\gn}$ sends a unital operad $\cO$  to the pullback of generalised operads $\smash{\cO\times_{(\cO^\col)^\sqcup} (\cO^\col\times\Fin_\ast)\in\Opd^\gn}$. Note that because of the natural equivalence \begin{equation}\label{equ:assem-inverse-0}\smash{G(\cO)_{\langle 0 \rangle}=\big(\cO\times_{(\cO^\col)^\sqcup} (\cO^\col\times\Fin_\ast))_{\langle 0\rangle}\simeq \cO^\col},\end{equation} the functor $G\colon \Opd^\un\ra\Opd^\gn$ restricts to a functor of the form $G'\colon  \Opd^\gc\ra \Opd^{\gn,\red}$, and we claim that this is the inverse to the equivalence $\smash{\Assem\colon  \Opd^{\gn,\red}\simeq \Opd^\gc}$. To see this, note that as $\Assem$ is left adjoint to $\inc$, the natural transformation $G\ra \inc$ yields a natural transformation $\Assem\circ G\ra \id_{\Opd^\un}$ which restricts to a natural transformation $\Assem\circ G'\ra \id_{\Opd^\gc}$, so it suffices to show that the latter transformation is an equivalence, i.e.\,that for all $\cO\in \Opd^\gc$ the natural map of operads $\smash{\Assem(\cO\times_{(\cO^\col)^\sqcup} (\cO^\col\times\Fin_\ast))\ra \cO}$ is an equivalence. This is proved in the first paragraph of the proof of 2.3.4.4 loc.cit..

From the natural equivalence \eqref{equ:assem-inverse-0}, we see that the functor $\smash{(-)_{\langle 0\rangle}\colon \Opd^{\gn,\red}\ra\cS}$ translates under the equivalence $\smash{G'\colon \Opd^\gc \simeq \Opd^{\gn,\red}}$ to $\smash{(-)^\col\colon \Opd\ra \cS}$, so putting everything together we obtain a commutative diagram of categories
\begin{equation}\label{equ:functors-vs-generalised-operads}
\begin{tikzcd}[row sep=0.3cm]
\int_\cS \Fun(-,\Opd^\red)\arrow[drrr,"\pi",swap]&[-1cm]\simeq&[-1cm] \int_\cS \Fam^\red(-)&[-1cm]\simeq&[-1cm] \Opd^{\gn,\red}\arrow[r,"\simeq"',"\Assem"]&[10pt] \Opd^\gc\arrow[dll,"(-)^\col"]\\
&&&\cS
\end{tikzcd}
\end{equation}
whose horizontal functors are equivalences. To finish the proof, it suffices to show that the top row is induced by taking colimits in $\Opd$. This is part of the proof of \cite[Proposition 2.2]{HorelKrannichKupers}.
\end{proof}

\begin{rem}
The previous proof crucially relies on \cite[2.3.4.4]{LurieHA} which in turn relies on 2.4.3.6 loc.cit., which states that a certain functor $\gamma \colon \cC \times \Fin_\ast \to \cC^\sqcup$ that sends $(c,S\sqcup\ast)$ to $(c)_{i\in S}$ is an approximation in the sense of 2.3.3.6 loc.cit., by ``unwinding the definitions''. Some comments on this statement are in order (we use the notation and terminology of loc.cit.):
\begin{enumerate}[leftmargin=*]
	\item \label{enum:not-approx}As stated, the statement appears to be incorrect: consider the case $\cC = [1]$ and the unique morphism $(0,1) \to (1,1)=\gamma(1,\langle 2\rangle)$ in $[1]^\sqcup$ covering $\id_{\langle 2 \rangle}$. If $\gamma$ were an approximation, then part (2) in the definition of an approximation in 2.3.3.6 loc.cit.\,would imply that $(0,1) \to (1,1)$ is up to equivalence in the image of $\gamma$ which is not the case: the image of $\gamma$ does not even contain an object equivalent to $(0,1)$.
	\item For most applications of 2.4.3.6 in loc.cit.\,the weaker claim that $\gamma$ is a \emph{weak approximation} in the sense of 2.3.3.6 loc.cit.\,would suffice, but this is not the case either: consider the same example as in \ref{enum:not-approx}. If the category in (2') of 2.3.3.6 associated to the morphism $(0,1) \to (1,1)=\gamma(1,\langle 2\rangle)$ were nonempty, then there would be a factorisation of it as an inert map $(0,1)\ra \gamma(c,\langle m\rangle)$ for $c\in[1]$ and $m\ge0$ followed by a morphism $\gamma(c,\langle m\rangle)\ra \gamma(1,\langle 2\rangle)$ in the image of $\gamma$. As $(0,1) \to (1,1)$ covers $\id_{\langle 2\rangle}$, the underlying morphism of $(0,1)\ra \gamma(c,\langle m\rangle)$ is inert and injective, so we have $m=2$ and $c=1$. But there is no inert map $(0,1)\ra (1,1)$ since such a map would by definition be $\gamma$-cocartesian (see 2.1.2.3 loc.cit.), so in particular $\id_{(0,1)}$ would factor over $(1,1)$ which is not the case.
	\item However, if $\cC$ is a groupoid (which is the only case needed for the proof of \cite[2.3.4.4]{LurieHA} and thus for that of \cref{thm:operad-grp-type}), the functor $\gamma$ is an approximation as claimed: condition (1) in 2.3.3.6 loc.cit.\,holds since the morphisms $(\id_c \times \rho^i)$ in $\cC \times \Fin_\ast$ for $c\in \cC$ give the required locally cocartesian lifts along $\cC \times \Fin_\ast\ra \Fin_*$ whose images in $\cC^{\sqcup}$ are inert. For condition (2), we use that, since $\cC$ is a groupoid, any active morphism $X \to (c)_{i \in \langle n \rangle^\circ} = \gamma(c,\langle n \rangle)$ in $\cC^\sqcup$ with underlying active map $\varphi \colon \langle m \rangle \to \langle n \rangle$ is equivalent to a map in $\cC^\sqcup$ covering $\varphi$ of the form $(c)_{i \in \langle m \rangle^\circ} \to (c)_{i \in \langle n \rangle^\circ}$ given by the identities; this has a cartesian lift as required: $(\id_c \times \varphi)$.
\end{enumerate}
\end{rem}

\subsection{Tangential structures for operads}\label{sec:tangential-structures-operads}
Informally speaking, \cref{thm:operad-grp-type} says that groupoid-coloured operads $\cO$ are the same as reduced operad-valued functors $\theta\colon B \ra \Opd^\red$ out of groupoids, and that under this correspondence one has $\smash{B\simeq \cO^\col}$ and $\smash{\cO\simeq \colim_B\theta}$. We write $\smash{\theta_\cO\colon \cO^\col\ra \Opd^\red}$ for the functor associated to a groupoid-coloured operad $\cO$, and call $\smash{\theta_\cO(c)\in \Opd^\red}$ for a colour $c\in\cO^\col$ the \emph{reduced cover of $\cO$ at $c$}. A map $\varphi\colon \cO\ra\cP$ between groupoid-coloured operads is \emph{an equivalence on reduced covers} if the maps $\theta_\cO(c)\ra \theta_{\cP}(\varphi(c))$ are equivalences for all $c\in\cO^\col$. Given groupoid-coloured operads $\cO$ and $\cO'$, \cref{thm:operad-grp-type} in particular gives an equivalence 
\[
	\smash{\Map_{\Opd}(\cO,\cO')\simeq \Map_{\int_{\cS}\Fun(-,\Opd)}((\cO^\col,\theta_\cO),(\cU^\col,\theta_\cU))}
\]
and thus a fibre sequence
\begin{equation}\label{equ:maps-gc-operad}
	\smash{\Map_{\Fun(\cO^\col,\Opd)}(\theta_\cO, (\theta_{\cO'}\circ \varphi))\ra \Map_{\Opd}(\cO,\cO')\xlra{(-)^\col} \Map_{\cS}(\cO^\col,\cO'^\col)}
\end{equation}
in which we took fibres over a map $\varphi\in \Map_{\cS}(\cO^\col,\cO'^\col)$. Moreover, writing $\Opd^{\gc,\simeq_{\red}}\subset \Opd^{\gc}$ for wide subcategory on maps that are equivalences on reduced covers, we can combine \cref{thm:operad-grp-type} with the identification $\smash{\int_{\cS}\Map_{\cS}(-,\Opd^{\red,\simeq})\simeq \cS_{/\Opd^{\red,\simeq}}}$ to an equivalence
\begin{equation}\label{equ:gc-redequ-identification}
	\textstyle{\smash{\Opd^{\gc,\simeq_{\red}}\simeq \cS_{/\Opd^{\red,\simeq}}}}
\end{equation}
that sends $\smash{\cO\in \Opd^{\gc,\simeq_{\red}}}$ to $\smash{\theta_{\cO}\colon \cO^{\col}\ra \Opd^{\red,\simeq}}$. In particular, since any automorphism of a groupoid-coloured operad is an equivalence on reduced covers, we obtain an equivalence
\begin{equation}\label{equ:aut-gc-operad}
	\smash{  \Aut_{\Opd}(\cO)\simeq \Aut_{\cS_{/\Opd^\simeq}}(\theta_{\cO})\quad\text{ for all }\cO\in \Opd^{\gc}}.
\end{equation}
Closely related, and in analogy with the notion of a tangential structure for manifolds, we define:

\begin{dfn}\label{dfn:tangial-structures-opd} A \emph{tangential structure} for a reduced operad $\cU$ is a map $\theta\colon B\ra \BAut(\cU)$ for a groupoid $B$. The \emph{$\theta$-framed $\cU$-operad} for a tangential structure $\theta$ defined as the colimit
\[\cU^\theta\coloneqq \colim\big(B\xra{\theta} \BAut(\cU)\hookrightarrow \Opd\big)\in\Opd^{\gc}.\]
\end{dfn}

\noindent The colimit in \cref{dfn:tangial-structures-opd} can be made more concrete as follows. By the first part of \cref{thm:operad-grp-type}, the category of colours of $\cU^\theta$ is $(\cU^\theta)^\col\simeq B$ and the multi-operations are given by 
\begin{equation}\label{equ:multi-oeprations-framed}
	\textstyle{\Mul_{\cU^\theta}\big((c_s)_{s\in S};d\big)\simeq \Mul_{\cU}\big((*)_{s\in S};*\big)\times \bigsqcap_{s\in S}\Map_{B}\big(c_s,d\big)}
\end{equation} where the $c_s$ and $d$ are colours in $(\cU^\theta)^\col\simeq B$ and $\ast$ is the unique colour of $\cU$ (see \cref{lem:tangential-opd-multi-ops} below). As a result of the above discussion, for a fixed reduced operad $\cU$, any groupoid-coloured operad $\cO$ whose reduced covers are all equivalent to $\cU$, is equivalent to the $\theta$-framed operad $\cU^\theta$ for a unique tangential structure $\theta_{\cO}\colon \cO^\col \ra\BAut(\cU)$ for $\cU$. 

\begin{ex}\label{exam:framings-for-operads} There are standard examples of $\theta$-framed $\cU$-operads for a reduced operad $\cU$:
\begin{enumerate}
	\item The $(\varnothing \ra \BAut(\cU))$-framed $\cU$-operad is the initial operad, which in the model for $\Opd$ from \cref{sec:operads} corresponds to the inclusion $\{\ast\}\subset\Fin_\ast$.
	\item The $(\ast \ra\BAut(\cU))$-framed $\cU$-operad is $\cU$ itself.
	\item There is the $\smash{\id_{\BAut(\cU)}}$-framed $\cU$-operad $\smash{\cU^\id = \colim(\BAut(\cU) \hookrightarrow \Opd^\un)}$. As $\smash{\id_{\BAut(\cU)}}$ is terminal in the category $\smash{\cS_{/\BAut(\cU)}}$ of tangential structures for $\cU$, the equivalence \eqref{equ:gc-redequ-identification} implies that $\cU^\id$ is terminal in the full subcategory of $\smash{\Opd^{\gc,\simeq_{\red}}}$ spanned by those operads whose reduced covers are all equivalent to $\cU$. In particular, we have $\Aut(\cU^\id)\simeq \ast$.
\end{enumerate}
\end{ex}

\noindent The above discussion can also be generalised to take the truncations $\cU_{\le k}$ of $\cU$ from \cref{sec:truncation-operads} into account. As explained in \cref{sec:truncation-operads}, they fit into a tower $\cU_{\leq \bullet}$ of reduced operads and there is a corresponding tower $\BAut(\cU_{\leq \bullet})$ of groupoids. A \emph{tower of tangential structures} for $\cU$ is a map $\smash{\theta_\bullet \colon B_\bullet \to \BAut(\cU_{\leq \bullet})}$ in $\Tow(\cS)$. Such a map induces a tower $\smash{\cU^{\theta_\bullet}_{\leq \bullet}\in\Tow(\Opd^\gc)}$ featuring the $\theta_k$-framed $\cU_{\le k}$-operad. For $\smash{\theta_\bullet=\id_{\BAut(\cU_{\le \bullet})}}$ this yields a tower $\smash{\cU^\id_{\leq \bullet}}$ which is terminal in the subcategory of $\Tow(\Opd^{\gc})$ with objects those towers $\cO_\bullet$ for which the reduced covers of $\cO_k$ for $k\ge1$ are equivalent to $\cU_{\le k}$, and maps of towers that are levelwise equivalences on reduced covers.

\subsection{$\theta$-framed operads and cocartesian operads}\label{sec:cocartesian}
One of the simplest reduced operad is the commutative operad $\Com$. Since it is terminal in $\Opd$ (see \cref{sec:operads-intro}), we have $\BAut(\Com)\simeq\ast$, so tangential structures $\theta\colon B\ra\BAut(\Com)$ for $\Com$ are the same as groupoids $B$. The associated $B$-framed $\Com$-operad $\Com^B$ is also known under a different name: it is the cocartesian operad $B^\sqcup$.

\begin{lem}\label{lem:cocartesian-is-colimit}For a groupoid $X$, there is a natural equivalence of operads
\[X^\sqcup\simeq \colim_{X}(\const_{\Com})=\Com^X\]
where the right-hand side is the colimit of the constant functor $X\ra \Opd$ on the terminal operad. 
\end{lem}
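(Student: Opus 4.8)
The statement to prove is the equivalence $X^\sqcup\simeq \colim_X(\const_{\Com})$ of operads, naturally in the groupoid $X$. The cleanest route is to invoke \cref{thm:operad-grp-type}: since $\Com\in\Opd^\red$ is a reduced operad, the constant functor $\const_{\Com}\colon X\ra\Opd^\red$ is a point of $\int_{\cS}\Fun(-,\Opd^\red)$ lying over $X\in\cS$, and the theorem identifies its colimit in $\Opd$ with the corresponding groupoid-coloured operad, whose category of colours is $X$. So the first step is to observe $\colim_X(\const_{\Com})\in\Opd^{\gc}$ with $(\colim_X\const_{\Com})^{\col}\simeq X$. The second step is to identify this colimit with $X^\sqcup$; for this I would use that $X^\sqcup$ is itself groupoid-coloured (indeed unital and $1$-truncated by \cref{sec:cocart-oprightfib}) with category of colours $X$, and compute that its reduced covers are all equivalent to $\Com$. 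By \eqref{equ:operations-cocartesian-operad}, the multi-operations of $X^\sqcup$ are $\Mul_{X^\sqcup}((c_i)_{i\in S};c)\simeq \sqcap_{i\in S}\Map_X(c_i,c)$; restricting to a single colour $c$ (i.e.\,taking the reduced cover $\theta_{X^\sqcup}(c)$) kills all the mapping spaces down to points since $X$ is a groupoid and the only endomorphisms contributing are identities up to the groupoid structure—more precisely, the reduced cover at $c$ has $\Mul((\ast)_S;\ast)\simeq \sqcap_{i\in S}\Map_X(c,c)$, but one must be careful: this is not contractible in general. The correct statement is that the \emph{functor} $\theta_{X^\sqcup}\colon X\ra\Opd^{\red}$ classifying $X^\sqcup$ is constant at $\Com$, which follows because $X^\sqcup=\Com^X$ is by definition built from the constant functor.

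**A cleaner second step.** Rather than reverse-engineering $\theta_{X^\sqcup}$, I would argue directly via adjunctions. Both $X^\sqcup$ and $\colim_X(\const_{\Com})$ are unital operads, and I claim both corepresent the same functor on $\Opd^{\un}$. On one hand, $(-)^\sqcup\colon \Cat\ra\Opd^{\un}$ is right adjoint to $(-)^{\col}$, so $\Map_{\Opd^{\un}}(\cP, X^\sqcup)\simeq \Map_{\Cat}(\cP^{\col}, X)\simeq \Map_{\cS}(\cP^{\col},X)$ when $X$ is a groupoid (using that maps to a groupoid factor through the core). On the other hand, since $\BAut(\Com)\simeq\ast$ and $\Com$ is terminal, the inclusion $\{\Com\}\hookrightarrow\Opd$ factors the constant functor, and by \eqref{equ:maps-gc-operad} or directly by \eqref{equ:gc-redequ-identification} specialised to $\cU=\Com$, we get $\Map_{\Opd}(\cP,\Com^X)\simeq \Map_{\cS}(\cP^{\col},X)$ for groupoid-coloured $\cP$, and this extends to all unital $\cP$ since the reduced cover of any $\cP^{\gc}$-target is $\Com$. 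Comparing, both sides corepresent $\cP\mapsto\Map_{\cS}(\cP^{\col},X)$ on $\Opd^{\un}$, hence are equivalent by Yoneda. Naturality in $X$ is then automatic from naturality of the two representability statements.

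**The main obstacle.** The subtle point is making the representability of $\Map_{\Opd}(-,\Com^X)$ precise and checking it agrees across the full subcategory inclusions—i.e.\,that the equivalence \eqref{equ:gc-redequ-identification} with $\cU=\Com$ really does give $\Map_{\Opd}(\cP,\Com^X)\simeq\Map_{\cS}(\cP^{\col},X)$ and not merely on the subcategory $\Opd^{\gc,\simeq_{\red}}$. One must check that $\Map_{\Opd}(\cP,\Com^X)$ only sees $\cP^{\col}$, which should follow from the fibre sequence \eqref{equ:maps-gc-operad}: the fibre over a colour map $\varphi$ is $\Map_{\Fun(\cP^{\col},\Opd)}(\theta_{\cP}, \const_{\Com}\circ\varphi)$, and since $\Com$ is terminal in $\Opd$, this mapping space is contractible—so the fibre sequence collapses to an equivalence $\Map_{\Opd}(\cP,\Com^X)\simeq\Map_{\cS}(\cP^{\col},X)$, valid for all groupoid-coloured $\cP$. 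The extension to all unital $\cP$ then uses that $\Com^X$ is $1$-truncated (being $(X^\sqcup$-like), so mapping into it factors through the $1$-truncation $\tau_1\cP\simeq\cP^{\col}$); alternatively one sidesteps this by only claiming the equivalence $X^\sqcup\simeq\Com^X$ inside $\Opd^{\gc}$, which is all that is needed. I expect the bookkeeping around which subcategory the Yoneda argument runs in to be the part requiring the most care, but no genuinely hard input beyond \cref{thm:operad-grp-type} and the terminality of $\Com$.
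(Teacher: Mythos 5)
Your ``cleaner second step'' is correct and is essentially the paper's argument: the paper also reduces the statement, via \cref{thm:operad-grp-type} and the uniqueness of adjoints, to the observation that $\colim_{(-)}(\const_{\Com})\colon\cS\ra\Opd^{\gc}$ is right adjoint to $(-)^{\col}$ because $\Com$ is terminal (phrased there as the general fact that the unstraightening projection of a functor with terminal objects in its values admits a right adjoint picking out those terminal objects), which is the functorial form of your object-by-object claim that both $X^\sqcup$ and $\Com^X$ represent $\cP\mapsto\Map_{\cS}(\cP^{\col},X)$ on $\Opd^{\gc}$. Note only that your first paragraph's direct identification of the reduced covers of $X^\sqcup$ is rightly abandoned --- the justification ``because $X^\sqcup=\Com^X$ is by definition built from the constant functor'' would be circular --- but since the Yoneda argument replaces it entirely, the proof stands.
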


\begin{proof}
Restricting the adjunction $(-)^\col\dashv(-)^\sqcup$ to the full subcategories $\Opd^\gc\subset \Opd^\un$ and $\cS\subset\Cat$, it suffices to show that $\smash{\colim_{(-)}(\const_{\Com})\colon \cS\ra \Opd^\gc}$ is right adjoint to $\smash{(-)^\col\colon \Opd^\gc\ra \cS}$, by the uniqueness of adjoints. By \cref{thm:operad-grp-type} this equivalent to showing that the projection $\smash{\pi\colon \int_{\cS}\Fun(-,\Opd^\red)\ra \cS}$ is left adjoint to the functor that sends a groupoid $B\in \cS$ to $(B,\const_\Com)$. This is a general fact: for a functor $\varphi\colon \cC\ra \Cat$ whose values have terminal objects, the unstraightening $\smash{\pi\colon \int_{\cC}\varphi\ra \cC}$ has a right adjoint given by sending $c\in \cC$ to $(c,t_{\varphi(c)})$ where $t_{\varphi(c)}\in\varphi(c)$ is the terminal object (combine the dual version of \cite[7.3.2.6]{LurieHA} with 7.3.2.1 and 7.3.2.2 loc.cit.).
\end{proof}

\noindent \cref{lem:cocartesian-is-colimit} has the following consequence: 

\begin{lem}\label{lem:pullbacks-groupoid-type} A map between groupoid-coloured operads $\varphi\colon \cO\ra \cP$ is an equivalence on reduced covers if and only if it is an operadic right-fibration in the sense of Definition \ref{dfn:oprightfib}.
\end{lem}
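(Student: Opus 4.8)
The statement asks to identify the class of operadic right-fibrations between groupoid-coloured operads with the class of maps that are equivalences on reduced covers. My plan is to work entirely on the level of reduced covers, using \cref{thm:operad-grp-type} to translate both conditions into statements about the associated functors $\theta_\cO\colon \cO^\col\ra \Opd^\red$, and then invoke \cref{lem:operadic-right-fib} (the equivalent characterisation \ref{enum:orf-i} in terms of $\varphi^\col$ being a right-fibration and the square to the cocartesian operads being a pullback). First I would observe that, via \eqref{equ:gc-redequ-identification}, a map of groupoid-coloured operads $\varphi\colon\cO\ra\cP$ which is an equivalence on reduced covers corresponds precisely to a map in $\cS_{/\Opd^{\red,\simeq}}$, i.e.\ a map $\cO^\col\ra\cP^\col$ of spaces together with a homotopy $\theta_\cO\simeq \theta_\cP\circ\varphi^\col$; conversely, starting from an operadic right-fibration I need to produce such data.

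\textbf{Forward direction (operadic right-fibration $\Rightarrow$ equivalence on reduced covers).} Suppose $\varphi$ is an operadic right-fibration. I would use characterisation \ref{enum:orf-i}: the square
\[\begin{tikzcd}
\cO\rar{\varphi}\dar&\cP\dar\\
\cO^{\col,\sqcup}\rar&\cP^{\col,\sqcup}
\end{tikzcd}\]
is a pullback of operads and $\varphi^\col$ is a right-fibration. Since $\cO^\col,\cP^\col$ are groupoids, \cref{lem:cocartesian-is-colimit} identifies $\cO^{\col,\sqcup}\simeq\Com^{\cO^\col}$ and similarly for $\cP$, so under the correspondence of \cref{thm:operad-grp-type} the bottom map corresponds to $\const_\Com\colon\cO^\col\ra\Opd^\red$ composed with $\varphi^\col$. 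Now I would compute reduced covers by taking fibres. The key point is that \cref{thm:operad-grp-type} is an equivalence \emph{over} $\cS$, and on each reduced cover the pullback square above becomes, after passing to fibres over a colour $c\in\cO^\col$, a square of reduced operads
\[\begin{tikzcd}
\theta_\cO(c)\rar\dar&\theta_\cP(\varphi(c))\dar\\
\Com\rar{\simeq}&\Com
\end{tikzcd}\]
(using that $\varphi^\col$ is an equivalence on cores, or at least that the bottom map covers an identity after identifying fibres), whence the top map $\theta_\cO(c)\ra\theta_\cP(\varphi(c))$ is an equivalence. Concretely I would argue this by noting that the functor $(-)_{\langle 0\rangle}$, or equivalently the reduced-cover functor via the proof of \cref{thm:operad-grp-type}, sends pullback squares of generalised operads to pullback squares of reduced operads, and the bottom edge becomes an equivalence.

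\textbf{Reverse direction (equivalence on reduced covers $\Rightarrow$ operadic right-fibration).} Conversely, assume $\varphi$ is an equivalence on all reduced covers. I would verify \ref{enum:orf-i}. First, $\varphi^\col$ is a functor of groupoids, and every functor of groupoids is a right-fibration, so that part is automatic. It remains to show the square above is a pullback of operads; by \cref{lem:operad-limits} this can be checked in $\Cat_{/\Fin_*}$, and by \cref{lem:operad-equivalence-criterion} combined with \cref{lem:operad-limits} it suffices to check it is a pullback on categories of colours and on all spaces of multi-operations. On colours it is the square $\cO^\col\to\cP^\col$, $\cO^\col\to\cO^\col$, $\cP^\col\to\cP^\col$ (via the units of $(-)^\col\dashv(-)^\sqcup$, which are equivalences on colours), which is trivially cartesian. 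On multi-operations, I would use formula \eqref{equ:multi-oeprations-framed}: writing $\cO\simeq\cU_\cO^{\theta_\cO}$ and $\cP\simeq\cU_\cP^{\theta_\cP}$ with $\cU_\cO(c):=\theta_\cO(c)$ the reduced cover, and using that $\cO^{\col,\sqcup}\simeq\Com^{\cO^\col}$ has $\Mul((*)_{s\in S};*)\simeq *$, the square on the space of multi-operations $\Mul((c_s)_{s\in S};d)$ becomes
\[\begin{tikzcd}[column sep=small]
\Mul_{\theta_\cO(d)}((*)_{s\in S};*)\times\prod_s\Map_{\cO^\col}(c_s,d)\rar\dar&\Mul_{\theta_\cP(\varphi d)}((*)_{s\in S};*)\times\prod_s\Map_{\cP^\col}(\varphi c_s,\varphi d)\dar\\
\prod_s\Map_{\cO^\col}(c_s,d)\rar&\prod_s\Map_{\cP^\col}(\varphi c_s,\varphi d)
\end{tikzcd}\]
which is cartesian exactly because $\theta_\cO(d)\ra\theta_\cP(\varphi d)$ is an equivalence (it is literally a product of the equivalence $\Mul_{\theta_\cO(d)}((*)_{s\in S};*)\xrightarrow{\simeq}\Mul_{\theta_\cP(\varphi d)}((*)_{s\in S};*)$ with the bottom map). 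Hence the square is a pullback of operads and \ref{enum:orf-i} holds.

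\textbf{Main obstacle.} The delicate point is making the manipulation of reduced covers in the forward direction rigorous: the reduced-cover functor $\cO\mapsto\theta_\cO$ is, strictly speaking, the composite of the equivalences in diagram \eqref{equ:functors-vs-generalised-operads} together with unstraightening, and I must make sure that this composite preserves the relevant pullbacks (equivalently, that $(-)_{\langle 0\rangle}\colon\Opd^\gn\ra\Cat$, which is a cartesian fibration, interacts correctly with the pullback of operads coming from \ref{enum:orf-i}, and that $\Assem$ preserves what is needed). An alternative, and probably cleaner, route for the forward direction that avoids generalised operads is to bypass reduced covers entirely: given an operadic right-fibration $\varphi$, use \ref{enum:orf-ii} (so $\Env(\varphi)$ is a right-fibration), combine with \eqref{equ:operad-wreath-subcat} / the description of $\Env(\cO)$ from \cref{sec:env} to read off that the induced maps on $\Mul$ spaces exhibit each $\Mul_\cP((\varphi c_s)_s;\varphi d)$ as built from $\Mul_\cO((c_s)_s;d)$ and the groupoid data, and then reconstruct directly via \eqref{equ:multi-oeprations-framed} and \cref{lem:operad-equivalence-criterion} that $\varphi$ is an equivalence on reduced covers. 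I would present whichever of these two is shorter once the bookkeeping is in place; most likely the second, since it only uses results already assembled in \cref{sec:operads} and \cref{sec:cocart-oprightfib}.
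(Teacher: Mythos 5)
Your overall strategy—reduce to characterisation \ref{enum:orf-i}, note that $\varphi^\col$ is automatically a right-fibration between groupoids, and translate the pullback condition through \cref{thm:operad-grp-type}—is the same as the paper's. But there is a genuine circularity in the way you carry it out: both your reverse direction and your ``cleaner'' alternative for the forward direction invoke the formula \eqref{equ:multi-oeprations-framed} for the multi-operations of a $\theta$-framed operad. In the paper that formula is exactly \cref{lem:tangential-opd-multi-ops}, and its proof explicitly cites \cref{lem:pullbacks-groupoid-type}: the key step there is ``as $\theta(d)\ra c\theta$ is by construction an equivalence on reduced covers, this square of operads is a pullback by \cref{lem:pullbacks-groupoid-type}''. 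So as the logical order stands, you cannot use \eqref{equ:multi-oeprations-framed} here without first proving it by some independent route, which you do not supply.

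The step you flag as the ``main obstacle'' in the forward direction is precisely where the paper's proof does its work, and it resolves it without ever computing multi-operations. Since every map of groupoids is a right-fibration, the only content is whether the unit square $\cO\ra\cO^{\col,\sqcup}$, $\cP\ra\cP^{\col,\sqcup}$ is a pullback of operads. Because $\cS\subset\Cat$ is closed under limits, so is $\Opd^{\gc}\subset\Opd$, hence this pullback may be computed in $\Opd^{\gc}\simeq\int_{\cS}\Fun(-,\Opd^{\red})$. Under this equivalence (together with \cref{lem:cocartesian-is-colimit} identifying the cocartesian operads with the pairs $(\cO^\col,\const_{\Com})$), the square becomes a square in the unstraightening whose vertical maps are induced by the terminal maps $\theta_\cO\ra\const_{\Com}$ and whose top map carries the natural transformation $\eta_\varphi\colon\theta_\cO\ra\varphi^*\theta_\cP$; such a square is a pullback if and only if $\eta_\varphi$ is an equivalence, which is by definition the condition of being an equivalence on reduced covers. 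This single chain of equivalent statements handles both implications at once and avoids any appeal to the multi-operation formula. If you want to keep your two-directional structure, you must either prove \eqref{equ:multi-oeprations-framed} independently first, or replace those computations by the argument in the unstraightening just described.
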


\begin{proof}
Since $\cO^\col$ and $\cP^\col$ are groupoids, any map between them is a right-fibration, so $\varphi$ is an operadic right-fibration if and only if the square in Definition \ref{lem:operadic-right-fib} is a pullback of operads.  Since $\cS\subset \Cat$ is closed under limits, and thus also $\smash{\Opd^\gc\subset\Opd}$, this is equivalent to the square being a pullback in $\Opd^\gc$. Using the equivalence $\Opd^\gc\simeq\int_{\cS}\Fun(-,\Opd^\red)$ together with \cref{lem:cocartesian-is-colimit}, the square is equivalent to a square in $\smash{\int_{\cS}\Fun(-,\Opd^\red)}$ of the form 
\[
	\begin{tikzcd}[column sep=1.5cm]
	(\cO^\col,\theta_\cO)\dar[swap]{(\id,t)}\rar{(\varphi^\col, \eta_\varphi)}\dar& (\cP^\col,\theta_\cP)\dar{(\id,t)}\dar\\
	(\cO^\col,\const_{\Com})\rar{(\varphi^\col, \id)}&(\cP^\col,\const_{\Com})
	\end{tikzcd}
\]
for a natural transformation $\smash{\eta_\varphi\colon \theta_\cO\ra \varphi^*\theta_\cP}$. Here the natural transformations indicated by $t$ are induced by the fact that $\smash{\Com\in\Opd^\red}$ is terminal. This a pullback in $\smash{\int_{\cS}\Fun(-,\Opd^\red)}$ if and only if $\eta_\varphi$ is an equivalence which is the definition of $\varphi$ being an equivalence on reduced covers. 
\end{proof}

\noindent \cref{lem:pullbacks-groupoid-type} allows us to establish the promised description \eqref{equ:multi-oeprations-framed} of the multi-operations in $\cU^\theta$:

\begin{lem}\label{lem:tangential-opd-multi-ops} Given a functor $\theta\colon B\ra\Opd^{\red}$, the space of multi-operations of the colimit $\colim_B\theta$ in $\Opd$ from a collection of colours $(c_s)_{s\in S}$ in $(\colim_B\theta)^\col\simeq X$ to a colour $d$ is given by
\[
	\textstyle{\Mul_{\colim_B\theta}((c_s)_{s \in S};d) \simeq \Mul_{\theta(d)}((\ast)_{s \in S};\ast) \times \bigsqcap_{s \in S} \Map_B(c_s,d).}
\]
\end{lem}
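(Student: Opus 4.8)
The plan is to deduce \cref{lem:tangential-opd-multi-ops} from the pullback characterisation of operadic right-fibrations together with the explicit description of cocartesian operads. First I would recall from \cref{lem:pullbacks-groupoid-type} that the canonical map $\varphi\colon \colim_B\theta\ra \Com^B = B^{\sqcup}$ (induced by the unique natural transformation $\theta\ra \const_{\Com}$, using that $\Com$ is terminal in $\Opd^{\red}$) is an equivalence on reduced covers, hence an operadic right-fibration. By \cref{lem:operadic-right-fib}\ref{enum:orf-i}, this means that the square
\[
\begin{tikzcd}[column sep=1.5cm]
\colim_B\theta\rar{\varphi}\dar&B^{\sqcup}\dar\\
(\colim_B\theta)^{\col,\sqcup}\rar&(B^{\sqcup})^{\col,\sqcup}
\end{tikzcd}
\]
is a pullback of operads; but by \cref{thm:operad-grp-type} we have $(\colim_B\theta)^\col\simeq B$ and $(B^{\sqcup})^{\col}\simeq B$, and the right-hand vertical map is an equivalence (a cocartesian operad equals its own colours-cocartesian operad), so in fact $\varphi$ itself realises $\colim_B\theta$ as pulled back from the cocartesian operad in a way compatible with the identification of colours.

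Next I would extract the statement on multi-operations. Since $\Env(-)$ preserves pullbacks of operads (as noted in the proof of \cref{lem:operadic-right-fib}, using \cref{lem:operad-equivalence-criterion} and \eqref{equ:mapping-space-env}), the map on monoidal envelopes $\Env(\varphi)\colon \Env(\colim_B\theta)\ra \Env(B^{\sqcup})$ is a right-fibration, and likewise at the level of spaces of multi-operations the relevant squares are pullbacks. Concretely, fixing colours $(c_s)_{s\in S}$ and $d$, the map $\Mul_{\colim_B\theta}((c_s)_{s\in S};d)\ra \Mul_{B^{\sqcup}}((c_s)_{s\in S};d)$ is obtained as the fibre of $\Map_{\Env(\colim_B\theta)}\ra \Map_{\Env(B^{\sqcup})}$ over the relevant component, and the latter map is the base change of the right-fibration $\Env(\varphi)$. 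Using the formula \eqref{equ:operations-cocartesian-operad} for the cocartesian operad, $\Mul_{B^{\sqcup}}((c_s)_{s\in S};d)\simeq \bigsqcap_{s\in S}\Map_B(c_s,d)$, so it remains to identify the fibre of $\Mul_{\colim_B\theta}((c_s)_{s\in S};d)\ra \bigsqcap_{s\in S}\Map_B(c_s,d)$ over a point $(\gamma_s\colon c_s\ra d)_{s\in S}$. Because $B$ is a groupoid, each $\gamma_s$ is an equivalence, so this reduces to the case where all $c_s = d$ and all $\gamma_s = \id_d$; that fibre is then, by the assembly/disintegration description underlying \cref{thm:operad-grp-type} (the reduced cover of $\colim_B\theta$ at $d$ is $\theta(d)$), exactly $\Mul_{\theta(d)}((\ast)_{s\in S};\ast)$. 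Assembling, $\Mul_{\colim_B\theta}((c_s)_{s\in S};d)\simeq \Mul_{\theta(d)}((\ast)_{s\in S};\ast)\times\bigsqcap_{s\in S}\Map_B(c_s,d)$.

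Alternatively, and perhaps more cleanly, I would argue directly on fibres of $(-)^\col$: via the equivalence $\Opd^{\gc}\simeq \int_{\cS}\Fun(-,\Opd^\red)$ of \cref{thm:operad-grp-type}, the operad $\colim_B\theta$ corresponds to $(B,\theta)$, and for a fixed object $d\in B$ the space $\Mul_{\colim_B\theta}((c_s)_{s\in S};d)$ can be computed inside the ``fibre over $d$'', which by the disintegration picture is governed by the reduced operad $\theta(d)$; the extra data needed to feed in colours $c_s\neq d$ amounts precisely to a choice of morphism $c_s\ra d$ in $B$, giving the product with $\bigsqcap_{s\in S}\Map_B(c_s,d)$. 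The one step that requires genuine care — and which I expect to be the main obstacle — is making rigorous the informal phrase ``$\Mul_{\colim_B\theta}((c_s)_{s\in S};d)$ is computed by the reduced cover $\theta(d)$ after choosing the $\gamma_s$'': this is precisely where the right-fibration property is essential, since it guarantees that the relevant comparison square of mapping spaces (or of fibres of $\Env(\varphi)$) is a pullback, so that \cref{lem:test-fib-pullback} or \cref{lem:right-fib-BC} applies and identifies the fibre over $(\gamma_s)_s$ with the corresponding fibre over the constant tuple, which by \eqref{equ:operations-cocartesian-operad} and the identification of reduced covers is $\Mul_{\theta(d)}((\ast)_{s\in S};\ast)$. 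One must also check that the product decomposition is the canonical one, i.e.\ that the projection to $\bigsqcap_{s\in S}\Map_B(c_s,d)$ is $\Mul(\varphi)$ and that there is a section splitting it off; both follow from naturality of the constructions in \cref{thm:operad-grp-type} and the fact that $\Com$ is terminal, so that the map $\theta\ra\const_\Com$ and hence $\varphi$ is canonical.
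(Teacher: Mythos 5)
Your argument has a genuine gap at its foundation: the map $\varphi\colon \colim_B\theta\ra B^{\sqcup}$ induced by $\theta\ra\const_{\Com}$ is \emph{not} an equivalence on reduced covers, and hence not an operadic right-fibration. Under the equivalence of \cref{thm:operad-grp-type}, the reduced cover of $\colim_B\theta$ at $d$ is $\theta(d)$, while the reduced cover of $B^{\sqcup}=\Com^B$ at $d$ is $\Com$; the induced map $\theta(d)\ra\Com$ is an equivalence only in the degenerate case $\theta(d)\simeq\Com$ (take $B=\ast$ and $\theta=E_d$ to see the failure). Equivalently, the square in \cref{lem:operadic-right-fib}\ref{enum:orf-i} for this $\varphi$ asks that the unit $\colim_B\theta\ra(\colim_B\theta)^{\col,\sqcup}$ be pulled back from an equivalence, which forces $\colim_B\theta$ to be cocartesian. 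So $\Env(\varphi)$ is not a right-fibration, and the step where you base-change along it to identify the fibres of $\Mul_{\colim_B\theta}((c_s)_{s\in S};d)\ra \bigsqcap_{s\in S}\Map_B(c_s,d)$ over different points $(\gamma_s)_s$ collapses. You correctly sense that a right-fibration property is the engine of the proof, but you have attached it to the wrong map.

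The operadic right-fibration that actually does the work is the inclusion of the reduced cover $\theta(d)\ra\colim_B\theta$ induced by $\{d\}\subset B$: this \emph{is} an equivalence on reduced covers by construction, so \cref{lem:pullbacks-groupoid-type} applies to it, and the resulting pullback of operads gives a pullback on spaces of multi-operations identifying the fibre of $p_{\colim_B\theta}$ over the identity tuple with $\Mul_{\theta(d)}((\ast)_{s\in S};\ast)$. Two further points you leave to ``naturality'' also need actual arguments: (a) reducing from an arbitrary basepoint $(\gamma_s)_s$ to the identity tuple, which requires exhibiting a transitive-up-to-homotopy action (precomposition by the group $\bigsqcap_{s\in S}\Mul_{\colim_B\theta}(d;d)\simeq\bigsqcap_{s\in S}\Map_B(d,d)$ on the whole comparison diagram); and (b) producing the product splitting itself, which does not follow from knowing all fibres agree — one must construct the map $\bigsqcap_{s\in S}\Mul_{\colim_B\theta}(d;d)\times\Mul_{\theta(d)}((\ast)_{s\in S};\ast)\ra\Mul_{\colim_B\theta}((d)_{s\in S};d)$ explicitly via operadic composition and check it is an equivalence fibrewise. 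With the right-fibration replaced by $\theta(d)\ra\colim_B\theta$ and these two steps filled in, your outline becomes the intended proof.
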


\begin{proof}For any unital operad $\cO$, the unit of the adjunction $(-)^\col\dashv(-)^\sqcup$ induces a natural map
\begin{equation}\label{equ:go-to-unary}
	\textstyle{p_\cO\colon \Mul_{\cO}((c_i)_{i\in S};d)\lra \Mul_{(\cO^\col)^\sqcup}(c_s;d)\simeq \bigsqcap_{s\in S}\Mul_{\cO}(c_s;d)}
\end{equation} 
which in terms of operadic compositions is induced by inserting the unique $0$-ary operations in all but one of the entries. For $c\theta\coloneqq\colim_B\theta$, we have $(c\theta)^\col\simeq B$, so the target of $p_{c\theta}$ is $ \sqcap_{s \in S} \Map_B(c_s,d)$, which is empty unless all $c_s\in B$ are in the same component as $d$, so the same holds for the domain. We may thus assume $c_s=d$ for all $s$ and consider the commutative diagram
\[\hspace{-0.2cm}
	\begin{tikzcd}[column sep=0.25cm]
	\sqcap_{s\in S}\Mul_{c\theta}(d;d)\times \Mul_{\theta(d)}((*)_{s\in S};*)\rar\dar{\id\times p_{\theta(d)}}&\sqcap_{s\in S}\Mul_{c\theta}(d;d)\times  \Mul_{c\theta}((d)_{s\in S};d) \rar\dar{\id\times p_{c\theta}}&\Mul_{c\theta}((d)_{s\in S};d)\dar{p_{c\theta}}\\
 	\sqcap_{s\in S}\Mul_{c\theta}(d;d)\times   \sqcap_{s\in S}\Mul_{\theta(d)}(*;*)\rar&\sqcap_{s\in S}\Mul_{c\theta}(d;d)\times    \sqcap_{s\in S}\Mul_{c\theta}(d;d) \rar&  \sqcap_{s\in S}\Mul_{c\theta}(d;d)
	\end{tikzcd}
\]
where the left horizontal maps are induced by the map $\theta(d)\ra c\theta$ induced by $\{d\}\subset B$ and the right ones by operadic composition. As $(c\theta)^\col\simeq B$, it suffices to show that the top horizontal composition is an equivalence. As $\cU$ is reduced, we have $\Mul_{\theta(d)}(\ast;\ast)\simeq\ast$, so the bottom horizontal composition is an equivalence and it is thus enough to show that the composition on vertical fibres is an equivalence. In view of the action of the group $ \sqcap_{s\in S}\Mul_{c\theta}(d;d)$ on the diagram by precomposition, it suffices to check this on fibres over the identity operations. In this case the map on fibres agrees with the map on vertical fibres of the square of spaces of multi-operations induced by the square of operads in Definition \ref{lem:operadic-right-fib} for $(\cO\ra\cP)=(\theta(d)\ra c\theta)$ induced by $\{d\}\subset X$. As $\theta(d)\ra c\theta$ is by construction an equivalence on reduced covers, this square of operads is a pullback by \cref{lem:pullbacks-groupoid-type} and thus induces a pullback on spaces of multioperations, so the claim follows.\end{proof}
 
\noindent A similar argument shows that taking $\theta$-framed operads for tangential structures $\theta$ commutes with truncations of operads as discussed in \cref{sec:truncation-operads}, in the following sense:

\begin{lem}\label{lem:truncated-tangential} Fix $k\ge1$.
\begin{enumerate}[leftmargin=*]
	\item \label{enum:truncated-tangential-i} A groupoid-coloured operad $\cO$ is $k$-truncated if and only if all its reduced covers are $k$-truncated.
	\item \label{enum:truncated-tangential-ii} Viewing $\Opd^{\le k, \un}$ as the full subcategory of $\Opd^{\un}$ of $k$-truncated operads, the equivalence of \cref{thm:operad-grp-type} restricts to an equivalence over $\cS$
	\[
		\smash{\textstyle{\int_{\cS}\Fun(-,\Opd^{\le k,\red})\simeq \Opd^{\le k,\gc}}}
	\] 
	where $\Opd^{\le k,\red}\coloneqq \Opd^{\le k,\un}\cap \Opd^{\red}$ and $\Opd^{\le k,\gc}\coloneqq \Opd^{\le k,\un}\cap \Opd^{\gc}$.
	\item \label{enum:truncated-tangential-iii} For a functor $\theta\colon B\ra\Opd^{\red}$ out of a groupoid, there is an equivalence (colimits are taken in $\Opd$)
	\[
		\big(\colim_{b\in B}\theta(b)\big)_{\le k}\simeq \colim_{b\in B}(\theta(d)_{\le k}).
	\]
	In particular, if $\theta$ is a tangential structure for a reduced operad $\cU$, we have \[(\cU^\theta)_{\le k}\simeq (\cU_{\le k})^{\theta_k}\] where $\theta_k$ is the composition of $\theta$ with the map $\BAut(\cU)\ra\BAut(\cU_{\le k})$ induced by truncation.
\end{enumerate}
\end{lem}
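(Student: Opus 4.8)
The three parts build on each other, so I would prove them in order. For part \ref{enum:truncated-tangential-i}, recall from \cref{sec:truncation-operads} that a groupoid-coloured operad $\cO$ is $k$-truncated iff the counit maps \eqref{equ:counit-multioperations} on multi-operations are equivalences, and by \cref{lem:operad-equivalence-criterion} this can be checked on categories of colours (where it holds automatically, the counit being an equivalence on colours) and on each space of multi-operations $\Mul_{\cO}((c_s)_{s\in S};c)$. Now invoke \cref{lem:tangential-opd-multi-ops}: the multi-operation space decomposes as $\Mul_{\theta_\cO(c)}((\ast)_{s\in S};\ast)\times\bigsqcap_{s\in S}\Map_{\cO^\col}(c_s,c)$, and the mapping-space factors are insensitive to truncation (they live in the category of colours, which the truncation tower fixes). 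Comparing this decomposition for $\cO$ with the corresponding decomposition for $\cO_{\le k}$ — using that $\cO\to\cO_{\le k}$ induces $\theta_\cO(c)\to\theta_{\cO_{\le k}}(c)$ on reduced covers — the counit on $\Mul_\cO$ is an equivalence iff the counit $\theta_\cO(c)\to\theta_{\cO_{\le k}}(c)$ is an equivalence on all multi-operation spaces of arity $\le k$, i.e.\ iff $\theta_\cO(c)$ is $k$-truncated. (I should double-check that $\theta_{\cO_{\le k}}(c)$ really is $(\theta_\cO(c))_{\le k}$, which is essentially what part \ref{enum:truncated-tangential-iii} will pin down; to avoid circularity I would phrase part \ref{enum:truncated-tangential-i} purely in terms of \eqref{equ:counit-multioperations} and the arity-wise behaviour of the decomposition, which is self-contained.)

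For part \ref{enum:truncated-tangential-ii}, the equivalence $\int_{\cS}\Fun(-,\Opd^\red)\simeq\Opd^\gc$ of \cref{thm:operad-grp-type} is an equivalence over $\cS$, so it suffices to identify the full subcategories on both sides picked out by the $k$-truncation conditions. On the right, $\Opd^{\le k,\gc}$ is the full subcategory of $k$-truncated groupoid-coloured operads; on the left, $\int_\cS\Fun(-,\Opd^{\le k,\red})$ is the full subcategory on pairs $(B,\theta)$ with $\theta$ landing in $k$-truncated reduced operads. By part \ref{enum:truncated-tangential-i}, a groupoid-coloured operad $\cO$ is $k$-truncated iff all its reduced covers $\theta_\cO(c)$ are — and $\theta_\cO(c)\in\Opd^{\le k,\red}$ for all $c$ is exactly the condition that the functor $\theta_\cO\colon\cO^\col\to\Opd^\red$ factors through $\Opd^{\le k,\red}$. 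Hence the equivalence of \cref{thm:operad-grp-type} restricts to the claimed full-subcategory equivalence over $\cS$. (I should note $\Opd^{\le k,\red}\subset\Opd^\red$ is full, which follows since $\Opd^{\le k,\un}\subset\Opd^\un$ is; the right adjoint $\tau_{k*}$ being fully faithful from \cref{sec:truncation-operads} is what makes $\Opd^{\le k,\un}$ a full subcategory of $\Opd^\un$.)

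For part \ref{enum:truncated-tangential-iii}, I would argue by a universal-property / adjunction chase. The functor $(-)_{\le k}\colon\Opd^\un\to\Opd^\un$ is the composite $\tau_{k*}\tau_k$, i.e.\ a reflection onto the full subcategory $\Opd^{\le k,\un}$; restricted to groupoid-coloured operads, part \ref{enum:truncated-tangential-i}/\ref{enum:truncated-tangential-ii} identify this with the reflection of $\Opd^\gc$ onto $\Opd^{\le k,\gc}$. Under the equivalence $\Opd^\gc\simeq\int_\cS\Fun(-,\Opd^\red)$ this reflection is, fibrewise over $B\in\cS$, nothing but postcomposition with the reduced truncation reflection $(-)_{\le k}\colon\Opd^\red\to\Opd^{\le k,\red}$, since: (i) the truncation functors are computed in $\Cat_{/\Fin_\ast}$ where colimits of the relevant shape and limits interact compatibly, or more cleanly (ii) a left adjoint between total categories of cartesian fibrations that is given fibrewise by a left adjoint is the fibrewise left adjoint — here both $\tau_k$ restricted to $\Opd^\gc$ and the fibrewise postcomposition with reduced truncation are left adjoint to the same inclusion $\int_\cS\Fun(-,\Opd^{\le k,\red})\hookrightarrow\int_\cS\Fun(-,\Opd^\red)$, so they agree by uniqueness of adjoints. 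Since the top row of the diagram \eqref{equ:functors-vs-generalised-operads} sends $(B,\theta)$ to $\colim_B\theta$, applying it to $(B,\theta)$ versus $(B,(-)_{\le k}\circ\theta)$ and using that this equivalence intertwines the two truncation reflections gives $(\colim_B\theta)_{\le k}\simeq\colim_B(\theta(-)_{\le k})$. The special case $(\cU^\theta)_{\le k}\simeq(\cU_{\le k})^{\theta_k}$ then follows by taking $\theta$ to be the composite $B\xra{\theta}\BAut(\cU)\hookrightarrow\Opd^\red$ and observing that $(-)_{\le k}$ carries $\BAut(\cU)$ into $\BAut(\cU_{\le k})$ via the map induced by truncation, so that $(-)_{\le k}\circ\theta$ is the functor classifying $\theta_k\colon B\to\BAut(\cU_{\le k})$.

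\textbf{Main obstacle.} The delicate point is part \ref{enum:truncated-tangential-iii}: making rigorous that, under the equivalence of \cref{thm:operad-grp-type}, the operadic truncation reflection on $\Opd^\gc$ corresponds to \emph{fibrewise} postcomposition with the reduced truncation. The cleanest route is the adjunction-uniqueness argument above — exhibit both candidate functors as left adjoint to the inclusion $\int_\cS\Fun(-,\Opd^{\le k,\red})\hookrightarrow\int_\cS\Fun(-,\Opd^\red)$ (using parts \ref{enum:truncated-tangential-i},\ref{enum:truncated-tangential-ii} to know the essential image, and that a fibrewise reflection on a Cartesian-unstraightened category is again a reflection), so that no explicit manipulation of colimits in $\Opd$ versus in $\Opd^\red$ is needed. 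The one thing to verify carefully is that the inclusion really does admit this left adjoint fibrewise and that the total left adjoint is computed fibrewise — this is a standard fact about reflective subcategories of total categories of Cartesian fibrations, but I would spell out the reference (or a short argument via the pointwise formula for the adjoint).
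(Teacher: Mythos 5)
Your proposal is correct and follows essentially the same route as the paper: part (i) via the product decomposition of multi-operations over the reduced covers (the paper carries this out by reducing to a single colour and passing to fibres over the identity along $p_\cO$, which is the careful version of your "the mapping-space factors are insensitive to truncation" step, needed because the limit in \eqref{equ:counit-multioperations} does not split off the $\prod_{s\in S'}\Map(c_s,c)$ factors for free), part (ii) as an immediate consequence of (i) and \cref{thm:operad-grp-type}, and part (iii) by observing that the equivalence of \cref{thm:operad-grp-type} intertwines the two truncation reflections. Your uniqueness-of-adjoints formulation of (iii) is a reasonable and slightly more explicit rendering of the paper's one-line appeal to the fact that the equivalence is given by taking colimits.
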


\begin{proof}Arguing as in the beginning of the proof of \cref{lem:tangential-opd-multi-ops}, to show  \ref{enum:truncated-tangential-i}, it suffices to prove that for each colour $d\in\cO^\col$ and finite set $S$, the map $\Mul_{\cO}((d)_{s\in S};d)\ra \lim_{S'\subseteq S,|S'|\le k}\Mul_{\cO}((d)_{s\in S'};d)$ is an equivalence if and only if the map $\Mul_{\theta_\cO(d)}((\ast)_{s\in S};\ast)\ra \lim_{S'\subseteq S,|S'|\le k}\Mul_{\theta_\cO(d)}((\ast)_{s\in S'};\ast)$ is an equivalence. Similarly to the previous proof, we consider the commutative square
\[
	\begin{tikzcd}
	\Mul_{\cO}((d)_{s\in S};d)\rar\dar{p_{\cO}}& \lim_{S'\subseteq S,|S'|\le k}\Mul_{\cO}((d)_{s\in S'};d)\dar{p_{\cO}}\\
	\bigsqcap_{s\in S}\Mul_{\cO}(d;d)\rar& \lim_{S'\subseteq S,|S'|\le k}\bigsqcap_{s\in S'}\Mul_{\cO}(d;d).
	\end{tikzcd}
\]
The bottom row is an equivalence, so the top row is an equivalence if and only if the map on vertical fibres over all basepoints is one. The action of the group $\bigsqcap_{s\in S}\Mul_{\cO}(d;d)$ on the square exhibits the latter to be equivalent to the map on vertical fibre over the identity basepoint being an equivalence. Arguing as in the previous proof, this map on fibres over the identity is given by the map $\Mul_{\theta(d)}((\ast)_{s\in S};\ast)\ra \lim_{S'\subset S,|S'|\le k}\Mul_{\theta(d)}((\ast)_{s\in S'};\ast)$, so the claim follows. Combining \ref{enum:truncated-tangential-i} with \cref{thm:operad-grp-type} immediately implies \ref{enum:truncated-tangential-ii}. From this \ref{enum:truncated-tangential-ii} follows, as it allows us to identify $\Opd^{\leq k,\gc}$ as the essential image of $\int_{\cS}\Fun(-,\Opd^{\le k,\red})$ under the equivalence of \cref{thm:operad-grp-type}. This implies \ref{enum:truncated-tangential-iii} by using that the equivalence in \cref{thm:operad-grp-type} is given by taking colimits.
\end{proof}

\subsection{Semidirect products of ordinary operads in spaces}\label{rem:semidirect} The notion of a $\theta$-framed $\cU$-operad for a reduced operad $\cU$ and a tangential structure $\theta\colon B\ra\BAut(\cU)$ (see \cref{dfn:tangial-structures-opd}) has a known analogue for classical operads in the $1$-category $\cat{S}$ of topological spaces or Kan complexes: for a group object $\cat{G}$ in $\cat{S}$ acting on an ordinary $1$-coloured symmetric operad $\cat{U}$ in $\cat{S}$, there is an associated $1$-coloured symmetric operad $\cat{U}\rtimes \cat{G}$ in $\cat{S}$, called the \emph{semidirect product of $\cat{U}$ and $\cat{G}$}, whose multi-operations are given by $(\cat{U}\rtimes \cat{G})(k)=\cat{U}(k)\times \cat{G}^k$ and whose operadic composition involves that of $\cat{U}$ and the $\cat{G}$-action on $\cat{U}$ (see \cite[Section 2]{SalvatoreWahl}). If $\cat{U}$ has weakly contractible $0$- and $1$-ary operations, then its operadic nerve $\cU\coloneqq N^{\otimes}(\cat{U})\in\Opd$ from \cite[2.1.1.27]{LurieHA} is a reduced operad in our sense and the action induces a tangential structure $\theta\colon \BG\ra\BAut(\cU)$ (see the proof of \cref{prop:semidirect-is-tangential}). As we shall we in the following proposition, the associated  $\theta$-framed $\cU$-operad turns out to be equivalent to the operadic nerve $\cU\rtimes G\coloneqq N^{\otimes}(\cat{U}\rtimes \cat{G})$ of the semidirect product (in the discrete case, this can also be extracted from \cite[Chapter 4]{Azam}).

\begin{prop}\label{prop:semidirect-is-tangential}In the notation above, if the spaces of $0$- and $1$-ary operations $\cat{U}(0)$ and $\cat{U}(1)$ are contractible, then there is an equivalence of operads $\cU\rtimes G\simeq \cU^{\theta}$.
\end{prop}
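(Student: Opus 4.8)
The plan is to identify both $\cU \rtimes G$ and $\cU^\theta$ via their universal properties and the classification result \cref{thm:operad-grp-type}, rather than comparing them by hand as spaces over $\Fin_*$. Both are groupoid-coloured operads with category of colours $\BG$ (for $\cU \rtimes G$ this is because the operadic nerve of a $1$-coloured operad with $\cat G$ acting has colour object $\BG$; concretely $(\cU\rtimes G)^{\otimes}_{\langle 1\rangle}\simeq \BG$ since $(\cat U\rtimes\cat G)(1)=\cat U(1)\times\cat G\simeq \cat G$ and composition encodes the group structure), and for $\cU^\theta = \colim_{\BG}(\theta)$ this is the first part of \cref{thm:operad-grp-type}. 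Since a map of groupoid-coloured operads inducing an equivalence on colours and on reduced covers is an equivalence (by \cref{lem:operad-equivalence-criterion}, or more structurally by \eqref{equ:gc-redequ-identification}), it suffices to produce a map $\cU^\theta\to \cU\rtimes G$ over $\BG$ which is an equivalence on reduced covers, equivalently (by \cref{lem:pullbacks-groupoid-type}) an operadic right-fibration over $\BG$; and then to identify one reduced cover.

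First I would construct the tangential structure $\theta\colon \BG\to\BAut(\cU)$ carefully. The $\cat G$-action on $\cat U$ is a map $B\cat G\to \BAut_{\cat{Op}(\cat S)}(\cat U)$ of spaces, where $\cat{Op}(\cat S)$ is the $1$-category (or relative category) of $1$-coloured symmetric operads in $\cat S$; applying the operadic nerve $N^\otimes(-)$, which is a functor to $\Opd$ and sends weak equivalences to equivalences, and using that $N^\otimes(\cat U)=\cU$ is reduced (since $\cat U(0), \cat U(1)$ are contractible, the nerve has contractible $0$- and $1$-ary spaces, hence trivial colour category), we obtain $\theta\colon \BG\to\BAut(\cU)$. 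This is where the hypothesis on $\cat U(0)$ and $\cat U(1)$ is used: it is exactly what makes $\cU$ reduced and thus makes "$\theta$-framed $\cU$-operad" meaningful. Next, I would produce the comparison map. The single-colour inclusion $\{\ast\}\hookrightarrow\cat G$ realises $\cat U$ as the fibre, and assembling over $\BG$ one gets a natural map; more precisely, the functor $N^\otimes\circ(\text{semidirect product})$ restricted along $\ast\to \BG$ recovers $\cU$, so by the universal property of the colimit $\cU^\theta = \colim_{\BG}\theta$ in $\Opd$ (\cref{dfn:tangial-structures-opd}) there is a canonical map $\Phi\colon \cU^\theta\to \cU\rtimes G$ over $\BG$. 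Alternatively, and perhaps cleaner, one computes directly from \cite[Section 2]{SalvatoreWahl} that $\cU\rtimes G$ is itself a family of operads over $\BG$ with all fibres $\cU$, hence lies in $\Opd^{\gc,\simeq_\red}$ with reduced covers $\cU$; by \eqref{equ:gc-redequ-identification} such an operad is determined by a map $\BG\to\Opd^{\red,\simeq}\simeq\BAut(\cU)$, and one identifies this map with $\theta$ by unwinding the $\cat G$-action.

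The concrete check is that $\Phi$ is an equivalence on multi-operations. By \cref{lem:tangential-opd-multi-ops}, $\Mul_{\cU^\theta}((c_s)_{s\in S};d)\simeq \Mul_{\theta(d)}((\ast)_{s\in S};\ast)\times\prod_{s\in S}\Map_{\BG}(c_s,d)$, and $\theta(d)\simeq\cU=N^\otimes(\cat U)$, so this is $\cat U(|S|)_{h\Sigma_S}$-type data times $\prod_s G$ (after choosing basepoints, $\Map_{\BG}(c_s,d)\simeq G$). On the other hand $N^\otimes(\cat U\rtimes\cat G)$ has $|S|$-ary multi-operation space modelled on $(\cat U\rtimes\cat G)(|S|)=\cat U(|S|)\times\cat G^{|S|}$ with the appropriate $\Sigma_S$-action, and over the component of $\BG$ picked out by $(c_s)_{s\in S}$ and $d$ this matches the above. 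So $\Phi$ induces an equivalence on all spaces of multi-operations and on colour categories, hence is an equivalence by \cref{lem:operad-equivalence-criterion}. The main obstacle I anticipate is not any single step but the bookkeeping at the interface between the strict, point-set operadic world of \cite{SalvatoreWahl} and the $\infty$-operadic world: one must be a little careful that the operadic nerve $N^\otimes$ genuinely takes the strict semidirect-product structure maps (which mix $\cat U$-composition with the $\cat G$-action) to the colimit-cocone maps defining $\cU^\theta$, i.e.\ that the comparison map is really the canonical one and not merely some map inducing the right thing on homotopy groups. Phrasing the argument through \eqref{equ:gc-redequ-identification} — "both sides are the image of the same map $\BG\to\BAut(\cU)$ under a fully faithful functor" — sidesteps most of this, reducing everything to identifying the classifying map of the $\cat G$-action on $\cat U$ with $\theta$, which is essentially the definition.

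\begin{proof}[Proof sketch]
Both $\cU^\theta$ and $\cU\rtimes G$ are groupoid-coloured operads with colour groupoid $\BG$: for $\cU^\theta=\colim_{\BG}\theta$ this is the first assertion of \cref{thm:operad-grp-type}, and for $\cU\rtimes G=N^\otimes(\cat U\rtimes\cat G)$ it follows from $(\cat U\rtimes\cat G)(1)=\cat U(1)\times\cat G\simeq\cat G$ together with the description of operadic composition in a semidirect product. Moreover $\cU\rtimes G$ is a family of operads over $\BG$ in the sense of \cref{sec:disintegration} whose fibre over (a point of) $\BG$ is $N^\otimes(\cat U)=\cU$, which is reduced since $\cat U(0)$ and $\cat U(1)$ are contractible; hence $\cU\rtimes G\in\Opd^{\gc,\simeq_\red}$ with all reduced covers equivalent to $\cU$. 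By \eqref{equ:gc-redequ-identification} it is therefore classified by a map $\BG\to\Opd^{\red,\simeq}\simeq\BAut(\cU)$, and unwinding the construction of the semidirect product this map is exactly the tangential structure $\theta$ induced by the $\cat G$-action (the $\cat G$-action on $\cat U$ is precisely the data encoding how the fibres $\cU$ are glued over $\BG$). The $\theta$-framed operad $\cU^\theta$ is by definition the image of $\theta$ under the same classifying correspondence, so $\cU^\theta\simeq\cU\rtimes G$.

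Concretely, one may also verify this on multi-operations: by \cref{lem:tangential-opd-multi-ops},
\[
	\Mul_{\cU^\theta}\big((c_s)_{s\in S};d\big)\simeq \Mul_{\cU}\big((\ast)_{s\in S};\ast\big)\times\prod_{s\in S}\Map_{\BG}(c_s,d),
\]
and since $\cU=N^\otimes(\cat U)$ one has $\Mul_{\cU}((\ast)_{s\in S};\ast)\simeq \cat U(|S|)$ up to the usual $\Sigma_S$-coherence; after choosing a basepoint of $\BG$ the right-hand side becomes $\cat U(|S|)\times\cat G^{|S|}=(\cat U\rtimes\cat G)(|S|)$ with the correct $\Sigma_S$-action, which is the corresponding space of multi-operations of $N^\otimes(\cat U\rtimes\cat G)$. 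The canonical map $\cU^\theta\to\cU\rtimes G$ induced by the cocone $\cU\simeq(\cU\rtimes G)|_{\ast}\hookrightarrow\cU\rtimes G$ and the universal property of the colimit defining $\cU^\theta$ therefore induces an equivalence on colour groupoids and on all spaces of multi-operations, hence is an equivalence of operads by \cref{lem:operad-equivalence-criterion}.
\end{proof}
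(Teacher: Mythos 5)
Your overall strategy — classify both sides as groupoid-coloured operads with colour groupoid $\BG$ and reduced covers $\cU$, and match the classifying maps $\BG\to\BAut(\cU)$ — is the same route the paper takes (via \cref{thm:operad-grp-type} and the disintegration machinery). But there is a genuine gap at the two steps you dismiss as formal. First, the construction of $\theta$ itself: you say the strict $\cat{G}$-action gives a map $\BG\to\BAut(\cU)$ "by applying the operadic nerve". Since $\cat{G}$ is a group object in $\cat{S}$ and not a discrete group, this requires the operadic nerve to be compatible with the simplicial enrichment, which it is not in any evident way; the action is a simplicial group homomorphism into strict automorphisms, and turning that into a point of $\Map_{\cS}(\BG,\BAut(\cU))$ is not automatic. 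The paper's workaround is to build an auxiliary $\cat{S}$-enriched category $\cat{U}^{\cat{G},\otimes}$ over $\Fin_*\times\cat{G}$ whose coherent nerve is a cocartesian fibration over $\BG$ with fibre $\cU$, and to \emph{define} $\theta$ as its straightening.

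Second, and more seriously, the identification of the classifying map of $\cU\rtimes G$ with $\theta$ is not "essentially the definition" — it is the entire content of the proposition. Under the equivalence \eqref{equ:gc-redequ-identification} (equivalently, the disintegration of \cref{thm:operad-grp-type}), the classifying map of $\cU\rtimes G$ is computed from $\Assem^{-1}(\cU\rtimes G)$, i.e.\ the pullback of the counit $\cU\rtimes G\to((\cU\rtimes G)^{\col})^{\sqcup}$ along $\gamma\colon \Fin_*\times\BG\to((\cU\rtimes G)^{\col})^{\sqcup}$. The paper shows this $\infty$-categorical pullback can be computed as a strict pullback of simplicial categories (using that the relevant comparison functor is a fibration in the Bergner model structure and that this model structure is left proper), and that the strict pullback is exactly $\cat{U}^{\cat{G},\otimes}$ — the same object whose nerve unstraightens $\theta$. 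Without some version of this computation you have neither the cocone exhibiting $\cU\rtimes G$ as a target of the diagram $\theta\colon\BG\to\Opd$ (a cocone over $\BG$ is a map $\cU\to\cU\rtimes G$ \emph{plus} coherent compatibility with the full $\cat{G}$-action, not just a single map from the fibre), nor a map to which \cref{lem:operad-equivalence-criterion} and your multi-operation comparison could be applied. The fibrewise check in your last paragraph is fine once the map exists, but constructing the map is where the work lies.
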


\begin{rem}In view of \cref{prop:semidirect-is-tangential}, specialising the fibre sequence \eqref{equ:maps-gc-operad} and the equivalence \eqref{equ:aut-gc-operad} to the operad $\cU\rtimes G$ recovers a result of Horel--Willwacher \cite[Theorem 1.1]{HorelWillwacher} (note that they work in the model category of ordinary $1$-coloured operads in $\cat{S}$ whose underlying $\infty$-category is not equivalent to a full subcategory of $\Opd$, but to one of $\Opd_{\ast/}$; see \cite[Section 2.3]{HorelKrannichKupers}).
\end{rem}

\begin{proof}[Proof of \cref{prop:semidirect-is-tangential}]We assume that $\cat{S}$ is the category of Kan-complexes; the case of topological spaces follows by first taking singular simplicial sets. Recall from \cite[2.1.1.22, 2.1.1.23]{LurieHA} that the operadic nerve $N^{\otimes}(\cat{O})\in\Opd$ of a coloured symmetric simplicial operad $\cat{O}$ in $\cat{S}$ is constructed as the coherent nerve $N(-)$ in the sense \cite[1.1.5.5]{LurieHTT} of a certain $\cat{S}$-enriched ordinary category $\cO^{\otimes}$ with a functor to $\Fin_*$, defined explicitly in terms of $\cat{O}$ \cite[2.1.1.22, 2.1.1.23]{LurieHA}.

\medskip

\noindent First we explain how the $\cat{G}$-action on $\cat{U}$ induces a functor $\theta\colon \BG\ra\BAut(\cU)$. This is not entirely obvious, since the operadic nerve considered as a functor from the $1$-category of coloured symmetric simplicial operads in $\cat{S}$ to the underlying $1$-category of the simplicial category whose coherent nerve is $\Opd$ (see \cite[2.1.4.1]{LurieHA}) is not simplicially enriched in evident way (c.f.~\cite[Section 6]{JoyalTierney}). Instead, we consider the $\cat{S}$-enriched category $\cat{U}^{\cat{G},\otimes}$ with objects finite pointed sets, morphism spaces $\cat{Map}_{\cat{U}^{\cat{G},\otimes}}(S\sqcup\ast,T\sqcup\ast)\coloneqq \sqcup_{\alpha\colon S\sqcup\ast \ra T\sqcup\ast}\cat{G}\times \smallprod_{t\in T}\cat{U}(\alpha^{-1}(t))$, and composition induced by the maps $\smash{\cat{G}\times [\sqcap_{j\in\beta^{-1}(i)}\cat{U}(\alpha^{-1}(j))]\times (\cat{G}\times \cat{U}(\beta^{-1}(i))\ra \cat{G}\times \cat{U}((\beta\circ\alpha)^{-1}(i))}$ giving by first letting the second copy of $\cat{G}$ act diagonally on $\sqcap_{j\in\beta^{-1}(i)}\cat{U}(\alpha^{-1}(j))$, followed by the map given by operad composition in $\cat{U}$ and multiplication in $\cat{G}$. Projection induces a functor $\cat{U}^{\cat{G},\otimes}\ra \Fin_*\times \cat{G}$ such that its composition $\cat{U}^{\cat{G},\otimes} \to \cat{G}$ with the projection to $\cat{G}$ is a cocartesian fibration of $\cat{S}$-enriched categories whose fibre is $\cat{U}^\otimes$. Applying $N(-)$ to this composition we obtain a cocartesian fibration with fibre $\cU\coloneqq N(\cat{U}^\otimes)$, the straightening gives the desired functor $\theta\colon \BG\ra\BAut(\cU)$.

\medskip

\noindent To show $\cU\rtimes G\simeq \cU^{\theta}$, first note that, by the proof of \cref{thm:operad-grp-type}, it suffices to show that the two generalised operads $\smash{\Assem^{-1}(\cU\rtimes G)}$ and $\smash{\Assem^{-1}(\cU^\theta)}$ are equivalent in $\Opd^\gn$ which we can check in $(\Cat_\infty)_{/\Fin_*}$, since $\Opd^\gn\hookrightarrow (\Cat_\infty)_{/\Fin_*}$ is a replete subcategory inclusion. To this end, note that by the description of $\Assem^{-1}(-)$ in the mentioned proof, there is a cartesian square in $\Cat_{/\Fin_*}$
\begin{equation}\label{equ:pullback-simplicial-operad}
	\begin{tikzcd}[row sep=0.5cm, column sep=0.5cm]
	\Assem^{-1}(\cU\rtimes G)\rar\dar &\cU\rtimes G\dar\\
	\Fin_*\times (\cU\rtimes G)^{\col}\rar{\gamma}& ((\cU\rtimes G)^{\col})^{\sqcup}.
	\end{tikzcd}
\end{equation}
This can be obtained as the coherent nerve of a square of $\cat{S}$-enriched categories over $\Fin_*$: firstly, for any $\cat{S}$-enriched category $\cat{C}$, the cocartesian operad $N(\cat{C})^\sqcup$ is the coherent nerve of the $\cat{S}$-enriched category $\cat{C}^\sqcup$ over $\Fin_*$ whose objects are sequences $(c_s)_{s\in S}$ of objects in $\cat{C}$ indexed by a finite set $S$ and whose morphism spaces are defined by the formulas \eqref{equ:maps-total-space-operads} and \eqref{equ:operations-cocartesian-operad}. In these terms, the functor $\gamma\colon \Fin_*\times N(\cat{C})\ra N(\cat{C})^\sqcup$ arises as $N(-)$ applied to the $\cat{S}$-enriched functor $\Fin_*\times \cat{C}\ra \cat{C}^\sqcup$ that sends $(S,c)$ to $(c)_{s\in S}$. Specialising this to $\cat{C}=(\cat{U} \rtimes \cat{G})^\col$, this gives the bottom row in \eqref{equ:pullback-simplicial-operad}. Moreover, by definition of the operadic nerve, we have $\cU\rtimes G\simeq N((\cat{U}\rtimes \cat{G})^\otimes)$ where $(-)^\otimes$ is as before. The projections $(\cat{U}\rtimes \cat{G})(k)=\cat{U}\times \cat{G}^k\ra \cat{G}^k$ induce a functor $(\cat{U}\rtimes \cat{G})^\otimes\ra  \cat{G}^\sqcup$ whose coherent nerve gives the right vertical map in \eqref{equ:pullback-simplicial-operad}. The latter functor is a fibration in the Bergner model structure on simplicial categories by inspection using the description of fibrations resulting from \cite[A.3.2.24, A.3.2.25, A.3.2.9]{LurieHTT}. Since this model category is left proper (see A.3.2.4 loc.cit.) and $N(-)$ is a Quillen equivalence to the Joyal model structure on simplicial sets whose underlying $\infty$-category is $\Cat$ (see 2.2.5.1 loc.cit.), we can compute the pullback \eqref{equ:pullback-simplicial-operad} in simplicial categories and obtain $\Assem^{-1}(\cU\rtimes G)\simeq N((\Fin_*\times\cat{G}) \times_{\cat{G}^\sqcup}(\cat{U}\rtimes \cat{G})^\otimes)$. Spelling out the definition, ones see that the strict pullback $(\Fin_*\times\cat{G}) \times_{\cat{G}^\sqcup}(\cat{U}\rtimes \cat{G})^\otimes$ is precisely $\cat{U}^{\cat{G},\otimes}$, so it remains to show that $\smash{\Assem^{-1}(\cU^\theta)}\in\Cat_{/\Fin_*}$ is equivalent to $N(\cat{U}^{\cat{G},\otimes})\in\Cat_{/\Fin_*}$. But by the proof of  \cref{thm:operad-grp-type}, $\smash{\Assem^{-1}(\cU^\theta)}\in(\Cat_\infty)_{/\Fin_*}$ is the unstraightening of $\theta\colon \BG\ra \BAut(\cat{U})\hookrightarrow \Cat_{/\Fin_*}$ which is $N(\cat{U}^{\cat{G},\otimes})$ by construction.
\end{proof}

\section{Presheaves, Day convolution, and a decomposition result}
The purpose of this section is twofold: we record general results on Day convolution symmetric monoidal structures on categories $\PSh(\cC)$ of space-valued presheaves on symmetric monoidal categories $\cC$ (Sections \ref{sec:day}--\ref{sec:modules-day}) and we establish a natural pullback decomposition of $\PSh(\cC)$ involving the category of presheaves on certain full subcategories of $\cC$ and their ``complements'' (\cref{sec:recollement}). 

\subsection{Day convolution}\label{sec:day} Given a symmetric monoidal category $\cC$, the category of space-valued presheaves $\PSh(\cC)=\Fun(\cC^{\op},\cS)$ carries a symmetric monoidal structure by Day convolution \cite[4.8.1.12, 4.8.1.13]{LurieHA} which is on objects given by 
\begin{equation}\label{equ:day-conv}
	(F\otimes G)(c)\simeq \underset{c\ra c'\otimes c''}{\colim}\,F(c')\times F(c''),
\end{equation} 
where the colimit is taken over the pullback $(\cC\times \cC)_{c/}$ of the monoidal product $\otimes \colon \cC\times \cC\ra \cC$ along the forgetful functor $\cC_{c/}\ra \cC$. Its unit is given by the representable presheaf $\Map_{\cC}(-,1_\cC)$ on the monoidal unit $1_\cC$.  There are several ways to characterise this symmetric monoidal structure: in $\CMon(\Cat)$ it can be characterised as the unique symmetric monoidal structure on $\PSh(\cC)$ that preserves small colimits in both variables and for which the Yoneda embedding can be made symmetric monoidal (see 4.8.1.12 loc.cit.), in $\CMon^\lax(\Cat)$ it can be characterised by the natural equivalence (see \cref{sec:lax} for the notation and 2.2.6.8 loc.cit.\,for a reference)
\[
	\Map_{\CMon^\lax(\Cat)}(-\times \cC^{\op},\cS)\simeq \Map_{\CMon^\lax(\Cat)}(-,\PSh(\cC)),
\] 
where $\times$ is the product in $\CMon^\lax(\Cat)$ (which is computed in ${\Cat}_{/\Fin_\ast}$, so given by taking pullbacks over $\Fin_\ast$). Combined with \eqref{equ:op-equivalence} and the Yoneda lemma, the second characterisation shows that there is a lift of the precomposition-functor $\PSh(-)\colon \Cat^\op\ra \Cat$ to a functor
\begin{equation}\label{equ:restriction-is-lax}
	\PSh(-)\colon \CMon^\oplax(\Cat)^\op\lra \CMon^\lax(\Cat).
\end{equation}
Since precomposition of presheaves along a functor has a left adjoint induced by taking left Kan extension, this functor lands in the subcategory $\CMon^{\lax,R}(\Cat)\subset \CMon^\lax(\Cat)$ so we may postcompose it with the equivalence \eqref{equ:mates} induced by taking left adjoints to obtain a lift
\begin{equation}\label{equ:left-kan-is-oplax}
	\PSh(-)\colon \CMon^\oplax(\Cat)\lra  \CMon^\oplax(\Cat)
\end{equation}
of the left Kan extension-functor $\PSh(-)\colon \Cat\ra \Cat$. 

\begin{ex}\label{ex:mapping-unit-in-is-lax}Inclusion of the unit in a symmetric monoidal category $\cC$ gives a symmetric monoidal functor $1\hookrightarrow \cC$. Applying \eqref{equ:left-kan-is-oplax} to it yields a lax monoidal lift of the evaluation functor $\ev_1\colon\PSh(\cC)\ra \cS$. Precomposing the latter with the symmetric monoidal Yoneda embedding, $y\colon \cC\ra\PSh(\cC)$ gives a lax  monoidal lift of the functor $\Map(1,-)\colon \cC\ra\cS$.
\end{ex}

\begin{lem}\label{lem:left-kan-monoidal}The functor \eqref{equ:left-kan-is-oplax} preserves monoidal functors, so restricts to an endofunctor on $\CMon(\Cat)$.\end{lem}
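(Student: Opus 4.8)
The statement to prove is that the functor $\PSh(-)\colon \CMon^\oplax(\Cat)\to\CMon^\oplax(\Cat)$ from \eqref{equ:left-kan-is-oplax} carries (strong) symmetric monoidal functors to symmetric monoidal functors. The plan is to unwind what \eqref{equ:left-kan-is-oplax} does on a morphism and then check the defining criterion for oplax monoidality to be strong, namely that the structure maps $\varphi(c\otimes c')\to\varphi(c)\otimes\varphi(c')$ become equivalences, in terms recalled in \cref{sec:lax}.

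First I would recall the construction of \eqref{equ:left-kan-is-oplax}: it is obtained from \eqref{equ:restriction-is-lax} (which sends an oplax monoidal functor $\varphi\colon\cC\to\cD$ to the lax monoidal functor $\varphi^*\colon\PSh(\cD)\to\PSh(\cC)$) by postcomposing with the mate equivalence \eqref{equ:mates}, which replaces $\varphi^*$ by its left adjoint $\varphi_!$ equipped with the oplax structure produced by formula \eqref{equ:oplax-from-lax}. So, given a strong symmetric monoidal $\varphi\colon\cC\to\cD$, viewed as an oplax functor whose structure maps are equivalences, the task is to show that the oplax structure maps of $\varphi_!\colon\PSh(\cC)\to\PSh(\cD)$ are equivalences. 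Now the lax structure maps of $\varphi^*$ are themselves equivalences: for a strong monoidal $\varphi$, the functor $\varphi^*$ is strong monoidal for the Day convolution structures — this is part of \cite[4.8.1.12, 4.8.1.13]{LurieHA} and also follows from the characterisation of Day convolution in $\CMon^\lax(\Cat)$ recalled in \cref{sec:day}, since precomposition with a symmetric monoidal functor preserves the relevant mapping-space formula. Thus $\varphi^*$ lies in the wide subcategory of strong monoidal functors, on which the lax and oplax structures of its adjoint agree; hence under \eqref{equ:mates} the functor $\varphi_!$ inherits the strong monoidal structure whose structure maps \eqref{equ:oplax-from-lax} are built from the (invertible) lax structure maps of $\varphi^*$ and the unit/counit of $\varphi_!\dashv\varphi^*$.

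Concretely, the step that makes the structure maps equivalences is this: since $\varphi^*$ is strong monoidal, it follows from \cref{lem:left-adjoint-bimodules} (or more directly from the mate calculus in \cref{sec:mate-calculus}, applied to the adjunction $\varphi_!\dashv\varphi^*$ of symmetric monoidal categories and the invertible mates of the Day convolution structure constraints) that the Beck–Chevalley transformation $\varphi_!(F)\otimes\varphi_!(G)\to\varphi_!(F\otimes G)$ computed by \eqref{equ:oplax-from-lax} is an equivalence for all $F,G\in\PSh(\cC)$; likewise on units, using that $\varphi_!$ preserves the representable presheaf on the unit because $\varphi$ preserves the unit and left Kan extension of representables along $\varphi$ is the representable on the image (Yoneda). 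Therefore $\varphi_!$, with its oplax structure coming from \eqref{equ:left-kan-is-oplax}, is strong symmetric monoidal, which is exactly the claim; restricting \eqref{equ:left-kan-is-oplax} along the inclusion $\CMon(\Cat)\hookrightarrow\CMon^\oplax(\Cat)$ then gives the asserted endofunctor of $\CMon(\Cat)$.

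The main obstacle is bookkeeping rather than conceptual: one must be careful that the oplax structure that \eqref{equ:left-kan-is-oplax} actually equips $\varphi_!$ with is the one given by \eqref{equ:oplax-from-lax} applied to the lax structure of $\varphi^*$ — this is precisely the content of the mate equivalence \eqref{equ:mates} of \cite[Corollary C]{HHLN}/\cite[Theorem 1.1]{Torii} — and then verify that when the input lax structure maps are equivalences the output oplax structure maps are too. This last implication is a triangle-identity computation: in \eqref{equ:oplax-from-lax} the first and last maps are (co)units applied inside an equivalence and the middle map is the lax constraint of $\varphi^*$, which we have arranged to be invertible, so the composite is invertible. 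I would present this as the key lemma and note that the unit case is handled identically.
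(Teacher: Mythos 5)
Your argument has two genuine gaps, both in the step where you try to show that the oplax structure maps of $\varphi_!$ are invertible. First, the intermediate claim that $\varphi^*\colon\PSh(\cD)\to\PSh(\cC)$ is \emph{strong} monoidal for Day convolution whenever $\varphi$ is strong monoidal is false. The lax structure map $(\varphi^*F\otimes\varphi^*G)(c)\to\varphi^*(F\otimes G)(c)$ compares a colimit over $(\cC\times\cC)_{c/}$ with one over $(\cD\times\cD)_{\varphi(c)/}$, and the comparison functor between these indexing categories need not be cofinal. A concrete counterexample is $\varphi=(1\hookrightarrow\cD)$, for which $\varphi^*=\ev_{1_\cD}$: the paper's own \cref{ex:mapping-unit-in-is-lax} records that this is only lax monoidal, since $(F\otimes G)(1_\cD)=\colim_{1_\cD\to d\otimes d'}F(d)\times G(d')$ is generally larger than $F(1_\cD)\times G(1_\cD)$. (The citation of \cref{lem:left-adjoint-bimodules} does not help here; that lemma concerns bimodule categories.) Second, even granting that the lax constraints of $\varphi^*$ were equivalences, the inference that the composite \eqref{equ:oplax-from-lax} is then an equivalence is not a "triangle-identity computation": the first map is $\varphi_!$ applied to a tensor of unit maps and the last map is a counit, and neither is an equivalence in general (units and counits of $\varphi_!\dashv\varphi^*$ are equivalences only under fully-faithfulness hypotheses). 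The triangle identities do not make such a three-fold composite invertible just because its middle map is.

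The paper's proof avoids both issues by working directly with the oplax constraint of $F_!$ rather than routing through $F^*$: since Day convolution preserves colimits in each variable and $F_!$ preserves colimits, both $F_!\circ\otimes$ and $\otimes\circ(F_!\times F_!)$ preserve colimits in each variable, so the oplax transformation between them is an equivalence if and only if it is one after precomposing with $y_\cC\times y_\cC$. On representables, using that Yoneda is symmetric monoidal and compatible with left Kan extension, the constraint identifies with $y_\cD$ applied to the constraint $F(c\otimes c')\to F(c)\otimes F(c')$ of $F$ itself (and similarly for units), and Yoneda is conservative. If you want to repair your write-up, replace your two problematic steps with this reduction to representables; the mate formalism is only needed to know \emph{which} oplax structure $\varphi_!$ carries, not to verify its invertibility.
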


\begin{proof}We show the stronger claim that an oplax monoidal functor $F \colon \cC \to \cD$ is symmetric monoidal if and only if this holds for its image $F_!$ under \eqref{equ:left-kan-is-oplax}. To see this, note that as Day convolution preserves colimits in both variables and left Kan extension preserves colimits, the transformation $F_! \circ \otimes \to \otimes \circ (F_! \times F_!)$ given by the oplax structure is an equivalence if and only if its precomposition with the product of the Yoneda embeddings $y_\cC \times y_\cC$ of $\cC$ is. Using the naturality of the Yoneda embedding with respect to left Kan extension and that the Yoneda embedding is symmetric monoidal, tracing through the construction we see that the component of the natural transformation at $y_\cC(c) \times y_\cC(c')$ is given by the map of presheaves $y_\cD(-,F(c \otimes c')) \ra y_\cD(-,F(c) \otimes F(c'))$ induced by the oplax monoidality of $F$, and that the map $y_\cD(1_\cD)\to F_!(y_\cC({1}_\cD)) \simeq y_\cD(F({1}_\cC))$ is the value of $y_{\cD}$ at ${1}_\cD \to F({1}_\cC)$. Since the Yoneda embedding is conservative, this shows the claim.
\end{proof}

\subsection{Localising Day convolution}\label{sec:localise-day-convolution}
For a symmetric monoidal category $\cC$ and a full subcategory $\cC_0\subset \cC$, one can sometimes use the Day convolution symmetric monoidal structure on $\PSh(\cC)$ to construct a symmetric monoidal structure on $\PSh(\cC_0)$, even in situations where $\cC_0$ is not symmetric monoidal itself. To explain this, we consider the operad $\PSh(\cC)^\otimes\ra \Fin_\ast$ associated to the Day convolution structure (see \cref{sec:operads-as-symmon}), denote the fully faithful inclusion by $\iota\colon \cC_0\hookrightarrow \cC$ and write $\PSh(\cC_0)^\otimes\subset \PSh(\cC)^\otimes$ for the full subcategory on those collections of objects $(F_s)_{s\in S}$ (see \cref{sec:operads-intro}) for which each $F_s$ is in the essential image of the fully faithful right Kan extension $\iota_\ast\colon \PSh(\cC_0)\hookrightarrow\PSh(\cC)$. The restricted functor $\PSh(\cC_0)^{\otimes}\ra \Fin_\ast$ is again an operad, its category of colours is by construction equivalent to $\PSh(\cC_0)$, and the inclusion $\PSh(\cC_0)^{\otimes}\subset \PSh(\cC)^{\otimes}$ a morphism of operads \cite[p.\,193]{LurieHA}. In general, $\PSh(\cC_0)^{\otimes}\ra \Fin_\ast$ is not a cocartesian fibration, i.e.\,not a symmetric monoidal category, but there is the following condition under which it is. It involves the categories $(\cC \times \cC)_{c/}$ appearing in the colimit description \eqref{equ:day-conv} of the Day convolution.

\begin{lem}\label{lem:localise-day}If the inclusion $(\cC_0 \times \cC)_{c_0/} \to (\cC \times \cC)_{c_0/}$ is final for all objects $c_0 \in \cC_0$, then 
\begin{enumerate}
	\item\label{enum:localise-day-i} $\PSh(\cC_0)^\otimes\ra \Fin_\ast$ is a symmetric structure on $\PSh(\cC_0)$, 
	\item\label{enum:localise-day-ii} this symmetric monoidal structure is objectwise given by $F\otimes G\simeq\iota^*(\iota_*(F)\otimes\iota_*(G))$,
	\item\label{enum:localise-day-iii}$\PSh(\cC_0)$ is a presentable symmetric monoidal category in the sense of \cref{sec:presentable-monoidal},
	\item\label{enum:localise-day-iv} the inclusion $\PSh(\cC_0)^\otimes\subset \PSh(\cC)^\otimes$ is lax symmetric monoidal and refines the right Kan extension $\iota_\ast\colon \PSh(\cC_0)\hookrightarrow \PSh(\cC)$, and 
	\item\label{enum:localise-day-v} the oplax symmetric monoidal refinement of the restriction $\iota^\ast\colon \PSh(\cC)\ra \PSh(\cC_0)$ resulting from \ref{enum:localise-day-iv} and the equivalence \eqref{equ:mates} is symmetric monoidal.
\end{enumerate}
\end{lem}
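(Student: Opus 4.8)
The plan is to verify the five claims in order, since each one feeds into the next. First I would unpack the finality hypothesis: for each $c_0\in\cC_0$ the inclusion $(\cC_0\times\cC)_{c_0/}\to(\cC\times\cC)_{c_0/}$ being final means that colimits over the larger category can be computed over the smaller one. Applying this to the Day convolution formula \eqref{equ:day-conv} for $F,G\in\PSh(\cC)$ evaluated at $c_0$, the colimit $\colim_{c_0\to c'\otimes c''}F(c')\times G(c'')$ can be restricted to those decompositions with $c'\in\cC_0$. The point is that if $F=\iota_*(F_0)$ and $G=\iota_*(G_0)$ then $F(c')$ only depends on $c'$ through the full subcategory $\cC_0$ up to the usual right-Kan-extension formula, so one can identify $\iota^*(\iota_*(F_0)\otimes\iota_*(G_0))$ with an honest colimit of representable-type expressions in $\PSh(\cC_0)$. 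This is really the engine behind \ref{enum:localise-day-ii}, and once \ref{enum:localise-day-ii} is established as a well-defined bifunctor landing in (the essential image of) $\iota_*$, claim \ref{enum:localise-day-i} follows: one checks that the operad $\PSh(\cC_0)^\otimes\to\Fin_*$ has cocartesian lifts over all morphisms, not just inert ones, by exhibiting the binary tensor product via the formula in \ref{enum:localise-day-ii} and the nullary unit $\iota^*(\text{Day unit})$, and verifying the Segal-type compatibility. Here the cleanest route is to observe that $\iota_*\colon\PSh(\cC_0)\to\PSh(\cC)$ is fully faithful with essential image a localising subcategory, and that the finality hypothesis is exactly what makes this essential image closed under the Day tensor product; a reflective subcategory of a symmetric monoidal category closed under the tensor product (equivalently: the localisation functor $\iota^*\iota_*$ is compatible with $\otimes$) inherits a symmetric monoidal structure, which is the standard localisation-of-symmetric-monoidal-categories argument (cf.\ \cite[2.2.1.9, 4.1.3.4]{LurieHA} in spirit, though here it is cleaner to argue directly with the operad $\PSh(\cC_0)^\otimes$).

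Next, \ref{enum:localise-day-iii}: presentability of $\PSh(\cC_0)$ as a category is automatic from \cite[5.5.1.1]{LurieHTT}, so the content is that the tensor product from \ref{enum:localise-day-ii} preserves colimits in each variable. This I would deduce from the formula $F\otimes G\simeq\iota^*(\iota_*F\otimes\iota_*G)$: the right Kan extension $\iota_*$ preserves all limits, but to see colimit-preservation of the composite we instead use the colimit description coming from the finality reduction in the first paragraph — expressing $F\otimes G$ at $c_0$ as a colimit over $(\cC_0\times\cC)_{c_0/}$ of products $F(c')\times G(c'')$ — and note that this is manifestly colimit-preserving in $F$ and in $G$ separately since colimits in $\PSh(\cC_0)$ are computed pointwise and commute with the colimit over the indexing category and with finite products in $\cS$. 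For \ref{enum:localise-day-iv}, the inclusion $\PSh(\cC_0)^\otimes\subset\PSh(\cC)^\otimes$ is a morphism of operads by the cited \cite[p.\,193]{LurieHA}, hence lax symmetric monoidal when both sides are symmetric monoidal categories (which is now known by \ref{enum:localise-day-i}); and on underlying categories it is the inclusion of the essential image of $\iota_*$, i.e.\ it refines $\iota_*$, essentially by construction of $\PSh(\cC_0)^\otimes$ as the full suboperad on collections landing in that essential image. Finally \ref{enum:localise-day-v}: the equivalence \eqref{equ:mates} turns the lax structure on $\iota_*$ (a right adjoint) into an oplax structure on its left adjoint $\iota^*$, and to see this oplax structure is strong one checks the comparison maps $\iota^*(F\otimes G)\to\iota^*F\otimes\iota^*G$ are equivalences — but by \ref{enum:localise-day-ii} the target is literally $\iota^*(\iota_*\iota^*F\otimes\iota_*\iota^*G)$, and since $\iota_*\iota^*$ is a localisation with $\iota^*\iota_*\iota^*\simeq\iota^*$ and $\iota^*$ is symmetric monoidal onto its image, this reduces to the statement that the localisation $\iota_*\iota^*\colon\PSh(\cC)\to\PSh(\cC)$ is compatible with $\otimes$, which is precisely what \ref{enum:localise-day-i}/\ref{enum:localise-day-ii} encode.

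The main obstacle, I expect, is \ref{enum:localise-day-ii} — more precisely, promoting the pointwise identity $F\otimes G\simeq\iota^*(\iota_*F\otimes\iota_*G)$ from a statement about underlying bifunctors to a statement at the level of symmetric monoidal structures, i.e.\ making sure the operad $\PSh(\cC_0)^\otimes$ defined as a full suboperad genuinely has the cocartesian lifts that \ref{enum:localise-day-i} asserts, with the tensor product computed by that formula, all coherently. The finality hypothesis is used precisely here and nowhere else essentially; once one has carefully checked that the essential image of $\iota_*$ is closed under Day convolution (which is where finality enters, via the colimit formula), the rest is the formal theory of reflective localisations of symmetric monoidal $\infty$-categories together with the mate correspondence \eqref{equ:mates}. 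I would therefore organise the proof so that the finality hypothesis is invoked once, to prove a lemma of the form ``$\iota_*F\otimes\iota_*G$ lies in the essential image of $\iota_*$ and equals the right Kan extension of the evident presheaf on $\cC_0$,'' and then derive \ref{enum:localise-day-i}, \ref{enum:localise-day-iii}, \ref{enum:localise-day-iv}, \ref{enum:localise-day-v} formally from it; an alternative, perhaps slicker, packaging is to cite a general ``localisation of a symmetric monoidal category along a compatible reflective localisation'' statement and then just check the compatibility hypothesis using finality, but I would keep the hands-on colimit argument available as it also immediately yields \ref{enum:localise-day-iii}.
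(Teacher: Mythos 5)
Your overall architecture matches the paper's: invoke Lurie's localisation machinery (\cite[2.2.1.7, 2.2.1.9]{LurieHA}) to obtain \ref{enum:localise-day-i}, read off \ref{enum:localise-day-ii} from the cocartesian lifts, prove \ref{enum:localise-day-iii} from the pointwise colimit formula after the finality reduction, and get \ref{enum:localise-day-iv}--\ref{enum:localise-day-v} formally. However, the key lemma you propose to extract from finality is the wrong one. You assert that finality makes the essential image of $\iota_*$ \emph{closed} under Day convolution, i.e.\ that $\iota_*F\otimes\iota_*G$ is again right Kan extended from $\cC_0$, and you plan to derive everything from that. This does not follow from the hypothesis and is not what Lurie's criterion asks for: being in the essential image of $\iota_*$ is a limit condition at the objects of $\cC\setminus\cC_0$ (the unit $X\to\iota_*\iota^*X$ must be an equivalence there), whereas the finality hypothesis only controls the Day colimit at objects $c_0\in\cC_0$; the two conditions see disjoint parts of $\cC$. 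Your parenthetical ``(equivalently: the localisation functor is compatible with $\otimes$)'' conflates two genuinely different conditions: Lurie's compatibility condition is that local \emph{equivalences} are stable under $(-)\otimes H$ for arbitrary $H$, not that local \emph{objects} are stable under $\otimes$. Note also that if closure held, item \ref{enum:localise-day-ii} would read $\iota_*(F\otimes G)\simeq\iota_*F\otimes\iota_*G$; the lemma deliberately states the weaker $F\otimes G\simeq\iota^*(\iota_*(F)\otimes\iota_*(G))$, in which the outer $\iota^*$ implements the reflection.

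What must actually be verified, and what the paper verifies, is: if $\phi\colon F\to G$ becomes an equivalence after applying $\iota^*$ (equivalently, after $\iota_*\iota^*$), then so does $\phi\otimes\id_H$ for every $H\in\PSh(\cC)$. This is checked pointwise at $c_0\in\cC_0$: finality lets you compute $(\phi\otimes\id_H)(c_0)$ as a map of colimits over $(\cC_0\times\cC)_{c_0/}$, and over that index category the first tensor factor is only ever evaluated at objects of $\cC_0$, where $\phi$ is an equivalence by assumption. This single computation is the only place finality enters; with it, \ref{enum:localise-day-i} follows from \cite[2.2.1.7, 2.2.1.9]{LurieHA}, and \ref{enum:localise-day-ii} follows by identifying the cocartesian lift over the active map $\{1,2,\ast\}\to\{1,\ast\}$ as the Day convolution lift followed by the unit of $\iota^*\dashv\iota_*$. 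Your arguments for \ref{enum:localise-day-iii} and \ref{enum:localise-day-iv} then go through as written. For \ref{enum:localise-day-v}, rather than appealing to ``$\iota^*$ is symmetric monoidal onto its image'' (which is close to circular), check directly that each of the three maps in the relevant instance of the composite \eqref{equ:oplax-from-lax} is an equivalence: the first by the compatibility statement just proved, the second because it is $\iota^*$ applied to a component of the unit $\id\to\iota_*\iota^*$ (an equivalence by the triangle identities, not because that unit component is itself an equivalence), and the third because the counit $\iota^*\iota_*\to\id$ is an equivalence.
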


\begin{proof}
During the proof we will repeatedly use that since $\iota$ is fully faithful the counit $\iota^*\iota_*\ra\id_{\PSh(\cC_0)}$ is an equivalence, the unit $\id_{\PSh(\cC)}\ra \iota_*\iota^*$ is a reflexive localisation in the sense of \cite[p.\,362]{LurieHTT}, and the right Kan extension $\iota_*\colon \PSh(\cC_0)\ra \PSh(\cC)$ is fully faithful.

\medskip

\noindent By construction the fibre of $\PSh(\cC_0)^\otimes\ra \Fin_\ast$ over $\langle 1\rangle$ is the essential image of $\iota_*$. The latter is equivalent to $\PSh(\cC_0)$, so to show \ref{enum:localise-day-i} it suffices to prove that $\PSh(\cC_0)^\otimes\ra \Fin_\ast$ is an operad and a cocartesian fibration. This follows from \cite[2.2.1.7, 2.2.1.9]{LurieHA} once we show that the finality condition ensures that if a map $\phi\colon F\ra G$ in $\PSh(\cC)$ is an equivalence after applying $\iota_*\iota^*$, then $\phi\otimes \id_H\colon F\otimes H\ra G\otimes H$ is for any $H\in \PSh(\cC)$ an equivalence after applying $\iota_*\iota^*$ as well. Since $\iota_*$ is fully faithful and equivalences on presheaves can be tested objectwise, this follows from showing that if $\phi\colon F\ra G$ has the property that $\phi(c_0)$ is an equivalence for all $c_0\in\cC_0$, then $(\phi\otimes\id_H)(c_0)$ is also an equivalence for all $c_0\in\cC_0$. By \eqref{equ:day-conv} the latter is equivalent to $\colim_{c_0 \to c' \otimes c''}(F(c') \times H(c'')) \ra \colim_{c_0 \to c' \otimes c''}(G(c) \times H(c''))$ being an equivalence for all $c_0\in\cC_0$. Here the colimits are taken over $(\cC \times \cC)_{c_0/}$. Since $\phi$ is an equivalence when restricted to $\cC_0$ by assumption, 
this follows from the assumption that $(\cC_0 \times \cC)_{c_0/} \to (\cC \times \cC)_{c_0/}$ is final.

\medskip

\noindent To show \ref{enum:localise-day-ii}, recall that the monoidal product $d\otimes d'$ in a symmetric monoidal category $\cD$ can be recovered from the corresponding cocartesian fibration $\cD^{\otimes}\ra \Fin_*$ as the target of the unique cocartesian lift with source $(d,d')$ of the unique active map $\{1,2,\ast\}\ra\{1,\ast\}$. Now observe that in $\PSh(\cC_0)^{\otimes}$ such a cocartesian lift with source $(\iota_*(F),\iota_*(G))$ is the composition of the cocartesian lift $(\iota_*(F),\iota_*(G))\ra \iota_*(F)\otimes \iota_*(G)$ in $\PSh(\cC)^{\otimes}$ followed by the unit $\iota_*(F)\otimes \iota_*(G)\ra \iota_*\iota^*(\iota_*(F)\otimes \iota_*(G))$. 

\medskip

\noindent Categories of presheaves are presentable \cite[5.5.1.1]{LurieHTT}, so by symmetry it suffices to show that $(-)\otimes G$ preserves colimits for any $G\in\PSh(\cC_0)$. Using \eqref{equ:day-conv}, the first part of \ref{enum:localise-day-ii} and the hypothesis, we first compute for $F\simeq \colim_i F_i$ in $\PSh(\cC_0)$ and $c_0\in \cC_0$
\[(F\otimes G)(c_0)\simeq \colim_{(c,c')\in (\cC\times \cC)_{c_0/}}(\iota_*(F)(c)\times \iota_*(G)(c'))\simeq \colim_{(c,c')\in (\cC_0\times \cC)_{c_0/}}(F(c)\times \iota_*(G)(c')) \]
which, since colimits commute with colimits, is equivalent to $\colim_i(\colim_{(c,c')\in (\cC_0\times \cC)_{c_0/}}(F_i(c)\times \iota_*(G)(c')))$. Applying the same reasoning backwards, the latter is equivalent to $\colim_i(F_i\otimes G)(c_0)$, and since colimits of presheaves are computed objectwise, the claim follows. 

\medskip

\noindent We observed above that $\PSh(\cC_0)^\otimes\subset \PSh(\cC)^\otimes$ is a morphism of operads, so it is (by definition) lax monoidal. On fibres over $\langle 1\rangle$ this functor is the full subcategory inclusion of the essential image of $\iota_\ast$ into $\PSh(\cC)$. As $\iota_*$ is fully faithful this inclusion is equivalent to $\iota_\ast$ itself, so \ref{enum:localise-day-iv} follows.

\medskip

\noindent The final point \ref{enum:localise-day-v} claims that for $F,G\in\PSh(\cC)$ the instance 
\[
	\iota^*\big(F\otimes G\big)\ra \iota^*\big(\iota_*(\iota^*(F))\otimes \iota_*(\iota^*(G))\big)\ra\iota^*\big(\iota_*(\iota^*(F)\otimes \iota^*(G))\big) \ra \iota^*(F)\otimes \iota^*(G)
\]
of the composition \eqref{equ:oplax-from-lax} is an equivalence. In fact all three maps are equivalences: for the first, this follows from the property we showed in the first part of the proof by factoring $F\otimes G\ra \iota_*(\iota^*(F))\otimes \iota_*(\iota^*(G))$ as $F\otimes G\ra \iota_*(\iota^*(F))\otimes G$ followed by $\iota_*(\iota^*(F))\otimes G\ra \iota_*(\iota^*(F))\otimes \iota_*(\iota^*(G))$. For the second map, note that by \ref{enum:localise-day-ii} we have $X\otimes Y\simeq \iota^*(\iota_*(X)\otimes\iota_*(Y))$ for $X,Y\in\PSh(\cC_0)$ and via this equivalence the map $\iota_*(X)\otimes \iota_*(Y)\ra \iota_*(X\otimes Y)$ coming from the lax monoidal structure is given by the unit $\id \ra \iota_*\iota^*$ applied to $\iota_*(X)\otimes \iota_*(Y)$. Specialising this to $X=\iota^*(F)$ and $Y=\iota^*(G)$, it follows that the map in question is given by applying $\iota^*$ to the unit $\id \ra \iota_*\iota^*$ at some object (namely $\iota_*(X)\otimes \iota_*(Y)$), which is indeed an equivalence since $\iota$ is fully faithful. Finally, the third map is an equivalence since the counit $\iota^*\iota_*\ra \id$ is an equivalence.
\end{proof}

\begin{rem}\cref{lem:localise-day} \ref{enum:localise-day-v} can also be deduced from \cite[2.2.1.9 (3)]{LurieHA}, by showing that the oplax structure on $\iota^*$ induced by the symmetric monoidal structure  from 2.2.1.9 loc.cit.\,agrees with the oplax structure on $\iota^*$ we used above (resulting from \eqref{equ:mates}). This can be seen by showing that both of them are obtained from the given lax monoidal structure on $\iota_*$ by taking parametrised left adjoints over $\Fin_\ast$ in the sense of \cite[3.1.1]{HHLN}.
\end{rem}

\subsection{Bimodules in categories of presheaves}\label{sec:modules-day} Given a symmetric monoidal category $\cC$  and associative algebras $A,B\in \Ass(\PSh(\cC))$ in the category of presheaves with respect to the Day convolution monoidal structure, there is a convenient description of the category $\BMod_{A,B}(\PSh(\cC))$ of $(A,B)$-bimodules in $\PSh(\cC)$ from \cref{sec:bimod}. To explain this, we consider the composition $(F_{AB}\circ y_\cC)\colon \cC\ra  \BMod_{A,B}(\PSh(\cC))$ of the Yoneda embedding $y_\cC$ of $\cC$ followed by the left adjoint $F_{AB}$ to the forgetful functor $U_{AB}\colon \BMod_{A,B}(\PSh(\cC))\ra \PSh(\cC)$ from \cref{sec:free-bimodules}. We denote the full subcategory given by the essential image of $(F_{AB}\circ y_\cC)$ (\textbf{f}ree bimodules on \textbf{rep}resentables) by 
\begin{equation}\label{equ:free-rep-bimodules}
	\smash{\BMod^{\frep}_{A,B}(\PSh(\cC))\subset\BMod_{A,B}(\PSh(\cC))}.
\end{equation} 
Since $\BMod_{A,B}(\PSh(\cC))$ has all colimits as a result of \cite[4.3.3.9]{LurieHA}, this full subcategory inclusion extends by 5.1.5.5 loc.cit.\,uniquely to a colimit preserving functor
\begin{equation}\label{equ:free-rep-bimod-colim-fun}
	\smash{\PSh(\BMod^{\frep}_{A,B}(\PSh(\cC)))\lra \BMod_{A,B}(\PSh(\cC))}
\end{equation}
which admits a right adjoint, given by the restricted Yoneda embedding, i.e.\,the Yoneda embedding of $\BMod_{A,B}(\PSh(\cC))$ followed by restriction along \eqref{equ:free-rep-bimodules}. 

\begin{lem}\label{lem:modules-as-presheaves}The functor \eqref{equ:free-rep-bimod-colim-fun} is an equivalence with the restricted Yoneda embedding as inverse.
\end{lem}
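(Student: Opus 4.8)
The plan is to exhibit \eqref{equ:free-rep-bimod-colim-fun} as a left adjoint whose right adjoint is fully faithful, and then show the left adjoint is also fully faithful, which forces both to be equivalences. The key observation, recorded just before the statement, is that \eqref{equ:free-rep-bimod-colim-fun} admits the restricted Yoneda embedding $\smash{\rho\colon \BMod_{A,B}(\PSh(\cC))\to \PSh(\BMod^{\frep}_{A,B}(\PSh(\cC)))}$ as a right adjoint. The standard criterion (\cite[5.5.4.17]{LurieHTT}, or directly the Yoneda lemma) says $\rho$ is fully faithful precisely when the objects in the image of $(F_{AB}\circ y_{\cC})$ form a set of \emph{generators} of $\BMod_{A,B}(\PSh(\cC))$ in the sense that every bimodule is a colimit of such objects; equivalently, that the only full cocomplete subcategory containing $\BMod^{\frep}_{A,B}(\PSh(\cC))$ is the whole category. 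So the first step is to prove this generation statement.

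First I would recall from \cref{sec:free-bimodules} that every bimodule $M\in \BMod_{A,B}(\PSh(\cC))$ is a sifted (in fact geometric-realisation) colimit of free bimodules $F_{AB}(U_{AB}F_{AB})^{\bullet}U_{AB}(M)$, via the monadic bar construction, using that $\BMod_{A,B}(\PSh(\cC))$ is monadic over $\PSh(\cC)$ by \cite[4.7.3.9]{LurieHA}. This reduces the claim to: every free bimodule $F_{AB}(P)$ on a presheaf $P\in\PSh(\cC)$ is a colimit of free bimodules on representables. But $P\simeq \colim_{c\to P}y_{\cC}(c)$ is canonically a colimit of representables (\cite[5.1.5.3]{LurieHTT}), and $F_{AB}$ preserves colimits as a left adjoint, so $F_{AB}(P)\simeq \colim_{c\to P}F_{AB}(y_{\cC}(c))$ is a colimit of objects in $\BMod^{\frep}_{A,B}(\PSh(\cC))$. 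Chaining the two colimit presentations shows every $M$ is a colimit of free bimodules on representables, hence $\rho$ is fully faithful.

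Next I would show that the left adjoint \eqref{equ:free-rep-bimod-colim-fun} is fully faithful. Write $j\colon \BMod^{\frep}_{A,B}(\PSh(\cC))\hookrightarrow \BMod_{A,B}(\PSh(\cC))$ for the inclusion; by construction \eqref{equ:free-rep-bimod-colim-fun} is the colimit-preserving extension $|{-}|_{j}$ of $j$ along the Yoneda embedding $y$ of $\BMod^{\frep}_{A,B}(\PSh(\cC))$, and its right adjoint is $\rho=y^{\ast}_{\BMod}(-)$ restricted along $j$. For $|{-}|_j$ to be fully faithful it suffices that the unit $\id\to \rho\circ |{-}|_j$ is an equivalence; since both functors preserve colimits and $\PSh(\BMod^{\frep}_{A,B}(\PSh(\cC)))$ is generated under colimits by representables $y(x)$ for $x\in \BMod^{\frep}_{A,B}(\PSh(\cC))$, it is enough to check the unit is an equivalence on each $y(x)$. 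There $|{-}|_j(y(x))\simeq j(x)$, and $\rho(j(x))$ has value at $x'\in\BMod^{\frep}_{A,B}(\PSh(\cC))$ the space $\Map_{\BMod_{A,B}(\PSh(\cC))}(j(x'),j(x))\simeq \Map_{\BMod^{\frep}_{A,B}(\PSh(\cC))}(x',x)$ since $j$ is fully faithful — which is exactly $y(x)(x')$. So the unit is an equivalence, $|{-}|_j$ is fully faithful, and a fully faithful left adjoint with fully faithful right adjoint is an equivalence.

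The main obstacle I anticipate is not any single step but making sure the reduction in the first paragraph is airtight: one must know that $U_{AB}$ is conservative and that $\BMod_{A,B}(\PSh(\cC))$ is presentable (both cited above from \cite[4.3.3.9, 4.7.3.9]{LurieHA}) so that the bar resolution converges and $F_{AB}$ genuinely preserves all the colimits in sight; and one must be slightly careful that "colimit of free bimodules on representables" really does generate under \emph{all} colimits and not merely under sifted ones — but this is automatic since free bimodules on representables include, after taking colimits over the already-available sifted resolutions, arbitrary colimits via the two-step presentation. An alternative, perhaps cleaner, route that avoids the bar construction entirely: since $F_{AB}\colon \PSh(\cC)\to \BMod_{A,B}(\PSh(\cC))$ is a colimit-preserving functor between presentable categories and $\PSh(\cC)$ is generated under colimits by $y_{\cC}(\cC)$, the essential image of $F_{AB}$ generates the essential image of $F_{AB}$ applied to a generating set, and one invokes that $\BMod_{A,B}(\PSh(\cC))$ is monadic — hence generated by free objects $F_{AB}(P)$ — to conclude; either way the crux is the interplay of monadicity with the colimit-generation of $\PSh(\cC)$.
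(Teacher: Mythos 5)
Your route is genuinely different from the paper's, but it has a gap at the crucial point: you never prove that the restricted Yoneda embedding $\rho$ preserves colimits — equivalently, that the objects $F_{AB}(y_{\cC}(c))$ are \emph{completely compact} in $\BMod_{A,B}(\PSh(\cC))$ — and both halves of your argument secretly depend on this. In your first step, the criterion ``$\rho$ is fully faithful precisely when the image of $F_{AB}\circ y_{\cC}$ generates under colimits'' is not correct as stated: full faithfulness of $\rho$ means that the counit $\lvert\rho(M)\rvert_j\to M$, i.e.\ the \emph{canonical} colimit over the comma category $\smash{\BMod^{\frep}_{A,B}(\PSh(\cC))_{/M}}$, computes $M$ (density), whereas your bar-construction argument only exhibits $M$ as \emph{some} colimit of free bimodules on representables; this gives a splitting of the counit up to the comparison map, not an equivalence. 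The standard way to upgrade generation to density is to observe that the class of $M$ with invertible counit is closed under colimits — which requires $\rho$ to preserve colimits. In your second step the dependence is explicit: ``since both functors preserve colimits'' is exactly the unproved assertion for $\rho$. The fix is short but it is the heart of the matter: by the adjunction $F_{AB}\dashv U_{AB}$ and the Yoneda lemma, $\Map(F_{AB}(y_{\cC}(c)),-)\simeq \ev_c\circ U_{AB}$, and both $\ev_c$ (colimits of presheaves are objectwise) and $U_{AB}$ (by \cite[4.3.3.9]{LurieHA}) preserve colimits.

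Once that is in place your argument closes up, but note that the paper takes a shorter path that avoids the monadic bar resolution entirely: it uses complete compactness to get full faithfulness of \eqref{equ:free-rep-bimod-colim-fun} directly via \cite[5.1.6.10]{LurieHTT}, and then, instead of proving generation, observes that the same functors $\Map(F_{AB}(y_{\cC}(c)),-)\simeq \ev_c\circ U_{AB}$ are \emph{jointly conservative} (because $U_{AB}$ is conservative and equivalences of presheaves are detected objectwise), so the right adjoint $\rho$ is conservative; a fully faithful left adjoint with conservative right adjoint is an equivalence. Your generation argument is a valid substitute for the conservativity step and would be the natural route if one wanted the stronger statement that $\smash{\BMod^{\frep}_{A,B}(\PSh(\cC))}$ is dense, but it buys nothing here and costs you the bar-construction bookkeeping; either way, the complete-compactness verification cannot be skipped.
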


\begin{proof}The second part follows from the first since the restricted Yoneda embedding is right adjoint to the functor \eqref{equ:free-rep-bimodules}. To show that the latter is fully faithful, by \cite[5.1.6.10]{LurieHTT} it suffices to show that $F_{AB}(y_\cC(c))\in \BMod_{A,B}(\PSh(\cC))$ is completely compact for all $c\in\cC$ i.e.\,that the functor $\smash{\Map_{\BMod_{A,B}(\PSh(\cC))}(F_{AB}(y_\cC(c)),-)\colon \BMod_{A,B}(\PSh(\cC))\ra \cS}$ preserves colimits. By adjointness and the Yoneda embedding, this functor is the composition of the forgetful functor $U_{AB}\colon \BMod_{A,B}(\PSh(\cC))\ra \PSh(\cC)$ with the evaluation functor $\ev_c\colon \PSh(\cC)\ra \cC$ at $c$. Both of these preserve colimits: the former as result of \cite[4.3.3.9]{LurieHA} and the latter since colimits in presheaf categories are computed objectwise. It follows that \eqref{equ:free-rep-bimod-colim-fun} is fully faithful, so in order to show that it is an equivalence it suffices to prove that its right adjoint---the restricted Yoneda embedding---is conservative. This follows from the fact that the functors $\smash{\Map_{\BMod_{A,B}(\PSh(\cC))}(F_{AB}(y_\cC(c)),-)}$ are for $c\in\cC$ jointly conservative as a result of adjointness and the Yoneda lemma.\end{proof}

\begin{rem}The definition of $\smash{\BMod^{\frep}_{A,B}(\PSh(\cC))}$ and the functor \eqref{equ:free-rep-bimodules} makes sense for \emph{any} symmetric monoidal structure on a category $\PSh(\cC)$ of presheaves; it need not be induced from a symmetric monoidal structure on $\cC$ by Day convolution. Moreover, on a closer look, also the proof of \cref{lem:modules-as-presheaves} goes through for \emph{any} symmetric monoidal structure $\PSh(\cC)$, so it in particular also applies to $\PSh(\cC_0)$  fo full subcategories $\cC_0\subset \cC$ as in \cref{lem:localise-day}.\end{rem}

\subsection{Decomposing categories of presheaves} \label{sec:recollement} We now turn to a decomposition result for the category of space-valued presheaves on a category as a pullback in terms of categories of presheaves on certain full subcategories and their ``complements''. The subcategories we consider are:

\begin{dfn}\label{dfn:decom-pair}A category $\cC$ together with a full subcategory $\cC_0\subset\cC$ closed under equivalences is a \emph{decomposition pair} if the following two conditions are satisfied:
\begin{enumerate}[leftmargin=*]
	\item if the identity $\id_c$ of an object $c\in \cC$ factors through an object in $\cC_0$, then $c$ is contained in $\cC_0$,
	\item if a composition $g \circ f$ of two composable morphisms $f,g$ in $\cC$ factors through an object in $\cC_0$, then at least one of the morphisms $f$ or $g$ already factors through an object in $\cC_0$.
\end{enumerate}
\end{dfn}

\medskip

\noindent The conditions in this definition are designed exactly so that one can define a (potentially non-full) subcategory $\cC \backslash \cC_0$ of $\cC$ called the \emph{complement}, by restricting to objects that are not in $\cC_0$ and to morphisms that do not factor through objects in $\cC_0$. Writing $\iota\colon \cC_0\hookrightarrow \cC$ and $\nu \colon \cC\backslash\cC_0\hookrightarrow \cC$ for the inclusions we consider the functors four functors
\[
	\begin{tikzcd} 
	\PSh(\cC_0) \rar[shift left=1.2ex]{\iota_!} \rar[shift left=-1.2ex,swap]{\iota_*}& \PSh(\cC) \rar{\nu^*} \lar & \PSh(\cC \backslash \cC_0)\end{tikzcd}
\]
where the unlabelled left middle arrow is the restriction $\iota^*$ whose left and right adjoints $\iota_!$ and $\iota_*$ are given by left and right Kan extension. These functors feature in a pair of natural transformations 
\[
	\Lambda\coloneqq \big(\nu^*\iota_!\iota^*\ra \nu^*\ra \nu^*\iota_*\iota^*\big) ,
\]
of functors $\PSh(\cC) \to \PSh(\cC \backslash \cC_0)$, resulting from applying $\nu^*$ to the counit of $\iota_! \dashv \iota^*$ and the unit of $\iota^* \dashv \iota_*$. Similarly, there is a natural transformation of functors $\PSh(\cC_0) \to \PSh(\cC \backslash \cC_0)$
\[
	\Omega\coloneqq \big(\nu^*\iota_!\ra \nu^*\iota_*),
\]
obtained by applying $\nu^*$ to the zig-zag $\smash{\iota_!\xleftarrow{\simeq} \iota_!\iota^*\iota_*\ra \iota_*}$ induced by the counits of $\iota^* \dashv \iota_*$ and $\iota_! \dashv \iota^*$; the former is an equivalence since $\iota$ is fully faithful. Viewing $\Lambda$ and $\Omega$ as functors $\PSh(\cC) \ra \PSh(\cC\backslash \cC_0)^{[2]}$ and $\PSh(\cC_0) \ra \PSh(\cC\backslash \cC_0)^{[1]}$, the announced decomposition result reads as follows:

\begin{thm}\label{thm:recollement} For a decomposition pair $\cC_0 \subset \cC$, there is a commutative square in $\Cat$
\begin{equation}\label{eqn:recollement-square}
	\begin{tikzcd} \PSh(\cC) \dar[swap]{\iota^*}\rar{\Lambda} &\PSh(\cC\backslash \cC_0)^{[2]} \dar{(0\le 2)^*} \\
	\PSh(\cC_0) \rar{\Omega} & \PSh(\cC \backslash \cC_0)^{[1]}.
	\end{tikzcd}
\end{equation}
If for all $c,d \in \cC \backslash \cC_0$ the map
\[
	(\iota_!\circ \iota^*)\big(\Map_{\cC}(-,d)\big)(c)\sqcup \Map_{\cC\backslash\cC_0}(c,d)\lra \Map_{\cC}(c,d)
\]
induced by $\nu$ and the counit of $\iota_! \dashv \iota^*$, is an equivalence, then the square \eqref{eqn:recollement-square} is cartesian.
\end{thm}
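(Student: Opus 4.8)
The plan is to prove that the square \eqref{eqn:recollement-square} is cartesian by using the characterisation of presheaf categories as colimit-completions, and by exhibiting both sides as recollements (stratified categories) glued from the two pieces $\PSh(\cC_0)$ and $\PSh(\cC\backslash\cC_0)$. First I would observe that the pair $(\iota,\nu)$ exhibits $\PSh(\cC)$ as a recollement of $\PSh(\cC_0)$ and $\PSh(\cC\backslash\cC_0)$ in the sense of \cite[A.8]{LurieHA}: the restriction functors $\iota^*$ and $\nu^*$ are jointly conservative (every presheaf is determined by its values on objects, and every object of $\cC$ lies in $\cC_0$ or in $\cC\backslash\cC_0$), the functor $\nu^*\iota_!$ admits the description via $\Omega$, and the gluing data is precisely the natural transformation $\Omega$. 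Concretely, $\PSh(\cC)$ should be equivalent to the lax-equalizer / oriented fibre product of $\PSh(\cC_0)$ and $\PSh(\cC\backslash\cC_0)$ along a functor $\PSh(\cC_0)\to\PSh(\cC\backslash\cC_0)$, and the content of the theorem is to identify that gluing functor with $\Omega$ and to package the comparison as the stated pullback against the evaluation $(0\le 2)^*$.

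The key technical step is the hypothesis: the map
\[
	(\iota_!\circ \iota^*)\big(\Map_{\cC}(-,d)\big)(c)\sqcup \Map_{\cC\backslash\cC_0}(c,d)\lra \Map_{\cC}(c,d)
\]
being an equivalence for $c,d\in\cC\backslash\cC_0$ says exactly that the mapping space in $\cC$ between two objects of the complement decomposes as a coproduct of "maps factoring through $\cC_0$" and "maps not factoring through $\cC_0$". Using the decomposition pair axioms, this is what lets one check that $\nu$ behaves well with respect to left Kan extension and that the Beck--Chevalley square relating $\iota_!,\iota^*,\nu^*$ and the analogous functors is exact. I would first reduce the claim to a statement about representable presheaves: since all four categories of presheaves are colimit-completions of the respective indexing categories and all the functors in sight ($\iota^*$, $\Lambda$, $\Omega$, $(0\le2)^*$) preserve colimits (left Kan extensions and restrictions do; $(0\le 2)^*$ does since colimits in functor categories are pointwise), it suffices to show that the comparison functor $\PSh(\cC)\to \PSh(\cC_0)\times_{\PSh(\cC\backslash\cC_0)^{[1]}}\PSh(\cC\backslash\cC_0)^{[2]}$ is fully faithful and essentially surjective, and fully faithfulness can be checked on the generating representables $\Map_\cC(-,c)$ using that they are completely compact. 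For $c\in\cC_0$ the representable is sent to the pair $(\Map_{\cC_0}(-,c), \text{something contractible-over})$, and for $c\in\cC\backslash\cC_0$ one uses the hypothesis to identify $\nu^*\Map_\cC(-,c)$ as the $[2]$-diagram $\nu^*\iota_!\iota^*\Map_\cC(-,c)\to \nu^*\Map_\cC(-,c)\to\nu^*\iota_*\iota^*\Map_\cC(-,c)$ where the first map is the inclusion of the summand of morphisms factoring through $\cC_0$ — that is precisely where $\Lambda$ lands, and the pullback condition pins down the middle term. Essential surjectivity then follows from a density/colimit argument once fully faithfulness is known, together with the fact that both sides are presentable and the functor preserves colimits and small-object generators.

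In more detail, the steps in order: (1) record that $\PSh(\cC)$, $\PSh(\cC_0)$, $\PSh(\cC\backslash\cC_0)$ are presentable and the comparison functor to the pullback preserves all colimits; (2) check commutativity of \eqref{eqn:recollement-square} — this is formal, combining the definitions of $\Lambda$ and $\Omega$ with the triangle identities, exactly as the transformations were built; (3) prove the comparison functor is fully faithful by restricting to representables $\Map_\cC(-,c)$, splitting into the cases $c\in\cC_0$ (straightforward, using $\iota$ fully faithful so $\iota^*\iota_!\simeq\id$ ... actually $\iota^*\iota_!$ need not be the identity, so here one uses $\iota^*\Map_\cC(-,c)\simeq\Map_{\cC_0}(-,c)$ directly and that $\nu^*\Map_\cC(-,c)$ for $c\in\cC_0$ is computed via the first decomposition axiom) and $c\in\cC\backslash\cC_0$ (the heart of the matter, using the displayed hypothesis to split $\nu^*\Map_\cC(-,c)$ as the pushout/coproduct that exhibits it as the value of $\Lambda$); (4) deduce essential surjectivity from fully faithfulness plus cocontinuity plus the fact that the pullback category is generated under colimits by the images of the generators — or alternatively, verify directly that the recollement reconstruction $\iota_!\iota^* X \sqcup_{(\text{glue})} (\ldots)$ recovers $X$ using joint conservativity of $\iota^*,\nu^*$. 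The main obstacle I anticipate is step (3) in the case $c\in\cC\backslash\cC_0$: one must show not merely that the hypothesis gives a coproduct decomposition of mapping spaces objectwise, but that this decomposition is natural in the first variable $c'\in\cC\backslash\cC_0$ and is compatible with the $[2]$-diagram structure — i.e. that the "maps factoring through $\cC_0$" summand is exactly the image of $\nu^*\iota_!\iota^*\Map_\cC(-,c)$ and the quotient is $\nu^*\iota_*\iota^*\Map_\cC(-,c)$, which requires a careful chase using the second decomposition axiom (a composite factoring through $\cC_0$ forces one factor to) to see that the relevant categories of factorisations are either empty or weakly contractible. Managing these factorisation categories and their (co)limits is where the bulk of the work lies; everything else is formal recollement bookkeeping.
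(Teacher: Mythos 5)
Your outline gets the formal parts right (commutativity via the triangle identities, reduction to a colimit-preserving comparison functor $\varphi\colon \PSh(\cC)\to\cP$ into the pullback $\cP$, and the role of the hypothesis in splitting $\nu^*\Map_\cC(-,d)$ into the summand of maps factoring through $\cC_0$ and its complement), but there are two genuine gaps. First, the recollement framing does not apply as stated: $\cC\backslash\cC_0$ is a \emph{non-full} subcategory of $\cC$, so $\nu_!$ is not fully faithful and $\nu^*$ is not the restriction to an open/closed part in the sense of \cite[A.8]{LurieHA}; the pullback against $(0\le 2)^*$ is not a lax equalizer of the two presheaf categories, and one cannot import the recollement reconstruction formula. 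Second, and more seriously, essential surjectivity does not ``follow from a density/colimit argument once fully faithfulness is known.'' A fully faithful colimit-preserving functor between presentable categories need not be essentially surjective; you would need to show that $\cP$ is generated under colimits by the images of the representables, and that is exactly the hard content of the theorem, not a formality. Indeed, the paper's remark after the statement records that the authors themselves had a proof along your lines (resolving presheaves by representables) which established fully faithfulness but \emph{could not} show essential surjectivity.

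The paper's proof (due to Ayala--Mazel-Gee--Rozenblyum) closes this gap with a specific device you are missing: it constructs explicit jointly conservative, colimit-preserving projections $\pi_1\colon\cP\to\PSh(\cC_0)$ and $\pi_2\colon\cP\to\PSh(\cC\backslash\cC_0)$ together with left adjoints $\lambda_1,\lambda_2$, and identifies $\varphi\circ y_\cC$ with $\lambda_1\circ y_{\cC_0}$ on $\cC_0$ and with $\lambda_2\circ y_{\cC\backslash\cC_0}$ on $\cC\backslash\cC_0$ (the latter identification is where the hypothesis enters). Writing $\cR\subset\cP$ for the full subcategory on these images, the objects of $\cR$ are completely compact in $\cP$ (representables are completely compact and left adjoints of colimit-preserving functors preserve complete compactness), so the colimit-preserving extension $\ext\colon\PSh(\cR)\to\cP$ is fully faithful by \cite[5.1.6.10]{LurieHTT}; its right adjoint, the restricted Yoneda embedding, is conservative because $\pi_1,\pi_2$ are jointly conservative and $\Map_\cP(\lambda_i(y(c)),-)\simeq \pi_i(-)(c)$, and a fully faithful functor with conservative right adjoint is an equivalence. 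That ``fully faithful plus conservative right adjoint'' step is what replaces your appeal to a density argument, and without it (or something equivalent) the proof does not go through.
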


\begin{rem}\,
\begin{enumerate}
	\item The proof of \cref{thm:recollement} presented below was explained to us by Rozenblyum; a similar proof and statement will appear in forthcoming joint work of him with Ayala and Mazel-Gee. Before learning about their argument, the authors knew of a different proof---based on resolving presheaves by representables---of the weaker statement that the functor from $\PSh(\cC)$ to the pullback of the remaining entries in \cref{thm:recollement} is fully faithful, but this proof did not show essential surjectivity. We are grateful to Ayala, Mazel-Gee, and Rozenblyum for allowing us to include their argument. Since the first version of our work appeared, a different proof of this result was also independently found by Haine, Ramzi, and Steinebrunner \cite[Theorem 5.12]{HaineRamziSteinebrunner}. Their proof is less direct, but also applies to categories of presheaves with values in presentable categories other than spaces.	
	\item The left Kan extension $(\iota_!\iota^*)(\Map_{\cC}(-,d))(c)$ in the statement can be equivalently described as the coend $\Map_{\cC}(c,-)\otimes_{\cC_0}\Map_{\cC}(-,d)$, so the condition in the theorem becomes that the canonical map $\Map_{\cC}(c,-)\otimes_{\cC_0}\Map_{\cC}(-,d)\ra \Map_{\cC}(c,d)$ is an equivalence onto the collection of path-components of morphisms that factor through an object in $\cC_{0}$.
\end{enumerate}
\end{rem}

\begin{proof}[Proof of \cref{thm:recollement} (after Ayala--Mazel-Gee--Rozenblyum)]
For the first part we need to provide a natural equivalence between the clock- and counterclockwise composition in the square. This arises by applying $\nu^*$ to a natural equivalence between the composition 
\[
	\smash{\iota_!\iota^* \overset{\epsilon}\lra \id \overset{\overline{\eta}}\lra \iota_*\iota^*}
\]
of the counit $\epsilon$ of $\iota_! \dashv \iota^*$ and the unit $\overline{\eta}$ of $\iota^* \dashv \iota_*$, and the zig-zag
\[
	\smash{\iota_!\iota^* \underset{\simeq}{\xleftarrow{\iota_! \overline{\epsilon}_{\iota^*}}} \iota_! \iota^* \iota_* \iota^* \xrightarrow{\epsilon_{\iota_* \iota^*}} \iota_* \iota^*},
\]
where $\overline{\epsilon}$ is the counit of $\iota^*\dashv \iota_*$. By the triangle identities, an inverse of $\smash{\iota_! \overline{\epsilon}}_{\iota^*}$ is given by $\iota_! \iota^*\overline{\eta}$, so it suffices to identify $\epsilon_{\iota_* \iota^*}\circ\iota_!\iota^*(\overline{\eta})$ with $\overline{\eta}\circ \epsilon$. But the structure of $\epsilon$ as a natural transformation provides this, since it gives natural identifications $\epsilon_d\circ \iota_!\iota^*(f)\simeq f\circ \epsilon_c$ for morphisms $f\colon c\ra d$, so in particular for the components of $\overline{\eta}$.

\medskip

\noindent Writing $\cP$ for the pullback of the square without the top-left corner, the task for the second part is to prove that the functor $\varphi\colon \PSh(\cC) \to \cP$ induced by the commutativity is an equivalence. To increase legibility in the following proof, we pretend that objects which come equipped with an equivalence between them, are actually equal. We begin by observing some properties of $\cP$ and $\varphi$:
\begin{enumerate}[leftmargin=0.6cm]
	\item\label{property-p-i} The category $\cP$ has all colimits: using that $\nu^*\iota_!$ is a left adjoint and $ \nu^*\iota_*$ a right adjoint, the colimit of a diagram $(X_i,(\nu^*\iota_!(X_i) \to Z_i \to \nu^*\iota_*(X_i)))_{i\in I}$ in $\cP$ indexed by some category $I$ where $X_i \in \PSh(\cC_0)$ and $(\nu^*\iota_!(X_i) \to Z_i \to \nu^*\iota_*(X_i)) \in \PSh(\cC \backslash \cC_0)^{[2]}$ is given by $(\colim\, X_i,(\nu^*\iota_!(\colim\, X_i)\simeq \colim\,(\nu^*\iota_!(X_i))\ra \colim\, Z_i\ra \nu^*\iota_*(\colim\, X_i)))$ where the final map is the composition of $\colim\, Z_i \to \colim\, \nu^*\iota_*(X_i)$ with the adjoint of $\iota^* \nu_!(\colim\, \nu^*\iota_*(X_i)) \simeq \colim(\iota^* \nu_!\nu^*\iota_*(X_i)) \to \colim\, X_i$ with respect to the adjunction $\iota^* \nu_!\dashv\nu^* \iota_*$.
	\item\label{property-p-ii} The description  of colimits in $\cP$ from \ref{property-p-i} shows that $\varphi\colon \PSh(\cC) \to \cP$ preserves colimits.
	\item \label{property-p-iii} There are functors \[\pi_1 \colon \cP \lra \PSh(\cC_0)\quad\text{and}\quad\pi_2 \colon \cP \ra \PSh(\cC\backslash \cC_0)\]
	given by $\pi_1(X,(\nu^*\iota_!(X) \to Z \to \nu^*\iota_*(X)))=X$ and $\pi_2(X,(\nu^*\iota_!(X) \to Z \to \nu^*\iota_*(X)))=Z$. They are jointly conservative and preserve colimits as a result of \ref{property-p-i}.
	\item\label{property-p-iv} The functors $\pi_1$ and $\pi_2$ from \ref{property-p-iii} admit left adjoints \[\lambda_1 \colon \PSh(\cC_0) \lra \cP\quad\text{and}\quad\lambda_2 \colon \PSh(\cC\backslash \cC_0) \lra \cP\] 	given by $\lambda_1(X)= (X,(\nu^*\iota_!(X) = \nu^*\iota_!(X) \to \nu^*\iota_*(X)))$ and $\lambda_2(Z)= (\iota^* \nu_!(Z),(\nu^*\iota_!\iota^*  \nu_!(Z)\ra \nu^*\iota_!\iota^* \nu_!(Z) \sqcup Z \to \nu^*\iota_*\iota^* \nu_!(Z)))$. They feature in natural transformations
	\begin{equation}\label{equ:lambda-nats}
		(\lambda_1\circ y_{\cC_0})\lra (\varphi \circ y_{\cC}\circ\iota) \quad\text{and}\quad(\lambda_2\circ y_{\cC\backslash\cC_0})\lra (\varphi\circ y_{\cC}\circ\nu)
	\end{equation}
	that are, via the adjunctions $\lambda_i\dashv \pi_i$, adjoint to the natural transformations $y_{\cC_0}\ra (\iota^*\circ y_{\cC}\circ\iota)$ and $y_{\cC\backslash\cC_0}\ra (\nu^*\circ y_{\cC}\circ\nu)$ that are induced by the inclusions of $\cC_0$ and $\cC\backslash\cC_0$ in $\cC$.
	\item\label{property-p-v} The natural transformations \eqref{equ:lambda-nats} are equivalences. Since the $\pi_i$ are jointly conservative, we may test this after applying the $\pi_i$ to both transformations. Applying them to the first transformation yields $y_{\cC_0}\ra (\iota^*\circ y_\cC\circ \iota)$ and $(\nu^*\circ \iota_!\circ y_{\cC_0})\ra (\nu^*\circ y_{\cC}\circ\iota)$ which are equivalences; the former since $\cC_0$ is a \emph{full} subcategory and the latter since the Yoneda embedding is compatible with left Kan extension. Applying the $\pi_i$ to the second transformation yields $(\iota^*\circ \nu_!\circ y_{\cC\backslash \cC_0})\ra (\iota^*\circ y_{\cC}\circ \nu)$ and $(\nu^*\circ \iota_! \circ \iota^*\circ y_{\cC}\circ \nu)\sqcup y_{\cC\backslash \cC_0}\ra (\nu^*\circ y_{\cC}\circ\nu)$ which are also equivalences, the former for the same reason as before and the latter by hypothesis. 
	\end{enumerate}	
To prove the claim, we write $\cR \subset \cP$ for the full subcategory spanned by the essential images of $(\lambda_1\circ y_{\cC_0})$ and $(\lambda_2\circ y_{\cC\backslash \cC_0})$. Since $\iota$ and $\nu$ are jointly essentially surjective, it follows from \ref{property-p-v} that the composition $(\varphi\circ y_{\cC})$ lands in $\cR \subset \cP$, so $(\varphi\circ y_{\cC})$ factors as a functor $\smash{\overline{\varphi\circ y_\cC}\colon \cC\ra\cR}$ followed by the inclusion $\cR\subset\cP$. Since $\cP$ has colimits by \ref{property-p-i} and $\varphi$ preserves those by \ref{property-p-ii}, this implies by taking colimit preserving extensions, that $\varphi$ agrees with the composition of the left Kan extension $\smash{(\overline{\varphi\circ y_\cC})_!\colon \PSh(\cC)\ra\PSh(\cR)}$ with the colimit-preserving extension $\ext\colon \PSh(\cR)\ra \cP$ of the inclusion $\cR\subset \cP$, so the claim would follow if we show that both $\smash{\overline{\varphi\circ y_\cC}}$ and $\ext$ are equivalences. 

\medskip

\noindent To show that $\ext$ is an equivalence, recall from \cite[5.1.6.2]{LurieHTT} that an object $e$ in a category $\cE$ is \emph{completely compact} if $\Map_\cE(c,-)\colon\cE\ra\cS$ preserves small colimits. Since, firstly, representables in a presheaf category are completely compact (by the Yoneda lemma and the fact that colimits of presheaves are computed pointwise) and, secondly, completely compact objects are preserved by left adjoints whose right adjoints preserve colimits, we conclude from \ref{property-p-iii} and \ref{property-p-iv} that all objects in $\cR\subset\cP$ are completely compact in $\cP$. This implies by 5.1.6.10 loc.cit. that $\ext\colon \PSh(\cR)\ra\cP$ of $\cR\subset \cP$ is fully faithful. Moreover, this functor has a right adjoint $\cP\ra \PSh(\cR)$ given the Yoneda embedding $y_{\cP}$ followed by restriction along $\cR\subset\cP$. Since $\pi_1$ and $\pi_2$ are jointly conservative by \ref{property-p-iii}, it follows that this right adjoint is conservative as well, so $\ext$ is an equivalence because any fully faithful functor with a conservative right adjoint is an equivalence.

\medskip

\noindent It remains to show that  $\smash{\overline{\varphi\circ y_\cC}\colon \cC\ra\cR}$ is an equivalence. It follows from \ref{property-p-v} that this functor is essentially surjective, so we are left to show that for $c,c'\in\cC$ the composition
\begin{equation}\label{equ:composition-recol-ff}
	\smash{\Map_{\cC}\big(c,c'\big)\xlra{y_\cC} \Map_{\PSh(\cC)}\big(y_\cC(c),y_\cC(c')\big)\xlra{\varphi}\Map_{\cP}\big(\varphi(y_\cC(c)),\varphi (y_\cC(c'))\big)}
\end{equation}
is an equivalence. We distinguish two cases: whether $c\in \cC_0$ or $c\in \cC\backslash\cC_0$.

\medskip

\noindent If $c\in \cC_0$, then we may test whether \eqref{equ:composition-recol-ff} is an equivalence by postcomposition with the equivalence $\Map_{\cP}(\varphi(y_\cC(c)),\varphi (y_\cC(c')))\ra \Map_{\cP}((\lambda_1\circ y_{\cC_0})(c)),(\varphi\circ y_\cC)(c'))$ induced with precomposition with the first map in \eqref{equ:lambda-nats} (which is an equivalence by \ref{property-p-v}). Using the adjunction $\lambda_1\dashv \pi_1$ the resulting map is equivalent to the natural map $\Map_{\cC}(c,c')\ra \Map_{\PSh(\cC_0)}(y_{\cC_0}(c),\iota^*(y_{\cC}(c')))$ which is an equivalence by the Yoneda lemma. The case $c\in \cC\backslash \cC_0$ works the same way, using the second  map in \eqref{equ:lambda-nats} and the adjunction $\lambda_2\dashv \pi_2$.
\end{proof}

\begin{ex}Fix a category $\cC$ and an object $\bar{c}\in \cC$ with the property that any object which admits both a morphism from $\bar{c}$ and one to $\bar{c}$, is equivalent to $\bar{c}$. Then the full subcategory $\cC_0\subset \cC$ of objects not equivalent to $\bar{c}$ is a decomposition pair to which \cref{thm:recollement} applies: for $c,c' \in \cC \backslash \cC_0$ we have $(\iota_!\circ \iota^*)(\Map_\cC(-,c'))(c)=\varnothing$ and $\Map_{\cC \backslash \cC_0}(c,c') \to \Map_\cC(c,c')$ is an equivalence. Under the additional assumption that every endomorphism in $\cC$ is invertible, the pullback \eqref{eqn:recollement-square} was in this case on the level of mapping spaces established by Göppl--Weiss in \cite[Theorem 4.1.2]{Goppl}.
\end{ex}

\subsubsection{Naturality of the decomposition}\label{sec:abstract-layer-naturality} In the remainder of this section, we will establish a result on the naturality of the square in \cref{thm:recollement} in the pair $\cC_0 \subset \cC$. The statement involves subcategories
\[
	\Cat^{\subset_\BC,\dec}\hookrightarrow\Cat^{\subset_\BC}\hookrightarrow \Cat^{[1]}
\]
defined as follows: $\Cat^{\subset_\BC}$ is the subcategory of $\Cat^{[1]}$ whose objects are full subcategory inclusions $\cC_0\subset \cC$ and whose morphisms are those commutative squares
\[\begin{tikzcd}[column sep=0.6cm, row sep=0.6cm] 
	\cD_0 \arrow[r,"j",hookrightarrow] \dar[swap]{\varphi_0} & \cD \dar{\varphi} \\
	\cC_0 \arrow[r,"\iota",hookrightarrow]  & \cC 
\end{tikzcd}\]
for which the Beck--Chevalley transformations
\begin{equation}\label{equ:BC-naturality}
	\varphi_{0!} j^* \lra \iota^* \varphi_!\quad\text{and}\quad \varphi_{!} j_* \lra \iota_* \varphi_{0!}
\end{equation}
are equivalences. Here the first transformation is the composition $\varphi_{0!} j^*\ra \iota^*\iota_!\varphi_{0!} j^*\simeq\iota^*\varphi_{!}j_! j^*\ra \iota^* \varphi_!$ induced by the unit of $\iota_!\dashv \iota^*$, the naturality of left Kan extensions as well as the counit of $j_!\dashv j^*$, and the second is the composition $\varphi_{!} j_* \ra \iota_* \iota^* \varphi_{!} j_*\simeq\iota_* \varphi_{0!} j^* j_* \ra \iota_* \varphi_{0!}$ induced by the unit of $\iota^*\dashv \iota_*$, where the equivalence is given by the first transformation as well as the counit of $j^*\dashv j_*$. 

\medskip

\noindent The second subcategory $\smash{\Cat^{\subset_\BC,\dec}}$ of $\Cat^{\subset_\BC}$ has decomposition pairs $\cC_0\subset\cC$ as objects and as morphisms squares as above with the additional property that $\varphi$ restricts to a functor $\varphi_{\neq0}\colon \cD\backslash \cD_0\ra \cC\backslash \cC_0$ such that the analogue of the first transformations in \eqref{equ:BC-naturality} with $j$ and $\iota$ replaced by the inclusions $\cD\backslash \cD_0\hookrightarrow\cD$ and $\cD\backslash \cD_0\hookrightarrow\cD$ is an equivalence. To state the naturally result for the square in \cref{thm:recollement}, note that the first part of the proof of this theorem produces for a full subcategory inclusion $\iota\colon\cC_0\hookrightarrow\cC$ a commutative square in $\Cat$
\begin{equation}\label{eqn:recollement-square-nat}
	\begin{tikzcd}[column sep=1.8cm] \PSh(\cC) \rar{(\iota_!\iota^* \to \id \to \iota_*\iota^*)} \dar[swap]{\iota^*} & \PSh(\cC)^{[2]} \dar{(0\le 2)^*} \\
	\PSh(\cC_0) \rar{(\iota_! \to \iota_*)} & \PSh(\cC)^{[1]}.
	\end{tikzcd}
\end{equation}

\begin{thm}\label{thm:recollement-nat} Assigning to a full subcategory inclusion $\cC_0 \subset \cC$ the square \eqref{eqn:recollement-square-nat}, and to a decomposition pair $\cC_0\subset\cC$ the square \eqref{eqn:recollement-square} extends to functors
\[
	 \Cat^{\subset_\BC} \lra \Cat^{[1] \times [1]}\quad\text{and}\quad \Cat^{\subset_\BC,\dec} \lra \Cat^{[1] \times [1]}
\]
which are on morphisms both induced by taking left Kan extensions.
\end{thm}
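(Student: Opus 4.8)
The plan is to build both functors simultaneously by packaging the four Kan-extension functors $\iota_!,\iota^*,\iota_*,\nu^*$ into a single functor out of $\Cat^{\subset_\BC}$ (resp. $\Cat^{\subset_\BC,\dec}$) valued in $\Cat^{[1]\times[1]}$, and then checking that the Beck--Chevalley conditions built into the morphisms of these subcategories are exactly what is needed for naturality. The key observation is that presheaves-and-left-Kan-extension is functorial: there is a functor $\PSh_!(-)\colon\Cat\to\Cat$ sending $\varphi\colon\cC\to\cD$ to $\varphi_!\colon\PSh(\cC)\to\PSh(\cD)$, obtained for instance by straightening the cocartesian fibration classified by $\PSh(-)$ together with the adjoint functor theorem, or more cleanly from the universal property $\Fun^{\colim}(\PSh(\cC),\cE)\simeq\Fun(\cC,\cE)$ applied to $\cE=\PSh(-)$ viewed as a functor $\Cat\to\CAT$. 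First I would recall this functoriality and also the variant $\PSh_*(-)$ on the subcategory of $\Cat$ with fully faithful morphisms, using uniqueness of adjoints.

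**Constructing the square functor on $\Cat^{\subset_\BC}$.**
Given an object $(\cC_0\subset\cC)$ in $\Cat^{\subset_\BC}$, I would construct the square \eqref{eqn:recollement-square-nat} as follows. The functor $\PSh_!(-)$ applied to the inclusion $\iota$ gives $\iota_!\colon\PSh(\cC_0)\to\PSh(\cC)$, its right adjoint $\iota^*$, and the iterated right adjoint $\iota_*$; the two natural transformations $\iota_!\iota^*\to\id\to\iota_*\iota^*$ and $\iota_!\to\iota_*$ are then produced from the (co)units, exactly as in the first paragraph of the proof of \cref{thm:recollement} (that paragraph in fact only uses formal properties of the adjunctions, no hypothesis on $\cC_0\subset\cC$). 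To see that a morphism in $\Cat^{\subset_\BC}$ — a square with $\varphi_{0!}j^*\to\iota^*\varphi_!$ and $\varphi_!j_*\to\iota_*\varphi_{0!}$ equivalences — induces a morphism of such squares with all components given by left Kan extension, I would assemble everything into a functor out of $\Cat^{\subset_\BC}$: use that $\Cat^{\subset_\BC}\subset\Cat^{[1]}$ and that $\PSh_!(-)^{[1]}\colon\Cat^{[1]}\to\Cat^{[1]}$ sends $\iota$ to $\iota_!$, then observe that the passage $\iota_!\mapsto(\iota^*,\iota_*,\text{the two transformations})$ is natural precisely because of the Beck--Chevalley equivalences \eqref{equ:BC-naturality}: the first makes $\iota^*$ natural (the mate correspondence, \eqref{equ:mate-equ} in \cref{sec:mate-calculus}, turns the lax square with $1$-cells the left adjoints $\varphi_{0!},\varphi_!$ into a genuinely commuting square with $1$-cells the right adjoints $\varphi_0^*\dashv\varphi_{0!}$ etc., but here we instead use that $\varphi_{0!}j^*\simeq\iota^*\varphi_!$ directly), and the second makes $\iota_*$ natural. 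The two natural transformations are then natural by functoriality of forming (co)units. I would phrase this via the mate/Beck--Chevalley calculus of \cref{sec:mate-calculus} and \cref{sec:rfib}, citing \cite[Corollary F]{HHLN} for the mate correspondence so as to avoid checking coherences by hand.

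**Restricting to $\Cat^{\subset_\BC,\dec}$.**
For the second functor, note that on objects the square \eqref{eqn:recollement-square} is obtained from \eqref{eqn:recollement-square-nat} by postcomposing the right-hand entries with $\nu^*\colon\PSh(\cC)\to\PSh(\cC\backslash\cC_0)$ and replacing $\Lambda,\Omega$ by their images $\nu^*\Lambda,\nu^*\Omega$ — which is exactly how $\Lambda,\Omega$ were defined in \cref{sec:recollement}. So I would factor the desired functor $\Cat^{\subset_\BC,\dec}\to\Cat^{[1]\times[1]}$ as the composite of (a) a functor $\Cat^{\subset_\BC,\dec}\to\Cat^{\subset_\BC}$-with-extra-data recording also the complement inclusion $\nu\colon\cC\backslash\cC_0\hookrightarrow\cC$ and the restricted functor $\varphi_{\neq0}$, (b) the square functor from the previous paragraph, and (c) postcomposition of the two right-hand corners with $\nu^*$. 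Step (a) is where the definition of morphism in $\Cat^{\subset_\BC,\dec}$ is used: one must check that $\cD\backslash\cD_0\to\cC\backslash\cC_0$ is well-defined and functorial (immediate from the definition of the complement and of morphisms, which by hypothesis send morphisms factoring through $\cD_0$ to morphisms factoring through $\cC_0$), and step (c) uses that $\nu^*$ is natural — here the extra Beck--Chevalley condition in the definition of $\Cat^{\subset_\BC,\dec}$ (the analogue of the first transformation in \eqref{equ:BC-naturality} for the complement inclusions) is exactly what makes $\nu^*\Lambda$ natural, i.e. makes the composite $\varphi\mapsto\nu^*\iota_!\iota^*$ etc. natural. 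The remaining content — that \eqref{eqn:recollement-square} actually is the image of \eqref{eqn:recollement-square-nat} under step (c), i.e. that $\nu^*$ of the transformation $\iota_!\iota^*\to\id\to\iota_*\iota^*$ is the transformation called $\Lambda$ — is a definitional unwinding.

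**The main obstacle.**
The real work is organizational rather than conceptual: making the assignment $(\cC_0\subset\cC)\mapsto(\iota_!,\iota^*,\iota_*)$ into an honest functor, rather than merely something that behaves well on objects and on individual morphisms. The cleanest route, which I would take, is to never construct $\iota^*$ and $\iota_*$ "by hand" on $\Cat^{\subset_\BC}$ but instead to obtain the whole square \eqref{eqn:recollement-square-nat} at once as a functor into $\Cat^{[1]\times[1]}$ by a universal-property argument: the square is determined by its restriction along the Yoneda embeddings together with colimit-preservation (as in the second half of the proof of \cref{thm:recollement}, the relevant functors all preserve colimits), so it suffices to produce the diagram of representables naturally in $\Cat^{\subset_\BC}$, where everything is manifestly functorial. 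I expect the bulk of the actual write-up to consist of verifying that the two Beck--Chevalley conditions in \eqref{equ:BC-naturality} (resp. the third one for complements) are precisely the obstructions to this naturality vanishing, which is essentially a restatement of \cref{lem:right-fib-BC}-style mate calculus; I would lean on \cite[Corollary F]{HHLN} and \cref{sec:mate-calculus} to keep this honest without an explicit cocycle computation.
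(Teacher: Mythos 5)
Your overall architecture is the same as the paper's: obtain $(-)_!$, $(-)^*$, $(-)_*$ as functors out of $[1]\times\Cat^{\subset_\BC}$ using the mate correspondence \eqref{equ:mate-equ} together with the Beck--Chevalley equivalences \eqref{equ:BC-naturality} (this is \cref{lem:kan-extension-upgrade}), and then obtain the second functor by restricting to $\Cat^{\subset_\BC,\dec}$ and postcomposing with $\nu^*$, whose naturality is exactly the extra Beck--Chevalley condition in the definition of $\Cat^{\subset_\BC,\dec}$. However, your proposed resolution of what you correctly identify as ``the main obstacle'' does not work. The shortcut via colimit-preserving extension from representables fails because the entries of the square involving $\iota_*$ are not colimit-preserving: $\iota_*$ is a right Kan extension and does not commute with colimits (already for $\cC_0=\{a_1,a_2\}\subset\cC=\{a_1\to b\leftarrow a_2\}$ one has $(\iota_*X)(b)=X(a_1)\times X(a_2)$, which destroys coproducts), so neither $\Omega=(\iota_!\to\iota_*)$ nor the third component of $\Lambda$ is determined by its restriction to representables in the way you suggest. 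The colimit-preservation used in the second half of the proof of \cref{thm:recollement} concerns the functor into the pullback $\cP$, where colimits are \emph{not} computed componentwise in the $\nu^*\iota_*$-slot; it does not transfer to the individual entries of the square.

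The fallback you offer---``the two natural transformations are then natural by functoriality of forming (co)units,'' keeping this honest via \cite[Corollary~F]{HHLN}---is precisely where the real content lies, and Corollary~F alone does not supply it. The mate correspondence produces, from the strict naturality of $(-)_!$, a \emph{lax} naturality of $(-)^*$ whose $2$-cells are the Beck--Chevalley maps; it does not by itself produce the counit $\epsilon\colon(-)_!(-)^*\to\id$ and unit $\overline{\eta}\colon\id\to(-)_*(-)^*$ as coherent $2$-functors $([1]\boxtimes[1]')\times\Cat^{\subset_\BC}\to\twoCat$. The paper extracts these by a further device---applying the mate correspondence to the degenerate strict squares \eqref{equ:tilt-left-kan} and factoring through a localisation of the Gray tensor product (\cref{lem:units-coherent})---and then still needs coherent triangle identities (\cref{lem:triangle-identities}) and the compatibility square (\cref{lem:compatibility-square}) in order to identify $\overline{\eta}\epsilon$ with the zig-zag defining $\Omega$, i.e.\ to make the square \eqref{eqn:recollement-square-nat} commute \emph{naturally} in $\Cat^{\subset_\BC}$ rather than merely objectwise. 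Your sketch treats this last commutativity as inherited from the object-level argument in \cref{thm:recollement}, but upgrading it to a functor to $\Cat^{[1]\times[1]}$ is an additional coherence problem that your outline neither solves nor reduces to a quotable result.
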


\noindent The proof of \cref{thm:recollement-nat} involves $2$-categorical concepts, especially (op)lax natural transformations. We refer to \cref{sec:mate-calculus} for a recollection of the relevant facts and the notation we use. For brevity, we often leave out the word ``natural'' in ``natural transformation'' in what follows. We write $[1]'\coloneqq [1]$ to notationally distinguish two copies of $[1]$. For a $2$-category $\mathbb{C}$, we call a $2$-functor $[1]\boxtimes [1]'\ra \mathbb{C}$ a \emph{lax square}. Diagrammatically, a lax square looks like the leftmost diagram in
\begin{equation}\label{equ:default-lax-square}
	\begin{tikzcd}
	a\rar{\alpha}\arrow[d,"\omega",swap]&b\dar{\beta}\arrow[shorten <=10pt,shorten >=10pt,Rightarrow]{dl}{\theta}\\
	c\arrow[r,swap,"\gamma"]&d
	\end{tikzcd}\quad\quad\quad
	\begin{tikzcd}
	a\arrow[r,"\beta\alpha",""{name=T,inner sep=1pt,below}]\arrow[d,"\id",swap]&d\dar{\id}\\
	a\arrow[r,swap,"\gamma\omega",""{name=B,inner sep=1pt,below}]&d
	\arrow[shorten <=8pt, shorten >=8pt, Rightarrow, from=T, to=B, "\theta"]
	\end{tikzcd}\quad \quad\quad
	\begin{tikzcd}
	a\arrow[r,"\alpha",""{name=T,inner sep=1pt,below}]\arrow[d,"\id",swap]&b\dar{\id}\\
	a\arrow[r,swap,"\alpha",""{name=B,inner sep=1pt,below}]&b
	\arrow[shorten <=8pt, shorten >=8pt, Rightarrow, from=T, to=B, "\id"],
	\end{tikzcd}
\end{equation}
We always indicate the $[1]$-direction horizontally and the $[1]'$-direction vertically. There are two constructions in a general  $2$-category $\mathbb{C}$ that we will use repeatedly:

\begin{enumerate}[label=(\Alph*)]
	\item \label{const:deform-square} Given a lax square $[1]\boxtimes [1]'\ra \mathbb{C}$, we can form a new lax square in $\mathbb{C}$ by precomposition with the unique $2$-functor $s\colon [1]\boxtimes [1]'\ra [1]\boxtimes [1]'$ that fixes the nontrivial $2$-morphism and is on objects given by $s(0,0)=(0,0)$, $s(1,0)=(1,1)$, $s(0,1)=(0,0)$, $s(1,1)=(1,1)$. This corresponds to turning the leftmost square in \eqref{equ:default-lax-square} into the middle one.
	\item \label{const:morphism-square} Given a morphism $\alpha\colon a\ra b$ viewed as a functor $[1]\ra\mathbb{C}$ , we can form a lax square by precomposition with the unique $2$-functor $e\colon [1]\boxtimes [1]'\ra [1]$ with $e(0,0)=0$, $s(1,0)=1$, $s(0,1)=0$, $s(1,1)=1$.  This corresponds to turning $\alpha$ into the rightmost square in \eqref{equ:default-lax-square}.
\end{enumerate}

\noindent We begin the proof of \cref{thm:recollement-nat} with four preparatory lemmas involving the functors 
\begin{equation}\label{equ:preshaef-functors-nat}
	\PSh(-_0)\colon \Cat^{\subset_\BC}\lra \Cat\quad \text{and} \quad \PSh(-)\colon  \Cat^{\subset_\BC}\lra \Cat
\end{equation} 
that send $\smash{(\cC_0{\subset}\cC)\in \Cat^{\subset_\BC}}$ to $\PSh(\cC_0)$ and to $\PSh(\cC)$ respectively, and are on morphisms induced by left Kan extensions. More precisely, they are the composition of the inclusion $\Cat^{\subset_\BC}\subset\Cat^{[1]}$ with the evaluation at $i\in[1]$ for $i=0,1$ respectively, followed by the functor $\PSh(-)\colon \Cat\ra\Cat$.

\begin{lem}\label{lem:kan-extension-upgrade}There are functors of the form $[1]\times \Cat^{\subset_\BC}\ra  \Cat$
\[
	\big(\PSh(-)\xrightarrow{\id}\PSh(-)\big),\   \big(\PSh(-_0)\xrightarrow{(-)_!}\PSh(-)\big),\   \big(\PSh(-)\xrightarrow{(-)^*}\PSh(-_0)\big),\   \big(\PSh(-_0)\xrightarrow{(-)_*}\PSh(-)\big)
\]
whose value at the morphisms $\smash{((0<1),\id_{\iota})}$ are given by the indicated functors, e.g.\,for $(-)_!$ by the left Kan extension $\iota_!\colon \PSh(\cC_0)\ra \PSh(\cC)$. 
\end{lem}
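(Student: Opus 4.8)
The plan is to realise each of the four functors $[1]\times\Cat^{\subset_\BC}\ra\Cat$ as a natural transformation between functors $\Cat^{\subset_\BC}\ra\Cat$ and to produce them one from the next by the mate correspondence recalled in \cref{sec:mate-calculus}. Write $\ev_0,\ev_1\colon\Cat^{\subset_\BC}\hookrightarrow\Cat^{[1]}\ra\Cat$ for the two evaluations, so that $\PSh(-_0)=\PSh(-)\circ\ev_0$ and $\PSh(-)=\PSh(-)\circ\ev_1$, where on the right $\PSh(-)\colon\Cat\ra\Cat$ is the covariant functor $\varphi\mapsto\varphi_!$ used in \eqref{equ:preshaef-functors-nat}. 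The functor associated to the identity transformation is simply the composite $[1]\times\Cat^{\subset_\BC}\ra\Cat^{\subset_\BC}\xra{\PSh(-)\circ\ev_1}\Cat$, and there is nothing to prove for it.

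For the functor associated to $(-)_!$, I would take the tautological natural transformation $u\colon\ev_0\Rightarrow\ev_1$ of functors $\Cat^{[1]}\ra\Cat$ — the one corresponding to $\id\colon\Cat^{[1]}\ra\Fun([1],\Cat)$, whose naturality square at a morphism of $\Cat^{[1]}$ is that very commutative square — whisker it with $\PSh(-)\colon\Cat\ra\Cat$, and restrict along $\Cat^{\subset_\BC}\hookrightarrow\Cat^{[1]}$. This yields a genuine natural transformation $\eta\colon\PSh(-_0)\Rightarrow\PSh(-)$: its naturality squares are $\PSh(-)$ applied to commutative squares, hence commute because $\PSh(-)$ is a functor (i.e.\,left Kan extensions compose), and no Beck--Chevalley condition is needed here; and its component at an inclusion $\iota\colon\cC_0\hookrightarrow\cC$ is $\iota_!$, as required.

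For $(-)^*$, observe that $\eta$, viewed as an oplax natural transformation, has all components $\iota_!$ left adjoints, with right adjoints $\iota^*$, so it defines an object of $\Fun^{\oplax,L}(\Cat^{\subset_\BC},\twoCat)$. Applying the mate correspondence \eqref{equ:mate-equ} produces a lax natural transformation $\overline{\eta}\colon\PSh(-)\Rightarrow\PSh(-_0)$ over $\Cat^{\subset_\BC}$ with component $\iota^*$ at $\iota$ and with its $2$-cells given, via \eqref{equ:mate-bc} and the invertibility of the $2$-cells of $\eta$, by the Beck--Chevalley transformations $\varphi_{0!}j^*\ra\iota^*\varphi_!$. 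These are equivalences by the defining condition of $\Cat^{\subset_\BC}$ (the first transformation in \eqref{equ:BC-naturality}), so $\overline{\eta}$ is in fact a genuine natural transformation; this is the functor associated to $(-)^*$. Iterating once more: $\overline{\eta}$ is genuine, hence oplax, with all components $\iota^*$ left adjoints with right adjoints $\iota_*$, so a second application of the mate correspondence produces a genuine natural transformation $\overline{\overline{\eta}}\colon\PSh(-_0)\Rightarrow\PSh(-)$ with component $\iota_*$ and $2$-cells the Beck--Chevalley transformations $\varphi_!j_*\ra\iota_*\varphi_{0!}$, which are equivalences by the second transformation in \eqref{equ:BC-naturality}; this is the functor associated to $(-)_*$. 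In all cases the value at $((0<1),\id_\iota)$ is the asserted functor by construction.

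The main obstacle is bookkeeping rather than conceptual: one must verify that the $2$-cells output by the mate correspondence \eqref{equ:mate-bc} — built from units, counits and the (invertible) $2$-cells of the previous transformation via the triangle identities — simplify to exactly the Beck--Chevalley transformations written out in \eqref{equ:BC-naturality}, so that the conditions imposed in the definition of $\Cat^{\subset_\BC}$ are precisely what makes each lax transformation genuine. One also has to keep track that a genuine natural transformation with pointwise left-adjoint components really lies in $\Fun^{\oplax,L}(\Cat^{\subset_\BC},\twoCat)$, so that the mate correspondence applies, and that this remains true for the transformation $\overline{\eta}$ so that it can be applied a second time.
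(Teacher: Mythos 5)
Your proposal is correct and follows essentially the same route as the paper: the first two functors are built from the tautological inclusion $\Cat^{\subset_\BC}\subset\Cat^{[1]}$ postcomposed with $\PSh(-)$, and the third and fourth are obtained by two successive applications of the mate correspondence \eqref{equ:mate-equ}, using that the resulting $2$-cells unpack to the two Beck--Chevalley transformations of \eqref{equ:BC-naturality} and hence are invertible by the definition of $\Cat^{\subset_\BC}$. The bookkeeping you flag at the end is exactly the step the paper also handles by "unpacking \eqref{equ:mate-bc}", so there is no gap.
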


\begin{proof}The first functor is the composition of the projection $[1]\times \Cat^{\subset_\BC}\ra \Cat^{\subset_\BC}$ with the second functor in \eqref{equ:preshaef-functors-nat}. The second one is the adjoint of the composition of the inclusion $\Cat^{\subset_\BC}\subset \Cat^{[1]}$ followed by the endofunctor of $\Cat^{[1]}$ given by postcomposition with  $\smash{\PSh(-)\colon \Cat\ra \Cat}$. To construct the third one, we view the second functor $(-)_!$ as an oplax transformation $(-)_!\colon \Cat^{\subset_\BC}\boxtimes [1]\ra \twoCat$ whose components are $\iota_!$ so admit right adjoints $\iota^*$. We then apply \eqref{equ:mate-equ} to lift these right adjoints to a lax transformation $(-)^*\colon [1]\boxtimes\Cat^{\subset_\BC}\ra \twoCat$. Unpacking \eqref{equ:mate-bc}, the two-cells of $(-)^*$ are given by first transformation in \eqref{equ:BC-naturality}, so they are equivalences by assumption, i.e.\,$(-)^*$ gives a strict transformation, i.e.\,a functor $(-)^*\colon [1]\times\Cat^{\subset_\BC}\ra \Cat$ as required. To construct the fourth functor, we apply the same argument to $(-)^*$ in place of $(-)_!$, using that this time the occurring two-cells in the lax transformation are given by the second transformation in \eqref{equ:BC-naturality}.
\end{proof}

\begin{lem}\label{lem:units-coherent} There are $2$-functors of the form $([1]\boxtimes [1]')\times \Cat^{\subset_\BC}\ra \twoCat$
\[
	\big((-)_!(-)^*\xsra{\epsilon}\id\big),\quad  \big(\id\xsra{\eta}(-)^*(-)_!\big),\quad  \big((-)^*(-)_*\xsra{\overline{\epsilon}}\id\big),\quad \big(\id\xsra{\overline{\eta}}(-)_*(-)^*\big) 
\]
that agree, restricted to $([1]\boxtimes \{i\})\times \Cat^{\subset_\BC}\simeq [1]\times \Cat^{\subset_\BC}$ for $i=0,1$, with the indicated compositions of the functors from \cref{lem:kan-extension-upgrade}, and agree, restricted to $([1]\boxtimes [1]')\times \{\iota\}$, with the lax squares 
\[\hspace{-0.3cm}
\begin{tikzcd} \PSh(\cC) \arrow[r,"\iota_!\iota^*",""{name=T,inner sep=1pt,below}]\dar{\id} & \PSh(\cC) \dar{\id}  \\
	\PSh(\cC)  \arrow[r,"\id",""{name=B,inner sep=1pt,below},swap]& \PSh(\cC)
	\arrow[shorten <=8pt, shorten >=8pt, Rightarrow, from=T, to=B, "\epsilon"]
	\end{tikzcd}
	\begin{tikzcd} \PSh(\cC_0) \arrow[r,"\id",""{name=T,inner sep=1pt,below}]\dar{\id} & \PSh(\cC_0) \dar{\id}  \\
	\PSh(\cC_0)  \arrow[r,"\iota^*\iota_!",""{name=B,inner sep=1pt,below},swap]& \PSh(\cC_0)
	\arrow[shorten <=8pt, shorten >=8pt, Rightarrow, from=T, to=B, "\eta"]
	\end{tikzcd}
	\begin{tikzcd} \PSh(\cC_0) \arrow[r,"\iota^*\iota_*",""{name=T,inner sep=1pt,below}]\dar{\id} & \PSh(\cC_0) \dar{\id}   \\
	\PSh(\cC_0) \arrow[r,"\id",""{name=B,inner sep=1pt,below},swap]& \PSh(\cC_0)
	\arrow[shorten <=8pt, shorten >=8pt, Rightarrow, from=T, to=B, "\overline{\epsilon}"]
	\end{tikzcd}
	\begin{tikzcd} \PSh(\cC) \arrow[r,"\id",""{name=T,inner sep=1pt,below}]\dar{\id} & \PSh(\cC) \dar{\id}  \\
	\PSh(\cC)   \arrow[r,"\iota_*\iota^*",""{name=B,inner sep=1pt,below},swap]& \PSh(\cC)
	\arrow[shorten <=8pt, shorten >=8pt, Rightarrow, from=T, to=B, "\overline{\eta}"]
	\end{tikzcd}\]
featuring the (co)unit transformations of the adjunctions $\iota_!\dashv\iota^*\dashv\iota_*$.\end{lem}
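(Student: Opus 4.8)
The plan is to build all four $2$-functors in Lemma~\ref{lem:units-coherent} simultaneously from the mate correspondence, exploiting that the data of a (co)unit of an adjunction is exactly what is encoded in the lax-transformation picture of \cref{sec:mate-calculus}, and that this picture is already coherent over $\Cat^{\subset_\BC}$ by Lemma~\ref{lem:kan-extension-upgrade}. Concretely, recall that in the proof of Lemma~\ref{lem:kan-extension-upgrade} the functor $(-)^*$ was produced from $(-)_!$, viewed as an oplax transformation $\Cat^{\subset_\BC}\boxtimes[1]\to\twoCat$ whose components are left adjoints, by applying the equivalence \eqref{equ:mate-equ} to pass to the lax transformation of right adjoints. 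First I would observe that the equivalence \eqref{equ:mate-equ} is realised by a $2$-functor whose source also records the unit and counit: more precisely, the datum of an adjunction between $F$ and $G$ internal to a $2$-category is equivalently a $2$-functor out of the ``walking adjunction'' $2$-category $\mathbf{Adj}$, and $\mathbf{Adj}$ contains (co)unit $2$-cells of the shape $[1]\boxtimes[1]'$. Thus the coherent family of adjunctions $\iota_!\dashv\iota^*$ over $\Cat^{\subset_\BC}$ assembles into a $2$-functor $\mathbf{Adj}\times\Cat^{\subset_\BC}\to\twoCat$ (for the $\iota_!\dashv\iota^*$ adjunction, and similarly one for $\iota^*\dashv\iota_*$), and restricting along the two inclusions $[1]\boxtimes[1]'\hookrightarrow\mathbf{Adj}$ that pick out the unit and counit cells yields exactly the $2$-functors $\epsilon,\eta$ (respectively $\overline\epsilon,\overline\eta$) of the statement. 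The restriction-to-$([1]\boxtimes\{i\})$ and restriction-to-$([1]\boxtimes[1]')\times\{\iota\}$ claims then hold by construction, since on a single object $\iota$ the $2$-functor $\mathbf{Adj}\to\twoCat$ is just the adjunction $\iota_!\dashv\iota^*$ with its (co)units, and on $[1]\boxtimes\{i\}$ it restricts to the corresponding edge of $\mathbf{Adj}$, which under \eqref{equ:mate-equ} and Lemma~\ref{lem:kan-extension-upgrade} is the indicated composite of $\id$, $(-)_!$, $(-)^*$, $(-)_*$.

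Second, to make the ``coherent family of adjunctions over $\Cat^{\subset_\BC}$'' precise without developing $\mathbf{Adj}$-calculus in families, I would instead argue directly via \eqref{equ:mate-equ} as used in Lemma~\ref{lem:kan-extension-upgrade}. There, $(-)_!$ and $(-)^*$ were obtained as a matching oplax/lax pair of transformations $\Cat^{\subset_\BC}\boxtimes[1]\rightleftarrows\twoCat$, $[1]\boxtimes\Cat^{\subset_\BC}\to\twoCat$; by \cite[Corollary~F]{HHLN} this pair is classified by a single diagram, and the naturality of \eqref{equ:mate-equ} in the indexing category gives, after a further $\boxtimes[1]'$, the unit and counit $2$-cells $\eta\colon\id\to(-)^*(-)_!$ and $\epsilon\colon(-)_!(-)^*\to\id$ as $2$-functors $([1]\boxtimes[1]')\times\Cat^{\subset_\BC}\to\twoCat$: the $[1]'$-direction carries the (co)unit $2$-morphism, the $[1]$-direction carries the composability of $(-)_!$ with $(-)^*$ (this is construction~\ref{const:deform-square} applied to the relevant edge of the classifying diagram), and the $\Cat^{\subset_\BC}$-direction carries the coherence in $\iota$. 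The same recipe applied to the adjunction $(-)^*\dashv(-)_*$ produces $\overline\epsilon$ and $\overline\eta$, using that $(-)_*$ was itself built in Lemma~\ref{lem:kan-extension-upgrade} by one more application of \eqref{equ:mate-equ}. That the restrictions along $[1]\boxtimes\{i\}$ recover the stated composites of functors from Lemma~\ref{lem:kan-extension-upgrade}, and that the restrictions along $([1]\boxtimes[1]')\times\{\iota\}$ recover the four displayed lax squares, is then immediate from the descriptions of the relevant $2$-cells: the triangle identities identify the boundary, and \eqref{equ:mate-bc} identifies the $2$-cell with the honest (co)unit of $\iota_!\dashv\iota^*\dashv\iota_*$ at the single object $\iota$.

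The step I expect to be the main obstacle is making the bookkeeping of Gray tensor products precise: one must keep careful track of which factor of $[1]\boxtimes[1]'\boxtimes\Cat^{\subset_\BC}$ (or its strictified variants $[1]\times(\cdots)$) carries which piece of data, since the $2$-cells must be genuine $2$-morphisms only in the $[1]'$-direction while being strictly invertible in the $\Cat^{\subset_\BC}$-direction (this strictness is exactly what the Beck--Chevalley hypotheses defining $\Cat^{\subset_\BC}$ buy us, via Lemma~\ref{lem:kan-extension-upgrade}), and must degenerate appropriately in the $[1]$-direction on the two ends. Concretely, one has to check that the composite $2$-functor factors through the localisation of $(\cdots)\boxtimes\Cat^{\subset_\BC}$ that inverts the $\Cat^{\subset_\BC}$-indexed $2$-cells, using the general fact recalled at the end of \cref{sec:mate-calculus} that a $2$-functor out of $(\cC\times\cC')\boxtimes\cD$ inverting the $2$-morphisms in the image of $(\cC\times\{c'\})\boxtimes\cD$ is the same as one out of $\cC\times(\cC'\boxtimes\cD)$. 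Once this factorisation is in place the four $2$-functors are obtained by straightforward restriction, and no further computation is needed; the verification of the boundary conditions is then a diagram chase using the triangle identities, exactly as in the first part of the proof of \cref{thm:recollement}.
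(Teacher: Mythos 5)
Your second and third paragraphs reproduce essentially the paper's own argument: the paper likewise obtains all four $2$-functors by applying the mate correspondence \eqref{equ:mate-equ} to the functors of \cref{lem:kan-extension-upgrade}, factoring through the localisation $[1]\boxtimes([1]'\times\Cat^{\subset_\BC})\to([1]\boxtimes[1]')\times\Cat^{\subset_\BC}$ using the Beck--Chevalley hypotheses, and finishing with construction \ref{const:deform-square}; your opening $\mathbf{Adj}$-based framing is then superseded by this direct route, just as you anticipate. The one detail you leave implicit --- where the extra $[1]'$ comes from --- is made concrete in the paper by precomposing $(-)_!$ (resp.\ $(-)^*$) with explicit functors $r_\epsilon,r_\eta\colon[1]'\times[1]\to[1]$, so that the mates of the resulting strictly commutative squares are precisely the lax squares carrying the (co)units, exactly as your appeal to \eqref{equ:mate-bc} indicates.
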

 
\begin{proof}
Consider the unique functors $r_{\epsilon}\colon [1]'\times [1]\ra [1]$ with $r_{\epsilon}(i,j)=0$ if $(i,j)=(0,0)$ and $r_{\epsilon}(i,j)=1$ otherwise, and $r_{\eta}\colon [1]'\times [1]\ra [1]$ with $r_{\eta}(i,j)=1$ if $(i,j)=(1,1)$ and $r_{\eta}(i,j)=0$ otherwise. Precomposing $(-)_!\colon [1]\times \Cat^{\subset_\BC}\ra  \Cat$ from \cref{lem:kan-extension-upgrade} with these functors gives two functors  $ [1]'\times \Cat^{\subset_\BC}\times [1]\ra  \Cat$ that send $\smash{(\iota\colon \cC_0\subset\cC)\in \Cat^{\subset_\BC}}$ to the commutative squares
\begin{equation}\label{equ:tilt-left-kan}
	\begin{tikzcd}\ \PSh(\cC_0) \rar{\iota_!} \dar{\iota_!} & \PSh(\cC) \dar{\id} \\
	\PSh(\cC) \rar[swap]{\id}  & \PSh(\cC) 
	\end{tikzcd}
	\quad\text{respectively}\quad 	
	\begin{tikzcd}\ \PSh(\cC_0) \rar{\id} \dar{\id} & \PSh(\cC_0) \dar{\iota_!} \\	
	\PSh(\cC_0) \rar[swap]{\iota_!}  & \PSh(\cC).
	\end{tikzcd}
\end{equation}
Viewing these two functors as oplax transformations $([1]'\times \Cat^{\subset_\BC})\boxtimes [1]\ra  \twoCat$ and applying  \eqref{equ:mate-equ} (which corresponds in the above two squares to taking horizontal right adjoints) gives 2-functors $\widetilde{\epsilon}, \widetilde{\eta}\colon [1]\boxtimes  ([1]'\times \Cat^{\subset_\BC})\ra \twoCat$ that send $\smash{\iota\in \Cat^{\subset_\BC}}$ to the lax squares
\begin{equation}\label{equ:tilt-left-kan-mate}
	\begin{tikzcd} \PSh(\cC) \dar{\id}  \rar{\iota^*}& \PSh(\cC_0)\arrow[shorten <=10pt,shorten >=10pt,Rightarrow]{dl}{\epsilon}  \dar{\iota_!}  \\
	\PSh(\cC)  \rar[swap]{\id} & \PSh(\cC) \end{tikzcd}
	\quad\text{respectively}\quad
	\begin{tikzcd} \PSh(\cC_0) \rar{\id}\dar{\iota_!}  & \PSh(\cC_0) \arrow[shorten <=10pt,shorten >=10pt,Rightarrow]{dl}{\eta}\dar{\id} \\
	\PSh(\cC)  \rar[swap]{\iota^*} & \PSh(\cC_0).  \end{tikzcd}
\end{equation}
By naturality of \eqref{equ:mate-equ}, the restrictions of $\widetilde{\epsilon}$ to $(\{i\}\times \Cat^{\subset_\BC})\boxtimes [1]$ for $i=0,1$ agree with the 1-functors $(-)_!$ and $\id$ from \cref{lem:kan-extension-upgrade} respectively, and the same holds for $\widetilde{\eta}$ with $0$ and $1$ interchanged. In particular, these restrictions send all 2-morphisms to equivalences, so by the discussion in \cref{sec:mate-calculus} the 2-functors $\widetilde{\epsilon}$ and  $\widetilde{\eta}$ factor uniquely over the localisation $[1]\boxtimes ( [1]'\times \Cat^{\subset_\BC})\ra ([1]\boxtimes [1]')\times \Cat^{\subset_\BC}$ that inverts the $2$-morphisms in $[1]\boxtimes ( \{i\}\times \Cat^{\subset_\BC})$ for $i=0,1$. Applying the construction \ref{const:deform-square} yields the first two functors $\epsilon, \eta \colon ([1]\boxtimes [1]')\times \Cat^{\subset_\BC}\ra\twoCat$ in the claim. The exact same procedure but starting with $(-)^*$ instead of $(-)_!$ yields the second two functors.
\end{proof}

Note that we also have functors of the form $([1]\boxtimes [1]') \times \Cat^{\subset_\BC}\ra \twoCat$
\begin{equation}\label{equ:further-functors}
	\smash{\big((-)_!\xrightarrow{\id_{(-)_!}}(-)_!\big),\quad \big((-)^*\xrightarrow{\id_{(-)^*}}(-)^*\big),\quad \big((-)_*\xrightarrow{\id_{(-)_*}}(-)_*\big),}
\end{equation}
by applying \ref{const:morphism-square} to the functors in \cref{lem:kan-extension-upgrade}. Their values at $\smash{\iota\in \Cat^{\subset_\BC}}$ are the lax squares
\[
	\begin{tikzcd}
	\PSh(\cC_0) \arrow[r,"\iota_!",""{name=T,inner sep=1pt,below}]\arrow[d,"\id",swap]&\PSh(\cC)\dar{\id}\\
	\PSh(\cC_0) \arrow[r,swap,"\iota_!",""{name=B,inner sep=1pt,below}]&\PSh(\cC)
	\arrow[shorten <=8pt, shorten >=8pt, Rightarrow, from=T, to=B, "\id"],
	\end{tikzcd}\quad
	\begin{tikzcd}
	\PSh(\cC)\arrow[r,"\iota^*",""{name=T,inner sep=1pt,below}]\arrow[d,"\id",swap]&\PSh(\cC_0)\dar{\id}\\
	\PSh(\cC)\arrow[r,swap,"\iota^*",""{name=B,inner sep=1pt,below}]&\PSh(\cC_0)
	\arrow[shorten <=8pt, shorten >=8pt, Rightarrow, from=T, to=B, "\id"],
	\end{tikzcd}\quad
	\begin{tikzcd}
	\PSh(\cC_0) \arrow[r,"\iota_*",""{name=T,inner sep=1pt,below}]\arrow[d,"\id",swap]&\PSh(\cC)\dar{\id}\\
	\PSh(\cC_0) \arrow[r,swap,"\iota_*",""{name=B,inner sep=1pt,below}]&\PSh(\cC)
	\arrow[shorten <=8pt, shorten >=8pt, Rightarrow, from=T, to=B, "\id"].
	\end{tikzcd}
\]
By horizontally (i.e.\,in the $[1]$-direction) and vertically (i.e.\,in the $[1]'$-direction) composing and pasting these 2-functors and the 2-functors from \cref{lem:units-coherent}, we can build many new 2-functors. 

\begin{ex}\label{ex:pasting-examples}We spell out a few relevant examples of such compositions and pastings. Horizontal compositions and pastings are indicated notationally with a $\cdot$ or $\circ$ symbol, respectively, whereas vertical compositions and pastings are indicated by no symbol or an arrow, respectively.
\begin{enumerate}[leftmargin=0.7cm]
\item\label{equ:triangle-id-functor} Horizontally composing $\eta$ with $\id_{(-)_!}$, and $\id_{(-)_!}$ with $\epsilon$, and then vertically composing gives
\[
	\big((-)_!\xrightarrow{(\epsilon\cdot(-)_!)((-)_!\cdot\eta)}(-)_!\big)\colon ([1]\boxtimes [1]')\times \Cat^{\subset_\BC} \ra \twoCat.
\]
\item\label{equ:bottom-row-2cats} Horizontally composing $\overline{\epsilon}$ with $\id_{(-)_!}$, and $\id_{(-)_*}$ with $\overline{\epsilon}$, and then vertically pasting gives
\[
	\big((-)_!\xrightarrow{(-)_!\cdot\overline{\epsilon}}(-)_!(-)^*(-)_*\xrightarrow{\epsilon\cdot(-)_*}(-)_*\big)\colon ([1]\boxtimes K)\times \Cat^{\subset_\BC}\ra \twoCat,
\]
where $K\coloneqq (0\leftarrow 1\ra2)$ is the span category. Since for $\iota\in \Cat^{\subset_\BC}$ the counit transformation $\overline{\epsilon}\colon\iota^*\iota_*\ra \id$ is an equivalence as $\iota$ is fully faithful, this 2-functor descends to $\smash{([1]\boxtimes K')\times \Cat^{\subset_\BC}}$ where $K'$ is the localisation of $K$ resulting from inverting $0\leftarrow1$.
\item \label{equ:bottom-row-2cats-2}Vertically pasting $\id_{(-)^*}$ to itself and then horizontally pasting with \ref{equ:bottom-row-2cats} gives
\[
	\big((-)_!\circ(-)^*\xleftarrow{((-)_!\cdot\overline{\epsilon})\circ(-)^*}(-)_!(-)^*(-)_*\circ(-)^*\xrightarrow{(\epsilon\cdot(-)_*)\circ(-)^*}(-)_*\circ(-)^*\big),
\]
a 2-functor $([2]\boxtimes K')\times \Cat^{\subset_\BC}\ra \twoCat$.
\item \label{equ:top-row-2cats}Pasting $\epsilon$ and $\overline{\eta}$ vertically gives
\[
	\big((-)_!(-)^*\xsra{\epsilon}\id\xsra{\overline{\eta}}(-)_*(-)^*\big)\colon([1]\boxtimes [2])\times \Cat^{\subset_\BC}\ra \twoCat.
\]
\end{enumerate}

We encourage the reader to think of these pastings diagrammatically in terms of their values at objects $\iota\in \Cat^{\subset_\BC}$, e.g.\,\ref{equ:bottom-row-2cats-2} corresponds to the left diagram in
\begin{equation}\label{equ:massive-pasting}
	\begin{tikzcd}
	\PSh(\cC) \arrow[r,"\iota^*",""{name=TL,inner sep=1pt,below}]  & \PSh(\cC_0) \arrow[r,"\iota_*",""{name=T,inner sep=1pt,below}] & \PSh(\cC) \\
	\PSh(\cC)\uar{\id} \arrow[r,"\iota^*",""{name=ML,inner sep=1pt,below}] \dar[swap]{\id}&\PSh(\cC_0)\uar{\id} \arrow[r,"\iota_!\iota^*\iota_*",""{name=M,inner sep=1pt,below}]\dar[swap]{\id} & \PSh(\cC) \dar{\id}\uar[swap]{\id}  \\
	\PSh(\cC)   \arrow[r,"\iota^*",""{name=BL,inner sep=1pt,below},swap] &\PSh(\cC_0)  \arrow[r,"\iota_!",""{name=B,inner sep=1pt,below},swap] & \PSh(\cC),
	\arrow[shorten <=12pt, shorten >=6pt, Rightarrow, from=M, to=T, "\epsilon\cdot\iota_*",swap]
	\arrow[shorten <=8pt, shorten >=8pt, Rightarrow, from=M, to=B, "\iota_!\cdot\overline{\epsilon}"]
	\arrow[shorten <=12pt, shorten >=6pt, Rightarrow, from=ML, to=TL, "\id",swap]
	\arrow[shorten <=8pt, shorten >=8pt, Rightarrow, from=ML, to=BL, "\id"]
	\end{tikzcd}
	\quad \quad \quad\quad
	\begin{tikzcd}
	a\rar{\alpha}\arrow[d,"\omega",swap]&b\dar{\beta}\arrow[shorten <=10pt,shorten >=10pt,Rightarrow]{dl}{\theta}\\
	c\arrow[d,"\kappa",swap]\arrow[r,swap,"\gamma"]&d\dar{\delta}\arrow[shorten <=10pt,shorten >=10pt,Rightarrow]{dl}{\rho}\\
	e\rar{\epsilon}&f.
	\end{tikzcd}
\end{equation}
\end{ex}

\begin{lem}\label{lem:triangle-identities}Vertical compositions of horizontal compositions of $2$-functors in \cref{lem:units-coherent} with functors in \eqref{equ:further-functors} satisfy the usual triangle identities, e.g.\,\cref{ex:pasting-examples} \ref{equ:triangle-id-functor} is equivalent to $\id_{ (-)_!}$.
\end{lem}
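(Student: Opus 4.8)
The plan is to reduce the statement of \cref{lem:triangle-identities} to the classical triangle identities that hold in any $2$-category, applied uniformly over the indexing category $\Cat^{\subset_\BC}$. Concretely, for a single full subcategory inclusion $\iota\colon\cC_0\hookrightarrow\cC$, the composite $2$-cell $(\epsilon\cdot\iota_!)(\iota_!\cdot\eta)$ is precisely the left-hand side of one of the triangle identities for the adjunction $\iota_!\dashv\iota^*$, hence equal to $\id_{\iota_!}$; the other cases are the remaining triangle identity and the two triangle identities for $\iota^*\dashv\iota_*$. The issue is that \cref{lem:units-coherent} packages these $2$-cells not as single $2$-cells but as $2$-functors out of $([1]\boxtimes[1]')\times\Cat^{\subset_\BC}$, and what we must show is an equivalence of $2$-functors out of such a domain.

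First I would unwind how the $2$-functors of \cref{ex:pasting-examples}\ref{equ:triangle-id-functor} and its analogues were built: each is obtained by vertical and horizontal (compositions and) pastings of the $2$-functors $\epsilon,\eta,\overline\epsilon,\overline\eta$ from \cref{lem:units-coherent} together with the $\id_{(-)_!}$, $\id_{(-)^*}$, $\id_{(-)_*}$ of \eqref{equ:further-functors}, which in turn all arose, via the constructions \ref{const:deform-square} and \ref{const:morphism-square} and the mate equivalence \eqref{equ:mate-equ}, from the strict functors of \cref{lem:kan-extension-upgrade}. Using the naturality of \eqref{equ:mate-equ} in $\cC\in\Cat$ (with respect to precomposition), each such pasting restricts, over the point $\iota\in\Cat^{\subset_\BC}$, to the corresponding pasting of $2$-cells in $\twoCat$ between the functors $\iota_!,\iota^*,\iota_*$; this is exactly the left-hand side of a triangle identity. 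So the target $2$-functor $\id_{(-)_!}$ (respectively $\id_{(-)^*}$) and the pasted $2$-functor have, objectwise over $\Cat^{\subset_\BC}$, equivalent underlying $2$-cells.

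The remaining work is to promote this objectwise equivalence to an equivalence of $2$-functors out of $([1]\boxtimes[1]')\times\Cat^{\subset_\BC}$. Here I would argue as in \cref{sec:mate-calculus}: a $2$-functor out of $([1]\boxtimes[1]')\times\Cat^{\subset_\BC}$ sending all $2$-morphisms in the image of $([1]\boxtimes[1]')\times\{\iota\}$ to equivalences (which both sides do, since a triangle-identity composite of invertible and structural cells is again structural, and all the relevant $2$-cells here become equivalences after restriction—the mate $2$-cells are equivalences by the Beck--Chevalley hypothesis defining $\Cat^{\subset_\BC}$) is the same as a functor out of $([1]\times[1])\times\Cat^{\subset_\BC}$, i.e.\ an element of $\Fun([1]\times[1],\Cat^{\Cat^{\subset_\BC}})$. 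Two such are equivalent if and only if they agree after evaluating at each object and each morphism of $[1]\times[1]$, and for the boundary edges both sides restrict (by \cref{lem:units-coherent} and \eqref{equ:further-functors}) to the very same functors from \cref{lem:kan-extension-upgrade}; the content is then the single $2$-cell comparison, which we settled objectwise in the previous step together with the uniqueness of factorisations through the localisation $[1]\boxtimes([1]'\times\Cat^{\subset_\BC})\ra([1]\boxtimes[1]')\times\Cat^{\subset_\BC}$ used in the proof of \cref{lem:units-coherent}.

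I expect the main obstacle to be purely organisational rather than mathematical: keeping precise track of which localisations and which applications of \ref{const:deform-square}/\ref{const:morphism-square} have been performed, so that the pasted $2$-functor and $\id_{(-)_!}$ literally have the same domain $([1]\boxtimes[1]')\times\Cat^{\subset_\BC}$ before one compares them, and checking that the triangle-identity coherence for an adjunction $L\dashv R$ of functors upgrades to a coherence for the \emph{families} $(-)_!\dashv(-)^*\dashv(-)_*$—which it does because the adjunctions of \cref{lem:kan-extension-upgrade} are, by construction via \eqref{equ:mate-equ}, adjunctions internal to $\Fun(\Cat^{\subset_\BC},\Cat)$, and the triangle identities are witnessed data in any such adjunction. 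Once this bookkeeping is set up, no genuine computation remains.
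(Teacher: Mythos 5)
Your reduction to the classical triangle identities is the right instinct, but the way you propose to globalise it over $\Cat^{\subset_\BC}$ contains a genuine gap. The claim that two $2$-functors out of $([1]\boxtimes [1]')\times \Cat^{\subset_\BC}$ (or, after localising, two functors out of $[1]\times[1]\times\Cat^{\subset_\BC}$) ``are equivalent if and only if they agree after evaluating at each object and each morphism of $[1]\times[1]$'' is false: an equivalence of $\Cat$-valued functors is coherence data, not a property detected objectwise, and two families of $2$-cells whose components at every $\iota\in\Cat^{\subset_\BC}$ are individually equivalent to identities need not be equivalent as families. So the objectwise triangle identity for each adjunction $\iota_!\dashv\iota^*$, which is where all of your mathematical content sits, does not by itself produce the asserted equivalence of $2$-functors $(\epsilon\cdot(-)_!)((-)_!\cdot\eta)\simeq\id_{(-)_!}$. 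Your fallback in the last paragraph---that the triangle identities are ``witnessed data'' because $(-)_!\dashv(-)^*$ is an adjunction internal to $\Fun(\Cat^{\subset_\BC},\Cat)$---would indeed suffice, but it begs the question: one would first have to show that the specific cells $\epsilon,\eta$ manufactured in \cref{lem:units-coherent} (via precomposition with $r_\epsilon, r_\eta$, the mate equivalence \eqref{equ:mate-equ}, localisation, and construction \ref{const:deform-square}) \emph{are} the counit and unit of such an internal adjunction, and that identification is essentially the content of the lemma.

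The paper closes this gap differently, and the device is worth internalising. Rather than comparing the two sides objectwise, it exhibits both the pasted composite and $\id_{(-)_!}$ as images of one and the same coherent object---the mate of the vertical pasting of the two \emph{strictly commuting} squares \eqref{equ:tilt-left-kan}, a $2$-functor $([1]\boxtimes[2])\times\Cat^{\subset_\BC}\to\twoCat$---under two different reduction procedures $([1]\boxtimes[2])\rightsquigarrow([1]\boxtimes[1]')$. These two procedures correspond, in the universal case, to two $2$-functors $[1]\boxtimes[1]'\to[1]\boxtimes[2]$ between Gray tensor products of posets, i.e.\ between strict gaunt $2$-categories, where their agreement is an elementary direct verification; naturality of \eqref{equ:mate-equ} then transports this agreement to the actual family. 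The triangle identity is thereby encoded once and for all in the fact that the vertical composite of the squares \eqref{equ:tilt-left-kan} commutes strictly, and no objectwise-to-coherent promotion is ever needed. If you want to salvage your approach, you must either reproduce an argument of this universal-case type or supply a genuine proof that $\epsilon$ and $\eta$ assemble into an adjunction in the functor $2$-category; the bookkeeping you defer to is precisely where the mathematics lives.
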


\begin{proof}We explain the proof that \cref{ex:pasting-examples} \ref{equ:triangle-id-functor} is equivalent to $\id_{ (-)_!}$; the other triangle identities are similar. The claim will follows from a general observation: given a $2$-functor $[1]\boxtimes [2]\ra \mathbb{C}$ into a $2$-category $\mathbb{C}$, as indicated by the right-hand diagram in \eqref{equ:massive-pasting}, the two procedures to turn it into a $2$-functor of the form $[1]\boxtimes [1]'\ra \mathbb{C}$ indicated by the following schematic agree:
\[
	\begin{tikzcd}
	a\rar{\alpha}\arrow[d,"\omega",swap]&b\dar{\beta}\arrow[shorten <=10pt,shorten >=10pt,Rightarrow]{dl}{\theta}\\
	c\arrow[d,"\kappa",swap]\arrow[r,swap,"\gamma"]&d\dar{\delta}\arrow[shorten <=10pt,shorten >=10pt,Rightarrow]{dl}{\rho}\\
	e\rar{\epsilon}&f.
	\end{tikzcd}
	\overset{\ref{enum:procedure-a}}\rightarrow
	\begin{tikzcd}
	a \arrow[r,"\delta\beta\alpha",""{name=T,inner sep=1pt,below}] \arrow[d,"\id",swap]&d\dar{\id}\\
	a\arrow[d,"\id",swap]\arrow[r,swap,"\delta \gamma\omega",""{name=B,inner sep=1pt,below}] & f\dar{\id}\\
	a \arrow[r,swap,"\epsilon\kappa \omega",""{name=C,inner sep=1pt,below}] &f.
	\arrow[shorten <=8pt, shorten >=8pt, Rightarrow, from=T, to=B, "\delta\theta"]
	\arrow[shorten <=8pt, shorten >=8pt, Rightarrow, from=B, to=C, "\rho\omega"]
	\end{tikzcd} 
	\overset{\ref{enum:procedure-a}}\rightarrow
	\begin{tikzcd}
	a \arrow[r,"\delta\beta\alpha",""{name=T,inner sep=1pt,below}] \arrow[d,"\id",swap]&[20pt] d \dar{\id}\\[10pt]
	a\arrow[r,swap,"\epsilon\kappa\omega",""{name=B,inner sep=1pt,below}] & f
	\arrow[shorten <=6pt, shorten >=6pt, Rightarrow, from=T, to=B, "(\rho\omega)(\delta\theta)" description]
	\end{tikzcd}
	\overset{\ref{enum:procedure-b}}\leftarrow
	\begin{tikzcd}
	a\rar{\alpha}\arrow[d,"\kappa\omega",swap]&[20pt]b\dar{\delta\beta}\arrow[shorten <=7pt,shorten >=7pt,Rightarrow]{dl}[description]{(\rho \omega)(\delta \theta)}\\[10pt]
	e\arrow[r,swap,"\epsilon"]&f
	\end{tikzcd}
	\overset{\ref{enum:procedure-b}}\leftarrow
	\begin{tikzcd}
	a\rar{\alpha}\arrow[d,"\omega",swap]&b\dar{\beta}\arrow[shorten <=10pt,shorten >=10pt,Rightarrow]{dl}{\theta}\\
	c\arrow[d,"\kappa",swap]\arrow[r,swap,"\gamma"]&d\dar{\delta}\arrow[shorten <=10pt,shorten >=10pt,Rightarrow]{dl}{\rho}\\
	e\rar{\epsilon}&f.
	\end{tikzcd}
\]
In words, the two procedures are:
\begin{enumerate}[label=(\alph*),leftmargin=*]
	\item \label{enum:procedure-a} First postcompose the result of applying \ref{const:deform-square} to the upper square with \ref{const:morphism-square} applied to $\delta$, paste the result vertically with the result of applying the construction \ref{const:deform-square} to the lower square and precomposing horizontally with \ref{const:morphism-square} applied to $\omega$, and then vertically compose.
	\item \label{enum:procedure-b}First compose the two squares vertically and then apply \ref{const:deform-square}.
\end{enumerate}
In the universal case $\id\colon [1]\boxtimes [2]\ra [1]\boxtimes [2]=\mathbb{C}$, the two procedures correspond to two $2$-functors $[1]\boxtimes [1]'\ra[1]\boxtimes [2]$. These are $2$-functors in the ordinary sense of classical $2$-categories, so the above diagrammatic schematics is actually a \emph{proof} that they agree.

\medskip

\noindent The two procedures can be applied more generally to 2-functors $([1]\boxtimes [2])\times\cC\ra \mathbb{C}$ for $\cC\in\Cat$. With this in mind, we consider the functor $[2]\times \Cat^{\subset_\BC}\times [1]\ra \Cat$ resulting from vertically pasting the two functors with values the squares \eqref{equ:tilt-left-kan}. Viewing it as an oplax transformation  $([2]\times \Cat^{\subset_\BC})\boxtimes [1]\ra \twoCat$ and applying \eqref{equ:mate-equ} as in the proof of \cref{lem:units-coherent} yields $([1]\boxtimes [2])\times \Cat^{\subset_\BC}\ra \twoCat$ which sends $\iota\in  \Cat^{\subset_\BC}$ to the vertical pasting of the squares \eqref{equ:tilt-left-kan-mate}. As a result of the naturality of \eqref{equ:mate-equ}, applying procedure \ref{enum:procedure-a} and \ref{enum:procedure-b} to it gives \cref{ex:pasting-examples} \ref{equ:triangle-id-functor} and $\id_{ (-)_!}$ respectively, so they agree.
\end{proof}

\begin{lem}\label{lem:compatibility-square}There is a commutative square in $\Cat$
\[
	\begin{tikzcd}
	([1]\times \Cat^{\subset_\BC})\arrow["(0\le 2)\times\id",d,swap]\rar{\ref{equ:top-row-2cats}}& \Fun^\lax([2],\twoCat)\dar{(0\le 2)^*}\\
	([2]\times \Cat^{\subset_\BC})\rar{ \ref{equ:bottom-row-2cats-2}}&\Fun^\lax(K',\twoCat)\simeq \Fun^\lax([1],\twoCat)
	\end{tikzcd}
\]
whose horizontal arrows are induced by the indicated functors from \cref{ex:pasting-examples} and the adjunction \eqref{equ:adjunction-oplax-cat}, and the equivalence by $K'\simeq[1]$ given by inverting $0\leftarrow1$ and composing with $1\ra2$.
\end{lem}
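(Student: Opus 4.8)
The plan is to turn the square into an equivalence of two $2$-functors and then to reproduce, one coherence level higher, the first part of the proof of \cref{thm:recollement}. By the adjunction \eqref{equ:adjunction-oplax-cat} together with the shuffle $2$-functor $(\cC\times\cC')\boxtimes\cD\ra\cC\times(\cC'\boxtimes\cD)$ from \cref{sec:mate-calculus} (used with $\cC=\Cat^{\subset_\BC}$, a $1$-category), both composites around the square are, after precomposition with such shuffles, encoded by $2$-functors $([1]\boxtimes[1]')\times\Cat^{\subset_\BC}\ra\twoCat$---that is, by $\Cat^{\subset_\BC}$-indexed families of lax squares. It therefore suffices to produce an equivalence between the clockwise family $\Phi$, obtained from \cref{ex:pasting-examples}\ref{equ:top-row-2cats} by restricting its $[2]$-coordinate along $(0\le2)\colon[1]'\ra[2]$, and the counterclockwise family $\Psi$, obtained from \cref{ex:pasting-examples}\ref{equ:bottom-row-2cats-2} by restricting its $[2]$-coordinate along $(0\le2)\colon[1]\ra[2]$ and its $K'$-coordinate along the equivalence $[1]'\simeq K'$ in the statement.

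Next I would identify both families explicitly. Unwinding the description of the $2$-functors $\epsilon$ and $\overline{\eta}$ in \cref{lem:units-coherent}, the family $\Phi$ has trivial underlying $1$-categorical data (all objects $\PSh(\cC)$, all $1$-cells identities) and its filling $2$-cell at a full subcategory inclusion $\iota\colon\cC_0\subset\cC$ is the vertical composite $\iota_!\iota^*\xsra{\epsilon}\id\xsra{\overline{\eta}}\iota_*\iota^*$ of the counit of $\iota_!\dashv\iota^*$ and the unit of $\iota^*\dashv\iota_*$, functorially in $\Cat^{\subset_\BC}$ via left Kan extensions. Using the explicit description of \cref{ex:pasting-examples}\ref{equ:bottom-row-2cats-2} through the left diagram in \eqref{equ:massive-pasting} and the identification $K'\simeq[1]'$ (which inverts the $2$-cell $\iota_!\overline{\epsilon}\iota^*$, invertible because $\iota$ is fully faithful), the family $\Psi$ also has trivial underlying $1$-categorical data and its filling $2$-cell at $\iota$ is the morphism $\iota_!\iota^*\ra\iota_*\iota^*$ obtained from the span $\iota_!\iota^*\xleftarrow{\iota_!\overline{\epsilon}\iota^*}\iota_!\iota^*\iota_*\iota^*\xsra{\epsilon\iota_*\iota^*}\iota_*\iota^*$. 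So $\Phi\simeq\Psi$ is exactly the assertion---now coherent in $\Cat^{\subset_\BC}$ and compatible with left Kan extensions---that these two $2$-cells agree, which is what the first part of the proof of \cref{thm:recollement} establishes for each fixed $\iota$.

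To promote that argument I would proceed in two steps along its structure. By the coherent triangle identities of \cref{lem:triangle-identities}, the horizontal composite of $\overline{\eta}$ (from \cref{lem:units-coherent}) with the appropriate identity $2$-functors from \eqref{equ:further-functors} supplies a coherent two-sided inverse of the family $\iota\mapsto\iota_!\overline{\epsilon}\iota^*$, namely $\iota\mapsto\iota_!\iota^*\overline{\eta}$; this reduces the claim to an equivalence between the families $\iota\mapsto(\epsilon\iota_*\iota^*)\circ(\iota_!\iota^*\overline{\eta})$ and $\iota\mapsto\overline{\eta}\circ\epsilon$. This remaining equivalence is the coherent form of the naturality of $\epsilon$ used in \cref{thm:recollement}, and I would realise it as an instance of the ``two procedures agree'' principle from the proof of \cref{lem:triangle-identities}: one applies it to a $\Cat^{\subset_\BC}$-family of $[1]\boxtimes[2]$-diagrams built from $\epsilon$, $\overline{\eta}$ and identity $2$-functors so that procedure \ref{enum:procedure-b} (first compose, then deform) returns $\overline{\eta}\circ\epsilon$ while procedure \ref{enum:procedure-a} (first deform, then compose) returns $(\epsilon\iota_*\iota^*)\circ(\iota_!\iota^*\overline{\eta})$. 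As in \cref{lem:units-coherent} and \cref{lem:triangle-identities}, this family is itself produced by assembling a strict functor out of the $(-)_!$, $(-)^*$, $(-)_*$ of \cref{lem:kan-extension-upgrade}, viewing it as an oplax transformation, applying the mate equivalence \eqref{equ:mate-equ}, and deforming. Chaining the two steps gives $\Phi\simeq\Psi$, and hence the desired commutative square in $\Cat$.

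The step I expect to be the main obstacle is not conceptual but the bookkeeping needed to make the ``two procedures agree'' principle literally applicable here: aligning the orientation conventions for $[1]\boxtimes[1]'$ and $[1]\boxtimes[2]$, the various whiskerings, and the localisation $K'\simeq[1]'$, and checking that all intermediate $2$-functors descend along the localisations involved---which is automatic since $\Cat^{\subset_\BC}$ is a $1$-category and $\overline{\epsilon}\colon\iota^*\iota_*\ra\id$ is invertible. Once every gadget is expressed in terms of honest $2$-functors, the remaining content is a diagram chase in classical $2$-categories, which promotes to the $\infty$-categorical statement for free, exactly as in the proof of \cref{lem:triangle-identities}.
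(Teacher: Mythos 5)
Your proposal is correct and follows essentially the same route as the paper: reduce the square to an equivalence of $2$-functors $([1]\boxtimes[1]')\times\Cat^{\subset_\BC}\ra\twoCat$ between $\overline{\eta}\epsilon$ and the zig-zag $(\epsilon\iota_*\iota^*)\circ(\iota_!\overline{\epsilon}\iota^*)^{-1}$, invert $\iota_!\overline{\epsilon}\iota^*$ coherently via \cref{lem:triangle-identities}, and conclude by the coherent interchange argument from the first part of the proof of \cref{thm:recollement}. The paper's proof is just a terser version of exactly this.
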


\begin{proof}
The claim follows once we provide an equivalence of $2$-functors $[1]\boxtimes[1]'\times  \Cat^{\subset_\BC}\ra \twoCat$ 
\[
	\big((-)_!(-)^*\xrightarrow{\overline{\eta}\epsilon}(-)_*(-)^*\big)\simeq \big((-)_!(-)^*\xrightarrow{((-)_!\cdot\overline{\epsilon}\cdot(-)^*)^{-1}}(-)_!(-)^*(-)_*(-)^*\xrightarrow{\epsilon(-)_*(-)^*}(-)_*(-)^*\big).
\]
Using \cref{lem:triangle-identities}, this follows from the argument at the beginning of the proof of \cref{thm:recollement}.\end{proof}

\begin{proof}[Proof of \cref{thm:recollement-nat}]Consider the following diagram in $\Cat$
\[
	\begin{tikzcd}[row sep=0.2cm]
	[1]\times \Cat^{\subset_\BC}\arrow["(0\le 2)\times\id",dd,swap]\rar{\ref{equ:top-row-2cats}}& \Fun^\lax([2],\twoCat)\arrow[dd,"(0\le 2)^*"] \arrow[dr,"\laxlim_{[2]}",bend left=10,""{name=LL,inner sep=1pt,below}]&\\
	&&\Cat.\\
	{[2]\times \Cat^{\subset_\BC}}\rar{\ref{equ:bottom-row-2cats-2}}&\Fun^\lax(K',\twoCat)\simeq \Fun^\lax([1],\twoCat)\arrow[ur,"\laxlim_{[1]}",swap,bend right=10]\arrow[shorten <=8pt, shorten >=8pt, Rightarrow, from=LL]&\ 
	\end{tikzcd}
\]
The left square is the commutative square from  \cref{lem:compatibility-square}. The two bent functors are instances of the lax limit  discussed in \cref{sec:laxlim}. The natural transformation in the triangle is given by the naturality of lax limits discussed in that section. From the outer part of the diagram together with the transformation, we get a functor $([2]\cup_{[1]}([1]\times [1]'))\times \Cat^{\subset_\BC}\ra \Cat$ where the pushout is formed with respect to $(0\le 2)\colon [1]\ra[2]$ and $[1]=[1]\times\{1\}\subset [1]\times [1]'$. On $\iota\in \Cat^{\subset_\BC}$, this functor evaluates to the left diagram in (the outer square is the restriction to $[1] \times [1]'$, and the extension to the pushout gives the decomposition of the bottom row as a composition)
\begin{equation}\label{equ:glued-diagram-naturality}
	\begin{tikzcd}
	\PSh(\cC)^{[2]}\arrow[rr,"\iota_!\iota^*\ra\id\ra\iota_*\iota^*"]\dar{(0\le 2)^*}&&\PSh(\cC)^{[2]}\dar{(0\le 2)^*}\\
	\PSh(\cC)^{[1]}\arrow[r,"\iota^*=\iota^*"]&\PSh(\cC_0)^{[1]}\arrow[r,"\iota_!\ra \iota_*"]&\PSh(\cC)^{[1]}
	\end{tikzcd}\quad \begin{tikzcd}[column sep=1.5cm]
	\PSh(\cC)^{[2]}\arrow[r,"\iota_!\iota^*\ra\id\ra\iota_*\iota^*"]\arrow["\iota^*\circ 	(-)\circ (0\le 2)",d]&\PSh(\cC)^{[2]}\dar{(0\le 2)^*}\\
	\PSh(\cC_0)^{[1]}\arrow[r,"\iota_!\ra \iota_*"]&\PSh(\cC)^{[1]}.
	\end{tikzcd}
\end{equation}
In particular, composing the left vertical with the left bottom arrow we get a functor $[1]\times[1]'\times \Cat^{\subset_\BC}\ra \Cat$, which is on $\iota\in \Cat^{\subset_\BC}$ given by the right diagram in \eqref{equ:glued-diagram-naturality}. Horizontally precomposing it with the functor $[1]\times[1]'\times \Cat^{\subset_\BC}\ra \Cat$ that sends $\iota$ to the left square in
\begin{equation}\label{equ:final-squares-naturality}
	\begin{tikzcd}
	\PSh(\cC)=\PSh(\cC)^{[0]}\rar{(0\le 0\le 0)^*}\dar{\iota^*}&[10pt] \PSh(\cC)^{[2]}\arrow["\iota^*\circ (-)\circ (0\le2)",d]\\
	\PSh(\cC_0)=\PSh(\cC)^{[0]}\rar{(0\le 0\le 0)^*}&\PSh(\cC_0)^{[1]}
	\end{tikzcd}\quad\quad
	\begin{tikzcd}
	\PSh(\cC)^{[2]}\dar{(0\le 2)^*}\rar{\nu^*}&\PSh(\cC\backslash \cC_0)^{[2]}\dar{(0\le 2)^*}\\
	\PSh(\cC)^{[1]}\rar{\nu^*}&\PSh(\cC\backslash \cC_0)^{[1]}.
	\end{tikzcd}
\end{equation}
yields a functor $[1]\times[1]' \times \Cat^{\subset_\BC}\ra \Cat$ that sends $\iota\in \Cat^{\subset_\BC}$ to \eqref{eqn:recollement-square-nat}. By construction, the functoriality in the $\Cat^{\subset_\BC}$ variable is induced by taking left Kan extension, so this functor satisfies the first part of the claim. To construct the functor in the second claim cone restricts the functor from the first claim along $\Cat^{\subset_\BC,\dec}\hookrightarrow\Cat^{\subset_\BC}$ and horizontally postcomposes it with a functor $[1]\times[1]'\times \Cat^{\subset_\BC,\dec}\ra \Cat$ that sends $\iota$ to the right diagram in \eqref{equ:final-squares-naturality}. To construct the  latter, one first constructs a functor $[1]\times \Cat^{\subset_\BC,\dec}\ra \Cat$ that sends $(\cC_0\subset\cC)\in  \Cat^{\subset_\BC,\dec}$ to $\nu^*\colon \PSh(\cC)\ra\PSh(\cC\backslash \cC_0)$ as in the proof of \cref{lem:kan-extension-upgrade}; this uses the additional assumption on the Beck--Chevalley morphisms involving $\nu^*$ in the definition of $ \Cat^{\subset_\BC,\dec}$.
\end{proof}

\section{A calculus for right-modules over an operad}\label{sec:operadic-framework}In this section we develop the operadic framework outlined in the introduction and in particular prove \cref{bigthm:calc}. Throughout this section, we fix a unital operad $\cO$ and write as in the introduction \[\RMod_{k}(\cO)\coloneqq \PSh(\Env(\cO)_{\le k}),\]  for $1 \le k\le \infty$, where $\Env(\cO)_{\leq k}\subset \Env(\cO)$ is the full subcategory given as the preimage of the full subcategory $\Fin_{\leq k}\subset\Fin$ of finite sets of cardinality $\le k$ under $\pi_\cO\colon \Env(\cO)\ra\Fin$. As mentioned in the introduction, the inclusions $ \Env(\cO)_{\le 1}\subset  \Env(\cO)_{\le 2}\subset  \cdots$ induce a tower of categories
\begin{equation}
	\hspace{-0.3cm}\label{equation:tower-right-modules}\RMod_\bullet(\cO)=\Big(\RMod(\cO)=\RMod_\infty(\cO)\ra \cdots\ra \RMod_2(\cO)\ra\RMod_1(\cO)\Big)\in\Tow(\Cat).
\end{equation}
by restriction. We call $\RMod_k(\cO)$ the category of \emph{$k$-truncated right-modules over $\cO$} and the restriction functors in the tower \eqref{equation:tower-right-modules} \emph{truncations}. In this section we
\begin{enumerate}[leftmargin=*]
	\item[\ref{sec:sym-mon-tower}] construct a symmetric monoidal refinement of the tower \eqref{equation:tower-right-modules},
	\item[\ref{sec:naturality}] explain how \eqref{equation:tower-right-modules} and its symmetric monoidal refinement are functorial in $\cO$,
	\item[\ref{sec:unital-right mod}] discuss a \emph{unital} variant $\RMod^{\un}_\bullet(\cO)$ of $\RMod_\bullet(\cO)$;
	\item[\ref{sec:layers}] describe the \emph{first stage} $\RMod_1(\cO)$ explicitly and determine the \emph{layers} of the tower \eqref{equation:tower-right-modules} by expressing $\RMod_k(\cO)\ra \RMod_{k-1}(\cO)$ as pulled back from a simpler functor,
	\item[\ref{sec:smoothing-theory}] prove a \emph{smoothing theory} result: for certain maps of operads $\cO\ra\cU$ the map of towers $\RMod_\bullet(\cO)\ra \RMod_\bullet(\cU)$ is pulled back along the map between the first stages,
	\item[\ref{sec:morita-extension}] extend all previous points to the level of Morita symmetric monoidal double categories, and 
	\item[\ref{sec:proof-bigthm}] deduce \cref{bigthm:calc} from the above.
\end{enumerate}

\smallskip

\begin{center} \emph{During this section, we will freely use the concepts and notation introduced in \cref{sec:categorical-preliminaries}.}\end{center} 

\subsection{Symmetric monoidal refinement}\label{sec:sym-mon-tower}
The symmetric monoidal structure on $\Env(\cO)$ induces a Day convolution symmetric monoidal structure $\otimes$ on $\RMod(\cO)=\PSh(\Env(\cO))$; see \cref{sec:day}. The colimit formula for it from \eqref{equ:day-conv} can be simplified:

\begin{lem}\label{lem:day-con-env}For $X,Y\in \RMod(\cO)$ and $(c_i)_{i\in S}\in\Env(\cO)$, there is a natural equivalence 
\[
	\textstyle{(X\otimes Y)((c_i)_{i\in S})\simeq \bigsqcup_{S=S'\sqcup S''}X((c_i)_{i\in S'})\times Y((c_i)_{i\in S''})}.
\]
where the disjoint union runs over ordered partitions of the finite set $S$ into two parts.
\end{lem}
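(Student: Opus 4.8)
The plan is to specialise the general colimit formula \eqref{equ:day-conv} for Day convolution to the symmetric monoidal category $\Env(\cO)$ and simplify the indexing category of the colimit. Recall that by \eqref{equ:day-conv} we have $(X\otimes Y)((c_i)_{i\in S})\simeq \colim F$, where $F$ is the functor on $(\Env(\cO)\times\Env(\cO))_{(c_i)_{i\in S}/}$ sending an object $((c_i)_{i\in S}\ra A\otimes B)$ to $X(A)\times Y(B)$; here $A\otimes B$ denotes disjoint union of collections. The first step is to understand this slice category: an object is a collection $(c_i)_{i\in S}$ together with a morphism in $\Env(\cO)$ to a disjoint union $A\sqcup B$, and by the description \eqref{equ:mapping-space-env} of mapping spaces in $\Env(\cO)$, such a morphism consists of a map of finite sets $\varphi\colon S\ra T_A\sqcup T_B$ (where $A=(a_t)_{t\in T_A}$, $B=(b_t)_{t\in T_B}$) together with multi-operations.

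\textbf{Key steps.} First I would isolate the full subcategory $\cJ\subset (\Env(\cO)\times\Env(\cO))_{(c_i)_{i\in S}/}$ spanned by those objects for which the structure map $(c_i)_{i\in S}\ra A\sqcup B$ is an \emph{inclusion} in the sense of \cref{cor:unital-nonsurj}: that is, $A=(c_i)_{i\in S'}$ and $B=(c_i)_{i\in S''}$ for an ordered partition $S=S'\sqcup S''$, with the structure map the canonical one. This subcategory is equivalent to the discrete set of ordered partitions of $S$ into two parts, since by \cref{cor:unital-nonsurj} there are no nontrivial morphisms between distinct inclusions of this form (a morphism in the slice category over the map to $A\sqcup B$ must have underlying map of finite sets landing in the relevant subset, forcing it to be an identity). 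The value of $F$ on the partition $S=S'\sqcup S''$ is exactly $X((c_i)_{i\in S'})\times Y((c_i)_{i\in S''})$, so it remains to show that the inclusion $\cJ\hookrightarrow(\Env(\cO)\times\Env(\cO))_{(c_i)_{i\in S}/}$ is cofinal; then $\colim F\simeq \colim (F|_\cJ)\simeq \bigsqcup_{S=S'\sqcup S''}X((c_i)_{i\in S'})\times Y((c_i)_{i\in S''})$.

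\textbf{Cofinality.} To prove cofinality I would use Quillen's Theorem A (Joyal's $\infty$-categorical version): fix an object $\xi=((c_i)_{i\in S}\xra{f} A\sqcup B)$ of the slice category and show the comma category $\cJ_{\xi/}$ is weakly contractible. Given such $f$ with underlying set map $\varphi\colon S\ra T_A\sqcup T_B$, restricting $\varphi$ along $T_A, T_B$ partitions $S$ into $S'=\varphi^{-1}(T_A)$ and $S''=\varphi^{-1}(T_B)$, and unitality of $\cO$ (via \cref{lem:unital-cartesian-injections} and \cref{cor:unital-nonsurj}) gives a canonical factorisation of $f$ through the inclusion $(c_i)_{i\in S'}\sqcup(c_i)_{i\in S''}$, realised by the cartesian lifts for the injections $S'\hookrightarrow S \leftarrow S''$ composed with the active/surjective part of $\varphi$ and its multi-operations. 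This factorisation is initial in $\cJ_{\xi/}$: any other object of $\cJ_{\xi/}$ is a partition together with a compatible morphism, and the universal property of the cartesian lifts (the pullback square \eqref{eqn:unital-cartesian}) shows the factorisation map is unique, so $\cJ_{\xi/}$ has an initial object and is contractible. I expect the main obstacle to be bookkeeping in this cofinality argument: one must carefully track how a general morphism $f$ in $\Env(\cO)$ decomposes into its ``injective/cartesian'' and ``active'' parts and verify that the induced choice of partition and the factorisation assemble into a genuine functor giving an initial object, rather than merely doing this on objects — this is precisely where unitality of $\cO$ enters, through \cref{cor:unital-nonsurj}. Naturality of the resulting equivalence in $X$, $Y$ and $(c_i)_{i\in S}$ is then automatic since every step is natural.
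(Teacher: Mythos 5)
Your proposal is correct and is essentially the paper's argument in different packaging: both proofs use the mapping-space formula \eqref{equ:mapping-space-env} to split the Day-convolution indexing category according to the partition $S=\varphi^{-1}(T_A)\sqcup\varphi^{-1}(T_B)$ and then reduce to the canonical objects $((c_i)_{i\in S}\to(c_i)_{i\in S'}\sqcup(c_i)_{i\in S''})$, which the paper exhibits as terminal objects of the summands and you exhibit as a cofinal discrete subcategory — the same computation. One small remark: your appeal to unitality via \cref{lem:unital-cartesian-injections} and \cref{cor:unital-nonsurj} is not actually needed here, since the relevant factorisation of $f$ through $(c_i)_{i\in S'}\sqcup(c_i)_{i\in S''}=(c_i)_{i\in S}$ is immediate from \eqref{equ:mapping-space-env} alone (the paper's proof uses no unitality), though invoking it is harmless under the standing assumption.
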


\begin{proof}
The indexing category $(\Env(\cO)\times\Env(\cO))_{(c_i)_{i\in S}/}$ for the colimit formula \eqref{equ:day-conv} for $(X\otimes Y)((c_i)_{i\in S})$ maps via the symmetric monoidal functor $\pi_\cO \colon \Env(\cO) \to \Fin$ to the corresponding category $(\Fin\times\Fin)_{S/}$ for finite sets. There is an equivalence $(\Fin\times\Fin)_{S/}\simeq \sqcup_{S=S'\sqcup S''}\Fin_{S'/}\times \Fin_{S''/}$ induced by decomposing the source of a map $S\ra T\sqcup T'$ into the preimages of $T$ and $T'$. Using \eqref{equ:mapping-space-env}, this lifts to a decomposition $(\Env(\cO)\times\Env(\cO))_{(c_i)_{i\in S}/}\simeq  \sqcup_{S=S'\sqcup S''}\Env(\cO)_{/(c_i)_{i\in S'}}\times\Env(\cO)_{/(c_i)_{i\in S''}}$ which in turn induces a decomposition of the colimit 
\[
	\textstyle{(X\otimes Y)((c_i)_{i\in S})\simeq \bigsqcup_{S=S'\sqcup S''} \colim_{\Env(\cO)_{/(c_i)_{i\in S'}}\times\Env(\cO)_{/(c_i)_{i\in S''}}}(X(-) \times Y(-)).}
\]
Since $\smash{(\id_{(c_i)_{i\in S'}}, \id_{(c_i)_{i\in S''}})\in {\Env(\cO)_{/(c_i)_{i\in S'}}\times\Env(\cO)_{/(c_i)_{i\in S''}}}}$ is a final object, the $(S=S'\sqcup S'')$-summand in the decomposition is given by $X((c_i)_{i\in S'})\times Y((c_i)_{i\in S''})$, so the claim follows.\end{proof}

As part of the following proposition, we show that the Day convolution structure on $\RMod(\cO)$ induces compatible symmetric monoidal structures on $\RMod_k(\cO)$ for all $k$.
\begin{prop}\label{prop:tower}\,
\begin{enumerate}
	\item\label{enum:tower-convergence} The tower $\RMod_\bullet(\cO)$ in $\Cat$ converges.
	\item\label{enum:tower-monoidal-tower} The tower $\RMod_\bullet(\cO)$ can be lifted to a converging tower in $\CMon(\Cat)$.
	\item \label{enum:day-truncated}Objectwise the symmetric monoidal structure on $\RMod_k(\cO)$ from \ref{enum:tower-monoidal-tower} is for all $1\le k\le \infty$ given as in \cref{lem:day-con-env}. It makes $\RMod_k(\cO)$ a presentable symmetric monoidal category.
	\item\label{enum:day-grtp} $\RMod_\bullet(\cO)\in \Tow(\CMon(\Cat))$ is contained in the subcategory $\Tow(\CMon(\Cat)^\cgr)$.
\end{enumerate}
\end{prop}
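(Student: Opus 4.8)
\textbf{Proof proposal for \cref{prop:tower}.}

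The plan is to establish the four parts in order, each reducing to statements about the restriction functors along the inclusions $\Env(\cO)_{\le k}\subset \Env(\cO)_{\le k+1}$ and about the Day convolution structure. For \ref{enum:tower-convergence}, I would note that $\Env(\cO)\simeq \colim_k \Env(\cO)_{\le k}$ as a filtered colimit of full subcategory inclusions, since every object of $\Env(\cO)$ is a collection $(c_i)_{i\in S}$ with $|S|$ finite; hence $\RMod(\cO)=\PSh(\Env(\cO))=\lim_k \PSh(\Env(\cO)_{\le k})=\lim_k\RMod_k(\cO)$, because $\PSh(-)=\Fun((-)^{\op},\cS)$ sends colimits of categories to limits. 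This shows the tower $\RMod_\bullet(\cO)$ is a limit cone, i.e.\,converges.

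For \ref{enum:tower-monoidal-tower} and \ref{enum:day-truncated}, I would invoke \cref{lem:localise-day} with $\cC=\Env(\cO)$ and $\cC_0=\Env(\cO)_{\le k}$. The hypothesis to check is that $(\Env(\cO)_{\le k}\times \Env(\cO))_{c_0/}\to (\Env(\cO)\times \Env(\cO))_{c_0/}$ is final for every $c_0\in\Env(\cO)_{\le k}$; using the decomposition of $(\Env(\cO)\times\Env(\cO))_{(c_i)_{i\in S}/}$ into $\sqcup_{S=S'\sqcup S''}\Env(\cO)_{/(c_i)_{i\in S'}}\times\Env(\cO)_{/(c_i)_{i\in S''}}$ as in the proof of \cref{lem:day-con-env}, this reduces to observing that for a pair $(c',c'')$ lying over sets $S',S''$ with $S'\sqcup S''=S$ and $|S|\le k$, necessarily $|S'|,|S''|\le k$, so $(\Env(\cO)_{\le k}\times\Env(\cO))_{c_0/}$ contains the terminal object $(\id,\id)$ of each summand already; finality follows immediately from the existence of a terminal object in the relevant slice, together with \cref{cor:unital-nonsurj} which identifies $\Env(\cO)_{/(c_i)_{i\in S'}}$ inside $\Env(\cO)$ (the inclusion on objects over $\le k$ sets is cofinal). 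Then \cref{lem:localise-day}\,\ref{enum:localise-day-i}--\ref{enum:localise-day-iii} gives a presentable symmetric monoidal structure on each $\RMod_k(\cO)$, objectwise of the form in \cref{lem:localise-day}\,\ref{enum:localise-day-ii}, which unwinds via \cref{lem:day-con-env} to the formula $\bigsqcup_{S=S'\sqcup S''}X((c_i)_{i\in S'})\times Y((c_i)_{i\in S''})$ restricted to $|S|\le k$ (the right Kan extension $\iota_*$ does not affect evaluation on objects of $\Env(\cO)_{\le k}$). To assemble these into a tower in $\CMon(\Cat)$, I would use \cref{lem:localise-day}\,\ref{enum:localise-day-v}: for $j\le k$ the restriction $\RMod_k(\cO)\to\RMod_j(\cO)$ is symmetric monoidal, being the oplax-then-shown-strong refinement of restriction along $\Env(\cO)_{\le j}\subset\Env(\cO)_{\le k}$; coherence of these refinements as $j\le k$ vary should follow from the same finality input applied to all the inclusions at once, or can be phrased by choosing a single model of the restriction tower and applying \eqref{equ:left-kan-is-oplax}/\eqref{equ:mates} functorially over the poset $(\bfN_{>0}\cup\{\infty\},\le)$. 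The convergence in $\CMon(\Cat)$ then follows from \ref{enum:tower-convergence} because the forgetful functor $\CMon(\Cat)\to\Cat$ detects limits.

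For \ref{enum:day-grtp}, I need that each $\RMod_k(\cO)$ is compatible with geometric realisations and each restriction functor preserves them. This is immediate from \ref{enum:day-truncated}: a presentable symmetric monoidal category has a monoidal product preserving all colimits in each variable, in particular geometric realisations, so $\RMod_k(\cO)\in\CMon(\Cat)^\cgr$; and the restriction functors preserve all colimits (they are left adjoints, being left Kan extension along a functor, or simply because colimits of presheaves are computed objectwise and restriction is evaluation-wise), hence in particular geometric realisations, so they are morphisms in $\CMon(\Cat)^\cgr$. I expect the only genuine obstacle to be the bookkeeping in \ref{enum:tower-monoidal-tower}: verifying that the symmetric monoidal refinements of the individual restriction functors are coherently compatible along the whole tower, rather than just pairwise. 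The cleanest route is probably to produce the symmetric monoidal tower in one stroke by applying the functor \eqref{equ:left-kan-is-oplax} to the tower of (oplax, in fact strong) symmetric monoidal inclusions $\Env(\cO)_{\le 1}\subset\Env(\cO)_{\le 2}\subset\cdots\subset\Env(\cO)$ — here each inclusion is symmetric monoidal because $\Env(\cO)_{\le\infty}$ has disjoint union as monoidal product and the subcategories are \emph{not} closed under it, so one must instead feed in the opposite inclusions and use the localisation picture of \cref{lem:localise-day}\,\ref{enum:localise-day-v}, which is exactly what makes the restriction functors (not the inclusions) symmetric monoidal — and then check that the resulting diagram in $\CMon(\Cat)$ has the objectwise description claimed, which is \cref{lem:localise-day}\,\ref{enum:localise-day-ii} combined with \cref{lem:day-con-env}.
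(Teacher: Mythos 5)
Your proposal is correct and follows essentially the same route as the paper: convergence via $\Env(\cO)=\colim_k\Env(\cO)_{\le k}$ and the limit-preservation of $\PSh(-)$, monoidality via the finality hypothesis of \cref{lem:localise-day} checked through the summand-wise terminal objects $(\id,\id)$ as in \cref{lem:day-con-env}, the objectwise formula and presentability from \cref{lem:localise-day}\,\ref{enum:localise-day-ii}--\ref{enum:localise-day-iii}, and compatibility with geometric realisations from the restrictions being left adjoints. Your closing worry about coherence is resolved exactly as you suggest: the paper forms the tower of full suboperads $\RMod_k(\cO)^\otimes\subset\RMod_{k+1}(\cO)^\otimes$ inside the fixed Day convolution operad (a strictly coherent tower in $\CMon^{\lax,R}(\Cat)^{\op}$) and applies the mate equivalence \eqref{equ:mates} to the whole tower at once, with \cref{lem:localise-day}\,\ref{enum:localise-day-v} upgrading oplax to strong.
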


\begin{proof}
The union of full subcategories $\Env(\cO)=\bigcup_{i\ge1}\Env(\cO)_{\le k}$ expresses $\Env(\cO)$ as a filtered colimit in $\Cat$ (e.g.\,by \cite[03DE]{LurieKerodon}) which implies Item \ref{enum:tower-convergence} since $\PSh(-)\colon \Cat^{\op}\ra \Cat$ preserves limits (since it is right adjoint to $\Fun(-,\cS)^{\op}$). To prove Item \ref{enum:tower-monoidal-tower}, we consider the operad $\RMod(\cO)^{\otimes}\ra \Fin_\ast$ associated to the Day convolution structure on $\RMod(\cO)$ and the sequence of full subcategories \begin{equation}\label{equ:lax-tower}
	\cdots\subset \RMod_k(\cO)^{\otimes}\subset \RMod_{k+1}(\cO)^{\otimes}\subset \cdots \subset\RMod(\cO)^{\otimes}
\end{equation}
defined as in \cref{sec:localise-day-convolution}. By \cref{lem:localise-day} \ref{enum:localise-day-i} the restriction $\RMod_k(\cO)^{\otimes}\ra \Fin_\ast$ defines a symmetric monoidal structure on $\RMod_k(\cO)$ if the inclusions $(\Env(\cO)_{\le k}\times \Env(\cO))_{(c_s)_{s\in S}/}\ra (\Env(\cO)\times \Env(\cO))_{(c_s)_{s\in S}/}$ are final for all objects $(c_s)_{s\in S}\in \Env(\cO)_{\le k}$. Arguing as in the proof of \cref{lem:day-con-env}, this inclusion is equivalent to the inclusion given by $\bigsqcup_{S=S'\sqcup S''}(\Env(\cO)_{\le k})_{(c_s)_{s\in S'}/}\times\Env(\cO)_{(c_s)_{s\in S''}/} \subset \bigsqcup_{S=S'\sqcup S''} \Env(\cO)_{(c_s)_{s\in S'}/}\times\Env(\cO)_{(c_s)_{s\in S''}/}$. The latter is final if the individual inclusions in the disjoint union are final, and this is the case since each of them preserves the final object $(\id,\id)$ (note that $|S'|\le |S|\le k$). In view of \cref{lem:localise-day} \ref{enum:localise-day-iv} this lifts the tower $(\cdots \ra \RMod_k(\cO)\ra\RMod_{k+1}(\cO)\ra\cdots\ra \RMod(\cO))$ in $\Cat^{\op}$ induced by right Kan extensions along the inclusions to a tower in $\CMon^\lax(\Cat)^{\op}$. All functors in this tower admit left adjoints given by restriction, so this is actually a tower in $\CMon^{\lax,R}(\Cat)^{\op}$. Applying \eqref{equ:mates} we obtain a lift of \eqref{equation:tower-right-modules} to a tower in $\CMon^\oplax(\Cat)$. By \cref{lem:localise-day} \ref{enum:localise-day-v} this is even a tower in $\CMon(\Cat)$, so serves as a lift as promised in Item \ref{enum:tower-monoidal-tower}. The first part of Item \ref{enum:day-truncated} follows from Lemmas \ref{lem:localise-day} \ref{enum:localise-day-ii} and  \cref{lem:day-con-env} using that the counit $\iota^*\iota_*\ra \id$ is an equivalence since the inclusion $\iota\colon\Env(\cO)_{\le k}\hookrightarrow \Env(\cO)$ is fully faithful. The second part of Item \ref{enum:day-truncated} follows from \cref{lem:localise-day} \ref{enum:localise-day-iii} (or the formula in \cref{lem:day-con-env}). In view of the definition of the subcategory $\CMon(\Cat)^\cgr\subset \CMon(\Cat)$ in \cref{sec:cgr-subcat}, Item \ref{enum:day-grtp} is equivalent to showing that the symmetric monoidal categories $\RMod_k(\cO)$ are compatible with geometric realisations for all $k$ and that for all $\ell\le k$ the restriction functor $\RMod_k(\cO)\ra\RMod_{\ell}(\cO)$ preserve geometric realisations. The former holds since $\RMod_k(\cO)$ is even presentable symmetric monoidal by \ref{enum:day-truncated}, and the latter since the restriction functor has a right adjoint by right Kan extension.
\end{proof}

\subsection{Naturality of the tower}\label{sec:naturality}The tower $\RMod_\bullet(\cO)$ can be made functorial in the operad $\cO$:

\begin{thm}\label{thm:naturality}The construction $\cO\mapsto \RMod_\bullet(\cO)$ from \cref{prop:tower} extends to a functor 
\[
	\RMod_\bullet(-)\colon \Opd^\un \lra \Tow(\CMon(\Cat)^{\cgr})
\] 
which is on morphisms induced by taking left Kan extensions.
\end{thm}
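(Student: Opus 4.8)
The goal is to upgrade the assignment $\cO \mapsto \RMod_\bullet(\cO)$ to a functor out of $\Opd^\un$ landing in $\Tow(\CMon(\Cat)^{\cgr})$ whose value on morphisms is given by left Kan extension. The plan is to build this in stages, paralleling the construction in \cref{prop:tower}, while upgrading everything to be natural in $\cO$. First I would recall that $\Env(-)\colon \Opd \to \CMon(\Cat)$ is a functor (\cref{sec:env}), and that the assignment $k \mapsto \Env(\cO)_{\le k}$ is natural in $\cO$: a map of unital operads $\varphi\colon \cO \to \cP$ induces $\Env(\varphi)\colon \Env(\cO)\to\Env(\cP)$ commuting with the functors $\pi_\cO, \pi_\cP$ to $\Fin$, hence restricts to $\Env(\cO)_{\le k}\to \Env(\cP)_{\le k}$ for every $k$. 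This produces a functor $\Env(-)_{\le \bullet}\colon \Opd^\un \to \Tow(\CMon(\Cat))$ (note each $\Env(\cO)_{\le k}$ is symmetric monoidal as a consequence of \cref{prop:tower}\ref{enum:day-truncated} together with its identification as $\Env$ of the $k$-truncated operad, or one argues directly; in any case the relevant point is that the inclusions $\Env(\cO)_{\le k}\subset \Env(\cO)_{\le k+1}$ and the maps $\Env(\varphi)$ are symmetric monoidal).

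Next I would apply the functor $\PSh(-)\colon \CMon^\oplax(\Cat) \to \CMon^\oplax(\Cat)$ from \eqref{equ:left-kan-is-oplax}, which on morphisms is given by left Kan extension and which by \cref{lem:left-kan-monoidal} restricts to an endofunctor of $\CMon(\Cat)$. Since symmetric monoidal functors are in particular oplax symmetric monoidal, postcomposing $\Env(-)_{\le\bullet}$ with this functor (applied levelwise in the tower) yields a functor $\Opd^\un \to \Tow(\CMon(\Cat))$ sending $\cO$ to the tower $(\cdots \to \PSh(\Env(\cO)_{\le 2}) \to \PSh(\Env(\cO)_{\le 1}))$ with transition maps given by left Kan extension, and sending $\varphi\colon\cO\to\cP$ to the tower of left Kan extension functors $\Env(\varphi)_{!}$. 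The subtle point here is that the symmetric monoidal structure produced by \eqref{equ:left-kan-is-oplax} on $\PSh(\Env(\cO)_{\le k})$ must be shown to agree with the Day convolution structure built in \cref{prop:tower}\ref{enum:tower-monoidal-tower}; this is where I would invoke \cref{lem:localise-day}\ref{enum:localise-day-v}, which identifies the restriction functors $\iota^*$ as symmetric monoidal and hence pins down the symmetric monoidal structure on $\PSh(\Env(\cO)_{\le k})$ as the localized Day convolution, matching the one obtained abstractly. One then checks that the transition maps in the tower, being left adjoints to these symmetric monoidal restrictions, carry the oplax structure that \eqref{equ:left-kan-is-oplax} assigns them, so the two constructions of the tower coincide.

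Finally I would check that the tower lands in the subcategory $\Tow(\CMon(\Cat)^{\cgr})$: each $\PSh(\Env(\cO)_{\le k})$ is compatible with geometric realizations because it is presentably symmetric monoidal (\cref{prop:tower}\ref{enum:day-truncated}), and both the transition maps $\Env(\iota)_!$ within a fixed tower and the maps $\Env(\varphi)_!$ between towers preserve all colimits (being left adjoints), hence in particular geometric realizations, so they are morphisms in $\CMon(\Cat)^{\cgr}$; this is exactly the content of \cref{prop:tower}\ref{enum:day-grtp}, now made natural in $\cO$. Assembling, the composite $\Opd^\un \xrightarrow{\Env(-)_{\le\bullet}} \Tow(\CMon(\Cat)) \xrightarrow{\PSh(-)} \Tow(\CMon(\Cat)^{\cgr})$ is the desired functor. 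I expect the main obstacle to be the bookkeeping in the second paragraph: verifying that the mate equivalence \eqref{equ:mates} and the localization lemma \cref{lem:localise-day} interact correctly with the functoriality in $\cO$, i.e.\ that the oplax-monoidal left-adjoint structures produced fibrewise over $\Opd^\un$ are coherently natural, rather than merely defined objectwise. This amounts to carefully tracking the $2$-categorical naturality of \eqref{equ:mates} (its naturality in the source category, as used in \cref{sec:mate-calculus}) applied with source category $\Opd^\un$, and is of the same flavour as — though somewhat lighter than — the naturality arguments already carried out in the proof of \cref{thm:recollement-nat}.
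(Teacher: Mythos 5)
Your construction breaks at the first step: for finite $k$ the full subcategory $\Env(\cO)_{\le k}\subset \Env(\cO)$ is \emph{not} symmetric monoidal, since it is not closed under the monoidal product (the disjoint union of two tuples indexed by sets of cardinality $\le k$ can have cardinality up to $2k$). It is also not the monoidal envelope of the $k$-truncated operad: by \cref{rem:restricted-env-depends-on-truncation} one only has an equivalence $\Env(\cO)_{\le k}\simeq \Env(\cO_{\le k})_{\le k}$, while $\Env(\cO_{\le k})$ itself still contains tuples of arbitrary cardinality. Consequently there is no functor $\Env(-)_{\le \bullet}\colon \Opd^\un\to\Tow(\CMon(\Cat))$ to which \eqref{equ:left-kan-is-oplax} could be applied, and the symmetric monoidal structure on $\RMod_k(\cO)$ cannot be obtained as a Day convolution on $\Env(\cO)_{\le k}$ --- this is precisely why \cref{prop:tower} constructs it by localising the Day convolution on $\RMod(\cO)$ via \cref{lem:localise-day}, where it is a nontrivial finality condition that the localisation is again a symmetric monoidal structure. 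A second, related confusion concerns variance: the transition maps $\RMod_k(\cO)\to\RMod_{k-1}(\cO)$ \emph{within} the tower are restrictions along $\Env(\cO)_{\le k-1}\subset\Env(\cO)_{\le k}$, not left Kan extensions (only the maps \emph{between} towers induced by $\varphi\colon\cO\to\cP$ are left Kan extensions), so a purely covariant application of $\PSh(-)$ to the filtration of envelopes could not produce the tower $\RMod_\bullet(\cO)$ in any case.

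The paper's proof handles both issues by working contravariantly first: it starts from $\cO\mapsto\RMod(\cO)$, $\varphi\mapsto\varphi^*$ as a functor to $\CMon^{\lax}(\Cat)^{\op}$, shows that $\varphi^*$ preserves the tower of full suboperads $\RMod_k(-)^{\otimes}\subset\RMod(-)^{\otimes}$ of \eqref{equ:lax-tower}, and only then passes to left adjoints via the mate equivalence \eqref{equ:mates}. The essential input making $\varphi^*$ (and hence, after taking mates, $\varphi_!$) compatible with the truncations is the Beck--Chevalley equivalence $\varphi^*\iota_*\simeq\iota_*\varphi^*$ of \cref{lem:change-of-operads-restriction}, whose proof uses unitality of the operads in an essential way (via \cref{cor:unital-nonsurj}). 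Your proposal never invokes anything of this kind and never uses unitality, even though the statement is restricted to $\Opd^\un$ for exactly this reason; this is the missing idea you would need to supply.
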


\noindent The proof of \cref{thm:naturality} relies on the following lemma (which is, incidentally, the first place where we use unitality of $\cO$). In its statement and in the remainder of this work, we denote a map of operads $\varphi\colon \cO\ra \cU$ and its value under the functor $\Env(-)\colon \Opd\ra \Cat$ by the same symbol.

\begin{lem}\label{lem:change-of-operads-restriction}
For a map $\varphi\colon \cO\ra\cP$ of unital operads and $1\le \ell\le k\le \infty$, the squares in $\Cat$
\[
	\begin{tikzcd}
	\RMod_k(\cO)\arrow[d,swap,"\iota^\ast"]\rar{\varphi_!}&\RMod_k(\cP)\arrow[d,swap,"\iota^\ast"]\\
	\RMod_{\ell}(\cO)\rar{\varphi_!}&\RMod_{\ell}(\cP)
	\end{tikzcd}\quad\text{and}\quad \begin{tikzcd}
	\RMod_k(\cO)&\RMod_k(\cP)\arrow[l,"\varphi^*",swap]\\
	\RMod_{\ell}(\cO)\uar{\iota_\ast}&\RMod_{\ell}(\cP)\uar{\iota_\ast}\arrow[l,"\varphi^*",swap].
	\end{tikzcd}
\]
commute, i.e.\,the two Beck--Chevalley transformations $\iota^*\varphi_!\ra \varphi_!\iota^*$ and $\varphi^*\iota_* \ra \iota_*\varphi^*$ are equivalences.
\end{lem}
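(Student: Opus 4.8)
The statement asserts that two Beck--Chevalley transformations attached to the evident square of inclusions and the operad map $\varphi$ are equivalences. The plan is to prove the left (``$\iota^*\varphi_!\to\varphi_!\iota^*$'') square via a direct pointwise computation of left Kan extensions, and to deduce the right (``$\varphi^*\iota_*\to\iota_*\varphi^*$'') square formally from the left one by passing to right adjoints. For the right square, note that all four functors in it are left adjoints of the four functors in the left square read in the opposite direction: $\iota_*$ is right adjoint to $\iota^*$ and $\varphi^*$ is right adjoint to $\varphi_!$; since for any commutative square of left adjoints the mate (Beck--Chevalley) transformation between the induced square of right adjoints is an equivalence precisely when the original one is (this is the standard mate-calculus fact recalled in \cref{sec:mate-calculus}, applied here in the form: take right adjoints everywhere), the right square commutes as soon as the left one does. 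So the real content is the left square.

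\textbf{The left square.} Fix $(c_i)_{i\in S}\in\Env(\cO)_{\le\ell}$ (so $|S|\le\ell\le k$) and $X\in\RMod_k(\cO)$. Both composites, applied to $X$ and evaluated at $(c_i)_{i\in S}$, are colimits indexed by comma categories: on the one hand, $\varphi_!\iota^*X$ evaluated there is the colimit of $\iota^*X=X|_{\Env(\cO)_{\le\ell}}$ over $(\Env(\cO)_{\le\ell})_{(\varphi(c_i))_{i\in S}/\varphi}$ — more precisely the left Kan extension of $X|_{\Env(\cO)_{\le\ell}}$ along $\varphi\colon\Env(\cO)_{\le\ell}\to\Env(\cP)_{\le\ell}$, then restricted; and on the other $\iota^*\varphi_!X$ is the left Kan extension of $X$ along $\varphi\colon\Env(\cO)_{\le k}\to\Env(\cP)_{\le k}$, restricted to $\Env(\cP)_{\le\ell}$. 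The pointwise formula for left Kan extension identifies both with a colimit over an appropriate comma category, and the Beck--Chevalley map is induced by a comparison functor between these comma categories. Concretely, using the description of mapping spaces in monoidal envelopes \eqref{equ:mapping-space-env} and the key fact that $\cO$ (and $\cP$) are unital, one shows that for a fixed target $(d_j)_{j\in T}\in\Env(\cP)$ the indexing category for the Kan extension along $\varphi\colon\Env(\cO)_{\le k}\to\Env(\cP)_{\le k}$, when the target $(d_j)_{j\in T}$ actually lies in $\Env(\cP)_{\le\ell}$ (i.e. $|T|\le\ell$), decomposes according to injections $\iota\colon S'\to T$ of the underlying sets of objects of $\Env(\cO)$ mapping into $(d_j)_{j\in T}$, and unitality (via \cref{cor:unital-nonsurj}, which provides the inclusion maps $(c_s)_{s\in S'}\subseteq(c_s)_{s\in S}$ and the identification of the relevant over-categories) guarantees that all such $S'$ automatically satisfy $|S'|\le|T|\le\ell$, so the indexing category is already the one computing the Kan extension along $\varphi\colon\Env(\cO)_{\le\ell}\to\Env(\cP)_{\le\ell}$. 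Thus the comparison functor between the two comma categories is an equivalence (in fact: cofinal, which suffices), and the Beck--Chevalley map is an equivalence on every object.

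\textbf{Streamlined alternative.} Rather than manipulating comma categories by hand, I would prefer to phrase the whole thing through \cref{cor:unital-nonsurj}. That corollary says precisely that, for a unital operad and a subset $S'\subseteq S$, the over-category $\Env(\cO)_{/(c_s)_{s\in S'}}$ is the full subcategory of $\Env(\cO)_{/(c_s)_{s\in S}}$ on morphisms whose underlying map of finite sets has image in $S'$. Because $\varphi$ acts as the identity on the indexing finite sets (it lies over $\Fin$, by \cref{sec:env}), the over-category $\Env(\cO)_{/(c_i)_{i\in S}}$ maps to $\Fin_{/S}$, and the truncation condition ``$|S|\le k$'' is a condition purely on $\Fin_{/S}$; an object of $\Env(\cO)_{/(d_j)_{j\in T}}$ with $|T|\le\ell$ has underlying set of cardinality $\le\ell\le k$ automatically. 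This means the over-categories used in the pointwise Kan extension formula for $\varphi_!$ at the level $k$ and at the level $\ell$ literally coincide when evaluated at objects of $\Env(\cP)_{\le\ell}$, so the Beck--Chevalley transformation $\iota^*\varphi_!\to\varphi_!\iota^*$ is an equivalence by inspection of the two colimits.

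\textbf{Main obstacle.} The only subtle point — and the place where unitality is genuinely used — is verifying that the truncation bound is ``automatically satisfied on the source side'': one must rule out the possibility that $(c_i)_{i\in S}\in\Env(\cO)$ with $|S|>k$ contributes to the Kan extension of a level-$k$ presheaf at a target of size $\le\ell$. This is exactly the content of the cartesian-lift-for-injections characterisation of unitality (\cref{lem:unital-cartesian-injections}) together with \cref{cor:unital-nonsurj}: any morphism in $\Env(\cO)$ into $(d_j)_{j\in T}$ factors through the inclusion $(d_j)_{j\in\mathrm{image}}\subseteq(d_j)_{j\in T}$ of the collection indexed by the image of the underlying map, and that image has size $\le|T|\le\ell\le k$. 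Once this is in place, everything else is a routine unwinding of the pointwise left-Kan-extension formula and the mate correspondence, with no further difficulty.
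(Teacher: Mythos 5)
Your reduction of the second square to the first by passing to right adjoints is exactly what the paper does, and the idea that unitality enters through the factorisation of a morphism through the ``image'' sub-collection (\cref{cor:unital-nonsurj}) is also the right key input. However, the central combinatorial step is misstated, in two ways. First, the variance is wrong: for the left Kan extension $\varphi_!$ of presheaves, the pointwise formula at $(c_s)_{s\in S}\in\Env(\cP)_{\le\ell}$ is a colimit over (the opposite of) the \emph{under}-category $(\Env(\cO)_{\le k})_{(c_s)_{s\in S}/}$, whose objects are morphisms $(c_s)_{s\in S}\to(\varphi(d_t))_{t\in T}$ in $\Env(\cP)$ with $(d_t)_{t\in T}\in\Env(\cO)_{\le k}$ — not the over-categories $\Env(\cO)_{/(d_j)_{j\in T}}$ you work with. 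Second, and more seriously, the claim that the level-$\ell$ and level-$k$ indexing categories ``literally coincide'' is false: the underlying map of finite sets $S\to T$ of such a morphism need not be surjective, so $|T|$ is \emph{not} bounded by $|S|\le\ell$; unitality produces plenty of objects with $\ell<|T|\le k$ (compose any morphism with an inclusion $(d_t)_{t\in T'}\subseteq(d_t)_{t\in T}$). Symmetrically, the bound you do try to establish — that sources $(c_i)_{i\in S'}$ of morphisms \emph{into} $(d_j)_{j\in T}$ satisfy $|S'|\le|T|$ — also fails, since non-injective underlying maps (i.e.\ higher-arity multi-operations) make $|S'|$ unbounded. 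So the ``main obstacle'' you identify is not the right one, and its proposed resolution does not hold.

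What is actually true, and what the paper proves, is that the inclusion $((\Env(\cO)_{\le\ell})_{(c_s)_{s\in S}/})^{\op}\subset((\Env(\cO)_{\le k})_{(c_s)_{s\in S}/})^{\op}$ is cofinal. This requires checking that for each object $\alpha\colon(c_s)_{s\in S}\to(\varphi(d_t))_{t\in T}$ of the larger category, the slice of the smaller category over it is weakly contractible. Here is where \cref{cor:unital-nonsurj} does its work: $\alpha$ factors uniquely through $(\varphi(d_t))_{t\in\overline{T}}$ for $\overline{T}=\im(S\to T)$, and since $|\overline{T}|\le|S|\le\ell$ this factorisation lives in the $\le\ell$ under-category and is an \emph{initial object} of the slice in question (using the second part of \cref{cor:unital-nonsurj} to identify the slice with a category of factorisations). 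Your write-up contains the raw material for this but not the argument itself; as it stands, the step asserting equality of the indexing categories is a genuine gap that would make the proof fail.
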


\noindent In the following and in subsequent proofs, if we are given a functor $\varphi\colon \cC\ra \cD$ and an object $d\in \cD$ we write $\cC_{/d}$ and $\cC_{d/}$ for the pullbacks $\cC\times_{\cD}(\cD_{/d})$ and $\cC\times_{\cD}(\cD_{d/})$, respectively.

\begin{proof}[Proof of \cref{lem:change-of-operads-restriction}]It suffices to show the claim for the first Beck--Chevalley transformation, since the second is obtained from the first by taking right adjoints. From the colimit formula for left Kan extensions (see e.g.\,\cite[02YC]{LurieKerodon}), we see that it suffices to show that for all $(c_s)_{s\in S}\in \Env(\cP)_{\le \ell}$, the inclusion
$\smash{((\Env(\cO)_{\le \ell})_{(c_s)_{s\in S}/})^\op\subset ((\Env(\cO)_{\le k})_{(c_s)_{s\in S}/})^\op}$ is cofinal. 
For this, it is by \cite[4.1.3.1]{LurieHTT} enough to check that for each $((c_s)_{s\in S}\ra (\varphi(d_t))_{t\in T})\in (\Env(\cO)_{\le k})_{(c_s)_{s\in S}/}$ the category
\[
	\smash{\big((\Env(\cO)_{\le \ell})_{(c_s)_{s\in S}/}\big)_{/((c_s)_{s\in S}\ra (\varphi(d_t))_{t\in T})}}
\]
is weakly contractible. To this end, consider the image $\smash{\overline{T}\subset T}$ of the underlying map of sets of $(c_s)_{s\in S} \ra (\varphi(d_t))_{t\in T}$. As a result of \cref{cor:unital-nonsurj}, there is a unique factorisation of this morphism as the composition of a morphism $(c_s)_{s\in S}\ra (\varphi(d_t))_{t\in \overline{T}}$ followed by the image under $\varphi$ of the inclusion $(d_t)_{t\in \overline{T}}\subseteq (d_t)_{t\in T}$. Moreover, the functors induced by $(d_t)_{t\in \overline{T}}\subseteq (d_t)_{t\in T}$
\[
	\Env(\cO)_{/(d_t)_{t\in\overline{T}}}\lra \Env(\cO)_{/(d_t)_{t\in T}}\quad\text{and}\quad \Env(\cP)_{/(\varphi(d_t))_{t\in\overline{T}}}\lra \Env(\cO)_{/(\varphi(d_t))_{t\in T}}
\]
are fully faithful with essential image those morphisms whose underlying map of finite sets has image in $\overline{T}\subseteq T$, so it follows that the functor induced by $(d_t)_{t\in \overline{T}}\subseteq (d_t)_{t\in T}$
\[
	\big((\Env(\cO)_{\le \ell})_{(c_s)_{s\in S}/}\big)_{/((c_s)_{s\in S}\ra (\varphi(d_t))_{t\in \overline{T}})}\lra \big((\Env(\cO)_{\le \ell})_{(c_s)_{s\in S}/}\big)_{/((c_s)_{s\in S}\ra (\varphi(d_t))_{t\in T})}
\]
is an equivalence. But $\lvert S\rvert\le \ell$, so $\lvert\overline{T}\rvert\le \ell$ and hence the identity on $((c_s)_{s\in S}\ra (\varphi(d_t))_{t\in \overline{T}})$ is an initial object in the source. In particular these categories are indeed weakly contractible.
\end{proof}

\begin{proof}[Proof of \cref{thm:naturality}]We first consider the composition 
\[
	\smash{\Opd^\un_\infty \xrightarrow{\Env(-)}\CMon(\Cat)\hookrightarrow \CMon^\oplax(\Cat)\xlra{\eqref{equ:restriction-is-lax}} \CMon^\lax(\Cat)^\op}
\] 
which sends an operad $\cO$ to the symmetric monoidal category $\RMod(\cO)$ and a morphism $\varphi\colon \cO\ra\cP$ to the lax monoidal functor $\varphi^\ast\colon \RMod(\cP)\ra \RMod(\cO)$ induced by precomposition. As a result of the second part of \cref{lem:change-of-operads-restriction}, the underlying functor $\varphi^\ast$ preserves for all $k$ the essential images of the right Kan extension functors $\iota_*\colon \RMod_k(\cP)\ra \RMod(\cP)$ and $\iota_*\colon \RMod_k(\cO)\ra \RMod(\cO)$, so the map of operads $\varphi^\ast\colon \RMod(\cP)^\otimes\ra \RMod(\cO)^\otimes$ preserves the tower of full subcategory inclusions \eqref{equ:lax-tower}. This lifts $\Opd^\un_\infty\ra  \CMon^\lax(\Cat)^\op$ to a functor  $\Opd^\un_\infty\ra \Tow(\CMon^\lax(\Cat)^\op)$. Since the functors $\iota_\ast\colon \RMod_k(\cO)\ra\RMod_{k+1}(\cO)$ and $\varphi^*\colon \RMod_k(\cP)\ra \RMod_k(\cO)$ are right adjoints for all $k$, this functor lands in the subcategory $\Tow(\CMon^{\lax,R}(\Cat)^\op)$, so yields by taking left adjoints via \eqref{equ:mates} a functor $\Opd^\un_\infty\ra  \Tow(\CMon^{\oplax}(\Cat))$. By construction, the latter sends $\cO\in\Opd^{\un}$ to the tower in $\Tow(\CMon(\Cat)^\cgr)$ from \cref{prop:tower}, and it sends a morphism $\varphi\colon \cO\ra \cP$ in $\Opd^{\un}$ to the map of tower consisting of the oplax symmetric monoidal functors $\varphi_!\colon \RMod_k(\cO)\ra\RMod_k(\cP)$ given as the image of the symmetric monoidal functor $\varphi\colon \Env(\cO)\ra \Env(\cP)$ under \eqref{equ:left-kan-is-oplax}. Since \eqref{equ:left-kan-is-oplax} sends (strong) symmetric monoidal functors to (strong) symmetric monoidal functors by \cref{lem:left-kan-monoidal}, it follows that the functor $\Opd^\un_\infty\ra  \Tow(\CMon^{\oplax}(\Cat))$ lands in $\Opd^\un_\infty \ra  \Tow(\CMon(\Cat))$, so it only remains to show that the left Kan extension $\varphi_!\colon \RMod_k(\cO)\ra \RMod_k(\cP)$ lies in $\CMon(\Cat)^\cgr$ for all $k$, i.e.\,preserves geometric realisations. But $\varphi_!$ is a left adjoint, so preserves all colimits.
\end{proof}

\subsection{Unital right-modules}\label{sec:unital-right mod}For some purposes, it is convenient to restrict to the full subcategory $\RMod_k^\un(\cO)\subset \RMod_k(\cO)$ of truncated right-modules that are \emph{unital}, i.e.\,their value at the monoidal unit $\varnothing\in\Env(\cO)_{\le k}$ is contractible. The following records various properties of this subcategory, including an alternative description as presheaves on the full subcategory $\Env^{\neq 0}(\cO)_{\le k}\subset \Env(\cO)_{\le k}$ on those objects that are \emph{not} equivalent to the unit $\varnothing\in \Env(\cO)_{\le k}$:

\begin{lem}\label{lem:unital-modules-properties}For all $1\le k\le \infty$, the full subcategory $\RMod_k^\un(\cO)\subset \RMod_k(\cO)$ satisfies the following:
\begin{enumerate}[leftmargin=*]
	\item\label{enum:unital-i} It is preserved by the truncations \eqref{equation:tower-right-modules}, so \eqref{equation:tower-right-modules} restricts to a tower $\RMod^\un_\bullet(\cO)\in\Tow(\Cat)$.
	\item\label{enum:unital-ii} It is closed under weakly contractible colimits, so in particular geometric realisations.
	\item\label{enum:unital-iii} It is preserved by left Kan extension $\RMod_k(\cO)\ra \RMod_k(\cP)$ along maps $\cO\ra\cU$ in $\Opd^\un$.
	\item\label{enum:unital-iv}It contains the monoidal unit of $\RMod_k(\cO)$ and is closed under monoidal products, so inherits a symmetric monoidal structure from $\RMod_k(\cO)$ by \cite[2.2.1.2]{LurieHA}.
	\item\label{enum:unital-v} The symmetric monoidal category from \ref{enum:unital-iv} is unital as an operad.
	\item\label{enum:unital-add} Restriction induces an equivalence of categories  $\RMod^\un_k(\cO)\simeq \PSh(\Env^{\neq 0}(\cO)_{\le k})$.
\end{enumerate}
\end{lem}

\begin{proof}Item \ref{enum:unital-i} is clear from the definition since the monoidal unit $\varnothing$ lies in $\Env(\cO)_{\le k}$ for all $k$. Item \ref{enum:unital-ii} follows from the fact that colimits in categories of presheaves are computed objectwise and that the colimit of a diagram in $\cS$ with contractible values which is indexed over a weakly contractible category is contractible. Item \ref{enum:unital-iii} follows by applying the argument in the proof of \cref{lem:change-of-operads-restriction} to the case to $\ell=0$ and $k=k$ since $\Env(\cO)_{\le 0}\simeq \ast$, so the restriction $\RMod_k(\cO)\ra\RMod_0(\cO)$ is equivalent to the evaluation $\ev_{\varnothing}\colon \RMod_k(\cO)\ra\cS$ whose fibre over $\ast\in\cS$ is $\RMod^{\un}_k(\cO)\subset \RMod_k(\cO)$. As with any Day convolution monoidal structure, the unit $\RMod_k(\cO)$ is the representable presheaf $E_{\varnothing}\coloneqq \Map_{\Env(\cO)_{\le k}}(\varnothing,-)$ on the unit, so it is unital since we assumed the operad $\cO$ to be unital. Together with an application of \cref{prop:tower} \ref{enum:day-truncated} this yields Item \ref{enum:unital-iv}. Item \ref{enum:unital-v} follows from the Yoneda lemma since $\Map_{\RMod^\un_k(\cO)}(E_{\varnothing},X)\simeq X(\varnothing)\simeq \ast$ for all $X\in \RMod^\un_k(\cO)$. 

\medskip

\noindent Regarding Item \ref{enum:unital-add}, we only explain the case $k=\infty$; the proof for other $k$ is the same. By general Kan extension considerations, it suffices to show that the unit $X\ra \iota_*\iota^*X$ of the adjunction between $\PSh(\Env^{\neq 0}(\cO))$ and $\RMod(\cO)$ given by restriction and right Kan extension along the inclusion $\iota\colon \Env^{\neq 0}(\cO)\hookrightarrow \Env(\cO)$ is an equivalence if and only if $X$ is reduced. By the limit formula for right Kan extensions, this unit is for $(c_i)_{i\in S}\in\Env(\cO)$ given by $X((c_i)_{i\in S})\ra \lim_{((c'_i)_{i\in S'}\ra (c_i)_{i\in S}, S'\neq \varnothing)}X((c'_i)_{i\in S'})$. The latter is an equivalence if $S\neq \varnothing$ since $\id_{(c_i)_{i\in S}}$ is in this case a terminal object for the indexing category, and it is an equivalence if $S=\varnothing$ if and only if $X$ is unital since in this case the target is contractible because the indexing category of the limit is empty since there are no maps from a nonempty to an empty set. 
\end{proof}

\begin{cor}\label{cor:tower-thms-unital}Theorems \ref{prop:tower} and \ref{thm:naturality} hold for $\RMod^\un_\bullet(\cO)$ in place of $\RMod_\bullet(\cO)$, except for the second part of \cref{prop:tower} \ref{enum:day-truncated}, on the presentable monoidality. Moreover, the inclusions \[\RMod^\un_\bullet(-)\subset \RMod_\bullet(-)\] extend to a natural transformation of functors $\Opd^\un\ra\Tow(\CMon(\Cat)^\cgr)$.
\end{cor}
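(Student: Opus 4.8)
The plan is to deduce \cref{cor:tower-thms-unital} from the results already established for the non-unital tower by exploiting \cref{lem:unital-modules-properties}, which does all the real work. First I would observe that \cref{lem:unital-modules-properties} \ref{enum:unital-i}, \ref{enum:unital-iii}, \ref{enum:unital-iv}, \ref{enum:unital-v} together say that $\RMod^\un_\bullet(-)$ is a ``subfunctor'' of $\RMod_\bullet(-)\colon \Opd^\un\ra\Tow(\CMon(\Cat)^\cgr)$ in a suitable sense: for each $\cO$ and each $k$, $\RMod^\un_k(\cO)\subset \RMod_k(\cO)$ is a full symmetric monoidal subcategory closed under the structure maps of the tower, and by \ref{enum:unital-iii} the left Kan extension functors $\varphi_!$ restrict to it. The technical point to make precise is that a levelwise full subcategory inclusion that is preserved by all relevant functors assembles into a natural transformation of functors out of $\Opd^\un$; I would phrase this by noting that $\RMod_\bullet(-)$ factors through a category where objects are towers of symmetric monoidal categories \emph{together with} a choice of full replete monoidal subcategory at each stage, or — more concretely and with less overhead — by restricting the target to $\Tow(\CMon(\Cat)^\cgr)^{[1]}$ and checking that the data of \cref{lem:unital-modules-properties} produces a functor $\Opd^\un\ra\Tow(\CMon(\Cat)^\cgr)^{[1]}$ whose target-projection is $\RMod_\bullet(-)$; evaluating at the source gives $\RMod^\un_\bullet(-)$ and the inclusion is the asserted natural transformation.

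Next I would verify the analogues of \cref{prop:tower} and \cref{thm:naturality} for $\RMod^\un_\bullet(\cO)$. Convergence (\cref{prop:tower} \ref{enum:tower-convergence}) follows because $\RMod^\un_\bullet(\cO)\ra \RMod_\bullet(\cO)$ is a levelwise fully faithful inclusion compatible with the limit cones, so the limit cone for $\RMod_\bullet(\cO)$ restricts to one for $\RMod^\un_\bullet(\cO)$ — concretely, using \cref{lem:unital-modules-properties} \ref{enum:unital-add} one identifies $\RMod^\un_k(\cO)\simeq \PSh(\Env^{\neq 0}(\cO)_{\le k})$ and $\Env^{\neq 0}(\cO)=\bigcup_k \Env^{\neq 0}(\cO)_{\le k}$ is again a filtered colimit, so the argument of \cref{prop:tower} \ref{enum:tower-convergence} applies verbatim. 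The symmetric monoidal tower lift (\ref{enum:tower-monoidal-tower}) and the ``$\cgr$'' property (\ref{enum:day-grtp}) are inherited from \cref{prop:tower} via \cref{lem:unital-modules-properties} \ref{enum:unital-i}, \ref{enum:unital-ii}, \ref{enum:unital-iv}: the monoidal structure on $\RMod_k(\cO)$ restricts to $\RMod^\un_k(\cO)$ by \cite[2.2.1.2]{LurieHA}, and the truncation functors preserve geometric realisations since they do so on $\RMod_k(\cO)$ and $\RMod^\un_k(\cO)$ is closed under these by \ref{enum:unital-ii}. For the objectwise description in \ref{enum:day-truncated}, the formula of \cref{lem:day-con-env} restricts since a summand $X((c_i)_{i\in S'})\times Y((c_i)_{i\in S''})$ with $X,Y$ unital is contractible when $S=\varnothing$. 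The naturality statement \cref{thm:naturality} then follows from the natural transformation constructed in the first paragraph: the functor $\RMod^\un_\bullet(-)$ is the source of that transformation, its morphisms are by construction the restrictions of the $\varphi_!$, and these are left adjoints hence preserve all colimits, in particular geometric realisations, so they land in $\CMon(\Cat)^\cgr$.

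The one genuine caveat, which I would state explicitly, is that the second half of \cref{prop:tower} \ref{enum:day-truncated} — presentability of the monoidal category — is \emph{not} inherited, because $\RMod^\un_k(\cO)\subset \RMod_k(\cO)$ is closed under weakly contractible colimits but not under the initial object / empty colimit (the empty colimit in $\RMod_k(\cO)$ is the constant-empty presheaf, which is not unital), so $\RMod^\un_k(\cO)$ is not cocomplete. This is exactly why the corollary carves out that exception. I would remark that $\RMod^\un_k(\cO)$ is nonetheless still well-behaved: via \cref{lem:unital-modules-properties} \ref{enum:unital-add} it is a presheaf category $\PSh(\Env^{\neq 0}(\cO)_{\le k})$, hence presentable as a plain category, and the subtlety is only that the monoidal unit is the representable on $\varnothing$, which lies outside $\Env^{\neq 0}(\cO)_{\le k}$ — so the unit is not corepresented in the obvious way and the Day convolution description requires the passage through $\iota_*$.

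The main obstacle I anticipate is purely bookkeeping rather than mathematical: making precise the claim that ``a levelwise full subcategory preserved by all the functors defines a natural transformation'' in the $\infty$-categorical setting, without re-running the straightening/unstraightening arguments of Sections \ref{sec:sym-mon-tower}--\ref{sec:naturality}. The cleanest route is to note that every functor appearing in the construction of $\RMod_\bullet(-)$ in the proofs of \cref{prop:tower} and \cref{thm:naturality} — the restrictions $\iota^*$, the right Kan extensions $\iota_*$, the left Kan extensions $\varphi_!$, the mate equivalence \eqref{equ:mates}, and the subcategory inclusions \eqref{equ:lax-tower} — preserves the property of ``being unital'' (by \cref{lem:unital-modules-properties} \ref{enum:unital-i}, \ref{enum:unital-iii} and the evaluation-at-$\varnothing$ characterisation used in the proof of \ref{enum:unital-iii}), and therefore the entire construction restricts to the full subcategories $\RMod^\un_k(\cO)^\otimes$ defined exactly as $\RMod_k(\cO)^\otimes$ but with ``presheaf'' replaced by ``unital presheaf''; tracking this restriction through the same diagram of functors yields simultaneously the functor $\RMod^\un_\bullet(-)$ and the inclusion natural transformation into $\RMod_\bullet(-)$.
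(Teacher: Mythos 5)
Your proof is correct and follows essentially the same route as the paper, whose own argument is a one-line deduction from \cref{lem:unital-modules-properties}: all statements are inherited from the non-unital case, with convergence handled by observing that $\RMod^\un_k(\cO)$ is the fibre at $\ast\in\cS$ of $\ev_{\varnothing}\colon \RMod_k(\cO)\ra\cS$ (your filtered-colimit argument via \cref{lem:unital-modules-properties} \ref{enum:unital-add} works equally well). One correction to your discussion of the excluded case, which is peripheral to the proof but mathematically wrong as stated: $\RMod^\un_k(\cO)$ \emph{is} cocomplete --- it is a presheaf category by \cref{lem:unital-modules-properties} \ref{enum:unital-add}, as you yourself note two sentences later --- so the failure of presentable monoidality is not a failure of cocompleteness but rather the failure of the Day convolution product to preserve colimits in each variable separately; the remark following the corollary in the paper makes this explicit by computing $[\colim_{j\in J}(C\otimes C)](c)$ versus $([\colim_{j\in J}C]\otimes C)(c)$ for a non-weakly-contractible index category $J$.
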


\begin{proof}Using the properties established in \cref{lem:unital-modules-properties}, this is a direct consequence of that statements of Theorems \ref{prop:tower} and \ref{thm:naturality} for the larger category $\RMod_\bullet(\cO)$. For the convergence, one uses that the category $\RMod^\un_k(\cO)$ is the fibre at $\ast\in\cS$ of the evaluation $\ev_{\varnothing}\colon \RMod_k(\cO)\ra \cS$.
\end{proof}

\begin{rem}Note that if $\cO$ is not the initial operad, i.e.\,if $\cO^\col\neq\varnothing$, then $\RMod_k^\un(\cO)$ is \emph{not} presentable symmetric monoidal for any $1\le k\le\infty$: as a category of presheaves (see \cref{lem:unital-modules-properties} \ref{enum:unital-add}), it is presentable, but the monoidal structure does not preserve colimits in either of the variables. To see this, fix an object $c\in \cO^\col$ and a category $J$ which is not weakly contractible. For $C\coloneqq \const_*\in \RMod_k^\un(\cO)$ the constant presheaf with value a point, we write $\lvert-\rvert\colon \Cat\ra\cS$ for the left adjoint to the inclusion $\cS\subset\Cat$ (sometimes called the ``classifying space'') and compute
\[
	[\colim_{j\in J}(C\otimes C)](c)\simeq\colim_{j\in J}[(C\otimes C)(c)]\simeq\colim_{j\in J}[C(c)\times C(\varnothing) \sqcup C(\varnothing)\times C(c)]\simeq \lvert J\rvert\sqcup \lvert J\rvert,
\] 
where we used \cref{lem:day-con-env} and the fact that we can compute the values of a colimit in $\RMod_k^\un(\cO)$ at objects not equivalent to the unit as the colimit of the values, since the restriction $\RMod_k^\un(\cO)\ra \PSh(\Env^{\neq 0}(\cO)_{\le k})$ is an equivalence by  \cref{lem:unital-modules-properties} \ref{enum:unital-add}, so preserves colimits. On the other hand
\[
	([\colim_{j\in J}(C)]\otimes C)(c)\simeq\big([\colim_{j\in J}(C)](c)\times C(\varnothing)\big) \sqcup \big([\colim_{j\in J}(C)](\varnothing)\times C(c)\big)\simeq \lvert J \rvert \sqcup *\not\simeq \lvert J \rvert\sqcup \lvert J \rvert.
\]
\end{rem}

\subsection{The layers of the tower}\label{sec:layers} One purpose of the tower $\RMod_\bullet(\cO)$ is that it decomposes the category $\RMod(\cO)$ into ``simpler pieces'', namely $\RMod_1(\cO)$ (the \emph{bottom layer}) and the ``differences'' between $\RMod_k(\cO)$ and $\RMod_{k-1}(\cO)$ for $k>1$ (the \emph{higher layers}). This section identifies these ``simpler pieces'', beginning with the bottom layer:

\begin{thm}[The bottom layer]\label{thm:bottom-layer} Restriction along $\cO^\col\subset \Env(\cO)$ induces an equivalence \[
	\RMod^\un_1(\cO)\simeq\PSh(\cO^\col)
\]
of symmetric monoidal categories, where $\RMod^\un_1(\cO)$ is equipped with the Day convolution structure resulting from \cref{cor:tower-thms-unital} and $\PSh(\cO^\col)$ with the cocartesian symmetric monoidal structure.
\end{thm}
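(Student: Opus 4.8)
The plan is to analyse the category $\Env(\cO)_{\le 1}$ explicitly and match up two things: the full subcategory of unital presheaves on it, and the category of presheaves on $\cO^\col$, together with their respective monoidal structures. By \cref{lem:unital-modules-properties} \ref{enum:unital-add}, restriction gives an equivalence $\RMod^\un_1(\cO)\simeq \PSh(\Env^{\neq 0}(\cO)_{\le 1})$, so it suffices to identify $\Env^{\neq 0}(\cO)_{\le 1}$ with $\cO^\col$ and to track the monoidal structure through this identification. By the description of the monoidal envelope in \cref{sec:env}, the objects of $\Env(\cO)_{\le 1}$ are collections $(c_s)_{s\in S}$ with $|S|\le 1$, i.e.\ either the empty collection $\varnothing$ (the monoidal unit) or a single colour $c\in\cO^\col$; discarding the unit leaves exactly the objects of $\cO^\col$. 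For morphisms, \eqref{equ:mapping-space-env} gives $\Map_{\Env(\cO)_{\le 1}}(c,d)\simeq \sqcup_{\varphi\colon \{s\}\to\{t\}}\Mul_\cO(c;d)\simeq \Mul_\cO(c;d)\simeq \Map_{\cO^\col}(c,d)$ (the sum is over the unique map), so the inclusion $\cO^\col\subset \Env(\cO)_{\le 1}$ is fully faithful with essential image $\Env^{\neq 0}(\cO)_{\le 1}$. Hence restriction along $\cO^\col\subset\Env(\cO)\to\Env(\cO)_{\le 1}$ realises the equivalence $\RMod^\un_1(\cO)\simeq \PSh(\cO^\col)$ of underlying categories.

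It remains to upgrade this to an equivalence of symmetric monoidal categories. By \cref{prop:tower} \ref{enum:day-truncated} (via \cref{cor:tower-thms-unital}), the Day convolution structure on $\RMod^\un_1(\cO)$ is objectwise the one of \cref{lem:day-con-env}: for $X,Y$ and an object $c\in\Env^{\neq 0}(\cO)_{\le 1}$, since the only ordered partitions of a singleton $S=\{s\}$ are $(\{s\},\varnothing)$ and $(\varnothing,\{s\})$, and since $X(\varnothing)\simeq Y(\varnothing)\simeq\ast$ by unitality, we get $(X\otimes Y)(c)\simeq X(c)\sqcup Y(c)$. Thus under the equivalence with $\PSh(\cO^\col)$ the monoidal product is objectwise $F\mapsto F\sqcup G$, which is precisely the objectwise formula for the coproduct in $\PSh(\cO^\col)$, i.e.\ for the cocartesian monoidal structure (coproducts of presheaves are computed objectwise). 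To make this a statement about symmetric monoidal categories rather than just objectwise products, I would invoke the characterisation recalled in \cref{sec:cocart-oprightfib}: the cocartesian symmetric monoidal structure is the \emph{unique} symmetric monoidal structure on a category with finite coproducts whose product is given by coproducts on homotopy categories. Since $\RMod^\un_1(\cO)$ has finite coproducts (it is a presheaf category by \cref{lem:unital-modules-properties} \ref{enum:unital-add}) and since the computation above shows $\otimes$ agrees with $\sqcup$ on objects — hence on homotopy categories — the symmetric monoidal structure on $\RMod^\un_1(\cO)$ must be the cocartesian one, and the equivalence of underlying categories automatically promotes to one of symmetric monoidal categories.

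The main obstacle is the last step: verifying that the \emph{objectwise} identification $\otimes\simeq\sqcup$ genuinely identifies the two symmetric monoidal structures coherently, not just the binary products. The clean route is to reduce to the uniqueness statement for cocartesian structures (2.4.3.19 of \cite{LurieHA}, cited in \cref{sec:cocart-oprightfib}): any symmetric monoidal structure inducing coproducts on homotopy categories is canonically the cocartesian one, and symmetric monoidal equivalences are detected by their underlying functors once both sides are cocartesian. One subtlety to handle carefully is the unit: the monoidal unit of $\RMod^\un_1(\cO)$ is the representable $E_\varnothing=\Map_{\Env(\cO)_{\le 1}}(\varnothing,-)$ (see \cref{lem:unital-modules-properties} \ref{enum:unital-v}), and one must check that under restriction to $\cO^\col$ this corresponds to the initial presheaf $\varnothing\in\PSh(\cO^\col)$ — which it does, since $E_\varnothing(c)\simeq \Map_{\Env(\cO)_{\le 1}}(\varnothing,c)\simeq\varnothing$ for $c\neq\varnothing$ by unitality of $\cO$ (there are no maps from the empty collection to a nonempty one in $\Fin$, compare \cref{cor:unital-nonsurj}). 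This is consistent with the unit of the cocartesian structure being the initial object, confirming the identification. Naturality in $\cO$, if desired, follows because all constructions involved — the inclusion $\cO^\col\subset\Env(\cO)_{\le 1}$, restriction, and the cocartesian structure — are functorial, but I would only spell this out if the downstream use of \cref{bigthm:calc} requires it.
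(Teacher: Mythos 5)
Your proposal is correct and follows essentially the same route as the paper: restriction to $\Env^{\neq 0}(\cO)_{\le 1}\simeq\cO^\col$ via \cref{lem:unital-modules-properties}, then the objectwise computation of Day convolution from \cref{lem:day-con-env} using unitality, concluded by the uniqueness of cocartesian symmetric monoidal structures recalled in \cref{sec:cocart-oprightfib}. The extra check on the unit is harmless but not needed, since the homotopy-category criterion already suffices.
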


\begin{proof} \cref{lem:unital-modules-properties} \ref{enum:unital-add} implies that the restriction $\RMod^\un_1(\cO)\ra \PSh(\Env^{\neq 0}(\cO)_{\le 1})$ is an equivalence. As $\Env^{\neq0}(\cO)_{\le 1}\simeq \cO^\col$ as a result of \eqref{equ:mapping-space-env}, it only remains to show that the Day convolution structure on $\RMod^\un_1(\cO)$ is cocartesian, for which it suffices to show that the induced monoidal structure on the homotopy category is given by the coproduct (see \cref{sec:cocart-oprightfib}). This follows from specialising the formula in \cref{lem:day-con-env} to $S\in\Fin$ with $|S|=1$ (this step only works for unital right-modules).
\end{proof}

\subsubsection{The higher layers}\label{sec:higher-layers-nomodule}
Our next goal is to explain what data is needed to reconstruct the category $\RMod_{k}(\cO)$ from $\RMod_{k-1}(\cO)$. This will be an application of the decomposition result for presheaf categories from \cref{thm:recollement}, but for convenience we will introduce the objects and notation involved from scratch: writing $\iota\colon \Env(\cO)_{\le k-1}\hookrightarrow \Env(\cO)_{\le k}$ for the usual inclusion and $\nu\colon \cO^\col\wr\Sigma_k\hookrightarrow \Env(\cO)_{\le k}$ for the inclusion of the wreath product resulting from \eqref{equ:operad-wreath-subcat}, we consider functors
\begin{equation}\label{equ:layers-functors}
	\begin{tikzcd} 
	\RMod_{k-1}(\cO) \rar[shift left=1.2ex]{\iota_!} \rar[shift left=-1.2ex,swap]{\iota_*}& \RMod_{k}(\cO) \rar{\nu^*} \lar & \PSh( \cO^\col\wr\Sigma_k)
	\end{tikzcd}
\end{equation}
where the unlabelled leftmost middle arrow is the restriction $\iota^*$, whose left and right adjoints $\iota_!$ and $\iota_*$ are given by left and right Kan extensions respectively. The counit of $\iota_! \dashv \iota^*$ and the unit of $\iota^* \dashv \iota_*$ induce natural transformations $\nu^*\iota_!\iota^*\ra \nu^*\ra \nu^*\iota_*\iota^*$ which we view as a functor of the form
\[
	\Lambda\coloneqq \big(\nu^*\iota_!\iota^*\ra \nu^*\ra \nu^*\iota_*\iota^*\big)\colon\RMod_k(\cO)\lra \PSh(\cO^\col\wr\Sigma_k)^{[2]}.
\]
Similarly, there are natural transformations $\nu^*\iota_!\xsla{\simeq}\nu^*\iota_!\iota^*\iota_*\ra\nu^*\iota_*$ induced by the counits of $\iota^* \dashv \iota_*$ and $\iota_! \dashv \iota^*$ (the former is an equivalence as $\iota$ is fully faithful), which we view as a functor
\[
	\Omega\coloneqq \big(\nu^*\iota_!\ra \nu^*\iota_*\big)\colon\RMod_k(\cO)\lra \PSh(\cO^\col\wr\Sigma_k)^{[1]}.
\]

\begin{thm}[The higher layers]\label{thm:layers-tower}Fix $1<k<\infty$.
\begin{enumerate}
	\item\label{enum:layer-pullback} There is a commutative square in $\Cat$
	\begin{equation}\label{equ:recol-discs}
		\begin{tikzcd}
		\RMod_k(\cO) \rar{\Lambda} \dar[swap]{\iota^*} & \PSh(\cO^\col\wr\Sigma_k)^{[2]} \dar{(0\le 2)^*} \\
		\RMod_{k-1}(\cO) \rar{\Omega} & \PSh(\cO^\col\wr\Sigma_k)^{[1]}
		\end{tikzcd}
	\end{equation}
	which is cartesian.
	\item\label{enum:layer-pullback-gc} If $\cO$ is groupoid-coloured, then also the commutative square
	\begin{equation}\label{equ:recol-discs-simpler}
		\begin{tikzcd}
		\RMod_k(\cO) \rar{\Lambda^c} \dar[swap]{\iota^*} &\cS^{[2]}\dar{(0\le 2)^*}\\
		\RMod_{k-1}(\cO) \rar{\Omega^c} &\cS^{[1]}
		\end{tikzcd}
	\end{equation}
	resulting from extending \eqref{equ:recol-discs} to the right using $\colim\colon \PSh(\cO^\col\wr\Sigma_k)\ra\cS$, is cartesian.
	\item\label{enum:layer-pullback-red} Both \ref{enum:layer-pullback} and \ref{enum:layer-pullback-gc} are also valid with $\RMod^\un_\bullet(\cO)$ in place of $\RMod_\bullet(\cO)$.
	\end{enumerate}
\end{thm}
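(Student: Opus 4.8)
## Proof plan

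The plan is to deduce \cref{thm:layers-tower} directly from \cref{thm:recollement} and \cref{thm:recollement-nat}, by checking that the pair $\Env(\cO)_{\le k-1}\subset \Env(\cO)_{\le k}$ is a decomposition pair (in the sense of \cref{dfn:decom-pair}) whose complement is exactly $\cO^\col\wr\Sigma_k$, and that the map-of-mapping-spaces condition in \cref{thm:recollement} holds. The two subcategory conditions are verified using the explicit formula \eqref{equ:mapping-space-env} for mapping spaces in $\Env(\cO)$: a morphism $(c_s)_{s\in S}\to (d_t)_{t\in T}$ factors through an object with $<k$ labels precisely when its underlying map of finite sets $S\to T$ is not a bijection, equivalently when $|T|<k$ or the map fails to be surjective or injective. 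Condition (i) of \cref{dfn:decom-pair} (identities factoring through smaller objects) forces $|S|<k$; condition (ii) (compositions factoring through smaller objects) follows since a composite $S\to T\to U$ of finite-set maps that is not a bijection must have one of its factors not a bijection. Hence the complement $\Env(\cO)_{\le k}\backslash \Env(\cO)_{\le k-1}$ consists of the collections with exactly $k$ labels and morphisms whose underlying finite-set map is a bijection; by \eqref{equ:mapping-space-env} and \eqref{equ:operad-wreath-subcat} this is precisely $\cO^\col\wr\Sigma_k$, with $\nu$ the inclusion from \cref{sec:wreath-product}.

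For the pullback statement in \ref{enum:layer-pullback}, I would then check the hypothesis of \cref{thm:recollement}: for collections $c=(c_s)_{s\in S}$, $d=(d_t)_{t\in T}$ with $|S|=|T|=k$, the map
\[
	(\iota_!\iota^*)\big(\Map_{\Env(\cO)_{\le k}}(-,d)\big)(c)\sqcup \Map_{\cO^\col\wr\Sigma_k}(c,d)\lra \Map_{\Env(\cO)_{\le k}}(c,d)
\]
is an equivalence. By the second remark after \cref{thm:recollement} the left Kan extension term computes the coend $\Map_{\Env(\cO)}(c,-)\otimes_{\Env(\cO)_{\le k-1}}\Map_{\Env(\cO)}(-,d)$, i.e.\ the collection of path components of morphisms $c\to d$ that factor through an object with $<k$ labels. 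A morphism $c\to d$ with $|S|=|T|=k$ either has underlying finite-set map a bijection (contributing the $\cO^\col\wr\Sigma_k$ summand) or not, and in the latter case it factors through its image, which has $<k$ labels by the pigeonhole principle since a non-bijective self-map of $k$-element sets is non-surjective; \cref{cor:unital-nonsurj} gives such a factorisation through an object in $\Env(\cO)_{\le k-1}$, using unitality of $\cO$. This shows the two summands on the left account disjointly for all of $\Map_{\Env(\cO)_{\le k}}(c,d)$, and \cref{thm:recollement} then gives that \eqref{equ:recol-discs} is cartesian, since $\Lambda$, $\Omega$ here are exactly the functors $\Lambda$, $\Omega$ of that theorem with $\cC=\Env(\cO)_{\le k}$, $\cC_0=\Env(\cO)_{\le k-1}$.

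For \ref{enum:layer-pullback-gc}, when $\cO^\col$ is an $\infty$-groupoid so is $\cO^\col\wr\Sigma_k$, and the functor $\colim\colon \PSh(\cO^\col\wr\Sigma_k)\to\cS$ is (up to the identification $\PSh(B)\simeq \cS_{/B}$) the forgetful functor $\cS_{/B}\to\cS$; I would argue it sends the cartesian square \eqref{equ:recol-discs} to a cartesian square by observing that the right-hand square in the display of \cref{bigthm:calc}'s higher-layers part (i.e.\ the square with corners $\PSh(\cO^\col\wr\Sigma_k)^{[2]}$, $\PSh(\cO^\col\wr\Sigma_k)^{[1]}$, $\cS^{[2]}$, $\cS^{[1]}$) is cartesian --- this is because $\colim$ over a groupoid is a left adjoint that also preserves the relevant pullbacks, as in \cref{lem:right-fib-BC}/\cref{lem:test-fib-pullback} applied to the right-fibration $B\to\ast$; pasting the two cartesian squares gives that \eqref{equ:recol-discs-simpler} is cartesian. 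Finally, \ref{enum:layer-pullback-red} follows since $\RMod^\un_k(\cO)\subset\RMod_k(\cO)$ is the fibre of $\ev_\varnothing\colon \RMod_k(\cO)\to\cS$ over $\ast$ (\cref{lem:unital-modules-properties}), and all four functors in \eqref{equ:recol-discs} commute with the evaluation-at-$\varnothing$ functors down to the corresponding squares for $\RMod_\bullet(\Com_{\le 1})$-type data; restricting a cartesian square to the fibres over a chosen object of a compatible cartesian square of "ambient" squares is again cartesian. The main obstacle I anticipate is the bookkeeping in identifying the complement $\Env(\cO)_{\le k}\backslash\Env(\cO)_{\le k-1}$ with $\cO^\col\wr\Sigma_k$ compatibly with $\nu$ and in matching the abstract $\Lambda,\Omega$ of \cref{thm:recollement} with the ones defined in \cref{sec:higher-layers-nomodule} --- everything else is a direct citation of \cref{thm:recollement}, \cref{thm:recollement-nat} (for the functoriality claims, which presumably appear in the omitted continuation), \cref{cor:unital-nonsurj}, and \cref{lem:unital-modules-properties}.
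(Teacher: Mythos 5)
Your overall strategy is exactly the paper's: exhibit $\Env(\cO)_{\le k-1}\subset \Env(\cO)_{\le k}$ as a decomposition pair with complement $\cO^\col\wr\Sigma_k$, verify the hypothesis of \cref{thm:recollement}, and obtain \ref{enum:layer-pullback-gc} and \ref{enum:layer-pullback-red} by pasting with the pullback from \cref{lem:pullback-composition-rfib-fact} (for the right-fibration $\cO^\col\wr\Sigma_k\ra\ast$) and with the square exhibiting $\RMod^\un_\bullet(\cO)$ as a fibre of $\ev_\varnothing$. Those parts, and your verification of the two conditions of \cref{dfn:decom-pair} via $\pi_\cO$ and the image factorisation from \cref{cor:unital-nonsurj}, are fine.

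There is, however, a genuine gap at the heart of the argument: the verification of the hypothesis of \cref{thm:recollement}. You write that the coend $\Map_{\Env(\cO)}(c,-)\otimes_{\Env(\cO)_{\le k-1}}\Map_{\Env(\cO)}(-,d)$ computes ``the collection of path components of morphisms $c\to d$ that factor through an object with $<k$ labels'', and then only check that every morphism with $|S|=|T|=k$ either has bijective underlying map of sets or factors through $\Env(\cO)_{\le k-1}$. That establishes a statement on $\pi_0$ --- the two summands hit disjoint, jointly exhaustive collections of components --- but the hypothesis of \cref{thm:recollement} is that the map from the coend is an \emph{equivalence of spaces} onto those components, and this is precisely what the remark following \cref{thm:recollement} flags as the condition to be checked, not a tautology. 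In general the ``space of factorisations through $\cC_0$'' of a map that does factor need not be contractible, so surjectivity onto the right components does not suffice. To close the gap one has to show that the counit $(\iota_!\iota^*)\big(\Map^{\nsur}_{\Env(\cO)_{\le k}}(-,(d_t)_{t\in T})\big)\ra \Map^{\nsur}_{\Env(\cO)_{\le k}}(-,(d_t)_{t\in T})$ onto the non-surjective components is an equivalence. The paper does this in two steps: first it identifies $\Map^{\nsur}_{\Env(\cO)_{\le k}}(-,(d_t)_{t\in T})$ with $\colim_{T'\subsetneq T}\Map_{\Env(\cO)_{\le k}}(-,(d_t)_{t\in T'})$ via the cubical diagram of \cref{cor:unital-nonsurj} (a strongly cartesian cube of inclusions of components, whose punctured colimit is the union); second, since $\iota_!$ and $\iota^*$ preserve colimits and $\iota_!\iota^*$ fixes the representable presheaves on objects of $\Env(\cO)_{\le k-1}$ (as $|T'|\le k-1$), the counit is an equivalence on each term of the colimit and hence on the colimit itself. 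Some version of this colimit decomposition is indispensable; without it your argument only proves the $\pi_0$-statement.
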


\begin{proof}
Item \ref{enum:layer-pullback} follows from an application of \cref{thm:recollement}, once we show that $\Env(\cO)_{\le k-1}\subset \Env(\cO)_{\le k}$ is a decomposition pair in the sense of \cref{dfn:decom-pair}, that it satisfies the assumption in \cref{thm:recollement}, and that it has the property that the complementary subcategory $\Env(\cO)_{\le k}\backslash \Env(\cO)_{\le k-1}\subset \Env(\cO)_{\le k}$ is given by $\cO^\col\wr\Sigma_k\simeq \Env(\cO)\times_{\Fin}\Fin_k^\simeq \subset \Env(\cO)_{\le k}$. The first condition in \cref{dfn:decom-pair} follows by applying $\pi_\cO\colon \Env(\cO)\ra \Fin$ and the second condition follows from the factorisation of morphisms  $\alpha\colon (c_s)_{s\in S}\ra (d_t)_{t\in T}$ in $\Env(\cO)_{\le k}$ through the inclusions $\smash{(d_t)_{t\in \im(\overline{\alpha})}\subseteq (d_t)_{t\in T}}$ resulting from \cref{cor:unital-nonsurj}; here $\overline{\alpha}\colon S\ra T$ is the underlying map of finite sets. Similarly, since a selfmap of finite sets of cardinality $k$ is an equivalence if and only if it does not factor through a set of lower cardinality, the claim that $\Env(\cO)_{\le k}\backslash \Env(\cO)_{\le k-1}$ agrees with $\cO^\col \wr \Sigma_k \simeq \Env(\cO)\times_{\Fin}\Fin_k^\simeq $ is equivalent to showing that a morphism $\alpha\colon (c_s)_{s\in S}\ra (d_t)_{t\in T}$ in $\Env(\cO)_{\le k}$ with $\lvert S\rvert=\lvert T\rvert=k$ factors through an object in $\Env(\cO)_{\le k-1}$ if and only if this is the case for the underlying maps of finite sets. This again follows from the factorisation of $\alpha$ through $(d_t)_{t\in \im(\overline{\alpha})}\subseteq (d_t)_{t\in \im(\overline{\alpha})}$. This leaves us with verifying the assumption of \cref{thm:recollement}, namely that the natural map 
\begin{equation}\label{equ:recollement-assumption-rmod}
	\begin{tikzcd}[row sep=0.3cm]
	(\iota_!\circ \iota^*)\big(\Map_{\Env(\cO)_{\le k}}(-,(d_t)_{t\in T})\big)((c_s)_{s\in S})\sqcup \Map_{\cO^\col \wr \Sigma_k}((c_s)_{s\in S},(d_t)_{t\in T}) \dar \\ \Map_{\Env(\cO)_{\le k}}((c_s)_{s\in S},(d_t)_{t\in T})
	\end{tikzcd}
\end{equation}
is an equivalence for all $(c_s)_{s\in S}, (d_t)_{t\in T}\in\Env(\cO)$ with $|S|=|T|=k$. To show this, we observe, firstly, that the second summand in the source of the map in the hypothesis of \cref{thm:recollement} is by definition the collection of those components of the target that map to isomorphisms in $\Fin$ (or equivalently, to surjections, as $|S|=|T|=k$), and, secondly, that the map from the first summand lands in the subspace of the target consisting of those components whose underlying map of finite sets is not surjective; we henceforth indicate this subspace by adding a superscript $(-)^\nsur $. As a result, the map \eqref{equ:recollement-assumption-rmod} is an equivalence if and only if its restriction 
\[
	(\iota_!\circ \iota^*)\big(\Map_{\Env(\cO)_{\le k}}(-,d_t)_{t\in T})\big)((c_s)_{s\in S})\lra \Map^{\nsur }_{\Env(\cO)_{\le k}}((c_s)_{s\in S},(d_t)_{t\in T})
\]
is an equivalence. Since ${\Map^{\nsur }_{\Env(\cO)_{\le k}}(-,(d_t)_{t\in T})\subset \Map_{\Env(\cO)_{\le k}}(-,(d_t)_{t\in T})}$ is a subfunctor and an equivalence when restricted to $\Env(\cO)_{\le k-1}$, so it suffices to showing that the counit 
\begin{equation}\label{equ:counit-nonsur}
	\smash{(\iota_!\circ \iota^*)\big(\Map^{\nsur }_{\Env(\cO)_{\le k}}(-,(d_t)_{t\in T})\big)\lra\Map^{\nsur }_{\Env(\cO)_{\le k}}(-,(d_t)_{t\in T}).}
\end{equation} 
is an equivalence in $\PSh(\Env(\cO)_{\le k})$. To this end, we first show that the map in $\PSh(\Env(\cO)_{\le k})$
\begin{equation}\label{equ:colim-nonsur}
	\smash{\underset{T'\subsetneq T}\colim\,\Map_{\Env(\cO)_{\le k}}(-,(d_t)_{t\in T'})\lra\Map^{\nsur }_{\Env(\cO)_{\le k}}(-,(d_t)_{t\in T})}
\end{equation}
induced by the cubical diagram $(d_t)_{t\in \bullet}$ from \cref{cor:unital-nonsurj} is an equivalence. By the second part of \cref{cor:unital-nonsurj}, the map $\smash{\Map_{\Env(\cO)_{\le k}}((c'_{s})_{s\in S'},(d_t)_{t\in T'})\ra\Map^{\nsur }_{\Env(\cO)_{\le k}}((c'_{s})_{s\in S'},(d_t)_{t\in T})}$ is for all $(c'_{s})_{s\in S'}\in \Env(\cO)_{\le k}$ and $T'\subsetneq T$ an equivalence onto the subspace 
\[
	\smash{\Map^{\nsur }_{\Env(\cO)_{\le k}}((c'_{s})_{s\in S'},(d_t)_{t\in T})_{T'}\subset \Map^{\nsur }_{\Env(\cO)_{\le k}}((c'_{s})_{s\in S'},(d_t)_{t\in T})}
\] 
of those components whose induced map in $\Fin$ has image in $T'$. As $T'\subsetneq T$ varies, these components exhaust the whole space because the non-surjectivity condition, so it suffices to show that the colimit is given by the union of these collections of components indexed by proper subsets $T'\subsetneq T$. For this, we note that this union has the property that the intersection of the components corresponding to a subset $T'\subsetneq T$ with those corresponding to $T''\subsetneq T$ are the ones corresponding to $T'\cap T''\subsetneq T$, which says that the cubical diagram of collections of components indexed by the poset of subsets of $T$ is strongly cartesian. For strongly cartesian cubical diagrams consisting of inclusions of components the colimit of the restriction to the poset of proper subsets of $T$ is given by the union (this follows e.g.\,from \cite[Lemma 5.10.4]{MunsonVolic}), so \eqref{equ:counit-nonsur} is indeed an equivalence.

\medskip

\noindent Since $\iota_!$ and $\iota^*$ are left adjoints and thus preserve colimits, the colimit decomposition \eqref{equ:colim-nonsur} shows that \eqref{equ:counit-nonsur} is an equivalence if the component of the counit of the adjunction $i_! \dashv i^*$ at the presheaf $\smash{\Map_{\Env(\cO)_{\le k}}(-,(d_t)_{t\in T'})}$ is for all $T'\subsetneq T$ an equivalence. But since $\lvert T'\rvert\le k-1$, applying $\iota^*(-)$ to this presheaf gives the representable presheaf $\smash{\Map_{\Env(\cO)_{\le k-1}}(-,(d_t)_{t\in T'})}$, so the claim follows from the fact that left Kan extensions preserves representable presheaves. This concludes the proof of \ref{enum:layer-pullback}.
 
 \medskip

\noindent Item  \ref{enum:layer-pullback-gc} follows by pasting the pullback from \ref{enum:layer-pullback} with that in \cref{lem:pullback-composition-rfib-fact} for $\varphi\coloneqq ((0\le 2)\colon [1]\ra[2])$, $\cB\coloneqq \cO^\col \wr \Sigma_k$, and $\cD=\ast$, using that $\cO^\col\wr\Sigma_k$ is a groupoid since $\cO^\col$ is assumed to be one, so the map to a point $\cO^\col\wr\Sigma_k\ra\ast$ is a right-fibration.
Finally, Item \ref{enum:layer-pullback-red} follows by pasting the pullbacks in \ref{enum:layer-pullback} and \ref{enum:layer-pullback-gc} with the square that expresses $\RMod^\un_k(\cO)$ as the pullback of the restriction functor $\iota^*\colon \RMod_{k}(\cO)\ra \RMod_{k-1}(\cO)$ along the inclusion $\RMod^\un_{k-1}(\cO)\subset \RMod_{k-1}(\cO)$.
\end{proof}

\begin{rem}\label{rem:mod-out-ocol-square}Occasionally it is convenient to factor \eqref{equ:recol-discs-simpler} into a pasting of two commutative squares
\begin{equation}\label{equ:recol-discs-simpler-factorisation}
		\begin{tikzcd}
		\RMod_k(\cO) \rar{\Lambda^c} \dar[swap]{\iota^*} &\PSh(\Fin^\simeq_{k})^{[2]}\rar{\colim}\dar{(0\le 2)^*} &\cS^{[2]}\dar{(0\le 2)^*}\\
		\RMod_{k-1}(\cO) \rar{\Omega^c} &\PSh(\Fin^\simeq_{k})^{[1]}\rar{\colim}&\cS^{[1]}
		\end{tikzcd}
	\end{equation}
	whose left horizontal arrows are obtained from $\Lambda^c$ and $\Omega^c$ in \eqref{equ:recol-discs} using the left Kan extension $\PSh(\cO^\col\wr\Sigma_k)\ra \PSh(\Fin_k^\simeq)$ along the projection $\cO^\col\wr\Sigma_k\ra \Fin_k^\simeq$. In \cref{thm:layers-tower} \ref{enum:layer-pullback-gc} we showed that if $\cO$ is groupoid-coloured, then the outer square is a pullback. The same argument also shows that the left-hand square is a pullback if $\cO$ is groupoid-coloured.
\end{rem}

\subsubsection{A simplified description of the right Kan extension} \label{sec:simplified-coboundaries} 
The right Kan extension $\iota_\ast\colon\RMod_{k-1}(\cO)\ra \RMod_{k}(\cO)$ featuring in the definition of $\Lambda$ and $\Omega$ in the description of the higher layers from \cref{thm:layers-tower} has a more explicit description in terms of the cubical diagrams $(c_s)_{s\in\bullet}$ from \cref{cor:unital-nonsurj}. This is a special case of the following identification of the right Kan extension along any of the inclusions $\iota\colon \Env(\cO)_{\le \ell}\hookrightarrow \Env(\cO)_{\le k}$ for $1\le \ell\le k\le \infty$:

\begin{lem}\label{lem:simplified-right-kan}
For $X\in \RMod_\ell(\cO)$ and $(c_{s})_{s\in S}\in\Env(\cO)_{\le k}$, there is an equivalence 
\[
	\smash{\iota_*(X)\big((c_{s})_{s\in S}\big)\simeq \lim_{S'\subseteq S, \lvert S'\rvert \leq \ell}X\big((c_{s})_{s\in S'}\big)}
\]
induced by the cubical diagram $(c_s)_{s\in\bullet}$ from \cref{cor:unital-nonsurj}. This is natural in $X$ and $(c_{s})_{s\in S}$.
\end{lem}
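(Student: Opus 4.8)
\textbf{Proof plan for \cref{lem:simplified-right-kan}.}

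The plan is to identify the indexing category for the right Kan extension and show it is dominated by a much smaller cofinal subcategory, namely the cube of subsets. By the pointwise formula for right Kan extensions \cite[02YA]{LurieKerodon}, we have $\iota_*(X)((c_s)_{s\in S}) \simeq \lim_{(\Env(\cO)_{\le \ell})_{(c_s)_{s\in S}/}} X \circ p$, where $p$ is the projection and the limit is over the comma category of objects $(c_s)_{s\in S} \to (d_t)_{t\in T}$ with $|T| \le \ell$. First I would use \cref{cor:unital-nonsurj}: every morphism $\alpha\colon (c_s)_{s\in S}\to (d_t)_{t\in T}$ in $\Env(\cO)$ factors uniquely as an inclusion-type map followed by a morphism whose underlying map of finite sets is surjective onto its image. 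Concretely, writing $\overline{\alpha}\colon S \to T$ for the underlying map and $S' = \overline{\alpha}^{-1}(\overline{\alpha}(S))$... more precisely, the key point is that any object of the comma category admits a canonical ``reduction'' where we restrict attention to the image of $\overline\alpha$, and combined with the equivalence of overcategories in the second part of \cref{cor:unital-nonsurj}, objects of the form $(c_s)_{s\in S'} \to (c_s)_{s\in S'}$ given by identities (for $S' \subseteq S$ with $|S'| \le \ell$) are initial among those mapping to a given object with underlying set the image.

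The cleaner way to organize this, which I would carry out, is to exhibit the functor $(2^S_{\le \ell}, \subseteq)^{\op} \to (\Env(\cO)_{\le \ell})_{(c_s)_{s\in S}/}$ sending $S' \subseteq S$ with $|S'|\le \ell$ to the inclusion $(c_s)_{s\in S} \supseteq (c_s)_{s\in S'}$ reversed appropriately — wait, one must be careful about variance. The cube $(c_s)_{s\in\bullet}\colon (2^S,\subseteq)\to\Env(\cO)$ of \cref{cor:unital-nonsurj} goes the wrong way to land in an undercategory directly; instead, for $S'\subseteq S$ we do not get a map $(c_s)_{s\in S}\to (c_s)_{s\in S'}$ in $\Env(\cO)$. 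However, using unitality and \cref{lem:unital-cartesian-injections}, the inclusion $(c_s)_{s\in S'}\subseteq (c_s)_{s\in S}$ \emph{is} the cartesian lift, and what we actually want is the following: the functor $\Env(\cO)_{/(c_s)_{s\in S'}} \to \Env(\cO)_{/(c_s)_{s\in S}}$ of \cref{cor:unital-nonsurj} is an equivalence onto a full subcategory. Dualizing, I would instead argue directly that the opposite of the comma category $(\Env(\cO)_{\le \ell})_{(c_s)_{s\in S}/}$ — whose objects are morphisms out of $(c_s)_{s\in S}$ — contains the poset $\{S' \subseteq S : |S'| \le \ell\}$ as a cofinal subcategory, via sending $S'$ to a chosen morphism $(c_s)_{s\in S} \to (c_s)_{s\in S'}$... but such a morphism does not exist in general. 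So the correct statement must use that we can also describe $\iota_*$ via the equivalence $\RMod_\ell(\cO)\simeq \lim$ over the wreath pieces, or more simply: decompose the indexing category by the image $\overline T \subseteq T$ of the underlying map, apply \cref{cor:unital-nonsurj} to contract each piece to its ``non-surjective part'', and observe that what survives are exactly the morphisms $(c_s)_{s\in S} \to (c_s)_{s\in \overline T}$ where $\overline T$ is identified with a subset of $S$ via a section — no.

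Let me restate the plan more carefully. The right approach: the comma category $(\Env(\cO)_{\le \ell})_{(c_s)_{s\in S}/}$ has, by \cref{cor:unital-nonsurj}, the property that every object $(c_s)_{s\in S}\to (d_t)_{t\in T}$ factors through $(d_t)_{t\in \overline T}$ where $\overline T = \im(\overline\alpha)$, and that this factorization is \emph{initial} among objects with image $\overline T$ — because the relevant overcategory $\big((\Env(\cO)_{\le \ell})_{(c_s)_{s\in S}/}\big)_{/\alpha}$ has an initial object (the identity on the reduction), by the argument already appearing in the proof of \cref{lem:change-of-operads-restriction}. Therefore, by \cite[4.1.3.1]{LurieHTT}, the full subcategory on the reductions — morphisms $(c_s)_{s\in S} \to (c_s)_{s\in S''}$ for $S'' \subseteq S$, $|S''| \le \ell$, with underlying map a bijection onto image, i.e.\ essentially the structure map of the cube — is initial (equivalently, the opposite inclusion is cofinal). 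Wait: again, for such a morphism to exist with underlying map $S\to S''$ surjective we need unitality giving the $0$-ary operations to kill the complement $S\setminus S''$; this is precisely the map in the cube $(c_s)_{s\in\bullet}$ \emph{read the other way} via \cref{cor:unital-nonsurj}, since that corollary says $(c_s)_{s\in S'}\subseteq (c_s)_{s\in S}$ are the \emph{inclusions}, and dualizing the indexing (morphisms \emph{out} versus \emph{into}) the relevant maps are the quotient-type maps $(c_s)_{s\in S}\to (c_s)_{s\in S''}$ obtained by inserting $0$-ary operations, which exist precisely because $\cO$ is unital. Once cofinality of $\{S''\subseteq S: |S''|\le \ell\}^{\op} \hookrightarrow (\Env(\cO)_{\le\ell})_{(c_s)_{s\in S}/}$ is established, the limit collapses to $\lim_{S'\subseteq S,\,|S'|\le\ell} X((c_s)_{s\in S'})$, and naturality in $X$ and in $(c_s)_{s\in S}$ is automatic since every step is canonical (the cube is natural in $(c_s)_{s\in S}$ and $\cO$ by \cref{cor:unital-nonsurj}, and the right Kan extension is functorial in $X$).

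\textbf{Main obstacle.} The delicate point is the cofinality argument: verifying that for each object $\alpha\colon(c_s)_{s\in S}\to(d_t)_{t\in T}$ of the comma category the relevant slice category $\big(\{S''\subseteq S:|S''|\le\ell\}^{\op}\big)_{\alpha/}$ — equivalently the category of factorizations of $\alpha$ through a reduction — is weakly contractible. This is exactly parallel to the weak-contractibility computation in the proof of \cref{lem:change-of-operads-restriction}: one uses the unique factorization of \cref{cor:unital-nonsurj} through $(d_t)_{t\in\overline T}$ with $\overline T = \im(\overline\alpha)$, notes $|\overline T| \le |S| $ so if $|S|\le\ell$ this even has an initial object, and in general one reduces along the equivalence of overcategories in the second part of \cref{cor:unital-nonsurj} to the case where the target map is surjective. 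I expect the bookkeeping of variances (undercategory vs.\ overcategory, and the direction of the cube) to be the only real source of friction; the mathematical content is entirely supplied by \cref{cor:unital-nonsurj} together with unitality of $\cO$.
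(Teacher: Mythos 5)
There is a genuine gap, and it sits exactly where you flagged the ``friction'': the variance. Your starting point --- the pointwise formula for $\iota_*(X)\big((c_s)_{s\in S}\big)$ as a limit over the \emph{under}category $(\Env(\cO)_{\le \ell})_{(c_s)_{s\in S}/}$ of morphisms \emph{out of} $(c_s)_{s\in S}$ --- is wrong. Since $X$ is a presheaf, $\iota_*$ is right Kan extension along $\iota^{\op}$, so the indexing category at $(c_s)_{s\in S}$ is $\big((\Env(\cO)_{\le \ell})_{/(c_s)_{s\in S}}\big)^{\op}$: morphisms $(d_t)_{t\in T}\to (c_s)_{s\in S}$ \emph{into} the object, with $\lvert T\rvert\le\ell$. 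With the correct variance there is no mismatch with the cube at all: the inclusions $(c_s)_{s\in S'}\subseteq (c_s)_{s\in S}$ of \cref{cor:unital-nonsurj} are morphisms into $(c_s)_{s\in S}$ and define a functor from the poset of subsets $S'\subseteq S$ with $\lvert S'\rvert\le\ell$ to $(\Env(\cO)_{\le\ell})_{/(c_s)_{s\in S}}$, and the cofinality check via \cite[4.1.3.1]{LurieHTT} is precisely the factorisation-through-the-image argument you correctly anticipate elsewhere: for $\alpha\colon (d_t)_{t\in T}\to (c_s)_{s\in S}$ the category of factorisations $(d_t)_{t\in T}\to (c_s)_{s\in T'}\subseteq (c_s)_{s\in S}$ with $T'\subseteq S$, $\lvert T'\rvert\le\ell$, has an initial object, namely the unique factorisation through $T'=\im(\overline{\alpha})$ from \cref{cor:unital-nonsurj} (note $\lvert\im(\overline{\alpha})\rvert\le\lvert T\rvert\le\ell$). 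This is exactly how the paper argues.

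Your attempted repair, by contrast, does not work: you assert that ``quotient-type maps $(c_s)_{s\in S}\to (c_s)_{s\in S''}$ obtained by inserting $0$-ary operations \ldots\ exist precisely because $\cO$ is unital''. They do not. By \eqref{equ:mapping-space-env}, a morphism $(c_s)_{s\in S}\to (c_s)_{s\in S''}$ requires a map of finite sets $\varphi\colon S\to S''$ together with operations in $\Mul_\cO((c_s)_{s\in\varphi^{-1}(s'')};c_{s''})$ for each $s''$; the fibres of $\varphi$ over points of $S''$ are nonempty, so these are not $0$-ary operations and the relevant spaces may be empty (e.g.\ $\lvert S\rvert=2$, $\lvert S''\rvert=1$ requires a binary operation). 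Unitality produces morphisms in the opposite direction only, $(c_s)_{s\in S''}\to (c_s)_{s\in S}$, by filling the empty fibres over $S\setminus S''$ with the $0$-ary operations --- and these are precisely the morphisms that the correctly oriented limit formula asks for. So the cofinal subcategory of the undercategory you propose at the end does not exist. Once the variance is corrected, the ingredients you assembled (the criterion \cite[4.1.3.1]{LurieHTT}, the parallel with the proof of \cref{lem:change-of-operads-restriction}, naturality from \cref{cor:unital-nonsurj}) go through and the argument coincides with the paper's.
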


\begin{proof}
We write $\mathrm{P}_{\leq \ell}(S)$ for the poset of subsets of $S$ of cardinality $\leq \ell$ ordered by inclusion. The cubical diagram $(c_s)_{s\in\bullet}$ induces functor $\mathrm{P}_{\leq \ell}(S)_{/S}\ra (\Env(\cO)_{\le \ell})_{/(c_s)_{s\in S}}$. From the limit formula for right Kan extension, we see that the claim follows from showing that this functor is cofinal, for which it is by \cite[4.1.3.1]{LurieHTT} enough to prove that for $(\alpha\colon (d_t)_{t\in T}\ra (c_s)_{s\in S})\in (\Env(\cO)_{\le \ell})_{/(c_s)_{s\in S}}$ the category $\mathrm{P}_{\leq \ell}(S)_{((d_t)_{t\in T}\ra (c_s)_{s\in S})/}$ is weakly contractible. This is the category of factorisations $(d_t)_{t\in T}\ra (c_s)_{s\in T'}\subset (c_s)_{s\in S}$ of $\alpha$ for subsets $T'\subset S$ of cardinality $\leq \ell$ which is indeed is weakly contractible since it has in view of \cref{cor:unital-nonsurj} an initial object $(d_t)_{t\in T}\ra (c_s)_{s\in \im(\overline{\alpha})}\subset (c_s)_{s\in S}$ where $\overline{\alpha}\colon T\ra S$ is the map of finite sets induced by $\alpha$. This implies the first part of the claim. The second part follows from the naturality of the cubical diagram $(c_s)_{s\in\bullet}$ from \cref{cor:unital-nonsurj}.
\end{proof}

\begin{ex}[The second layer]\label{ex:snd-layer-abstract}When specialised to $k=2$ and restricting to unital right-modules the pullback square \eqref{equ:recol-discs} has the form 
\begin{equation}\label{equ:recol-discs-simpler-factorisation}
		\begin{tikzcd}
		\RMod^\un_2(\cO) \rar{\Lambda^c} \dar[swap]{\iota^*} &\PSh(\cO^\col\wr\Sigma_2)^{[2]}\dar{(0\le 2)^*}\\
		\PSh(\cO^\col)\rar{\Omega^c} &\PSh(\cO^\col\wr\Sigma_2)^{[1]};
		\end{tikzcd}
	\end{equation}
	here we used \cref{thm:bottom-layer}. Using \cref{lem:simplified-right-kan} and rewriting the relevant left Kan extension as a coend, one sees that $\Lambda^c$ sends $X\in \RMod^\un_2(\cO)$ to the composition in $\PSh(\cO^\col\wr\Sigma_2)$ that on a pair of colours  $(c,d)\in \cO^\col\wr\Sigma_2$ is given by the following composition induced by the functoriality of $X$
	\[\Map_{\Env(\cO)}(c\sqcup d,-)\otimes_{\cO^\col}X \lra X(c\sqcup d)\lra X(c)\times  X(d)
	\]
\end{ex}

\subsubsection{Naturality of the layers} The identification of the higher layers in \cref{thm:layers-tower} can be made natural in operadic right-fibrations in the sense of Definition \ref{dfn:oprightfib}, i.e.\,it extends to a functor on the wide subcategory $\smash{\Opd^{\rf}\subset \Opd^{\un}}$ on the operadic right-fibrations:

\begin{thm}\label{thm:naturality-of-layers}The squares \eqref{equ:recol-discs} and \eqref{equ:recol-discs-simpler} in \cref{thm:layers-tower} extend to functors $\smash{\Opd^{\rf} \ra \Cat^{[1] \times [1]}}$. On morphisms, these extensions are induced by left Kan extension.
\end{thm}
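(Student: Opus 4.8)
The plan is to realise the square \eqref{equ:recol-discs} as the value at $\cO$ of a composite functor $\Opd^{\rf}\to\Cat^{\subset_\BC,\dec}\to\Cat^{[1]\times[1]}$, whose second leg is the functor furnished by \cref{thm:recollement-nat}; the asserted functoriality and its compatibility with left Kan extensions then follow formally. The first leg sends a unital operad $\cO$ to the decomposition pair $\Env(\cO)_{\le k-1}\subset\Env(\cO)_{\le k}$, which was shown in the proof of \cref{thm:layers-tower} to be a decomposition pair with complement $\cO^\col\wr\Sigma_k\simeq\Env(\cO)\times_{\Fin}\Fin_k^{\simeq}$. To see that this assignment is functorial on $\Opd^\un$, recall that $\Env(-)$ is a functor $\Opd^\un\to\CMon(\Cat)$ lifting to $\CMon(\Cat)_{/\Fin}$ (\cref{sec:env}), so the canonical functor $\pi_{(-)}\colon\Env(-)\to\Fin$ is natural; since $\Env(\cO)_{\le k}$, $\Env(\cO)_{\le k-1}$ and $\cO^\col\wr\Sigma_k$ are by definition the preimages under $\pi_{\cO}$ of the full subcategories $\Fin_{\le k}$, $\Fin_{\le k-1}$ and $\Fin_k^{\simeq}$ of $\Fin$, we obtain a functor $\Opd^\un\to\Cat^{[1]}$ sending $\cO$ to the full subcategory inclusion $\Env(\cO)_{\le k-1}\subset\Env(\cO)_{\le k}$.

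It then remains to check that the restriction of this functor to $\Opd^{\rf}$ factors through the (non-full) subcategory $\Cat^{\subset_\BC,\dec}\subset\Cat^{[1]}$; as this is a subcategory, this is a property of the objects and morphisms involved. On objects it holds by the previous paragraph. For a morphism---an operadic right-fibration $\varphi\colon\cO\to\cU$---the functor $\Env(\varphi)$ is a right-fibration by \cref{lem:operadic-right-fib}\,\ref{enum:orf-ii}, and, lying over $\Fin$, it restricts to functors $\Env(\cO)_{\le k}\to\Env(\cU)_{\le k}$, $\Env(\cO)_{\le k-1}\to\Env(\cU)_{\le k-1}$ and $\cO^\col\wr\Sigma_k\to\cU^\col\wr\Sigma_k$ whose respective sources are the pullbacks, along $\Env(\varphi)$ restricted to $\Env(\cO)_{\le k}$, of the three (preimage) subcategories $\Env(\cU)_{\le k-1}$, $\cU^\col\wr\Sigma_k$ and---trivially---$\Env(\cU)_{\le k}$ of $\Env(\cU)_{\le k}$. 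In particular each of these three restrictions is a right-fibration and each of the three squares just described is cartesian, so \cref{lem:right-fib-BC} shows that both Beck--Chevalley transformations in \eqref{equ:BC-naturality} as well as the analogous one involving the complements are equivalences. Hence $\varphi$ induces a morphism in $\Cat^{\subset_\BC,\dec}$, and postcomposing with \cref{thm:recollement-nat} gives the desired functor $\Opd^{\rf}\to\Cat^{[1]\times[1]}$ recovering \eqref{equ:recol-discs}, with functoriality induced by left Kan extensions since this is true of both constituents.

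For the remaining naturality assertions I would paste on two more functors out of $\Opd^{\rf}$, each compatible with left Kan extensions. For \eqref{equ:recol-discs-simpler}, which for any unital $\cO$ is obtained from \eqref{equ:recol-discs} by applying $\colim$ to the right column, I would use the functor $\Opd^{\rf}\to\Cat^{[1]}$ sending $\cO$ to $\colim\colon\PSh(\cO^\col\wr\Sigma_k)\to\cS$, obtained by precomposing $\cO\mapsto\cO^\col\wr\Sigma_k$ with the functor $\Cat\to\Cat^{[1]}$ that whiskers the left-Kan-extension-functorial $\PSh(-)$ of \cref{sec:day} with the natural transformation $\id_{\Cat}\Rightarrow\const_{\ast}$; compatibility with left Kan extensions uses that $\colim$ composed with a left Kan extension is again $\colim$. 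For the $\RMod^\un_\bullet$-variant of \ref{enum:layer-pullback-red}, I would paste with the square expressing $\RMod^\un_k(\cO)$ as the fibre over $\ast\in\cS$ of $\ev_\varnothing\colon\RMod_k(\cO)\to\cS$: this is natural in $\cO\in\Opd^\un$ because, for $\varphi\colon\cO\to\cU$, the monoidal unit $\varnothing\in\Env(\cU)$ admits only the identity mapping into it by \eqref{equ:mapping-space-env}, whence $\varphi_!$ preserves the value at $\varnothing$ and $\ev_\varnothing\colon\RMod_\bullet(-)\Rightarrow\const_\cS$ is a natural transformation, functorial via left Kan extensions, whose fibre is $\RMod^\un_\bullet(-)$ (cf.\,the proof of \cref{lem:unital-modules-properties}\,\ref{enum:unital-iii} and \cref{cor:tower-thms-unital}).

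I do not expect a conceptual obstacle: every category that appears is a preimage under a symmetric monoidal functor to $\Fin$, which forces the relevant squares to be cartesian and, together with \cref{lem:operadic-right-fib}, supplies the right-fibrations needed to invoke \cref{lem:right-fib-BC}. The actual work is thus the bookkeeping---assembling the pieces into genuine functors out of $\Opd^{\rf}$ and verifying at each pasting step that the functoriality remains ``induced by left Kan extension''---which the framework set up in \cref{thm:recollement-nat} is designed to make routine.
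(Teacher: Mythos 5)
Your overall architecture is exactly the paper's: send $\cO$ to the truncation inclusion $\Env(\cO)_{\le k-1}\subset\Env(\cO)_{\le k}$, check that on $\Opd^{\rf}$ this lands in $\Cat^{\subset_\BC,\dec}$, postcompose with the second functor of \cref{thm:recollement-nat}, and paste with a $\colim$-functor for \eqref{equ:recol-discs-simpler}. Your treatment of the first Beck--Chevalley transformation $\varphi_{0!}j^*\to\iota^*\varphi_!$ and of the one involving the complements $\varphi_!\nu^*\to\nu^*\varphi_!$ is also fine (and for the first one it is a legitimate alternative to the paper's citation of \cref{lem:change-of-operads-restriction}, since for operadic right-fibrations the relevant squares are cartesian with right-fibration legs, so \cref{lem:right-fib-BC} applies).

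The gap is your claim that \cref{lem:right-fib-BC} also disposes of the \emph{second} transformation in \eqref{equ:BC-naturality}, namely $\varphi_!j_*\to\iota_*\varphi_{0!}$. That lemma only concerns transformations of the shape $\alpha_!\gamma^*\to\delta^*\beta_!$ (left Kan extensions against restrictions), whereas this one involves the \emph{right} Kan extension $j_*$ along the truncation inclusion, and its being an equivalence is not a formal consequence of cartesianness plus right-fibrancy. Concretely: take $\cC=[1]$, $\cC_0=\{0\}$, $\varphi\colon\cD=\{0\}\hookrightarrow[1]$ (a right-fibration, the unstraightening of $y(0)$), so $\cD_0=\cD\times_\cC\cC_0=\{0\}$ and $j=\id$. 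For $X\in\PSh(\{0\})\simeq\cS$ one computes $\varphi_!j_*X\simeq(X,\varnothing)$ while $\iota_*\varphi_{0!}X\simeq(X,X)$, so the transformation fails. Equivalently, the composite defining it begins with the unit $\varphi_!j_*\to\iota_*\iota^*\varphi_!j_*$, which is an equivalence only if $\varphi_!j_*Y$ is right Kan extended from $\cC_0$ --- a genuine condition. This is precisely what \cref{lem:auxiliarly-naturality-layers} establishes for the truncation inclusions: its proof is not formal but uses the explicit description of $j_*$ as a limit over the weakly contractible punctured cube of subsets (\cref{lem:simplified-right-kan}, which in turn rests on unitality of the operad via \cref{cor:unital-nonsurj}), the fact that left Kan extension along a right-fibration preserves weakly contractible limits (\cref{lem:left-kan-limit}), and a case analysis over the fibres of $\pi_\cO$. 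You need to supply this argument (or an equivalent one) before the factorisation through $\Cat^{\subset_\BC,\dec}$ is justified; the remaining pasting steps are then routine as you say.
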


\begin{rem}[Naturality in the tangential structure]By \cref{lem:pullbacks-groupoid-type}, the category $\Opd^{\rf}$ contains the subcategory $\smash{\Opd^{\gc,\simeq_\red}}$ of $\smash{\Opd^{\un}}$ consisting of groupoid-coloured operads and maps between them that are equivalences on reduced covers (see \cref{sec:tangential-structures-operads}). The later is equivalent to the overcategory $\smash{\cS_{/\Opd^{\red,\simeq}}}$ by \eqref{equ:gc-redequ-identification}, so for a fixed reduced operad $\cU$ we obtain functors $\smash{\cS_{/\BAut(\cU)}\ra \Cat^{[1] \times [1]}}$ that send a tangential structure $\theta\colon B\ra \BAut(\cU)$ in the sense of \cref{dfn:tangial-structures-opd} to the pullbacks from \cref{thm:layers-tower} for the $\theta$-framed $\cU$-operad $\cU^{\theta}$. Informally speaking, this says that the layers are natural in the tangential structure.
\end{rem}

 The key to proving \cref{thm:naturality-of-layers} is the following lemma:

\begin{lem}\label{lem:auxiliarly-naturality-layers}For an operadic right-fibration $\varphi\colon \cO\ra\cP$, both squares in the diagram of categories 
\[
	\begin{tikzcd}
	\RMod_{k-1}(\cO)\rar{\iota_*}\dar{\varphi_!}& \RMod_{k}(\cO)\dar{\varphi_!}\rar{\nu^*}& \PSh(\cO^\col\wr{\Sigma_k})\dar{\varphi_!}\\
	\RMod_{k-1}(\cP)\rar{\iota_*}& \RMod_{k}(\cP)\rar{\nu^*}& \PSh(\cP^\col\wr{\Sigma_k})
	\end{tikzcd}
\]
commute in the sense that the following Beck--Chevalley transformations are equivalences
\[
	\varphi_!\iota_* \lra \iota_*\iota^*\varphi_!\iota_* \simeq \iota_*\varphi_!\iota^*\iota_* \simeq \iota_*\varphi_!\quad\text{and}\quad\varphi_!\nu^*\lra\nu^*\varphi_!,
\]
where the left one is induced by the (co)unit of $\iota^* \dashv \iota_*$ and the equivalence from \cref{lem:change-of-operads-restriction}.\end{lem}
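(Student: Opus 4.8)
\emph{Proof plan.} The two Beck--Chevalley transformations are of rather different character, so I would treat them separately, the second one first as it is immediate.

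\emph{The transformation $\varphi_!\nu^*\to\nu^*\varphi_!$.} This I would deduce directly from \cref{lem:right-fib-BC}. Since $\varphi$ is an operadic right-fibration, the functor $\Env(\varphi)\colon\Env(\cO)\to\Env(\cP)$ is a right-fibration by \cref{lem:operadic-right-fib}; and because $\pi_{\cO}=\pi_{\cP}\circ\Env(\varphi)$ one has $\Env(\cO)_{\le k}=\Env(\cO)\times_{\Env(\cP)}\Env(\cP)_{\le k}$, so the restriction $\varphi\colon\Env(\cO)_{\le k}\to\Env(\cP)_{\le k}$ is again a right-fibration, being a base change of $\Env(\varphi)$. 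Using \eqref{equ:operad-wreath-subcat} for $\cO$ and $\cP$ together with $\cO^\col\wr\Sigma_k=\Env(\cO)_{\le k}\times_{\Fin_{\le k}}\Fin_k^{\simeq}$ (and likewise for $\cP$) one checks that the square
\[
\begin{tikzcd}
\cO^\col\wr\Sigma_k\rar{\nu}\dar&\Env(\cO)_{\le k}\dar{\varphi}\\
\cP^\col\wr\Sigma_k\rar{\nu}&\Env(\cP)_{\le k}
\end{tikzcd}
\]
is cartesian, whence \cref{lem:right-fib-BC} gives that $\varphi_!\nu^*\to\nu^*\varphi_!$ is an equivalence.

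\emph{The transformation $\varphi_!\iota_*\to\iota_*\varphi_!$.} First I would note that in the displayed factorisation $\varphi_!\iota_*\to\iota_*\iota^*\varphi_!\iota_*\simeq\iota_*\varphi_!\iota^*\iota_*\simeq\iota_*\varphi_!$ the last two maps are already equivalences --- the first by \cref{lem:change-of-operads-restriction}, the second since $\iota$ is fully faithful so $\iota^*\iota_*\simeq\id$. It therefore suffices to show that the first map, the component at $\varphi_!\iota_*X$ of the unit of $\iota^*\dashv\iota_*$, is an equivalence for all $X\in\RMod_{k-1}(\cO)$; equivalently, that $\varphi_!\iota_*X$ lies in the essential image of $\iota_*\colon\RMod_{k-1}(\cP)\to\RMod_k(\cP)$. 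By \cref{lem:simplified-right-kan} (with $\ell=k-1$) this essential image consists exactly of those presheaves $Z$ for which the canonical map $Z((d_t)_{t\in T})\to\lim_{T'\subsetneq T}Z((d_t)_{t\in T'})$ induced by the cube $(d_t)_{t\in\bullet}$ of \cref{cor:unital-nonsurj} is an equivalence whenever $\lvert T\rvert=k$. Since \cref{lem:change-of-operads-restriction} and the full-faithfulness of $\iota$ give $(\varphi_!\iota_*X)((d_t)_{t\in T'})\simeq(\varphi_!X)((d_t)_{t\in T'})$ for $\lvert T'\rvert<k$, the task reduces to identifying $(\varphi_!\iota_*X)((d_t)_{t\in T})$ with $\lim_{T'\subsetneq T}(\varphi_!X)((d_t)_{t\in T'})$ compatibly with the cube. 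To this end I would compute $(\varphi_!\iota_*X)((d_t)_{t\in T})$ as the colimit of $\iota_*X$ over the comma category $C\coloneqq\Env(\cO)_{\le k}\times_{\Env(\cP)_{\le k}}(\Env(\cP)_{\le k})_{/(d_t)_{t\in T}}$ for $\varphi_!$ along the right-fibration $\varphi$. By \cref{cor:unital-nonsurj} every object of $C$ factors its structure map through the inclusion $(d_t)_{t\in T'}\subseteq(d_t)_{t\in T}$ with $T'$ the image of the underlying map of finite sets, which exhibits $C$ as the union of the sieve $C_{\mathrm{nsur}}=\bigcup_{T'\subsetneq T}C_{T'}$ and its complementary cosieve $C_{\mathrm{sur}}$ of objects underlying a bijection onto $T$, where $C_{T'}\coloneqq\Env(\cO)_{\le k}\times_{\Env(\cP)_{\le k}}(\Env(\cP)_{\le k})_{/(d_t)_{t\in T'}}$ is the comma category computing $(\varphi_!\iota_*X)((d_t)_{t\in T'})=(\varphi_!X)((d_t)_{t\in T'})$. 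Feeding in the limit description of $\iota_*X$ from \cref{lem:simplified-right-kan} and using \cref{lem:left-kan-limit} to commute $\varphi_!$ past the weakly contractible limits that this description involves, one should obtain the desired identification of $\colim_C\iota_*X$ with the total fibre $\lim_{T'\subsetneq T}(\varphi_!X)((d_t)_{t\in T'})$.

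\emph{Where the difficulty lies.} The first part is routine. In the second part, the reduction to the cube condition is formal; the hard part will be the final identification of $\colim_C\iota_*X$ with the total fibre $\lim_{T'\subsetneq T}(\varphi_!X)((d_t)_{t\in T'})$. What makes this delicate is the bookkeeping needed to control the contribution of the surjective stratum $C_{\mathrm{sur}}$ and to convert the colimit over the sieve/cosieve decomposition of $C$ into the desired finite limit, together with checking that \cref{lem:left-kan-limit} interacts correctly with this decomposition --- a point that is genuinely non-trivial because $C$ depends on the fibres of the right-fibration $\varphi^\col$ over the colours $d_t$, which need be neither contractible nor nonempty. I expect this to be the bulk of the argument.
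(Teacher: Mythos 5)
Your treatment of $\varphi_!\nu^*\to\nu^*\varphi_!$ is correct and is exactly the paper's argument: apply \cref{lem:right-fib-BC} to the cartesian square exhibiting $\cO^\col\wr\Sigma_k\to\cP^\col\wr\Sigma_k$ as pulled back from the right-fibration $\varphi\colon\Env(\cO)_{\le k}\to\Env(\cP)_{\le k}$. For $\varphi_!\iota_*\to\iota_*\varphi_!$ your reduction is also sound: since $\iota_*$ is fully faithful it suffices that $\varphi_!\iota_*X$ lie in its essential image, which by \cref{lem:simplified-right-kan} amounts to the cube-induced map $Z((d_t)_{t\in T})\to\lim_{T'\subsetneq T}Z((d_t)_{t\in T'})$ being an equivalence for $\lvert T\rvert=k$. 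But the computation you then defer is the entire content of the lemma, and the setup you propose for it contains a genuine error of variance. The pointwise formula for the \emph{left} Kan extension of a presheaf along $\varphi$ is a colimit over the opposite of the comma category $\Env(\cO)_{\le k}\times_{\Env(\cP)_{\le k}}(\Env(\cP)_{\le k})_{(d_t)_{t\in T}/}$ --- the \emph{coslice}, exactly as in the proof of \cref{lem:change-of-operads-restriction} --- not the slice $(\Env(\cP)_{\le k})_{/(d_t)_{t\in T}}$ you write. Objects of the correct category are morphisms $(d_t)_{t\in T}\to(\varphi(c_s))_{s\in S}$, so the factorisation of structure maps through subsets $T'\subsetneq T$ of the \emph{target} that drives your sieve/cosieve decomposition is simply not available there; what \cref{cor:unital-nonsurj} provides in the coslice is a factorisation through the image in $S$, which is a different stratification and does not by itself produce the limit over $T'\subsetneq T$. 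So the decisive identification is both unproved and organised around the wrong indexing category.

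The paper sidesteps the pointwise colimit entirely. Since $\iota$ and the fibre inclusion $j_{\underline{k}}\colon\Env(\cO)_{\underline{k}}\hookrightarrow\Env(\cO)_{\le k}$ are jointly essentially surjective, it suffices to check the transformation after applying $\iota^*$ and after applying $j_{\underline{k}}^*$. The first case follows from the triangle identities and $\iota^*\iota_*\simeq\id$. For the second, \cref{lem:right-fib-BC} applied to the pullback $\Env(\cO)_{\underline{k}}=\Env(\cO)_{\le k}\times_{\Env(\cP)_{\le k}}\Env(\cP)_{\underline{k}}$ moves $\varphi_!$ past $j_{\underline{k}}^*$; \cref{lem:simplified-right-kan} rewrites $j_{\underline{k}}^*\iota_*$ as the limit $\lim_{S\subsetneq\underline{k}}\gamma_S^*\iota_S^*$ over the poset of proper subsets of $\underline{k}$, which is weakly contractible since $\varnothing$ is initial; \cref{lem:left-kan-limit} then commutes $\varphi_!$ past that limit; and one more application of \cref{lem:right-fib-BC} to the pasted pullback over $\Env(\cP)_{\le k-1}$ handles each $\gamma_S^*\iota_S^*$. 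Substituting this chain of natural equivalences for your stratified-colimit computation repairs the argument; note that it also delivers the compatibility with the cube maps for free, which an abstract identification of the two objects would not.
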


\begin{proof}Throughout the proof, for a finite set $S\in\Fin_{\le k}$ we write $j_S\colon \Env(\cO)_S\hookrightarrow \Env(\cO)_{\le k}$ for the inclusion of the fibre of $\pi_{\cO}$ at $S$. If $|S|\le k-1$ we occasionally implicitly consider $j_S$ to have target $\Env(\cO)_{\le k-1}$ as opposed $\Env(\cO)_{\le k}$. Writing $\underline{k}\coloneqq \{1,\ldots,k\}$ we first consider the pullbacks 
\begin{equation}\label{equ:right-fib-pullback-naturality-layers}
	\begin{tikzcd}
	\cO^\col\wr{\Sigma_k} \dar{\varphi}\rar[hook]{\nu} & \Env(\cO)_{\le k}\dar{\varphi}\\
	\cP^\col\wr{\Sigma_k} \rar[hook]{\nu} & \Env(\cP)_{\le k}.
	\end{tikzcd}\quad\text{and}\quad
	\begin{tikzcd}
	\Env(\cO)_{\underline{k}}\dar{\varphi}\rar[hook]{j_{\underline{k}}} & \Env(\cO)_{\le k}\dar{\varphi}\\
	\Env(\cP)_{\underline{k}} \rar[hook]{j_{\underline{k}}} & \Env(\cP)_{\le k}.
	\end{tikzcd}
\end{equation}
whose vertical maps are right-fibrations as pullbacks of $\Env(\cO)\ra\Env(\cP)$ which is a right-fibration by assumption. Applying \cref{lem:right-fib-BC} to the left pullback square shows that the second transformation in the claim is an equivalence. Using that $\iota$ and $j_{\underline{k}}$ are jointly essentially surjective, in order to show that the first transformation is an equivalence, it suffices to prove that the two transformations obtained applying $\iota^*$ and $\smash{j_{\underline{k}}^*}$ to it, $\iota^*\varphi_!\iota_*\ra \iota^*\iota_*\iota^*\varphi_!\iota_*\simeq \iota^*\iota_*\varphi_!$ and $\smash{j_{\underline{k}}^*\varphi_!\iota_*\ra j_{\underline{k}}^*\iota_*\varphi_!}$, are equivalences. For the first, this follows from an application of the triangle identities and the fact that the counit of $\iota^*\dashv \iota_*$ is an equivalence since $\iota$ is fully faithful. For the second, we first apply \cref{lem:right-fib-BC} to the right-hand pullback in \eqref{equ:right-fib-pullback-naturality-layers} to rewrite $\smash{j_{\underline{k}}^*\varphi_!\iota_*\ra j_{\underline{k}}^*\iota_*\varphi_!}$ as a transformation of the form $\smash{\varphi_!j_{\underline{k}}^*\iota_*\ra j_{\underline{k}}^*\iota_*\varphi_!}$. Then we use \cref{lem:simplified-right-kan} to write $\smash{j_{\underline{k}}^*\iota_*}$ as a limit of functors $\smash{j_{\underline{k}}^*\iota_*\simeq \lim_{S\subsetneq \ul{k}}(\gamma_S^*\iota^*_S)}$ where $\smash{\gamma_{S}\colon \Env(\cO)_{\ul{k}}\ra\Env(\cO)_{S}}$ is the functor which on objects sends $\smash{(c_s)_{s\in \underline{k}}}$ to $\smash{(c_s)_{s\in S}}$ (from \eqref{equ:mapping-space-env} one sees that $\smash{\Env(\cO)_{\ul{k}}}\simeq (\cO^\col)^{\underline{k}}$ and this functor is given by projection onto the factors corresponding to $S$). Combining this with \cref{lem:left-kan-limit} and the fact that the poset of proper subsets of $\underline{k}$ has an initial object, we are left to show that the natural transformations $\varphi_!\gamma_S^*\iota^*_S \ra \gamma_S^*\iota^*_S\varphi_!$ are equivalences for all $S\subsetneq \underline{k}$. This follows from an application of \cref{lem:right-fib-BC} to the pasted pullback
\[
	\begin{tikzcd}
	\Env(\cO)_{\ul{k}}\dar{\varphi}\arrow[r,"\gamma_{S}"]&[5pt] \Env(\cO)_{S}\dar{\varphi}\rar["j_{S}",r,hook]&\Env(\cO)_{\le k-1}\dar{\varphi}\\
	\Env(\cP)_{\ul{k}}\arrow[r,"\gamma_{S}"]&\Env(\cP)_{S}\rar["j_{S}",r,hook]&\Env(\cP)_{\le k-1}
	\end{tikzcd}
\]
whose vertical maps are right-fibrations since they are pulled back from $\varphi\colon \Env(\cO)\ra\Env(\cP)$.
\end{proof}

\begin{proof}[Proof of \cref{thm:naturality-of-layers} ]
Assigning an operad $\cO$ to the inclusion $\iota\colon \Env(\cO)_{\le k-1}\subset \Env(\cO)_{\le k}$ gives a functor $\smash{\Opd^{\un} \ra\Cat^{[1]}}$. Its restriction to $\Opd^{\rf}$ lands in the subcategory $\smash{\Cat^{\subset_\BC,\dec}}$ from \cref{sec:abstract-layer-naturality} since for an operadic right-fibration $\varphi$ the relevant Beck--Chevalley transformations $\varphi_!\iota^*\ra \iota^*\varphi_!$, $\varphi_!\iota_*\ra \iota_*\varphi_!$ and $\varphi_!\nu^*\ra \nu^*\varphi_!$ are equivalences: the first by \cref{lem:change-of-operads-restriction} and the second and third by \cref{lem:auxiliarly-naturality-layers}. Postcomposing the resulting functor $\smash{\Opd^{\rf} \ra\Cat^{\subset_\BC,\dec}}$ with the second functor $\Cat^{\subset_\BC,\dec}\ra \Cat^{[1]\times [1]}$ from \cref{thm:recollement-nat} extends the squares \eqref{equ:recol-discs} to a functor $\Opd^{\rf}\ra  \Cat^{[1]\times [1]}$. By construction its restriction to the right vertical maps in \eqref{equ:recol-discs} is the composition \vspace{-0.2cm}
\[\Opd^{\rf}\xra{(-)^\col\wr\Sigma_k} \Cat\xra{\PSh(-)}\Cat\xra{(-)^{[2]}\xlra{(0\le 2)^*}(-)^{{[1]}}}\Cat^{[1]},\] so we can paste it with the composition \vspace{-0.2cm}
\[
	\Opd^{\rf}\xra{(-)^\col\wr\Sigma_k} \Cat\simeq\Cat_{/*} \xra{\PSh(-)}\Cat_{/\cS} \xra{(-)^{[2]} \xlra{(0\le 2)^*}(-)^{[1]}}\Cat^{[1]}_{/(\cS^{[2]}\ra\cS^{[1]})} \xra{\text{forget}}\Cat^{[1]\times [1]}
\]
to obtain a functor $\Opd^{\rf}\ra  \Cat^{[1]\times [1]}$ that extends the squares \eqref{equ:recol-discs-simpler}.
\end{proof}

\subsection{Smoothing theory for right-modules}\label{sec:smoothing-theory}The natural identification of the layers of $\RMod_\bullet(\cO)$ in Theorems \ref{thm:layers-tower} and \ref{thm:naturality-of-layers} implies in particular that for any operadic right-fibration $\varphi\colon \cO\ra \cP$, the map of towers $\RMod_\bullet(\cO)\ra\RMod_\bullet(\cP)$ resulting from \cref{thm:naturality} consists of pullbacks:

\begin{prop}\label{prop:smoothing-theory-rmod-step}For an operadic right-fibration $\varphi\colon \cO\ra\cP$, the square in $\CMon(\Cat)$
\[
	\begin{tikzcd}
	\RMod_{k}(\cO)\rar\dar[swap]{\iota^*}\rar{\varphi_!}&\RMod_{k}(\cP)\dar{\iota^*}\\
	\RMod_{\ell}(\cO)\rar{\varphi_!}&\RMod_{\ell}(\cP).
	\end{tikzcd}
\]
is cartesian for any $1\le \ell\le k\le \infty$. The same applies for the subcategories of unital right-modules. 
\end{prop}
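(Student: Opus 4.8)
The plan is to reduce the statement for general $1\le \ell\le k\le\infty$ to the case $\ell=k-1$ and $k$ finite, and then deduce that case directly from the natural identification of the layers in \cref{thm:layers-tower} and \cref{thm:naturality-of-layers}. First I would observe that it suffices to treat underlying $\infty$-categories: the forgetful functor $\CMon(\Cat)\ra\Cat$ preserves and detects limits, and by \cref{prop:tower}~\ref{enum:tower-monoidal-tower} and \cref{thm:naturality} all four functors in the square are symmetric monoidal, so a square in $\CMon(\Cat)$ is cartesian as soon as its image in $\Cat$ is. (For the unital variant one additionally uses \cref{cor:tower-thms-unital} and the fact, recorded in \cref{lem:unital-modules-properties}, that $\RMod^\un_k(\cO)$ is the fibre of $\ev_\varnothing\colon\RMod_k(\cO)\ra\cS$, so the unital square is obtained from the non-unital one by a base change that is compatible with all the functors involved — or, alternatively, one reruns the layer argument verbatim using \cref{thm:layers-tower}~\ref{enum:layer-pullback-red}.)

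Next I would reduce to consecutive stages. For $k$ finite, the restriction $\iota^*\colon\RMod_k(\cO)\ra\RMod_\ell(\cO)$ factors as a finite composite of consecutive truncations $\RMod_k\ra\RMod_{k-1}\ra\cdots\ra\RMod_\ell$, and by \cref{thm:naturality} the square in the statement is the horizontal pasting of the corresponding squares for each consecutive pair $(m-1,m)$ with $\ell<m\le k$; a pasting of cartesian squares is cartesian. For $k=\infty$, I would use that $\RMod_\infty(\cO)=\lim_m\RMod_m(\cO)$ by \cref{prop:tower}~\ref{enum:tower-convergence}, that $\varphi_!$ and $\iota^*$ are compatible with the tower structure (again \cref{thm:naturality} and \cref{lem:change-of-operads-restriction}), and that limits commute with limits: thus the square for $(\ell,\infty)$ is the limit over $m\ge\ell$ of the squares for $(\ell,m)$, each of which is cartesian by the finite case, hence it is cartesian. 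So everything comes down to the square for the pair $(k-1,k)$ with $1<k<\infty$.

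For that square, I would invoke the commutative cube whose front face is the $(k-1,k)$-square for $\cO$, whose back face is the $(k-1,k)$-square for $\cP$, and whose connecting maps are the $\varphi_!$'s; by \cref{thm:naturality-of-layers} this entire cube, together with the identifications \eqref{equ:recol-discs} of front and back faces as pullbacks along $(0\le 2)^*\colon\PSh(\cO^\col\wr\Sigma_k)^{[2]}\ra\PSh(\cO^\col\wr\Sigma_k)^{[1]}$ and its $\cP$-analogue, is natural in $\varphi\in\Opd^{\rf}$. Concretely, \cref{thm:layers-tower}~\ref{enum:layer-pullback} expresses $\RMod_k(\cO)$ as the pullback of $\RMod_{k-1}(\cO)\xra{\Omega}\PSh(\cO^\col\wr\Sigma_k)^{[1]}\xla{(0\le2)^*}\PSh(\cO^\col\wr\Sigma_k)^{[2]}$ (and similarly for $\cP$), and by \cref{thm:naturality-of-layers} the map $\varphi_!\colon\RMod_k(\cO)\ra\RMod_k(\cP)$ is the map of pullbacks induced by $\varphi_!$ on all three corners. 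Therefore the front $(k-1,k)$-square for $\cO$ is the pullback of the back $(k-1,k)$-square for $\cP$ along the map $\Omega$-corner: explicitly, $\RMod_k(\cO)\simeq\RMod_{k-1}(\cO)\times_{\PSh(\cP^\col\wr\Sigma_k)^{[1]}}\RMod_k(\cP)$, using that the outer square obtained by pasting the front face with the identification of the back face as a pullback is again a pullback (pasting law), plus the fact that $\varphi_!\colon\PSh(\cO^\col\wr\Sigma_k)^{[i]}\ra\PSh(\cP^\col\wr\Sigma_k)^{[i]}$ for $i=1,2$ fit into the evident commuting diagram with the $\Lambda$'s, $\Omega$'s and $(0\le2)^*$'s. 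Unwinding, this is exactly the assertion that the front $(k-1,k)$-square is cartesian. The main obstacle I anticipate is purely bookkeeping: making precise that \cref{thm:naturality-of-layers} really does deliver the cube as a map of pullback cones (i.e. that the functor $\Opd^{\rf}\ra\Cat^{[1]\times[1]}$ there, evaluated at the morphism $\varphi$, is the map of squares $\eqref{equ:recol-discs}_\cO\ra\eqref{equ:recol-discs}_\cP$ with $\varphi_!$ on each vertex), and then applying the pasting law for pullback squares twice in the right order; once that is set up, no further computation is needed.
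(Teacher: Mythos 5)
Your reductions (to consecutive stages $\ell=k-1$ with $k$ finite via pasting and limits, to underlying categories via the forgetful functor $\CMon(\Cat)\ra\Cat$, and the treatment of the unital variant) match the paper, as does the central idea of feeding the morphism $\varphi$ into the naturality of the layer identification from \cref{thm:naturality-of-layers} to obtain a commutative cube whose front and back faces are the cartesian squares \eqref{equ:recol-discs} for $\cO$ and $\cP$. However, there is a genuine gap in the last step. To conclude that the left face of the cube (the square in the statement) is cartesian from the front and back faces being cartesian, you need the \emph{right} face
\[
	\begin{tikzcd}
	\PSh(\cO^\col\wr\Sigma_k)^{[2]}\rar{(\varphi_!)^{[2]}}\dar[swap]{(0\le 2)^*}&\PSh(\cP^\col\wr\Sigma_k)^{[2]}\dar{(0\le 2)^*}\\
	\PSh(\cO^\col\wr\Sigma_k)^{[1]}\rar{(\varphi_!)^{[1]}}&\PSh(\cP^\col\wr\Sigma_k)^{[1]}
	\end{tikzcd}
\]
to be cartesian as well; only then do the two applications of the pasting/cancellation law go through (front $+$ right cartesian gives the composite square cartesian, and cancelling against the cartesian back face gives the left face). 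You dismiss this with ``the evident commuting diagram with the $\Lambda$'s, $\Omega$'s and $(0\le 2)^*$'s'', but commutativity is far from sufficient: for a general functor $\cO^\col\wr\Sigma_k\ra\cP^\col\wr\Sigma_k$ this square is \emph{not} a pullback. The paper's proof invokes \cref{lem:pullback-composition-rfib-fact} (with $\cE=[1]$, $\cF=[2]$, $\varphi=(0\le 2)$), whose hypothesis requires $\cO^\col\wr\Sigma_k\ra\cP^\col\wr\Sigma_k$ to be a right-fibration; this in turn holds because it is pulled back from the right-fibration $\Env(\cO)\ra\Env(\cP)$ along $\Fin_k^\simeq\subset\Fin$, which is where the hypothesis that $\varphi$ is an operadic right-fibration enters a second time (beyond its use in \cref{thm:naturality-of-layers}). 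Your argument never uses this input at this point, so it cannot be complete.

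A secondary issue: your ``explicit'' identification $\RMod_k(\cO)\simeq\RMod_{k-1}(\cO)\times_{\PSh(\cP^\col\wr\Sigma_k)^{[1]}}\RMod_k(\cP)$ is a pullback over the wrong base. The statement to be proven is a pullback over $\RMod_{k-1}(\cP)$, and the comparison map from $\RMod_{k-1}(\cO)\times_{\RMod_{k-1}(\cP)}\RMod_k(\cP)$ to your fibre product (induced by $\Omega_\cP$) is not an equivalence in general, so even granting your identity the proposition would not follow without further argument. Both problems disappear if you argue as the paper does: establish that the front, back, \emph{and} right faces of the cube are cartesian, and then deduce that the left face is cartesian by the cancellation property of pullback squares.
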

\begin{proof}By taking limits and pasting, we can assume $k<\infty$ and $\ell=k-1$. Since the forgetful functor $\CMon(\Cat)\ra\Cat$ detects pullbacks, it suffices to prove that the square in the claim is a pullback of categories. The functoriality of \eqref{equ:recol-discs} established in \cref{thm:naturality-of-layers} yields a commutative cube
\[
	\begin{tikzcd}[row sep=0.1cm, column sep=0.3cm]
	&\RMod_k(\cP)\arrow[dd]\arrow[rr,"\Lambda",pos=0.75]&&[20pt]\PSh(\cP^\col\wr{\Sigma_k})^{[2]}\arrow[dd]\\
	\RMod_k(\cO)\arrow[dd]\arrow[ur]\arrow[rr,crossing over,"\Lambda",pos=0.75]&&\PSh(\cO^\col\wr{\Sigma_k})^{[2]}\arrow[ur]&\\
	&\RMod_{k-1}(\cP)\arrow[rr,"\Omega",pos=0.75]&&\PSh(\cP^\col\wr{\Sigma_k})^{[1]}\\
	\RMod_{k-1}(\cO)\arrow[ur]\arrow[rr,"\Omega",pos=0.75]&&\PSh(\cO^\col\wr{\Sigma_k})^{[1]} \arrow[ur]\arrow[uu,leftarrow,crossing over]&
	\end{tikzcd}
\]
whose diagonal arrows are induced by left Kan extension along $\varphi$, the two leftmost vertical arrows by restriction along $\iota$, and the two rightmost vertical arrows by restriction along $(0\le 2)\colon [1]\ra[2]$. The front, the back, and the right face in this cubes are cartesian: the former two by \cref{thm:layers-tower} \ref{enum:layer-pullback} and the latter by an application of \cref{lem:pullback-composition-rfib-fact}, using that $\cO^\col\wr{\Sigma_k}\ra \cP^\col\wr{\Sigma_k}$ is a right-fibration because it is pulled back from the right-fibration $\Env(\cO)\ra  \Env(\cP)$ along $\Fin_{\le k}^\simeq \subset \Fin$. This implies that the left face in the cube is cartesian as well, as claimed.
\end{proof}

Setting $\ell=1$ and invoking \cref{thm:bottom-layer}, \cref{prop:smoothing-theory-rmod-step} implies:

\begin{thm}[Smoothing theory]\label{thm:smoothing-theory-rmod} For an operadic right-fibration $\varphi\colon \cO\ra\cP$, the square 
\[
	\begin{tikzcd}
	\RMod_{k}^\un(\cO)\rar\dar[swap]{\iota^*}\rar{\varphi_!}&\RMod_{k}^\un(\cP)\dar{\iota^*}\\
	\PSh(\cO^\col)\rar{\varphi_!}&\PSh(\cP^\col).
	\end{tikzcd}
\]
is a pullback in $\CMon(\Cat)$ for any  $1\le k\le \infty$. 
 \end{thm}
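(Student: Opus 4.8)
The plan is to reduce the case of general $k$ to the inductive step already proved in \cref{prop:smoothing-theory-rmod-step}, and then identify the bottom of the tower via \cref{thm:bottom-layer}. First I would handle the case $k<\infty$. Here the square in the statement is obtained by pasting the squares from \cref{prop:smoothing-theory-rmod-step} for the consecutive restrictions $\RMod_k^\un(\cO)\to\RMod_{k-1}^\un(\cO)\to\cdots\to\RMod_1^\un(\cO)$ — all of which are cartesian in $\CMon(\Cat)$ by that proposition applied with the unital subcategories — with the square
\[
\begin{tikzcd}
\RMod_1^\un(\cO)\rar{\varphi_!}\dar&\RMod_1^\un(\cP)\dar\\
\PSh(\cO^\col)\rar{\varphi_!}&\PSh(\cP^\col)
\end{tikzcd}
\]
in which the vertical maps are the equivalences of symmetric monoidal categories from \cref{thm:bottom-layer}. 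I need to check that this last square commutes, i.e.\ that the equivalence $\RMod_1^\un(\cO)\simeq\PSh(\cO^\col)$ is natural in operadic right-fibrations with respect to left Kan extension; this is essentially the statement that restriction along $\cO^\col\hookrightarrow\Env^{\neq0}(\cO)_{\le1}$ (which is an equivalence) commutes with $\varphi_!$, which follows from \cref{lem:change-of-operads-restriction} together with the compatibility of the Yoneda embedding with left Kan extension, or more directly from the naturality built into \cref{thm:naturality}. A pasting of cartesian squares is cartesian, so this settles $k<\infty$.

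For the remaining case $k=\infty$, I would pass to the limit: by \cref{cor:tower-thms-unital} (the unital analogue of \cref{prop:tower}\ref{enum:tower-convergence}) the tower $\RMod_\bullet^\un(\cO)$ converges, i.e.\ $\RMod^\un(\cO)\simeq\lim_k\RMod_k^\un(\cO)$ in $\CMon(\Cat)$, and likewise for $\cP$. Since the functors $\varphi_!$ and $\iota^*$ are compatible with the tower structure (again by \cref{thm:naturality} and \cref{cor:tower-thms-unital}), the square for $k=\infty$ is the limit over $k$ of the squares for finite $k$. As limits commute with limits, and each finite-$k$ square is a pullback, the limiting square is a pullback as well; one only has to note that $\PSh(\cO^\col)$ and $\PSh(\cP^\col)$ are constant in $k$, so the bottom row of the limit square is unchanged. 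Since the forgetful functor $\CMon(\Cat)\to\Cat$ preserves and detects limits (in particular pullbacks), it suffices throughout to argue in $\Cat$.

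The one genuine subtlety — and the place I would spend the most care — is the commutativity and naturality of the bottom square above: the left vertical equivalence $\RMod_1^\un(\cO)\simeq\PSh(\cO^\col)$ a priori only knows about the Day convolution versus cocartesian monoidal structures, and I must check that under it the oplax-turned-strong monoidal functor $\varphi_!$ on $\RMod_1^\un$ corresponds to the left Kan extension $\varphi_!$ on $\PSh(\cO^\col)$. The cleanest way is to observe that both squares under consideration — the tower-of-pullbacks square and the bottom square — are instances of the naturality functor $\RMod_\bullet^\un(-)\colon\Opd^\un\to\Tow(\CMon(\Cat)^\cgr)$ of \cref{thm:naturality} evaluated on the morphism $\varphi$, combined with the natural identification of the first stage; since the first layer identification in \cref{thm:bottom-layer} is literally ``restrict along $\cO^\col\subset\Env(\cO)_{\le1}$'', and restriction commutes with left Kan extension along right-fibrations by \cref{lem:change-of-operads-restriction} (applied with $\ell=1$), the required compatibility is forced. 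Everything else is formal diagram-chasing with cartesian squares and limits.
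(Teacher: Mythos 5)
Your proposal is correct and follows essentially the same route as the paper: the paper deduces the theorem by setting $\ell=1$ in \cref{prop:smoothing-theory-rmod-step} (whose statement already covers all $1\le\ell\le k\le\infty$, its proof containing exactly your pasting-and-limits reduction) and then invoking \cref{thm:bottom-layer}. The extra care you take over the compatibility of the bottom-layer identification with $\varphi_!$ is a point the paper leaves implicit, and your justification via \cref{lem:change-of-operads-restriction} and the naturality in \cref{thm:naturality} is sound.
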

 
\begin{rem}\label{rem:why-smoothing-thy}We think of \cref{thm:smoothing-theory-rmod} as a ``smoothing theory'' for right-modules because it is for certain choices of $\cO$ (variants of the $E_d$-operad) closely related to a result in the theory of high-dimensional manifolds that goes under the same name; see \cref{sec:smoothing-embcalc}.
\end{rem}

\subsection{Extension to Morita categories}\label{sec:morita-extension}
We now extend the results from the previous subsection from the categories of (truncated) right-modules to their Morita double categories. 
\subsubsection{A tower of Morita categories}\label{sec:tower-morita-general}
Applying the functor $\ALG(-)$ from \eqref{equ:alg-functor-com} to the converging tower $\RMod_\bullet(\cO)\in\Tow(\CMon(\Cat)^{\cgr})$ from \cref{prop:tower} and its reduced variant from \cref{cor:tower-thms-unital} results in converging towers of symmetric monoidal Morita double categories 
\[
	\ModInf_\bullet(\cO)\coloneqq \ALG(\RMod_\bullet(\cO))\quad \text{and}\quad \ModInf^\un_\bullet(\cO)\coloneqq \ALG(\RMod^\un_\bullet(\cO)),
\]
related by a map $\ModInf^\un_\bullet(\cO)\ra \ModInf_\bullet(\cO)$ of towers (use \cref{lem:limits-of-morita} for the convergence). Moreover the naturality of $\RMod_\bullet(-)$ and $\RMod^\un_\bullet(-)$ of  \cref{thm:naturality} and \cref{cor:tower-thms-unital} extends these constructions to functors of the form \begin{equation}\label{equ:naturality-morita-operadic-tower}
 	\smash{\ModInf^\un_\bullet(-), \ModInf_\bullet(-)\colon \Opd^\un \lra \Tow(\CMon(\CCat(\Cat)))}
\end{equation} 
which are related by a natural transformation that on each entry of the tower is a levelwise full subcategory inclusion.
 
\medskip

\noindent Applying $\ALG(-)$ to the symmetric monoidal equivalence from \cref{thm:bottom-layer} and using the identification of the Morita category of a cocartesian symmetric monoidal category in terms of the double category of cospans from \cref{sec:cospan-alg} results in the following identification of $\ModInf_1^\un(\cO)$:

\begin{thm}[The bottom Morita layer] \label{thm:bottom-layer-morita} Restriction along $\cO^{\col}\subset \Env(\cO)$ induces an equivalence 
\[
	\ModInf_1^\un(\cO)\simeq \Cosp(\PSh(\cO^{\col}))
\]
of symmetric monoidal double categories.
\end{thm}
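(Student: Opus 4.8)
\textbf{Proof plan for Theorem \ref{thm:bottom-layer-morita}.}
The plan is to combine three inputs: the symmetric monoidal equivalence $\RMod^\un_1(\cO)\simeq \PSh(\cO^\col)$ from \cref{thm:bottom-layer} (where the right-hand side carries the cocartesian symmetric monoidal structure), the functoriality of $\ALG(-)$ from \eqref{equ:alg-functor-com}, and the identification of the Morita double category of a cocartesian symmetric monoidal category with the double category of cospans from \cref{sec:cospan-alg}. First I would apply the functor $\ALG(-)\colon \CMon(\Cat)^{\cgr}\to\CMon(\CCat(\Cat))$ to the symmetric monoidal equivalence of \cref{thm:bottom-layer}. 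For this I need to check that this equivalence is a morphism in $\CMon(\Cat)^{\cgr}$, i.e.\ that both sides are compatible with geometric realisations and that the equivalence preserves them. The source $\RMod^\un_1(\cO)$ is compatible with geometric realisations by \cref{cor:tower-thms-unital} (it produces a tower in $\Tow(\CMon(\Cat)^{\cgr})$), and the target $\PSh(\cO^\col)$ with the cocartesian structure has good relative tensor products since $\PSh(\cO^\col)$ has all colimits (see \cref{rem:remarks-on-cospans} \ref{enum:cospans-complete}, or rather the discussion preceding it). Since any equivalence of categories preserves all colimits, the equivalence is automatically compatible with geometric realisations. Applying $\ALG(-)$ then yields an equivalence $\ModInf^\un_1(\cO)=\ALG(\RMod^\un_1(\cO))\simeq \ALG(\PSh(\cO^\col))$ in $\CMon(\CCat(\Cat))$, where the right-hand side uses the cocartesian symmetric monoidal structure on $\PSh(\cO^\col)$.

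Next I would invoke \cref{sec:cospan-alg}, which identifies $\ALG(\cC^{\sqcup})\simeq \Cosp(\cC)$ as symmetric monoidal double categories whenever $\cC$ has finite colimits (more precisely, the version with ``good relative tensor products'' from \cref{rem:remarks-on-cospans} \ref{enum:cospans-complete}). Taking $\cC=\PSh(\cO^\col)$, which has all colimits and in particular finite ones, this gives $\ALG(\PSh(\cO^\col))\simeq \Cosp(\PSh(\cO^\col))$ in $\CMon(\CCat(\Cat))$. Composing the two equivalences produces the desired symmetric monoidal equivalence $\ModInf^\un_1(\cO)\simeq \Cosp(\PSh(\cO^\col))$. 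Finally, to justify the claim that this equivalence is ``induced by restriction along $\cO^\col\subset\Env(\cO)$'', I would unwind the construction: on categories of objects the equivalence is $\Ass(\RMod^\un_1(\cO))\simeq \Ass(\PSh(\cO^\col))\simeq \PSh(\cO^\col)$, where the last step uses that every object of a cocartesian symmetric monoidal category carries a unique commutative (hence associative) algebra structure; tracing through \cref{thm:bottom-layer} shows this composite is indeed induced by restriction along $\cO^\col\subset\Env(\cO)$ via the forgetful functor to underlying objects.

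The main obstacle I anticipate is purely bookkeeping rather than conceptual: verifying that the cocartesian symmetric monoidal structure appearing in \cref{thm:bottom-layer} is literally the same structure for which the identification $\ALG(\cC^{\sqcup})\simeq \Cosp(\cC)$ of \cite{HaugsengMelaniSafronov} is stated, and that the compatibility-with-geometric-realisations (or ``good relative tensor products'') hypotheses line up across the two sources. Since \cref{thm:bottom-layer} explicitly produces the cocartesian structure (via the homotopy-category characterisation of \cref{sec:cocart-oprightfib}) and since $\PSh(\cO^\col)$ is presentable, hence in particular cocomplete, both hypotheses are satisfied, so this obstacle is a matter of citing the right statements in the right form. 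No genuinely new argument should be needed beyond \cref{thm:bottom-layer} and the material in Sections \ref{sec:morita} and \ref{sec:cospan-alg}.
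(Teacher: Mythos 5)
Your proposal is correct and follows exactly the paper's own (very terse) argument in \cref{sec:tower-morita-general}: apply $\ALG(-)$ to the symmetric monoidal equivalence of \cref{thm:bottom-layer} and then invoke the identification $\ALG(\cC^\sqcup)\simeq\Cosp(\cC)$ from \cref{sec:cospan-alg}. The extra hypothesis-checking you include (compatibility with geometric realisations on the source via \cref{cor:tower-thms-unital}, good relative tensor products on the target) is exactly the bookkeeping the paper leaves implicit, so no further argument is needed.
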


\noindent Before discussing the higher Morita layers to the next subsection, we observe that the smoothing theory for right-modules from \cref{thm:smoothing-theory-rmod} directly implies the following variant on the level of Morita categories, simply by applying $\ALG(-)$ and using \cref{lem:limits-of-morita}:

\begin{thm}[Morita Smoothing theory] \label{thm:smoothing-theory-morita} For an operadic right-fibration $\varphi\colon \cO\ra\cP$, the square
\[
	\begin{tikzcd}
	\ModInf_{k}^\un(\cO)\rar\dar{\iota^*}\rar{\varphi_!}&\ModInf_{k}^\un(\cP)\dar{\iota^*}\\
	\Cosp(\PSh(\cO^\col))\rar{\varphi_!}&\Cosp(\PSh(\cP^\col))
	\end{tikzcd}
\]
is a pullback of symmetric monoidal double categories for any $1\le k \le \infty$, so in particular induces pullbacks on the level of bimodule categories and on the level of mapping spaces of the latter.
\end{thm}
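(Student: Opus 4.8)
The plan is to deduce \cref{thm:smoothing-theory-morita} directly from the already-established smoothing theory for right-modules, \cref{thm:smoothing-theory-rmod}, by applying the Morita functor $\ALG(-)$ of \eqref{equ:alg-functor-com} and invoking its compatibility with limits from \cref{lem:limits-of-morita}. First I would recall that \cref{thm:smoothing-theory-rmod} provides, for an operadic right-fibration $\varphi\colon\cO\ra\cP$ and each $1\le k\le\infty$, a pullback square in $\CMon(\Cat)$ with corners $\RMod_k^\un(\cO)$, $\RMod_k^\un(\cP)$, $\PSh(\cO^\col)$, $\PSh(\cP^\col)$; moreover, by the naturality statement in \cref{cor:tower-thms-unital} together with \cref{thm:bottom-layer}, this square has corners and edges that land in $\CMon(\Cat)^\cgr$—the functors involved are left Kan extensions and restrictions, which preserve colimits and in particular geometric realisations, and $\PSh(\cO^\col)$ with its cocartesian monoidal structure lies in $\CMon(\Cat)^\cgr$ since $\RMod_1^\un(\cO)\simeq\PSh(\cO^\col)$ does by \cref{lem:unital-modules-properties}\ref{enum:unital-iv}-\ref{enum:unital-v} and \cref{prop:tower}\ref{enum:day-grtp}. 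Thus the square is a left-cone diagram in $\CMon(\Cat)^\cgr$ which is a limit in $\CMon(\Cat)$, so \cref{lem:limits-of-morita} applies and shows that applying \eqref{equ:alg-functor-com} yields a limit in $\CMon(\CCat(\Cat))$.

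The second step is to identify the four corners of the resulting square with the ones named in the statement. By definition of $\ModInf^\un_k(-)$ in \cref{sec:tower-morita-general} we have $\ALG(\RMod^\un_k(\cO))=\ModInf^\un_k(\cO)$ and likewise for $\cP$. For the bottom corners I would invoke \cref{thm:bottom-layer-morita}, which gives $\ALG(\RMod_1^\un(\cO))\simeq\Cosp(\PSh(\cO^\col))$; but here I need to be slightly careful, since the bottom of the smoothing-theory square is $\PSh(\cO^\col)$ with its \emph{cocartesian} symmetric monoidal structure, obtained via the equivalence of \cref{thm:bottom-layer}, and it is precisely applying $\ALG(-)$ to a cocartesian symmetric monoidal category that produces the cospan double category by \cref{sec:cospan-alg} and \cite[Section 6]{HaugsengMelaniSafronov}. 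So the identification of the bottom edge $\varphi_!\colon\Cosp(\PSh(\cO^\col))\ra\Cosp(\PSh(\cP^\col))$ with $\ALG$ applied to $\varphi_!\colon\PSh(\cO^\col)\ra\PSh(\cP^\col)$ is exactly what \cref{thm:bottom-layer-morita} (and its naturality) delivers, the vertical maps being $\ALG$ applied to the restrictions $\iota^*$. This gives the asserted pullback of symmetric monoidal double categories.

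For the final sentence of the statement—the induced pullbacks on bimodule categories and on mapping spaces—I would argue as follows. A pullback square in $\CMon(\CCat(\Cat))$ is detected by the forgetful functor to $\CCat(\Cat)\subset\Fun(\Delta^{\op},\Cat)$, which preserves and detects limits, so it is in particular a pullback of underlying double categories; evaluating the simplicial objects at $[0]$ and $[1]$ (which again preserves limits) gives pullback squares of categories of objects $\Ass(\RMod^\un_k(-))$ and of morphisms $\BMod(\RMod^\un_k(-))$. Restricting the latter over a compatible choice of source and target algebras—i.e.\ taking the fibre of the source-target functor $\BMod(\cC)\ra\Ass(\cC)^{\times2}$, a limit construction—yields the claimed pullback on bimodule categories $\BMod_{A,B}(\RMod^\un_k(\cO))$; and taking cores, or mapping spaces in these categories, is again a limit and hence preserves the pullback. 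I do not expect any genuine obstacle here: the whole argument is a formal transport of \cref{thm:smoothing-theory-rmod} through limit-preserving functors, and the only point requiring care is the bookkeeping that all the symmetric monoidal categories and functors in the source square genuinely lie in the ``$\cgr$'' world so that \cref{lem:limits-of-morita} is applicable—this was already arranged in \cref{prop:tower}\ref{enum:day-grtp}, \cref{cor:tower-thms-unital}, and the proof of \cref{thm:smoothing-theory-rmod} via \cref{prop:smoothing-theory-rmod-step}, whose functors are left adjoints or restrictions and hence colimit-preserving.
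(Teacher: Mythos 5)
Your proposal is correct and follows essentially the same route as the paper, which obtains the result "simply by applying $\ALG(-)$ to the pullback of \cref{thm:smoothing-theory-rmod} and using \cref{lem:limits-of-morita}", with the bottom corners identified via \cref{thm:bottom-layer-morita}. Your additional bookkeeping (that the square lies in $\CMon(\Cat)^{\cgr}$, and the passage to bimodule categories and mapping spaces via limit-preserving evaluations and fibres) is exactly the implicit content of the paper's one-line argument.
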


Instead of going from the $k$th stage immediately to the first, one may also consider the intermediate squares of symmetric monoidal double categories 
\begin{equation}\label{equ:smoothing-theory-morita-only-one-step}
	\begin{tikzcd}
	\ModInf_{k}^\un(\cO)\rar\dar{\iota^*}\rar{\varphi_!}&\ModInf_{k}^\un(\cP)\dar{\iota^*}\\
	\ModInf_{k-1}^\un(\cO)\rar{\varphi_!}&\ModInf_{k}^\un(\cP)
\end{tikzcd}
\end{equation}
which are pullbacks under the same assumptions of \cref{thm:smoothing-theory-morita}, by the gluing property for pullbacks. Note this version is also valid for the bigger categories of non-unital right-modules; we only passed to unital right-modules before to identify the first stages in terms of cospans.

\begin{ex}[Change of tangential structure]\label{sec:terminal-operadic}
Given a reduced operad $\cU$ and a map 
\[
	\begin{tikzcd}[row sep=0.3cm,column sep=0.3cm]
	A\arrow[rr,"\varphi"]\arrow[dr,"\theta",swap]&&B\arrow[dl,"\nu"]\\
	&\BAut(\cU)&
	\end{tikzcd}
\]
of tangential structures for $\cU$ in the sense of \cref{sec:tangential-structures-operads}, the induced map of operads $\varphi\colon \cU^\theta \ra \cU^{\nu}$ is an operadic right-fibration by \cref{lem:pullbacks-groupoid-type}. \cref{thm:smoothing-theory-morita} thus in particular implies that the tower $\ModInf^\un_\bullet(\cU^{\theta})$ is pulled back from $\ModInf^\un_{\bullet}(\cU^{\nu})\ra \Cosp(\cS_{/B})$ along the map $\Cosp(\cS_{/A})\ra \Cosp(\cS_{/B})$ between the first stages induced by $\varphi$; here we used the straightening--unstraightening equivalence $\PSh(X)\simeq \cS_{/X}$ for groupoids $X$. In particular, setting $\nu=\id_{\BAut(\cU)}$, we see that for any choice of $\theta$, the tower $\ModInf^\un_\bullet(\cU^{\theta})$ is pulled back from $\ModInf^\un_{\bullet}(\cU^{\id})\ra \Cosp(\cS_{/\BAut(\cU)})$.

\medskip

\noindent 
More generally, given a tower of tangential structures $\smash{\theta_\smallsquare \colon B_\smallsquare\ra \BAut(\cU_{\le \smallsquare})}$ as in \cref{sec:tangential-structures-operads}, we get a tower $\smash{\cU_{\le \smallsquare}^{\theta_\smallsquare}}$ of unital operads to which we can apply $\smash{\ModInf^\un_\bullet(-)}$ to obtain a tower of towers of symmetric monoidal double categories ${\ModInf^\un_\bullet(\cO^{\smallsquare}_{\le \smallsquare})}$  which is pulled back from the tower of towers $\smash{\ModInf^\un_\bullet(\cO^{\id_{\BAut(\cO_{\le \smallsquare})}}_{\le \smallsquare})}$ along the map of towers $\smash{\Cosp(\cS_{/B_\smallsquare})\ra \Cosp(\cS_{/\BAut(\cU_{\le \smallsquare})})}$ induced by the tower of tangential structures $\theta_\smallsquare$.
\end{ex}

\subsubsection{The higher Morita layers}\label{sec:higher-morita-layers}Fixing associative algebras $A,B \in \Ass(\RMod(\cO))$ viewed as objects of the double category $\ModInf(\cO)$, the tower of double categories $\ModInf_\bullet(\cO)$ induces on mapping categories between $A$ and $B$ (see Sections \ref{sec:monoids} and \ref{sec:morita}) a tower of categories of bimodules
\begin{equation}\label{equ:bimodule-tower-rmod}
	\ModInf_\bullet(\cO)_{A,B} = \BMod_{A_\bullet,B_\bullet}(\RMod_\bullet(\cO))
\end{equation}
where $A_k,B_k\in \Ass(\RMod_k(\cO))$ are the $k$-truncations of the $A$ and $B$ (to ease the notation we often omit the $k$-subscripts). For $A=B$ the monoidal unit, this recovers the tower $\RMod_\bullet(\cO)$ whose layers we described in \cref{thm:layers-tower} (see \cref{sec:bimod}). In this section we extend this description to nontrivial $A$ and $B$. To do so we proceed similarly to \cref{sec:layers} and consider functors
\begin{equation}\label{equ:module-layers-functors}
	\begin{tikzcd} 
	\ModInf_{k-1}(\cO)_{A,B} \rar[shift left=1.2ex]{\iota_{AB!}} \rar[shift left=-1.2ex,swap]{\iota_{AB*}}& \ModInf_{k}(\cO)_{A,B} \rar{\nu^* \ U_{AB}} \lar &[0.3cm] \PSh(\cO^\col\wr\Sigma_k)
	\end{tikzcd}
\end{equation}
where the left middle unlabelled arrow is the functor $\smash{\iota_{AB}^*\colon  \ModInf_{k}(\cO)_{A,B}\ra  \ModInf_{k-1}(\cO)_{A,B}}$ featuring in \eqref{equ:bimodule-tower-rmod}, $\iota_{AB!}$ and $\iota_{AB*}$ are its left and right adjoints (which exist by \cref{lem:left-adjoint-bimodules} and \cref{prop:tower} \ref{enum:day-truncated}), $\nu\colon \cO^\col\wr\Sigma_k\hookrightarrow \RMod_k(\cO)$ is as in \cref{sec:layers}, and $U_{AB}\colon \ModInf_{k}(\cO)_{A,B}\ra \RMod_k(\cO)$ is the forgetful functor. Analogously to the non-bimodule situation of \cref{sec:layers} there are functors
\begin{align*}
	\Lambda_{AB}&\coloneqq\big(\nu^*U^{\ }_{AB}\iota^{\ }_{AB!}\iota_{AB}^*\ra \nu^*U^{\ }_{AB}\ra \nu^*U^{\ }_{AB}\iota^{\ }_{AB*}\iota_{AB}^*\big)\colon \ModInf_{k}(\cO)_{A,B}\lra \PSh(\cO^\col\wr\Sigma_k)^{[2]} && \text{and} \\
	\Omega_{AB}&\coloneqq\big(\nu^*U^{\ }_{AB}\iota^{\ }_{AB!}\ra \nu^*U^{\ }_{AB}\iota^{\ }_{AB*}\big)\colon \ModInf_{k-1}(\cO)_{A,B}\lra \PSh(\cO^\col\wr\Sigma_k)^{[1]}
\end{align*}
that fit into a commutative square of categories
\begin{equation}\label{equ:recol-discs-modules}\begin{tikzcd}
	\ModInf_{k}(\cO)_{A,B} \rar{\Lambda_{AB}} \dar[swap]{\iota_{AB}^*} & \PSh(\cO^\col\wr\Sigma_k)^{[2]} \dar{(0\le 2)^*} \\
	\ModInf_{k-1}(\cO)_{A,B}  \rar{\Omega_{AB}} & \PSh(\cO^\col\wr\Sigma_k)^{[1]}.\end{tikzcd}
	\end{equation}
The construction of $\Lambda_{AB},\Omega_{AB}$ and the square goes as in \cref{sec:layers} by replacing the role of \eqref{equ:layers-functors} with \eqref{equ:module-layers-functors} (for $\Omega_{AB}$ this uses that $\iota_{AB*}$ is fully faithful which follows e.g.\,from \cref{lem:bimodule-tower-is-presheaf-tower} below).

\begin{thm}[The higher Morita layers]\label{thm:layers-tower-module}Fix $1<k<\infty$ and associative algebras $A,B\in \Ass(\RMod(\cO))$ whose underlying right-modules are unital in the sense of \cref{sec:unital-right mod}.
\begin{enumerate}
	\item\label{enum:layer-pullback-module} The square of categories \eqref{equ:recol-discs-modules} is cartesian.\item\label{enum:layer-pullback-gc-module} If $\cO$ is groupoid-coloured, then also the commutative square of categories
	\begin{equation}\label{equ:recol-discs-simpler-module}
		\begin{tikzcd}
		\ModInf_{k}(\cO)_{A,B}\rar{\Lambda^c_{AB}} \dar[swap]{\iota_{AB}^*} 	&\cS^{[2]}\dar{(0\le 2)^*}\\
		\ModInf_{k-1}(\cO)_{A,B} \rar{\Omega^c_{AB}} &\cS^{[1]}
		\end{tikzcd}
	\end{equation}
	obtained by extending \eqref{equ:recol-discs-modules} to the right using functor $\colim\colon \PSh(\cO^\col\wr\Sigma_k)\ra\cS$ is cartesian.
	\item\label{enum:layer-pullback-red-module} Both \ref{enum:layer-pullback} and \ref{enum:layer-pullback-gc} are also valid with $\ModInf^\un_{\bullet}(\cO)$ in place of $\ModInf_{\bullet}(\cO)$.
	\end{enumerate}
\end{thm}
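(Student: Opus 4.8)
The plan is to reduce \cref{thm:layers-tower-module} to the non-module case \cref{thm:layers-tower} via a comparison with presheaves on a category of ``free bimodules on representables''. The key observation is that, by \cref{lem:modules-as-presheaves} and the remark following it, there is for each $1\le k\le\infty$ a natural equivalence $\ModInf_k(\cO)_{A,B}\simeq \PSh(\BMod^{\frep}_{A_k,B_k}(\RMod_k(\cO)))$ between the category of $(A_k,B_k)$-bimodules and the category of presheaves on the full subcategory $\cD_k\coloneq \BMod^{\frep}_{A_k,B_k}(\RMod_k(\cO))$ spanned by free bimodules on representables. First I would analyse the category $\cD_k$: its objects are indexed by objects $(c_s)_{s\in S}\in\Env(\cO)_{\le k}$ via $(c_s)_{s\in S}\mapsto F_{A_k B_k}(E_{(c_s)_{s\in S}})$, and using the monadic bar resolution from \cref{sec:free-bimodules} together with \cref{lem:day-con-env} one computes the mapping spaces $\Map_{\cD_k}(F_{AB}(E_{(c_s)_{s\in S}}), F_{AB}(E_{(d_t)_{t\in T}}))$ in terms of the mapping spaces of $\Env(\cO)_{\le k}$ twisted by the bar construction of $A$ and $B$. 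The crucial structural point is that the ``number of inputs'' grading is still controlled: one shows that $\cD_{k-1}\subset \cD_k$ is a decomposition pair in the sense of \cref{dfn:decom-pair}, with complement $\cD_k\backslash \cD_{k-1}$ equivalent to the relevant wreath-product-type category, and that the hypothesis of \cref{thm:recollement} is satisfied.

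Concretely, I expect the following identifications. The inclusion $\iota\colon \Env(\cO)_{\le k-1}\hookrightarrow\Env(\cO)_{\le k}$ induces $F_{AB}\circ y$ on both sides, and the square \eqref{equ:recol-discs-modules} should be obtained by applying $\PSh(-)$ to a morphism of decomposition pairs $(\cD_{k-1}\subset\cD_k)$, after identifying the complement with (a copy of) $\cO^\col\wr\Sigma_k$. In fact, the free-bimodule functor restricted to the ``exactly $k$ inputs'' part should be an equivalence onto $\cD_k\backslash\cD_{k-1}$: a free bimodule $F_{AB}(E_{(c_s)_{s\in S}})$ with $|S|=k$ has, after applying $U_{AB}$ and using unitality of $A,B$ together with \cref{lem:day-con-env}, a ``leading term'' $E_{(c_s)_{s\in S}}$ that is not hit from lower strata, and the monoidal unit components of $A$ and $B$ contribute the identity. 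I would then verify the factorisation conditions of \cref{dfn:decom-pair} by combining the factorisation of maps in $\Env(\cO)_{\le k}$ through inclusions $(d_t)_{t\in\im(\bar\alpha)}\subseteq(d_t)_{t\in T}$ from \cref{cor:unital-nonsurj} with the bar-construction description of maps in $\cD_k$, and check the \cref{thm:recollement}-hypothesis by the same strongly-cartesian-cube argument (invoking \cite[Lemma 5.10.4]{MunsonVolic}) used in the proof of \cref{thm:layers-tower}\ref{enum:layer-pullback}, now carried out levelwise in the bar resolution and using that $U_{AB}$ is conservative and colimit-preserving by \cite[4.3.3.9]{LurieHA}. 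Item \ref{enum:layer-pullback-module} then follows from \cref{thm:recollement}; Item \ref{enum:layer-pullback-gc-module} follows by pasting with \cref{lem:pullback-composition-rfib-fact} exactly as in the proof of \cref{thm:layers-tower}\ref{enum:layer-pullback-gc}, using that $\cO^\col\wr\Sigma_k\ra\ast$ is a right-fibration when $\cO^\col$ is a groupoid; and Item \ref{enum:layer-pullback-red-module} follows by pasting with the pullback square cutting out unital bimodules, as before.

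An alternative, possibly cleaner route I would also consider is to realise \eqref{equ:recol-discs-modules} directly as $\ALG$ applied fibrewise: the functors $\iota_{AB}^*,\iota_{AB!},\iota_{AB*}$ are, by \cref{lem:left-adjoint-bimodules} and \cref{lem:monoidal-functors-preserve-free-bimodules}, the bimodule-level incarnations of $\iota^*,\iota_!,\iota_*$, so that $\Lambda_{AB}$ and $\Omega_{AB}$ are obtained from $\Lambda,\Omega$ by precomposition with $U_{AB}$ and using the Beck--Chevalley comparisons of \eqref{equ:general-bimodule-diagram}. One would then want to deduce the pullback square \eqref{equ:recol-discs-modules} from \eqref{equ:recol-discs} by a descent argument along $U_{AB}$, using that $U_{AB}$ is monadic (\cite[4.7.3.9]{LurieHA}) and that the decomposition square is compatible with the monad $F_{AB}U_{AB}$ — i.e.\ the monad preserves the relevant subcategories and the Beck--Chevalley transformations. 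The main obstacle in either approach is the same: establishing that the stratification of $\Env(\cO)_{\le k}$ by number of inputs is inherited by the bimodule categories in a way compatible with \emph{all four} functors $\iota_{AB}^*,\iota_{AB!},\iota_{AB*},\nu^*U_{AB}$ and their (co)units — in particular that the right Kan extension $\iota_{AB*}$, which is not simply a levelwise construction on the bar resolution, still admits the limit-formula description of \cref{lem:simplified-right-kan} after passing through $U_{AB}$ (this should follow from the fact that $U_{AB}$ preserves limits by \cite[4.3.3.9]{LurieHA}, but it requires care). Once that compatibility is in place, the combinatorial core of the argument is word-for-word the proof of \cref{thm:layers-tower}.
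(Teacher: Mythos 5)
Your first route is exactly the paper's proof: pass to presheaves on the full subcategory of free bimodules on representables (the paper's \cref{lem:bimodule-tower-is-presheaf-tower}, the tower refinement of \cref{lem:modules-as-presheaves}), check that the truncation filtration of that subcategory is a decomposition pair whose complement is identified with $\cO^\col\wr\Sigma_k$ using unitality of $A$ and $B$, verify the hypothesis of \cref{thm:recollement} by the same non-surjective-image/strongly-cartesian-cube argument keyed off the analogue of $\pi_\cO$ (the paper uses a reference functor to bipointed finite sets recording the $S=S_L\sqcup S_M\sqcup S_R$ decomposition of \eqref{equ:free-bimodule-maps}), and deduce items (ii) and (iii) by pasting as in \cref{thm:layers-tower}. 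The proposal is correct and takes essentially the same approach.
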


\begin{rem}The condition in \cref{thm:layers-tower-module} that the underlying right-modules of of $A$ and $B$ are unital is not essential, but the statement becomes slightly more involved without that assumption: the category $\cO^\col \wr \Sigma_k$ needs to be replaced by the pullback $\smash{\BMod^\frep_{A,B}(\RMod(\cO))_{\leq k} \times_{(\Fin_{*_L\sqcup *_R})} \Fin_k^\simeq}$ that appears in the proof of \cref{thm:layers-tower-module} below.
\end{rem}

Before proving \cref{thm:layers-tower-module}, we describe the functors $\smash{(U_{AB}\iota^{\ }_{AB*})}$ and $\smash{(U_{AB}\iota^{\ }_{AB!})}$ featuring in the definition of $\smash{\Lambda_{AB}}$ and $\smash{\Omega_{AB}}$ in terms of the variants $\iota_*,\iota_!\colon \RMod_{k-1}(\cO)\ra  \RMod_{k}(\cO)$ of these functors in the non-bimodule situation of \cref{sec:layers}:

\begin{lem}\label{lem:match-latch-module-simpler}For $M\in\ModInf_{k-1}(\cO)_{A,B}$, there are natural equivalences 
\[
	(U_{AB}\  \iota^{\ }_{AB*})(M)\simeq (\iota_*\ U_{AB})(M)\quad\text{and}\quad(U^{\ }_{AB}\  \iota^{\ }_{AB!})(M)\simeq \big\lvert[p]\mapsto A_k\otimes\iota_!\big(A_{k-1}^{\otimes p}\otimes M\otimes B_{k-1}^{\otimes p}\big)\otimes B_k \big\rvert
\]
where the rightmost object is the realisation of the indicated simplicial object in $\RMod_k(\cO)$ which is induced from the monadic bar construction $\Barc_{U_{AB}F_{AB}}(U_{AB}F_{AB},U_{AB}(M))_\bullet$ of \cref{sec:free-bimodules}.
\end{lem}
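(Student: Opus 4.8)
The strategy is to analyze the two functors $U_{AB}\iota_{AB*}$ and $U_{AB}\iota_{AB!}$ separately, using the fact that bimodules are monadic over $\RMod(\cO)$ (\cref{sec:free-bimodules}) together with the Beck--Chevalley comparisons between the forgetful/free functors and the Kan extension functors $\iota_*,\iota_!$.

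For the right Kan extension identity $U_{AB}\iota_{AB*}\simeq \iota_* U_{AB}$, the key point is that the forgetful functors $U_{AB}\colon \BMod_{A,B}(\RMod_k(\cO))\ra \RMod_k(\cO)$ and $U_{AB}\colon \BMod_{A,B}(\RMod_{k-1}(\cO))\ra \RMod_{k-1}(\cO)$ preserve and detect limits (by \cite[4.3.3.3, 4.3.3.9]{LurieHA}) and that the left square of \eqref{equ:general-bimodule-diagram} for the monoidal restriction functor $\iota^*\colon \RMod_k(\cO)\ra\RMod_{k-1}(\cO)$ commutes, i.e.\ $U_{AB}\circ \iota_{AB}^* \simeq \iota^*\circ U_{AB}$. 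Taking right adjoints in this commuting square of right-adjoint-admitting functors (all four functors $U_{AB},\iota^*$ admit right adjoints by \cref{lem:left-adjoint-bimodules} and \cref{prop:tower} \ref{enum:day-truncated}) yields a Beck--Chevalley transformation $\iota_* U_{AB}\ra U_{AB}\iota_{AB*}$; I would show it is an equivalence by checking it after applying $U_{AB}$ once more, which is harmless since $U_{AB}$ detects equivalences, and then reducing to the fact that $U_{AB}\iota_{AB}^*\simeq \iota^* U_{AB}$ on the nose. (Alternatively this is immediate from \cref{lem:bimodule-tower-is-presheaf-tower} once that is available, but the direct argument via mates is cleaner and self-contained.)

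For the left Kan extension formula, I would use that every bimodule $M$ is the geometric realization of its monadic bar resolution $\Barc_{U_{AB}F_{AB}}(U_{AB}F_{AB}, U_{AB}(M))_\bullet$, whose $p$-simplices after forgetting are $(U_{AB}F_{AB})^{p+1}U_{AB}(M)$, i.e.\ (using the explicit unit formula in \cref{sec:free-bimodules}) given by $A_{k-1}^{\otimes p+1}\otimes U_{AB}(M)\otimes B_{k-1}^{\otimes p+1}$ in $\RMod_{k-1}(\cO)$. Since $\iota_{AB!}$ is a left adjoint it preserves this geometric realization, and since $U_{AB}$ preserves geometric realizations as well (colimits are detected by $U_{AB}$), one gets $U_{AB}\iota_{AB!}(M)\simeq |\,[p]\mapsto U_{AB}\iota_{AB!}F_{AB}^{p}\cdots\,|$. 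The main computational input is then the identification of $U_{AB}\iota_{AB!}F_{AB}$ on a right-module $X\in\RMod_{k-1}(\cO)$: by \cref{lem:monoidal-functors-preserve-free-bimodules} applied to the monoidal functor $\iota_!$... but $\iota_!$ is only oplax, not monoidal; instead I would argue directly that $\iota_{AB!}$ applied to a free $(A_{k-1},B_{k-1})$-bimodule $F_{AB}(X)=A_{k-1}\otimes X\otimes B_{k-1}$ is computed by the Beck--Chevalley square obtained from the commuting left square of \eqref{equ:general-bimodule-diagram} by passing to horizontal left adjoints, giving $U_{AB}\iota_{AB!}F_{AB}(X)\simeq A_k\otimes \iota_!(X)\otimes B_k$. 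Substituting $X=A_{k-1}^{\otimes p}\otimes M\otimes B_{k-1}^{\otimes p}$ and using that $\iota_!$ (being a left adjoint, hence oplax-monoidal, with its oplax structure on representables being an equivalence by \eqref{equ:mapping-space-env}) interacts correctly with the bar differentials yields the claimed simplicial object $[p]\mapsto A_k\otimes\iota_!(A_{k-1}^{\otimes p}\otimes M\otimes B_{k-1}^{\otimes p})\otimes B_k$.

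\textbf{Main obstacle.} The delicate step is ensuring the simplicial identifications are coherent, not just levelwise: one must check that the face and degeneracy maps of the resolution $\Barc_{U_{AB}F_{AB}}(U_{AB}F_{AB},U_{AB}(M))_\bullet$ match, under the levelwise equivalences $U_{AB}\iota_{AB!}(\text{$p$-simplices})\simeq A_k\otimes\iota_!(A_{k-1}^{\otimes p}\otimes M\otimes B_{k-1}^{\otimes p})\otimes B_k$, with the structure maps of the asserted simplicial object in $\RMod_k(\cO)$. This is a compatibility of Beck--Chevalley transformations with the monad multiplication $U_{AB}F_{AB}U_{AB}F_{AB}\ra U_{AB}F_{AB}$, which unwinds to the statement that the oplax-monoidal structure maps of $\iota_!$ are compatible with the multiplication maps $A_{k-1}\otimes A_{k-1}\ra A_{k-1}$ and $B_{k-1}\otimes B_{k-1}\ra B_{k-1}$ after applying $\iota_!$ and comparing to $A_k,B_k$; I expect this to follow formally from naturality of the mate/Beck--Chevalley construction (as packaged in \cref{sec:mate-calculus}) together with \cref{lem:change-of-operads-restriction}, but spelling it out carefully is where the real work lies.
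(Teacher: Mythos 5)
Your overall strategy---adjunction bookkeeping between $U_{AB},F_{AB}$ and $\iota^*,\iota_!,\iota_*$ for the first identity, and the monadic bar resolution together with a commutation of free functors with $\iota_!$ for the second---is the same as the paper's. However, two of your adjunction steps are off in ways that leave genuine gaps. For the first equivalence: the transformation between $U_{AB}\iota_{AB*}$ and $\iota_* U_{AB}$ obtained as the mate of $U_{AB}\iota_{AB}^*\simeq\iota^* U_{AB}$ with respect to the vertical adjunctions is \emph{not} automatically invertible, and your proposed check (``apply $U_{AB}$ once more and use that it detects equivalences'') does not parse: both functors already take values in $\RMod_k(\cO)$, so there is no further $U_{AB}$ to apply. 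The clean route, which is the paper's, is to start from the equivalence $F_{AB}\iota^*\simeq\iota_{AB}^*F_{AB}$ of \cref{lem:monoidal-functors-preserve-free-bimodules} (applicable because $\iota^*$ is strong monoidal by \cref{prop:tower}) and take right adjoints of the two composite functors; equivalent functors have equivalent right adjoints, so $\iota_* U_{AB}\simeq U_{AB}\iota_{AB*}$ with nothing left to verify. Your mate being invertible is in fact equivalent to that lemma, so the claim that your argument is ``self-contained'' and avoids it is not correct.

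For the second equivalence your plan matches the paper's, but the key input $U_{AB}\iota_{AB!}F_{AB}(X)\simeq A_k\otimes\iota_!(X)\otimes B_k$ does not come from ``passing to horizontal left adjoints in the left square of \eqref{equ:general-bimodule-diagram}''---that mate is the transformation $F_{AB}\iota^*\ra\iota_{AB}^*F_{AB}$, which compares the wrong functors (and there is no left square of \eqref{equ:general-bimodule-diagram} for $\iota_!$ itself, since that construction requires lax monoidality). The correct move is to take left adjoints of the two composites in the equivalence $U_{AB}\iota_{AB}^*\simeq\iota^*U_{AB}$, which yields $\iota_{AB!}F_{AB}\simeq F_{AB}\iota_!$ for free---no monoidality of $\iota_!$ and no Beck--Chevalley condition to check---and then apply $U_{AB}$. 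Finally, the ``main obstacle'' you identify is not one: the statement defines the simplicial object as the one \emph{induced from} the monadic bar construction, so the lemma only requires a levelwise identification of its simplices; no independent matching of face and degeneracy maps against a separately defined simplicial object is needed, and the paper's proof accordingly does not address it.
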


\begin{proof}
The first equivalence follows from applying right adjoints to the equivalence $F_{AB}\iota^*\simeq \iota_{AB}^* F_{AB}$ of functors $\RMod_k(\cO)\ra \ModInf_{k-1}(\cO)_{A,B}$ resulting from  \cref{lem:monoidal-functors-preserve-free-bimodules}. For the second equivalence, we write $M$ as the realisation of the monadic bar construction $\Barc_{U_{AB}F_{AB}}(U_{AB}F_{AB},U_{AB}(M))_\bullet\in\Fun(\Delta^{\op},\ModInf_{k-1}(\cO)_{A,B})$ with $p$-simplices $(F_{AB}U_{AB})^{p+1}(M)$. Since $\smash{U_{AB}\ \iota_{AB!}}$ preserves colimits as a composition of left adjoints, $\smash{U_{AB}\ \iota_{AB!}}(M)$ is the realisation of $\smash{U_{AB}\ \iota_{AB!}}(\Barc_{U_{AB}F_{AB}}(U_{AB}F_{AB},U_{AB}(M))_\bullet)$. Combining the equivalence $\smash{\iota_{AB!}F_{AB}\simeq F_{AB}\iota_!}$ resulting from taking left adjoints in $\smash{U_{AB}\ \iota^*_{AB}\simeq \iota^*\ U_{AB}}$ with the description of the monadic bar construction after applying $U_{AB}$ from \cref{sec:free-bimodules}, we see that this simplicial object has $p$-simplices as in the claim: $U_{AB}\ \iota_{AB!}(F_{AB}U_{AB})^{p+1}(M)\simeq U_{AB}F_{AB}\iota_!\ U_{AB}(F_{AB}U_{AB})^{p}(M)\simeq A_k\otimes\iota_!(A_{k-1}^{\otimes p}\otimes M\otimes B_{k-1}^{\otimes p})\otimes B_k$.
\end{proof}

As for \cref{thm:layers-tower}, the proof of \cref{thm:layers-tower-module} is an application of the decomposition result for presheaf categories \cref{thm:recollement}. To apply it, we rewrite \eqref{equ:bimodule-tower-rmod} in terms of the full subcategory $\smash{\BMod^{\frep}_{A,B}(\RMod(\cO))\subset \BMod_{A,B}(\RMod(\cO))}$ from \cref{sec:modules-day}: the essential image of the composition
\[
	\smash{\Env(\cO)\xlhra{y}\RMod(\cO)\xlra{F_{AB}}\BMod_{A,B}(\RMod(\cO)).}
\] 
Restricting to the essential images of the subcategories $\Env(\cO)_{\le k}\subset \Env(\cO)$ for varying $k$ yields a tower $\smash{\BMod^{\frep}_{A,B}(\RMod(\cO))_{\le \bullet}}$ of full subcategory inclusions which in turn induces a tower $\smash{\PSh(\BMod^{\frep}_{A,B}(\RMod(\cO))_{\le \bullet})}$ of presheaf categories by restriction. The latter is equivalent to \eqref{equ:bimodule-tower-rmod}:

\begin{lem}\label{lem:bimodule-tower-is-presheaf-tower}
There is an equivalence of towers of categories \begin{equation}\label{equ:tower-of-bimodules-is-presheaf-tower}
	\ModInf_{\bullet}(\cO)_{A,B}\simeq \PSh(\BMod^{\frep}_{A,B}(\RMod(\cO))_{\le \bullet})
\end{equation}
which for $\bullet=\infty$ agrees with the equivalence from \cref{lem:modules-as-presheaves}.
\end{lem}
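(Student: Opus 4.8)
The plan is to reduce \eqref{equ:tower-of-bimodules-is-presheaf-tower} to the $k=\infty$ case handled in \cref{lem:modules-as-presheaves}, by checking that all the structure involved is compatible with the truncation filtration. Concretely, I would argue as follows. First recall from \cref{sec:modules-day} that for each $1\le k\le\infty$ the functor \eqref{equ:free-rep-bimod-colim-fun}, applied now with the symmetric monoidal structure on $\RMod_k(\cO)=\PSh(\Env(\cO)_{\le k})$ from \cref{prop:tower} \ref{enum:day-truncated} in place of the Day convolution on $\RMod(\cO)$, produces an equivalence $\PSh(\BMod^{\frep}_{A_k,B_k}(\RMod_k(\cO)))\simeq \BMod_{A_k,B_k}(\RMod_k(\cO))=\ModInf_k(\cO)_{A,B}$ with the restricted Yoneda embedding as inverse; as pointed out in the remark following \cref{lem:modules-as-presheaves}, that proof goes through for an arbitrary symmetric monoidal structure on a presheaf category, so this is legitimate. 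Thus the right-hand side of \eqref{equ:tower-of-bimodules-is-presheaf-tower} at level $k$ is, after identifying $\BMod^{\frep}_{A_k,B_k}(\RMod_k(\cO))$ with $\BMod^{\frep}_{A,B}(\RMod(\cO))_{\le k}$, precisely $\ModInf_k(\cO)_{A,B}$. It remains to show that these level-wise equivalences assemble into an equivalence of towers, i.e.\ are compatible with the structure maps.

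The key computation is therefore an identification $\BMod^{\frep}_{A,B}(\RMod(\cO))_{\le k}\simeq \BMod^{\frep}_{A_k,B_k}(\RMod_k(\cO))$ compatible with the inclusions as $k$ varies. For this I would use that the left Kan extension $\iota_!\colon\RMod_{k}(\cO)\to\RMod_{k'}(\cO)$ along $\Env(\cO)_{\le k}\subset\Env(\cO)_{\le k'}$ is symmetric monoidal (this follows from \cref{lem:left-kan-monoidal} together with \cref{prop:tower}, since it is the image of the symmetric monoidal inclusion of envelopes under \eqref{equ:left-kan-is-oplax}), so by \cref{lem:monoidal-functors-preserve-free-bimodules} it sends free bimodules to free bimodules: $\iota_{AB!}F_{A_k,B_k}\simeq F_{A_{k'},B_{k'}}\iota_!$. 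Combined with the compatibility of the Yoneda embedding with left Kan extension ($\iota_!\circ y\simeq y\circ\iota$, naturality of Yoneda), this shows that the composite $\Env(\cO)_{\le k}\xhookrightarrow{y}\RMod_k(\cO)\xrightarrow{F_{A_k,B_k}}\ModInf_k(\cO)_{A,B}\xrightarrow{\iota_{AB!}}\ModInf_{k'}(\cO)_{A,B}$ agrees with $\Env(\cO)_{\le k}\subset\Env(\cO)_{\le k'}\xhookrightarrow{y}\RMod_{k'}(\cO)\xrightarrow{F_{A_{k'},B_{k'}}}\ModInf_{k'}(\cO)_{A,B}$, which is exactly what is needed to identify the filtered system $\BMod^{\frep}_{A_\bullet,B_\bullet}(\RMod_\bullet(\cO))$ with the tower of full subcategories $\BMod^{\frep}_{A,B}(\RMod(\cO))_{\le\bullet}$ inside $\BMod^{\frep}_{A,B}(\RMod(\cO))$ — here one uses that $\iota_{AB!}$ is fully faithful, which holds by \cref{lem:left-adjoint-bimodules} since the symmetric monoidal $\iota_!$ has a fully faithful left adjoint (again left Kan extension along a fully faithful functor).

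Given this identification of the full-subcategory towers, the level-wise equivalences $\ModInf_k(\cO)_{A,B}\simeq\PSh(\BMod^{\frep}_{A,B}(\RMod(\cO))_{\le k})$ are natural: on each side the structure map $k'\to k$ is restriction along the subcategory inclusion $\BMod^{\frep}_{A,B}(\RMod(\cO))_{\le k}\subset\BMod^{\frep}_{A,B}(\RMod(\cO))_{\le k'}$ — on the presheaf side by definition, and on the bimodule side because $\iota_{AB}^*$ is right adjoint to $\iota_{AB!}$ and $\iota_{AB!}$ is the colimit-preserving extension of this subcategory inclusion, so under \cref{lem:modules-as-presheaves} it corresponds to restricted Yoneda followed by restriction. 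Since the $k=\infty$ equivalence is the one from \cref{lem:modules-as-presheaves} by construction, this finishes the proof. I expect the main obstacle to be bookkeeping rather than conceptual: one must be careful that the equivalence \eqref{equ:free-rep-bimod-colim-fun} used at finite $k$ is set up with respect to the correct (localised Day) monoidal structure on $\RMod_k(\cO)$ and that all the Beck--Chevalley-type identities ($\iota_{AB!}F\simeq F\iota_!$, $\iota_!y\simeq y\iota$, adjunction between $\iota_{AB!}$ and $\iota_{AB}^*$) are invoked with matching indices; assembling these into an honest equivalence of functors $(\bfN_{>0}\cup\{\infty\})^{\op}\to\Cat$ is the only place where some care is genuinely required.
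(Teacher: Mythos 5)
Your overall architecture is the same as the paper's: reduce to \cref{lem:modules-as-presheaves} level-wise, identify the tower $\BMod^{\frep}_{A_\bullet,B_\bullet}(\RMod_\bullet(\cO))$ with the tower of full subcategories $\BMod^{\frep}_{A,B}(\RMod(\cO))_{\le\bullet}$, and match the structure maps. However, there is a genuine error in the step you flag as "key": the left Kan extension $\iota_!\colon\RMod_k(\cO)\to\RMod_{k'}(\cO)$ is \emph{not} symmetric monoidal, and the justification you give cannot work because the inclusion of envelopes $\Env(\cO)_{\le k}\subset\Env(\cO)_{\le k'}$ is not a symmetric monoidal functor to begin with --- $\Env(\cO)_{\le k}$ is not closed under $\sqcup$, which is precisely why the monoidal structure on $\RMod_k(\cO)$ has to be built by the localisation procedure of \cref{sec:localise-day-convolution} rather than by Day convolution on a monoidal envelope. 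Concretely, for single colours $c,d$ one computes from \cref{prop:tower}~(iii) that $y(c)\otimes y(d)\simeq y(c)\cup_{y(\varnothing)}y(d)$ in $\RMod_1(\cO)$, so $\iota_!\bigl(y(c)\otimes y(d)\bigr)$ evaluated at $(c,d)$ gives only $\Map((c,d),c)\sqcup\Map((c,d),d)$, whereas $\iota_!y(c)\otimes\iota_!y(d)\simeq y((c,d))$ evaluated there gives all of $\Map_{\Env(\cO)}((c,d),(c,d))$, including the components over bijections. So $\iota_!$ is only oplax monoidal, \cref{lem:monoidal-functors-preserve-free-bimodules} does not apply to it, and indeed $\iota_{AB!}$ is not even of the form "$(\varphi)_{AB}$ for a lax monoidal $\varphi$" --- it is \emph{defined} as the left adjoint of $\iota_{AB}^*=(\iota^*)_{AB}$.

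The identity you need, $\iota_{AB!}\circ F_{A_k,B_k}\simeq F_{A_{k'},B_{k'}}\circ\iota_!$, is nevertheless true, but it must be obtained by passing to adjoints: the square of right adjoints $U_{A_kB_k}\circ\iota_{AB}^*\simeq\iota^*\circ U_{A_{k'}B_{k'}}$ commutes because $\iota^*$ \emph{is} symmetric monoidal (left square of \eqref{equ:general-bimodule-diagram}), and taking left adjoints everywhere gives the commuting square of $F$'s and $\iota_!$'s. This is exactly how the paper argues (it phrases the whole proof in the category of left adjoints via $\Cat^R\simeq\Cat^{L,\op}$ and checks commutativity of \eqref{equ:diagram-free-inclusion-module} by appealing to the right adjoints). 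With that substitution your argument goes through; the remaining steps (full faithfulness of $\iota_{AB!}$ via \cref{lem:left-adjoint-bimodules}, identification of the structure maps as restrictions under the restricted Yoneda equivalences) are correct, although note the slip that it is $\iota^*$, not $\iota_!$, which is the symmetric monoidal functor with fully faithful \emph{left} adjoint to which \cref{lem:left-adjoint-bimodules} is applied.
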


\begin{proof}
Both towers are contained in the wide subcategory $\Cat^R$ of $\Cat$ on those functors that are right adjoints (for the left one this uses \cref{lem:left-adjoint-bimodules} and for the right one the left adjoints are given by left Kan extension), so in view of the equivalence $\Cat^R\simeq \Cat^{L,\op}$ induced by taking adjoints, we may equivalently show that there is an equivalence between the towers of left adjoints
\begin{equation}\label{equ:left-adjoint-module-tower}
	\BMod_{A_\bullet,B_\bullet}(\RMod_\bullet(\cO))^L\simeq \PSh(\BMod^{\frep}_{A,B}(\RMod(\cO))_{\le \bullet})^L
\end{equation}
in $\Cat^{L,\op}$. Now consider the following diagram of categories for $1\le k\le \ell\le \infty$
\begin{equation}\label{equ:diagram-free-inclusion-module}
	\begin{tikzcd}
	\Env(\cO)_{\le k}\dar{\subset}\rar{y}&\RMod_k(\cO)\dar{\iota_!}\rar{F_{A_kB_k}}&[5pt]\BMod_{A_k,B_k}(\RMod_k(\cO))\dar\\
	\Env(\cO)_{\le \ell}\rar{y}&\RMod_\ell(\cO)\rar{F_{A_\ell B_\ell}}&\BMod_{A_\ell,B_\ell}(\RMod_\ell(\cO)),
	\end{tikzcd}
\end{equation}
whose rightmost vertical map is the map in the left-hand tower of \eqref{equ:left-adjoint-module-tower}. Both of the squares in \eqref{equ:diagram-free-inclusion-module} commute: the left one by the universal property of $y$ and the right one because the corresponding square of right adjoints commutes. The essential image of the rows are by definition the subcategories indicated by a $\frep$-subscript, so by commutativity the left-hand tower of \eqref{equ:left-adjoint-module-tower} preserves these subcategories. By taking colimit preserving extensions we thus obtain a map \begin{equation}\label{equ:bimods-as-presheaves-for-rmods}
	\PSh(\BMod_{A_\bullet,B_\bullet}^{\frep}(\RMod_\bullet(\cO))^L)\xlra{\simeq}\BMod_{A_\bullet,B_\bullet}(\RMod_\bullet(\cO))^L
\end{equation} 
of towers in $\Tow(\Cat^{L,\op})$ which is an equivalence by \cref{lem:modules-as-presheaves}. For \eqref{equ:left-adjoint-module-tower}, it thus suffices to show 
\begin{equation}\label{equ:subcat-module-tower}
	\BMod_{A_\bullet,B_\bullet}^{\frep}(\RMod_\bullet(\cO))^L\simeq \BMod^{\frep}_{A,B}(\RMod(\cO))_{\le \bullet}
\end{equation}
Since the value of at $\bullet=\infty$ of the towers in \eqref{equ:subcat-module-tower} agree (by definition) and all maps in both towers are fully faithful (for the right one by definition and for the left one as a result of \cref{lem:left-adjoint-bimodules}), we may view both towers as towers of full subcategories of $\smash{\BMod_{A,B}^{\frep}(\RMod(\cO))}$. It thus suffices to show that the essential image of the map from $k$ to $\infty$ in the left tower agrees with the value at $k$ of the right tower. This follows from commutativity of  \eqref{equ:diagram-free-inclusion-module} for $k=\infty$ and $\ell=k$.
\end{proof}

\begin{proof}[Proof of \cref{thm:layers-tower-module}] Items \ref{enum:layer-pullback-gc-module} and \ref{enum:layer-pullback-red-module} follow from \ref{enum:layer-pullback-module} as in the proof of \cref{thm:layers-tower-module}. In view of \cref{lem:bimodule-tower-is-presheaf-tower} and the uniqueness of adjoints, to deduce \ref{enum:layer-pullback-module} from \cref{thm:recollement} it suffices to show
\begin{enumerate}[label=(\alph*),leftmargin=*]
	\item\label{enum:recollement-module-a} $\BMod^{\frep}_{A,B}(\RMod(\cO))_{\le k-1}\subset \BMod^{\frep}_{A,B}(\RMod(\cO))_{\le k}$ is a decomposition pair,
	\item\label{enum:recollement-module-b} there is a commutative square in $\Cat$ with vertical equivalences
	\vspace{-0.15cm} 
	\[\hspace{1cm}
		\begin{tikzcd}[ar symbol/.style = {draw=none,"\textstyle#1" description,sloped},
		equivalent/.style = {ar symbol={\simeq}}, row sep=0.4cm, column sep=0.6cm]
		\ModInf_{k}(\cO)_{A_{k},B_{k}} \rar{\nu^*\ U_{AB}}\arrow[d,equivalent,"\eqref{equ:tower-of-bimodules-is-presheaf-tower}",swap] & \PSh(\cO^\col\wr\Sigma_k)\arrow[d,equivalent] \\ \PSh(\BMod^{\frep}_{A,B}(\RMod(\cO))_{\le k})\rar{\text{res}}& \PSh(\BMod^{\frep}_{A,B}(\RMod(\cO))_{\le k}\backslash \BMod^{\frep}_{A,B}(\RMod(\cO))_{\le k-1}),
		\end{tikzcd}
	\]
	\item\label{enum:recollement-module-c} the assumption of \cref{thm:recollement} is satisfied for the composition pair in \ref{enum:recollement-module-a}.
\end{enumerate}
We first consider the commutative diagram of categories
\begin{equation}\label{equ:map-free-mods-to-finite-sets}
	\hspace{-0.3cm}
	\begin{tikzcd}[row sep=0.5cm, column sep=0.4cm]
	\Env(\cO)\arrow[r,hookrightarrow,"y"]\arrow[d,"\Env(t)","\pi_\cO"']&\RMod(\cO)\dar{t_!}\rar{F_{AB}}&\BMod_{A,B}(\RMod(\cO))\dar{t_!}\\
	\Fin=\Env(\Com)\arrow[r,hookrightarrow,"y"]\arrow[d,"F_{\ast_L\ast_R}","\ast_L\sqcup (-) \sqcup \ast_R"']&\RMod(\Com)\dar{F_{\ast_L\ast_R}}\rar{F_{t_!A,t_!B}}&\BMod_{t_!A,t_!B}(\RMod(\Com))\arrow[dl,"(\ast_L)\otimes_{t_!A}(-)\otimes_{t_!B}(\ast_R)", bend left=10]\\
	\Fin_{\ast_L\sqcup\ast_R/}=\BMod_{\ast_L,\ast_R}(\Fin)\arrow[r,"y",hookrightarrow]&\BMod_{\ast_L,\ast_R}(\RMod(\Com)).
	\end{tikzcd}
\end{equation}
Here $t\colon \cO\ra \Com$ is the map to the terminal operad and $\ast_L,\ast_R$ stand for copies of the terminal object in the respective categories. Commutativity of the top left, bottom left, and top right square is provided by the universal property and the symmetric monoidality of the Yoneda embedding (see \cref{sec:day}) as well as the symmetric monoidality of $t_!$ from \cref{lem:left-kan-monoidal}. Commutativity of the triangle is induced by the evident commutativity of the triangle of right adjoints, using that the diagonal functor is left adjoint to restricting bimodule structures along the algebra maps from $t_!A$ and $t_!B$ to the terminal algebras $*$ (combine 4.6.2.17, 4.6.3.11, and 4.3.2.8 loc.cit.). By commutativity, the composition of the two leftmost vertical functors induce a functor
\begin{equation}\label{equ:map-to-bipointedsets}
	\BMod^\frep_{A,B}(\RMod(\cO))\lra \BMod^\frep_{\ast_1,\ast_2}(\RMod(\Com))\simeq \Fin_{\ast_L\sqcup\ast_R}
\end{equation}
where $\Fin_{\ast_L\sqcup\ast_R}\subset \Fin_{\ast\sqcup\ast/}$ is the essential image of the leftmost bottom vertical functor (i.e.\,the full subcategory on the injective maps $\ast_L\sqcup\ast_R\ra S$); the equivalence in \eqref{equ:map-to-bipointedsets} is induced by the bottom left square of \eqref{equ:map-free-mods-to-finite-sets}. Unpacking the definition, one sees that \eqref{equ:map-to-bipointedsets} sends $F_{AB}(y((c_s)_{s\in S}))$ to $\ast_L\sqcup S\sqcup \ast_L$ on objects and is on morphisms with respect to the equivalence
\begin{align}
	\begin{split}\label{equ:free-bimodule-maps}&\Map_{\BMod_{A,B}(\RMod(\cO))}(F_{AB}(y((c_s)_{s\in S})),F_{AB}(y((d_t)_{t\in T}))) \\
	&\qquad \simeq \textstyle{\bigsqcup_{S=S_L\sqcup S_M\sqcup S_R}A((c_s)_{s\in S_L})\times \Map_{\Env(\cO)}((c_s)_{s\in S_M},(d_t)_{t\in T})\times B((c_s)_{s\in S_R})}
	\end{split}
\end{align}
induced by the adjunction $F_{AB}\dashv U_{AB}$, the Yoneda lemma, and \cref{lem:day-con-env} given as follows: $S_k$ is sent to $\ast_k$ for $k=L,R$ and $S_M$ is sent to $T$ via the map induced by applying $\pi_\cO\colon \Env(\cO)\ra\Fin$ to the middle factor. In particular, the full subcategory $\smash{\BMod^{\frep}_{A,B}(\RMod(\cO))_{\le k}}$ is the preimage under \eqref{equ:map-to-bipointedsets} of the full subcategory $\smash{\Fin_{\ast_L\sqcup \ast_R}^{\le k}\subset \Fin_{\ast_L\sqcup \ast_R}}$ on sets of the form $\ast_L\sqcup S\sqcup \ast_R$ with $|S|\le k$. With this in mind, the proof of Items \ref{enum:recollement-module-a} and \ref{enum:recollement-module-c} is a straight-forward adaption of the proof of the corresponding properties in \cref{thm:layers-tower} by replacing the role of $\pi_\cO\colon \Env(\cO)\ra\Fin$ in that proof with \eqref{equ:map-to-bipointedsets} and repeatedly using the adjunction $F_{AB}\dashv U_{AB}$. This leaves us with showing \ref{enum:recollement-module-b}. Checking the definition, the complement of the decomposition pair from \ref{enum:recollement-module-a} is the subcategory of $\BMod^\free_{A,B}(\RMod(\cO))$ with objects those $F_{AB}(y((c_s)_{s\in S})$ with $|S|=k$ and morphisms given by the summand in \eqref{equ:free-bimodule-maps} with $S_L=S_R=\varnothing$. Since $A(\varnothing)\simeq B(\varnothing)\simeq\ast$ as we assumed $A$ and $B$ to be unital, it follows that the upper row in \eqref{equ:map-free-mods-to-finite-sets} induces an equivalence of categories from $\cO^\col\wr\Sigma_k\simeq\Env(\cO)\times_{\Fin}\Fin_k^\simeq$ to the complement, so it suffices to show that restriction along this equivalence makes the diagram in \ref{enum:recollement-module-b} commute. Writing $\cC$ for the complement, we consider the commutative diagram of categories
\[
	\begin{tikzcd}
	\cO^{\col}\wr\Sigma_k\arrow[d,"F_{AB}\circ y_{\Env(\cO)}\circ \nu"',"\simeq"]\rar{\nu}&\Env(\cO)_{\le k}\rar{F_{A_kB_k}\circ y_{\Env(\cO)_{\le k}}}\dar[swap]{F_{AB}\circ y_{\Env(\cO)}}&[1cm]\BMod^\frep_{A_k,B_k}(\RMod_k(\cO))\arrow[d,"\iota_{AB!}","\simeq"']\\
	\cC\arrow[r,"\inc",hookrightarrow]&\BMod^\frep_{A,B}(\RMod(\cO))_{\le k}\arrow[r,equal]&\BMod^\frep_{A,B}(\RMod(\cO))_{\le k}
	\end{tikzcd}
\]
whose right-hand square involving the equivalence from \eqref{equ:subcat-module-tower}. Applying $\PSh(-)\colon \Cat^\op\ra\Cat$ to this, the left vertical equivalence and the bottom composition become the respective maps in the diagram in \ref{enum:recollement-module-b}. The right vertical equivalence differs from the respective equivalence in  \ref{enum:recollement-module-b} by precomposition with the inverse of \eqref{equ:bimods-as-presheaves-for-rmods} which is by \cref{lem:modules-as-presheaves} given by the restricted Yoneda embedding. We are thus left to identify the two functors $\smash{\BMod_{A_k,B_k}(\RMod_k(\cO))\ra \PSh(\cO^\col\wr\Sigma_k)}$ given by $\smash{(\nu^*\circ y^*\circ F_{A_kB_k}^*\circ y)}$ and $\smash{(\nu^*\circ U_{A_kB_k})}$. But $(y^*\circ F_{A_kB_k}^*\circ y)\simeq U_{A_kB_k}$ by the Yoneda lemma and the adjunction $F_{A_kB_k}\dashv U_{A_kB_k}$, so we are done.
\end{proof}

\subsubsection{Naturality of higher Morita layers}\label{sec:naturality-higher-morita} We now extend the naturality of the layers from \cref{thm:naturality-of-layers} to the level of bimodule categories by making the squares \eqref{equ:recol-discs-modules} and \eqref{equ:recol-discs-simpler-module} functorial in maps of triple $(\cO,A,B)\ra (\cP,A',B')$ given by a map of unital operads $\varphi\colon \cO\ra \cP$ and equivalences of algebras $\varphi_!(A)\simeq A'$ and $\varphi_!(B)\simeq A'$ in $\RMod(\cP)$. To make this precise, we consider the composition
\begin{equation}\label{equ:preliminary-functor-nat-bimodule-cats}
	\Opd^\un \xrightarrow[\ref{cor:tower-thms-unital}]{\RMod^\un(-)} \CMon(\Cat)^\cgr \xrightarrow{\Ass(-)^{\times 2}} \Cat
\end{equation}
from which we obtain a functor $\smash{(\Ass(\RMod^\un(-))^\simeq)^{\times 2}\colon \Opd^\rf\ra \Cat}$ by restricting to the subcategory $\smash{\Opd^\rf\subset \Opd^{\un}}$ and passing to the subfunctor taking value in the cores of the values of \eqref{equ:preliminary-functor-nat-bimodule-cats}. The cocartesian unstraightening of this functor is the category of triples $(\cO,A,B)$ on which we make the squares from \cref{thm:naturality-of-layers} functorial:

\begin{thm}\label{thm:naturality-of-layers-module}The squares \eqref{equ:recol-discs-modules} and \eqref{equ:recol-discs-simpler-module} in \cref{thm:layers-tower-module} extend to functors of the form
\[
	\smash{\textstyle{\int_{\Opd^\rf} (\Ass(\RMod^\un(-))^\simeq)^{\times 2} \lra \Cat^{[1] \times [1]}}}
\] 
which are on morphisms induced by taking left Kan extensions.
\end{thm}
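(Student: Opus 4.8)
The plan is to mirror the proof strategy of \cref{thm:naturality-of-layers}, transporting it along the equivalence of \cref{lem:bimodule-tower-is-presheaf-tower} so that the statement becomes an instance of the presheaf-theoretic naturality result \cref{thm:recollement-nat}. First I would observe that, just as in \cref{sec:naturality}, the assignment $(\cO,A,B)\mapsto \BMod^{\frep}_{A,B}(\RMod(\cO))_{\le \bullet}$ is functorial along the total category $\int_{\Opd^\rf}(\Ass(\RMod^\un(-))^\simeq)^{\times 2}$: a map of triples $(\cO,A,B)\to(\cP,A',B')$ induces via $\varphi_!$ and the chosen equivalences $\varphi_!(A)\simeq A'$, $\varphi_!(B)\simeq B'$ a functor $\BMod_{A,B}(\RMod(\cO))\to \BMod_{A',B'}(\RMod(\cP))$ (using \cref{lem:bmod-naturality}, noting that left Kan extensions of unital right-modules are unital by \cref{lem:unital-modules-properties}\ref{enum:unital-iii} and that the relevant monoidal functors are compatible with geometric realisations), and by commutativity of the outer diagram in \eqref{equ:diagram-free-inclusion-module} this functor preserves the full subcategories indexed by cardinality, hence restricts to a map of towers $\BMod^{\frep}_{A,B}(\RMod(\cO))_{\le \bullet}\to \BMod^{\frep}_{A',B'}(\RMod(\cP))_{\le \bullet}$ in $\Cat^{[1]}$. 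This produces a functor $\int_{\Opd^\rf}(\Ass(\RMod^\un(-))^\simeq)^{\times 2}\to \Cat^{[1]}$.

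Next I would show this functor lands in the subcategory $\Cat^{\subset_\BC,\dec}\hookrightarrow \Cat^{[1]}$ of \cref{sec:abstract-layer-naturality}: objects are decomposition pairs by \cref{thm:layers-tower-module}\ref{enum:layer-pullback-module} (via the verification of items \ref{enum:recollement-module-a}, \ref{enum:recollement-module-c} in its proof), and the morphisms are the relevant Beck--Chevalley squares. Establishing the Beck--Chevalley conditions \eqref{equ:BC-naturality} and the $\nu^*$-analogue is the bimodule version of \cref{lem:auxiliarly-naturality-layers}, and this is where the main work lies: one must check that for a map of triples over an operadic right-fibration $\varphi$, the functor $\varphi_!\colon \BMod_{A,B}(\RMod(\cO))\to \BMod_{A',B'}(\RMod(\cP))$ commutes with $\iota_{AB}^*$, $\iota_{AB*}$ and with $\nu^*U_{AB}$. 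For the first, this is \cref{lem:change-of-operads-restriction} applied at the level of bimodules, which follows because $U_{AB}$ detects equivalences and intertwines $\iota_{AB}^*$ with $\iota^*$ and $\varphi_!$ with $\varphi_!$ (\cref{lem:general-bimodule-diagram}, i.e.\,the left square of \eqref{equ:general-bimodule-diagram}, together with \cref{lem:monoidal-functors-preserve-free-bimodules}); passing to right adjoints handles $\iota_{AB*}$. For $\nu^*U_{AB}$, I would use \cref{lem:match-latch-module-simpler} to reduce the $\iota_{AB*}$-computation to the non-bimodule statement $\varphi_!\nu^*\iota_*\simeq\nu^*\iota_*\varphi_!$ of \cref{lem:auxiliarly-naturality-layers}, and reduce the $\iota_{AB!}$-side to a geometric realisation of copies of $\varphi_!\nu^*\iota_!(A^{\otimes p}\otimes(-)\otimes B^{\otimes p})$, each of which is handled by \cref{lem:auxiliarly-naturality-layers} together with the fact that $\varphi_!$ preserves colimits, relative tensor products (\cref{sec:morita}), and the free bimodules on representables (combining \cref{lem:monoidal-functors-preserve-free-bimodules} with naturality of the Yoneda embedding along left Kan extension).

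Having landed the functor in $\Cat^{\subset_\BC,\dec}$, I would postcompose with the second functor $\Cat^{\subset_\BC,\dec}\to \Cat^{[1]\times[1]}$ of \cref{thm:recollement-nat} to obtain the functor extending \eqref{equ:recol-discs-modules}, exactly as in the last paragraph of the proof of \cref{thm:naturality-of-layers}: one identifies the right vertical leg of the produced square with $\PSh$ of the complement $\BMod^{\frep}_{A,B}(\RMod(\cO))_{\le k}\backslash\BMod^{\frep}_{A,B}(\RMod(\cO))_{\le k-1}\simeq \cO^\col\wr\Sigma_k$ via item \ref{enum:recollement-module-b} in the proof of \cref{thm:layers-tower-module}, and pastes with the functor $(-)^\col\wr\Sigma_k$ followed by $\PSh(-)$ and $(0\le 2)^*$. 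For \eqref{equ:recol-discs-simpler-module} one further pastes with $\Opd^\rf\to\Cat\simeq\Cat_{/\ast}\xrightarrow{\PSh(-)}\Cat_{/\cS}$ followed by $(0\le 2)^*$ and forgetting down to $\Cat^{[1]\times[1]}$, precisely as in the construction extending \eqref{equ:recol-discs-simpler}. By construction the functoriality in the triple is induced by left Kan extension. The main obstacle, as indicated, is the bimodule Beck--Chevalley lemma: the argument of \cref{lem:auxiliarly-naturality-layers} must be threaded through the bar-resolution description of $U_{AB}\iota_{AB!}$ from \cref{lem:match-latch-module-simpler}, and one has to be careful that the algebras $A,B$ are truncated compatibly and that $\varphi_!$ of the truncation $A_k$ agrees with the truncation of $\varphi_!(A)\simeq A'$, which uses \cref{lem:change-of-operads-restriction} at the algebra level.
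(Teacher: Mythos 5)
Your overall architecture coincides with the paper's: both proofs reduce to the second functor of \cref{thm:recollement-nat} by upgrading the decomposition pair $\smash{\BMod^{\frep}_{A,B}(\RMod(\cO))_{\le k-1}\subset \BMod^{\frep}_{A,B}(\RMod(\cO))_{\le k}}$ to a functor into $\Cat^{\subset_\BC,\dec}$ and then paste on the identification of the complement with $\cO^\col\wr\Sigma_k$. Where you genuinely diverge is in the verification of the Beck--Chevalley conditions. You propose to check the commutations with $\iota_{AB*}$, $\iota_{AB!}$ and $\nu^*U_{AB}$ directly in bimodule-land, threading the bar-resolution description of $U_{AB}\iota_{AB!}$ from \cref{lem:match-latch-module-simpler} through strong monoidality of $\varphi_!$ and the non-bimodule statement of \cref{lem:auxiliarly-naturality-layers}; this works, but it forces you to also verify (as you only gesture at) that under the equivalence of \cref{lem:bimodule-tower-is-presheaf-tower} the bimodule pushforward corresponds to left Kan extension along $\smash{\varphi_!\colon \BMod^\frep_{A,B}(\RMod(\cO))\ra\BMod^\frep_{A',B'}(\RMod(\cP))}$, so that your bimodule-level identities really are the presheaf-level conditions \eqref{equ:BC-naturality} defining $\Cat^{\subset_\BC,\dec}$. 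The paper instead stays entirely in presheaf-land: it observes, directly from the mapping-space formula \eqref{equ:free-bimodule-maps}, that $\varphi_!$ restricted to the $\frep$-subcategories is itself a right-fibration whenever $\varphi$ is an operadic right-fibration, and then reruns the proof of \cref{lem:auxiliarly-naturality-layers} verbatim with $\Fin$ replaced by $\Fin_{\ast_L\sqcup\ast_R}$ (using the lift to $\Cat_{/\Fin_{\ast_L\sqcup\ast_R}}$ coming from the terminal object $(\Com,\ast_L,\ast_R)$, which also handles preservation of the filtration and of the complements in one stroke). The paper's route buys a shorter argument via \cref{lem:right-fib-BC} and avoids monadic resolutions; yours buys independence from the explicit mapping-space computation at the cost of the translation step and of separately checking that $\varphi_!^{AB}$ preserves the complements, which you do not address explicitly. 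Both are viable; neither step in your outline would fail.
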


\begin{proof}
The extension of the square \eqref{equ:recol-discs-simpler-module} to a functor follows from that of the square \eqref{equ:recol-discs-modules} as in the proof of \cref{thm:naturality-of-layers}, so we may focus on the square \eqref{equ:recol-discs-modules}. Recall from the proof of \cref{thm:layers-tower-module} that this square is the special case of the square \eqref{eqn:recollement-square} for the decomposition pair \begin{equation}\label{equ:decomp-pair-naturality-bimodule}
	\smash{\BMod_{A,B}^\frep(\RMod(\cO))_{\le k-1}\subset \BMod_{A,B}^\frep(\RMod(\cO))_{\le k}}
\end{equation} 
in the sense of \cref{dfn:decom-pair}, so using the second part of \cref{thm:recollement-nat}, it suffices to upgrade this composition pair to a functor $\smash{\textstyle{\int_{\Opd^\rf} (\Ass(\RMod^\un(-))^\simeq)^{\times 2} \ra \Cat^{\subset_{\BC},\dec}}}$ which is on morphisms induced by left Kan extension. For this we consider the composition
\begin{equation}\label{equ:main-functor-bimodule-naturality}
	\textstyle{\int_{\Opd^\un} \Ass(\RMod^\un(-))^{\times 2} \lra \int_{\Mon(\Cat)^\cgr} \Ass(-)^{\times 2} \xrightarrow[\eqref{equ:bimodule-naturality-functor}]{\BMod_{-,-}(-)} \Cat}
\end{equation}
which sends a triple $(\cO,A,B)$ to $\BMod_{A,B}(\RMod(\cO)) \simeq \ModInf(\cO)_{A,B}$ and is on morphisms given by sending a morphism $(\varphi\colon \cO\ra \cP,  \varphi_!(A)\ra A', \varphi_!(B)\ra B')$ in the source of \eqref{equ:main-functor-bimodule-naturality} to the top row in \vspace{-0.15cm}
\[
	\begin{tikzcd}[row sep=0.5cm]
	\BMod_{A,B}(\RMod(\cO))\rar{\varphi_!}&\BMod_{\varphi_!(A),\varphi_!(B)}(\RMod(\cP))\rar{A'\otimes_{ \varphi_!(A)}(-)\otimes_{\varphi_!(B)}B'}&[1.5cm]\BMod_{A',B'}(\RMod(\cP))\\
	\RMod(\cO)\rar{\varphi_!}\uar{F_{AB}}&\RMod(\cP)\uar{F_{\varphi_!(A)\varphi_!(B)}}\arrow[ur,"{F_{A'B'}}", bend right=10]&\\
	\Env(\cO)\rar{\varphi}\uar{y}&\Env(\cP)\uar{y}&
	\end{tikzcd}
\]
This diagram commutes: the upper left square by \cref{lem:monoidal-functors-preserve-free-bimodules}, the lower left one by the naturality of the Yoneda embedding with respect to left Kan extension, and  the upper right triangle by the same argument as for the commutativity of the triangle in \eqref{equ:map-free-mods-to-finite-sets}. The essential images of the maps from the bottom row to the top row are by definition the $\frep$-subcategories, so we see that the functor \eqref{equ:main-functor-bimodule-naturality} restricts to a functor $\BMod_{-,-}^\frep(\RMod(-))$ with values $\BMod_{A,B}^\frep(\RMod(\cO))$. Using the notation from the proof of \cref{thm:layers-tower-module}, the source of \eqref{equ:main-functor-bimodule-naturality} has a terminal object given by $(\Com,\ast_L,\ast_R)$, so since $\smash{\BMod^{\frep}_{\ast_L,\ast_R}(\RMod(\Com))\simeq \Fin_{\ast_L\sqcup\ast_R}}$, the functor $\smash{\BMod_{-,-}^\frep(\RMod(-))}$ has a preferred lift to a functor with values in $\smash{\Cat_{/\Fin_{\ast_L\sqcup\ast_R}}}$. Now recall from the proof of \cref{thm:layers-tower-module} that the subcategories \eqref{equ:decomp-pair-naturality-bimodule} as well as their complements are given by preimages of certain fixed subcategories of $\Fin_{\ast_L\sqcup\ast_R}$ that are independent of the operad and the algebras, so the functor $\BMod_{-,-}^\frep(\RMod(-))$ preserves \eqref{equ:decomp-pair-naturality-bimodule} and their complement, and thus in particular induces a functor $\smash{\int_{\Opd^\un} (\Ass(\RMod^\un(-))^\simeq)^{\times 2} \ra \Cat^{[1]}}$ whose values are the pairs \eqref{equ:decomp-pair-naturality-bimodule} and which is on morphisms induced by left Kan extension. It thus suffices to justify that this functor lands in the subcategory $\Cat^{\subset_{\BC},\dec}\subset \Cat^{[1]}$ once we restrict to the subcategory $\smash{\int_{\Opd^\rf} (\Ass(\RMod^\un(-))^\simeq)^{\times 2}\subset \int_{\Opd^\un} \Ass(\RMod^\un(-))^{\times 2}}$. This can be done by adapting the proof of \cref{lem:auxiliarly-naturality-layers} in the case $A=B=\varnothing$, by replacing the role of $\Fin$ with $\Fin_{\ast_L\sqcup \ast_R}$ and using that the functor $\smash{\varphi_!\colon \BMod^\frep_{A,B}(\RMod(\cO))\ra \BMod^\frep_{\varphi_!(A),\varphi_!(B)}(\RMod(\cP))}$ is a right-fibration if $\varphi$ is an operadic right-fibration, since it is a cartesian fibration whose fibres are groupoids, which one shows by directly checking the definition using \eqref{equ:free-bimodule-maps}.
\end{proof}

\subsection{Proof of \cref{bigthm:calc}} \label{sec:proof-bigthm} \cref{bigthm:calc} is merely a summary of some of the results in the previous subsections: the tower \eqref{equ:tower-intro-un} results from \cref{lem:unital-modules-properties} \ref{enum:unital-i}, the claimed convergence, monoidality, and naturality properties follow from \cref{prop:tower}, \cref{thm:naturality}, and \cref{cor:tower-thms-unital}, the part regarding the first layer is \cref{thm:bottom-layer}, the claim on the higher layers is contained in Theorems \ref{thm:layers-tower} and \ref{thm:naturality-of-layers}, and the smoothing theory is \cref{thm:smoothing-theory-rmod}. The tower of Morita categories is obtained by applying the functor $(-)^{(\infty,2)}\colon \CCat(\Cat)\ra \Cat_{(\infty,2)}$ from \cref{sec:infty-two} to the tower of double categories $\ModInf^\un_\bullet(\cO)$ from \cref{sec:tower-morita-general}. On the level of double categories, extensions of the properties to the tower $\ModInf^\un_\bullet(\cO)$ of Morita categories have been established in \cref{sec:morita-extension} from which similar extensions on the level of $(\infty,2)$-categories follow by applying the functor $(-)^{(\infty,2)}$. For the smoothing theory pullback, this uses that cospan double categories lie in the subcategory $\Cat_{(\infty,2)}\subset \CCat(\Cat)$ by \cref{rem:remarks-on-cospans} \ref{enum:cospans-complete} and that $(-)^{(\infty,2)}$ preserves pullbacks in $\CCat(\Cat)$ whose lower right corner lie in $\Cat_{(\infty,2)}$ by property \ref{item:oo-2-pullbacks} in \cref{sec:infty-two}.  

\section{Variants of embedding calculus}\label{sec:emb-calc}
As outlined in the introduction, the tower $\RMod_\bullet(\cO)$ from \cref{sec:operadic-framework} and its Morita category refinement $\ModInf_\bullet(\cO)$ is for certain operads $\cO$ closely related to Goodwillie--Weiss embedding calculus \cite{WeissEmbeddings,WeissEmbeddingsErratum,GoodwillieWeiss}, suggests new variants of embedding calculus, and allows for various applications of our general results from \cref{sec:operadic-framework}. This section makes this precise. We will
\begin{enumerate}[leftmargin=*]
	\item[\ref{sec:ed-variants}] discuss the tower $\RMod_\bullet(E_d^\theta)$ for tangential structures $\theta\colon B\ra \BAut(E_d)$ for the $E_d$-operad,
	\item[\ref{sec:embcalc-without-boundary}] relate the tower $\RMod_\bullet(E_d^o)$ for a tangential structure $o\colon \BO(d)\ra\BAut(E_d)$ to Goodwillie--Weiss' calculus for smooth embeddings and use another tangential structure $t\colon \BTop(d)\ra\BAut(E_d)$ to introduce a version of embedding calculus for topological embeddings,
	\item[\ref{sec:relation-embcalc}] utilise the towers on Morita categories $\ModInf_\bullet(E_d^o)$ and $\ModInf_\bullet(E_d^t)$ to extend smooth and topological embedding calculus to the level of bordism categories, discuss manifold calculus,
	\item[\ref{sec:embcalc-props}] discuss some properties of embedding calculus,
	\item[\ref{sec:layers-embcalc}] identify the layers of embedding calculus in terms of frame bundles and configuration spaces,
	\item[\ref{sec:smoothing-embcalc}] deduce smoothing theory results for embedding calculus,
	\item[\ref{sec:particle}] introduce \emph{particle embedding calculus} and tangential structures for embedding calculus,
	\item[\ref{sec:tk-equivalences}] give conditions under which $T_k$-self-maps are equivalences,
	\item[\ref{sec:delooping}] prove a delooping result for embedding calculus, and
	\item[\ref{sec:positive-codim}] end with a discussion of embedding calculus in positive codimension.
\end{enumerate}

\subsection{The $E_d$-operad and its variants}\label{sec:ed-variants}As mentioned in the introduction, the operads $\cO$ for which we relate the tower $\ModInf_\bullet(\cO)$ to embedding calculus are instances of the $\theta$-framed $E_d$-operad $E_d^\theta$ for tangential structures $\theta\colon B\ra \BAut(E_d)$. Let us recall how these operads arise.

\subsubsection{The $\theta$-framed $E_d$-operad}\label{sec:ed-operads-general} Fix $d\ge0$. The operad $E_d$ \emph{of little $d$-discs} (also called the operad of \emph{of little $d$-cubes} or the \emph{$E_d$-operad}) is the operadic nerve \cite[2.1.1.27]{LurieHA} of the ordinary operad in the category of topological spaces with a single colour, spaces of operations the spaces $\Emb^\rec(S\times (-1,1)^d,(-1,1)^d)$ of rectilinear embeddings $S\times (-1,1)^d\hookrightarrow (-1,1)^d$ for finite sets $S$, and composition induced by the composition of embeddings, see e.g.\,5.1.0.4 loc.cit.. Since there is unique embedding $\varnothing\hookrightarrow (-1,1)^d$, the operad $E_d$ is unital, and since the space of rectilinear embeddings $(-1,1)^d\hookrightarrow (-1,1)^d$ is contractible, it is also reduced (in the sense of \cref{sec:tangential-structures-operads}). Given a tangential structure $\theta\colon B\ra \BAut(E_d)$, we can form the \emph{$\theta$-framed $E_d$-operad} (see  \cref{dfn:tangial-structures-opd})
\begin{equation}\label{equ:theta-framed-ed}
	\smash{E_d^{\theta}\coloneqq \colim\big(B\xsra{\theta}\BAut(E_d)\subset \Opd\big)\in\Opd^{\gc}}.
\end{equation}
This construction generalises several known variants of the $E_d$-operad, in particular the operads
\[\smash{E_d^{\oo}}\in\Opd\quad\text{and}\quad \smash{E_d^{\ot}}\in\Opd,\]
obtained by replacing the role of the spaces $\Emb^\rec(S\times (-1,1)^d,(-1,1)^d)$ of rectilinear embeddings in the above construction of the $E_d$-operad with the spaces $\Emb^\oo(S\times \bfR^d,\bfR^d)$ of all smooth embeddings (with the smooth topology) or with the spaces $\Emb^\ot(S\times \bfR^d,\bfR^d)$ of all topological embeddings (with the compact-open topology). To see that $E_d^{\oo}$ and $E_d^{\ot}$ are indeed special cases of \eqref{equ:theta-framed-ed}, note that these operads are unital, for the same reason as $E_d$, and that the forgetful maps $\smash{\Emb^\oo(S\times \bfR^d,\bfR^d)\ra \Emb^\ot(S\times \bfR^d,\bfR^d)}$ induce a map $\smash{E_d^{\oo}}\ra \smash{E_d^{\ot}}$ of operads which agrees upon applying $(-)^{\col}$ with the coherent nerve of the forgetful map of topological monoids $\smash{\Emb^{\oo}(\bfR^d,\bfR^d)\ra \Emb^{\ot}(\bfR^d,\bfR^d)}$. Since the inclusion maps 
\begin{equation}\label{equ:emb-is-top}
	\smash{\oO(d)\xra{\simeq} \Emb^{\oo}(\bfR^d,\bfR^d)}\quad\text{and}\quad \smash{\Top(d)\xra{\simeq} \Emb^{\ot}(\bfR^d,\bfR^d)}
\end{equation}
of the topological groups of orthogonal transformations (with the subspace topology of $\smash{\bfR^{d^2}}$) and the group of homeomorphisms of $\bfR^d$ (with the compact-open topology) are both weak equivalences (this follows from taking derivatives for the first and from the Kister--Mazur theorem \cite[Theorem 1]{Kister} for the second), this identifies the functor $(E_d^{\oo})^\col\ra (E_d^{\ot})^\col$ induced by $E_d^\oo\ra\E_d^\ot$ with the coherent nerve $\BO(d)\ra\BTop(d)$ of the forgetful map $\oO(d)\ra\Top(d)$ of topological groups. In particular $\smash{E_d^{\oo}}$ and $\smash{E_d^{\ot}}$ are groupoid-coloured. Since their reduced covers in the sense of \cref{sec:tangential-structures-operads} are equivalent to the $E_d$-operad and the map $\smash{E_d^{\oo}}\ra \smash{E_d^{\ot}}$ is an equivalence on reduced covers (combine the argument in the proof of \cite[5.4.2.9]{LurieHA} with the commutativity of \eqref{equ:functors-vs-generalised-operads}), it follows from the discussion in \cref{sec:tangential-structures-operads} that $E_d^{\ot}$ is the $\ot$-framed $E_d$-operad for a tangential structure 
\begin{equation}\label{equ:top-acts-on-ed}
	\ot\colon\BTop(d)\lra \BAut(E_d),
\end{equation}
and that $E_d^{\oo}$ is the $\oo$-framed $E_d$-operad for the tangential structure $\oo\colon\BO(d)\ra \BAut(E_d)$ obtained by precomposing \eqref{equ:top-acts-on-ed} with the forgetful map $\BO(d)\ra\BTop(d)$.

\begin{rem}\,
\begin{enumerate}[leftmargin=*]
	\item The operad $E_d^\ot$ also arises as the operadic nerve $N(-)$ of a semidirect product $\mathsf{E}_d\rtimes \Top(d)$ in the sense of \cref{rem:semidirect}, for a strict action of the topological group $\Top(d)$ on a certain model $\mathsf{E}_d$ of the $E_d$-operad as an ordinary operad in topological spaces (see below), and similarly $E_d^\oo$ is the operadic nerve of the semidirect product $\mathsf{E}_d\rtimes \oO(d)$ for the restriction of the action along $\oO(d)\ra\Top(d)$. Using \cref{thm:operad-grp-type} and \cref{prop:semidirect-is-tangential}, this in particular identifies the tangential structure \eqref{equ:top-acts-on-ed} as being induced by the strict $\Top(d)$-action on $\mathsf{E}_d$.  To construct the model $\mathsf{E}_d$ on which $\Top(d)$ acts, one considers for finite sets $S$ the map of topological spaces $\smash{\Emb^\ot(S\times\bfR^d,\bfR^d)\ra \bigsqcap_{s\in S}\Emb^\ot(\bfR^d,\bfR^d)}$ that forgets that embeddings are disjoint. Using a Moore paths construction, their homotopy fibres over the identities $(\id_{\bfR^d})_{s\in S}$ assemble to an ordinary operad $\mathsf{E}_d$ in topological spaces which is easily seen to be equivalent to $E_d$ (by using $(-1,1)^d\cong \bfR^d$ and that any rectilinear self-embedding $(-1,1)^d$ is canonically isotopic to the identity) and on which $\Top(d)$ acts strictly by conjugating embeddings. The composition maps $\Emb^\ot(S\times\bfR^d,\bfR^d)\times \sqcap_{s\in S}\Emb(\bfR^d,\bfR^d)\ra \Emb^\ot(S\times\bfR^d,\bfR^d)$ then induce an equivalence between $\mathsf{E}_d\rtimes \Top(d)$ and the topological operad described above whose operadic nerve is $E^t_d$, so one obtains an equivalence on operadic nerves as claimed.
	\item Any topological $d$-manifold $M$ gives rise to a tangential structure $\smash{\theta_M\colon M\ra\BAut(E_d)}$ by composing a classifier for the topological tangent bundle $M\ra\BTop(d)$ with the tangential structure $\smash{\ot\colon \BTop(d)\ra\BAut(E_d)}$. The associated $\smash{\theta_M}$-framed $E_d$-operad $\smash{E_d^{\theta_M}\in \Opd^{\gc}}$ is denoted $\mathbb{E}_M$ in \cite[5.4.5.1]{LurieHA}. It can be thought as an operad whose colours are points in $M$ and whose reduced cover at $m\in M$ is the operad of little $d$-discs in the tangent space of $M$ at $m$. Despite the notation, the operad $\mathbb{E}_M$ is \emph{very} different from the right module $E_M\in\RMod(E_d^{\ot})$ that we will considered later in this section; for instance we have $\smash{\Aut(E_d^{\theta_M})}\simeq \Aut_{\cS_{/\BAut(E_d)}}(M)$ by \eqref{equ:aut-gc-operad} whereas $\smash{\Aut_{\RMod(E_d^{\ot})}(E_M)}$ is often very close to the group $\smash{\Homeo(M)}$ of homeomorphism of $M$, by a combination of the main results of \cite{KKDisc} and \cref{thm:sdisc-smooth-vs-top} below.\end{enumerate}
\end{rem}

\subsubsection{Right-modules over the $\theta$-framed $E_d$-operad}\label{sec:rmod-framed-ed}Specialising the setting of \cref{sec:operadic-framework} to the unital operad $\smash{E_d^{\theta}}$ for a tangential structure $\theta\colon B\ra \BAut(E_d)$ gives a tower $\smash{\RMod_\bullet(E^{\theta}_d)}$ of symmetric monoidal categories of (truncated) right-modules over $\smash{E^{\theta}_d}$ which in turn induces a tower $\smash{\ModInf_\bullet(E^{\theta}_d)}$ of symmetric monoidal Morita double categories. For the two examples \eqref{equ:theta-framed-ed}, this can be described more explicitly. The map induced by $\smash{E_d^{\oo}\ra E_d^{\ot}}$ on symmetric monoidal envelopes is equivalent to the functor of symmetric monoidal categories $\DiscInf^{\oo}_d\ra\DiscInf^{\ot}_d$ obtained as the coherent nerve of the forgetful functor between the two topologically enriched symmetric monoidal categories with objects finite sets, morphism spaces the embedding spaces $\Emb^{\oo}(S\times\bfR^d,T\times \bfR^d)$ and $\Emb^{\ot}(S\times\bfR^d,T\times \bfR^d)$, respectively, and symmetric monoidal structures given by disjoint union (this follows by combining the construction of $\smash{E_d^{\oo}\ra E_d^{\ot}}$ from \cref{sec:ed-operads-general} with the explicit model for the monoidal envelope in \cite[2.2.4]{LurieHA}). In particular, we have an equivalence of towers of categories 
\begin{equation}\label{equ:rmod-as-discs}
	\RMod_{\bullet}(E^{c}_d)\simeq \PSh(\DiscInf^{c}_{d,\le \bullet})\quad\text{for } c\in\{\oo,\ot\}
\end{equation} 
where the right-hand side is the tower induced by restriction along the tower of inclusions of full subcategories $\DiscInf^{c}_{d,\le \bullet}$ of $\DiscInf^{c}_{d}$ obtained by restricting the cardinality of the occurring finite sets $S$. 

\subsection{Embedding calculus, without boundary conditions}\label{sec:embcalc-without-boundary} The categories $\DiscInf_d^{c}$ arise as full subcategories of the coherent nerves $\ManInf_d^{c}$ of the topologically enriched categories of smooth or topological (indicated by $c\in\{\oo,\ot\}$) $d$-manifolds without boundary $M,N,\ldots$ and spaces of smooth or topological embeddings $\Emb^{c}(M,N)$ between them (with the smooth or the compact-open topology, respectively). We may thus consider the diagram of categories
\begin{equation}\label{equ:emb-calc-cat-no-bdy}
	\begin{tikzcd}[row sep=0.55cm]
	\ManInf_d^{\oo}\dar{\delta}\rar{y}&\PSh(\ManInf_d^{\oo})\dar{\delta_!}\rar{\iota^*}&\PSh(\DiscInf_{d,\le \bullet}^{\oo})\dar{\delta_!}\simeq \RMod_\bullet(E_d^{\oo})\\
	\ManInf_d^{\ot}\rar{y}&\PSh(\ManInf_d^{\ot})\rar{\iota^*}&\PSh(\DiscInf_{d,\le \bullet}^{\ot})\simeq\RMod_\bullet(E_d^{\ot})
	\end{tikzcd}
\end{equation}
where $y$ is the respective Yoneda embedding, $\iota$ is the respective inclusion $\DiscInf_d^{c}\subset \ManInf_d^{c}$, and $\delta$ stands for the functors that forgets smoothness. We denote the top and bottom composition by
\begin{equation}\label{equ:e-functor-no-bordism-cat}
	E\colon \ManInf_d^{c}\lra \RMod_\bullet(E_d^{c})\quad\text{for }c\in\{\oo,\ot\}
\end{equation}
and their values at a $d$-manifold $M\in \ManInf_d^{c}$ by $E_M\coloneqq \Emb^{c}(-,M)\in \RMod_k(E_d^{c})$ for $1\le k\le \infty$.

\begin{lem}\label{lem:compatibility-o-to-t-underlying-cat}The diagram \eqref{equ:emb-calc-cat-no-bdy} lifts to a commutative diagram of categories.
\end{lem}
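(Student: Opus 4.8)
The statement asserts that the outer square in \eqref{equ:emb-calc-cat-no-bdy} — whose rows are the functors $E\colon \ManInf_d^{c}\to \RMod_\bullet(E_d^{c})$ and whose vertical arrows are $\delta$ (forgetting smoothness) and $\delta_!$ (left Kan extension along it) — commutes up to coherent homotopy. The plan is to factor this into the three already-present subsquares and check each one separately.

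\emph{Step 1 (the left square).} Here one must produce a commutative square relating $\delta\colon\ManInf_d^{\oo}\to\ManInf_d^{\ot}$, the two Yoneda embeddings, and $\delta_!\colon\PSh(\ManInf_d^{\oo})\to\PSh(\ManInf_d^{\ot})$. This is an instance of the general naturality of the Yoneda embedding with respect to left Kan extension: for any functor $\varphi\colon\cC\to\cD$ one has a canonical equivalence $\varphi_!\circ y_\cC\simeq y_\cD\circ\varphi$, since $\varphi_!(-)\simeq \lvert-\rvert_{(y_\cD\circ\varphi)}$ (the unique colimit-preserving extension) as recalled in \cref{sec:presheaves-yoneda}, and both sides agree on representables. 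Applying this with $\varphi=\delta$ gives the left square, coherently.

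\emph{Step 2 (the right square).} Here one needs $\delta_!\circ\iota^*\simeq\iota^*\circ\delta_!$ as functors $\PSh(\ManInf_d^{\oo})\to\PSh(\DiscInf_{d,\le\bullet}^{\ot})$, i.e.\ a Beck--Chevalley equivalence for the square of categories
\[
\begin{tikzcd}
\DiscInf_{d,\le\bullet}^{\oo}\rar{\iota}\dar{\delta}&\ManInf_d^{\oo}\dar{\delta}\\
\DiscInf_{d,\le\bullet}^{\ot}\rar{\iota}&\ManInf_d^{\ot}
\end{tikzcd}
\]
obtained by going to presheaves, left Kan extending horizontally and restricting vertically — wait, more precisely the relevant square has $\iota^*$ vertically and $\delta_!$ horizontally, so it is exactly of the shape \eqref{equ:BC-presheaf-square}. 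By \cref{lem:right-fib-BC}, this Beck--Chevalley transformation is an equivalence provided the underlying square of categories is cartesian and $\iota\colon\DiscInf_{d,\le\bullet}^{\ot}\hookrightarrow\ManInf_d^{\ot}$ is a right-fibration. The latter two facts are what I expect to be the main obstacle. The right-fibration claim fails outright ($\iota$ is a full subcategory inclusion, not a right-fibration in general), so this naive approach does not work directly; instead one should argue that the square is cartesian — meaning $\DiscInf_{d,\le\bullet}^{\oo}\simeq \DiscInf_{d,\le\bullet}^{\ot}\times_{\ManInf_d^{\ot}}\ManInf_d^{\oo}$ — which holds because $\delta$ is conservative on objects in the relevant sense (a smooth manifold whose underlying topological manifold is homeomorphic to $S\times\bfR^d$ is diffeomorphic to $S\times\bfR^d$, using that $\bfR^d$ has a unique smooth structure for $d\neq4$, and more relevantly that the objects of $\DiscInf_d^{\oo}$ are \emph{defined} to be those diffeomorphic to $S\times\bfR^d$) and then invoke the fact that restriction along a full-subcategory inclusion that is the pullback of a full-subcategory inclusion does commute with $\delta_!$ — this uses that $\delta_!$ is computed by the colimit formula and that the relevant comma categories match up, exactly as in the proof of \cref{lem:change-of-operads-restriction}. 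A cleaner route: the required commutation $\iota^*\delta_!\simeq\delta_!\iota^*$ is literally the content of the Beck--Chevalley square and can be verified by the argument in \cref{lem:change-of-operads-restriction}, checking that for each object of $\DiscInf_{d,\le\bullet}^{\ot}$ the comparison of comma categories is a cofinal inclusion; I would carry this out by hand.

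\emph{Step 3 (the equivalences).} Finally, the identifications $\PSh(\DiscInf_{d,\le\bullet}^{c})\simeq \RMod_\bullet(E_d^{c})$ on the right-hand column are the equivalences \eqref{equ:rmod-as-discs} established in \cref{sec:rmod-framed-ed}, which were shown there to be compatible with the map $E_d^{\oo}\to E_d^{\ot}$ of operads — i.e.\ the square
\[
\begin{tikzcd}
\PSh(\DiscInf_{d,\le\bullet}^{\oo})\rar{\simeq}\dar{\delta_!}&\RMod_\bullet(E_d^{\oo})\dar{\varphi_!}\\
\PSh(\DiscInf_{d,\le\bullet}^{\ot})\rar{\simeq}&\RMod_\bullet(E_d^{\ot})
\end{tikzcd}
\]
commutes, since $\Env(E_d^{\oo})\to\Env(E_d^{\ot})$ is identified with $\DiscInf_d^{\oo}\to\DiscInf_d^{\ot}$ and left Kan extension is functorial. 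Pasting the three commutative squares from Steps 1--3 along the common edges yields the commutative lift of the full diagram \eqref{equ:emb-calc-cat-no-bdy}, hence in particular the commuting square relating the two versions of $E$ and the vertical comparison functors. The bulk of the work, and the only place where anything beyond formal nonsense enters, is the cofinality/cartesianness check in Step 2.
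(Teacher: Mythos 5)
Your Steps 1 and 3 match the paper: the left square is the naturality of the Yoneda embedding with respect to left Kan extension, and the compatibility of the identification $\PSh(\DiscInf^{c}_{d,\le\bullet})\simeq\RMod_\bullet(E_d^{c})$ with the vertical maps comes from \cref{thm:naturality} together with \eqref{equ:rmod-as-discs}. The gap is in Step 2. First, the intermediate ``fact'' you lean on --- that for a cartesian square of categories whose legs into the ambient categories are full subcategory inclusions the Beck--Chevalley map $\delta_!\iota^*\to\iota^*\delta_!$ is automatically an equivalence --- is false. Take $\cD=\{a\to b\}$, $\cD_0=\{a\}$, $\cC=\{a',b\}$ discrete with $\delta(a')=a$, $\delta(b)=b$, and $F=y_{\cC}(b)$: the square of categories is cartesian, yet $\delta_!\iota^*F=\varnothing$ while $(\iota^*\delta_!F)(a)=\Map_{\cD}(a,b)\simeq\ast$. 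Moreover cartesianness is not even available here: for $d=4$ the fibre product $\DiscInf^{\ot}_{4}\times_{\ManInf^{\ot}_{4}}\ManInf^{\oo}_{4}$ contains exotic $\bfR^4$'s, so it is strictly larger than $\DiscInf^{\oo}_{4}$, and the lemma is asserted for all $d$.

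Second, your fallback --- ``check by hand that the comparison of comma categories is cofinal, as in \cref{lem:change-of-operads-restriction}'' --- is exactly where all the content sits, and it is not a formal check of the kind done in that lemma. By Quillen's Theorem A the cofinality amounts to the weak contractibility, for every topological embedding $\varphi\colon D\hookrightarrow M$ of a disc-object into a smooth manifold, of the category of factorisations of $\varphi$ through (the underlying topological manifold of) a smoothly embedded disc-object of $M$. Unlike the situation of \cref{lem:change-of-operads-restriction}, where unitality produced an initial object in the relevant factorisation categories, here there is no initial object, and naive attempts fail (for $d=4$ the image of $\varphi$ can be an exotic $\bfR^4$ contained in no standard smooth chart). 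The contractibility requires genuine geometric input: one must be able to canonically shrink topological embeddings of discs into small neighbourhoods of the centres, i.e.\ Weiss-cover descent for embedding spaces. The paper avoids the comma categories altogether: both composites $\delta_!\iota^*$ and $\iota^*\delta_!$ preserve colimits and agree on representables of disc-objects (left Kan extension preserves representables), so it suffices to exhibit every representable $\Emb^{\oo}(-,M)$ as a colimit $\colim_{D\in\cO_\infty(M)}\Emb^{\oo}(-,D)$ over the poset of smooth disc-neighbourhoods --- precisely the descent statement cited to Knudsen--Kupers. Either route needs this descent input; your proposal neither supplies it nor identifies it as the missing step.
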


\begin{proof}Commutativity of the left square is provided by the naturality of the Yoneda embedding. The rightmost vertical map is a map of towers by the naturality of $\RMod_\bullet(-)$ from \cref{thm:naturality}. It thus suffices to lift the right-hand square for $\bullet=\infty$ to a commutative square which in turn follows from showing that the Beck--Chevalley transformation $\delta_!\iota^*\ra \iota^*\delta_!$ of functors $\PSh(\ManInf_d^{\oo})\ra \RMod(E_d^{\ot})$ is an equivalence. The class of presheaves for which it is an equivalence is closed under colimits, since all functors involved are left adjoints. Left Kan extension preserves representable presheaves, so the class contains $\Emb^{\oo}(-,S\times \bfR^d)\in \PSh(\ManInf_d^\oo)$ for $S\in\Fin$. It thus suffices to show that any $\smash{X\in  \PSh(\ManInf_d^\oo)}$ is a colimit of presheaves of this form. Any presheaf is a colimit of representables, so we may assume $X=\Emb^\oo(-,M)$ for $\smash{M\in \ManInf_d^\oo}$. But $\smash{\Emb(-,M)\simeq\colim_{D\in \cO_\infty(M)}\Emb(-,D)}$ where $\cO_{\infty}(M)$ is the poset of open subsets $D\subset M$ diffeomorphic to $S\times  \bfR^d$ for $S\in \Fin$, by a well-known descent argument (as e.g.\,in the proof of \cite[Lemma 6.4]{KnudsenKupers}), so the claim follows.
\end{proof}

For $M,N\in\ManInf^{\oo}_d$, the upper row of \eqref{equ:emb-calc-cat-no-bdy} induced on mapping spaces a tower of spaces
\begin{equation}\label{equ:emb-calc-no-bdy-o}
 	\smash{\Emb^{\oo}(M,N)\ra \Map_{\RMod_{\bullet}(E^{o}_d)}(E_M,E_N)\eqcolon T_\bullet\Emb^{\oo}(M,N)}
\end{equation}
under the space $\Emb^{\oo}(M,N)$ of smooth embeddings. It was shown in \cite{BoavidaWeiss} that this tower agrees with the embedding calculus Taylor tower for the space $\Emb^{\oo}(M,N)$ of smooth embeddings as introduced in \cite{WeissEmbeddings} (alternatively, this is a special case of \cref{prop:morita-gives-embcalc} below). Replacing $\oo$-subscripts with $\ot$ subscripts, we obtain an analogous tower
\begin{equation}\label{equ:emb-calc-no-bdy-t}
	\smash{\Emb^{\ot}(M,N)\ra \Map_{\RMod_{\bullet}(E^{\ot}_d)}(E_M,E_N)\eqcolon T_\bullet\Emb^{\ot}(M,N)}
\end{equation}
under the space $\Emb^{\ot}(M,N)$ of embeddings between topological $d$-manifolds $M,N\in\ManInf^{\ot}_d$.

\subsection{Extension to bordism categories}\label{sec:relation-embcalc}We will now explain that the towers $\ModInf_\bullet(E^{c}_d)$ of Morita categories for $c\in\{\oo,\ot\}$ allow for an enhancement of the functors giving rise to embedding calculus for manifolds without boundary \eqref{equ:e-functor-no-bordism-cat}, to the level of bordism categories, which in particular give generalisations of \eqref{equ:emb-calc-no-bdy-o} and \eqref{equ:emb-calc-no-bdy-t} for spaces of boundary-fixing embeddings between manifolds with boundary, as well as coherent gluing maps. As indicated in the introduction, this is an extension of a construction from our previous work \cite{KKDisc}, so we first recall relevant aspects of the latter.

\subsubsection{Recollection from \cite{KKDisc} and extension to topological manifolds}\label{sec:e-for-topological-mfds}
In \cite[Section 3-4]{KKDisc} we constructed a functor of symmetric monoidal double categories (our notation differs slightly from that in loc.cit.: we write $\ncBordInf^{\oo}(d)$ and $\ModInf(E^{\oo}_d)$ instead of $\ncBordInf(d)$ and $\icat{M}\mathrm{od}(d)$) 
\begin{equation}\label{equ:E-functor}
	\smash{E\colon \ncBordInf^{\oo}(d)\lra\ModInf(E^{\oo}_d)}
\end{equation} 
which can be informally summarised as follows: the source $\ncBordInf^\oo(d)$ is a noncompact smooth $d$-dimensional bordism double category: the category of objects consists of smooth (potentially noncompact) $(d-1)$-manifolds without boundary and smooth embeddings between them, the mapping categories consist of (potentially noncompact) smooth $d$-dimensional bordisms and smooth embeddings fixing the boundaries between them, the composition functors are induced by gluing bordisms, and the symmetric monoidal structure is by taking disjoint unions. The functor \eqref{equ:E-functor} assigns to a $(d-1)$-manifold $P\in \ncBordInf^{\oo}(d)$ the presheaf $E_{P\times I}\coloneqq\Emb(-,P\times I)\in \PSh(\DiscInf_d^{\oo})=\RMod(E_d^{\oo})$ for $I\coloneqq[0,1]$, equipped with the algebra structure with respect to Day convolution induced by ``stacking'' $P\times I$ to itself, and to a bordism $W\colon P\leadsto Q$ the presheaf $E_W\coloneqq\Emb(-,W)$ with the $(E_{P\times I},E_{Q\times I})$-bimodule structure induced by ``stacking'' $P\times I\sqcup Q\times I$ to $W$. 

\medskip

\noindent The construction of \eqref{equ:E-functor} also goes through for \emph{topological} manifolds. Since was not discussed in loc.cit., we shall make good for it now:

\begin{thm}\label{thm:extension-e-topological}There is a commutative square of symmetric monoidal double categories 
\[
	\begin{tikzcd}	\ncBordInf^{\oo}(d)\rar{E}\dar&\ModInf(E_d^{\oo})\dar\\
	\ncBordInf^{\ot}(d)\rar{E}&\ModInf(E_d^{\ot})
	\end{tikzcd}
\]
where
\begin{enumerate}[leftmargin=*]
	\item\label{enum:extension-topo-i} $\ncBordInf^{c}(d)$ for $c\in\{\oo,\ot\}$ is a bordism category of potentially noncompact smooth (respectively topological) $(d-1)$-manifolds and the left vertical map is induced by forgetting smooth structures,
	\item\label{enum:extension-topo-ii} the right vertical map is induced by the map $E_d^{\oo}\ra E_d^{\ot}$ of operads from \cref{sec:ed-operads-general}, and 
	\item\label{enum:extension-topo-iii} on mapping categories from $\varnothing$ to itself, the square agrees with the outer square in \eqref{equ:emb-calc-cat-no-bdy} for $\bullet=\infty$, 
	\item\label{enum:extension-topo-iv} the horizontal functors land in the levelwise full subcategories $\ModInf^{\un}(E_d^{c})\subset \ModInf(E_d^{c})$.
\end{enumerate}
\end{thm}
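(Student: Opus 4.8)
The plan is to follow the blueprint of \cite[Sections 3--4]{KKDisc} essentially verbatim, observing at each step that nothing in the construction of \eqref{equ:E-functor} used smoothness in an essential way beyond the formal properties of the categories $\ncBordInf^{\oo}(d)$ and $\ModInf(E^{\oo}_d)$; then I would upgrade the two constructions to a square by naturality in the ``geometric category'' on the source side and in the operad on the target side.

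First I would set up the topological bordism double category $\ncBordInf^{\ot}(d)$ by repeating the construction of $\ncBordInf^{\oo}(d)$ with smooth replaced by topological throughout: objects are (potentially noncompact) topological $(d-1)$-manifolds without boundary, $1$-morphisms are topological $d$-bordisms, $2$-morphisms are locally flat topological embeddings fixing the boundary collars, composition is by gluing along collars (here one uses that topological manifolds admit collars, by Brown's collar theorem, so the gluing makes sense), and the symmetric monoidal structure is disjoint union. Forgetting smooth structures then gives a symmetric monoidal functor $\ncBordInf^{\oo}(d)\ra\ncBordInf^{\ot}(d)$, which is item \ref{enum:extension-topo-i}. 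Next I would run the construction of the functor $E$ from \cite{KKDisc} with $E_d^{\oo}$ replaced by $E_d^{\ot}$ and $\DiscInf_d^{\oo}$ by $\DiscInf_d^{\ot}$: to a topological $(d-1)$-manifold $P$ one assigns $E_{P\times I}=\Emb^{\ot}(-,P\times I)\in \PSh(\DiscInf^{\ot}_d)=\RMod(E_d^{\ot})$ with its stacking algebra structure, and to a bordism $W\colon P\leadsto Q$ one assigns $E_W=\Emb^{\ot}(-,W)$ with its bimodule structure. The coherence data (associativity, unitality, symmetric monoidality) is produced by exactly the same diagrams as in loc.cit., since those only involve manipulating embedding spaces, restriction maps, and gluing, all of which make sense topologically. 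This gives the bottom horizontal functor and simultaneously item \ref{enum:extension-topo-iii}: restricting to mapping categories from $\varnothing$ to itself recovers $\Emb^{\ot}(-,M)$ for $M$ a topological $d$-manifold, which is precisely the bottom row of \eqref{equ:emb-calc-cat-no-bdy} at $\bullet=\infty$ (the analogous statement for the smooth row being \cite[Theorem B]{KKDisc} or its analogue).

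The commutativity of the square, together with item \ref{enum:extension-topo-ii}, is where the only real work lies. The cleanest way to get it is to carry out the construction of \cite{KKDisc} once and for all relative to a map of ``geometric data'': the map of operads $E_d^{\oo}\ra E_d^{\ot}$ induces via \cref{thm:naturality} a symmetric monoidal functor $\RMod(E_d^{\oo})\ra\RMod(E_d^{\ot})$ compatible with geometric realisations (it is a left adjoint), hence by the functoriality of $\ALG(-)$ recalled in \cref{sec:morita} a functor of symmetric monoidal double categories $\ModInf(E_d^{\oo})\ra \ModInf(E_d^{\ot})$; this is the right vertical arrow. One then checks that the square of algebras and bimodules commutes by observing that the assignments $P\mapsto E_{P\times I}$ and $W\mapsto E_W$ are compatible with forgetting smooth structures in the appropriate sense --- concretely, that left Kan extension along $\DiscInf^{\oo}_d\ra\DiscInf^{\ot}_d$ sends the representable-based presheaf $\Emb^{\oo}(-,P\times I)$ to $\Emb^{\ot}(-,\delta(P\times I))$, and similarly for bordisms. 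This is the same Beck--Chevalley/descent argument as in the proof of \cref{lem:compatibility-o-to-t-underlying-cat}: both sides, as functors of $P$ (resp.\ of $W$), preserve colimits and agree on discs, and every smooth manifold is a suitable colimit of discs along which the embedding presheaf is a sheaf; the only new point is that one must verify this at the level of algebras and bimodules rather than bare presheaves, which follows because the forgetful functor on Morita categories is a map of double categories and hence compatible with the algebra/bimodule structures assembled by the $\ALG(-)$ construction. Finally, item \ref{enum:extension-topo-iv}, that the horizontal functors land in the unital subcategories $\ModInf^{\un}(E_d^{c})\subset \ModInf(E_d^{c})$, is immediate: the underlying right-module of $E_{P\times I}$ (resp.\ of $E_W$) has value $\Emb^{c}(\varnothing, P\times I)\simeq \ast$ (resp.\ $\Emb^c(\varnothing,W)\simeq\ast$) at the monoidal unit $\varnothing$, since there is a unique embedding of the empty manifold, and by \cref{lem:unital-modules-properties} this full subcategory is closed under the relevant structure; in the smooth case this is already noted in the discussion of \ref{equ:intro-smooth-tan} in the introduction.

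I expect the main obstacle to be purely organisational rather than mathematical: making the construction of $E$ from \cite{KKDisc} sufficiently modular that it can be applied uniformly to $(\ncBordInf^{\oo}(d), E_d^{\oo})$ and $(\ncBordInf^{\ot}(d), E_d^{\ot})$ and seen to be natural in the pair, so that the commuting square drops out formally instead of requiring a separate diagram chase. If one is willing to reprove things by hand, the only genuinely topological input needed beyond the smooth case is the existence of collars on topological manifolds (for the gluing in $\ncBordInf^{\ot}(d)$) and the Kister--Mazur theorem $\Top(d)\simeq\Emb^{\ot}(\bfR^d,\bfR^d)$ (already used in \cref{sec:ed-operads-general} to identify $E_d^{\ot}$), neither of which enters the coherence arguments.
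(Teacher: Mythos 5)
Your proposal follows essentially the same route as the paper: define $\ncBordInf^{\ot}(d)$ by replacing smooth with topological throughout the construction of \cite[Section 3]{KKDisc}, obtain the right vertical functor from $E_d^{\oo}\ra E_d^{\ot}$, reduce the commutativity of the square to the Beck--Chevalley equivalence $\delta_!\iota^*\simeq\iota^*\delta_!$ of \cref{lem:compatibility-o-to-t-underlying-cat} applied to a commuting square of symmetric monoidal categories, and feed the result through the functoriality of the (pre-)Morita construction; items \ref{enum:extension-topo-iii} and \ref{enum:extension-topo-iv} are handled exactly as in the paper. The only detail you gloss over is the verification that the topological composite lands in the Morita subcategory $\ModInf(E_d^{\ot})\subset\overline{\ALG}(\PSh(\DiscInf^{\ot}_d))$, where the paper substitutes the topological frame bundle of germs of embeddings of $\bfR^d$ for the smooth frame bundle in the analogue of \cite[Proposition 3.6]{KKDisc} — a minor but genuinely non-formal step.
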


\begin{proof}
This follows mostly from a minor variant of the construction of \eqref{equ:E-functor} for smooth manifolds in \cite[Section 3]{KKDisc}. To explain this, recall from Section 3\,loc.cit.\,that \eqref{equ:E-functor} arises as follows: first one constructs a composition in $\CMon(\Fun(\Delta^{\op},\Cat))$
\begin{equation}\label{equ:construction-e-smooth}
	\smash{\ncBordInf^{\oo}(d)\xlra{E^\geo} \overline{\ALG}(\ManInf^{\oo}_d)\xlra{y} \overline{\ALG}(\PSh(\ManInf^{\oo}_d))\xlra{\iota^*} \overline{\ALG}(\PSh(\DiscInf^{\oo}_d)).}
\end{equation}
This involves the symmetric monoidal categories  $\DiscInf^{\oo}_d\subset \ManInf^{\oo}_d$ from above and the functor $\smash{\overline{\ALG}(-)\colon \CMon(\Cat)\ra \CMon(\Fun(\Delta^{\op},\Cat))}$
given by assigning $\cC\in  \CMon(\Cat)$ to its \emph{pre-Morita category} $\smash{\overline{\ALG}(\cC)\in  \CMon(\Fun(\Delta^{\op},\Cat))}$ which, if $\cC$ is compatible with geometric realisations, contains $\ALG(\cC)\in  \CMon(\CCat(\Cat))\subset  \CMon(\Fun(\Delta^{\op},\Cat))$ as a levelwise full subcategory (see 2.9 loc.cit.). The functor $\smash{E^\geo}$  is an explicit geometric construction, the second and third functor in \eqref{equ:construction-e-smooth} results from applying $\smash{\overline{\ALG}(-)}$ to the Yoneda embedding of $\ManInf^{\oo}_d$ and the restriction along $\DiscInf^{\oo}_d\subset \ManInf^{\oo}_d$. To obtain \eqref{equ:E-functor} from \eqref{equ:construction-e-smooth}, one shows that the latter lands in the levelwise full subcategory $\smash{\ModInf(E_d^\oo)\simeq \ALG(\PSh(\DiscInf^{\oo}_d))\subset\overline{\ALG}(\PSh(\DiscInf^{\oo}_d))}$. All this can be mimicked for topological manifolds (we add $\ot$-superscripts for the topological variants): as for $\DiscInf_d^{\ot}\subset \ManInf_d^{\ot}$ above, for the definition of $\ncBordInf^{\ot}(d)$ and the construction of the analogue of $\smash{E^\geo}$, one literally replaces all occurrences of smooth manifolds with topological manifolds in \cite[3.\circled{1}--\circled{6}]{KKDisc}, and all spaces of smooth embeddings between smooth manifolds with the corresponding spaces of topological embeddings. The analogue of the second and third functor in \eqref{equ:construction-e-smooth} is defined as in the smooth case, and the proof that the composition lands in $\smash{\ModInf(E_d^\ot)\simeq \ALG(\PSh(\DiscInf^{\ot}_d))\subset\overline{\ALG}(\PSh(\DiscInf^{\ot}_d))}$ can be copied almost verbatim, except that in the proof of Proposition 3.6 loc.cit.\,one has to replace the frame bundle $\Fr(M)$ with the topological frame bundle $\Fr^t(M)$ of germs of topological embeddings of $\bfR^d$ into $M$.

\medskip

\noindent To obtain the vertical maps and the commutativity, one constructs a commutative diagram \vspace{-0.1cm}
\[
	\begin{tikzcd}[row sep=0.25cm]\ncBordInf^{\oo}(d)\dar\rar{E^\geo}& 		\overline{\ALG}(\ManInf^{\oo}_d)\rar{y}\dar& \overline{\ALG}(\PSh(\ManInf^{\oo}_d))\rar{\iota^*}\dar& \overline{\ALG}(\PSh(\DiscInf^{\oo}_d))\dar\\
	\ncBordInf^{\ot}(d)\rar{E^\geo}& \overline{\ALG}(\ManInf^{\ot}_d)\rar{y}& \overline{\ALG}(\PSh(\ManInf^{\ot}_d))\rar{\iota^*}& \overline{\ALG}(\PSh(\DiscInf^{\ot}_d))
	\end{tikzcd}\vspace{-0.1cm}
\]
as follows: the first two vertical functors are given by forgetting smoothness and make the first square commute by construction. The second square is obtained from the functoriality of $\smash{\overline{\ALG}(-)}$ and the naturality of the symmetric monoidal Yoneda embedding \cite[4.8.1.12, 4.8.1.13]{LurieHA}. The final square is obtained by applying $\smash{\overline{\ALG}(-)}$ to a commutative square of symmetric monoidal categories whose underlying square of categories is the right-hand square in \eqref{equ:emb-calc-cat-no-bdy} for $\bullet=\infty$. Both $\iota$ and $\delta$ are symmetric monoidal, so by the discussion in \cref{sec:day} the vertical functors in the right-hand square in \eqref{equ:emb-calc-cat-no-bdy} for $\bullet=\infty$ are symmetric monoidal and the horizontal ones lax symmetric monoidal. But by the argument in \cite[Lemma 3.10]{KKDisc} they are actually (strong) monoidal, so since the forgetful functor $\CMon(\Cat)\ra\Cat$ is conservative, to lift \eqref{equ:emb-calc-cat-no-bdy} to a commutative square in $\CMon(\Cat)$ it suffices that the Beck--Chevalley transformation $\delta_!\iota^*\ra \iota^*\delta_!$ of underlying categories is an equivalence. We already proved this in \cref{lem:compatibility-o-to-t-underlying-cat}, so the construction of the square in the claim is finished. Items \ref{enum:extension-topo-i}-\ref{enum:extension-topo-ii} hold by construction and by using \eqref{equ:rmod-as-discs}. Item \ref{enum:extension-topo-iii} also follows from the construction, combined with the general fact that mapping categories in Morita categories are categories of bimodules and that the category of bimodules over the unit algebras is the underlying category (see \cref{sec:morita-functor}). Item \ref{enum:extension-topo-iv} follows from the description of the functor \eqref{equ:E-functor} together with the observation that $\Emb^{c}(\varnothing,M)=\ast$ for $c\in\{\oo,\ot\}$ and any manifold $M$.
\end{proof}

\subsubsection{Embedding calculus on bordism categories} \label{sec:sm-top-emb-calc} 
The map of towers $\ModInf_\bullet(E_d^{\oo})\ra \ModInf_\bullet(E_d^{\ot})$ induced by $E_d^{\oo}\ra E_d^{\ot}$ and \cref{thm:bottom-layer} agrees with the right vertical map in \cref{thm:extension-e-topological} for $\bullet=\infty$, so we obtain a commutative diagram of symmetric monoidal double categories\vspace{-0.1cm}
\[\hspace{-0.8cm}
	\begin{tikzcd}[sep=small,row sep=0.25cm]\ncBordInf^{\oo}(d)\rar{E}\dar& \ModInf^\un(E^{\oo}_d)=\ModInf_\infty^\un(E^{\oo}_d)\dar\rar&\cdots\rar& \ModInf_2^\un(E^{\oo}_d)\rar\dar& \ModInf_1^\un(E^{\oo}_d)\overset{\ref{thm:bottom-layer-morita}}{\simeq}\Cosp(\cS_{/\BO(d)})\dar\\
	\ncBordInf^{\ot}(d)\rar{E}& \ModInf^\un(E^{\ot}_d)=\ModInf^\un_\infty(E^{\ot}_d)\rar&\cdots\rar& \ModInf^\un_2(E^{\ot}_d)\rar& \ModInf^\un_1(E^{\ot}_d)\overset{\ref{thm:bottom-layer-morita}}{\simeq}\Cosp(\cS_{/\BTop(d)});
	\end{tikzcd}\vspace{-0.1cm}
\]
here the identification of the bottom stages uses \cref{thm:bottom-layer} together with the (unstraightening) equivalence $\PSh(\cG)\simeq \cS_{\cG}$ for groupoids $\cG$ and $(E_d^\oo)^\col\simeq\BO(d)$ as well as $(E_d^\ot)^\col\simeq\BTop(d)$ (see \cref{sec:ed-operads-general}). When transposing the diagram and passing to mapping spaces of mapping categories this induces for smooth $(d-1)$-manifolds $P$ and $Q$ without boundary and bordisms $M,N\colon P\leadsto Q$ between them (all potentially noncompact) a commutative diagram
\[
	\hspace{-0.3cm}
	\begin{tikzcd}[row sep=0.25cm, column sep=0.3cm, ar symbol/.style = {draw=none,"\textstyle#1" description,sloped},
	equivalent/.style = {ar symbol={\simeq}}]
	&[-0.55cm]&[-0.55cm]\Emb_\partial^\oo(M,N)\arrow[d,equivalent]\rar&\Emb_\partial^\ot(M,N)\arrow[d,equivalent]&[-0.55cm]&[-0.55cm]\\[-0.2cm]
	&&\Map_{\ncBordInf^{\oo}(d)_{P,Q}}(M,N)\rar\dar&\Map_{\ncBordInf^{\ot}(d)_{P,Q}}(M,N) 	\dar&&\\
	T_\infty\Emb^{\oo}_\partial(M,N)&\coloneqq&\Map_{\ModInf(E^{\oo}_d)_{E_{P\times I},E_{Q\times I}}}(E_M,E_{N})\rar\dar&\Map_{\ModInf(E^{\ot}_d)_{E_{P\times I},E_{Q\times I}}}(E_M,E_{N})\dar&\eqcolon&T_\infty\Emb^{\ot}_\partial(M,N)\\
	&&\vdots\dar&\vdots\dar&&\\
	T_2\Emb^{\oo}_\partial(M,N)&\coloneqq&\Map_{\ModInf_2(E^{\oo}_d)_{E_{P\times I},E_{Q\times I}}}(E_M,E_{N})\rar\dar&\Map_{\ModInf_2(E^{\ot}_d)_{E_{P\times I},E_{Q\times I}}}(E_M,E_{N})\dar&\eqcolon&T_2\Emb^{\ot}_\partial(M,N)\\
	T_1\Emb^{\oo}_\partial(M,N)&\coloneqq&\Map_{\ModInf_1(E^{\oo}_d)_{E_{P\times I},E_{Q\times I}}}(E_M,E_{N})\arrow[d,equivalent,swap,"\ref{prop:bottom-manifold-layer}"]\rar&\Map_{\ModInf_1(E^{\ot}_d)_{E_{P\times I},E_{Q\times I}}}(E_M,E_{N})\arrow[d,equivalent,swap,"\ref{prop:bottom-manifold-layer}"]&\eqcolon&T_1\Emb^{\ot}_\partial(M,N)\\[-0cm]
	&&\Map^{/\BO(d)}_{\partial}(M,N)\rar&\Map^{/\BTop(d)}_{\partial}(M,N)&&
	\end{tikzcd}
\]
of spaces, where the bottom most two spaces are the mapping spaces in the category $(\cS_{/B})_{P\sqcup Q/}$ of spaces over $B\in\{\BO(d),\BTop(d)\}$ and under $\partial M\cong P\sqcup Q\cong \partial N$. The two columns of this diagram recover \eqref{equ:emb-calc-no-bdy-o} and \eqref{equ:emb-calc-no-bdy-t} in the case $P=Q=\varnothing$. The long vertical compositions from the embedding spaces to the bottom mapping spaces are induced by taking (topological) derivatives (see \cref{prop:bottom-manifold-layer}). Note that the right column is also defined if $P,Q,M$, and $N$ are not smooth. 

\medskip

\noindent In \cite[Theorem 4.5]{KKDisc} we showed that the map $\Emb^{\oo}_\partial(M,N)\ra T_\infty\Emb^{\oo}_\partial(M,N)$ in the left column is equivalent to the limit of the classical embedding calculus Taylor tower for the space $\Emb^{\oo}_\partial(M,N)$ of smooth embeddings fixing the boundary from \cite{WeissEmbeddings}. Replacing the use of the poset $\cU$ of open subsets of $M$ that are unions of an open collar of the boundary and a disjoint union of open discs, with the poset $\cU_k$ for $1\le k\le \infty$ where the cardinality of the discs is bounded by $k$, the same argument shows that this identification extends to the level of towers:

\begin{prop}\label{prop:morita-gives-embcalc} The left column in the above diagram is equivalent to Weiss' embedding calculus Taylor tower for the space $\Emb^{\oo}_\partial(M,N)$ of smooth embeddings fixed on the boundary from \cite{WeissEmbeddings}.
\end{prop}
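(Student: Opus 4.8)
The plan is to reduce Proposition~\ref{prop:morita-gives-embcalc} to the already-established identification at the top of the tower, namely \cite[Theorem 4.5]{KKDisc}, by running that argument level by level rather than only in the limit. Recall that \cite[Theorem 4.5]{KKDisc} identifies $\Map_{\ModInf(E^{\oo}_d)_{E_{P\times I},E_{Q\times I}}}(E_M,E_N)$ with the limit $T_\infty\Emb^{\oo}_\partial(M,N)$ of Weiss' embedding calculus tower, and the key geometric input there is a cofinality/descent argument using the poset $\cU$ of open subsets of $M$ that decompose as an open collar of $\partial M$ together with a disjoint union of open discs. Weiss' $k$-th approximation $T_k\Emb^{\oo}_\partial(M,N)$ is by definition (after the translation in loc.\,cit.) the homotopy limit over the sub-poset $\cU_k \subset \cU$ on those open sets in which the number of disc components is at most $k$. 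So what has to be checked is that, under the identifications in the excerpt, the Morita mapping space $\Map_{\ModInf_k(E^{\oo}_d)_{E_{P\times I},E_{Q\times I}}}(E_M,E_N)$ agrees with this homotopy limit over $\cU_k$.

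First I would recall from \cref{sec:morita} and \cref{sec:morita-extension} that the mapping space in question is $\Map_{\BMod_{E_{P\times I},E_{Q\times I}}(\RMod_k(E^{\oo}_d))}(E_M,E_N)$, where $E_M, E_N$ are the truncations of the bimodules $\Emb^{\oo}(-,M)$, $\Emb^{\oo}(-,N)$ to $\RMod_k(E^{\oo}_d) = \PSh(\DiscInf^{\oo}_{d,\le k})$ appearing in \cref{thm:extension-e-topological} and \cref{sec:sm-top-emb-calc}. Then I would invoke the description of bimodule mapping spaces used in \cite[Section 4]{KKDisc}: the bimodule $E_M$ is a colimit over $\cU$ (or over the corresponding poset of discs-and-collar configurations) of representables, exactly as in the descent argument in the proof of \cref{lem:compatibility-o-to-t-underlying-cat}, and mapping out of such a colimit turns the mapping space into a homotopy limit over the indexing poset. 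The crucial point is that truncating to $\RMod_k$ — i.e.\ replacing $\DiscInf^{\oo}_d$ by its full subcategory $\DiscInf^{\oo}_{d,\le k}$ on configurations of at most $k$ discs — restricts precisely the indexing poset of this homotopy limit from $\cU$ to $\cU_k$, because the representables $\Emb^{\oo}(-,D)$ with $D \cong S\times\bfR^d$ are only ``seen'' by $\RMod_k$ when $|S|\le k$. Put differently, the restriction functor $\RMod(E^{\oo}_d) \to \RMod_k(E^{\oo}_d)$ together with its right adjoint realizes the level-$k$ Morita mapping space as $\holim_{\cU_k}$ of the presheaf $U \mapsto \Emb^{\oo}_\partial(U,N)$, which is Weiss' $T_k$.

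Concretely the steps would be: (i) spell out, following \cite[2.8--2.9]{KKDisc} and \cref{lem:modules-as-presheaves}/\cref{lem:bimodule-tower-is-presheaf-tower}, that $\Map_{\BMod_{A_k,B_k}(\RMod_k(E^{\oo}_d))}(E_M,E_N)$ computes as a homotopy limit over $\BMod^{\frep}_{A,B}(\RMod(E^{\oo}_d))_{\le k}$-indexed data, i.e.\ over free bimodules on representables with at most $k$ discs; (ii) match this indexing category with the poset $\cU_k$ via the same geometric comparison (Weiss-style isotopy extension / configuration-category argument) used in \cite[Theorem 4.5]{KKDisc}, noting that this comparison is entirely compatible with the truncation since it is built disc-by-disc; (iii) identify the resulting $\holim_{\cU_k}$ with Weiss' definition of $T_k\Emb^{\oo}_\partial(M,N)$ from \cite{WeissEmbeddings}; (iv) check that the tower maps (restriction $\cU_k \subset \cU_{k+1}$ on one side, the truncation maps $\RMod_{k+1}\to\RMod_k$ on the other) correspond, which is automatic once one has the level-wise identification naturally in $k$, since both are induced by the inclusions of indexing posets. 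The bookkeeping of compatibility of these equivalences as $k$ varies is handled by the tower-naturality statements of \cref{thm:naturality} and \cref{cor:tower-thms-unital}, so it does not require re-doing the geometry.

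The main obstacle I expect is step (ii): verifying that the geometric comparison of \cite[Theorem 4.5]{KKDisc} between the abstract (Morita) indexing diagram and Weiss' poset $\cU$ is genuinely compatible with the disc-cardinality filtration, i.e.\ that an object of $\cU$ with exactly $m$ disc components corresponds to a free bimodule on a representable with exactly $m$ discs, uniformly and functorially. This is essentially a matter of carefully tracking how many ``little discs'' are involved in each stage of the descent argument and confirming that nothing in the comparison mixes configurations of different cardinalities (for instance via composition of operations). I would expect this to follow from the explicit form \eqref{equ:free-bimodule-maps} of morphisms between free bimodules on representables — which makes the number of disc components in the middle factor manifestly invariant under the relevant morphisms — together with the fact, already exploited in the proof of \cref{thm:layers-tower-module}, that the forgetful map to bipointed finite sets in \eqref{equ:map-to-bipointedsets} detects exactly this cardinality. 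Once that compatibility is in hand, everything else is a direct adaptation of the proof already given for $\bullet = \infty$.
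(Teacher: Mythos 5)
Your proposal is correct and follows essentially the same route as the paper: the paper's proof consists precisely of the observation that the argument of \cite[Theorem 4.5]{KKDisc} goes through with the poset $\cU$ of collar-plus-discs open subsets replaced by the subposet $\cU_k$ where the number of disc components is bounded by $k$, which is exactly your reduction. Your additional discussion of why the truncation to $\RMod_k$ matches the cardinality bound on $\cU_k$ (via the free-bimodules-on-representables description) is a sound elaboration of the detail the paper leaves implicit.
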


\noindent For spaces of topological embeddings, embedding calculus has not considered in the literature yet. We \emph{define} the topological embedding calculus Taylor tower as the right column in the above diagram (as mentioned above, the manifolds need not be smooth for this). Other possibilities would be to adapt to the topological setting the definition from \cite{WeissEmbeddings} in terms of universal polynomial approximations, or the definition from \cite{BoavidaWeiss} in terms of homotopy sheafifications; these constructions would lead to the same result, by a variant of the argument for \cref{prop:morita-gives-embcalc}.

\begin{rem}[Embeddings of triads]\label{rem:triads} Fix $c\in\{\oo,\ot\}$. As already hinted at in the introduction, the above setting can also be applied to the space of embeddings $\Emb^{c}_{\partial_0}(M,N)$ between $d$-manifolds $M$ and $N$ that agree with a given embedding $e_{\partial_0}\colon \partial_0M\hookrightarrow \partial N$ on a codimension $0$ submanifold $\partial_0M\subset M$: since the inclusion $M'\coloneqq \interior(M)\cup \interior(\partial_0M)\subset M$ and $N'\coloneqq \interior(N)\subset\interior(e_{\partial_0}(\partial_0M))\subset N$ are isotopy equivalences relative to $\interior(\partial_0M)$, and we have $\partial M'=  \interior(\partial_0M)\cong\partial N'$ induced by $e_{\partial_0}$, we have $\smash{\Emb^{c}_{\partial_0}(M,N)\simeq \Emb^{c}_{\partial}(M',N')}$,
so to apply embedding calculus to $\Emb^{c}_{\partial_0}(M,N)$, one can instead apply it to the equivalent space $\Emb^{c}_{\partial}(M',N')$ which fits into the above setup. 
\end{rem}

\begin{rem}[Composition and gluing]\label{rem:comp-and-gluing} By construction, for $d$-dimensional bordisms $M,N\colon P\leadsto Q$, the embedding calculus tower $\Emb^c_{\partial}(M,N)\ra T_\bullet\Emb^c_\partial(M,N)$ is induced by the tower of double categories $\ncBordInf^c(d)\ra\ModInf_\bullet(E_d^{c})$. This in particular equips embedding calculus with coherent composition and gluing maps. For example, given bordisms $W,V\colon Q\leadsto R$, specialising the composition functor in a double category (see \cref{sec:monoids}) we obtain a map of towers from $\Emb^c_{\partial}(M,N)\times \Emb^c_{\partial}(V,W)\ra T_\bullet\Emb^c_\partial(M,N)\times T_\bullet\Emb^c_\partial(V,W)$ to $\Emb^c_{\partial}(M\cup_QV,N\cup_QW)\ra T_\bullet\Emb^c_\partial(M\cup_QV,N\cup_QW)$ that extends the map on embedding spaces given by gluing embeddings along $Q$. For another example, if $M$ arises as a composition of bordisms $M=M_1\cup_{L}M_2\colon P\leadsto Q$, we have a map of towers from $\Emb^c_{\partial}(M,N)\ra T_\bullet\Emb^c_\partial(M,N)$ to $\Emb^c_{\partial}(M_1\backslash L,N)\ra T_\bullet\Emb^c_\partial(M_1\backslash L,N)$ that extends the map on embedding spaces given by restriction along the inclusion $M_1\backslash L\subset M$, by first gluing on $Q\times[0,1)\colon Q\leadsto \varnothing$ and then using precomposition with the inclusion $M_1\backslash L\subset M\cup_{Q}[0,1)$ considered as a morphism in $\ncBordInf(d)_{P,\varnothing}$.
\end{rem}

\subsubsection{Manifold calculus}\label{sec:manifold-calc}Fix $c\in\{\oo,\ot\}$. Embedding calculus can be generalised to \emph{manifold calculus}: given a presheaf $F \in \PSh(\ncBordInf^c(d)_{P,Q})$, its \emph{manifold calculus tower} is a tower \begin{equation}\label{equ:manifold-calc-tower-f}
	\smash{F\lra T^c_\infty F\lra \cdots \lra T^c_2F\lra T^c_1F}
\end{equation}
of presheaves on $\ncBordInf^c(d)_{P,Q}$ under $F$, defined as follows: since $\smash{\ModInf^c(d)_{E_{P \times I},E_{Q \times I}}}$ is cocomplete as a result of \cite[4.3.3.9]{LurieHA}, the functor $\smash{E\colon \ncBordInf^c(d)_{P,Q}\ra \ModInf^c_k(d)_{E_{P \times I},E_{Q \times I}}}$ has a colimit-preserving extension $\smash{\lvert-\rvert_{E}}$ to $\smash{\PSh(\ncBordInf^c(d)_{P,Q})}$ (see \cref{sec:presheaves-yoneda}) which agrees with the colimit-preserving extension in the untruncated case $k=\infty$ followed by $k$-truncation. This gives a tower \begin{equation}\label{equ:manifol-calc-tower-cat}
	\lvert-\rvert_{E}\colon \PSh(\ncBordInf^c(d)_{P,Q})\ra\ModInf_\bullet^c(d)_{E_{P \times I},E_{Q \times I}}
\end{equation} 
of categories under $\PSh(\ncBordInf^c(d)_{P,Q})$. Each functor in \eqref{equ:manifol-calc-tower-cat} has a right-adjoint given by the Yoneda embedding followed by restriction along $E$. The tower \eqref{equ:manifold-calc-tower-f} is then defined by the components of the counits of these adjunctions at $F$ (note that $|y(-)|_E\simeq E_{(-)}$): 
\[
	F(-)\simeq \Map_{\PSh(\ncBordInf^c(d)_{P,Q})}(y(-),F)\xra{\lvert-\rvert_{E}}\Map_{\ModInf_\bullet^c(d)_{E_{P \times I},E_{Q \times I}}}(E_{(-)},\lvert F\rvert_{E}).
\]
Applied to $F\coloneqq \Emb^c_\partial(-,N) = y(N)$ for $N \in \ncBordInf^c(d)_{P,Q}$, this recovers the embedding calculus tower for $\Emb^c_\partial(M,N)$ as discussed above. In \cref{rem:mfd-calc-comp} below, we see that manifold calculus defined this way agrees with  the previously considered version by Boavida de Brito--Weiss \cite{BoavidaWeiss}.

\subsubsection{Comparison to Boavida de Brito--Weiss' model with boundary}\label{sec:comparison-pedro-michael}
There is a different construction of the two columns of the bottom diagram in \cref{sec:sm-top-emb-calc}: consider the full subcategory $\smash{(\DiscInf^{c}_{d})_{P,Q}\subset\ncBordInf^{c}(d)_{P,Q}}$ for $c\in\{\oo,\ot\}$ spanned by those bordisms that are diffeomorphic or homeomorphic (depending on $c$) relative to the boundary to $\smash{P\times[0,1)\sqcup S\times \bfR^d\sqcup Q\times (-1,0]}$ for some finite sets $S$. This has a filtration by full subcategories $\smash{(\DiscInf^{c}_{d,\le \bullet})_{P,Q}}$ by bounding the cardinalities of the finite sets $S$, so we obtain a tower of categories under $\ncBordInf^{c}(d)_{P,Q}$ 
\begin{equation}\label{equ:pedro-michael-model-cats}
	\smash{(\iota^*\circ y)\colon \ncBordInf^{c}(d)_{P,Q}\ra\PSh((\DiscInf^{c}_{d,\le \bullet})_{P,Q})}
\end{equation} 
given by the Yoneda embedding $y$ followed by the restrictions $\iota$ along the full subcategory inclusions. For $c=\oo$, this tower was considered in \cite{BoavidaWeiss} where they showed in particular that it agrees on mapping spaces with the embedding calculus tower from \cite{WeissEmbeddings} and thus by \cref{prop:morita-gives-embcalc} also with the tower on mapping spaces induced by our $\smash{E\colon \ncBordInf^c(d)_{P,Q}\ra \ModInf_\bullet(E_d^{c})_{E_{P\times I},E_{Q\times I}}}$. For $c=\oo$ and $k=\infty$ we showed in \cite[Proposition 4.8]{KKDisc} that this equivalence can be lifted to an equivalence of categories. The same holds for $c=\ot$ and generalises to the level of towers:

\begin{lem}\label{lem:tower-bdbw-boundary} For $c\in\{\oo,\ot\}$, there is an equivalence 
\[
	\ModInf_\bullet(E_d^{c})_{E_{P\times I},E_{Q\times I}}\simeq \PSh((\DiscInf^{c}_{d,\le \bullet})_{P,Q})
\] 
of towers of categories under $\ncBordInf^c(d)_{P,Q}$.\end{lem}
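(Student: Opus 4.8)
The plan is to reduce the statement to \cref{lem:modules-as-presheaves} (the identification of bimodule categories with presheaves on free bimodules on representables), combined with the identification of free bimodules on representables established in the proof of \cref{thm:layers-tower-module}, and to make this identification compatible with the tower structure and with the functor from the bordism category. Concretely, I would proceed as follows. For $c\in\{\oo,\ot\}$, the algebras $E_{P\times I}$ and $E_{Q\times I}$ in $\RMod(E_d^c)$ are unital (their value at $\varnothing$ is $\Emb^c(\varnothing,P\times I)\simeq\ast$), so by the discussion in the proof of \cref{thm:layers-tower-module}, there is a map of operads to $\Com$ with which one computes the category $\BMod^{\frep}_{E_{P\times I},E_{Q\times I}}(\RMod(E_d^c))$. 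I would identify this category explicitly: using \eqref{equ:free-bimodule-maps}, its objects are the free bimodules $F_{E_{P\times I},E_{Q\times I}}(y((c_s)_{s\in S}))$ for finite sets $S$, and its morphism spaces decompose as $\bigsqcup_{S=S_L\sqcup S_M\sqcup S_R}\Emb^c(S_L\times\bfR^d,P\times I)\times\Map_{\Env(E_d^c)}((c_s)_{s\in S_M},(d_t)_{t\in T})\times\Emb^c(S_R\times\bfR^d,Q\times I)$. Unwinding the ``stacking'' algebra structures on $E_{P\times I}$ and $E_{Q\times I}$ and using $\Env(E_d^c)\simeq \DiscInf_d^c$, I would recognise this as the full subcategory $(\DiscInf^{c}_{d})_{P,Q}\subset\ncBordInf^{c}(d)_{P,Q}$: the ``left'' and ``right'' embedding factors encode how the collar ends $P\times[0,1)$ and $Q\times(-1,0]$ attach, and the ``middle'' factor records the embeddings of the disc part; bounding $|S_M|$ (equivalently $|S|$) corresponds to passing to $(\DiscInf^{c}_{d,\le k})_{P,Q}$.

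\textbf{Key steps in order.} First, I would show that the equivalence $\ModInf_\bullet(E_d^c)_{E_{P\times I},E_{Q\times I}}\simeq \PSh(\BMod^{\frep}_{E_{P\times I},E_{Q\times I}}(\RMod(E_d^c))_{\le \bullet})$ from \cref{lem:bimodule-tower-is-presheaf-tower} holds — this is a direct citation. Second, I would construct an equivalence of towers $\BMod^{\frep}_{E_{P\times I},E_{Q\times I}}(\RMod(E_d^c))_{\le \bullet}\simeq (\DiscInf^{c}_{d,\le \bullet})_{P,Q}$ compatibly with the inclusions into $\ncBordInf^c(d)_{P,Q}$; this is the heart of the argument and uses the explicit description of morphism spaces above plus the geometric description of the functor $E\colon\ncBordInf^c(d)_{P,Q}\ra\ModInf(E_d^c)_{E_{P\times I},E_{Q\times I}}$ recalled in \cref{sec:e-for-topological-mfds}, which on objects sends a disc-type bordism $P\times[0,1)\sqcup S\times\bfR^d\sqcup Q\times(-1,0]$ precisely to the free bimodule $F(y(S))$ (this is where one uses that the right module structures come from ``stacking''). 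Third, I would invoke \cref{lem:modules-as-presheaves} and the fact that, by the construction of \eqref{equ:pedro-michael-model-cats}, the tower $(\iota^*\circ y)$ on $\ncBordInf^c(d)_{P,Q}$ factors through these subcategories exactly as the tower $E$ does after the equivalence; compatibility with the maps from $\ncBordInf^c(d)_{P,Q}$ follows by identifying the restricted Yoneda embeddings on both sides. For $c=\oo$ and $\bullet=\infty$ this recovers \cite[Proposition 4.8]{KKDisc}, and I would point out that the argument there generalises levelwise once the subcategory identification above is in place.

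\textbf{Main obstacle.} The hard part is the second step: verifying that the abstractly-defined full subcategory $\BMod^{\frep}_{E_{P\times I},E_{Q\times I}}(\RMod(E_d^c))_{\le k}$ of free bimodules on representables genuinely coincides, as a subcategory of $\ncBordInf^c(d)_{P,Q}$ (via the functor $E$), with the geometrically-defined $(\DiscInf^{c}_{d,\le k})_{P,Q}$ — and not merely abstractly. This requires matching the ``stacking'' bimodule structure with the geometry of attaching collars to a disjoint union of Euclidean bordisms, which is essentially the content of the computation of $E$ on morphism spaces in \cite[Section 3-4]{KKDisc}; the point is that $\Emb^c(S_L\times\bfR^d, P\times I)$ together with the stacking action reconstructs precisely the space of embeddings of $P\times[0,1)\sqcup S_L\times\bfR^d$ into a collar-extended target, relative to $P\times\{0\}$. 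For the topological case $c=\ot$ one additionally needs that all the relevant constructions from \cite{KKDisc} — in particular the use of the (topological) frame bundle in the proof of \cref{thm:extension-e-topological} — go through verbatim, which \cref{thm:extension-e-topological} already provides. Once this subcategory identification and its compatibility with $E$ are pinned down, the tower statement follows formally from \cref{lem:bimodule-tower-is-presheaf-tower} and \cref{lem:modules-as-presheaves}.
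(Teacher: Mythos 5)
Your proposal is correct and follows essentially the same route as the paper: reduce via \cref{lem:bimodule-tower-is-presheaf-tower} and \cref{lem:modules-as-presheaves} to identifying the tower $\smash{\BMod^{\frep}_{E_{P\times I},E_{Q\times I}}(\RMod(E_d^c))_{\le\bullet}}$ with $(\DiscInf^{c}_{d,\le\bullet})_{P,Q}$ via $E$, then check compatibility with the functor from $\ncBordInf^c(d)_{P,Q}$. The only cosmetic difference is that where you propose to re-derive this identification by matching \eqref{equ:free-bimodule-maps} against the geometric mapping spaces of disc-type bordisms, the paper obtains both the full faithfulness and the identification of the essential image (and the commutativity of the relevant triangle) by citing the argument in the proof of \cite[Theorem 4.5]{KKDisc}, which is exactly the computation you describe as the main obstacle.
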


\begin{proof}
Recall that $\ModInf_\bullet(E_d^{c})_{E_{P\times I},E_{Q\times I}}$ is a tower of bimodule categories $\BMod_{{E_{P\times I},E_{Q\times I}}}(\RMod_\bullet(E_d^{c}))$ (see \cref{sec:morita-functor}). Viewing $c(P)\coloneqq P\times[0,1)$ and $c(Q)\coloneqq (-1,0]\times Q$ as bordisms $P\leadsto \varnothing$ and $\varnothing\leadsto Q$, we consider the diagram of categories
\[
	\begin{tikzcd}[ar symbol/.style = {draw=none,"\textstyle#1" description,sloped},
	equivalent/.style = {ar symbol={\simeq}}]
	\DiscInf_d^c\dar[swap]{c(P)\cup (-)\cup c(Q)}\dar&[-1cm]\subset&[-1cm] \ncBordInf^c(d)_{\varnothing,\varnothing}\rar{E}\dar[swap]{c(P)\cup (-)\cup c(Q)}&\BMod_{{E_{\varnothing},E_{\varnothing}}}(\RMod(E_d^{c}))\dar[swap]{E_{c(P)}\otimes (-)\otimes E_{c(Q)}}\\
	(\DiscInf_d^c)_{P,Q}&\subset&\ncBordInf^c(d)_{P,Q}\rar{E}&\BMod_{{E_{P\times I},E_{Q\times I}}}(\RMod(E_d^{c}))
	\end{tikzcd}
\]
where the left square commutes by definition and the right one because $E$ is a functor of double categories. The composition along the bottom row is full faithful by the argument in the proof of \cite[Theorem 4.5]{KKDisc}. The argument in that proof also shows that the composition $\DiscInf_d^c\ra \BMod_{{E_{P\times I},E_{Q\times I}}}(\RMod(E_d^{c}))$ sends a collection of discs $D\in \DiscInf_d^c\simeq\Env(E_d^{c})$ to the free bimodule $F_{E_{P\times I},E_{Q\times I}}(E_D)$ in the sense of \cref{sec:free-bimodules} on the representable presheaf on $D$, so the essential image of this composition is the full subcategory $\smash{\BMod_{{E_{P\times I},E_{Q\times I}}}^{\frep}(\RMod(E_d^{c}))\subset \BMod_{{E_{P\times I},E_{Q\times I}}}(\RMod(E_d^{c}))}$ of \cref{sec:modules-day}. Together with the fully faithfulness of the bottom row, this implies that the functor $E$ restricts to an equivalence of towers of full subcategory inclusions
\begin{equation}\label{equ:step-in-bimodule-filtration-as-discs}
	(\DiscInf_{d,\le \bullet}^c)_{P,Q}\xlratwo{E}{\simeq}\BMod_{{E_{P\times I},E_{Q\times I}}}^{\frep}(\RMod(E_d^{c}))_{\le \bullet}
\end{equation} 
where the right hand tower is defined as in the discussion around \cref{lem:bimodule-tower-is-presheaf-tower}. Abbreviating $\RMod(E_d^c)\coloneqq \cR^c$, we consider the diagram of categories
\[\hspace{-0.4cm}
	\begin{tikzcd}[column sep=0.5cm, row sep=0.1cm]
	&&[-1.5cm]\ncBordInf^c(d)_{P,Q}\arrow[lld,"E",swap,bend right=7]\arrow[rrd,"\iota^* y", bend left=7]&[-1.5cm]&\\
	\BMod_{{E_{P\times I},E_{Q\times I}}}(\cR^c)\rar{y}&\PSh(\BMod_{{E_{P\times I},E_{Q\times I}}}(\cR^c))\arrow[rr,"j^*"]&&\PSh(\BMod_{{E_{P\times I},E_{Q\times I}}}^\frep(\cR^c))\arrow[r,"E^*","\simeq"']&\PSh((\DiscInf_{d}^c)_{P,Q}).
	\end{tikzcd}
\]
The composition of the first two arrows in the bottom row is an equivalence by \cref{lem:modules-as-presheaves} and extends to an equivalence of towers by \cref{lem:bimodule-tower-is-presheaf-tower}. The final arrow is given by restriction along the equivalence \eqref{equ:step-in-bimodule-filtration-as-discs}, so also extends to the towers. It thus suffices to provide commutativity data for the triangle. Rewriting the bottom row using $E^* j^*=\iota^* E^*$, this was done in the proof of \cite[Theorem 4.5]{KKDisc} for $c=\oo$ and the same argument applies if $c=\ot$.
\end{proof}

\begin{rem}[Manifold calculus comparison]\label{rem:mfd-calc-comp}Under the equivalence of \cref{lem:tower-bdbw-boundary}, the tower \eqref{equ:manifol-calc-tower-cat} under $\PSh( \ncBordInf^c(d)_{P,Q})$ agrees with the colimit-preserving extension of \eqref{equ:pedro-michael-model-cats} which, by uniqueness of colimit-preserving extensions, agrees with the tower $\smash{\PSh(\ncBordInf^c(d)_{P,Q})\ra \PSh((\DiscInf_{d,\le \bullet}^c)_{P,Q})}$ under $\smash{\PSh(\ncBordInf^c(d)_{P,Q})}$ induced by restriction along $\iota_k\colon (\DiscInf_{d,\le k}^c)_{P,Q}\subset \ncBordInf^c(d)_{P,Q}$. The manifold calculus tower \eqref{equ:manifold-calc-tower-f} for $F\in\PSh( \ncBordInf^c(d)_{P,Q})$ as defined above thus agrees with the tower $F\ra (\iota_\bullet)_*(\iota_\bullet)^*(F)$ induced by the counits of the adjunction by restriction and right Kan extension. For $c=\oo$, this is how manifold calculus is defined in \cite{BoavidaWeiss}.
\end{rem}

\subsection{Some properties of embedding calculus}\label{sec:embcalc-props} There are a number of known formal properties for smooth embedding calculus (see e.g.\,\cite[Sections 2.2.3, 2.3]{KKsurfaces}). Essentially all of them also hold for the topological version. We explicitly mention two of the most important ones in the following; others, e.g.~those in loc.cit., follow from these. We fix $c\in\{\oo,\ot\}$.

\subsubsection{Descent for Weiss $k$-covers}\label{sec:descent}For a bordism $M\colon P\leadsto Q$ between manifolds $P$ and $Q$ without boundary (smooth or topological, depending on $c$), write $\cO(W)$ for poset of open subsets of $W$ that contain a neighbourhood of the boundary $\partial M=P \sqcup Q$, ordered by inclusion. Viewing such open subsets as bordisms $P\leadsto Q$ gives a functor $\cO(M)\ra (\ncBordInf^c(d)_{P,Q})_{/M}$  and thus a functor $\smash{\cO(W)\ra (\ModInf^c(d)_{E_{P\times I},E_{Q\times I}})_{/E_W}}$ by postcomposition with $E$. Recall that a subposet of $\cO(M)$ is a \emph{Weiss $k$-cover} for $1\le k\le \infty$ if any subset of cardinality $\leq k$ is contained in an element of $\cO(M)$. A \emph{Weiss $k$-cover} is \emph{complete} if it contains a Weiss $k$-cover of any finite intersection of its elements. The second part of the following statement involves the manifold calculus tower \eqref{equ:manifold-calc-tower-f}.

\begin{lem}[Descent] \label{lem:tk-descent} For $1\le k\le \infty$, a nonempty bordism $M\in\ncBordInf^c(d)_{P,Q}$ and a nonempty complete Weiss $k$-cover $\cU\subset \cO(M)$, the diagram $\{E_{U}\}_{U\in\cU}\ra E_M$ in $\ModInf^c_k(d)_{E_{P\times I},E_{Q\times I}}$ is a colimit diagram, and the diagram $T_k^cF(M)\ra \{T_k^cF(U)\}_{U\in\cU}$ a limit diagram for all $F\in\PSh(\ncBordInf^c(d)_{P,Q})$.
\end{lem}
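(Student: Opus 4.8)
The plan is to deduce both assertions from the fundamental properties of the functor $E$ and the category of presheaves, reducing everything to a descent statement for representable presheaves. First I would observe that the two claims are essentially dual: the first concerns colimits of $E_U$ in $\ModInf^c_k(d)_{E_{P\times I},E_{Q\times I}}$, and the second concerns limits obtained by mapping into $\lvert F\rvert_E$ and restricting along $E$; since $T_k^c F(U)\simeq \Map(E_U,\lvert F\rvert_E)$ by the definition of the manifold calculus tower in \cref{sec:manifold-calc}, a colimit diagram $\{E_U\}_{U\in\cU}\ra E_M$ is sent by $\Map(-,\lvert F\rvert_E)$ to a limit diagram, which is exactly the second assertion. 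So the second part follows formally from the first, and the work is concentrated in proving that $\{E_U\}_{U\in\cU}\ra E_M$ is a colimit diagram in $\ModInf^c_k(d)_{E_{P\times I},E_{Q\times I}}$.

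For the colimit statement, I would use that $E\colon \ncBordInf^c(d)_{P,Q}\ra \ModInf^c_k(d)_{E_{P\times I},E_{Q\times I}}$ factors (up to the equivalence of \cref{lem:tower-bdbw-boundary}) through $\iota^*\circ y$ where $y$ is the Yoneda embedding of $\ncBordInf^c(d)_{P,Q}$ and $\iota^*$ is restriction along $(\DiscInf^c_{d,\le k})_{P,Q}\subset \ncBordInf^c(d)_{P,Q}$. Since $\iota^*$ is a left adjoint (its right adjoint being right Kan extension), it preserves colimits; so it suffices to show that $\{y(U)\}_{U\in\cU}\ra y(M)$ becomes a colimit diagram after applying $\iota^*$, i.e.\,that for every object $D\in(\DiscInf^c_{d,\le k})_{P,Q}$ the canonical map $\colim_{U\in\cU}\Map_{\ncBordInf^c(d)_{P,Q}}(D,U)\ra \Map_{\ncBordInf^c(d)_{P,Q}}(D,M)$ is an equivalence. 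The object $D$ is a disjoint union of a collar on $P\sqcup Q$ with $\le k$ copies of $\bfR^d$, so a map $D\to M$ in the bordism category amounts to an embedding of at most $k$ discs into $M$ rel collar, and the mapping space is a (homotopy) disjoint union over such configurations. The completeness of the Weiss $k$-cover $\cU$ guarantees that the poset $\{U\in\cU : D\hookrightarrow U\}$ is nonempty and (using completeness on intersections) sifted/weakly contractible, which is exactly the combinatorial input needed to run the standard descent argument; this is the same mechanism as in the well-known descent argument for $\Emb(-,M)$ cited in the proof of \cref{lem:compatibility-o-to-t-underlying-cat} and as used by Boavida de Brito--Weiss and in \cite{KKDisc}. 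I would write this out by identifying $\Map_{\ncBordInf^c(d)_{P,Q}}(D,-)$ restricted to $\cU$ with the functor sending $U$ to the space of such disc-configurations in $U$, and noting that over each fixed configuration in $M$ the relevant subposet of $\cU$ is weakly contractible by the defining property of a complete Weiss $k$-cover, so the colimit computes the whole mapping space.

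The main obstacle I anticipate is the bookkeeping around the collar and the boundary condition: one must be careful that ``open subsets containing a neighbourhood of $\partial M$'' interacts correctly with the bordism-category morphisms (which are embeddings rel a collar, up to the isotopy-equivalence identifications used throughout \cite{KKDisc}), and that the contractibility of the subposet of elements of $\cU$ containing a given configuration genuinely follows from completeness rather than merely from $\cU$ being a Weiss $k$-cover---this is where completeness is used, to handle the higher coherences coming from intersections. A secondary point is to make sure the argument is uniform in $k$ (including $k=\infty$) and that passing from the colimit statement in the bimodule category to the limit statement for general $F$ only uses that $\lvert F\rvert_E$ lies in a cocomplete category and that $\Map(-,\lvert F\rvert_E)$ turns colimits into limits, which is automatic. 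Once the disc-configuration reformulation is in place, the remaining steps are routine.
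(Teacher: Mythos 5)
Your proposal is correct and follows essentially the same route as the paper: the paper likewise deduces the second assertion formally from the first via $T_k^cF(-)=\Map(E_{(-)},\lvert F\rvert_E)$, and for the first assertion it simply cites the well-known descent argument for configuration spaces (from the proof of Proposition 4.3 in \cite{KKDisc}), which is exactly the objectwise check on discs that you sketch. The only cosmetic point is that your appeal to ``$\iota^*$ preserves colimits'' is superfluous---the content is just that colimits of presheaves are computed objectwise, which is what you then verify.
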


\begin{proof}
For $c=\oo$ and $k=\infty$, the first part was deduced in the proof of \cite[Proposition 4.3]{KKDisc} from a well-known descent argument for configuration spaces. The same argument applies for any $k$ and for $c=\ot$. The second part follows from the first by the universal property of the colimit, since we have $\smash{T_k^cF(-)=\Map_{\ModInf_k^c(d)_{E_{P\times I},E_{Q\times I}}}(E_{(-)},|F|_E)}$; see \cref{sec:manifold-calc}.
\end{proof}

\subsubsection{Isotopy extension}\label{sec:topological-isotopy-extension} 
There is an isotopy extension theorem in the context of embedding calculus, which describes under some conditions the fibres of maps between limits of the smooth embedding calculus tower that are induced by restricting along a submanifold (see \cite[Theorem 4.10]{KKDisc} and \cite[Theorem 6.1, Remark 6.2 (iv)-(v)]{KnudsenKupers}). The same statements hold for the limit of the topological embedding calculus tower, since the proof in the smooth case only uses naturality properties, descent for Weiss $\infty$-covers and the usual parametrised isotopy extension theorem for spaces of smooth topological embeddings of codimension $0$, which are all are available in the topological setting: the required naturality is provided by \cref{thm:extension-e-topological}, the descent property by \cref{sec:descent}, and parametrised isotopy extension for topological embeddings by \cite{EdwardsKirby}.

\subsection{The layers of embedding calculus} \label{sec:layers-embcalc}Given that smooth and topological embedding calculus is encoded by the tower $\ModInf^\un_\bullet(E_d^c)$ of symmetric monoidal double categories under the noncompact bordism category $\ncBordInf^c(d)$ via the functor $E$ (for $c\in\{\oo,\ot\}$), its layers are in principle described by the abstract layer identifications for the towers $\ModInf^\un_\bullet(\cO)$ for general unital operads $\cO$ from Theorems \ref{thm:bottom-layer-morita} and \ref{thm:layers-tower-module}. However, for many purposes a more geometric description in terms of manifold-theoretic data is preferable. We will work out such a description in this subsection. This will in particular recover Weiss' description of the layers of the classical embedding calculus tower in terms of relative section spaces over bundles over configuration spaces \cite{WeissEmbeddings}.

\subsubsection{The bottom layer}As we already explained in \cref{sec:sm-top-emb-calc}, the map to the bottom layer of the topological embedding calculus tower on bordism categories has the form 
\begin{equation}\label{equ:E-functor-to-bottom-layer}
	E\colon \ncBordInf^{\ot}(d)\lra \ModInf_1^\un(E_d^\ot)\simeq  \Cosp(\cS_{/\BTop(d)}).
\end{equation}
The following shows that it is essentially given by ``taking topological tangent bundles'':
\begin{prop}\label{prop:bottom-manifold-layer}
The functor \eqref{equ:E-functor-to-bottom-layer} has the following properties:

\begin{enumerate}[leftmargin=*]
	\item it sends a $(d-1)$-manifold $P$ without boundary viewed as an object of $\ncBordInf^{\ot}(d)$ to the classifier $(P\ra \BTop(d))\in\cS_{/\BTop(d)}$ of its once stabilised topological tangent bundle,
	\item it sends a bordism $M\colon P\leadsto Q$ viewed as an object in the mapping category $\ncBordInf^{\ot}(d)_{P,Q}$ to the cospan $(P\ra M\la Q)\in (\cS_{/\BTop(d)})_{(P\sqcup Q)/}$ over $\BTop(d)$ induced by the tangent bundles,
	\item on mapping spaces of mapping categories 
	\[
		\hspace{0.6cm}\Map_{\ncBordInf^{\ot}(d)_{P,Q}}(M,N)\simeq \Emb^{\ot}_\partial(M,N)\ra \Map_\partial^{/\BTop(d)}(M,N)\coloneqq \Map_{(\cS_{/\BTop(d)})_{(P\sqcup Q)/}}(M,N)
	\]
	it is induced by taking topological derivatives.
\end{enumerate}
The same holds for the smooth version when replacing $c=\ot$ by $c=\oo$, $\BTop(d)$ by $\BO(d)$ and topological tangent bundles and derivatives by smooth ones.
\end{prop}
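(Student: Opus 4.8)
The plan is to reduce the statement about the functor $E$ on the bordism category to the already-established abstract identification of the bottom layer, \cref{thm:bottom-layer-morita}, applied to $\cO = E_d^{\ot}$, combined with the explicit geometric description of $E$ from \cref{thm:extension-e-topological} and the identification $(E_d^{\ot})^\col\simeq\BTop(d)$ from \cref{sec:ed-operads-general}. Concretely, \cref{thm:bottom-layer-morita} gives an equivalence $\ModInf_1^\un(E_d^\ot)\simeq\Cosp(\PSh((E_d^\ot)^\col))\simeq\Cosp(\cS_{/\BTop(d)})$ induced by restriction along $(E_d^\ot)^\col\subset\Env(E_d^\ot)$, i.e.\ along $\BTop(d)\simeq \Env^{\neq 0}(E_d^\ot)_{\le 1}\hookrightarrow \Env(E_d^\ot)$, which under $\Env(E_d^\ot)\simeq\DiscInf^\ot_d$ is the inclusion of the full subcategory on the single disc $\bfR^d$. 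So the composite \eqref{equ:E-functor-to-bottom-layer} is, on objects, $P\mapsto$ (the right-module $E_{P\times I}=\Emb^\ot(-,P\times I)$ over $E_d^\ot$, restricted to the object $\bfR^d\in\DiscInf^\ot_d$), which is the space $\Emb^\ot(\bfR^d, P\times I)$ of topological embeddings of a single Euclidean space; and similarly for bordisms and on mapping spaces. The remaining content is therefore purely geometric: to identify $\Emb^\ot(\bfR^d, M)$ together with its natural structure as a space over $(E_d^\ot)^\col\simeq\BTop(d)$ with the classifier of the (once-stabilised) topological tangent bundle of $M$.

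\textbf{Key steps.} First, I would spell out the equivalence of \cref{thm:bottom-layer-morita} on the three relevant pieces of data: on objects $P$, it is $U_{11}$-composed-with-restriction applied to the algebra $E_{P\times I}$, which by the computation in \cref{sec:free-bimodules} (the algebra $1$ has $U_{11}\colon\BMod_{1,1}\simeq\id$) and \cref{thm:bottom-layer} reduces to $\nu^* U_{11}E_{P\times I}= E_{P\times I}(\bfR^d)=\Emb^\ot(\bfR^d,P\times I)$; on a bordism $M\colon P\leadsto Q$ it is the cospan of the restricted right-modules $E_{P\times I}\to E_M\leftarrow E_{Q\times I}$ evaluated at $\bfR^d$, i.e.\ $\Emb^\ot(\bfR^d,P\times I)\to\Emb^\ot(\bfR^d,M)\leftarrow\Emb^\ot(\bfR^d,Q\times I)$, where the maps are postcomposition with the (stacking) embeddings $P\times I, Q\times I\hookrightarrow M$; and on mapping spaces it is the restriction of the bimodule map $E_M\to E_N$ to $\bfR^d$, i.e.\ $\Emb^\ot_\partial(M,N)\to\Map(\Emb^\ot(\bfR^d,M),\Emb^\ot(\bfR^d,N))$ relative to the source-target data, which is literally postcomposition of embeddings of $\bfR^d$.

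Second, I would identify $\Emb^\ot(\bfR^d,M)$, as a space over $(E_d^\ot)^\col$, with the classifier of the topological tangent bundle. Since $(E_d^\ot)^\col$ is the coherent nerve of $\Emb^\ot(\bfR^d,\bfR^d)\simeq\Top(d)$ (see \cref{sec:ed-operads-general}), the space $\Emb^\ot(\bfR^d,M)$ carries a free $\Top(d)$-action by precomposition, and the map to a point realizes the quotient $\Emb^\ot(\bfR^d,M)/\Top(d)$ as the relevant space over $\BTop(d)$; this quotient is the topological frame bundle $\Fr^t(M)$ modulo the contractible choice of parametrization, which is a model for the classifying map of the (once-stabilised, since $M$ is a bordism, i.e.\ $\dim M = d$ but the relevant bundle is the $d$-dimensional topological tangent microbundle) topological tangent bundle of $M$. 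Here I would cite the Kister--Mazur theorem (already used in \cref{sec:ed-operads-general}) to pass between microbundles and fibre bundles, and the standard fact that $\Emb^\ot(\bfR^d,M)\simeq \Fr^t(M)$ is a $\Top(d)$-principal bundle whose associated $\bfR^d$-bundle is the tangent microbundle. For bordisms the cospan structure and on mapping spaces the "topological derivative" interpretation then follow automatically, since postcomposition with an embedding $M\hookrightarrow N$ induces on frame bundles exactly the derivative map. Finally, the smooth case $c=\oo$ is identical, replacing $\Top(d)$ by $\OO(d)$ via $\Emb^\oo(\bfR^d,\bfR^d)\simeq\OO(d)$ and topological frame bundles by smooth ones; this is already essentially contained in the smooth half of \cref{thm:extension-e-topological} and in \cite[Proposition 3.6]{KKDisc} (the proof there identifies the restriction of $E_M$ to $\bfR^d$ with $\Fr(M)$).

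\textbf{Main obstacle.} The genuinely delicate point is the second step: carefully matching the \emph{natural} structure of $\Emb^\ot(\bfR^d,M)$ as a space over $(E_d^\ot)^\col$ — which comes out of the abstract machinery as "restrict the presheaf $\Emb^\ot(-,M)$ along $\{\bfR^d\}\hookrightarrow\DiscInf^\ot_d$" — with the classifying map of the topological tangent bundle of $M$, coherently in $M$ and compatibly with the cospan/composition structure. This requires being precise about: (i) the equivalence $(E_d^\ot)^\col\simeq\BTop(d)$ from \cref{sec:ed-operads-general} and that the induced $\Top(d)$-action on $\Emb^\ot(\bfR^d,M)$ is the precomposition action; (ii) that $\Emb^\ot(\bfR^d,M)$ is (equivalent to) the topological frame bundle $\Fr^t(M)$ of germs of topological embeddings, which is exactly the model used in the proof of \cref{thm:extension-e-topological} (the variant of \cite[Proposition 3.6]{KKDisc}); and (iii) that this is indeed a classifier for the once-stabilised topological tangent bundle, which is where Kister--Mazur and microbundle theory enter. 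Everything else is formal unwinding of \cref{thm:bottom-layer-morita}, \cref{thm:bottom-layer}, and the construction of $E$ in \cref{thm:extension-e-topological}.
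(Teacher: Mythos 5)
Your proposal is correct and follows essentially the same route as the paper: unwind the bottom-layer equivalence (via the straightening/colimit over the groupoid $(E_d^\ot)^\col\simeq\BTop(d)$) to see that the functor sends $M$ to $\Emb^\ot(\bfR^d,M)_{\Top(d)}\ra\BTop(d)$, and then identify this with the tangent classifier using that $\ev_0\colon\Emb^\ot(\bfR^d,M)\ra M$ is an equivalence after quotienting by (origin-preserving) homeomorphisms, by isotopy extension and Kister--Mazur. The paper's proof is exactly this, with the once-stabilised case for $(d-1)$-manifolds $P$ handled by passing to $P\times(0,1)$ and the derivative/cospan statements following from naturality of postcomposition, just as you describe.
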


\begin{proof}We begin with the following general recollections on topological tangent bundles:
\begin{enumerate}[leftmargin=*, label=(\alph*)]
	\item One model of the tangent classifier of the topological tangent bundle of a $d$-manifold $M$ is given by the following zig-zag
	\begin{equation}\label{equ:topological-tangent-bundle}
		M\xleftarrow{\simeq}\Emb^\ot(\bfR^d,M)_{\Top_0(d)}\xrightarrow{\simeq}\Emb^\ot(\bfR^d,M)_{\Top(d)}\ra\BTop(d),
	\end{equation}
	where $\Top_0(d)\subset\Top(d)$ is the topological subgroup of origin-preserving homeomorphisms. Composing with an appropriate translation gives a homotopy inverse to the inclusion, so $\Top(d)_0 \simeq\Top(d)$. This explains the middle equivalence between the orbits with respect to the action by precomposition. The leftmost map is induced by evaluation at $0$, which is a fibration by isotopy extension, and is an equivalence since the fibre at $m\in M$ of $\ev_0\colon \Emb^\ot(\bfR^d,M)\ra M$ is equivalent to the space of embeddings of $\bfR^d$ into a $\bfR^d$-neighbourhood of $m\in M$ which is in turn equivalent to $\Top(d)$ by the Kister--Mazur theorem.
	\item In this model, the topological derivative of a codimension $0$-embedding $e\colon M\hookrightarrow N$ is given by the map between the zig-zag \eqref{equ:topological-tangent-bundle} for $M$ to the one for $N$, induced by postcomposition with $e$.
	\item A model for the once-stabilised tangent bundle of a $(d-1)$-manifold $P$ is given by applying the above to $P\times(0,1)$ and using $P\times (0,1)\simeq P$ by the projection.
\end{enumerate}
With these models for the topological tangent bundles and the derivative, the claim follows by chasing through the construction of the functor $E$ and the fact that the straightening equivalence $\PSh(\cG)\simeq \cS_{/\cG}$ for groupoids is given by sending $X\in \PSh(\cG)$ to $\colim_\cG(X)\ra \colim_\cG(\ast)\simeq \cG$. The argument in the smooth case is the same; one simply replaces $\Top(d)$ by $\oO(d)$ and all spaces of topological embeddings by their smooth analogous.
\end{proof}

\subsubsection{The higher layers, without boundary}For topological $(d-1)$-manifolds $P$ and $Q$ without boundary, each consecutive functor in the tower $\ModInf_\bullet(E_d^\ot)_{E_{P\times I},E_{E\times I}}$ under $\ncBordInf^\ot(d)_{P,Q}$ fits by \cref{thm:layers-tower-module} \ref{enum:layer-pullback-gc} into a pullback square
\begin{equation}\label{equ:geometric-layer-diagram-bordism}
	\begin{tikzcd}[column sep=1.5cm, row sep=0.5cm]
	\ModInf_k(E_d^\ot)_{E_{P\times I},E_{E\times I}}\dar\rar{\Lambda^c_{E_{P\times I}E_{Q\times I}}}&\cS^{[2]}\dar{(0\le 2)^*}\\
	\ModInf_{k-1}(E_d^\ot)_{E_{P\times I},E_{E\times I}}\rar{\Omega^c_{E_{P\times I}E_{Q\times I}}}&\cS^{[1]}.
	\end{tikzcd}
\end{equation}
Our goal of this and the following subsection is to give a geometric description of the composition of the top horizontal arrow with $E\colon \ncBordInf^\ot(d)_{P,Q}\ra \ModInf_k(E_d^\ot)_{E_{P\times I},E_{E\times I}}$. This will in particular identify the fibres of the left vertical functor in \eqref{equ:geometric-layer-diagram-bordism} in terms of configuration space data (see \cref{cor:layer-description-explicit}). We first deal with the special case $P=Q=\varnothing$, where \eqref{equ:geometric-layer-diagram-bordism} becomes the pullback 
\begin{equation}\label{equ:geometric-layer-diagram-single}
	\begin{tikzcd}[row sep=0.5cm]
	\RMod_k(E_d^\ot)\dar\rar{\Lambda^c}&\cS^{[2]}\dar{(0\le 2)^*}\\
	\RMod_{k-1}(E_d^\ot)\rar{\Omega^c}&\cS^{[1]}
	\end{tikzcd}
\end{equation}
from \cref{thm:layers-tower} \ref{enum:layer-pullback-gc}.

\begin{rem}[Smooth layers]We focus on topological embedding calculus in this subsection, because the corresponding results for the smooth version follows from the topological case if $P$ and $Q$ are smooth: by the commutativity of the square in \cref{thm:extension-e-topological} and the naturality of the higher Morita layers from \cref{thm:naturality-of-layers-module} applied to the map of operads $\smash{E_d^\oo\ra E_d^\ot}$, the composition of the smooth version of the top horizontal arrow in \eqref{equ:geometric-layer-diagram-bordism} with $\smash{E\colon \ncBordInf^\oo(d)_{P,Q}\ra \ModInf_k(E_d^\oo)_{E_{P\times I},E_{E\times I}}}$ agrees with the corresponding composition composition in the topological case, precomposed with the forgetful map $\smash{\ncBordInf^\oo(d)_{P,Q}\ra \ncBordInf^\ot(d)_{P,Q}}$ followed by the composition.\end{rem}

The geometric description of the higher layers we  will give involves point-set topological constructions for which we work temporarily in a self-enriched convenient category $\cTop$ of topological spaces. Given a manifold $M$ without boundary, we consider its \emph{$k$th ordered configuration space}
\[
	F_k(M) \coloneqq \Emb(\ul{k},M)=M^k\backslash \Delta_k(M),\ \  \text{where}\ \  \Delta_k(M) \coloneqq \{(x_1,\ldots,x_k)\in M^k \mid \text{$x_i = x_j$ for some $i \neq j$}\} 
\]
is the \emph{thick diagonal}; here $\underline{k}\coloneqq\{1,\ldots,k\}$. The \emph{homotopy link} (see e.g.\,\cite[2.1]{QuinnLink})
\[\partial^h F_k(M)\coloneqq \holink(\Delta_k(M)\subset M^k)\in\cTop\] 
of the thick diagonal is the subspace $\partial^h F_k(M) \subset \Map([0,1],M^k)$ of paths $\gamma\colon [0,1]\ra M^k$ for which $\gamma^{-1}(\Delta_k(M))=\{0\}$. Evaluation at $1\in[0,1]$ gives the left map in a sequence of $\Sigma_k$-equivariant maps
\begin{equation}\label{equ:mfd-match-latch-before-orbits}
	\partial^h F_k(M) \xlra{\ev_1} F_k(M)\lra \holim_{S\subsetneq \underline{k}}F_S(M)
\end{equation}
whose right-hand map to the homotopy limit of the punctured cubical diagram $\underline{k}\supsetneq S\mapsto F_S(M)\eqcolon \Emb(S,M)$ is induced by forgetting points. Writing $\partial^h C_k(M) \coloneqq \partial^h F_k(M)_{\Sigma_k}$ and $C_k(M)=F_k(M)_{\Sigma_k}$ for the strict orbits, the sequence \eqref{equ:mfd-match-latch-before-orbits} induces maps in $\cTop$
\begin{equation}\label{equ:mfd-match-latch}
	\partial^h C_k(M) \lra C_k(M) \lra[\holim_{S\subsetneq \underline{k}}F_S(M)]_{h\Sigma_k}.
\end{equation}
This construction is functorial in codimension $0$ embeddings, so \eqref{equ:mfd-match-latch} gives rise to a topologically enriched functor $\Man^\ot_d\ra \cTop^{[2]}$ out of the topologically enriched category $\Man^\ot_d$ of topological $d$-manifolds without boundary and codimension $0$ embeddings between them. Upon taking coherent nerves, this gives a functor $\ManInf^\ot_d\ra \cS^{[2]}$. This agrees with the top composition in \eqref{equ:geometric-layer-diagram-single}:

\begin{prop}\label{lem:latch-match-manifolds} For a $d$-dimensional manifold $M$ without boundary, there is an equivalence in $\cS^{[2]}$
	\begin{equation}\label{latch-match-manifolds}
		\Lambda^c(E_M)\simeq \big(\partial^h C_k(M)\ra C_k(M)\ra [\holim_{S\subsetneq\underline{k}} F_S(M)]_{h \Sigma_k}\big).
	\end{equation}
which is natural in $M$, i.e.\,features in an equivalence of functors $\ManInf^\ot_d \to \cS^{[2]}$.
\end{prop}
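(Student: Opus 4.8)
The plan is to identify each of the three spaces in \eqref{latch-match-manifolds} with the corresponding entry of $\Lambda^c(E_M)$, using the explicit descriptions of $\Lambda^c = \colim \circ \Lambda$ assembled in \cref{sec:layers}. Recall that $\Lambda = (\nu^*\iota_!\iota^* \to \nu^* \to \nu^*\iota_*\iota^*)$ followed by $\colim\colon \PSh((E_d^\ot)^\col\wr\Sigma_k) \to \cS$, and that $(E_d^\ot)^\col\wr\Sigma_k \simeq \Env(E_d^\ot)\times_{\Fin}\Fin_k^\simeq$ by \eqref{equ:operad-wreath-subcat}. The key input is that for the right-module $E_M = \Emb^\ot(-,M)\in\RMod(E_d^\ot)=\PSh(\DiscInf_d^\ot)$, the restriction $\nu^* E_M$ is the presheaf on $(E_d^\ot)^\col\wr\Sigma_k$ that sends a $k$-tuple of framed points (with its $\Sigma_k$-action) to the space of embeddings $\ul k \times \bfR^d \hookrightarrow M$; taking $\colim$ over the groupoid $(E_d^\ot)^\col\wr\Sigma_k$ — which by the straightening equivalence $\PSh(\cG)\simeq\cS_{/\cG}$ and the standard ``scanning''/frame-bundle computation produces $\Emb^\ot(\ul k\times\bfR^d, M)_{h\Sigma_k} \simeq C_k(M)$ (the homotopy orbit of ordered configurations of little discs, which deformation retracts onto ordered configurations of points). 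This gives the middle term.

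The first step is therefore to set up, at the point-set level, a topologically enriched functor $\Man_d^\ot \to \cTop^{[2]}$ whose coherent nerve realises the composite $\Lambda^c \circ E$; I would do this by writing $E_M$ as the homotopy colimit $\colim_{D\in\cO_k(M)}E_D$ over the Weiss $k$-cover poset of discs in $M$ (as in the proof of \cref{lem:compatibility-o-to-t-underlying-cat} and \cref{lem:tk-descent}), so that all three functors $\nu^*\iota_!\iota^*$, $\nu^*$, $\nu^*\iota_*\iota^*$ applied to $E_M$ become homotopy (co)limits of the corresponding evaluations on discs, which are computed by \eqref{equ:operations-cocartesian-operad}-type formulas and the simplified right Kan extension formula of \cref{lem:simplified-right-kan}. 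Concretely: $\nu^*\iota^* E_M$ at a $k$-tuple of discs is $\emptyset$ unless the tuple is ``non-overlapping'', and the right Kan extension $\iota_*\iota^* E_M$ evaluated on a $k$-disc configuration is $\lim_{S\subsetneq \ul k}\Emb^\ot(S\times\bfR^d,M)$ by \cref{lem:simplified-right-kan}; taking $\Sigma_k$-homotopy-orbits of the $\colim$ over the wreath-product groupoid yields $[\holim_{S\subsetneq\ul k}F_S(M)]_{h\Sigma_k}$ for the third term. For the first term, $\iota_!\iota^* E_M$ evaluated on a $k$-disc configuration is (by the colimit formula for left Kan extension and the decomposition-pair analysis in the proof of \cref{thm:layers-tower}) the space of configurations of $k$ discs in $M$ together with a path collapsing at least two of them together; after homotopy orbits and the standard identification of $\Emb^\ot$ of discs with frames, this is precisely the homotopy link $\partial^h C_k(M)$ of the thick diagonal. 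The maps between the three — inclusion $\partial^h C_k(M)\to C_k(M)$ induced by $\ev_1$, and $C_k(M)\to[\holim_{S\subsetneq\ul k}F_S(M)]_{h\Sigma_k}$ induced by forgetting points — match the counit $\iota_!\iota^*\to\id$ and unit $\id\to\iota_*\iota^*$ under these identifications.

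The main technical obstacle, and the step I would spend the most care on, is the identification of the \emph{left} term: showing that $\colim$ of $\nu^*\iota_!\iota^* E_M$ over the wreath-product groupoid is genuinely the homotopy link $\partial^h C_k(M) = \holink(\Delta_k(M)\subset M^k)$ rather than merely some abstract homotopy colimit with the right homotopy type ``by fiat''. This requires a comparison between two models of a ``boundary'': the operadic one, where a point of $\iota_!\iota^*E_M$ on a $k$-disc configuration records a factorisation of an embedding through a configuration with \emph{strictly fewer} discs (i.e. through $\Env(E_d^\ot)_{\le k-1}$) together with the collapsing data, and Quinn's homotopy-link model. I would bridge these via the poset-of-opens rewriting: $\partial^h F_k(M)$ is the homotopy colimit over the Weiss $(k-1)$-cover of the punctured-cube-over-configuration-spaces diagram, exactly as the colimit formula for $\iota_!$ produces, and the non-surjectivity/strong-cartesianness argument from the proof of \cref{thm:layers-tower} (invoking \cref{cor:unital-nonsurj} and the union-of-components computation there, citing \cite[Lemma 5.10.4]{MunsonVolic}) gives that this homotopy colimit is the union of the relevant strata, which is Quinn's link. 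Naturality in $M$ in $\cS^{[2]}$ follows automatically since every construction above — the Weiss-cover colimit, the left/right Kan extensions, the $\Sigma_k$-homotopy orbits — is functorial in codimension-$0$ embeddings, so passing to coherent nerves gives the asserted equivalence of functors $\ManInf_d^\ot\to\cS^{[2]}$.
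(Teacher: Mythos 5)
Your overall strategy is the same as the paper's: identify the middle term via the Kister--Mazur identification $\Aut(\underline{k}\times\bfR^d)\simeq\Top(d)^k\rtimes\Sigma_k$ and evaluation at centres, the right term via \cref{lem:simplified-right-kan}, and the left term by first understanding the counit $\iota_!\iota^*\to\id$ on discs (where it picks out the non-injective components, matched with the homotopy link by the Alexander-trick homotopy of \cref{ex:boundary-for-discs}) and then globalising over $M$. The minor difference that you resolve $E_M$ by a Weiss-cover poset while the paper uses the coend $(-)\otimes_{\DiscInf_{\le k}}E_M$ (modelled by a bar construction) is immaterial.

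However, there is a genuine gap at exactly the step you single out as the main obstacle. To finish, you must show that the homotopy colimit over discs of the local boundary pieces computes the actual point-set homotopy link $\partial^h C_k(M)=\holink(\Delta_k(M)\subset M^k)$ for a \emph{general} $M$, i.e.\ that the augmentation from the bar construction (or Weiss-cover colimit) of $\partial^h C_k(-)$ to $\partial^h C_k(M)$ is an equivalence. Your proposed bridge --- the non-surjectivity/strong-cartesianness computation from the proof of \cref{thm:layers-tower} together with \cite[Lemma 5.10.4]{MunsonVolic} --- does not do this: that argument lives entirely inside the disc category $\Env(E_d^\ot)_{\le k}$ and only establishes the \emph{local} model, namely that the counit on a disjoint union of discs lands equivalently in $C_k^{\ninj}(S\times\bfR^d)\simeq\partial^h C_k(S\times\bfR^d)$. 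It says nothing about whether the functor $\holink(\Delta_k(-)\subset(-)^k)$ satisfies descent along Weiss covers of $M$; the homotopy link is a path space in $M^k$ and there is no formal reason for paths collapsing into the thick diagonal to be captured by configurations supported in finitely many discs. The paper closes this gap with a dedicated point-set argument: it models both coends by thick geometric realisations of bar constructions, shows the augmentation for the symmetric-power variant is a Serre microfibration with weakly contractible fibres (hence an equivalence, inherited by $C_k(M)$ via pullback), and then compares the $\partial^h$-variant to its pullback along $\ev_0$ by a levelwise homotopy that shrinks the domain of the link paths until they lie inside the chosen discs (using compactness of the parametrising disc/sphere). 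Some argument of this kind --- or an independent proof that $\partial^h C_k(-)$ is a homotopy sheaf for Weiss covers --- is needed; without it the identification of the left-hand term is unproved.
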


\begin{rem}In \cref{rem:alternative-models-layers} below, we explain alternative point-set models for $\partial^h C_k(M)\ra C_k(M)$.
\end{rem}

\begin{ex}\label{ex:boundary-for-discs}We discuss a key example before turning to the proof of \cref{lem:latch-match-manifolds}, namely $M=S\times \bfR^d$ for $S\in \Fin$. In this case the map $\ev_1\colon \partial^hC_k(S\times \bfR^d)\ra C_k(S\times \bfR^d)$ lands in the collection of components $\smash{C^{\ninj}_k(S\times \bfR^d)\subset C_k(S\times \bfR^d)}$ of \emph{non-injective configurations}, i.e.\,those configurations for which one of the disjoint summands in $S\times \bfR^d$ contains at least two points (note that the points are still disjoint, and ``non-injective'' refers to path components). The induced map
\begin{equation}\label{equ:latch-noninj}
	\ev_1\colon \partial^hC_k(S\times \bfR^d)\xlra{\simeq} C^{\ninj}_k(S\times \bfR^d)
\end{equation}
turns out to be a homotopy equivalence, since it has a homotopy inverse given by sending a non-injective ordered configuration $C\subset S\times \bfR^d$ to the straight-line paths from the origin of each $\bfR^d$-summand containing a point in $C$ to $C$: this is a strict right-inverse to $\ev_1$ and also a homotopy left inverse in view of the following homotopy of Alexander-trick type:
\[
	0,1] \times \partial^h C_k(S \times \bfR^d) \ni (s,\gamma) \mapsto \left(t \mapsto \begin{cases} 	(1-s)\cdot \gamma(\frac{t}{1-s}) & \text{if $t \in [0,1-s]$} \\
	t\cdot \gamma(1) & \text{if $t \in [1-s,1]$}\end{cases} \right)\in \partial^h C_k(S \times \bfR^d).
\]
This equivalence is natural in embeddings between $S\times\bfR^d$ for different $S$, so induces an equivalence $\smash{(\partial^hC_k(-)\ra C_k(-))\simeq (C_k^{\ninj}(-)\subset C_k)}$ of functors $\DiscInf_d^\ot\ra \cS^{[1]}$.
\end{ex}

\begin{proof}[Proof of \cref{lem:latch-match-manifolds}]We abbreviate  $\smash{\DiscInf_{d,\le k}^{\ot}\coloneqq \DiscInf_{\le k}}$. Going through the definition, we see that the inclusion $\smash{\nu\colon (E_d^{\ot})^\col\wr\Sigma_k\hookrightarrow \Env(E_d^{\ot})_{\le k}}$ from \cref{sec:higher-layers-nomodule} agrees with respect to the equivalence $\smash{\Env(E_d^{\ot})\simeq \DiscInf^{\ot}}$ (see \cref{sec:rmod-framed-ed}) with the inclusion $\smash{\BAut(\underline{k}\times\bfR^d)\hookrightarrow \DiscInf_{\le k}}$ of the automorphisms of $\underline{k}\times\bfR^d$ in $\DiscInf_{\le k}$, so the functor $\smash{\Lambda^c(-)\colon \PSh(\DiscInf_{\le k}) \ra\cS^{[2]}}$ sends $X$ to 
\begin{equation}\label{equ:match-latch-spelled-out}
	\big((\iota_!\iota^*X)(\underline{k}\times\bfR^d)_{\Aut(\underline{k}\times\bfR^d)}\ra X(\underline{k}\times\bfR^d)_{\Aut(\underline{k}\times\bfR^d)}\ra (\iota_\ast\iota^*X)(\underline{k}\times\bfR^d)_{\Aut(\underline{k}\times\bfR^d)}\big)
\end{equation}
where $\smash{\iota\colon \DiscInf_{\le k-1}\hookrightarrow \DiscInf_{\le k}}$ is the inclusion and $\smash{(-)_{\Aut(\underline{k}\times\bfR^d)}}$ denotes taking orbits. As a result of the Kister--Mazur theorem, the group $\Aut(\underline{k}\times\bfR^d)$ is equivalent to the wreath product $\smash{\Top(d)^{k}\rtimes \Sigma_k}$, so we can compute the $\Aut(\underline{k}\times\bfR^d)$-orbits by first taking $\Top(d)^k$-orbits and then taking orbits of to the residual $\Sigma_k$-action. Combining this with 
 \cref{lem:simplified-right-kan}, we get a natural equivalence
\[
	\textstyle{\big(X(\underline{k}\times\bfR^d)\ra (\iota_*\iota^*X)(\underline{k}\times\bfR^d)\big)_{\Aut(\underline{k}\times\bfR^d)}\simeq \big(X(\underline{k}\times\bfR^d)_{\Top(d)^k}\ra \lim_{S\subsetneq \underline{k}}(X(S\times\bfR^d)_{\Top(d)^S})\big)_{\Sigma_k}}
\]
For $X=E_M=\Emb(-,M)\in\PSh(\DiscInf_{\le k})$ and a finite sets $S$, evaluating at the centres gives an equivalence (again as a result of the Kister--Mazer theorem) \begin{equation}\label{equ:unthicken-configurations}
	\Emb(S\times\bfR^d,M)_{\Top(d)^S}\simeq F_S(M)=\Emb(S,M).
\end{equation} 
Combining this with the above, we get a natural equivalence
\[
	\textstyle{\big(E_M(\underline{k}\times\bfR^d)\ra (\iota_*\iota^*E_M)(\underline{k}\times\bfR^d)\big)_{\Aut(\underline{k}\times\bfR^d)}\simeq \big(C_k(M)\ra [\lim_{S\subsetneq \underline{k}}F_S(M)]_{\Sigma_k}\big)}
\]
which identifies the right part of \eqref{equ:match-latch-spelled-out} as claimed.
To identify the left part of \eqref{equ:match-latch-spelled-out}, we construct a natural commutative diagram with horizontal equivalences
\[
	\hspace{-0.4cm}
	\begin{tikzcd}[column sep=0.1cm,ar symbol/.style = {draw=none,"\textstyle#1" description,sloped}, equivalent/.style = {ar symbol={\simeq}}, row sep=0.4cm]
	(\iota_!\iota^*E_M)(\underline{k}\times\bfR^d)_{\Aut(\underline{k}\times\bfR^d)}\dar\rar[equivalent]&(\iota_!\iota^*E_{(-)})(\underline{k}\times\bfR^d)_{\Aut(\underline{k}\times\bfR^d)}\otimes_{\DiscInf_{\le k}}E_M\dar\rar[equivalent] &\partial^hC_k(-)\otimes_{\DiscInf_{\le k}}E_M\dar\rar[equivalent]&\partial^hC_k(M)\dar\\
	E_M(\underline{k}\times\bfR^d)_{\Aut(\underline{k}\times\bfR^d)}\rar[equivalent]&E_{(-)}(\underline{k}\times\bfR^d)_{\Aut(\underline{k}\times\bfR^d)}\otimes_{\DiscInf_{\le k}}E_M\rar[equivalent]&C_k(-)\otimes_{\DiscInf_{\le k}}E_M\rar[equivalent]&C_k(M).
	\end{tikzcd}
\]
where the two leftmost vertical maps are induced by the counit of $\iota_!\dashv\iota^*$ and the two rightmost ones by the evaluation $\ev_1\colon \partial^hC_k(-)\ra C_k(-)$. The identification between the first two columns follows from the fact that left Kan extension can be computed via a coend and the compatibility of coends with colimits. To identify the second column with the third, we construct equivalences
\[
	\smash{\big(\iota_!\iota^*E_{(-)})(\underline{k}\times\bfR^d)\ra E_{(-)}(\underline{k}\times\bfR^d)\big)_{\Aut(\underline{k}\times\bfR^d)}\simeq \big(C^\ninj_k(-)\subset C_k(-)\big)\simeq \big(\partial^hC_k(-)\ra C_k(-)\big)}
\]
of functors $\DiscInf_{\le k}\ra \cS^{[1]}$ where $\smash{C^\ninj_k(S\times\bfR^d)\subset C_k(S\times\bfR^d)}$ is defined as in \cref{ex:boundary-for-discs}. The second of these equivalences was explained in \cref{ex:boundary-for-discs}. In view of \eqref{equ:unthicken-configurations}, the first equivalence follows from showing that the counit $\smash{(\iota_!\iota^*E_{S\times \bfR^d})(\underline{k}\times\bfR^d)\ra E_{S\times \bfR^d}(\underline{k}\times\bfR^d)=\Emb(\underline{k}\times\bfR^d,S\times \bfR^d)}$  induces an equivalence onto the components whose map on path-components is not injective. If $|S|\le k-1$, then these are all components, so there is nothing to show since $\iota_!E_{S\times\bfR^d}\simeq E_{S\times\bfR^d}$ in this case. For $|S|=k$, we already established this as part of the proof of \cref{thm:layers-tower} (see \eqref{equ:counit-nonsur}; note that injectivity and surjectivity are the same for maps of finite sets of equal cardinality).

\medskip

\noindent This leaves us with the task to identify the final two columns in the above diagram. To do so, we work in a convenient category of topological spaces $\cTop$ and model the coends by the thick geometric realisations $\| \cat{B}_\bullet(\cat{E}_M,\Disc_{\le k},\partial^hC_k(-))\|$ and $\| B_\bullet(\cat{E}_M,\Disc_{\le k},C_k(-))\|$ of the respective bar constructions where $\cat{E}_M=\Emb(-,M)$ is an enriched presheaf on the topologically enriched category $\Disc_{\le k}$ with objects $S\times\bfR^d$ for $S\in\Fin_{\le k}$ and morphisms codimension $0$ embeddings, in the strict sense (cf.\,\cite[Section 4.4.1]{KKsurfaces}). It suffices to show that the two natural augmentations
\begin{equation}\label{equ:augmentation-config}
	\|\cat{B}_\bullet(\cat{E}_M,\Disc_{\le k},C_k(-))\|\ra C_k(M)\quad\text{and}\quad\| \cat{B}_\bullet(\cat{E}_M,\Disc_{\le k},\partial^hC_k(-))\|\ra \partial^hC_k(M)
\end{equation}
are equivalences. To this end, we consider the symmetric powers $\Sym_k(M) \coloneqq \smash{M^k_{\Sigma_k}}$ which contain $C_k(M)$ as a subspace, and consider the analogous augmentation $\|\cat{B}_\bullet(\cat{E}_M,\Disc_{\leq k},\Sym_k(-))\| \ra \Sym_k(M)$.
By a standard argument (c.f..~\cite[Proposition 4.5]{KKsurfaces} or \cite[Section 4]{KlangKupersMiller}) this is a Serre microfibration with weakly contractible fibres, so in fact an Serre fibration \cite[Lemma 2.2]{WeissClassify} and thus a weak equivalence. Being a Serre fibration with weakly contractible fibres is preserved by strict pullbacks, so by pulling back along $C_k(M) \subset \Sym_k(M)$, it follows that the first map in \eqref{equ:augmentation-config} is a weak equivalence. For the second, we consider the map of augmented semisimplicial spaces 
\[
	\begin{tikzcd}[row sep=0.3cm]
	\cat{B}_\bullet(\cat{E}_M,\Disc_{\leq k},\partial^h C_k(-)) \rar \dar & \cat{B}_\bullet(\cat{E}_M,\Disc_{\leq k},\Sym_k(-)) \dar \\
	\partial^h C_k(M) \rar{\ev_0} & \Sym_k(M) 
	\end{tikzcd}
\]
where the top map is induced by $\ev_0$. Setting $\cat{B}_\bullet \coloneqq \cat{B}_\bullet(\cat{E}_M,\Disc_{\leq k},\Sym_k(-)) \times_{\Sym_k(M)}\partial^h C_k(M)$, this factors as a horizontal composition of squares
\begin{equation}\label{eqn:microfib-diagram} 
	\begin{tikzcd}[row sep=0.3cm]
	\cat{B}_\bullet(\cat{E}_M,\Disc_{\leq k},\partial^h C_k(-)) \rar \dar &[-10pt] \cat{B}_\bullet \rar \dar &[-10pt] \cat{B}_\bullet(\cat{E}_M,\Disc_{\leq k},\Sym_k(-)) \dar \\
	\partial^h C_k(M) \rar[equal] &	\partial^h C_k(M) \rar{\ev_0} & \Sym_k(M).
	\end{tikzcd}
\end{equation}
On thick geometric realisation the right-hand square becomes a strict pullback which follows from the general fact that if $X_\bullet \to X_{-1}$ is an augmented semisimplicial space and $Y_{-1} \to X_{-1}$ is any continuous map, then the induced map $\|X_\bullet \times_{X_{-1}} Y_{-1}\| \to \|X_\bullet\| \times_{X_{-1}} Y_{-1}$ is a homeomorphism since the pullback functor $\cTop_{/X_{-1}}\ra \cTop_{/Y_{-1}}$ and the forgetful functor $\cTop_{/Y_{-1}}\ra \cTop$ both preserve colimits; the former since it is a left adjoint (see e.g.\,\cite[2.1.3]{MaySigurdsson}). Since being a Serre fibration with weakly contractible fibres is preserved by strict pullbacks, we conclude that the middle vertical map in \eqref{eqn:microfib-diagram} is an equivalence on thick geometric realisations, so it suffices to show that the top left horizontal map is an equivalence on thick geometric realisations.

\medskip

\noindent We will show that $\cat{B}_\bullet(\cat{E}_M,\Disc_{\leq k},\partial^h C_k(-)) \to \cat{B}_\bullet$ is in fact a levelwise weak equivalence. By composing embeddings, we can think of a point in the topological space of $0$-simplices of its domain as a collection of paths $\gamma \in \partial^h C_k(M)$ and a collection of $\leq k$ embedded discs in $M$ containing $\gamma$. More generally, a point in the $p$-simplices in the domain is a $\gamma$ and $p+1$ collections of discs, each nested inside the next and the innermost containing $\gamma$. In the target, the discs are only required to contain the starting point of $\gamma$. So if we are given a commutative diagram
\[
	\begin{tikzcd}[column sep=1cm,row sep=0.3cm] S^{i-1} \dar \rar & \cat{B}_p(\cat{E}_M,\Disc_{\leq k},\partial^h C_k(-)) \dar \\
	D^i \rar \arrow[dashed]{ru} & \cat{B}_p = \cat{B}_p(\cat{E}_M,\Disc_{\leq k},\Sym_k(-)) \underset{\Sym_k(M)}\times \partial^h C_k(M),\end{tikzcd}
\]
there exists by compactness an $\epsilon>0$ such that we restrict the paths $\gamma_t$ for $t \in D^i$ to $[0,\epsilon] \subset [0,1]$, then they will be contained entirely in the discs (see \cref{fig:lifting-microfib}). Restricting all $\gamma_t$ to $[0,(1-s)+s\epsilon]$ for $s \in [0,1]$ and reparametrising, we obtain a homotopy of commutative diagrams from the original one to one that admits a dotted lift. This shows that the top-left horizontal map in \eqref{eqn:microfib-diagram} is a levelwise equivalence, which finishes the proof.\end{proof}

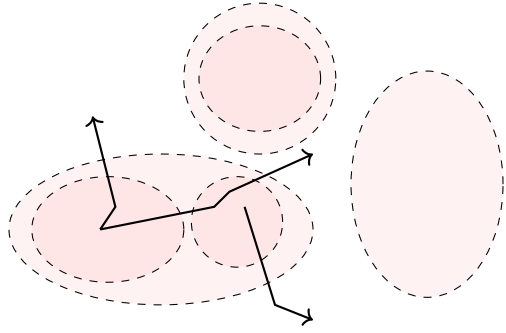
\begin{figure}
	\begin{tikzpicture}
		\draw [dashed,fill=red!5!white] (0,0) ellipse (2cm and 1cm);
		\draw [dashed,fill=red!5!white] (3.5,.6) ellipse (1cm and 1.5cm);
		\draw [dashed,fill=red!5!white] (1.3,2) ellipse (1cm and 1cm);
		\draw [dashed,fill=red!10!white] (-.7,0) ellipse (1cm and .7cm);
		\draw [dashed,fill=red!10!white] (1,0.1) ellipse (.6cm and .6cm);
		\draw [dashed,fill=red!10!white] (1.3,2) ellipse (.8cm and .7cm);
		\draw [thick,->] (-.8,0) -- (-.6,.3) -- (-.9,1.5);
		\draw [thick,->] (-.8,0) -- (.7,.3) -- (.9,.5) -- (2,1);
		\draw [thick,->] (1.1,0.3) -- (1.5,-1) -- (2,-1.2);
	\end{tikzpicture}
	\caption{The directed paths indicate an element $\gamma\in \partial^h C_3(M)$ starting at an element $\gamma(-)\in\Sym_3(M)$, which has been lifted to $\cat{B}_1(\cat{E}_M,\Disc_{\leq 3},\Sym_3(-))$ as indicated by the coloured discs. To lift $\gamma$ to $\cat{B}_1(\cat{E}_M,\Disc_{\leq 3},\partial^h C_3(-))$, we shrink the domain of the directed paths until their image lies in the innermost discs.}
	\label{fig:lifting-microfib}
\end{figure}

\subsubsection{The higher layers, with boundary}\label{sec:higher-layers-bdy}We now describe the top composition in \eqref{equ:geometric-layer-diagram-bordism} in the more general case where $P$ and $Q$ need not be empty. Given a bordism $M\colon P\leadsto Q$ of topological manifolds, we modify the definition of $\Delta_k(M)$ and $\partial^h F_k(M)$ to
\begin{align*}
	\Delta_{PQ,k}(M) &\coloneqq \big\{(x_1,\ldots,x_k)\in M^k \mid x_i = x_j\text{ for some }i \neq j,\text{ or } x_i \in \partial M=P \cup Q\text{ for some }i\big\}\in\cTop\\
	\partial^h_{PQ} F_k(M)&\coloneqq \holink(\Delta_{PQ,k}(M)\subset M^k)=\big\{\gamma\in \Map([0,1],M^k)\mid \gamma^{-1}(\Delta_{PQ,k}(M))=\{0\}\big\}\in\cTop.
\end{align*}
Evaluation at $1$ gives a $\Sigma_k$-equivariant map $\smash{\partial^h_{PQ} F_k(M) \to F_k(M)}$ which on strict $\Sigma_k$-orbits induces
\[
	\partial^h_{PQ} C_k(M) \coloneqq \partial^h_{PQ} F_k(M)_{\Sigma_k} \lra F_k(M)_{\Sigma_k}=C_k(M).
\]
The following generalises \cref{lem:latch-match-manifolds}: 

\begin{prop}\label{lem:latch-match-bordism}For a $d$-dimensional bordism $M \colon P \leadsto Q$, there is an equivalence in $\cS^{[2]}$
\[
	\Lambda^c_{E_{P\times I},E_{Q\times I}}(E_M)\simeq \big(\partial^h_{PQ} C_k(M)\ra C_k(M)\ra [\holim_{S\subsetneq\underline{k}}F_S(M)]_{\Sigma_k}\big)
\]
which is natural in the bordism $M$, i.e.~features in an equivalence of functors $\ncBordInf^\ot(d)_{P,Q} \to \cS^{[2]}$.
\end{prop}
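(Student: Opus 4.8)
The plan is to reduce the statement with boundary to the boundary-free case handled in \cref{lem:latch-match-manifolds}, applied to a suitable ``opened-up'' manifold, and to track the identification through the bimodule layer description of \cref{thm:layers-tower-module}. First I would recall from \cref{sec:sm-top-emb-calc} (cf.\,\cref{rem:triads}) that for a bordism $M\colon P\leadsto Q$ the presheaf $E_M\in \ModInf_k(E_d^\ot)_{E_{P\times I},E_{Q\times I}}=\BMod_{E_{P\times I},E_{Q\times I}}(\RMod_k(E_d^\ot))$ has underlying right-module $\Emb^\ot(-,M)$, and that the functors $\Lambda^c_{E_{P\times I}E_{Q\times I}}$ from \cref{sec:higher-morita-layers} are built from $\iota^{\ }_{AB!}$, $\iota^{\ }_{AB*}$, $U_{AB}$ and $\nu^*$. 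The key simplification is \cref{lem:match-latch-module-simpler}, which expresses $U_{AB}\iota^{\ }_{AB*}$ as $\iota_*U_{AB}$ and $U_{AB}\iota^{\ }_{AB!}$ as a realisation of a simplicial object built from $\iota_!$, $A_k\otimes(-)\otimes B_k$, and the free-bimodule bar construction; composing with $\nu^*$ and taking orbits as in the proof of \cref{lem:latch-match-manifolds} (where $\nu$ was identified with $\BAut(\underline k\times\bfR^d)\hookrightarrow \DiscInf_{\le k}^\ot$ and the orbits were computed via the wreath product $\Top(d)^k\rtimes\Sigma_k$) should reduce everything to a geometric computation in $\cTop$, exactly as in the boundary-free case but with the algebras $E_{P\times I}$ and $E_{Q\times I}$ inserted.

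\medskip

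\noindent The geometric heart is then to show that inserting $E_{P\times I}$ on the left and $E_{Q\times I}$ on the right, at the level of configuration spaces, has the effect of allowing configuration points to ``escape'' into collars of $P$ and $Q$ — which is precisely the passage from $\Delta_k(M)$, $\partial^h F_k(M)$ to $\Delta_{PQ,k}(M)$, $\partial^h_{PQ}F_k(M)$. Concretely, I would use the explicit equivalence $\BMod^\frep_{E_{P\times I},E_{Q\times I}}(\RMod(E_d^\ot))_{\le\bullet}\simeq (\DiscInf^\ot_{d,\le\bullet})_{P,Q}$ from the proof of \cref{lem:tower-bdbw-boundary} (via \cref{lem:bimodule-tower-is-presheaf-tower} and \eqref{equ:step-in-bimodule-filtration-as-discs}), under which the free bimodule on a collection of $\le k$ discs corresponds to the bordism $P\times[0,1)\sqcup S\times\bfR^d\sqcup Q\times(-1,0]$. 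This means the relevant bar-construction model for the coends appearing after applying $\nu^*$ and orbits is $\|\cat B_\bullet(\cat E_M,\Disc_{P,Q,\le k},\partial^h_{PQ}C_k(-))\|$, where $\Disc_{P,Q,\le k}$ is the topologically enriched category of such ``bordism discs'' and codimension-zero boundary-fixing embeddings; here the configuration points of a $(S\times\bfR^d\text{-part})$-disc together with the collar-directions of the $P$- and $Q$-parts account geometrically for the homotopy-link data relative to $P\sqcup Q$. I would then run the same Serre-microfibration argument as in the proof of \cref{lem:latch-match-manifolds}: the augmentations $\|\cat B_\bullet(\cat E_M,\Disc_{P,Q,\le k},\Sym_k(-))\|\to\Sym_k(M)$ and its pullbacks to $C_k(M)$ and $\partial^h_{PQ}C_k(M)$ are Serre microfibrations with weakly contractible fibres, hence equivalences, using that shrinking directed paths toward $\Delta_{PQ,k}(M)$ produces the required lifts just as in \cref{fig:lifting-microfib}, the only new feature being that some path-ends may lie on $\partial M$ rather than at coinciding interior points — which is harmless since the shrinking homotopy treats both uniformly.

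\medskip

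\noindent Finally I would handle naturality and the right-hand map. Naturality in $M\in\ncBordInf^\ot(d)_{P,Q}$ follows, as in \cref{lem:latch-match-manifolds}, by performing the bar-construction manipulations in the topologically enriched setting and then passing to coherent nerves, together with the naturality of the equivalence of \cref{lem:tower-bdbw-boundary} over $\ncBordInf^\ot(d)_{P,Q}$; the comparison with the abstract $\Lambda^c$ is the content of \cref{thm:layers-tower-module}\,\ref{enum:layer-pullback-gc-module} combined with \cref{lem:match-latch-module-simpler}. The rightmost entry $C_k(M)\to[\holim_{S\subsetneq\underline k}F_S(M)]_{\Sigma_k}$ is identified exactly as in the boundary-free proof, using \cref{lem:simplified-right-kan} to rewrite $U_{AB}\iota_{AB*}$ as the punctured cubical limit and \eqref{equ:unthicken-configurations} to unthicken discs to configuration points; since $A(\varnothing)=E_{P\times I}(\varnothing)\simeq\ast$ and $B(\varnothing)=E_{Q\times I}(\varnothing)\simeq\ast$ (both modules are unital), the insertion of the algebras does not affect this part of the diagram. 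The main obstacle I expect is the bookkeeping in identifying the ``bordism disc'' bar construction with the correct homotopy-link object $\partial^h_{PQ}C_k(M)$ — in particular making precise that a configuration point drifting into the $P$- or $Q$-collar corresponds, under the homotopy equivalence analogous to \eqref{equ:latch-noninj} of \cref{ex:boundary-for-discs}, to a directed path hitting $\partial M$; this requires a careful Alexander-trick style homotopy on $\partial^h_{PQ}C_k$ together with a matching analysis of the complement subcategory in the decomposition pair \eqref{equ:decomp-pair-naturality-bimodule}, but it is a direct (if somewhat lengthy) adaptation of \cref{ex:boundary-for-discs} and the proof of \cref{thm:layers-tower-module}.
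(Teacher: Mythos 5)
Your proposal is correct and follows essentially the same route as the paper: reuse the right-hand identification from the boundary-free case via \cref{lem:match-latch-module-simpler} and \cref{lem:simplified-right-kan}, identify the left-hand map through the equivalence $\BMod^{\frep}_{E_{P\times I},E_{Q\times I}}(\RMod(E_d^{\ot}))_{\le\bullet}\simeq(\DiscInf^{\ot}_{d,\le\bullet})_{P,Q}$ of \cref{lem:tower-bdbw-boundary}, and adapt the bar-construction/microfibration argument of \cref{lem:latch-match-manifolds} with the collar-aware Alexander trick of \cref{ex:boundary-for-discs-bimodule} replacing \eqref{equ:latch-noninj}. The paper states these adaptations more tersely, but the ingredients and their assembly coincide with yours.
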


\begin{rem}\label{rem:alternative-models-layers}There are several other point-set models for the map $\smash{\partial^h_{PQ} C_k(M)}\ra C_k(M)$ for a (potentially noncompact) bordism $M\colon P\leadsto Q$:
\begin{enumerate}[leftmargin=*]
	\item \label{enum:latch-fm-bdy}For smooth bordisms $M\colon P\leadsto Q$ one can make use of the \emph{Fulton--MacPherson bordification} $\FM_k(M)$ which is smooth manifold with corners that contains $F_k(M)$ as an open subspace (see \cite{SinhaFM} for a detailed discussion in the case where $\partial M=\varnothing$, and \cite[Section 5.3]{KRWAlgebraic} for an explanation of how to generalise this to $\partial M\neq\varnothing$). Its interior is $F_k(\interior(M))$, it is natural in smooth embeddings, it comes with a free $\Sigma_k$-action extending the evident action on $F_k(M)$, and it is compact if and only if $M$ is compact. By construction, the inclusion $F_k(M)\ra M^k$ extends to a map $\gamma\colon \FM_k(M)\ra M^k$ with $\gamma^{-1}(\Delta_{PQ,k}(M))=\partial \FM_k(M)$. Writing $\mathrm{CM}_k(M)\coloneqq \FM_k(M)_{\Sigma_k}$ for the strict quotient, there is an equivalence of pairs
	\begin{equation}\label{equ:latching-fm-compactification}
		\big(\partial^h_{PQ} C_k(M)\ra C_k(M)\big)\simeq \big(\partial 	\mathrm{CM}_k(M)\subset \mathrm{CM}_k(M)\big)
	\end{equation}
	that is natural in smooth codimension $0$ embeddings. This follows by combining the proof of \cref{lem:latch-match-manifolds} with the $\Sigma_k$-orbits of the equivalence from \cite[Proposition 4.5]{KKsurfaces}.

	\item\label{enum:latch-fm-bdy-topological} The Fulton--MacPherson bordification does not extend to topological manifolds \cite{KupersFM}, but there are other manifold models for the map $\smash{\partial^h_{PQ} C_k(M)\ra C_k(M)}$ if the bordism $M\colon P\leadsto Q$ is not necessarily smooth. For instance, we have
	\begin{equation}\label{equ:latching-regular-nghbd}
		\big(\partial^h_{PQ} C_k(M)\ra C_k(M)\big)\simeq \big(E\backslash 	(\Delta_{PQ,k}(M)_{\Sigma_k})\subset C_k(M)\big)
	\end{equation}
	where $E\subset M^k_{\Sigma_k}$ is a \emph{regular open neighbourhood} of $\Delta_{PQ,k}(M)_{\Sigma_k}\subset M^k_{\Sigma_k}$ in the sense of \cite{Siebenmann,SiebenmannGuillouHahl}. Such a regular neighbourhood $E$ exists by \cite[Proposition 1.6]{SiebenmannGuillouHahl} using Th\'eor\'eme 5.15, Lemme 5.16 loc.cit.~and the observation that $\smash{\Delta_{PQ,k}(M)_{\Sigma_k} \subset M^k_{\Sigma_k}}$ is locally triangulable. To verify the latter we may take $\smash{M = \bfR^d}$ or $\smash{M = [0,\infty) \times \bfR^{d-1}}$ (though it is convenient to double the latter to $\bfR^d$ with $C_2$-action given by reflection in the first coordinate), then apply the existence result for equivariant triangulations for smooth actions of finite groups on smooth manifolds \cite[Theorem]{Illman} to either the action of $\Sigma_k$ or $\Sigma_k \wr C_2$ on $M^k$, take the union of those simplices that have non-trivial isotropy group, and pass to the quotient. The regular neighbourhood is unique up to isotopy equivalence fixing $\Delta_{PQ,k}(M)_{\Sigma_k}$ by \cite[Th\'eor\'eme 2.2]{SiebenmannGuillouHahl}. To establish the claimed equivalence, one uses Proposition 5.12 loc.cit.~and argues as for $\partial^h C_k(S \times \bfR^d) \simeq C_k(S \times \bfR^d)^\ninj$ in \cref{ex:boundary-for-discs}.

	\medskip

	\noindent For the map $\smash{\partial^h_{PQ} F_k(M)\ra F_k(M)}$ \emph{before} taking $\Sigma_k$-orbits, one can do better than \eqref{equ:latching-regular-nghbd}: this map turns out to be equivalent to the boundary inclusion of a manifold, and this manifold can be chosen to be compact if $M$ is compact. To see this, we may assume $d\ge4$ and $k\ge2$, since for $d\le 3$ all bordisms are smoothable and we may use the Fulton--MacPherson bordification describe above, and for $k=1$ we have $\smash{(\partial^h_{PQ} F_k(M)\ra F_k(M))\simeq (\partial M\subset M)}$. In the other cases, the pair $\Delta_{PQ,k}(M)\subset M^k$ satisfies the hypotheses of \cite[3.1.1]{QuinnI}, since it is a closed locally triangulable subspace (see above) of codimension $\geq 4$ of a manifold in dimension $\geq 8$ containing the boundary, so has empty interior, is an ANR by \cite[Theorem 0.6.1]{DavermanVenema}, and 1-LC by Proposition 1.3.3 loc.cit.. As a result, $\Delta_{PQ,k}(M)\subset M^k$ has a \emph{mapping cylinder neighbourhood $N$} in the sense of \cite[p.~285]{QuinnI}: this is a codimension $0$ submanifold $N\subset M^k$ that is closed as a subspace and contains $\smash{\Delta_k^{PQ}(M)}$, together with a map $\pi\colon \partial N\ra \smash{\Delta_k^{PQ}(M)}$ such that $N\cong \cyl(\pi)$ relative to $\smash{\Delta_k^{PQ}(M)}$. The latter implies the second equivalence in
	\begin{equation}\label{equ:mapping-cylinder-pairs-latching}\hspace{0.3cm}
		\smash{\big(\partial^h_{PQ} F_k(M)\ra F_k(M)\big)\simeq \big(\interior(N)\backslash \Delta_{PQ,k}(M)\subset F_k(M)\big)\simeq \big(\partial(M^k\backslash \interior(N))\subset M^k\backslash \interior(N)\big)}
	\end{equation} 
	and the first equivalence follows as for \eqref{equ:latching-regular-nghbd} (in fact, the middle pair in \eqref{equ:mapping-cylinder-pairs-latching} is isotopy equivalent to the preimage of the right-hand side of \eqref{equ:latching-regular-nghbd} under the quotient map $F_k(M)\ra C_k(M)$, by the above mentioned uniqueness for regular open neighbourhoods.) Note that if $M$ is compact, then so is the right-hand side of \eqref{equ:mapping-cylinder-pairs-latching}. The reason why we cannot apply the same strategy to obtain a similar model for $\smash{\partial^h_{PQ} C_k(M)\ra C_k(M)}$ as the boundary inclusion of a manifold without additional work is that in contrast to \cite[Proposition 1.6]{SiebenmannGuillouHahl} (which only provides an \emph{open} neighbourhood), the result \cite[3.1.1]{QuinnI} assumes the surrounding space $M^k$ to be a manifold (which $M^k_{\Sigma_k}$ is typically not).
\end{enumerate}
\end{rem}

\begin{ex}\label{ex:boundary-for-discs-bimodule}Before turning to the proof of \cref{lem:latch-match-bordism}, we discuss a generalisation of \cref{ex:boundary-for-discs} for bordisms: consider a bordism $M\in\ncBordInf^t(d)_{P,Q}$ of the form 
\begin{equation}\label{equ:free-bordisms}
	\smash{M=\Big(P\times[0,1)\sqcup S\times\bfR^d\sqcup (-1,0]\times Q\Big)\colon P\leadsto Q\quad\text{for }S\in\Fin.}
\end{equation}
In this case the map $\smash{\ev_1\colon \partial^h_{PQ} C_k(M)\ra C_k(M)}$ lands in the components $\smash{C^{\ninj}_{PQ,k}(M)\subset C_k(M)}$ of those configurations for which one point lies in $\partial M=P\times\{0\}\cup \{0\}\times Q$ or one of the copies of $\bfR^d$ contains at least two points. A minor variant of the argument in \cref{ex:boundary-for-discs-bimodule} (use the straight-line paths in $P\times [0,1)\sqcup (-1,0]\times Q$ to the boundary) shows that the induced map \begin{equation}\label{equ:latch-noninj-bdy}
	\smash{\ev_1\colon \partial^h_{PQ}C_k(S\times \bfR^d)\xlra{\simeq} C^{\ninj}_{PQ,k}(M)}
\end{equation}
is homotopy equivalence. This gives rise to an equivalence $\smash{(\partial^h_{PQ}C_k(-)\ra C_k(-))\simeq (C_{k,PQ}^{\ninj}(-)\subset C_k)}$ of functors $\smash{(\DiscInf^{\ot}_d)_{P,Q}\ra \cS^{[1]}}$ where $\smash{(\DiscInf^{\ot}_d)_{P,Q}\subset \ncBordInf^{\ot}(d)_{P,Q}}$ is the full subcategory spanned by the bordisms of the form \eqref{equ:free-bordisms}.\end{ex}

\begin{proof}[Proof of \cref{lem:latch-match-bordism}]Adopting the notation from \cref{sec:higher-morita-layers} and abbreviating $A\coloneqq E_{P\times I}$, $B\coloneqq E_{Q\times I}$, arguing as in the first part of the proof of \cref{lem:latch-match-manifolds} and using the first equivalence from \cref{lem:match-latch-module-simpler}, we see that $\Lambda^c_{E_{P\times I},E_{Q\times I}}\colon \BMod_{A,B}(\RMod_k(E_d^{\ot}))\ra\cS^{[2]}$ sends $X$ to 
\[
	\big((U_{AB}\iota_{AB!}^{\ }\iota_{AB}^*)(X)(\underline{k}\times\bfR^d)_{\Aut(\underline{k}\times\bfR^d)}\ra U_{AB}(X)(\underline{k}\times\bfR^d)_{\Aut(\underline{k}\times\bfR^d)}\ra \iota_*\iota^*U_{AB}(X)(\underline{k}\times\bfR^d)_{\Aut(\underline{k}\times\bfR^d)}\big).
\]
For $X=E_{M}\in \BMod_{A,B}(\RMod_k(E_d^{\ot}))$, we have $U_{AB}(E_M)=\Emb(-,M)\in \PSh(\DiscInf_{\le k})$, so we can use the identification of the right-hand map with $C_k(M)\ra [\holim_{S\subsetneq\underline{k}}F_S(M)]_{\Sigma_k}$ from the proof of \cref{lem:latch-match-manifolds}. This leaves us with identifying the left hand map with $\smash{\partial^h_{PQ} C_k(M)\ra C_k(M)}$. As part of the proof of \cref{thm:layers-tower-module}, we identified this map with respect to the equivalence \[\smash{\BMod_{E_{P\times I},E_{P\times I}}(\RMod_\bullet(E_d^{\ot})\simeq \PSh(\BMod^{\frep}_{E_{P\times I},E_{Q\times I}}(\RMod(E_d^{\ot}))_{\le\bullet})}\] from \cref{lem:bimodule-tower-is-presheaf-tower} (induced by the restricted Yoneda embedding) with the map
\begin{equation}\label{equ:latch-j
	}j_{!}j^{*}(X)(\underline{k}\times\bfR^d)_{\Aut(\underline{k}\times\bfR^d)}\lra X(\underline{k}\times\bfR^d)_{\Aut(\underline{k}\times\bfR^d)}
\end{equation}
induced by the counit of $j_!\dashv j^*$ where $j$ is the inclusion 
\[
	\BMod^{\frep}_{E_{P\times I},E_{Q\times I}}(\RMod(E_d^{\ot}))_{\le k-1}\subset \BMod^{\frep}_{E_{P\times I},E_{Q\times I}}(\RMod(E_d^{\ot}))_{\le k}.
\] 
In the proof of \cref{lem:tower-bdbw-boundary} we gave geometric description of this inclusion: the functor $\smash{E}$ restricts to an equivalence of towers $\smash{\DiscInf^{\ot}_{P,Q,\le \bullet}\simeq \BMod^{\frep}_{E_{P\times I},E_{Q\times I}}(\RMod(E_d^{\ot}))_{\le \bullet}}$; see \eqref{equ:step-in-bimodule-filtration-as-discs}. Once rephrased like this, the identification of \eqref{latch-match-manifolds} for $\smash{X=E_M=\Emb_{\partial}(-,M)\in \PSh(\DiscInf^{\ot}_{P,Q})}$ follows by making minor adaptations to the argument in the proof of \cref{lem:latch-match-manifolds} to take the collars $P\times[0,1)$ and $(-1,0]\times Q$ into account (the role of $\iota$ is played by $j$ and the role of \eqref{equ:latch-noninj} by \eqref{equ:latch-noninj-bdy}).
\end{proof}

\begin{cor}\label{cor:layer-description-explicit} Fix a topological $d$-dimensional bordism $M\colon P\leadsto Q$.
\begin{enumerate}[leftmargin=*]
	\item\label{enum:layer-corollary-geometric} For another bordism $N\colon P\leadsto Q$ and a point $\smash{e\in T_{k-1}\Emb^\ot_\partial(M,N)}$, we have a natural equivalence 	
	\[
		\hspace{0.7cm}\fib_e\left(\begin{tikzcd}[row sep=0.3cm]T_{k}\Emb^\ot_\partial(M,N)\dar\\ T_{k-1}\Emb^\ot_\partial(M,N)\end{tikzcd}\right)\simeq \left\{\begin{tikzcd}[row sep=0.3cm, column sep=0.3cm]
		\partial^h_{PQ} C_k(M) \rar\dar & C_k(N)\dar\\
		C_k(M)\rar\arrow[ur,dashed]& \left[\lim_{S\subsetneq \underline{k}} F_S(N)\right]_{\Sigma_k}
		\end{tikzcd}\right\}
	\]
	of spaces. Here the right-hand side is the space of lifts in the commutative square with vertical arrows as in \cref{lem:latch-match-bordism} and horizontal arrows induced by $e$. 
	\item\label{enum:layer-corollary-general} More generally, for $\smash{X\in \ModInf_{k}(E_d^{\ot})_{E_{P\times I},E_{Q\times I}}}$ and $\smash{e\in \Map_{ \ModInf_{k-1}(E_d^{\ot})_{E_{P\times I},E_{Q\times I}}}(E_M,X)}$, we have natural equivalence 	
	\[
		\hspace{0.3cm}\fib_e\left(\begin{tikzcd}[row sep=0.3cm]\Map_{ \ModInf_{k}(E_d^{\ot})_{E_{P\times I},E_{Q\times I}}}(E_M,X)\dar\\ \Map_{ \ModInf_{k-1}(E_d^{\ot})_{E_{P\times I},E_{Q\times I}}}(E_M,X)\end{tikzcd}\right)\simeq \left\{\begin{tikzcd}[row sep=0.3cm, column sep=0.3cm]
		\partial^h_{PQ} C_k(M) \rar\dar & X(\underline{k}\times\bfR^d)_{\Top(d)\wr\Sigma_k}\dar\\
		C_k(M)\rar\arrow[ur,dashed]& \left[\lim_{S\subsetneq \underline{k}} X( S\times\bfR^d)\right]_{\Top(d)\wr\Sigma_k}
		\end{tikzcd}\right\}
	\]
	of spaces. Here the right-hand side is the space of lifts in the commutative square with left vertical arrow as above, right vertical arrow induced by restriction, and horizontal arrows induced by $e$. 
\end{enumerate}
The analogous statements hold in the smooth setting.
\end{cor}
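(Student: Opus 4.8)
The plan is to deduce \cref{cor:layer-description-explicit} directly from \cref{lem:latch-match-bordism} and \cref{thm:layers-tower-module}, using the formula for the mapping spaces relative to the source and target algebras (see \cref{sec:relative-mapping-spaces}) together with the fact that the higher-layer square \eqref{equ:recol-discs-simpler-module} is cartesian. First I would treat part \ref{enum:layer-corollary-general}, since part \ref{enum:layer-corollary-geometric} is the special case $X = E_N$ (using the identification $E_N(\underline{k}\times\bfR^d)_{\Top(d)\wr\Sigma_k}\simeq C_k(N)$ and $\lim_{S\subsetneq\underline k}E_N(S\times\bfR^d)_{\Top(d)\wr\Sigma_k}\simeq [\lim_{S\subsetneq\underline k}F_S(N)]_{\Sigma_k}$ established inside the proof of \cref{lem:latch-match-manifolds}, via $\Emb(S\times\bfR^d,N)_{\Top(d)^S}\simeq F_S(N)$). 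For \ref{enum:layer-corollary-general}, I would start from the cartesian square of \cref{thm:layers-tower-module}\ref{enum:layer-pullback-gc-module}, which expresses $\ModInf_k(E_d^{\ot})_{E_{P\times I},E_{Q\times I}}$ as the pullback of $\ModInf_{k-1}(E_d^{\ot})_{E_{P\times I},E_{Q\times I}}\xrightarrow{\Omega^c}\cS^{[1]}\xleftarrow{(0\le 2)^*}\cS^{[2]}$, and apply $\Map(E_M,-)$ to it. Mapping out of a fixed object preserves limits, so one gets a pullback of spaces; taking the fibre over $e\in \Map_{\ModInf_{k-1}}(E_M,X)$ and unwinding identifies this fibre with the space of lifts in the square obtained by applying $\Lambda^c_{E_{P\times I},E_{Q\times I}}$ (resp.\ $\Omega^c$) to $X$ and to $E_M$, and then mapping $E_M$ in.

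The key computation is then to read off the three corners of that lifting square explicitly. By \cref{lem:latch-match-bordism}, $\Lambda^c_{E_{P\times I},E_{Q\times I}}(E_M)\in\cS^{[2]}$ is the diagram $\partial^h_{PQ}C_k(M)\to C_k(M)\to [\holim_{S\subsetneq\underline k}F_S(M)]_{\Sigma_k}$; applying $(0\le 2)^*$ gives the outer map $\partial^h_{PQ}C_k(M)\to[\holim_{S\subsetneq\underline k}F_S(M)]_{\Sigma_k}$, which is the one indexing the ``source'' of the lifting problem. On the other hand, going back through the definitions of $\Lambda^c$ and $\Omega^c$ as in the proofs of \cref{thm:layers-tower-module} and \cref{lem:latch-match-bordism}, $\Lambda^c(X)$ and $\Omega^c(X)$ are computed by evaluating at $\underline k\times\bfR^d$ and taking $\Aut(\underline k\times\bfR^d)\simeq \Top(d)\wr\Sigma_k$-orbits, together with the simplified right Kan extension of \cref{lem:simplified-right-kan} which produces the limit $\lim_{S\subsetneq\underline k}X(S\times\bfR^d)$ before orbits; hence the ``target'' corner of the lifting square is $X(\underline k\times\bfR^d)_{\Top(d)\wr\Sigma_k}$ mapping to $[\lim_{S\subsetneq\underline k}X(S\times\bfR^d)]_{\Top(d)\wr\Sigma_k}$, exactly as in the statement. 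Here I would need to be a little careful about whether to take orbits of the limit or the limit of the orbits; since $\Sigma_k$ acts by permuting the indexing poset of proper subsets and $\Top(d)^{\underline k}$ preserves it, the homotopy limit and the orbit construction interleave as claimed — this is the same bookkeeping already carried out in the proof of \cref{lem:latch-match-manifolds}, so I would simply cite that argument rather than repeat it.

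Finally, for naturality: the square \eqref{equ:recol-discs-simpler-module} is natural in the triple $(\cO,A,B)$ and in $M$ by \cref{thm:naturality-of-layers-module}, and the geometric identification in \cref{lem:latch-match-bordism} is natural in $M\in\ncBordInf^{\ot}(d)_{P,Q}$, so the induced equivalence of fibres is natural in $M$ as asserted. The smooth case follows either by running the same argument for the operad $E_d^{\oo}$ (whose layers are identified the same way, with $\Top(d)$ replaced by $\oO(d)$), or — for smooth $P,Q,M$ — by pulling back along $E_d^{\oo}\to E_d^{\ot}$ using the compatibility square of \cref{thm:extension-e-topological} and the naturality of \cref{thm:naturality-of-layers-module}, as remarked before the proof of \cref{lem:latch-match-manifolds}. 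The main obstacle I anticipate is purely notational: matching up the three flavours of ``orbit of a limit'' appearing in $\Omega^c_{AB}$, in \cref{lem:simplified-right-kan}, and in the statement, and confirming that the map out of $E_M$ into the $\Lambda^c$-diagram really does produce the advertised lifting space rather than something only equivalent to it up to an unwanted twist; I expect this to be routine once the identifications from \cref{lem:latch-match-manifolds,lem:latch-match-bordism} are invoked verbatim.
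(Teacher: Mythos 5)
Your proposal is correct and follows essentially the same route as the paper's proof: reduce part (i) to part (ii) via $X=E_N$, map $E_M$ into the cartesian square of \cref{thm:layers-tower-module} (mapping out preserves limits), identify the fibre over $e$ as a space of lifts, and compute the corners using \cref{lem:latch-match-bordism} together with \cref{lem:match-latch-module-simpler} and \cref{lem:simplified-right-kan}. The orbit-versus-limit bookkeeping you flag is handled in the paper exactly as you suggest, by citing the computation already carried out in the proof of \cref{lem:latch-match-manifolds}.
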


\begin{proof}We abbreviate $A\coloneqq E_{P\times I}$ and $B\coloneqq E_{P\times I}$ for brevity. The first item is the special case $X=E_N$ of the second item. To prove the second item, we take mapping spaces from $E_M$ to $X$ in the pullback square in \eqref{equ:geometric-layer-diagram-bordism} and using the induced equivalence on vertical fibres together with \eqref{equ:geometric-layer-diagram-bordism} and the definition of $\Lambda^c_{AB}$ and $\Omega^c_{AB}$ from \cref{sec:higher-morita-layers} to see the fibre in question is equivalent to the space of fillers in the following diagram of spaces
\begin{equation}\label{equ:layer-filler-symmetric}
	\begin{tikzcd}[row sep=0.3cm]
	\partial^h_{PQ} C_k(M)\dar\rar & \underset{\BTop(d)\wr\Sigma_k}\colim\,(\nu^*U_{AB}\iota_{AB!}\iota^*_{AB}(X))\dar\\
	C_k(M)\dar\rar[dashed] & \underset{\BTop(d)\wr\Sigma_k}\colim\,(\nu^*U_{AB}(X))\dar\\
	\left[\lim_{S\subsetneq \underline{k}} F_S(M)\right]_{\Sigma_k}\rar&\underset{\BTop(d)\wr\Sigma_k}\colim\,(\nu^*U_{AB}\iota_{AB*}\iota^*_{AB}(X)).
\end{tikzcd}
\end{equation}
Combining \cref{lem:match-latch-module-simpler} with \cref{lem:simplified-right-kan} and tracing through the definition, we see that the right bottom entry is equivalent to $\smash{ [\lim_{S\subsetneq \underline{k}} X( S\times\bfR^d)]_{\Top(d)\wr\Sigma_k}}$ and the right middle entry to $X(\underline{k}\times\bfR^d)_{\Top(d)\wr\Sigma_k}$. Rewriting the lifting problem in \eqref{equ:layer-filler-symmetric} less symmetrically by composing the upper right and lower right part of the diagram, the claim follows. The proof in the smooth case is identical.
\end{proof}

\begin{rem}[Comparison to Weiss' layer description]
The fibres of $C_k(N)\ra [\lim_{S\subsetneq \underline{k}} F_S(N)]_{\Sigma_k}
$ are the total homotopy fibres of the cubical diagram $\underline{k}\supset S\mapsto F_S(N)$, so by taking pullbacks of the outer lower right triangle in the square in \cref{cor:layer-description-explicit} \ref{enum:layer-corollary-geometric}, we see that the space of lifts in this result is equivalent to a space of relative sections of a map to $C_k(M)$ whose fibres are the total homotopy fibres of $\underline{k}\supset S\mapsto F_S(M)$, relative to a given section defined on $\smash{\partial^h_{PQ} C_k(M)}$. By \cref{rem:alternative-models-layers} \ref{enum:latch-fm-bdy-topological} the latter can be informally described as the subspace of the unordered configuration space $C_k(M)$ of configurations that ``are near the thick diagonal or near the boundary''; this uses that any open neighbourhood of $\smash{\Delta_{PQ,k}(M)}$ contains a regular open neighbourhood. This recovers Weiss' original identification of the fibres of the maps $T_{k}\Emb_\partial(M,N)\ra T_{k-1}\Emb_\partial(M,N)$ from \cite[Theorem 9.2, Example 10.3]{WeissEmbeddings}, by an independent proof.
\end{rem}

\begin{rem}[Layers in manifold calculus]\label{rem:layers-manifold-calculus}\cref{cor:layer-description-explicit} \ref{enum:layer-corollary-general} also yields a description of the layers in manifold calculus: recall from \cref{sec:manifold-calc} that the manifold calculus tower for a presheaf $F\in \PSh(\ncBordInf^\ot(d)_{P,Q})$ for $c\in\{\oo,\ot\}$ is given by the tower of mapping spaces induced by $\smash{\ModInf_{\bullet}(E_d^{c})_{E_{P\times I},E_{Q\times I}}}$ from $E_M$ to $\lvert F\rvert_E$, so applying \cref{cor:layer-description-explicit} \ref{enum:layer-corollary-general} with $X=\lvert F\rvert_E$ and using the natural equivalence $\lvert F\rvert_E(S\times\bfR^d)\simeq F(P\times [0,1)\sqcup S\times\bfR^d\sqcup Q\times(-1,0])$ resulting from the commutativity of the final diagram in the proof of \cref{lem:tower-bdbw-boundary}, we see that the fibres of the map $T_k^cF(M)\ra T_{k-1}^cF(M)$ are spaces of sections relative to $\smash{\partial^h_{PQ} C_k(M)}$ of a map to $C_k(M)$ whose fibres are the total homotopy fibres of the cubical diagram $\smash{\underline{k}\supset S\mapsto F\big(P\times [0,1)\sqcup S\times\bfR^d\sqcup (-1,0]\times Q\big)}$.
\end{rem}

\begin{ex}[Second layer]We conclude this subsection with a discussion of the second layer. We will not rely on the geometric description resulting from \cref{lem:latch-match-manifolds} for this, but instead work with the abstract layer description from \cref{sec:layers} directly. We discuss the case without boundary and for topological manifolds for simplicity, but the smooth case and the case with boundary is similar. The variant of the layer-pullback square we discuss is the left square in \cref{rem:mod-out-ocol-square} which, when restricted to unital right-modules and using the equivalence $\RMod^\un_1(E_d^t)\simeq\cS_{/\BTop(d)}$ from \cref{thm:bottom-layer}, specialises to a pullback square of categories
\begin{equation}\label{equ:geometric-layer-diagram-single}
	\begin{tikzcd}[row sep=0.5cm]
	\RMod^\un_2(E_d^t)\dar\rar{\Lambda^c}&\PSh(B\Sigma_2)^{[2]}\dar\\
	\cS_{/\BTop(d)}\rar{\Omega^c\dar}&\PSh(B\Sigma_2)^{[1]}.
	\end{tikzcd}
\end{equation}
Using that $(E_d^t)^\col\wr\Sigma_2$ is equivalent to $B(\Top(d)^{2}\rtimes\Sigma_2)$ (see the proof of \cref{lem:latch-match-manifolds}), we see from \cref{ex:snd-layer-abstract} that the top row $\Lambda^c$ sends $X\in \RMod_2(E_d^\ot)$ to the clockwise composition in the following commutative diagram of $\Sigma_2$-spaces induced by the functoriality of $X$
\begin{equation}\label{equ:second-layer-square}\begin{tikzcd}
\big(\Emb^t(\bfR^d\sqcup\bfR^d,\bfR^d)_{\Top(d)^{2}}\big)\times_{\Top(d)}X(\bfR^d)\rar\dar&X(\bfR^d\sqcup\bfR^d)_{\Top(d)^{\times 2}}\dar\\
\big(\Emb^t(\bfR^d,\bfR^d)_{\Top(d)}\big)^{2}\times_{\Top(d)}X(\bfR^d)\simeq X(\bfR^d)_{\Top(d)}\rar{\Delta}&X(\bfR^d)_{\Top(d)}\times X(\bfR^d)_{\Top(d)}
\end{tikzcd}
\end{equation}
where $\Delta$ is the diagonal. The $\Sigma_2$-action on this square is induced by swapping the two disjoint summands in $\bfR^d\sqcup \bfR^d$. The top-left corner can be simplified by using $\Top_0(d)\simeq \Top(d)$ as in the proof of \cref{prop:bottom-manifold-layer} and considering the $\Sigma_2\times\Top_0(d)$-equivariant equivalences
\[
\Emb^t(\bfR^d\sqcup\bfR^d,\bfR^d)_{\Top(d)^{\times 2}}\xlla{\simeq}\Emb^t(\bfR^d\sqcup\bfR^d,\bfR^d)_{\Top_0(d)^{\times 2}}\xlra{\simeq} F_2(\bfR^d)
\]
where the first one is induced by the inclusion $\smash{\Top_0(d){\subset}\Top(d)}$ and the second one by restriction along $\{0\}\sqcup\{0\}\subset \bfR^d\sqcup\bfR^d$. As a $\Top_0(d)$-space we moreover have an equivalence $F_2(\bfR^d)\simeq \bfR^d\backslash\{0\}$ by sending $x\in \bfR^d\backslash\{0\}$ to $(0,x)\in F_2(\bfR^d)$. Using these equivalences and writing $\lvert X\rvert\coloneqq X(\bfR^d)_{\Top(d)}$, we that the left vertical map in \eqref{equ:second-layer-square} is given by the spherical fibration $\pi\colon S(\xi_{\lvert X\rvert})\ra \lvert X\rvert$ associated to the $\bfR^d$-bundle $\xi_{\lvert X\rvert}$ over $\lvert X\rvert$ induced by the $\Top(d)$-action on $X(\bfR^d)$, with the fibrewise $\Sigma_2$-action given by permuting points in $F_2(\bfR^d)\simeq \bfR^d\backslash\{0\}$. The commutative square \eqref{equ:second-layer-square} thus takes the form 
\[\begin{tikzcd}
S(\xi_{\lvert X\rvert})\rar\arrow[d,"\pi",swap]&X(\bfR^d\sqcup\bfR^d)_{\Top(d)^{\times 2}}\dar\\
\lvert X\rvert\rar{\Delta}&\lvert X\rvert^2.
\end{tikzcd}
\]
Informally speaking, this discussion shows that $\RMod_2^\un(E_d^t)$ is equivalent to the category of
\begin{enumerate}[leftmargin=*]
\item\label{enum:snd-layer-space} spaces $\lvert X\rvert $ with an $\bfR^d$-bundle $\xi_{\lvert X\rvert}$ over it, together with 
\item\label{enum:snd-layer-ext} an extension in $\Sigma_2$-spaces of the composition $(\Delta\circ\pi)\colon S(\xi_{\lvert X\rvert})\ra \lvert X\rvert \ra \lvert X\rvert^2$ of the projection of the associated spherical fibration with the diagonal, to a commutative square in $\Sigma_2$-spaces.
\end{enumerate}
For the $2$-truncated presheaf $E_M$ of a $d$-manifold $M$, \ref{enum:snd-layer-space} remembers the underlying homotopy type of $M$ and the topological tangent bundle $TM$ over it (see \cref{prop:bottom-manifold-layer}), and \ref{enum:snd-layer-ext} remembers the factorisation of $(\Delta\circ \pi)\colon S(TM)\ra M$ over the $2$-point configuration space $F_2(M)=M\times M\backslash \Delta$ induced by the choice of a $\Sigma_2$-equivariant tubular neighbourhood of $\Delta\subset M\times M$ (see \cite[Section 4]{KKM}). If $M$ is closed, then the square in \ref{enum:snd-layer-ext} is a pushout and yields the Poincaré embedding structure on the diagonal of the underlying Poincaré complex of $M$ induced by the manifold structure on $M$.
\end{ex}

\subsection{Smoothing theory for embedding calculus} \label{sec:smoothing-embcalc}
Consider the commutative diagram of symmetric monoidal double categories for $1\le k\le\infty$
\begin{equation}\label{equ:smoothing-theory-diagram}
	\begin{tikzcd}[row sep=0.3cm]
	\ncBordInf^{\oo}(d)\dar\rar{E}&\ModInf_k^{\un}(E^{\oo}_d)\dar\rar&\Cosp(\cS_{/\BO(d)})\dar\\
	\ncBordInf^t(d)\rar\rar&\ModInf_k^{\un}(E^{\ot}_d)\rar&\Cosp(\cS_{/\BTop(d)}).
	\end{tikzcd}
\end{equation}
It follows from the general smoothing theory result  for the towers $\ModInf_k^{\un}(\cO)$ for unital operads $\cO$ (see \cref{thm:smoothing-theory-morita}) that the right square is a pullback. As part of \cref{thm:smoothing-theory-emb-calc} below, we will see that classical smoothing theory for manifolds on the other hand implies that the outer square induces for $d\neq4$ pullbacks on all mapping categories, so the same is true for the left-hand square.

\begin{thm}[Smoothing theory for embedding calculus]\label{thm:smoothing-theory-emb-calc}\, 
\begin{enumerate}[leftmargin=*]
	\item \label{enum:smoothing-theory-emb-calc-i} The right-hand square in \eqref{equ:smoothing-theory-diagram} is a pullback of symmetric monoidal double categories.
	\item \label{enum:smoothing-theory-emb-calc-ii} For $d \neq 4$ the outer (and hence the left) square in \eqref{equ:smoothing-theory-diagram} induce pullbacks on all mapping categories.
\end{enumerate}
\end{thm}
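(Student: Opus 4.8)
\textbf{Proof strategy for \cref{thm:smoothing-theory-emb-calc}.}
Part \ref{enum:smoothing-theory-emb-calc-i} is immediate: the right-hand square in \eqref{equ:smoothing-theory-diagram} is the square of \cref{thm:smoothing-theory-morita} for the operadic right-fibration $E_d^{\oo}\ra E_d^{\ot}$ (which is an operadic right-fibration by \cref{lem:pullbacks-groupoid-type}, since it is an equivalence on reduced covers as recalled in \cref{sec:ed-operads-general}), under the identifications $(E_d^{\oo})^\col\simeq \BO(d)$, $(E_d^{\ot})^\col\simeq \BTop(d)$ and $\PSh(\cG)\simeq \cS_{/\cG}$ for groupoids. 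For part \ref{enum:smoothing-theory-emb-calc-ii}, I would argue as follows. By \cref{thm:extension-e-topological} \ref{enum:extension-topo-iii} the mapping categories of the double categories in \eqref{equ:smoothing-theory-diagram} are categories of bimodules, and a map of pullback squares of such is a pullback as soon as two of the three faces are; we have the right square as a pullback by \ref{enum:smoothing-theory-emb-calc-i}, so it suffices to show the outer square induces pullbacks on mapping categories, and then the left square will follow by the pasting lemma. Fixing smooth $(d-1)$-manifolds $P,Q$ (without boundary), the outer square on mapping categories between $P$ and $Q$ is a square whose objects are bordisms $M\colon P\leadsto Q$; I would check the pullback property objectwise on mapping spaces, i.e.\ show that for smooth bordisms $M,N\colon P\leadsto Q$ the square of spaces
\[
\begin{tikzcd}[row sep=0.4cm]
\Emb^{\oo}_\partial(M,N)\rar\dar&\Map^{/\BO(d)}_\partial(M,N)\dar\\
\Emb^{\ot}_\partial(M,N)\rar&\Map^{/\BTop(d)}_\partial(M,N)
\end{tikzcd}
\]
obtained from \eqref{equ:smoothing-theory-diagram} by \cref{prop:bottom-manifold-layer} is cartesian; since a map of categories that is an equivalence on objects and on all mapping spaces is an equivalence, and the objects match on the nose, this gives the claim. (One also needs the analogous statement for the relative mapping spaces in the sense of \cref{sec:relative-mapping-spaces}, but by \cref{rem:st-cartesian} and \cref{rem:morita-st-cartesian} these reduce to ordinary mapping spaces in mapping categories after restricting bimodule structures along equivalences of algebras, so the objectwise statement suffices.)

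The square of spaces above is precisely the content of classical parametrised smoothing theory for codimension-zero embeddings: the square expressing that the derivative map $\Emb^{\oo}_{\partial_0}(M,N)\ra \Map^{/\BO(d)}_{\partial_0}(M,N)$ is pulled back from the topological-derivative map $\Emb^{\ot}_{\partial_0}(M,N)\ra \Map^{/\BTop(d)}_{\partial_0}(M,N)$, which holds for $d\neq 4$ by \cite[Essay V]{KirbySiebenmann} (this is the statement quoted in the remark following the discussion of particle embedding calculus in the introduction). I would recall the precise formulation via the immersion/embedding comparison: for codimension zero, $\Emb^{c}_\partial(M,N)$ sits inside $\Imm^{c}_\partial(M,N)$ as a union of path components (those immersions that happen to be embeddings), and smoothing theory identifies $\Imm^{\oo}_\partial(M,N)$ with the space of bundle maps $TM\ra TN$ over $\BO(d)$ and under $\partial M$, compatibly with the topological version; the embedding condition is detected on $\pi_0$ and transported along the equivalence of immersion spaces. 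Concretely: the square is cartesian because both horizontal maps factor through the corresponding immersion spaces, the squares on immersion spaces are cartesian by smoothing theory (the Kirby--Siebenmann concordance-implies-isotopy and product-structure theorems, valid for $d\geq 5$ on manifolds and, via Cerf--Morlet, also $d\leq 3$; $d=4$ is excluded), and the inclusions of embeddings into immersions are inclusions of path components whose image corresponds under the derivative maps because whether an immersion is an embedding only depends on its underlying continuous map, which $\BO(d)\ra\BTop(d)$ does not change.

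The main obstacle, and the step that needs the most care, is assembling this objectwise smoothing-theory input \emph{coherently} over all bordisms $M$ simultaneously — i.e.\ upgrading the pointwise cartesian-square statement to a statement about functors of mapping categories — and in particular handling the collars $P\times I$, $Q\times I$ correctly so that the bimodule structures are matched. I would address this by using the descent property of \cref{lem:tk-descent} together with \cref{thm:extension-e-topological} \ref{enum:extension-topo-iii}: every mapping category of $\ncBordInf^{c}(d)_{P,Q}$ is generated under the relevant colimits by the bordisms of the form $P\times[0,1)\sqcup S\times\bfR^d\sqcup (-1,0]\times Q$ (this is the statement underlying \cref{lem:tower-bdbw-boundary}), on which the smoothing-theory square is trivially cartesian — there $\Emb^{c}_\partial$ of $\bigsqcup_S \bfR^d$ into $N$ is a configuration-type space already computed by the bottom layer, compare \cref{ex:boundary-for-discs-bimodule}. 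Since all four functors in the outer square preserve these colimits (the horizontal ones are the colimit-preserving extensions $|-|_E$ of \cref{sec:manifold-calc}, the vertical ones are $\nu_!$-type functors which preserve colimits as left adjoints in the bottom row and, on the manifold side, are induced by forgetting smooth structure), a square that is a pullback on the generators is a pullback everywhere. This reduces part \ref{enum:smoothing-theory-emb-calc-ii} entirely to the right square of \eqref{equ:smoothing-theory-diagram}, i.e.\ to part \ref{enum:smoothing-theory-emb-calc-i} plus the observation that $E$ lands in $\ModInf^{\un}$ by \cref{thm:extension-e-topological} \ref{enum:extension-topo-iv}, and the classical smoothing-theory content enters only in verifying that the comparison map $\ncBordInf^{\oo}(d)\ra \ncBordInf^{\ot}(d)\times_{\ModInf^{\un}_k(E^{\ot}_d)}\ModInf^{\un}_k(E^{\oo}_d)$ is an equivalence on the generating objects, which is exactly the $d\neq 4$ hypothesis.
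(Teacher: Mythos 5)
Part \ref{enum:smoothing-theory-emb-calc-i} of your proposal is correct and is exactly the paper's argument: specialise \cref{thm:smoothing-theory-morita} to the operadic right-fibration $E_d^{\oo}\ra E_d^{\ot}$. Part \ref{enum:smoothing-theory-emb-calc-ii}, however, has two genuine gaps.

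First, your claim that ``the objects match on the nose'' is false and hides the essential-surjectivity half of the statement. An object of the pullback of the outer square (on mapping categories) is a topological bordism $M\colon P\leadsto Q$ \emph{together with an arbitrary lift of its tangent classifier along $\BO(d)\ra\BTop(d)$ extending the given lifts on $\partial M$}. For the comparison functor from $\ncBordInf^{\oo}(d)_{P,Q}$ to be an equivalence you must show every such lift is realised by a smooth structure on $M$ rel $\partial M$ — this is the existence theorem of smoothing theory (\cite[Theorem IV.10.1]{KirbySiebenmann} for $d>4$, and for $d<4$ the statement is vacuous because $\BO(d)\ra\BTop(d)$ is an equivalence and smoothings always exist). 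Your argument only ever discusses mapping spaces, i.e.\ fully faithfulness, so this half is simply missing. (On the fully-faithfulness side your immersion-theoretic sketch is also looser than what is needed: the cartesianness of the square of embedding spaces is a smoothing-theory statement about the source manifold, not an immersion-theory statement, and one must in addition reduce the noncompact bordisms of $\ncBordInf^c(d)$ to the compact case — the paper does this by an exhaustion argument using isotopy extension, then collars off the boundary condition and cites \cite[Corollary 3.2]{BurgheleaLashofHomotopy}.)

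Second, and more seriously, the descent reduction in your final paragraph cannot work. The mapping categories of $\ncBordInf^{c}(d)_{P,Q}$ are not presheaf categories and are not ``generated under colimits'' by the bordisms $P\times[0,1)\sqcup S\times\bfR^d\sqcup (-1,0]\times Q$; the colimit statement of \cref{lem:tk-descent} lives in $\ModInf^c_k(d)_{E_{P\times I},E_{Q\times I}}$, i.e.\ it concerns the presheaves $E_U$, not the bordisms $U$ themselves, and the left vertical functor (forgetting smooth structures) is not a colimit-preserving functor between cocomplete categories. Even setting this aside, a comparison functor to a pullback cannot be checked to be an equivalence on colimit generators (essential surjectivity certainly does not pass along colimits of a non-full subcategory of generators). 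The decisive sanity check is the $d=4$ case: smoothing theory for disjoint unions of Euclidean spaces and collars is trivial in \emph{every} dimension, so if the claim really reduced to the generators, the theorem would hold for $d=4$ as well — but it fails there (this is why \cref{thm:sdisc-smooth-vs-top}, a consequence of this theorem, fails in dimension $4$ by \cite[Theorem B]{KnudsenKupers}). The hypothesis $d\neq 4$ must therefore be used on \emph{general compact} bordisms $M$, which is exactly what the paper's direct argument via classical smoothing theory does.
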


\begin{proof} As already pointed out above, \cref{enum:smoothing-theory-emb-calc-i} follows by specialising \cref{thm:smoothing-theory-morita} to the map of groupoid-coloured operads $E_d^{\oo}\ra E_d^{\ot}$ (which is an operadic right-fibration as a result of \cref{lem:pullbacks-groupoid-type}) and using the straightening equivalence $\PSh(\cG)\simeq \cS_{/\cG}$ for groupoids $\cG$. 

\medskip

\noindent For \ref{enum:smoothing-theory-emb-calc-ii} we have to show that the outer square induces a pullback on mapping categories between smooth $(d-1)$-manifolds $P$ and $Q$ without boundary, thought of as objects in $\ncBordInf^{\oo}(d)$. Essential subjectivity of the map from the top-left corner to the pullback of the remaining corners is equivalent to showing that for every topological $d$-manifold $M$ with boundary $P\sqcup Q$ any lift of its tangent classifier $M \to \BTop(d)$ along $\BO(d)\ra\BTop(d)$ extending the lifts on $\partial M\cong P\sqcup Q$ induced by the given smooth structure on $P$ and $Q$ is induced by a smooth structure on $M$ that extending the given one on $\partial M$. For $d > 4$ this is a consequence of the first part of \cite[Theorem IV.10.1]{KirbySiebenmann}, and for $d < 4$ there is nothing to prove since $\BO(d) \to \BTop(d)$ is an equivalence (\cite[V.5.5.9]{KirbySiebenmann} for $d < 3$, and \cite[p.~605]{HatcherSmale} for $d = 3$) and every manifold has admits a smooth structure extending any given one on the boundary (combine \cite[Chapters 8, 35]{Moise} with \cite[Theorem 5.3, p.\,122-123]{HirschMazur}). Fully faithfulness is equivalent to proving that for smooth $d$-manifolds $M$ and $N$ with identified boundary $\partial M\cong\partial N\cong P\sqcup Q$, the following square of embedding spaces spaces is a pullback
\begin{equation}\label{equ:embeddings-smoothing-theory}
	\begin{tikzcd}[column sep=0.5cm, row sep=0.5cm] 
	\Emb^\oo_\partial(M,N) \rar \dar & \Emb^t_\partial(M,N) \dar \\
	\Map^{/\BO(d)}_{\partial}(M,N) \rar & \Map^{/\BTop(d)}_{\partial}(M,N);
	\end{tikzcd}
\end{equation}
here the bottom row is as in \cref{sec:sm-top-emb-calc}. The first step is to reduce to a situation where $M$ is compact: pick an exhaustion $K_0\subset K_1\subset \cdots \subset M$ by compact submanifolds transverse to $\partial M$ such that the intersections $\partial_0 K_i\coloneqq \partial M\cap K_i\subset \partial K_i$ give an exhaustion of $\partial M$ by compact submanifolds. Consider the squares induced by restriction
\[
	\begin{tikzcd}[column sep=0.5cm, row sep=0.5cm]
	\Emb^\oo_{\partial}(M,N) \rar{\simeq} \dar & \lim_i \Emb^\oo_{\partial_0K_i}(K_i,N) \dar \\
	\Emb^\ot_{\partial}(M,N) \rar{\simeq} & \lim_i \Emb^\ot_{\partial_0 K_i}(K_i,N) \end{tikzcd} \quad \begin{tikzcd}[column sep=0.5cm, row sep=0.5cm] 
	\Map^{/\BO(d)}_{\partial}(M,N) \rar{\simeq} \dar & \lim_i \Map^{/\BO(d)}_{\partial_0K_i}(K_i,N) \dar \\
	\Map^{/\BTop(d)}_{\partial}(M,N) \rar{\simeq} & \lim_i \Map^{/\BTop(d)}_{\partial_0K_i}(K_i,N) \end{tikzcd}
\]
where the sub- or superscript $\partial_0 K_i$ indicates that we are considering spaces of embeddings (or maps over $\BO(d)$ or $\BTop(d)$) that agree on $\partial_0 K_i\subset \partial M$ with the respective embedding (or map) induced by the identification $\partial M\cong \partial N$. All horizontal maps in these squares are equivalences: for the ones in the right square this follows by using that the union of the $K_i$ respectively the $\partial_0 K_i$ express $K_i$ respectively $\partial_0K_i$ as a colimit in $\cS_{/\BO(d)}$ and $\cS_{/\BTop(d)}$, for those in the left square one can argue as follows: modelling these embedding spaces by simplicial sets, restriction along $K_{i-1}\subset K_i$ is a Kan fibration by isotopy extension, so the limit $ \lim_i \Emb^c_{\partial_0K_i}(K_i,N)$ for $c\in\{\oo,\ot\}$ can be computed as the strict limit of simplicial sets, which is isomorphic to $\Emb^c_{\partial_0K_i}(K_i,N)$ as a simplicial set. This reduces the claim to showing that \eqref{equ:embeddings-smoothing-theory} is a pullback after replacing $M$ by $K_i$ for a fixed $i$ and the $\partial$-subscripts with $\partial_0 K_i$. The latter can be further reduced to the case $\partial_0 K_i=\varnothing$, by choosing a collar $\partial_0K_i\times [0,1]\subset K_i$ and using the fibre sequences (the upper one uses isotopy extension again)
\[
	\begin{gathered}\Emb^c_{\partial_0K_i}(K_i,N)\lra\Emb^c(K_i,N)\lra \Emb^c(\partial_0K_i\times [0,1],N)\quad\text{for }c\in\{\oo,\ot\}\\
	\Map^{/\BG}_{\partial_0K_i}(K_i,N)\lra \Map^{/\oB G}(K_i,N)\lra \Map^{/\BG}(\partial_0 K_i\times I,N)\quad\text{for }G\in \{\oO(d),\Top(d)\}.\end{gathered}
\]
It remains to show that the version of the square \eqref{equ:embeddings-smoothing-theory} where the source manifold $M$ is compact and there are no boundary conditions, is a pullback, which is a well-known consequence of smoothing theory (see e.g.\,\cite[Corollary 3.2]{BurgheleaLashofHomotopy}).\end{proof}

Writing $\smash{\BordInf^{c}(d)\subset \ncBordInf^{c}(d)}$ for the levelwise full subcategory obtained by restricting to \emph{compact} bordisms for $c\in\{\oo,\ot\}$ (see \cite[3.\circled{7}.1, 4.1.2]{KKDisc} for the case $c=\oo$), we have a pullback decomposition $\smash{\BordInf^{\oo}(d)\simeq \BordInf^t(d)\times_{\ncBordInf^t(d)}\ncBordInf^{\oo}(d)}$ in $\CMon(\CCat(\Cat))$ which we may combine with the pullback in \cref{thm:smoothing-theory-emb-calc} to conclude:

\begin{cor}\label{thm:smoothing-theory-emb-calc-compact}For $1\le k\le \infty$, the commutative square of symmetric monoidal double categories
\[
	\begin{tikzcd}[row sep=0.3cm]
	\BordInf^{\oo}(d)\dar\rar &\ModInf_k(E^{\oo}_d)\dar\\
	\BordInf^t(d)\rar\rar&\ModInf_k(E^{\ot}_d)
	\end{tikzcd}
\]
induces for $d\neq4$ a pullback on all mapping categories.
\end{cor}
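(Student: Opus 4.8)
The plan is to deduce \cref{thm:smoothing-theory-emb-calc-compact} directly from \cref{thm:smoothing-theory-emb-calc}, using the pullback decomposition of the compact bordism categories as a pasting of pullback squares. The statement to prove is purely about mapping categories, so I would fix a smooth $(d-1)$-manifold $P$ (without loss of generality working with the compact case from the start, since that is the only place where the source and target of a bordism live in $\BordInf$) and objects $P,Q$, and show that for $d\neq 4$ the square of mapping categories
\[\begin{tikzcd}[row sep=0.3cm]
\BordInf^{\oo}(d)_{P,Q}\dar\rar&\ModInf_k(E^{\oo}_d)_{E_{P\times I},E_{Q\times I}}\dar\\
\BordInf^t(d)_{P,Q}\rar&\ModInf_k(E^{\ot}_d)_{E_{P\times I},E_{Q\times I}}
\end{tikzcd}\]
is a pullback in $\Cat$. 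The key input is the decomposition $\smash{\BordInf^{\oo}(d)\simeq \BordInf^t(d)\times_{\ncBordInf^t(d)}\ncBordInf^{\oo}(d)}$ in $\CMon(\CCat(\Cat))$; on mapping categories this gives $\BordInf^{\oo}(d)_{P,Q}\simeq \BordInf^t(d)_{P,Q}\times_{\ncBordInf^t(d)_{P,Q}}\ncBordInf^{\oo}(d)_{P,Q}$, because the source-target functor identifies mapping categories between fixed objects as fibres and fibres commute with pullbacks.

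First I would record why the decomposition $\BordInf^{\oo}(d)\simeq \BordInf^t(d)\times_{\ncBordInf^t(d)}\ncBordInf^{\oo}(d)$ holds. A compact bordism equipped with a smooth structure is the same as a topological compact bordism together with a smooth structure and the forgetting map; since a topological compact bordism between smooth $(d-1)$-manifolds with a chosen smooth structure is exactly a morphism of $\BordInf^{\oo}(d)$, this is essentially by inspection of the definitions of these bordism categories from \cite[3.\circled{7}.1, 4.1.2]{KKDisc} — the condition ``compact'' cuts out the same levelwise full subcategory on both sides of the Cartesian square of \cref{thm:smoothing-theory-emb-calc} \ref{enum:smoothing-theory-emb-calc-ii} on the level of $1$-morphisms, and the square on objects ($(d-1)$-manifolds) is already a pullback because a compact smooth $(d-1)$-manifold is a compact topological one with a smooth structure and $\BO(d-1)\to\BTop(d-1)$ suffices for $d-1\neq 4$, i.e.\ $d\neq 5$; for $d=5$ one instead notes directly that every $4$-manifold occurring as a boundary component is already assumed smooth in $\BordInf^{\oo}(5)$, so the square on objects is still a pullback. (Alternatively the object-level square is irrelevant for the statement about mapping categories, since we fix smooth $P,Q$ throughout.)

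Next I would carry out the pasting. By \cref{thm:smoothing-theory-emb-calc} \ref{enum:smoothing-theory-emb-calc-ii}, the outer square of \eqref{equ:smoothing-theory-diagram} induces pullbacks on all mapping categories, i.e.\ for our fixed $P,Q$ the square
\[\begin{tikzcd}[row sep=0.3cm]
\ncBordInf^{\oo}(d)_{P,Q}\dar\rar{E}&\ModInf_k^{\un}(E^{\oo}_d)_{E_{P\times I},E_{Q\times I}}\dar\\
\ncBordInf^t(d)_{P,Q}\rar&\ModInf_k^{\un}(E^{\ot}_d)_{E_{P\times I},E_{Q\times I}}
\end{tikzcd}\]
is Cartesian in $\Cat$. (Here I use that the pullback in \cref{thm:smoothing-theory-emb-calc} \ref{enum:smoothing-theory-emb-calc-i} combined with the smoothing-theory input gives the outer square, and \ref{enum:smoothing-theory-emb-calc-ii} precisely asserts this square is a pullback on mapping categories; I also silently pass between $\ModInf^{\un}$ and $\ModInf$ using \cref{thm:extension-e-topological} \ref{enum:extension-topo-iv} and the fact that $\ModInf^{\un}_k(\cO)\subset\ModInf_k(\cO)$ is levelwise full, so the mapping categories agree once the source and target lie in the unital part, which they do since they are of the form $E_{P\times I}$ and $E_{Q\times I}$.) Now pull back this Cartesian square along the inclusion $\BordInf^t(d)_{P,Q}\hookrightarrow\ncBordInf^t(d)_{P,Q}$: the resulting square has top-left corner $\BordInf^t(d)_{P,Q}\times_{\ncBordInf^t(d)_{P,Q}}\ncBordInf^{\oo}(d)_{P,Q}\simeq\BordInf^{\oo}(d)_{P,Q}$, top-right corner $\ModInf_k(E^{\oo}_d)_{E_{P\times I},E_{Q\times I}}$ (the middle-vertical map there is the pullback of an iso, hence the top-right corner is unchanged), and is Cartesian since pullbacks are stable under base change. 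This is exactly the square in the statement, so we are done.

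The main obstacle I anticipate is not any of the pasting bookkeeping but rather making the first step — the identification $\BordInf^{\oo}(d)\simeq \BordInf^t(d)\times_{\ncBordInf^t(d)}\ncBordInf^{\oo}(d)$ — genuinely rigorous at the level of the double-category models of \cite{KKDisc} rather than merely morally clear; one must check that the ``compact'' condition is detected levelwise by the underlying topological bordism (true, since compactness of a manifold does not see the smooth structure) and that the relevant square of \emph{objects} $(d-1)$-manifolds is a pullback, which for $d=5$ requires the separate remark above. Everything else is a formal consequence of stability of pullbacks under base change and of the fibre description of mapping categories in double categories recalled in \cref{sec:monoids}.
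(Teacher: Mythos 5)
Your proposal is correct and follows essentially the same route as the paper: the paper's proof is precisely the observation that $\BordInf^{\oo}(d)\simeq \BordInf^{\ot}(d)\times_{\ncBordInf^{\ot}(d)}\ncBordInf^{\oo}(d)$ combined with the mapping-category pullback of \cref{thm:smoothing-theory-emb-calc}, pasted together. Your additional remarks (that the object-level square is irrelevant once $P,Q$ are fixed, and that the unital/non-unital distinction is harmless since $E_{P\times I}$ and $E_{Q\times I}$ lie in the levelwise full subcategory $\ModInf^{\un}_k$) are accurate and only make explicit what the paper leaves implicit.
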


\subsubsection{Topological $\DiscInf$-structure spaces} \label{sec:s-disc}\cref{thm:smoothing-theory-emb-calc-compact} has a consequence for the $\DiscInf$-structure spaces studied in \cite{KKDisc}. To explain this, recall from \cite[4.5.1]{KKDisc} that for a closed smooth $(d-1)$-manifold $P$, the \emph{$\DiscInf$-structure space} $\smash{S^{\DiscInf,\oo}_P(X)}$ of a right-$E_{P\times I}$-module $X\in\RMod(E^{\oo}_d)$ is 
\[
	\smash{S^{\DiscInf,\oo}_P(X)\coloneqq \fib_{X}\big(E\colon \BordInf^{\oo}_{P}\ra\ModInf_k(E^{\oo}_d)^{\simeq }_{E_{P\times I}}\big)\in\cS}.
\]
This includes the $\DiscInf$-structure space $S_\partial^\oo(M)$ of a compact smooth $d$-manifold $M$ from 4.5.2 loc.cit.\,as the case $\smash{S_\partial^\oo(M)=S^{\DiscInf,\oo}_{\partial M}(E_M)}$. The same make sense in the topological category: for a closed topological $(d-1)$-manifold $P$ and a right-$E_{P\times I}$-module $X\in\RMod(E^{\ot}_d)$ we set
\[
	\smash{S^{\DiscInf,\ot}_P(X)\coloneqq \fib_{X}\big(E\colon \BordInf^{\ot}(d)_{P}\ra\ModInf_k(E^{\ot}_d)^{\simeq }_{E_{P\times I}}\big)\in\cS}
\] 
and $\smash{S^{\DiscInf,\oo}_\partial(M)\coloneqq S^{\DiscInf,\oo}_{\partial M}(E_M)}$ for  compact topological $d$-manifolds $M$. Commutativity of the square in \cref{thm:smoothing-theory-emb-calc-compact} induces comparison maps on horizontal fibres \[\smash{S^{\DiscInf,\oo}_P(X)\lra S^{\DiscInf,\ot}_P(X)}\] which are equivalences for $d\neq4$ as a result of \cref{thm:smoothing-theory-emb-calc-compact}. In particular, we get:

\begin{cor}\label{thm:sdisc-smooth-vs-top} For a compact smooth manifold $M$ of dimension $d \neq 4$, the comparison map
\[
	S_{\partial}^{\DiscInf,\oo}(M)\lra S_{\partial}^{\DiscInf,\ot}(M)
\]
between the smooth and topological $\DiscInf$-structure spaces of $M$ is an equivalence.
\end{cor}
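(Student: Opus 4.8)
The plan is to obtain \cref{thm:sdisc-smooth-vs-top} as an immediate consequence of \cref{thm:smoothing-theory-emb-calc-compact} by the argument sketched in \cref{sec:s-disc}. First I would recall the definitions: for a compact smooth $d$-manifold $M$, we have $S_{\partial}^{\DiscInf,\oo}(M) = S^{\DiscInf,\oo}_{\partial M}(E_M) = \fib_{E_M}\bigl(E\colon \BordInf^{\oo}(d)_{\partial M}\ra\ModInf_k(E^{\oo}_d)^{\simeq}_{E_{\partial M\times I}}\bigr)$, and similarly in the topological setting after forgetting the smooth structure on $M$ (which gives a topological manifold with a chosen smooth structure on its boundary). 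The comparison map is induced by the commutative square in \cref{thm:smoothing-theory-emb-calc-compact} for the object $P \coloneqq \partial M$, after passing to mapping categories from $P$ to $P$ in the two Morita towers and to the relevant under/over-categories in the bordism towers.

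The key step is to extract from the square in \cref{thm:smoothing-theory-emb-calc-compact} a pullback of spaces that computes the fibres in question. Passing to mapping categories $\ncBordInf^{c}(d)_{P,P}$ (or rather the compact variant $\BordInf^{c}(d)_{P,P}$) and to $\ModInf_k(E^{c}_d)_{E_{P\times I},E_{P\times I}}$, \cref{thm:smoothing-theory-emb-calc-compact} gives that the square of categories
\[
	\begin{tikzcd}[row sep=0.3cm]
	\BordInf^{\oo}(d)_{P,P}\dar\rar&\ModInf_k(E^{\oo}_d)_{E_{P\times I},E_{P\times I}}\dar\\
	\BordInf^{\ot}(d)_{P,P}\rar&\ModInf_k(E^{\ot}_d)_{E_{P\times I},E_{P\times I}}
	\end{tikzcd}
\]
is cartesian for $d\neq 4$. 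Since $\DiscInf$-structure spaces are defined as fibres over a fixed object of the map on cores $E\colon \BordInf^{c}(d)^{\simeq}_{P}\ra\ModInf_k(E^{c}_d)^{\simeq}_{E_{P\times I}}$, I would restrict the above square to the cores of the relevant one-object subcategories (the objects being bordisms $N\colon P\leadsto P$, or rather the mapping spaces $\Map_{\BordInf^c(d)_{P,P}}(M,N)$ and $\Map_{\ModInf_k(E_d^c)_{E_{P\times I},E_{P\times I}}}(E_M,E_N)$); taking cores and taking fibres over a fixed object both preserve pullbacks (cores since $(-)^{\simeq}$ is a right adjoint, fibres trivially), so the square of spaces computing the $\DiscInf$-structure spaces is cartesian. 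Taking horizontal fibres over $E_M$—noting that $E_M$ in the smooth tower maps to $E_M$ in the topological one under the right vertical functor, and $M$ viewed as a smooth bordism maps to the underlying topological bordism under the left vertical functor—yields that the comparison map $S_{\partial}^{\DiscInf,\oo}(M)\ra S_{\partial}^{\DiscInf,\ot}(M)$ on horizontal fibres is an equivalence, which is exactly the claim.

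I expect the only genuinely delicate point to be bookkeeping: making sure that "fibre over the object $M$" on the smooth side and "fibre over the object $\delta(M)$" on the topological side are compatible, so that the induced map on horizontal fibres of the cartesian square is really the comparison map as defined in \cref{sec:s-disc}. This amounts to unwinding that the left vertical functor $\BordInf^{\oo}(d)\ra\BordInf^{\ot}(d)$ sends $M$ to its underlying topological manifold (true by construction, \cref{thm:extension-e-topological}\ref{enum:extension-topo-i}) and that the right vertical functor $\ModInf_k(E^{\oo}_d)\ra\ModInf_k(E^{\ot}_d)$ sends $E_M=\Emb^{\oo}(-,M)$ to $E_M=\Emb^{\ot}(-,M)$, i.e.\ that $\varphi_!\circ E\simeq E\circ\delta$ for the map $\varphi\colon E_d^{\oo}\ra E_d^{\ot}$—which is precisely the commutativity of the square in \cref{thm:extension-e-topological}, restricted to mapping categories via \cref{thm:extension-e-topological}\ref{enum:extension-topo-iii}. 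Beyond this identification, the proof is formal: a map of fibres induced by a pullback square is an equivalence. I would also remark that the same argument proves the more general statement that $S^{\DiscInf,\oo}_P(X)\ra S^{\DiscInf,\ot}_P(X)$ is an equivalence for $d\neq 4$ for any right module $X$ (with $X$ on the topological side being the image of $X$ under $\varphi_!$), of which \cref{thm:sdisc-smooth-vs-top} is the case $X=E_M$, $P=\partial M$.
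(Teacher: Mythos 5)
Your proposal is correct and is essentially the paper's own argument: the paper deduces the corollary directly from the pullback on mapping categories in \cref{thm:smoothing-theory-emb-calc-compact} by passing to cores and taking horizontal fibres over $E_M$, using the commutativity from \cref{thm:extension-e-topological} to identify the basepoints, exactly as you describe (and it likewise records the general statement for arbitrary right-modules $X$ before specialising to $X=E_M$). The only minor bookkeeping slip is that $\DiscInf$-structure spaces are defined via nullbordisms of $P=\partial M$ (mapping categories from $P$ to $\varnothing$, i.e.\ $\ModInf(E_d^c)_{E_{P\times I}}$ with a single subscript) rather than bordisms $P\leadsto P$, but this does not affect the argument.
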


\begin{rem}\, 
\begin{enumerate}[leftmargin=*]
	\item One consequence of \cref{thm:sdisc-smooth-vs-top} is that $S_{\partial}^{\DiscInf,\oo}(M)$ is independent of the smooth structure of $M$ for $d\neq 4$, but this was already known as a result of the $2$-type invariance of \cite[Theorem A]{KKDisc} (see the discussion below the statement of the theorem).
	\item It follows from \cite[Theorem B]{KnudsenKupers} that the map in \cref{thm:sdisc-smooth-vs-top} is for $d=4$ not always an equivalence (c.f.\,Item (iii) of the remark following \cite[Theorem C]{KKDisc}).
\end{enumerate}
\end{rem}

\subsection{Particle embedding calculus and tangential structures}\label{sec:particle} We have considered two tangential structures for the $E_d$-operad in this section so far, $\oo\colon \BO(d)\ra\BAut(E_d)$ and $\ot\colon \BTop(d)\ra\BAut(E_d)$, due to their relationship to manifold theory via the functors $\smash{E\colon \ncBordInf^c(d)\ra \ModInf_\bullet(E_d^{c})}$ for $c \in \{\oo,\ot\}$. One of the strengths of our setup, however, is that it allows other tangential structures for $E_d$ (and its truncations) that have no direct geometric analogue. To explain this, we develop the following commutative diagram of towers of symmetric monoidal double categories 
\begin{equation}\label{equ:big-particle-calculus-diagram}
	\begin{tikzcd}[row sep=0.3cm, column sep=0.3cm]
	|[label={[label distance=-2mm]-45:\ulcorner_{\text{mc},d\neq 4}}]|\ncBordInf^\oo(d)\rar\dar \arrow[dr, phantom, "\circled{1}"]&\ncBordInf^\ot(d)\dar &&\\
	|[label={[label distance=-2mm]-45:\ulcorner}]|\ModInf^\un_\bullet(E_d^\oo)\arrow[dr, phantom, "\circled{2}"]\dar\rar&|[label={[label distance=-2mm]-45:\ulcorner}]|\ModInf^\un_\bullet(E_d^\ot)\arrow[dr, phantom, "\circled{3}"]\rar\dar &|[label={[label distance=-2mm]-45:\ulcorner}]|\ModInf^\un_\bullet(E_d^p)\arrow[dr, phantom, "\circled{4}"]\dar\rar&\ModInf^\un_\bullet(E_{d,\le \bullet}^p)\dar\\
	\Cosp(\cS_{/\BO(d)})\rar&\Cosp(\cS_{/\BTop(d)})\rar &\Cosp(\cS_{/\BAut(E_d)})\rar&\Cosp(\cS_{/\BAut(E_{d,\le \bullet})})
\end{tikzcd}
\end{equation}
hinted at in the introduction. Here is the construction:
\begin{itemize}[leftmargin=0.5cm]
	\item Whenever an entry in \eqref{equ:big-particle-calculus-diagram} does not feature a $\bullet$-sign, it is considered as a constant tower.
	\item The square $\circled{1}$ is the content of \cref{thm:extension-e-topological} and it induces a pullback (of towers, i.e.\,levelwise) on all mapping categories as long as $d\neq 4$ by \cref{thm:smoothing-theory-emb-calc}. 
	\item The first three entries in the middle row are obtained by applying $\smash{\ModInf^\un_\bullet(-)}$ from \eqref{equ:naturality-morita-operadic-tower} to the operad maps $\smash{E_d^{\oo}\ra E_d^{\ot}\ra E_d^{p}\eqcolon E_d^{\id}}$ induced by the changes of tangential structures 
	\[
		\begin{tikzcd}[column sep=1cm, row sep=0.5cm]\BO(d)\arrow[dr,"\oo",swap]\rar& 	\BTop(d)\arrow[d,"\ot"]\rar&\BAut(E_d)\arrow[dl,"\id"]\\
		&\BAut(E_d)&
	\end{tikzcd}\] 
	for $E_d$. On bottom layers, the maps between these three towers induce the first three bottom vertical arrows (see \cref{thm:bottom-layer-morita}). The resulting squares $\circled{2}$-$\circled{3}$ are pullbacks by \cref{thm:smoothing-theory-morita}.
	\item The square $\circled{4}$ is obtained by also taking the truncations $\smash{E_{d,\le k}\coloneqq (E_d)_{\le k}}$ of the $E_d$-operad into account (see \cref{sec:truncation-operads}), namely as follows: first one considers the tower \[
		\smash{E_{d,\le \smallsquare}^p\eqcolon E_{d,\le \smallsquare}^{\id}}\in\Tow(\Opd^{\gc})
	\] 
	of operads featuring the $\smash{\id_{\BAut(E_{d,\le k})}}$-framed $\smash{E_{d,\le k}}$-operad $\smash{E^\id_{d,\le k}}$ for $1\le k\le \infty$ (see Sections \ref{sec:tangential-structures-operads}). On categories of colours, this tower gives the sequence of spaces induced by truncation
	\[
		\BAut(E_d)\simeq\BAut(E_{d,\le \infty})\lra \cdots\lra\BAut(E_{d,\le 2})\lra \BAut(E_{d,\le 1}).
	\]
	Then one applies $\ModInf_\bullet^\un(-)$ to $E_{d,\le \smallsquare}^p$ get a double tower 
	\[
		\smash{\ModInf_\bullet^\un(E_{d,\le \smallsquare}^{\id})}\in\Tow(\Tow(\CMon(\CCat(\Cat))))
	\]
	from which the top right corner in $\circled{4}$ is obtained by restricting to the diagonal $\bullet=\smallsquare$, the bottom right corner by restricting to $\bullet=1$, the top left one by restricting to $\smallsquare=\infty,$ and the bottom left one by restricting to $(\bullet,\smallsquare)=(1,\infty)$. The square $\circled{4}$ is a pullback: apply \cref{thm:smoothing-theory-morita} for fixed $\bullet=k$ to the change of tangential structure $p_k\colon \BAut(E_d)\ra \BAut(E_{d,\le k})$ for the $E_{d,\le k}$-operad and use that the map $\smash{E_d^{p}\ra E^{p_k}_{d,\le k}}$ induces an equivalence on $\ModInf_\bullet(-)$ for $\bullet\le k$, since the map $\smash{\Env(E_d^p)_{\le k}\ra\Env(E^{p_k}_{d,\le k})_{\le k}}$ is an equivalence by \cref{rem:restricted-env-depends-on-truncation} and \cref{lem:truncated-tangential}.
\end{itemize}
For smooth bordisms $M,N\coloneqq P\leadsto Q$, the diagram \eqref{equ:big-particle-calculus-diagram} induces on mapping spaces of mapping categories a diagram of towers in spaces consisting of levelwise pullbacks
\begin{equation}\label{equ:big-particle-calculus-diagram-mapping-spaces}
	\hspace{-.2cm}\begin{tikzcd}[row sep=0.1cm, column sep=0.18cm]
	|[label={[label distance=-2mm]-45:\ulcorner_{d\neq 4}}]|\Emb^{\oo}_\partial(M,N)\rar\dar \arrow[dr, phantom, "\circled{1}"]&\Emb^{\ot}_\partial(M,N)\dar &&\\
	|[label={[label distance=-2mm]-45:\ulcorner}]|T_\bullet\Emb^{\oo}_\partial(M,N)\arrow[dr, phantom, "\circled{2}"]\dar\rar&|[label={[label distance=-2mm]-45:\ulcorner}]|T_\bullet\Emb^{\ot}_\partial(M,N)\arrow[dr, phantom, "\circled{3}"]\rar\dar &|[label={[label distance=-2mm]-45:\ulcorner}]|T_\bullet\Emb^{p_\bullet}_\partial(M,N)\arrow[dr, phantom, "\circled{4}"]\dar\rar&T_\bullet\Emb^{p}_\partial(M,N)\dar\\
	\Map_\partial^{/\BO(d)}(M,N)\rar&\Map_\partial^{/\BTop(d)}(M,N)\rar &\Map^{/\BAut(E_{d})}_\partial(M,N)\rar&\Map^{/\BAut(E_{d,\le 	\bullet})}_\partial(M,N).
	\end{tikzcd}
\end{equation}
Note that, without the leftmost column, this diagram also exists for \emph{topological} bordisms.

\medskip

\subsubsection{Particle embedding calculus}\label{sec:particle-calculus-subsubsection}The rightmost upper tower $\smash{\ModInf^\un_\bullet(E_{d,\le \bullet}^p)}$ in \eqref{equ:big-particle-calculus-diagram} (as well as the induced tower $\smash{T_\bullet\Emb^{p}_\partial(M,N)}$ on mapping spaces of mapping categories featuring in \eqref{equ:big-particle-calculus-diagram-mapping-spaces}), plays a special role in our theory, since they are in a sense, the universal towers built from tangential structures for the $E_d$-operad and its truncations (see \cref{sec:terminal-operadic} for a precise statement). We refer to the towers $\smash{\ModInf^\un_\bullet(E_{d,\le \bullet}^p)}$ and $\smash{T_\bullet\Emb^{p}_\partial(M,N)}$ as \emph{particle embedding calculus}, since they are, roughly speaking, constructed purely in terms of configuration space data and no longer involve the homotopy type of $\BO(d)$ or $\BTop(d)$.
 
\begin{rem}[Configuration categories]\label{rem:config-categories}As mentioned in the introduction, particle embedding calculus is likely to be closely related to Boavida de Brito--Weiss' theory of \emph{configuration categories}. For example, the pullback square obtained by horizontally composing $\circled{2}$-$\circled{4}$ in \eqref{equ:big-particle-calculus-diagram-mapping-spaces} is very reminiscent of pullback squares in their theory \cite[Theorem 6.4]{BoavidaWeissConfiguration}. The precise relationship between particle embedding calculus and configuration categories is subject of future work.
\end{rem}

\subsubsection{Other tangential structures}\label{sec:towers-tangential-structures-for-emb}
Instead of the identity tangential structures in the rightmost column of \eqref{equ:big-particle-calculus-diagram}, we can consider any tower $\theta_\bullet\colon B_\bullet\ra\BAut(E_{d,\le \bullet})$ of tangential structures for $E_d$ as in \cref{sec:tangential-structures-operads} and form the tower $\smash{\ModInf_\bullet^\un(E^{\theta_\bullet}_{d,\le \bullet})}$. It fits into a diagram of pullbacks of towers
\begin{equation}\label{equ:bordism-cats-with-tang-structures}
	\begin{tikzcd}[row sep=0.2cm, column sep=0.5cm]
	|[label={[label distance=-2mm]-45:\ulcorner}]|  \ncBordInf^{o^*\theta}(d)\dar\rar&|[label={[label distance=-2mm]-45:\ulcorner}]|  \ncBordInf^{t^*\theta}(d)\rar\dar& |[label={[label distance=-2mm]-45:\ulcorner}]|  \ModInf_\bullet^\un(E^{\theta_\bullet}_{d,\le \bullet})\rar\dar&\Cosp(\cS_{/B_\bullet})\dar\\
	\ncBordInf^{o}(d)\rar&\ncBordInf^{t}(d)\rar&\ModInf_\bullet^\un(E^p_{d,\le \bullet})\rar&\Cosp(\cS_{/\BAut(E_{d,\le \bullet})})
	\end{tikzcd}
\end{equation}
whose rightmost square is an instance of the discussion in \cref{sec:terminal-operadic}. To explain the left part of this diagram, recall that a \emph{tangential structure for $d$-manifolds} is a map $\nu\colon X\ra\BTop(d)$. Given a $d$-manifold $M$, a $\nu$-structure on $M$ is a lift of its tangent classifier along $\nu$. The bordism category of manifolds with $\nu$-structure is the pullback (cf.\,\cite[Definition 3.11]{KKDisc}) 
\[
	\smash{\ncBordInf^{\ot}(d)^{\nu}\coloneqq \ncBordInf^{c}(d)\times_{\Cosp(\cS_{/\BTop(d)})}\Cosp(\cS_{/X})}.
\] 
The same discussion applies smoothly, where one starts with a map to $\BO(d)$. The bordism categories appearing in \eqref{equ:bordism-cats-with-tang-structures} are the special cases where $\nu$ is the pullback of the map $\theta\colon B\ra \BAut(E_d)$ (the value of $\theta_\bullet$ at $\bullet=\infty$) along $\ot\colon\BTop(d)\ra \BAut(E_d)$ (or $\oo\colon\BO(d)\ra \BAut(E_d)$ in the smooth case). The left two squares in \eqref{equ:bordism-cats-with-tang-structures} are induced by the universal property of the pullback, and the fact that they are pullbacks follows by using that $\Cosp(\cS_{/-})\colon \cS\ra\CMon(\CCat(\Cat)))$ preserves pullbacks.

\medskip

\noindent Given a $d$-dimensional bordism $M\colon P\leadsto Q$, a choice of lift $\ell_M\colon M\ra B$ of the composition of the tangent bundle $M\ra\BTop(d)$ with $\ot\colon \BTop(d)\ra\BAut(E_d)$ along $\theta\colon B\ra\BAut(E_d)$ in particular induces similar lifts $\ell_{P}$, $\ell_{Q}$ involving the $1$-stabilised tangent bundles of $P$ and $Q$. In particular, this lifts $P$, $Q$ from $\ncBordInf^\ot(d)$ to objects $(P,\ell_P)$ and $(Q,\ell_Q)$ in $\smash{\ncBordInf^{\ot^*\theta}(d)}$, and $M$ from $\smash{\ncBordInf^\ot(d)_{P,Q}}$ to $\smash{(M,\ell_M)\in \ncBordInf^{\ot^*\theta}(d)_{(P,\ell),(Q,\ell)}}$. Given another bordism $N\colon P\leadsto Q$ with a lift $\ell_N$ and an identification of the induced lift for $P$ and $Q$ with the lifts $\ell_P$ and $\ell_Q$ induced by $\ell_M$, we consider the images $\smash{E_{(P\times I,\ell_P)}}$, $\smash{E_{(Q\times I,\ell_Q)}}$, $\smash{E_{(M,\ell_M)}}$ and $\smash{E_{(N,\ell_N)}}$ under 
\begin{equation}\label{equ:e-with-tangential-structures-on-mapping-cat}
	\smash{E\colon \ncBordInf^{\ot^*\theta}(d)_{(P,\ell_P),(Q,\ell_Q)}\ra \ModInf_\bullet^\un(E^{\theta_\bullet}_{d,\le \bullet})_{E_{(P\times I,\ell_P)},E_{(Q\times I,\ell_Q)}}}
\end{equation}
and write
\begin{equation}\label{equ:emb-calc-with-tang-structure}
	T_\bullet\Emb_\partial^{\theta_\bullet}((M,\ell_M),(N,\ell_N))\coloneqq \Map_{\ModInf_\bullet^\un(E^{\theta_\bullet}_{d,\le \bullet})_{E_{(P\times I,\ell_P)},E_{(Q\times I,\ell_Q)}}}(E_{(M,\ell_M)},E_{(N,\ell_N)})
\end{equation} 
for the tower of mapping spaces between these objects. If the tangential structures $\ell_M$ and $\ell_N$ are clear from the context, we omit them in the notation. Note that the $k$th entry in \eqref{equ:emb-calc-with-tang-structure} only depends on the $k$th stage $\theta_k\colon B_k\ra\BAut(E_{d,\le k})$ of the tower of tangential structures.

\begin{ex} Here are some special cases of \eqref{equ:emb-calc-with-tang-structure}:
\begin{enumerate}[leftmargin=*]
	\item The towers in the middle row of \eqref{equ:big-particle-calculus-diagram-mapping-spaces} correspond to the towers $\theta_\bullet\colon B_\bullet\ra \BAut(E_{d,\le \bullet})$ of tangential structures given by $\BO(d)\ra \BAut(E_{d,\le \bullet})$, $\BTop(d)\ra \BAut(E_{d,\le \bullet})$, $\BAut(E_d)\ra \BAut(E_{d,\le \bullet})$, and $\BAut(E_{d,\le \bullet})\ra \BAut(E_{d,\le \bullet})$, induced by $\oo,\ot,$ and the truncation maps.
	\item Any map $\theta\colon B\ra \BAut(E_{d,\le k})$ for a fixed $k$ can viewed as a tower $B_\bullet \ra \BAut(E_{d,\le \bullet})$ of tangential structures by $B_n=B_k$ for $n\le k$ and $B_n=B_k\times_{\BAut(E_{d,\le k})} \BAut(E_{d,\le n})$ otherwise.
	\item Precomposing $\theta_\bullet \colon \BO(d)\ra \BAut(E_{d,\le \bullet})$ with a map $B\ra\BO(d)$ yields embedding calculus with tangential structures common in the theory of smooth manifolds (the topological case is similar) such as framings $B=\ast$, orientations $B=\BSO(d)$, or spin structures $B=\BSpin(d)$. Note that for framings $\smash{\fr\colon \ast\ra \BAut(E_{d,\le \bullet})}$, we have $\smash{E_{d,\le \bullet}^\fr\simeq E_{d,\le \bullet}}$.
\end{enumerate}
\end{ex}  

\begin{rem}[Layers with tangential structures]\label{sec:layers-tangential-structures} The layers in $\smash{\ModInf_\bullet^\un(E^{\theta_\bullet}_{d,\le \bullet})}$ can be analysed as follows: the bottom layer is given as $\smash{\ModInf_1^\un(E^{\theta_1}_{d,\le 1})\simeq\Cosp(\cS_{/B_1})}$ by \cref{thm:bottom-layer-morita}. Regarding the higher layers, one notes that the functor from the $k$th stage to the $(k-1)$st factors as a composition
\[
	\ModInf_k^\un(E^{\theta_k}_{d,\le k})\lra\ModInf_{k-1}^\un(E^{\theta_k}_{d,\le k})\simeq \ModInf_{k-1}^\un(E^{\theta_{k}}_{d,\le k-1})\lra \ModInf_{k-1}^\un(E^{\theta_{k-1}}_{d,\le k-1})
\] 
where the second functor fits into a pullback with $\Cosp(\cS_{/B_k})\ra \Cosp(\cS_{/B_{k-1}})$ by \cref{thm:smoothing-theory-morita} and the first functor fits (on mapping categories) into a pullback with $(0\le 2)^*\colon \cS^{[2]}\ra \cS^{[1]}$ by \cref{thm:layers-tower-module}, involving the functor \begin{equation}\label{equ:layer-lambda-with-tangential-structures}
	\smash{\Lambda^c_{E_{(P\times I,\ell_P)},E_{(Q\times I,\ell_Q)}}\colon \ModInf_k^\un(E^{\theta_k}_{d,\le k})_{E_{(P\times I,\ell_P)},E_{(Q\times I,\ell_Q)}}\lra \cS^{[2]}}.
\end{equation}
Moreover, it follows from the naturality of the higher Morita layers in \cref{sec:naturality-higher-morita} with \eqref{equ:e-with-tangential-structures-on-mapping-cat} agrees with the forgetful functor $\smash{\ncBordInf^{\ot}(d)_{P,Q}}$ followed by the functor $\smash{\Lambda^c_{E_{P\times I},E_{Q\times I}}}$ we described geometrically in \cref{lem:latch-match-bordism}. In particular, combining this discussion with \cref{cor:layer-description-explicit}, we obtain an equivalence
\begin{equation}\label{equ:first-layer-tangential-structures}
	T_1\Emb_\partial^{\theta_1}((M,\ell_M),(N,\ell_N))\simeq \Map^{/B_1}_{\partial}(M,N)
\end{equation}
and natural fibre sequences of the form
\[
	\hspace{-0.45cm}\left\{\begin{tikzcd}[row sep=0.3cm, column sep=0cm]		\partial^h_{PQ} C_k(M) \rar\dar & C_k(N)\dar\\		C_k(M)\rar\arrow[ur,dashed]& \left[\lim_{S\subsetneq \underline{k}} F_S(N)\right]_{\Sigma_k}
	\end{tikzcd}\right\}
	\ra \fib_e\left(\begin{tikzcd}[row sep=0.3cm]T_{k}\Emb^{\theta_k}_\partial((M,\ell_M),(N,\ell_N))\dar\\ T_{k-1}\Emb^{\theta_{k-1}}_\partial((M,\ell_M),(N,\ell_N))\end{tikzcd}\right)\ra \Omega\left\{\begin{tikzcd}[row sep=0.5cm, column sep=0.3cm]
	P\sqcup Q \rar{\ell_N}\arrow[d,hook] & B_k\dar\\		M\rar[swap]{\ell_M}\arrow[ur,dashed]& B_{k-1}
	\end{tikzcd}\right\}
\]
where the loop space is based at $\ell_M$ and the horizontal maps in the leftmost lifting problem depend on the point $e$ we took fibres over. 

\medskip

\noindent In the example of particle embedding calculus from \cref{sec:particle-calculus-subsubsection}, the map $B_k\ra B_{k-1}$ is given by the map $\BAut(E_{d,\le k})\ra \BAut(E_{d,\le k-1})$ induced by truncation. The latter map  can be analysed using work of Göppl--Weiss \cite{Goppl} (see also \cite[Section 2.2.5]{KKPontryagin}).
\end{rem}

\subsection{$T_k$-self-maps are equivalences}\label{sec:tk-equivalences} As a sample application of the geometric layer identification for embedding calculus from \cref{sec:layers-embcalc}, we show that the monoid $T_k\Emb^{\ot}_\partial(M,M)$ with respect to  composition is grouplike for all $1\le k\le \infty$ under some assumptions on $M$. This is reminiscent of the fact that self-embeddings of compact manifolds that fix the boundary are diffeomorphisms. We do this in the more general context of embedding calculus with tangential structures (see \cref{sec:towers-tangential-structures-for-emb}), so fix a tangential structure $\theta\colon B\ra \BAut(E_{d,\le k})$ and a $\theta$-structure $\ell_M$ on $M$ (i.e\,a lift of the composition $M\ra\BTop(d)\ra \BAut(E_{d,\le k})$).

\begin{thm}\label{thm:tk-equivalences}Fix $1\le k\le \infty$, a $d$-dimensional bordism $M\colon P\leadsto Q$, a tangential structure $\theta\colon B\ra\BAut(E_{d,\le k})$, and a $\theta$-structure $\ell_M$ on $M$. If $M$ is compact, connected, orientable, has nonempty boundary, and $\pi_1(M)$ is polycyclic-by-finite, then the monoid $T_k\Emb_\partial^{\theta}((M,\ell_M),(M,\ell_M))$ is group-like.
\end{thm}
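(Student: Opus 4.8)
The plan is to show that an element $e\in T_k\Emb_\partial^{\theta}((M,\ell_M),(M,\ell_M))$ which is invertible up to the identity in $T_k$ admits an inverse; by a standard monoid argument it suffices to show that every $e$ which becomes an equivalence after projecting to $T_1\Emb_\partial^\theta((M,\ell_M),(M,\ell_M))\simeq \Map_\partial^{/B}(M,M)$ (see \eqref{equ:first-layer-tangential-structures}) is already invertible, and then to separately treat the bottom stage. For the bottom stage, composition in the monoid $\Map_{\partial}^{/B}(M,M)$ is induced by composition of maps over $B$ under $\partial M$, and since $M$ is connected with nonempty boundary, a self-map over $B$ relative to the boundary that lies in the monoid (i.e.\ restricts to the identity on $\partial M$) is automatically a homotopy equivalence; this uses that such a map has degree $1$ after forgetting down to $\Map_\partial(M,M)$ because it is the identity near $\partial M$, together with the orientability and compactness of $M$, so $\pi_0$ of this monoid is a group. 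Thus it is enough to show that $e$ is invertible under the hypothesis that its image in $T_1$ is an equivalence.

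First I would set up the induction on the tower using the layer fibre sequences from \cref{sec:layers-tangential-structures}: writing $T_k^{\theta}\coloneq T_k\Emb_\partial^{\theta}((M,\ell_M),(M,\ell_M))$, the map $T_k^{\theta}\ra T_{k-1}^{\theta}$ has, after taking fibres over a chosen point, fibre equivalent to a relative section space
\[
\Gamma\left(\begin{tikzcd}[row sep=0.3cm, column sep=0cm]
\partial^h_{PQ} C_k(M) \rar\dar & C_k(M)\dar\\
C_k(M)\rar\arrow[ur,dashed]& \left[\lim_{S\subsetneq \underline{k}} F_S(M)\right]_{\Sigma_k}
\end{tikzcd}\right)
\]
twisted also by the tangential-structure layer $\Omega\{P\sqcup Q\ra B\ra B;\ M\}$ from the fibre sequence in \cref{sec:layers-tangential-structures}. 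Composition in $T_k^{\theta}$ acts on this tower compatibly, so $e$ is invertible as soon as its image in $T_{k-1}^\theta$ is invertible \emph{and} the induced self-map of the layer fibre is an equivalence. The key point is therefore that left and right composition by $e$ act on each layer as a homotopy equivalence whenever $e$ restricts to an equivalence on $T_1$. The geometric description of the layers makes this tractable: composition with $e$ acts on the base $C_k(M)$ (and on $\partial^h_{PQ}C_k(M)$) through the underlying self-map of $M$ determined by the $T_1$-component of $e$, which is a homotopy equivalence of $M$ fixing $\partial M$, hence induces homotopy equivalences of $C_k(M)$ and $\partial^h_{PQ}C_k(M)$ (this uses that $C_k(-)$, $\partial^h_{PQ}C_k(-)$ and $[\lim_{S}F_S(-)]_{\Sigma_k}$ are functorial under codimension-$0$ embeddings and homotopy invariant under isotopy equivalences over $\partial M$, which can be extracted from the naturality statements behind \cref{lem:latch-match-bordism}), and on the fibre direction through the total homotopy fibres of $S\mapsto F_S(M)$, again via a homotopy equivalence of $M$. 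A section space along a homotopy equivalence of the base, twisted by a coherently identified bundle, is a homotopy equivalence, so the self-map of each layer is an equivalence. Descending the tower (and passing to the limit when $k=\infty$, using the convergence in \cref{prop:tower}) gives invertibility of $e$.

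The role of the hypotheses is as follows: orientability and compactness with nonempty boundary are what make $\pi_0$ of the first stage a group (self-maps fixing a nonempty boundary have degree one and hence are equivalences on a compact orientable manifold), and they also guarantee that $C_k(M)$ and its homotopy-link variant have the finiteness needed so that $\pi_0$ of the section-space layer is a group rather than merely a set closed under the monoid action. The condition that $\pi_1(M)$ is polycyclic-by-finite is the one genuinely delicate ingredient: it is needed to ensure that the homotopy groups of the layers (in particular the relevant section spaces and the tangential-structure layer $\Omega\{\,\cdots\,\}$, whose homotopy groups involve homology of covers of $M$ and of $C_k(M)$ with twisted coefficients) are finitely generated in each degree, so that a self-map which is an isomorphism onto a subgroup of finite index — as will be forced at each stage by the $T_1$-equivalence together with Euler-characteristic/degree bookkeeping — is actually an isomorphism, whence an equivalence. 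Concretely, the argument that ``onto'' plus ``finitely generated'' forces ``iso'' is a Hopfian-type statement for the homotopy groups, valid because polycyclic-by-finite groups (and their group rings) are Noetherian. \textbf{The main obstacle} will be making this last point precise: controlling the homotopy type of the layers enough to know their homotopy groups are finitely generated $\mathbf{Z}[\pi_1]$-modules (or abelian groups) and then running the Hopfian argument uniformly through the tower, especially in the limit case $k=\infty$ where one must also rule out $\lim^1$ obstructions — here the polycyclic-by-finite hypothesis again enters through Noetherianity to ensure the inverse system of homotopy groups is Mittag-Leffler.
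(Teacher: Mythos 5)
Your overall skeleton --- induct up the tower and show that composition with $e$ acts invertibly on each layer --- matches the paper's, but the two steps you treat as routine are exactly where the work lies, and as written both are gaps. First, the bottom stage: a self-map of $(M,\partial M)$ over $B$ that is the identity near the nonempty boundary does have degree one, but degree one does \emph{not} imply homotopy equivalence for a compact orientable manifold; it only gives surjectivity on $\pi_1$ and split surjectivity on homology with local coefficients via Poincar\'e duality. To promote this to an equivalence one needs $\pi_1(M)$ Hopfian and $\bfZ[\pi_1(M)]$ Noetherian --- this is \cref{lem:hausmann-gen} (a variant of Hausmann's result), and it is precisely here, already at $k=1$, that the polycyclic-by-finite hypothesis enters. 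Your account of ``the role of the hypotheses'' therefore misplaces the key assumption.

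Second, the higher layers: the self-map of $\bigl(\partial^h_{PQ}C_k(M)\ra C_k(M)\ra[\lim_{S}F_S(M)]_{\Sigma_k}\bigr)$ induced by composition with $e$ is not ``the map induced by a homotopy equivalence of $M$'' --- $e$ is merely a point of $T_k$, and in any case configuration spaces are not homotopy functors, so even an actual homotopy equivalence of $M$ rel $\partial M$ need not induce an equivalence of $C_k(M)$. What closes this gap in the paper is again the Hausmann mechanism, applied one level up: $(F_k(M),\partial^h_{PQ}F_k(M))$ is a finite Poincar\'e pair (via mapping cylinder neighbourhoods of the fat diagonal, \cref{rem:alternative-models-layers}) with $\pi_1(F_k(M))\cong\pi_1(M)^k$ polycyclic-by-finite, so a self-map of this pair that is an equivalence on $\partial^h_{PQ}F_k(M)$ --- which is what the inductive hypothesis supplies through the $\Omega$-part of the layer square --- is an equivalence on $F_k(M)$. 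Without identifying the layer base as a Poincar\'e pair, your appeal to ``finitely generated homotopy groups of the section spaces'' has nothing to act on; and the Mittag--Leffler worry at $k=\infty$ is a red herring, since once each finite stage is group-like, $T_\infty=\lim_kT_k$ is group-like because taking units preserves limits.
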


The proof involves the following minor generalisation of \cite[Proposition 1(a)]{Hausmann}:

\begin{lem}\label{lem:hausmann-gen} Let $(X,\partial X)\in\cS^{[1]}$ be an oriented finite Poincar\'e duality pair such that $X$ is connected, $\partial X \neq \varnothing$, the group $\pi_1(X)$ is Hopfian, and the group ring $\bfZ[\pi_1(X)]$ Noetherian. Then any self-map $(\varphi,\varphi_\partial)\colon (X,\partial X)\ra  (X,\partial X)$ is an equivalence if and only if the map $\varphi_\partial\colon \partial X\ra \partial X$ is an equivalence.\end{lem}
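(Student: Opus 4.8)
\textbf{Proof proposal for \cref{lem:hausmann-gen}.} The plan is to mimic the argument of \cite[Proposition 1(a)]{Hausmann}, which treats the case $\partial X = \varnothing$ with a map that is a homotopy equivalence on each boundary component, and adapt it to a pair where only $\partial X$ (not $X$) is known to be carried to itself by an equivalence. The ``only if'' direction is trivial since any equivalence restricts to an equivalence on the boundary, so the content is the ``if'' direction: assuming $\varphi_\partial$ is an equivalence, we must show $\varphi$ is.

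First I would reduce to showing that $\varphi$ induces an isomorphism on $\pi_1$ and on homology with $\bfZ[\pi_1(X)]$-coefficients, after which Whitehead's theorem finishes the job (all spaces here are homotopy equivalent to finite CW complexes by the finiteness hypothesis). For $\pi_1$: the map $\varphi_*\colon \pi_1(X) \to \pi_1(X)$ is surjective, because $\partial X \to X$ is surjective on $\pi_1$ up to the finitely many conjugacy classes coming from the boundary components --- more precisely, one uses that in a connected Poincar\'e pair with nonempty boundary the fundamental group is generated by the images of the boundary component groups together with... actually the cleaner route, and the one Hausmann uses, is: $\varphi_*$ is surjective on $H_1$ and more relevantly one shows $\varphi_*$ is onto by a transfer/degree argument. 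Since $\pi_1(X)$ is Hopfian, a surjective self-endomorphism is an isomorphism, so $\varphi_*$ is an isomorphism on $\pi_1$. Then I would identify the universal cover $\widetilde\varphi\colon \widetilde X \to \widetilde X$ and argue it is a $\bfZ[\pi_1(X)]$-homology equivalence: the key input is that $X$ has nonempty boundary, so $(X,\partial X)$ being an oriented Poincar\'e pair means $H_*(X; \bfZ[\pi_1 X]) \cong H^{d-*}(X,\partial X; \bfZ[\pi_1 X])$ and in particular $X$ is, $\bfZ[\pi_1 X]$-homologically, a retract-free finite complex of cohomological dimension $d$; the self-map $\varphi$ induces a self-map of the long exact sequence of the pair, which is an isomorphism on the $\partial X$-terms (by hypothesis, using $\pi_1(\partial X) \to \pi_1(X)$ to push $\varphi_\partial$'s equivalence to $\bfZ[\pi_1 X]$-coefficients) and on $\pi_1$, and then Poincar\'e--Lefschetz duality plus the Noetherian hypothesis on $\bfZ[\pi_1 X]$ forces $\varphi$ to be a $\bfZ[\pi_1 X]$-homology isomorphism on $X$ as well. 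Here the Noetherian condition is what lets one run the standard argument that a self-map of a finitely generated module over a Noetherian ring inducing a surjection is an isomorphism, applied degreewise to $H_*(X;\bfZ[\pi_1 X])$.

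Concretely the steps, in order: (1) observe all spaces are equivalent to finite complexes and reduce to a $\bfZ[\pi_1 X]$-homology plus $\pi_1$ statement via Whitehead; (2) prove $\varphi_*\colon \pi_1(X) \to \pi_1(X)$ is surjective --- this is the degree argument: $\varphi$ has a well-defined degree (using orientability and the Poincar\'e pair structure), the restriction $\varphi_\partial$ being an equivalence pins the degree to $\pm 1$, and a degree $\pm 1$ map of a connected Poincar\'e complex is $\pi_1$-surjective; (3) invoke Hopfian to upgrade to $\pi_1$-isomorphism; (4) pass to universal covers and use Poincar\'e--Lefschetz duality for the pair together with the five lemma applied to the map of long exact sequences $H_*(\partial\widetilde X) \to H_*(\widetilde X) \to H_*(\widetilde X, \partial \widetilde X)$, where the outer maps are isomorphisms (the boundary one by hypothesis, the relative one by duality from the absolute one, bootstrapping degreewise from the top dimension down using that these are finitely generated $\bfZ[\pi_1 X]$-modules and the ring is Noetherian) to conclude $\varphi$ is a $\bfZ[\pi_1 X]$-homology equivalence; (5) conclude by Whitehead.

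The main obstacle I anticipate is step (4): making the duality bootstrap rigorous when one only knows $\varphi_\partial$ is an equivalence and must deduce the relative homology statement, since a priori the map on $H_*(\widetilde X, \partial\widetilde X)$ and the map on $H_*(\widetilde X)$ feed into each other through the long exact sequence, so one cannot naively apply the five lemma. The resolution is the standard one from surgery theory / Wall's argument: use Poincar\'e--Lefschetz duality to reinterpret $\varphi_*$ on $H_*(\widetilde X)$ as (up to the degree, which is $\pm 1$) the transpose of $\varphi^*$ on $H^{d-*}(\widetilde X, \partial \widetilde X)$, combine with the known isomorphism on the boundary and a downward induction on degree where at each stage the Noetherian hypothesis promotes a surjection of finitely generated $\bfZ[\pi_1 X]$-modules to an isomorphism; one must be careful that $\bfZ[\pi_1 X]$ being Noetherian (which holds, e.g., for $\pi_1(X)$ polycyclic-by-finite by the theorem of Philip Hall --- precisely the hypothesis that will be verified in the application) is used exactly here. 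A secondary minor obstacle is verifying the degree of $\varphi$ is $\pm 1$: this needs orientability and that $\varphi_\partial$ an equivalence forces the fundamental class of $\partial X$ to map to $\pm$ itself, hence (via the boundary map $H_d(X,\partial X) \to H_{d-1}(\partial X)$ sending the relative fundamental class to the boundary one) the degree is $\pm 1$; this is routine but should be stated carefully since $\partial X$ may be disconnected.
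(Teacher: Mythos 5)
Your proposal is correct and follows essentially the same route as the paper: degree $\pm 1$ from $\varphi_\partial$ and $\partial X\neq\varnothing$, Browder's argument for $\pi_1$-surjectivity, Hopfian to get a $\pi_1$-isomorphism, Poincar\'e duality to get surjectivity on $\bfZ[\pi_1(X)]$-homology, the Noetherian hypothesis to promote surjections of finitely generated modules to isomorphisms, and Hurewicz/Whitehead to conclude. The only cosmetic difference is at your step (4): the paper bypasses the long-exact-sequence bootstrap entirely by citing Wall's splitting lemma, which gives split surjectivity of $\varphi_*$ on absolute $\Lambda$-homology in all degrees at once from the degree-one condition — exactly the "resolution" you correctly identify.
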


\begin{proof}It suffices to prove that $\varphi$ is an equivalence, assuming that $\varphi_\partial$ is an equivalence. If $\varphi_\partial$ is an equivalence, then it has degree $\pm1$, so the same holds for $(\varphi,\varphi_\partial)$, since $\partial X\neq\varnothing$ and $X$ is connected. This implies that $\varphi_* \colon \pi_1(X) \to \pi_1(X)$ is surjective by the argument in \cite[Proposition 1.2]{Browder} (which goes through for pairs), so an isomorphism since $\pi_1(X)$ was assumed to be Hopfian. Writing $\Lambda \coloneqq \bfZ[\pi_1(X)]$, this implies $\oH_*(X,\varphi^*\Lambda) \cong \oH_*(X;\Lambda)$ as $\Lambda$-modules. Moreover, the map $\oH_*(\varphi;\Lambda)\colon \oH_k(X,\varphi^*\Lambda)\ra \oH_k(X;\Lambda)$ induced by $\varphi$ is for all $k$ split surjective by Poincaré duality (see e.g.\,\cite[Lemma 2.2]{WallSCM}). By finiteness of $X$, the $\Lambda$-module $\oH_k(X;\Lambda)$ is finitely generated, so as any surjection of finitely generated modules over Noetherian rings is an isomorphism, we conclude that $\oH_*(\varphi;\Lambda)$ is an isomorphism, so $\varphi$ is an equivalence by the relative Hurewicz theorem.
\end{proof}

\begin{rem}\label{ex:malcev-hall}A class of groups $\pi$ that satisfy the hypotheses of \cref{lem:hausmann-gen} (i.e.\,$\pi$ is Hopfian and $\bfZ[\pi]$ noetherian) are polycyclic-by-finite groups. Indeed, since these groups contain a polycyclic subgroup $\pi'$ of finite index, they are finitely generated and residually finite, and therefore Hopfian (see e.g.\,\cite[p.\,307, 310]{Magnus}). For the fact that the group ring $\bfZ[\pi]$ is Noetherian, see Theorem 1 (and the subsequent discussion) in \cite{Hall}.
\end{rem}

\begin{proof}[Proof of \cref{thm:tk-equivalences}]
We adopt notation from Sections \ref{sec:layers-embcalc} and \ref{sec:particle}. First we show that a self-map of ${\Lambda^c_{E_{(P\times I,\ell_{P})},E_{(Q\times I,\ell_{P})}}(E_{(M,\ell_M)})\in\cS^{[2]}}$ is an equivalence if the self-map of ${\Omega^c_{E_{(P\times I,\ell_{P})},E_{(Q\times I,\ell_{P})}}(E_{(M,\ell_M)})\in\cS^{[1]}}$ induced by composition is an equivalence. In view of \cref{lem:latch-match-bordism} and the discussion in \cref{sec:layers-tangential-structures}, we have an equivalence in $\cS^{[2]}$
\[
	\Lambda^c_{E_{(P\times I,\ell_{P})},E_{(Q\times I,\ell_{P})}}(E_{(M,\ell_M)})\simeq \big(\partial^h_{PQ}C_k(M)\ra C_k(M)\ra[\holim_{S\subsetneq \underline{k}}F_S(M)]_{\Sigma_k}\big).
\] 
The right-hand sequence was obtained from $\smash{(\partial^h_{PQ}F_k(M)\ra F_k(M)\ra\holim_{S\subsetneq \underline{k}}F_S(M))}$ by taking $\Sigma_k$-orbits, so it suffices to show that the self-map of the sequence before taking these orbits is an equivalence if the self-maps of the outer two spaces are. In fact, we will show the stronger statement that a self-map $(\varphi,\varphi_\partial)$ of $\smash{(F_k(M),\partial^h_{PQ}F_k(M))}$ is an equivalence if $\varphi_\partial$ is one. The pair $\smash{(F_k(M),\partial^h_{PQ}F_k(M))}$ is equivalent to the boundary inclusion of a compact manifold by the discussion at the end of \cref{rem:alternative-models-layers} \ref{enum:latch-fm-bdy-topological}, so in particular a finite Poincaré pair. For $d\ge3$, we can then deduce the claim as an application of \cref{lem:hausmann-gen} and \cref{ex:malcev-hall}, since $\pi_1(F_k(M))\cong  \pi_1(M)^k$ is polycyclic-by-finite if $\pi_1(M)$ is. For $d\le 2$, the manifold $M$ is either an interval, a disc, or an annulus. In these cases, $F_k(M)$ is aspherical and has Hopfian fundamental group (by the same reasoning as in \cref{ex:malcev-hall}, using that it is a finite-index subgroup of a braid group, which is finitely generated and residually finite (see e.g.\,\cite[p.~307]{Magnus}), so it suffices to show that $\pi_1(\varphi)$ is surjective. This holds by the first part of the proof of \cref{lem:hausmann-gen}.

\medskip

\noindent We now prove the claim by induction on $k$. For $k=1$ the claim follows from \eqref{equ:first-layer-tangential-structures} since any self-map of $M$ under $\partial M\cong P\sqcup Q$ is an equivalence by the same argument as above. Assuming the claim for $k-1$, we consider the pullback square of monoids resulting from \cref{thm:layers-tower-module}
\[
	\begin{tikzcd}[row sep=0.3cm]
	T_{k}\Emb^{\theta}_\partial((M,\ell_M),(M,\ell_M))\dar\rar&\End_{\cS^{[2]}}(\Lambda^c_{E_{(P\times I,\ell_{P})},E_{(Q\times I,\ell_{P})}}(E_{(M,\ell_M)}))\dar\\
	T_{k-1}\Emb^{\theta}_\partial((M,\ell_M),(M,\ell_M))\rar&\End_{\cS^{[1]}}(\Omega^c_{E_{(P\times I,\ell_{P})},E_{(Q\times I,\ell_{P})}}(E_{(M,\ell_M)})).
	\end{tikzcd}
\]
By induction the lower left corner is group-like, which implies together with the first part of the proof that the top horizontal map has image in the units. Since taking units preserves pullbacks, it follows that the top left corner is group-like, which finishes the induction.
\end{proof}

\subsection{A delooping result for embedding calculus}\label{sec:delooping} In this subsection, we utilise our perspective on embedding calculus---as a special case of our general calculus for right-modules over an operad---to prove an example of a delooping result in the context of embedding calculus. To state the precise result, fix $1\le k\le \infty$ and a tangential structure $\theta\colon B\ra\BAut(E_{d,\le k})$. The standard framing on $D^d$ inherited from the inclusion on $\bfR^d$ induces a $\fr$-structure $\ell_\st$ on $D^d$ in the sense of \cref{sec:towers-tangential-structures-for-emb} where $\fr\colon \ast\ra \BO(d)$ is the tangential structure encoding framings. A choice of basepoint $\ast\in\fib(\theta)$ induces a map of tangential structures $\fr\ra \oo^*\theta$ for $E_{d,\le k}$, and thus in particular a $\oo^*\theta$-structure $\ell_\ast$ on $D^d$. The main purpose of this subsection is to prove the following delooping result:

\begin{thm}\label{thm:delooping}For $1\le k\le \infty$, a tangential structure $\theta\colon B\ra\BAut(E_{d,\le k})$, and a basepoint $\ast\in \fib(\theta)$, there is an equivalence of $E_1$-spaces
\[
	\smash{T_k\Emb^\theta_\partial\big((D^d,\ell_\ast),(D^d,\ell_\ast)\big) \simeq \Omega^{d+1}\fib(B\xra{\theta} \BAut(E_{d,\le k}))}
\]
where the $E_1$-space structures are given by composition and the loop space structure, respectively.
\end{thm}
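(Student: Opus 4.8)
The plan is to identify both sides with endomorphisms of a suitable object in the Morita category and then apply Lurie's theory of centralisers. First I would observe that, by the construction of embedding calculus on bordism categories (Sections \ref{sec:sm-top-emb-calc}, \ref{sec:towers-tangential-structures-for-emb}), the space $T_k\Emb^\theta_\partial((D^d,\ell_\ast),(D^d,\ell_\ast))$ is the space of endomorphisms of the bimodule $E_{(D^d,\ell_\ast)}$ in the mapping category $\ModInf^\un_k(E_{d,\le k}^{\oo^*\theta})_{E_{(S^{d-1}\times I,\ell),E_{(\varnothing)}}}$ --- or rather, after passing to the equivalent nullbordism picture, in the mapping category from $\varnothing$ to itself, where the bimodule reduces to a right-module over the unit. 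Concretely, $D^d$ as a nullbordism of $\varnothing$ gives $E_{D^d}\in \RMod^\un_k(E_{d,\le k}^{\oo^*\theta})$, and by the smoothing theory pullback of \cref{thm:smoothing-theory-rmod} applied to the operadic right-fibration $E_{d,\le k}^{\oo^*\theta}\to E_{d,\le k}^{\theta}$ --- equivalently, by the change-of-tangential-structure discussion of \cref{sec:terminal-operadic} together with \cref{lem:pullbacks-groupoid-type} --- the relevant mapping space is pulled back from the one for the $\theta$-framed truncated operad $E_{d,\le k}^{\theta}$ along the map of cospan categories induced by $\fib(\theta)\to B$. Since the $\fr$-structure picks out the basepoint $\ast\in\fib(\theta)$, and $E_{d,\le k}^{\fr}\simeq E_{d,\le k}$, we reduce to computing $\Map_{\RMod^\un_k(E_{d,\le k})}(E_{D^d},E_{D^d})$ together with the extra data coming from the fibre $\fib(\theta)$.

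\textbf{Key steps.} Step one: identify $E_{D^d}\in\RMod^\un_k(E_{d,\le k})$. Since $D^d\cong\bfR^d$ is itself (homeomorphic to) a single disc, $E_{D^d}=\Emb^c(-,\bfR^d)$ is the representable right-module $E_{\ast}$ on the single colour of $E_d$, i.e. the monoidal unit of $\RMod^\un_k(E_{d,\le k})$ after restricting along the inclusion of discs. By the Yoneda lemma, $\Map_{\RMod^\un_k(E_{d,\le k})}(E_{\ast},E_{\ast})\simeq E_{\ast}(\ast)\simeq \Emb^c(\bfR^d,\bfR^d)\simeq \ast$; but this is the \emph{wrong} computation --- what we want is the endomorphisms \emph{as an algebra/bimodule}, i.e. we must remember the $E_{D^d\times I}$-bimodule structure, or equivalently work one categorical level up. Step two, the real content: realise $T_k\Emb^\theta_\partial((D^d,\ell_\ast),(D^d,\ell_\ast))$ as the space of endomorphisms of the unit object in the mapping $(\infty,2)$-category, which by the bottom-layer identification and the iterated bar construction is controlled by the $E_{d+1}$-algebra structure; more precisely, I would invoke Lurie's centraliser theory (\cite[5.3]{LurieHA}): the derived centraliser / $E_{d+1}$-structure on the truncated framed disc operad together with Dunn additivity identifies the relevant endomorphism object with an iterated loop space. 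Step three: use that $E_{d,\le k}$ has a canonical augmentation making $\fib(\mathrm{id}\colon \BAut(E_{d,\le k})\to\BAut(E_{d,\le k}))$ contractible (the last line of \eqref{equ:delooping-special-cases}), so the $\theta$-framed case is obtained by forming the fibre over $B$, and the delooping degree $d+1$ emerges as $d$ (from the tangential-bundle direction / the $E_d$-structure) plus one (from the "interval" direction in the bordism category, i.e. the bar construction over the stacking algebra $E_{D^d\times I}$). Assembling: the fibre sequence $\Omega^{d+1}\fib(\theta)\to T_k\Emb^\theta_\partial(D^d,D^d)\to \ast$ from the pullback square, with the $E_1$-structures matching because composition of self-bordisms corresponds under the identification to the last loop coordinate.

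\textbf{Main obstacle.} The hard part is making precise the delooping mechanism that produces the $E_{d+1}$-structure and hence $\Omega^{d+1}$, rather than merely $\Omega$ or $\Omega^d$. One must carefully combine: (a) the identification of $\ModInf^\un_k(E_{d,\le k})_{E_{D^d\times I},E_{D^d\times I}}$-endomorphisms of $E_{D^d}$ with a based loop space coming from the bottom-layer cospan description (\cref{thm:bottom-layer-morita}: $\ModInf_1^\un\simeq\Cosp(\cS_{/B})$, where $\Cosp$-endomorphisms of the unit compute $\Omega$), which via the higher-layer pullbacks (\cref{thm:layers-tower-module}) and the geometric layer identification (\cref{cor:layer-description-explicit}) assembles inductively in $k$; and (b) the extra $\Omega^d$ coming from the fact that for $D^d=\bfR^d$ all the configuration-space layers $\partial^h C_j(D^d)\to C_j(D^d)\to[\cdots]_{\Sigma_j}$ are, after the Alexander-trick equivalence of \cref{ex:boundary-for-discs}, equivalent to based loop spaces of the corresponding spaces of little-disc operations --- and these little-disc operation spaces are exactly what Lurie's centraliser theory deloops to the $E_{d+1}$-operad structure on the truncated framed discs. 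The cleanest route is probably to not do the induction by hand but to identify the full tower-endomorphism space with $\Omega^{d}$ of the automorphism/centraliser space of the augmented $E_{d,\le k}$-operad inside $\Opd^{\le k,\un}$, use that the truncation functor has a fully faithful right adjoint on unital operads (\cref{sec:truncation-operads}), and then the remaining $\Omega$ comes from the bordism-category "suspension" --- with the $E_1$-structure tracked throughout via the monoidal double-category structure of \cref{thm:extension-e-topological} and \cref{rem:comp-and-gluing}.
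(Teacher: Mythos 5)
You have correctly located the two essential tools --- Lurie's centraliser theory from \cite[5.3]{LurieHA} and a splitting of the delooping degree as $d+1 = d + 1$ --- and this is indeed the skeleton of the paper's argument. But the proposal is missing the two bridges that make the argument close, and one of your steps is circular. First, the paper brings $\fib(\theta)$ into play not via the smoothing-theory pullback but via the classification of groupoid-coloured operads: $\fib_*(\theta)\simeq \Map_{\Opd^{\gc,\simeq_\red}}(E_{d,\le k},E_{d,\le k}^{\theta})$, which includes as a collection of path components into $\Alg_{E_d}(\RMod_k(E_{d,\le k}^{\theta}))^{\simeq}$, using that truncation of unital operads is a localisation and that $\cO\ra\RMod_k(\cO)$ is faithful for $k$-truncated $\cO$ (\cref{lem:operad-yoneda}). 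Looping once then gives $\Omega\,\fib(\theta)\simeq \Aut_{\Alg_{E_d}}(A_\ast)$ for the $E_d$-algebra $A_\ast$ determined by the basepoint --- that is where the ``$+1$'' comes from, and it has nothing to do with the interval direction of the bordism category or a bar construction over $E_{D^d\times I}$. Your Step three instead invokes the contractibility of $T_k\Emb^p_\partial(D^d,D^d)$, which is the $\theta=\id$ special case of the theorem being proved (the last line of \eqref{equ:delooping-special-cases}), so that route is circular unless you first establish the particle case independently, e.g.\ by the layer-by-layer induction you sketch in (a)--(b) but do not carry out.

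Second, and more seriously, nothing in the proposal connects the output of centraliser theory --- $\Omega^{d}\Aut_{\Alg_{E_d}}(A_\ast)\simeq \Aut_{\oMod^{E_d}_{A_\ast}}(A_\ast)^{\times}$ --- to the embedding-calculus mapping space. The paper does this via \cref{thm:ed-modules-as-leftmodules} together with \cref{prop:universal-algebra-for-ed}: for an $E_d$-algebra $A$ in a presentable symmetric monoidal category $\cC$ one has $\oMod^{E_d}_{A}(\cC)\simeq \LLMod_{\lvert E_{(S^{d-1}\times I,\ell_\st)}\rvert_{A}}(\cC)$ with $A$ (as a module over itself) corresponding to $\lvert E_{(D^d,\ell_\st)}\rvert_{A}$; applied to $A_\ast$ this exhibits $\Aut_{\oMod^{E_d}_{A_\ast}}(A_\ast)$ as exactly $T_k\Emb^\theta_\partial((D^d,\ell_\ast),(D^d,\ell_\ast))^{\times}$. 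This enveloping-algebra identification (proved model-categorically in the paper) is the actual content standing behind your phrase ``the remaining $\Omega$ comes from the bordism-category suspension'', and it is absent from the proposal. Finally, even granting all of the above you would still need \cref{thm:tk-equivalences} to remove the units superscript, i.e.\ to know that the composition monoid $T_k\Emb^\theta_\partial(D^d,D^d)$ is already group-like.
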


\begin{rem}\label{rem:relation-to-existing-delooping}As mentioned in the introduction, there are already many delooping results in the context of embedding calculus in the literature. \cref{thm:delooping} is related to several of them:
\begin{enumerate}[leftmargin=*]
	\item For the tangential structure $\theta=(\ast\ra B)$, \cref{thm:delooping} recovers the codimension zero case of \cite[Equation (13)]{DucoulombierTurchin}, and for $\theta=(\BO(d)\ra \BAut(E_d))$ it recovers \cite[Equation (12)]{DucoulombierTurchinWillwacher} and the codimension zero case of \cite[Equation (1.3)]{BoavidaWeissConfiguration}. 
	\item For the identity tangential structure $\theta=(\BAut(E_{d,\le k})\ra \BAut(E_{d,\le k}))$, \cref{thm:delooping} becomes $T_k\Emb^p_\partial(D^d,D^d)\simeq \ast$, i.e.\,the space of endomorphisms of $D^d$ in particle embedding calculus is trivial. This should be compared to Boavida de Brito and Weiss' Alexander trick for configuration categories \cite[Theorem 1.4]{BoavidaWeissConfiguration}.
\end{enumerate}
\end{rem}

\begin{rem}\, 
\begin{enumerate}[leftmargin=*]
\item Variants of the method of proof of \cref{thm:delooping} can be used to obtain many other delooping results for embedding calculus, e.g.\,deloopings of $\smash{T_k\Emb^\theta_\partial((M \times D^d,\ell),(M \times D^d,\ell))^\times}$ in terms of $E_d$-algebra automorphisms of $\smash{E_{(M \times \bfR^d,\ell)}}$, or delooping results in positive codimension. 
\item The left side of \cref{thm:delooping} can be shown to carry an $E_{d+1}$-algebra structure of geometric origin corresponding to composition and stacking $D^d\cong [-1,1]^d$ in the $d$ directions (this follows e.g.\,by implementing \cite[Remark 3.1 (i)]{KKDisc} and extending it to the level of towers). With respect to this $E_{d+1}$-structure, the equivalence in \eqref{thm:delooping} ought to be one of $E_{d+1}$-algebras.
\end{enumerate}
\end{rem}

\noindent We prove \cref{thm:delooping} below, after discussing two of its key ingredients.

\subsubsection{Delooping spaces of $E_d$-module maps} \label{sec:delooping-module-maps} One of the ingredients in the proof of \cref{thm:delooping} is a delooping result due to Lurie for the space of maps of $E_d$-modules over $A$ from $A$ to $\varphi^*B$ where $\varphi\colon A\ra B$ is any map of $E_d$-algebras in $\cC$. To state this result in a form suitable for our purposes, we write $\smash{M^\times\in\Alg_{E_d}(\cS)}$ for the \emph{units} of an $E_d$-algebra $\smash{M\in\Alg_{E_d}(\cS)}$ in spaces, i.e.\,the value at $M$ of the right-adjoint of the inclusion $\smash{\Alg^{\mathrm{gp}}_{E_d}(\cS)\subset \Alg_{E_d}(\cS)}$ of the full subcategory of group-like $E_d$-algebras \cite[p.\,898]{LurieHA}. Combined with the functor $\Map_{\cC}(1,-)\colon \Alg_{E_d}(\cC)\ra \Alg_{E_d}(\cS)$ resulting from the lax monoidal lift of the functor $\Map_{\cC}(1,-)\colon \cC\ra\cS$ (see \cref{ex:mapping-unit-in-is-lax}), this allows one to define a group-like $E_d$-space $\Map_\cC(1,B)^\times\in\Alg_{E_d}(\cS)$, known as the \emph{unit space}, for any $E_d$-algebra $B$ in any symmetric monoidal category $\cC$. In these terms the delooping result reads as follows:

\begin{thm}\label{thm:delooping-technical}For a presentable symmetric monoidal category $\cC$ and a morphism $\varphi\colon A\ra B$ of $E_d$-algebras in $\cC$, there is an $E_d$-algebra structure on $\Map_{\smash{\oMod_{A}^{E_d}(\cC)}}(A,\varphi^*B)\in\cS$ and a fibre sequence
\[
	\smash{\Omega^{d}\Map_{\Alg_{E_d}(\cC)}(A,B)\lra \Map_{\smash{\oMod_{A}^{E_d}(\cC)}}(A,\varphi^*B)^\times \lra  \Map_{\cC}({1},B)^\times}
\]
of $E_d$-algebras where the loop space is taken at $\smash{\varphi}$. This is natural in pre- and post-composition with morphisms of $E_d$-algebras in $\cC$. If $\cC$ is unital, it in particular yields a natural equivalence of $E_d$-algebras
\[
	\smash{\Omega^{d}\Map_{\Alg_{E_d}(\cC)}(A,B)\simeq \Map_{\smash{\oMod_{A}^{E_d}(\cC)}}(A,\varphi^*B)^\times}.
\]
\end{thm}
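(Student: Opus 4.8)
\textbf{Proof plan for \cref{thm:delooping-technical}.}
The plan is to reduce the statement to Lurie's delooping theorem for module spaces over $E_d$-algebras, packaged via the identification $\smash{\oMod_A^{E_d}(\cC) \simeq \LLMod_{|U_{E_d}|_A}(\cC)}$ from \cref{thm:ed-modules-as-leftmodules}, which applies since $\smash{(E_d^\col)^\simeq \simeq \ast}$ is connected. First I would recall from \cite[5.3.1.14, 5.3.1.30]{LurieHA} (or the relevant statements in \cite[Section 5.3]{LurieHA}) that for an $E_d$-algebra $A$ in a presentable symmetric monoidal category $\cC$ and a map of $E_d$-algebras $\varphi \colon A \to B$, the mapping space $\smash{\Map_{\oMod_A^{E_d}(\cC)}(A, \varphi^* B)}$ carries a canonical $E_d$-algebra structure (coming from the fact that $\smash{\oMod_A^{E_d}(\cC)}$ is tensored over $\smash{\oMod_A^{E_d}(\cS)}$, or more directly that $A$ is an $E_d$-coalgebra object in its own module category via the diagonal, so its endomorphisms carry an $E_d$-structure), and that there is a fibre sequence of $E_d$-algebras
\[
	\Omega^d \Map_{\Alg_{E_d}(\cC)}(A, B) \lra \Map_{\oMod_A^{E_d}(\cC)}(A, \varphi^* B)^\times \lra \Map_{\cC}(1, B)^\times
\]
with the loop space taken at $\varphi$, where the last map is the forgetful functor $\smash{\oMod_A^{E_d}(\cC) \to \cC}$ followed by evaluation on the unit. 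This is essentially Lurie's ``$E_d$-algebra deloops the space of $E_d$-module maps modulo the unit space'' result; the content is the analysis of the $E_d$-monoidal bar construction / Koszul duality for $E_d$-algebras.

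The next step is to establish naturality in pre- and postcomposition. For postcomposition with a map $\psi \colon B \to B'$ of $E_d$-algebras, one uses that $\psi$ induces $\smash{\oMod_A^{E_d}(\cC) \to \oMod_A^{E_d}(\cC)}$-compatible maps (restriction of scalars is contravariant, but $\psi_*$ on modules over $A$ is given by $(\psi\varphi)^* B' $ receiving a map from $\varphi^*B$), and one checks the fibre sequence is natural in the target; for precomposition with $\chi \colon A' \to A$ one uses the restriction functor $\smash{\chi^* \colon \oMod_A^{E_d}(\cC) \to \oMod_{A'}^{E_d}(\cC)}$ together with the compatibility in \cref{thm:ed-modules-as-leftmodules}(2) with symmetric monoidal left adjoints (here one needs the construction $\smash{|U_{E_d}|_{(-)}}$ to be suitably functorial, which follows from the proof of that theorem since the algebra $U_{E_d}$ and module $M_{E_d}$ are fixed and only the evaluation $\smash{|-|_{(-)}}$ varies). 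I would phrase all of this using the functoriality already recorded in the excerpt for $\smash{\oMod^{E_d}_{(-)}(\cC) \colon \Alg_{E_d}(\cC)^\op \to \Cat}$ and the analogous functoriality of $\Alg_{E_d}(-)$ and of the unit-space functor.

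Finally, for the last assertion: if $\cC$ is \emph{unital} as an operad (i.e.\ the monoidal unit $1_\cC$ is initial, cf.\ \cref{sec:unital}), then $\smash{\Map_\cC(1_\cC, B) \simeq \ast}$ for every object $B$, so $\smash{\Map_\cC(1_\cC, B)^\times \simeq \ast}$ as a group-like $E_d$-space, and the fibre sequence degenerates to the claimed equivalence $\smash{\Omega^d \Map_{\Alg_{E_d}(\cC)}(A,B) \simeq \Map_{\oMod_A^{E_d}(\cC)}(A, \varphi^* B)^\times}$; this is automatically natural since the fibre sequence was. The main obstacle I anticipate is pinning down precisely which form of Lurie's delooping statement to cite and verifying that the $E_d$-algebra structure on $\smash{\Map_{\oMod_A^{E_d}(\cC)}(A, \varphi^* B)}$ that appears there agrees with the one used when we later apply this to embedding calculus (where $\cC$ is a category of right-modules with Day convolution and the $E_d$-structure should match the geometric ``stacking $D^d$ in $d$ directions'' structure); getting the two $E_d$-structures to coincide, rather than merely both existing, is the delicate point. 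A secondary subtlety is that \cite{LurieHA} typically works with $\smash{\oMod^{E_d}}$ in Lurie's model whereas we use Hinich's, so one invokes the comparison from \cite[Appendix B]{HinichRect} (valid here since $E_d$ is groupoid-coloured) to transport the statement.
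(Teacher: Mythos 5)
Your overall strategy — reduce everything to Lurie's \cite[5.3]{LurieHA} — is the same as the paper's, but the proposal never identifies the object that actually does the work, namely the \emph{centraliser} $Z_{E_d}(\varphi)\in\Alg_{E_d}(\cC)$, and as a result both of your key steps are under-justified or misattributed. The $E_d$-algebra structure on $\Map_{\oMod_A^{E_d}(\cC)}(A,\varphi^*B)$ does not come from a tensoring of $\oMod_A^{E_d}(\cC)$ over $\oMod_A^{E_d}(\cS)$, nor from $A$ being an ``$E_d$-coalgebra in its own module category'' — neither of these would hand you an $E_d$-structure on this mapping space compatible with the fibre sequence. It comes from the fact that $Z_{E_d}(\varphi)$ exists because $\cC$ is presentable monoidal (5.3.1.15 loc.cit.) and corepresents the functor $\Map_{\oMod_A^{E_d}(\cC)}(A\otimes(-),\varphi^*B)\colon \Alg_{E_d}(\cC)^{\op}\ra\cS$ (5.3.1.30 loc.cit., which you cite but do not use in this way); setting the test object to the unit gives $\Map_{\cC}(1,Z_{E_d}(\varphi))\simeq \Map_{\oMod_A^{E_d}(\cC)}(A,\varphi^*B)$, and the left-hand side carries an $E_d$-structure via the lax monoidal lift of $\Map_{\cC}(1,-)$ because $Z_{E_d}(\varphi)$ is an $E_d$-algebra.

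The fibre sequence is likewise not ``Lurie's bar construction / Koszul duality'' statement; it is extracted from the pullback square of 5.3.2.5 loc.cit.\ by specialising $A'=1$, applying $\Omega^d$ to obtain a pullback in $\Alg_{E_d}(\cS)$, and then using 5.3.2.7 and 5.3.2.14 to identify one corner with $\Map_{\cC}(1,Z_{E_d}(\varphi))^\times$ and another as trivial; the naturality in pre- and postcomposition is inherited from the naturality of that square rather than checked by hand as you sketch. Without naming the centraliser and this square, the fibre sequence in your write-up is simply asserted. Two smaller points: the detour through $\oMod_A^{E_d}(\cC)\simeq \LLMod_{|U_{E_d}|_A}(\cC)$ from \cref{thm:ed-modules-as-leftmodules} is not needed here (it is used only later, in the application to $T_k\Emb_\partial^\theta(D^d,D^d)$, together with \cref{prop:universal-algebra-for-ed}), and your worry about matching the $E_d$-structure with a geometric one is not part of this theorem — what the application actually needs is the compatibility of the $E_1$-structure with composition, which is exactly what the naturality clause supplies. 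The final reduction in the unital case ($\Map_{\cC}(1,B)\simeq\ast$) is correct and matches the paper.
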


\begin{proof}We write $Z_{E_d}(\varphi)\in\Alg_{E_d}(\cC)$ for the \emph{centraliser} of $\varphi$ in the sense of \cite[5.3.1.2, 5.3.1.12, 5.3.1.13]{LurieHA}, which exists if $\cC$ is presentable monoidal by 5.3.1.15 loc.cit.. This centraliser corepresents the functor $\smash{\Map_{\smash{\oMod_{A}^{E_d}(\cC)}}(A \otimes (-),\varphi^*B)\colon \Alg_{E_d}(\cC)^\op \ra \cS}$ as a result of 5.3.1.30 loc.cit., so for $C=1$ one has an equivalence of spaces $\smash{\Map_{\cC}(1,Z_{E_d}(\varphi))\simeq \Map_{\smash{\oMod_{A}^{E_d}(\cC)}}(A,\varphi^*B)}$ (c.f.\, (3) on p.\,881 loc.cit.), which induces an $E_d$-algebra structure on the right-hand side as claimed. The fibre sequence and its naturality then follows by setting $A'=1$ in 5.3.2.5 loc.cit., applying $\Omega^d(-)$ to the resulting pullback to get a pullback in $\Alg_{E_d}(\cS)$, using the second part in this result and 5.3.2.7 as well as 5.3.2.14 to identify one of the corners naturally with $\Map_{\cC}(1,Z_{E_d}(\varphi))^\times$ and another one as being trivial. Finally, if $\cC$ is unital, then $\Map(1,B)\simeq\ast$, so the fibre sequence reduces to an equivalence. \end{proof}

\subsubsection{The enveloping algebra for $E_d$-modules}Another ingredient in the proof of \cref{thm:delooping} is the equivalence of categories from \cref{thm:ed-modules-as-leftmodules} in the case $\cO=E_d$. For this operad, the associative algebra $U_{E_d}$ in $\RMod(E_d)=\PSh(\Env(E_d))$ featuring in the statement, as well as the module $M_{E_d}$ over it admits an explicit description: as mentioned above, the standard framing $\ell_\st$ of $D^d$ lifts the nullbordism $D^d\colon S^{d-1}\leadsto \varnothing$ from $\BordInf^t(d)$ to $\BordInf^\fr(d)$, so by applying $E$ this lifts the pair $(E_{S^{d-1}\times I},E_{D^d})\in\LLMod(\RMod(E_d^t))$ to $\smash{(E_{(S^{d-1}\times I,\ell_\st)},E_{(D^d,\ell_\st)})\in \LLMod(\RMod(E_d))}$.

\begin{prop}\label{prop:universal-algebra-for-ed}
We have $(U_{E_d},M_{E_d})\simeq (E_{(S^{d-1}\times I,\ell_\st)},E_{(D^d,\ell_\st)})$ in $\LLMod(\RMod(E_d))$.
\end{prop}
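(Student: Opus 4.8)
The plan is to identify both sides of the asserted equivalence via the universal property of $(U_{E_d}, M_{E_d})$ established in the proof of \cref{thm:ed-modules-as-leftmodules}. Recall from that theorem that $U_{E_d}\in\Ass(\RMod(E_d))$ and the left $U_{E_d}$-module $M_{E_d}$ are characterised (up to the natural equivalences fixed in the proof) by the property that for every $E_d$-algebra $A$ in a presentable symmetric monoidal category $\cC$ there is an equivalence $\oMod_A^{E_d}(\cC)\simeq \LLMod_{|U_{E_d}|_A}(\cC)$ sending $A$ (as a module over itself) to $|M_{E_d}|_{E_d}$. Moreover, the model-categorical construction of $U_{E_d}$ and $M_{E_d}$ in that proof gives them explicitly as coends: $U_{E_d}\simeq \lvert U_{E_d}\rvert_{y}$ where $y$ is the Yoneda embedding $\Env(E_d)\hookrightarrow\RMod(E_d)$, i.e.\ $U_{E_d}$ is the presheaf $S\mapsto \Mul_{E_d}((\ast)_{s\in S\sqcup\ast};\ast)$ on $\Env(E_d)$ with its Day-convolution associative algebra structure coming from operadic composition at the marked input, and $M_{E_d}$ is the presheaf $S\mapsto \Mul_{E_d}((\ast)_{s\in S};\ast)$ with the induced left module structure (underlying object equivalent to the monoidal unit $E_\varnothing$). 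So the task is to match these presheaves, with their algebra/module structure, against $E_{(S^{d-1}\times I,\ell_{\st})}$ and $E_{(D^d,\ell_{\st})}$.

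\textbf{Key steps.} First I would unwind the definition of $E$ on the relevant objects. By construction of $\smash{E\colon \ncBordInf^\oo(d)\ra \ModInf(E_d^\oo)}$ (and its lift along the map of operads $E_d\simeq E_d^{\fr}\to E_d^\oo$ coming from the framing), the right $E_d$-module $E_{(W,\ell)}$ attached to a framed bordism $W$ is the presheaf $D\mapsto \Emb^{\fr}(D,W)$ on $\Env(E_d)\simeq \DiscInf_d^{\fr}$, i.e.\ spaces of framed embeddings of disjoint unions of framed discs into $W$. For $W=S^{d-1}\times I$ one computes $\Emb^{\fr}(\sqcup_S\bfR^d, S^{d-1}\times I)\simeq \Mul_{E_d}((\ast)_{s\in S};\ast)$ using the standard contraction of rectilinear to arbitrary embeddings of discs into a cylinder (Kister--Mazur-type arguments, as in \cref{sec:ed-operads-general}), with the algebra structure coming from stacking $S^{d-1}\times I$ onto itself; this stacking map corresponds precisely to operadic composition, so one needs to check the two monoid structures agree — but this is exactly the content of the statement that $S^{d-1}\times I$ is a model for the "universal" annulus, and can be phrased as an identification of $E_{(S^{d-1}\times I,\ell_\st)}$ with $U_{E_d}$ in $\Ass(\RMod(E_d))$. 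A clean way to make the comparison is to observe that both $(U_{E_d},M_{E_d})$ and $(E_{(S^{d-1}\times I,\ell_\st)},E_{(D^d,\ell_\st)})$ satisfy the \emph{same} universal property: for $(E_{(S^{d-1}\times I,\ell_\st)},E_{(D^d,\ell_\st)})$ this follows from \cite[Theorem 4.5]{KKDisc} (the identification of the embedding calculus tower as mapping spaces of bimodules) together with the fact that modules over $E_{S^{d-1}\times I}$ of the form $E_W$ for nullbordisms $W$ of $S^{d-1}$ correspond to $E_d$-modules over the $E_d$-algebra $E_{D^d}$, which is the $E_d$-disc algebra; this is essentially \cite[5.5.4.16]{LurieHA} combined with the free--forgetful adjunction and descent for Weiss covers (\cref{lem:tk-descent}).

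\textbf{Alternative, more hands-on route.} Rather than invoking a universal property on the nose, I expect the cleanest argument is the following direct comparison. By \cref{lem:modules-as-presheaves} applied to $\cC=\RMod(E_d)$ with the algebra $U_{E_d}$, the category $\LLMod_{U_{E_d}}(\RMod(E_d))$ — equivalently $\RRMod$ of the opposite — is a category of presheaves on free modules on representables, and unwinding this shows $U_{E_d}$ as a presheaf on $\Env(E_d)$ is represented, after passing through the coend formula in step 2 of the proof of \cref{thm:ed-modules-as-leftmodules}, by $S\mapsto \Mul_{E_d}(\underline{1}\sqcup S;\ast)$. On the geometric side, $E_{(S^{d-1}\times I,\ell_\st)}$ as a presheaf on $\DiscInf^{\fr}_d\simeq\Env(E_d)$ sends $\sqcup_S\bfR^d$ to $\Emb^{\fr}(\sqcup_S\bfR^d, S^{d-1}\times I)$, and I would produce a natural equivalence $\Emb^{\fr}(\sqcup_S\bfR^d, S^{d-1}\times I)\simeq \Mul_{E_d}(\underline{1}\sqcup S;\ast)$ by choosing the "missing" disc to be a fixed collar neighbourhood of one boundary component, then applying a parametrised isotopy-extension / Gram--Schmidt-type contraction (exactly the argument already used in \cref{ex:boundary-for-discs} and in the proof of \cref{lem:latch-match-manifolds}) to compare arbitrary framed embeddings with rectilinear ones. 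The same choice of collar identifies $E_{(D^d,\ell_\st)}$, which sends $\sqcup_S\bfR^d$ to $\Emb^{\fr}(\sqcup_S\bfR^d,D^d)\simeq\Mul_{E_d}(S;\ast)$, with $M_{E_d}$. Finally I would check that these equivalences of underlying presheaves are compatible with the algebra and module structures: on the geometric side these come from stacking cylinders and gluing a cylinder onto the disc, and on the algebraic side from operadic composition at the marked input(s); tracing both through the collar identification shows they agree, using the uniqueness of lax monoidal structures on the coend construction invoked in step 4 of the proof of \cref{thm:ed-modules-as-leftmodules}.

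\textbf{Main obstacle.} The genuine difficulty is not the underlying-object comparison — that is a standard "discs into an annulus/disc are contractibly many, and are the same as operations of $E_d$" computation — but rather verifying the \emph{coherence} of the algebra and module structures: one must check that the stacking operation on $E_{(S^{d-1}\times I,\ell_\st)}$, which a priori is only defined as the composition functor of the double category $\ncBordInf^\fr(d)$ transported through $E$, matches the associative algebra structure on $U_{E_d}$ which is defined via a coend in a simplicial model category. I expect the right way around this is to not compare the two structures by hand but to deduce the equivalence of algebras/modules from the already-proven equivalence of categories $\oMod^{E_d}_{A}(\cC)\simeq \LLMod_{|U_{E_d}|_A}(\cC)$ (\cref{thm:ed-modules-as-leftmodules}) together with \cite[5.5.4.16]{LurieHA} and the identification of mapping spaces in $\ModInf(E_d)$ from \cite[Theorem 4.5]{KKDisc}: both exhibit $\RMod(E_d)$-linear functor categories with the "same" generating object (the disc), and by uniqueness of the representing algebra (Morita theory, e.g.\ via \cite[4.7.1]{LurieHA} or directly the fact that a presentable stable-free module category determines its enveloping algebra up to equivalence) the two enveloping algebras must agree, carrying $M_{E_d}$ to $E_{(D^d,\ell_\st)}$ since both are the image of the unit/disc object. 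The compatibility with symmetric monoidal left adjoints asserted in \cref{thm:ed-modules-as-leftmodules}(ii) ensures the identification is natural, which is what is needed for later applications.
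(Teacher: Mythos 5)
Your instinct that the comparison should be run through a uniqueness/universal-property argument rather than a by-hand matching of the two algebra structures is exactly right, and you even point at the correct tool (Lurie's endomorphism categories \cite[4.7.1]{LurieHA}). But the uniqueness statement you actually invoke does not do the job: ``Morita theory / uniqueness of the representing algebra'' is false in the form you use it, since an equivalence of module categories only determines the algebra up to Morita equivalence. What pins the algebra down is exhibiting $(A,\mathrm{act}\colon A\otimes M\ra M)$ as a \emph{terminal} object of the endomorphism category $\cC[M]$, and this unwinds to the concrete, checkable condition that
\[
\Map_{\cC}(C,A)\lra \Map_{\cC}(C\otimes M,M),\qquad f\mapsto \mathrm{act}\circ(f\otimes \id_M),
\]
is an equivalence for every $C\in\cC$ (this, together with $M_0\simeq M_1$, is the paper's \cref{lem:criterion-module}). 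Your proposal never isolates this condition, and it is not a formal consequence of \cref{thm:ed-modules-as-leftmodules} or of \cite[Theorem 4.5]{KKDisc}; moreover \cite[5.5.4.16]{LurieHA}, which you lean on to give the geometric pair a universal property, is precisely the unproven statement that \cref{thm:ed-modules-as-leftmodules} was written to replace, so citing it here is close to circular.

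Once the criterion is in place, both verifications are short but require content your proposal does not supply. Since every presheaf is a colimit of representables, one reduces to $C=E_{T\times D^d}$; for $(U_{E_d},M_{E_d})$ the condition then becomes $\Emb^{\rec}((-1,1)^d,(-1,1)^d)\simeq\ast$, while for $(E_{(S^{d-1}\times I,\ell_\st)},E_{(D^d,\ell_\st)})$ it becomes the statement that $\Emb^{\fr}(T\times D^d,S^{d-1}\times I)$ is the fibre of the restriction $\Emb^{\fr}(T\times D^d\sqcup D^d,D^d)\ra\Emb^{\fr}(D^d,D^d)$ --- parametrised isotopy extension, which is the genuine geometric input of the proof. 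Your ``hands-on'' alternative, which tries to match the stacking algebra structure on $E_{(S^{d-1}\times I,\ell_\st)}$ with the operadic one by ``tracing through the collar identification'', runs head-on into the coherence problem you yourself flag as the main obstacle; the appeal to ``uniqueness of lax monoidal structures on the coend construction'' does not help, because that uniqueness compares a strict and an $\infty$-categorical model of the \emph{same} coend, not two differently defined algebra structures on equivalent objects. (Your identification of the underlying objects is fine --- note only that in your ``key steps'' paragraph $\Emb^{\fr}(\sqcup_S\bfR^d,S^{d-1}\times I)$ should be $\Mul_{E_d}((\ast)_{s\in S\sqcup\ast};\ast)$, not $\Mul_{E_d}((\ast)_{s\in S};\ast)$, as you correctly write later.)
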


\noindent The proof of this proposition will be an application of the following general lemma:

\begin{lem}\label{lem:criterion-module}Fix a monoidal category $\cC$ and two pairs $(A_0,M_0), (A_1,M_1)\in \LLMod(\cC)$ of an associative algebra and a left-module over it. Assume that the following two properties are satisfied:

\begin{enumerate}
	\item $M_0\simeq M_1$ as objects in $\cC$ and 
	\item for every object $C\in \cC$ and $i\in\{0,1\}$, the following composition is an equivalence.
	\[
		\smash{\Map_{\cC}(C,A_i)\xra{(-)\otimes M_i} \Map_{\cC}(C\otimes M_i,A_i\otimes M_i)\xra{\mathrm{act}\circ(-)} \Map_{\cC}(C \otimes M_i,M_i).}
	\]
\end{enumerate}
Then $(A_0,M_0)$ and $(A_1,M_1)$ are equivalent in $\LLMod(\cC)$.
\end{lem}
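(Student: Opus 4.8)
The statement characterises a pair $(A,M)\in\LLMod(\cC)$ up to equivalence by the functor it represents on $\cC$ together with the underlying object of $M$. The natural strategy is to produce a canonical map between the two pairs and then check it is an equivalence. First I would use the Yoneda-type philosophy: condition (ii) says precisely that, for each $i$, the presheaf $C\mapsto \Map_\cC(C\otimes M_i,M_i)$ on $\cC$ is corepresented by $A_i$, via the map $\mathrm{act}_i\circ((-)\otimes M_i)$. In other words, writing $h_i\colon \cC^{\op}\to\cS$ for this presheaf, we have natural equivalences $\Map_\cC(-,A_i)\simeq h_i$. Now the equivalence $M_0\simeq M_1$ of underlying objects from condition (i) induces an equivalence of presheaves $h_0\simeq h_1$ (both sides only depend on the underlying object $M_i\in\cC$ and the functor $(-)\otimes M_i$, via postcomposition and precomposition by the chosen equivalence), hence by the corepresentability an equivalence $A_0\simeq A_1$ in $\cC$. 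So far this only recovers the underlying objects; the content is to upgrade this to an equivalence in $\LLMod(\cC)$.

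\textbf{The main step: recognising $A$ as an endomorphism object.} The cleanest implementation is to identify $A_i$ with the endomorphism algebra $\End_\cC(M_i)$ in the sense of \cite[4.7.1]{LurieHA} — i.e.\ to show the canonical map $A_i\to\End_\cC(M_i)$ classifying the left action is an equivalence of associative algebras, and moreover that $M_i$, as a left $A_i$-module, is pulled back from the tautological left $\End_\cC(M_i)$-module structure on $M_i$. This is exactly what condition (ii) buys: by the universal property of the endomorphism object (it corepresents, among associative algebras acting on $M_i$, the terminal such, and on underlying objects $\End_\cC(M_i)$ corepresents $C\mapsto\Map_\cC(C\otimes M_i,M_i)$ — see 4.7.1.40–4.7.1.41 loc.cit.), condition (ii) says the underlying map $A_i\to\End_\cC(M_i)$ is an equivalence in $\cC$, hence an equivalence of associative algebras since the forgetful functor $\Alg(\cC)\to\cC$ is conservative; and the left-module $M_i$ over $A_i$ corresponds under this to the canonical module. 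Therefore $(A_i,M_i)\simeq(\End_\cC(M_i),M_i)$ in $\LLMod(\cC)$. Since $\End_\cC(-)$ together with its tautological module is functorial in equivalences of the underlying object, the equivalence $M_0\simeq M_1$ in $\cC$ induces an equivalence $(\End_\cC(M_0),M_0)\simeq(\End_\cC(M_1),M_1)$ in $\LLMod(\cC)$, and composing gives $(A_0,M_0)\simeq(A_1,M_1)$.

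\textbf{Expected obstacle and fallback.} The main obstacle is making precise the passage ``condition (ii) implies $A_i\simeq\End_\cC(M_i)$ as an equivalence of pairs, not just of algebras'' — i.e.\ tracking the left-module structure, not only the algebra. Lurie's endomorphism-object formalism is set up exactly for this, but one must check that the map $A_i\to\End_\cC(M_i)$ provided by the $A_i$-action on $M_i$ is the one whose underlying map in $\cC$ is the corepresenting map appearing in (ii); this is a compatibility between the two universal properties (algebras-acting-on-$M_i$ versus plain corepresentability in $\cC$) and is 4.7.1.40 loc.cit.. If one prefers to avoid invoking the full endomorphism-object machinery, an alternative is to argue directly on the level of the monadic bar resolution: both $M_i\simeq A_i\otimes M_i$-type presentations and the uniqueness of the left-module structure extending a given underlying object and action map can be extracted from the fact that $\LLMod(\cC)\to\cC$ is a cartesian fibration whose fibres are described by algebra maps. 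Either way, the remaining verifications — that $h_0\simeq h_1$ as presheaves given $M_0\simeq M_1$, and that the induced $A_0\simeq A_1$ is compatible with the module structures — are routine diagram chases once the endomorphism-object identification is in place. I would present the endomorphism-object version as the main argument.
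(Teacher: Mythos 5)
Your proposal is correct and follows essentially the same route as the paper: both arguments rest on Lurie's endomorphism category $\cC[M]$ from \cite[4.7.1]{LurieHA}, using that condition (ii) exhibits $(A_i,\mathrm{act})$ as a terminal object there (equivalently, exhibits $A_i$ as $\End_\cC(M_i)$ with $M_i$ the tautological module), after which the two pairs are identified as terminal objects of the same category. The paper phrases this as "both pairs are terminal in $\Ass(\cC[M])$" rather than via the corepresentability of $\End_\cC(M_i)$, but this is only a cosmetic difference.
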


\begin{proof}The proof involves the \emph{endomorphism category} $\cC[M]$ of an object $M\in \cC$ from \cite[4.7.1]{LurieHA}. This is a monoidal category with objects given as pairs $(X,\eta)$ of $X\in \cC$ and a morphism $\eta\colon X\otimes M \ra M$, morphisms spaces $\smash{\Map_{\cC[M]}((X,\eta),(Y,\varphi))}$ given by the pullback of the forgetful map $\smash{\Map_{\cC_{/M}}(\eta,\varphi)\ra  \Map_{\cC}(X\otimes M,Y\otimes M)}$ along $\smash{(-)\otimes M\colon \Map_{\cC}(X,Y)\ra  \Map_{\cC}(X\otimes M,Y\otimes M)}$, and monoidal structure on objects given by $\smash{(X,\eta)\otimes(Y,\varphi)\simeq (X\otimes Y,\eta\circ (\id_X\otimes \varphi))}$ (see 4.7.1.1. loc.cit. and the preceding discussion, as well as 4.7.1.30). This category can be used to prove the claim:

\medskip
\noindent Since $M_0\simeq M_1$, we may assume $M_0=M_1\eqcolon M$. The pairs $(A_0,M), (A_1,M)$ define objects in the fibre $\fib_{M}(\LLMod(\cC)\ra \cC)$. The latter features  by 4.7.1.34 and 4.7.1.35 loc.cit.\,in an equivalence $\fib_{M}(\LLMod(\cC)\ra \cC)\simeq \Ass(\cC[M])$ whose composition with the forgetful map $\Ass(\cC[M])\ra \cC[M]$ sends an algebra $A\in \Ass(A)$ with an action on $M$ to the pair $(A,\mathrm{act}\colon A\otimes M\ra M)$. It thus suffices to show that $(A_i,\mathrm{act}\colon A_i\otimes M\ra M)$ for $i=0,1$ are equivalent in $\Ass(\cC[M])$. This would follow by showing that both are final objects in $\Ass(\cC[M])$, which we may test in $\cC[M]$ since the forgetful functor detects limits (see 3.2.2.5 loc.cit.). Using the above description the mapping spaces, one sees that the second condition in the statement implies that  $(A_i,M)$ is terminal in $\cC[M]$ for $i\in\{0,1\}$, so the claim follows.
\end{proof}

\begin{proof}[Proof of \cref{prop:universal-algebra-for-ed}]
Recall from the proof of \cref{thm:ed-modules-as-leftmodules} that $(U_{E_d},M_{E_d})$ is induced via the coherent nerve from an object $(\cat{U_{E_d}},\cat{M_{E_d}})$ in the simplicial category $\cat{LMod(PSh(Env(E_d))}$ of strict left-modules: we have $\cat{U_{E_d}(-)}=\Emb^{\rec}((-)\sqcup (-1,1)^d,(-1,1)^d)$ and  $\cat{M_{E_d}(-)}= \Emb^{\rec}(-,(-1,1)^d)$ as simplicial presheaves on $\cat{Env(E_d)}$, with the algebra and module structure induced by composition. 

\medskip
\noindent With this in mind, we apply \cref{lem:criterion-module} in the case $\cC=\RMod(E_d)$ to the two pairs in the claim. The first condition is satisfied since the inclusion $\smash{\Emb^{\rec}(-,(-1,1)^d)\subset \Emb^{\fr}(-,(-1,1)^d)}$ is an equivalence, so $M_{E_d}\simeq E_{D^d}$ in $\RMod(E_d)$. Since any presheaf is a colimit of representables, it suffices to verify the second condition for $C=E_{T\times D^d}\simeq E_{T\times (-1,1)^d}$ for finite sets $T$. Spelling out the condition and using the Yoneda lemma, it follows immediately for $(U_{E_d},M_{E_d})$ and for $\smash{(E_{(S^{d-1}\times I,\ell_\st)},E_{(D^d,\ell_\st)})}$ by using that $\smash{\Emb^\fr(T\times D^d,S^{d-1}\times I)}$ is equivalent to the fibre of the restriction $\smash{\Emb^\fr(T\times D^d\sqcup D^d,D^d)\ra \Emb^\fr(D^d,D^d)\simeq\ast}$, by isotopy extension.
\end{proof}

\subsubsection{The proof of \cref{thm:delooping}}We now turn to the proof of \cref{thm:delooping}, beginning with a preparatory lemma on the sequence of operads $\cO\ra\Env(\cO)\ra\RMod(\cO)$ given by the unit of the adjunction from \cref{sec:env}, followed by the symmetric monoidal Yoneda embedding.
\begin{lem}\label{lem:operad-yoneda}Let $\cO$ be an operad.
\begin{enumerate}[leftmargin=*]
	\item The map of operads $\cO\ra \RMod(\cO)$ is faithful, i.e.\,the map $\Map_{\Opd}(\cU,\cO)\ra \Map_{\Opd}(\cU,\RMod(\cO))$ induced by postcomposition is an inclusion of path-components for all $\cU\in\Opd$.
	\item If $\cO$ is $k$-truncated, then the map of operads $\cO\ra \RMod(\cO)$ factors uniquely over the faithful map of operads $\RMod_k(\cO)\ra \RMod(\cO)$ given by right Kan extension.
\end{enumerate}
\end{lem}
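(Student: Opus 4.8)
The plan is to prove the two parts in turn, both relying on the description of mapping spaces in $\Env(\cO)$ and in $\RMod(\cO)$ via the Yoneda lemma, together with the compatibility of the Yoneda embedding with the operadic structure.

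First I would address (i). A map of operads $\psi\colon \cU\ra\cO$ is, by \cref{lem:operad-equivalence-criterion} and the discussion in \cref{sec:operads}, determined on colours by a functor $\cU^\col\ra\cO^\col$ and on multi-operations by compatible maps $\Mul_{\cU}((u_s)_{s\in S};u)\ra \Mul_{\cO}((\psi u_s)_{s\in S};\psi u)$. Postcomposition with the map $\cO\ra\Env(\cO)\ra\RMod(\cO)$ sends $u\in\cU^\col$ to the representable presheaf $y_{\Env(\cO)}(u)$ (viewing $u$ as a one-element collection in $\Env(\cO)$), and sends a multi-operation to the corresponding map of representable presheaves under the colimit formula \eqref{equ:mapping-space-env}. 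By the Yoneda lemma, the functor $\Env(\cO)\ra\RMod(\cO)$ is fully faithful, so on the level of spaces of operad maps the induced map $\Map_\Opd(\cU,\Env(\cO))\ra\Map_\Opd(\cU,\RMod(\cO))$ is an equivalence onto the components of those maps landing in representables. It thus suffices to show that $\cO\ra\Env(\cO)$ is faithful. But the map $\cO^\col\ra\Env(\cO)^\col$ is the inclusion of the colours as one-element collections, which is fully faithful by \eqref{equ:mapping-space-env}, and on multi-operations $\Mul_{\cO}((c_s)_{s\in S};d)\simeq\Mul_{\Env(\cO)}((c_s)_{s\in S};d)$ by the very construction of $\Env(\cO)$ in \cref{sec:env} (the active morphisms of $\cO^\otimes$ are exactly the morphisms of $\Env(\cO)$). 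So $\cO\ra\Env(\cO)$ is fully faithful on colours and an equivalence on multi-operations, hence faithful (indeed, its only failure to be an equivalence is essential non-surjectivity, since $\Env(\cO)$ has more objects). Chaining the two faithful maps gives that $\cO\ra\RMod(\cO)$ is faithful.

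For (ii), recall from \cref{sec:truncation-operads} that $\cO$ being $k$-truncated means the counit $\cO\ra\cO_{\le k}$ is an equivalence, equivalently the maps \eqref{equ:counit-multioperations} on multi-operations are equivalences. The map $\RMod_k(\cO)\ra\RMod(\cO)$ of operads is induced by the right Kan extension $\iota_*\colon\PSh(\Env(\cO)_{\le k})\hookrightarrow\PSh(\Env(\cO))$, which by \cref{lem:localise-day} \ref{enum:localise-day-iv} is lax symmetric monoidal, hence a map of operads; it is faithful because $\iota_*$ is fully faithful. To factor $\cO\ra\RMod(\cO)$ through it, I would observe that the composite $\cO\ra\Env(\cO)\ra\RMod(\cO)$ sends a colour $c$ to $y_{\Env(\cO)}(c)$, and a one-element collection lies in $\Env(\cO)_{\le 1}\subset\Env(\cO)_{\le k}$, so the representable $y_{\Env(\cO)}(c)$ is automatically in the image of $\iota_*$ (a representable presheaf on an object of the subcategory is right Kan extended from its restriction, using that $\Env(\cO)_{\le k}\subset\Env(\cO)$ is a full subcategory containing that object, together with \cref{lem:simplified-right-kan} or the standard Kan-extension fact). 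It remains to check that the multi-operations, viewed as maps of representable presheaves, also lie in the subcategory $\RMod_k(\cO)^\otimes\subset\RMod(\cO)^\otimes$ — i.e. that the operad map $\cO\ra\RMod(\cO)$ factors through $\RMod_k(\cO)^\otimes\ra\RMod(\cO)^\otimes$ as a map over $\Fin_*$. This is where the $k$-truncation hypothesis enters: the image of a collection $(c_s)_{s\in S}$ of colours is the representable on $(c_s)_{s\in S}\in\Env(\cO)$, and this presheaf is right Kan extended from $\Env(\cO)_{\le k}$ if and only if its value at any $(d_t)_{t\in T}$ with $|T|>k$ is recovered as the relevant limit; by the formula \eqref{equ:mapping-space-env} together with \cref{cor:unital-nonsurj} this value is a union over $\varphi\colon S\to T$ of products of multi-operation spaces, and the limit condition holds precisely because each $\Mul_\cO((c_s)_{s\in\varphi^{-1}(t)};d_t)$ satisfies \eqref{equ:counit-multioperations}. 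I expect this verification — that $k$-truncatedness of $\cO$ implies the representable presheaves on all of $\Env(\cO)$ (not just on objects of cardinality $\le k$) lie in the essential image of $\iota_*$, and that the operadic composition maps are compatible — to be the main obstacle, as it requires carefully matching the cubical-limit formula \eqref{equ:counit-multioperations} against the right-Kan-extension formula of \cref{lem:simplified-right-kan}. The uniqueness of the factorisation is then immediate from the faithfulness of $\RMod_k(\cO)\ra\RMod(\cO)$, which follows from fully faithfulness of $\iota_*$ just as above.
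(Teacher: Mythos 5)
Your argument for part (i) is correct and essentially the paper's: both reduce to full faithfulness of $\cO^\otimes\ra\RMod(\cO)^\otimes$ via the Yoneda lemma and the mapping-space formulas \eqref{equ:maps-total-space-operads}, \eqref{equ:mapping-space-env}; splitting the composite through $\Env(\cO)$ is only a cosmetic difference.

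Part (ii) contains a genuine error, and it sits exactly at the step you declare to be ``automatic''. You claim that because a single colour $c$, viewed as a one-element collection, lies in $\Env(\cO)_{\le k}$, the representable $y(c)=\Map_{\Env(\cO)}(-,c)$ is automatically in the essential image of $\iota_*$. This is false: the ``standard Kan-extension fact'' you have in mind is that \emph{left} Kan extension preserves representables ($\iota_!y_{\cC_0}(c)\simeq y_{\cC}(c)$); for \emph{right} Kan extension the unit $y(c)\ra\iota_*\iota^*y(c)$ evaluated at $(d_t)_{t\in T}$ with $\lvert T\rvert>k$ is, by \cref{lem:simplified-right-kan}, precisely the map $\Mul_{\cO}((d_t)_{t\in T};c)\ra\lim_{T'\subseteq T,\lvert T'\rvert\le k}\Mul_{\cO}((d_t)_{t\in T'};c)$ of \eqref{equ:counit-multioperations}, which is an equivalence if and only if $\cO$ is $k$-truncated. (For $\cO=E_2$ and $k=1$ the left side at a pair of colours is $\Mul_{E_2}(2)\simeq S^1$ while the right side is a point.) So this object-level check is not free — it is the entire content of the proof and the only place the hypothesis is used.

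Correspondingly, the step where you do invoke $k$-truncatedness is checking a condition that never arises. By the construction of $\RMod_k(\cO)^\otimes$ in \cref{sec:localise-day-convolution} and \cref{prop:tower}, it is the \emph{full} suboperad of $\RMod(\cO)^\otimes$ on those tuples $(F_s)_{s\in S}$ with each $F_s$ in the essential image of $\iota_*$; a functor factors through a full subcategory as soon as it does so on objects, so there is no separate condition on morphisms or on operadic composition. Moreover the object of $\RMod(\cO)^\otimes$ hit by a collection $(c_s)_{s\in S}\in\cO^\otimes$ is the $S$-tuple $(y(c_s))_{s\in S}$ of single-colour representables (forced by the Segal condition), not the single presheaf $y((c_s)_{s\in S})$, so the representables on multi-colour collections whose right-Kan-extendedness you propose to verify are not the relevant objects. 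The correct argument is therefore: the factorisation reduces to showing $y(c)\simeq\iota_*\iota^*y(c)$ for each colour $c$, and this is exactly \eqref{equ:counit-multioperations} combined with \cref{lem:simplified-right-kan}.
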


\begin{proof}
The map $\smash{\Map_{\Opd}(\cU,\cO)\ra \Map_{\Opd}(\cU,\RMod(\cO))}$ the first item is about is obtained from the functor $\smash{\Fun(\cU^{\otimes},\cO^\otimes )\ra \Fun(\cU^{\otimes},\RMod(\cO)^\otimes)}$ by passing to certain full subcategories in source and target---namely those over $\Fin_*$ that preserve cocartesian lifts of inert morphisms---followed by applying cores, so the first claim follows from the fact that it suffices to prove that the functor $\cO^\otimes\ra \RMod(\cO)^\otimes$ in $\Cat$ is fully faithful. The latter follows from the Yoneda lemma and the description of the mapping spaces in source and target resulting from \eqref{equ:maps-total-space-operads}, \eqref{equ:mapping-space-env}.

\medskip 

\noindent From the construction of the symmetric monoidal structure of $\RMod_k^\un(\cO)$ in \cref{prop:tower} \ref{enum:tower-monoidal-tower} resulting from \cref{lem:localise-day} we see that in order to show the second part it suffices to show that for $c\in \cO^\col$ the image $\Map_{\Env(\cO)}(-,c)\in \RMod(\cO)$ under $\cO \to \RMod(\cO)$ is right Kan extended from $\RMod_k(\cO)$, i.e.\,that the unit $\Map_{\Env(\cO)}(-,c)\ra \iota_*\iota^*\Map_{\Env(\cO)}(-,c)$ is an equivalence in $\RMod(\cO)$. This follows from \eqref{equ:counit-multioperations} and \cref{lem:simplified-right-kan}.
\end{proof}

\noindent As a next step in the proof of \cref{thm:delooping}, we consider the following sequence of spaces
\begin{equation}\label{equ:operad-fibre-diagram-delooping}
	\begin{tikzcd}[row sep=0.5cm,column sep=0cm,ar symbol/.style = {draw=none,"\textstyle#1" description,sloped},equivalent/.style = {ar symbol={\simeq}}] 
	\fib_*(B\ra\BAut(E_{d,\le k})) \rar[equivalent,pos=1.18] \hspace{-.3cm} & \hspace{.3cm} \Map_{\Opd^{\gc,\simeq_{\red}}}(E_{d,\le k},(E_{d,\le k})^\theta) \ar[draw=none]{ld}[name=X, anchor=center]{} \ar[hook', rounded corners, to path={ -- ([xshift=2ex]\tikztostart.east)	|- (X.center) \tikztonodes	-| ([xshift=-2ex]\tikztotarget.west) -- (\tikztotarget)}]{dl} \\
	\Map_{\Opd^{\un}}(E_{d,\le k},(E_{d,\le k})^\theta) \rar[equivalent] &\Map_{\Opd^\un}(E_{d},(E_{d,\le k})^\theta)\ar[draw=none]{ld}[name=X, anchor=center]{} \ar[hook', rounded corners, to path={ -- ([xshift=2ex]\tikztostart.east)	|- (X.center) \tikztonodes	-| ([xshift=-2ex]\tikztotarget.west) -- (\tikztotarget)}]{dl}\\
	\Map_{\Opd}(E_{d},\RMod_k((E_{d,\le k})^\theta))\rar[equivalent]&\Alg_{E_d}(\RMod_k((E_{d,\le k})^\theta))^\simeq.
	\end{tikzcd}
\end{equation}
The top equivalence is the equivalence of categories \eqref{equ:gc-redequ-identification} on mapping spaces from $E_{d,\le k}$ to $(E_{d,\le k})^\theta$ using that $\smash{\Map_{\cS_{/\BAut(E_{d,\le k})}}(*,B)\simeq \fib(B\ra \BAut(E_{d,\le k}))}$, the middle equivalence holds because truncation of unital operads is a localisation (see \cref{sec:truncation-operads}) and $(E_{d,\le k})^\theta$ is $k$-truncated by \cref{lem:truncated-tangential}, and the bottom equivalence holds by definition (see \cref{sec:algebra-over-operads}). The first arrow is induced by the subcategory inclusion $\Opd^{\gc,\simeq_\red}\subset\Opd^{\un}$, so it is an inclusion of components. The second arrow is induced by postcomposition with the map of operads considered in \cref{lem:operad-yoneda} for $\cO=(E_{d,\le k})^\theta$, and this lemma also shows that it is an inclusion of components. Note that the composition \eqref{equ:operad-fibre-diagram-delooping} sends a basepoint $\ast\in \fib(B\ra\BAut(E_{d,\le k}))$, viewed as a commutative diagram 
\begin{equation}\label{equ:point-is-change-of-tang-structure}
	\begin{tikzcd}[column sep=-0.2cm,row sep=0.3cm]
	\ast\arrow[dr]\arrow[rr,"\ast"]&& B\arrow[dl]\\
	&\BAut(E_{d,\le k})&,
	\end{tikzcd}
\end{equation}
to the composition of operads
\begin{equation}\label{equ:where-basepoint-goes}\smash{E_d\lra E_{d,\le k}\xlra{\ast}E_{d,\le k}^\theta\subset \Env(E_{d,\le k}^\theta)\xlhra{y} \RMod(E_{d,\le k}^\theta)}\end{equation}
whose first map is given by truncation, second map by the change of tangential structure \eqref{equ:point-is-change-of-tang-structure}, and final map by the symmetric monoidal Yoneda embedding. Since the target of this composition is a presentable symmetric monoidal category, this composition extends first to $\Env(E_d)$ and then along the Yoneda embedding to $\RMod(E_d)$ by taking colimit preserving extension. By the uniqueness of colimit preserving extensions, the resulting functor $\RMod(E_d)\ra \RMod(E_{d,\le k}^\theta)$ agrees with the one given by truncation followed by the change of tangential structures \eqref{equ:point-is-change-of-tang-structure} via left Kan extension.

\medskip

\noindent Next we consider a sequence of equivalences of $E_1$-algebras in spaces
\begin{align}
	\begin{split}\label{equ:delooping-chain-of-equ}
	&\Omega^{d+1}\fib\big(B\ra\BAut(E_{d,\le k})\big)\ {\simeq}\  \Omega^{d}\Aut_{\Alg_{E_d}(\RMod_k((E_{d,\le k})^\theta))}\big(A_\ast\big)\\{\simeq}\ \ &  \Aut_{\oMod^{E_d}_{A_\ast}(\RMod_k((E_{d,\le k})^\theta))}\big(A_\ast\big)\ {\simeq} \ \Aut_{\LLMod_{\lvert E_{(S^{d-1}\times I,\ell_\st)}\rvert_{A_\ast}}(\RMod_k((E_{d,\le k})^\theta))}\big(\lvert E_{(D^d,\ell_\st)}\rvert_{A_\ast}\big).\end{split}
\vspace{-0.3cm}\end{align} 
The first equivalence is one of $E_{d+1}$-algebras and results from applying $\Omega^{d+1}(-)$ to \eqref{equ:operad-fibre-diagram-delooping}, based at $\ast\in \fib(B\ra\BAut(E_{d,\le k}))$; the $E_d$-algebra $A_\ast$ is defined as the image of this basepoint, i.e.\,as the composition \eqref{equ:where-basepoint-goes}. The second one follows from the fibre sequence in \cref{thm:delooping-technical} for $\varphi=\id_{A_\ast}$, using that $\RMod_k((E_{d,\le k})^\theta)$ is presentable monoidal by an instance of \cref{lem:localise-day} \ref{enum:localise-day-iii} and the unitality of the operad $E_{d,\le k}^\theta$. It is an equivalence of $E_1$-algebras with respect to the $E_1$-structure by composition on the target and the one induced from the loop space structure on the source; this follows from the naturality part of \cref{thm:delooping-technical}. The final equivalence follows from specialising \cref{thm:ed-modules-as-leftmodules} to $\cO=E_d$, combined with \cref{prop:universal-algebra-for-ed}. Since we argued above that the colimit preserving extension $\lvert -\rvert_{A_\ast}\colon \RMod(E_d)\ra \RMod(E_{d,\le k}^\theta)$ is induced by truncation and the change of tangential structure \eqref{equ:point-is-change-of-tang-structure}, the algebra and left-module $\smash{(\lvert E_{(S^{d-1}\times I,\ell_\st)}\rvert_{A_\ast},\lvert E_{(D^d,\ell_\st)}\rvert_{A_\ast})}$ in $\smash{\RMod_k((E_{d,\le k})^\theta)}$ are obtained from the $k$-truncation of the algebra and left-module ${(E_{(S^{d-1}\times I,\ell_\st)}, E_{(D^d,\ell_\st)})}$ in $\RMod(E_d)$ by the change of tangential structure \eqref{equ:point-is-change-of-tang-structure} so the final space in \eqref{equ:delooping-chain-of-equ} agrees with $T_k\Emb^\theta_\partial((D^d,\ell_\ast),(D^d,\ell_\ast))^\times$. The latter is group-like by \cref{thm:tk-equivalences}, so we may leave out the $(-)^\times$-superscript and thus conclude the claim.

\subsubsection{Delooping the $\DiscInf$-structure space of a disc}\cref{thm:delooping} implies the following delooping result for the $\DiscInf$-structure space (see \cref{sec:s-disc} for a recollection). 
\begin{cor}\label{cor:delooping-cor-sdisc}For $d\ge0$, is an equivalence $S^{\DiscInf,t}_\partial(D^d){\simeq}\Omega^{d+1}\Aut(E_d)/\Top(d)$.
\end{cor}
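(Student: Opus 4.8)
The plan is to identify $S^{\DiscInf,\ot}_\partial(D^d)$ as a fibre of the functor $E$ restricted to the appropriate mapping spaces and then deduce the delooping from \cref{thm:delooping} applied to a suitable tangential structure. Recall from \cref{sec:s-disc} that $S^{\DiscInf,\ot}_\partial(D^d) = S^{\DiscInf,\ot}_{S^{d-1}}(E_{D^d})$ is the fibre over $E_{D^d}$ of the map $E\colon \BordInf^{\ot}(d)_{S^{d-1}}\ra \ModInf_\infty(E^{\ot}_d)^{\simeq}_{E_{S^{d-1}\times I}}$ on cores of mapping categories from $S^{d-1}$ (viewed as a closed topological $(d-1)$-manifold) in the bordism category. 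First I would observe that, by the isotopy extension argument recorded in \cref{sec:topological-isotopy-extension} together with the standard fact that $\BordInf^{\ot}(d)_{S^{d-1}}$ is essentially the topological group $\Homeo_\partial(D^d)$ acting on the nullbordism $D^d\colon S^{d-1}\leadsto\varnothing$ (this is the topological counterpart of \cite[4.5.2]{KKDisc}, combined with $\Homeo_\partial(D^d)$ acting on the space of nullbordisms whose boundary is identified with $S^{d-1}$), one has $\BordInf^{\ot}(d)_{S^{d-1}}\simeq \BHomeo_\partial(D^d)$, so that the source of $E$ is connected with automorphisms $\Homeo_\partial(D^d)$.

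Next I would analyse the target $\ModInf_\infty(E^{\ot}_d)^{\simeq}_{E_{S^{d-1}\times I}}$ near the object $E_{D^d}$. Using that mapping categories in the Morita category are categories of left-modules over $E_{S^{d-1}\times I}$ (see \cref{sec:morita-functor}), and the equivalence of towers $\ModInf_\bullet(E^{\ot}_d)_{E_{S^{d-1}\times I},E_\varnothing}\simeq \PSh((\DiscInf^{\ot}_{d,\le\bullet})_{S^{d-1},\varnothing})$ from \cref{lem:tower-bdbw-boundary}, I would identify the component of $E_{D^d}$ in the core with $\Aut_{\LLMod_{E_{S^{d-1}\times I}}(\RMod(E^{\ot}_d))}(E_{D^d})$. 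The key computational input is then \cref{thm:ed-modules-as-leftmodules} for $\cO = E_d$ together with \cref{prop:universal-algebra-for-ed}, which gives an equivalence $\LLMod_{E_{(S^{d-1}\times I,\ell_\st)}}(\RMod(E_d)) \simeq \oMod^{E_d}_{E_{(D^d,\ell_\st)}}(\RMod(E_d))$ over the groupoid of framed $E_d$-structures; passing to the $\ot$-framed setting via the change of tangential structure $\fr\ra \ot$ and taking automorphisms, one gets $\Aut_{\LLMod_{E_{S^{d-1}\times I}}(\RMod(E^{\ot}_d))}(E_{D^d})$ together with its $\Top(d)$-action coming from the framing groupoid. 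Matching this with \cref{thm:delooping} applied to the tangential structure $\ot\colon\BTop(d)\ra\BAut(E_d)$ (so $k=\infty$, $B = \BTop(d)$, with $\fib(\ot) = \Aut(E_d)/\Top(d)$), I would obtain $\Omega T_\infty\Emb^{\ot}_\partial(D^d,D^d)^\times \simeq \Omega^{d+1}\fib(\ot) = \Omega^{d+1}(\Aut(E_d)/\Top(d))$ as the based loop space, hence $E_{D^d}$-component of the core of the mapping category.

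Finally I would assemble the fibre sequence: the functor $E$ on cores gives, over the component of $E_{D^d}$, a map $\BHomeo_\partial(D^d)\ra (\text{component of }E_{D^d})$, whose fibre is by definition $S^{\DiscInf,\ot}_\partial(D^d)$. On loop spaces this is $\Homeo_\partial(D^d)\ra \Omega(\text{component})$; but the latter, by the paragraph above, is $\Omega^{d+1}(\Aut(E_d)/\Top(d))$, and the map is the derivative/scanning map whose homotopy fibre is exactly $S^{\DiscInf,\ot}_\partial(D^d)$ delooped once. Taking the fibre sequence and reindexing yields $S^{\DiscInf,\ot}_\partial(D^d)\simeq\Omega^{d+1}(\Aut(E_d)/\Top(d))$. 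Concretely: the component of $E_{D^d}$ in the core is connected (it is a $\BHomeo$-type object receiving a map from the connected $\BHomeo_\partial(D^d)$), so the fibre of a map of connected spaces $\BHomeo_\partial(D^d)\ra X$ with $\Omega X\simeq\Omega^{d+1}(\Aut(E_d)/\Top(d))$ and $\pi_1 X\to\pi_0(\text{fib})$ trivial gives $S^{\DiscInf,\ot}_\partial(D^d)$ fitting into $S^{\DiscInf,\ot}_\partial(D^d)\to \BHomeo_\partial(D^d)\to X$; but one checks the composite $\BHomeo_\partial(D^d)\to X$ is null on the relevant homotopy (this is where the delooping of \cref{thm:delooping} enters: $X$ itself deloops $\Omega^{d+1}(\Aut(E_d)/\Top(d))$, and the structure space is the total fibre), yielding the stated equivalence.

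\textbf{Main obstacle.} The hard part will be the bookkeeping in the second paragraph: carefully matching the left-module description of the $E_{D^d}$-component in $\ModInf_\infty(E^{\ot}_d)^{\simeq}_{E_{S^{d-1}\times I}}$ with the $E_d$-module automorphism space appearing in the proof of \cref{thm:delooping}, including tracking the $\Top(d)$-action (equivalently the change of tangential structure $\fr\ra\ot$) through \cref{thm:ed-modules-as-leftmodules} and \cref{prop:universal-algebra-for-ed}, and verifying that the resulting map from $\BHomeo_\partial(D^d)$ is compatible with the delooping so that the single loop in \cref{thm:delooping} translates correctly into the $(d+1)$-fold loop in the structure space statement. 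One must also confirm that $S^{\DiscInf,\ot}_\partial(D^d)$ is connected (or at least that $\pi_0$ behaves correctly) so that the fibre sequence can be delooped; this should follow from $\Aut(E_d)/\Top(d)$ being a single component together with the group-likeness of $T_\infty\Emb^{\ot}_\partial(D^d,D^d)$ from \cref{thm:tk-equivalences}.
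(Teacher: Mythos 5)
Your overall strategy matches the paper's: realise $S^{\DiscInf,\ot}_\partial(D^d)$ as the fibre of $E$ on cores of mapping categories, identify the loop space of the target component via \cref{thm:delooping} for $\theta=\ot$, and use group-likeness from \cref{thm:tk-equivalences}. However, there are two genuine gaps. First, the core $\BordInf^{\ot,\simeq}(d)_{S^{d-1},\varnothing}$ is \emph{not} $\BHomeo_\partial(D^d)$: it is the disjoint union $\bigsqcup_{[M]}\BHomeo_\partial(M)$ over homeomorphism classes of \emph{all} compact nullbordisms $M$ of $S^{d-1}$, and for the fibre over $E_{D^d}$ you must rule out every $M\not\cong D^d$ with $E_M\simeq E_{D^d}$. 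This is not formal: one first shows $E_M\simeq E_{D^d}$ forces $M$ to be contractible (by pushing forward to $(\cS_{/\BTop(d)})_{/S^{d-1}}$), and then invokes the topological Poincar\'e conjecture to conclude $M\cong D^d$ rel boundary. Your proposal silently assumes connectedness of the source and so skips this input entirely; without it the fibre could have extra components.

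Second, the step from the fibre sequence to the answer requires the Alexander trick: $\BHomeo_\partial(D^d)\simeq\ast$, so the fibre of $\ast\to \oB\,T_\infty\Emb^\ot_\partial(D^d,D^d)^\times$ is simply the loop space $T_\infty\Emb^\ot_\partial(D^d,D^d)^\times$ of the target component, which \cref{thm:delooping} then identifies with $\Omega^{d+1}(\Aut(E_d)/\Top(d))$. Your substitute — ``one checks the composite is null on the relevant homotopy'' — is not a valid replacement: if the source were not contractible, the fibre would not be the loop space of the target, nullhomotopy or not. Separately, the detour in your second paragraph through \cref{thm:ed-modules-as-leftmodules} and \cref{prop:universal-algebra-for-ed} is redundant: the component of $E_{D^d}$ in the core of the Morita mapping category is $\oB\Aut(E_{D^d})=\oB\,T_\infty\Emb^\ot_\partial(D^d,D^d)^\times$ by the definition of the core, and the left-module machinery is only needed inside the proof of \cref{thm:delooping}, which you cite anyway.
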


\begin{proof}
This follows from the chain of equivalences 
\begin{align*}
	S^{\DiscInf,\ot}_\partial(D^d) 
&= \fib_{E_{D^d}}(\BordInf^{t,\simeq}(d)_{S^{d-1}}\ra \ModInf(E_d^t)_{E_{S^{d-1}}})\\
	&\simeq \textstyle{\fib(\bigsqcup_{[M]}\BHomeo_\partial(M) \to \oB T_\infty \Emb^\ot_\partial(D^d,D^d)^\times )} \\
	&\simeq T_\infty \Emb^\ot_\partial(D^d,D^d)^\times \simeq \Omega^{d+1}\Aut(E_d)/\Top(d)
\end{align*}
where $[M]$ runs over compact topological $d$-manifolds $M$ with an identification $\partial M\cong S^{d-1}$ such that $\smash{E_M\simeq E_{D^d}}$ in $\smash{\ModInf(E_d^t)_{E_{S^{d-1}}}^\simeq}$, up to homeomorphism fixed on the boundary. As $E_M\simeq E_{D^d}$ implies that $M$ is contractible (apply the functor $\smash{\ModInf(E_d^t)_{E_{S^{d-1}}}\ra (\cS_{/\BTop(d)})_{/S^{d-1}}}$ to see this), there is in fact a unique such class by the topological Poincaré conjecture, namely that of $D^d$. Since $\BHomeo_\partial(D^d)$ is contractible by the Alexander trick, this implies the penultimate equivalence in the above chain. The final equivalence is the case $\theta=(\BTop(d)\ra \BAut(E_d))$ and $k=\infty$ of \cref{thm:delooping}, together with the fact that $T_\infty \Emb^\ot_\partial(D^d,D^d)$ is grouplike by \cref{thm:tk-equivalences}.
\end{proof}

\begin{rem}Recall from \cref{thm:sdisc-smooth-vs-top} that $\smash{S^{\DiscInf,o}_\partial(D^d)\simeq S^{\DiscInf,t}_\partial(D^d)}$ as long as $d\neq4$, so \cref{cor:delooping-cor-sdisc} also implies that there is an equivalence \begin{equation}\label{equ:delooping-s-disc-smooth}
	S^{\DiscInf,o}_\partial(D^d)\simeq \Omega^{d+1}\Aut(E_d)/\Top(d)\quad{\text{for }}d\neq4.
\end{equation} 
Up to passing to certain components in the source and target, the equivalence \eqref{equ:delooping-s-disc-smooth} was already known from Boavida de Brito and Weiss' work \cite{BoavidaWeissConfiguration} (see \cite[Theorem 8.1]{KKDisc} for an explanation of how to deduce it from their work). Our proof of it here is independent of their work, and comes with the further advantages that it does not require any restriction on components, and that it admits a version that is also valid in dimension $d=4$, namely \cref{cor:delooping-cor-sdisc}.
\end{rem}

\subsection{Embedding calculus in positive codimension}\label{sec:positive-codim}So far we only considered embedding calculus for manifolds of a fixed dimension $d$, for reasons we explained in the introduction. However, for the sake of completeness, we briefly explain how embedding calculus for spaces of embeddings between manifolds of potentially different dimensions can be incorporated into our set-up. This not meant as a comprehensive discussion: we leave a full development of the theory in positive codimension to future work.

\medskip

\noindent 

\noindent Fixing $c\in\{\oo,\ot\}$, the first observation is that there is an operad $E^c\in\Opd^\un$ that contains the operads $E_d^c$ for all $d$ as full suboperads (in the sense of \cite[2.7.1]{KKDisc}), but also involves embeddings between Euclidean spaces of different dimension: it is the operadic nerve of the ordinary coloured simplicial operad with colours the nonnegative integers and  spaces of multi-operations from $(d_s)_{s \in S}$ for $S\in\Fin$ and $d_s\ge0$ to $d\ge0$ given by the space $\Emb^c(\sqcup_{s \in S} \bfR^{d_s},\bfR^d)$ of smooth (if $c = \oo$) or locally flat topological (if $c = \ot)$ embeddings. Note that the operad $E^c$ is unital, but not groupoid-coloured.

\medskip 

\noindent The second observation is that all constructions from \cite[3]{KKDisc} and their extensions to topological manifolds from \cref{sec:e-for-topological-mfds} go through almost verbatim for manifolds that do not have a fixed dimension (disjoint unions of $d$-manifolds for potentially different values of $d$). We denote the resulting objects by dropping the reference to the dimension $d$ in the notation. In particular, there is 
\begin{itemize}[leftmargin=0.5cm]
	\item a symmetric monoidal category $\ManInf^c$ of manifolds without boundary and without fixed dimension (smooth or topological, depending on $c$), and spaces of locally flat embeddings between them. Its underlying operad contains $E^c$ as the full suboperad,
	\item the full subcategory $\DiscInf^c\subset \ManInf^c$ on manifolds of the form $\sqcup_{s\in S}\bfR^{d_s}$ for $S\in\Fin$ and $d_s\ge0$, which is equivalent to the symmetric monoidal envelope $\Env(E^c)$ of $E^c$,
	\item a bordism category $\ncBordInf^c\in\CMon(\CCat(\Cat))$ of manifolds without fixed dimension, containing $\ncBordInf^c(d)$ for all $d\ge0$ as levelwise full subcategories,
	\item a functor $E\colon \ncBordInf^c\ra \ModInf^\un(E^c)$ of symmetric monoidal  double $\infty$-categories.
\end{itemize}

\medskip

\noindent Applying the theory from \cref{sec:operadic-framework} to the unital operad $E^c$, in particular upgrades $\ModInf^\un(E^c)=\ModInf^\un_\infty(E^c)$ to a tower $E\colon \ncBordInf^c\ra\ModInf^\un_\bullet(E^c)$ of symmetric monoidal double categories under $\ncBordInf^c$. This extends the tower $E\colon \ncBordInf^c(d)\ra\ModInf^\un_\bullet(E_d^c)$ for fixed $d\ge0$ we considered in \cref{sec:relation-embcalc} in that it factors as the following composition in $\CMon(\CCat(\Cat))$
\[
	\smash{\ncBordInf^c(d)\subset \ncBordInf^c\lra\ModInf^\un_\bullet(E^c)\xlra{\iota^*} \ModInf^\un_\bullet(E_d^c)}
\]
where the final map is induced by restriction along the inclusion of operads $\iota\colon E_d^c\subset E^c$ which induces a lax symmetric monoidal functor $\RMod(E^c)\ra \RMod(E_d^c)$ by \eqref{equ:restriction-is-lax} that turns out to be strong monoidal (e.g.\,by the formula for Day convolution from \cref{lem:day-con-env}), and extends to the level of towers by an argument similar to the proof of \cref{thm:naturality}.

\medskip

\noindent Recall from \cref{sec:sm-top-emb-calc} that the embedding calculus tower $\Emb^c_\partial(M,N)\ra T_\bullet\Emb^c_\partial(M,N)$ to the space of embeddings between two $d$-dimensional bordisms $M,N\colon P\leadsto Q$ was obtained from the tower $E\colon \ncBordInf^c(d)\ra\ModInf^\un_\bullet(E_d^c)$ by taking mapping spaces in mapping categories. For embedding calculus for embeddings of arbitrary codimension, we can use relative mapping spaces in the sense of \cref{sec:relative-mapping-spaces}: given an $m$-dimensional bordism $M\colon P\leadsto Q$ and an $n$-dimensional bordism $N\colon R\leadsto S$, both viewed as objects in the category of morphisms of $\ncBordInf^c$, and embeddings $e_P\colon P \hookrightarrow R$ and $e_Q\colon Q\hookrightarrow S$, seen as maps in the category of objects of $\ncBordInf^c$, the mapping space between $M$ and $N$ relative to $e_P$ and $e_Q$ in the sense of  \cref{sec:relative-mapping-spaces} recovers the space $\Emb_\partial(M,N)$ of locally flat embeddings $M\hookrightarrow N$ that extend $e_P\sqcup e_Q$:
\[
\Map_{\ncBordInf^c}(M,N;e_P,e_Q)\simeq\Emb^c_\partial(M,N).
\] 
The embedding calculus tower 
\begin{equation}\label{equ:emb-calc-pos-codim-tower-def}
	\Emb_\partial^c(M,N)\lra T_\bullet\Emb_\partial^c(M,N)
\end{equation} 
can then be obtained as the tower on relative mapping spaces induced by the composition 
\begin{equation}\label{equ:emb-calc-pos-codim-fixed-target-dim}
	\smash{\ncBordInf^c\lra  \ModInf^\un_\bullet(E^c)\xlra{\iota^*} \ModInf^\un_\bullet(E_m^c).}
\end{equation}

\begin{rem}\label{rem:rems-on-pos-codim-calc}Some comments on this definition:
\begin{enumerate}[leftmargin=*]
	\item The reader might wonder why relative mapping spaces made no appearance in our discussion of embedding calculus in codimension $0$. The conceptual reason is that $\ncBordInf^c(d)$ in contrast to $\ncBordInf^c$ satisfies the condition of \cref{rem:st-cartesian}: given $d$-dimensional bordisms $N\colon R\leadsto S$, viewed as an object in the category of morphisms $\ncBordInf^c(d)_{[1]}$, and embeddings $e_P\colon P\hookrightarrow R$ and $e_Q\colon Q\hookrightarrow S$, viewed as morphisms in the category of objects $\ncBordInf^c(d)_{[0]}$, a cartesian lift of $(e_P,e_Q)$ with target $N$ along $(s,t)\colon \ncBordInf^c(d)_{[1]}\ra \ncBordInf^c(d)_{[0]}\times\ncBordInf^c(d)_{[0]}$ is given by the inclusion $N'\subset N$ of $N'\coloneqq \interior(N)\cup e(P)\cup e(Q)$. This also gives a conceptual reason for the ``trick'' in \cref{rem:triads} to deal with embedding spaces of triads: in the notation of this remark, $N'$ results from $N$ as the cartesian lift of $(e_{\partial_0},\id_\varnothing)$.
	\item\label{enum:more-symmetric-pos-codim-calculus} In the definition of \eqref{equ:emb-calc-pos-codim-tower-def} as being induced by \eqref{equ:emb-calc-pos-codim-fixed-target-dim}, we could have omitted the final functor restriction functor to $\ModInf^\un_\bullet(E_m^c)$ and obtained the same result. Using \cref{rem:st-cartesian} and \cref{rem:morita-st-cartesian} to rewrite relative mapping spaces as absolute ones, this follows by showing that for $1\le k\le \infty$ the map
	\[
		\Map_{\ModInf_k(E^c)_{E_P,E_Q}}(E_M,X) \lra \Map_{\ModInf_k(E^c_m)_{E_P,E_Q}}(E_M,X)
	\] 
	induced by the second functor in \eqref{equ:emb-calc-pos-codim-fixed-target-dim} is an equivalence for all $m$-dimensional bordisms $M\colon P\leadsto Q$, and $X\in \RMod_k(E^c)_{E_P,E_Q}$. To do so, firstly, we use both sides satisfy descent with respect to complete Weiss $k$-covers of $M$ (by an argument as in \cref{lem:tk-descent}) and, secondly, that the map is an equivalence for $M=[0,1) \times P \sqcup S \times \bfR^m \sqcup (-1,0] \times Q$ with $|S| \leq k$ by the Yoneda lemma, using that $E_M$ is in this case the free $(E_{P\times I},E_{Q\times I})$ on the representable presheaf $E_{S\times \bfR^m}$, both in $\RMod_k(E^c)_{E_P,E_Q}$ and in $\RMod_k(E^c)_{E_P,E_Q}$ (by an argument as in the proof of \cref{lem:bimodule-tower-is-presheaf-tower}). Restricting to $\ModInf^\un_\bullet(E_m^c)$ to define \eqref{equ:emb-calc-pos-codim-tower-def} has the advantage that we can make use of the results on $\ModInf^\un_\bullet(E_m^c)$ from the previous subsections, e.g.\,the geometric identification of the layers from \cref{sec:higher-layers-bdy}. Not restricting to $\ModInf^\un_\bullet(E^c)$ on the other hand equips \eqref{equ:emb-calc-pos-codim-tower-def} with better naturality properties, e.g.\,with respect to precomposition with positive codimension embeddings.
	\item \label{enum:pos-codim-as-manifold-calc}A more direct way to construct \eqref{equ:emb-calc-pos-codim-tower-def} that does not involve the operad $E^c$ is to apply manifold calculus as in \cref{sec:manifold-calc} to the presheaf $\Emb_{\partial}(-,N)\in\PSh(\ncBordInf(m)_{P,Q})$. To see that the resulting tower agrees with \eqref{equ:emb-calc-pos-codim-tower-def}, one notes that the $k$th stage of \eqref{equ:emb-calc-pos-codim-tower-def} is an equivalence for $M=[0,1)\times P\sqcup S\times \bfR^m\sqcup (-1,0]\times Q$ with $|S|\le k$ and that the target satisfies descent with respect to complete Weiss $k$-covers of $M$ (see Item \ref{enum:more-symmetric-pos-codim-calculus} of this remark). This in particular shows that \eqref{equ:emb-calc-pos-codim-tower-def} agrees with the previously considered models for embedding calculus, e.g.\,with the original one from \cite{WeissEmbeddings}. The advantage of \eqref{equ:emb-calc-pos-codim-tower-def} is that it extends (by construction) to the level of bordism categories, endowing it with more naturality properties (e.g.\,with respect to pre- and postcompositions or gluings along parts of the boundary). 
\end{enumerate}
\end{rem}
 
\begin{rem}[Embeddings of positive codimension triads]\label{rem:triad-pos-codim}Given manifolds $M$ and $N$ (possibly noncompact and of potentially different dimension), a codimension $0$-submanifold $\partial_0M\subset \partial M$ and an embedding $e_{\partial_0}\colon \partial_0M\hookrightarrow \partial N$, we can proceed as in \cref{rem:triads} by replacing $M$ up to isotopy equivalence relative to $\interior(\partial_0M)$ with $M'\coloneqq (\interior(M)\cup\interior(\partial_0M))\colon \interior(\partial_0M)\leadsto \varnothing$ and consider 
\[
	\Emb_{\partial_0}^c(M,N)\simeq\Emb^c_{\partial}(M',N)\lra T_\bullet\Emb^c_{\partial}(M',N)\coloneqq T_\bullet\Emb^c_{\partial_0}(M,N)
\]
to obtain an embedding calculus tower for the space of embeddings $M\hookrightarrow N$ extending $e_{\partial_0}$.
\end{rem}

\subsubsection{The bottom layer}
Recall from \cref{sec:sm-top-emb-calc} the equivalence $\ModInf^\un_1(E_m^c) \simeq \Cosp(\cS_{/\oB K(m)})$ for $K(m)\in\{\oO(m),\Top(m)\}$ depending on $c\in\{\oo,\ot\}$ and the description of $\ncBordInf^c(m)\ra \Cosp(\cS_{/\oB K(m)})$ from \cref{prop:bottom-manifold-layer}. To generalise this to describe the effect of the bottom layer 
\begin{equation}\label{equ:bottom-layer-pos-codim}
	\ncBordInf^c\lra \ModInf^\un_1(E_m^c)\simeq \Cosp(\cS_{/\oB K(m)})
\end{equation} 
of \eqref{equ:emb-calc-pos-codim-fixed-target-dim}, we denote by $\oO(n;m)\le \oO(n)$ and $\Top(n;m)\le \Top(n)$ for $m\le n$ the topological subgroups of those orthogonal transformations or homeomorphisms of $\bfR^n$ that preserve the subspace $\bfR^m=\bfR^m\times\{0\}^{n-m}\subset \bfR^n$ setwise, and by $\oO(n,m)\le \oO(n;m)$ and $\Top(n,m)\le \Top(n;m)$ the smaller subgroups where $\bfR^m$ is pointwise fixed. These subgroups are related by fibre sequences 
\begin{equation}\label{equ:fix-or-move}
	\oO(n,m)\ra \oO(n;m)\ra \oO(m)\quad\text{and}\quad \Top(n,m)\ra \Top(n;m)\ra \Top(m),
\end{equation}
induced by restriction, and by maps induced by taking products and composition
\begin{equation}\label{equ:product-maps-top}
	\begin{tikzcd}[row sep=0cm, column sep=0.5cm]
	\oO(m)\times \oO(n-m)\rar{\simeq}& \oO(m)\times \oO(n,m)\rar{\simeq}&\oO(n;m)\\\Top(m)\times \Top(n-m)\rar& \Top(m)\times \Top(n,m)\rar{\simeq}&\Top(n;m).
	\end{tikzcd}
\end{equation}
All maps labelled with a $\simeq$-sign are easily seen to be equivalences (isomorphisms of topological groups, in fact), but the unlabelled arrow is not an equivalence in general (e.g.\,for $m=20$ and $n=29$ it is not; see \cite[Example 2]{Millett}).

\begin{lem}\label{lem:positive-codim-t1-fact}Fix $n,m\ge0$.
\begin{enumerate}[leftmargin=*]
	\item On categories of objects, the functor \eqref{equ:bottom-layer-pos-codim} sends an $(n-1)$-manifold $P$ to 
	\[
		\big(P\times_{\BTop(n)}\BTop(n;m)\ra\BTop(m)\big)\in\cS_{/\BTop(m)}
	\] 
	if $m\le n$ (involving the once stabilised tangent bundle), and to $\varnothing\ra\BTop(m)$ otherwise. \item On categories of morphisms, \eqref{equ:bottom-layer-pos-codim} sends an $m$-dimensional bordism $N\colon R\leadsto S$ to the cospan
	\[
		\big(P\times_{\BTop(n)}\BTop(n;m)\big)\ra (N\times_{\BTop(n)}\BTop(n;m)\big)\la(Q\times_{\BTop(n)}\BTop(n;m)\big)
	\]
	in $\cS_{/\BTop(m)}$ if $m\le n$, and to the cospan $\varnothing\ra\varnothing\la\varnothing$ otherwise.
	\item On relative mapping spaces, the functor \eqref{equ:bottom-layer-pos-codim} is induced by taking topological derivatives.
\end{enumerate}
Replacing $\Top$ by $\oO$ and $\ot$ by $\oo$, the same statement holds in the smooth case.
\end{lem}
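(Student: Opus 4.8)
\textbf{Proof plan for \cref{lem:positive-codim-t1-fact}.}
The strategy is to reduce the statement to the codimension-$0$ case already handled in \cref{prop:bottom-manifold-layer}, together with an analysis of how the restriction functor $\iota^*\colon\ModInf^\un_1(E^c)\ra\ModInf^\un_1(E^c_m)$ behaves under the bottom-layer identifications. First I would recall that by \cref{thm:bottom-layer-morita} applied to the (non-groupoid-coloured) unital operad $E^c$, there is an equivalence $\ModInf^\un_1(E^c)\simeq\Cosp(\PSh((E^c)^\col))$, where $(E^c)^\col\simeq\bfN_{\geq0}$ is discrete; more usefully, the full subcategory of $\RMod^\un_1(E^c)$ on objects concentrated in colour $n$ is $\PSh(\ast)\simeq\cS$, but the relevant datum for the target $N^{n}\colon R\leadsto S$ of dimension $n$ lives over the colour $n$. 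The first genuine step is therefore to identify, for a fixed $n$, the image of $E_{N}$ for an $n$-dimensional bordism $N$ under $\ncBordInf^c\ra\ModInf^\un_1(E^c)$. This is essentially \cref{prop:bottom-manifold-layer} verbatim: tracing through the construction of $E$ (which is the same construction, just with varying dimension) shows that $N\mapsto E_N$ lands in the full sub-double-category on objects of colour $n$ and there agrees with ``take the stabilised topological tangent bundle classified by a map to $\BTop(n)$'', using the Kister--Mazur theorem exactly as in that proof. So the content of the present lemma is purely about the functor $\iota^*$.

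The key computational step is then to understand $\iota^*\colon\RMod^\un(E^c)\ra\RMod^\un(E^c_m)$ on the relevant representables. By definition $\iota^*$ is restriction along $\Env(E^c_m)\hookrightarrow\Env(E^c)$, i.e.\ along $\DiscInf^c_m\hookrightarrow\DiscInf^c$. For the bottom layer what matters is the effect on the wreath-product subcategory $(E^c_m)^\col\wr\Sigma_1\simeq B K(m)$ sitting inside $\Env(E^c_m)_{\le 1}$: concretely, $\iota^*(E_N)(\bfR^m)=\Emb^c(\bfR^m,N)$, with its $K(m)$-action by precomposition (recall $K(m)\simeq\Emb^c(\bfR^m,\bfR^m)$, using Kister--Mazur for $c=\ot$). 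The heart of the argument is the identification, for $N=\bfR^n$ (the representable generating everything by descent, cf.\ \cref{lem:tk-descent}), of the $K(m)$-space $\Emb^c(\bfR^m,\bfR^n)$ up to equivalence. For $c=\ot$ one uses the Kister--Mazur-type analysis of spaces of locally flat embeddings of Euclidean spaces together with isotopy extension: $\ev_0\colon\Emb^\ot(\bfR^m,\bfR^n)\ra\bfR^n$ is a fibration with fibre $\Emb^\ot_0(\bfR^m,\bfR^n)$, and the latter, by the Kister--Mazur theorem in the parametrised/codimension setting, is equivariantly equivalent to $\Top(n)/\Top(n,m)\simeq\Top(n;m)/(\Top(m)\times\{1\})$ — here is where the subgroups $\Top(n;m)$, $\Top(n,m)$ and the sequences \eqref{equ:fix-or-move}, \eqref{equ:product-maps-top} enter. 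Taking $K(m)=\Top(m)$-orbits and comparing with the general formula $E_N(\underline{1}\times\bfR^m)_{K(m)}$ from the proof of \cref{lem:latch-match-manifolds} (specialised to $k=1$) yields $\Emb^\ot(\bfR^m,N)_{\Top(m)}\simeq N\times_{\BTop(n)}\BTop(n;m)$, naturally in codimension-$0$ embeddings of $N$ and in restrictions along submanifolds; when $m>n$ the embedding space is empty, giving the second clause. Propagating this along $R,S\hookrightarrow N$ and assembling the cospan, and using the once-stabilisation bookkeeping exactly as in \cref{prop:bottom-manifold-layer}, gives parts (i) and (ii). Part (iii) follows since, as in \cref{prop:bottom-manifold-layer}, the map $E$ on (relative) mapping spaces is by construction induced by postcomposition of embeddings, which under the above zig-zag models is precisely the topological derivative.

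The main obstacle I anticipate is the equivariant Kister--Mazur identification $\Emb^\ot_0(\bfR^m,\bfR^n)\simeq\Top(n)/\Top(n,m)$ as $\Top(m)$-spaces, and more precisely making the naturality in $N$ compatible with the once-stabilisation shift that appears when $N$ is only a bordism rather than an open manifold (the collars $R\times[0,1)$, $S\times(-1,0]$ need to be threaded through, as in the passage from \cref{lem:latch-match-manifolds} to \cref{lem:latch-match-bordism}). I would handle the first point by citing the relative Kister--Mazur statement and the parametrised isotopy extension theorem of \cite{EdwardsKirby} (so that $\ev_0$ is genuinely a fibration), and the second by the same collar-insertion argument used in the proof of \cref{lem:latch-match-bordism}, replacing the role of $\nu$/$\iota$ there by the inclusion $E^c_m\hookrightarrow E^c$. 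The smooth case $c=\oo$ is identical with $\Top$ replaced by $\oO$ throughout, using that $\oO(n,m)\simeq\Emb^\oo_0(\bfR^m,\bfR^n)$ by taking derivatives; no separate argument is needed beyond noting that \eqref{equ:product-maps-top} is an equivalence in the smooth case, which makes the smooth formula slightly cleaner but does not affect the statement.
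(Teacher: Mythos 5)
Your proposal is correct and follows essentially the same route as the paper: reduce to the local model $N=\bfR^n$ (the paper uses isotopy extension where you invoke descent, but these are interchangeable here), identify $\Emb^{\ot}_0(\bfR^m,\bfR^n)\simeq \Top(n)/\Top(n,m)$ via a Kister--Mazur/germ argument with Edwards--Kirby supplying the fibration, and then pass to $\Top(m)$-orbits using \eqref{equ:fix-or-move} to land on $\Top(n)/\Top(n;m)$ and hence $N\times_{\BTop(n)}\BTop(n;m)$. The only blemish is the parenthetical claim $\Top(n)/\Top(n,m)\simeq\Top(n;m)/(\Top(m)\times\{1\})$, which is not what you use and should be deleted; the step you actually need, $\bigl(\Top(n)/\Top(n,m)\bigr)/\Top(m)\simeq\Top(n)/\Top(n;m)$, is the one the paper proves.
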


\begin{proof}We prove the second part in the case where $c=\ot$ and $P=Q=\varnothing$; all other statements follow analogously, similarly to the proof of \cref{prop:bottom-manifold-layer}. By construction the value of $\eqref{equ:bottom-layer-pos-codim}$ at $N$ is given by $\Emb^t(\bfR^m,N)/\Top(m)\ra \BTop(m)$. If $\dim(N)=n>m$ then this is $\varnothing\ra \BTop(m)$ since $\Emb^t(\bfR^m,N)=\varnothing$. In the other case $n \leq m$, we consider the commutative diagram 
\[
	\begin{tikzcd}[row sep=0.3cm] 
	\Emb^\ot(\bfR^{m},N)/ \Top(m) \dar& \lar\Emb^\ot(\bfR^{n},N) /  \Top(n;m) \rar\dar& \Emb^\ot(\bfR^{n},N)/ \Top(n)\overset{}{\simeq} M\dar\\
	\BTop(m)&\BTop(n;m)\lar\rar&\BTop(n)
	\end{tikzcd}
\]
where the horizontal arrows are given restriction, the vertical arrows are the evident ones, and the top-right equivalence holds by the argument in the proof of \cref{prop:bottom-manifold-layer}. The leftmost vertical map is the one we like to identify. Since the right square is a pullback, it suffices to show that the top-left horizontal map is an equivalence. Using isotopy extension, it suffices to show this for $N=\bfR^n$. Since the inclusion $\Emb^t(\bfR^n,\bfR^n)\subset \Top(n)$ is an equivalence by the Kister--Mazur theorem (see the proof of \cref{prop:bottom-manifold-layer}) and restriction induces an equivalence $\Top(n)/\Top(n,m)\ra \Emb^t(\bfR^m,\bfR^n)$ (see below), we conclude the claimed equivalence
\[
	\hspace{-0.4cm}\Emb^\ot(\bfR^{m},\bfR^{n})/ \Top(m)\simeq \big(\Top(n)/\Top(n,m)\big)/\Top(m)\simeq \Top(n)/(\Top(m)\times \Top(n,m))\simeq \Top(n)/\Top(n;m).
\]
To justify that restriction induces an equivalence $\Top(n)/\Top(n,m)\simeq \Emb^t(\bfR^m,\bfR^n)$, we add subscripts $(-)_0$ to denote the subspaces of homeomorphisms or embeddings that fix the origin and superscripts $(-)^{(0)}$ the spaces of germs of homeomorphisms or embeddings near the origin, so that the inclusion and taking germs yields vertical maps in the commutative diagram
\[
	\begin{tikzcd}[row sep=0.2cm]  \Top(n,m) \rar & \Top(n) \rar & \Emb^t(\bfR^m,\bfR^n) \\
	\Top_0(n,m) \rar \dar \uar[swap] & \Top_0(n) \rar \dar \uar[swap] & \Emb^t_0(\bfR^m,\bfR^n) \dar \uar[swap]\\
	\Top_0(n,m)^{(0)} \rar & \Top_0(n)^{(0)} \rar & \Emb^t_0(\bfR^m,\bfR^n)^{(0)}.\end{tikzcd}
\]
The claim follows by arguing that all vertical maps are equivalences, the bottom-right horizontal arrow is a surjective Kan fibration, and the bottom-left horizontal map the inclusion of a fibre. The top vertical maps admit homotopy inverses given by composition with the appropriate translations. The left and middle bottom vertical maps are equivalences by the argument in the proof of \cite[Theorem 1]{Kister}, and the right bottom vertical map is an equivalence by restricting the domain to a small neighbourhood of the origin. The bottom-right horizontal map is a Kan fibration by isotopy extension \cite{EdwardsKirby} whose fibre over the inclusion is the bottom-left horizontal map. For the surjectivity, choose a chart around the origin that witnesses a given embedding is locally flat.
\end{proof}

\subsubsection{The higher layers} By construction, the tower $\smash{T_\bullet\Emb^c_\partial(M^m,N^n)}$ is induced by the tower $\smash{\ModInf_\bullet(E_m^c)_{E_{P\times I},E_{Q\times I}}}$ on mapping spaces from $E_M$ to a bimodule $\iota^*E_N\in \ModInf_\bullet(E_m^c)_{E_{P\times I},E_{Q\times I}}$ whose underling right-module is given by $\Emb(-,N)\in\RMod(E_m^c)\simeq\PSh(\DiscInf^c_m)$. The discussion of the higher layers in $\ModInf_\bullet(E_m^c)_{E_{P\times I},E_{Q\times I}}$ from \cref{sec:layers-embcalc} thus applies to this tower. In particular, an application of \cref{cor:layer-description-explicit} \ref{enum:layer-corollary-general} to $X=\iota^*E_N$ shows that the description of the higher layers in the codimension zero case from part \ref{enum:layer-corollary-geometric} of this corollary is also valid in positive codimension.

\section{Convergence results for embedding calculus}\label{sec:convergence}
When considering embedding calculus for topological embeddings---as done in \cref{sec:emb-calc}---one is led to ask whether there is a convergence result analogous to that by Goodwillie, Klein, and Weiss for smooth embeddings (see \cref{sec:convergence-statements} below for a recollection). As part of this section, we show that this is the case, as long as the manifold one embeds into is smoothable and of dimension $\ge5$. The strategy is to deduce topological convergence from smooth convergence via classical smoothing theory and our smoothing theory for embedding calculus from \cref{sec:smoothing-embcalc}. For this argument, it is convenient to first prove a strengthening of the existing smooth convergence result,; this should be of independent interest. As an additional application of the latter, we extend work of Galatius--Randal-Williams \cite{GRWContractible} on homeomorphisms of contractible manifolds and embeddings of one-sided h-cobordisms from dimensions $d\ge6$ to dimension $d=5$ (see \cref{sec:grw-alexander-extension}). 

\medskip

\noindent The plan for this section is to state the convergence results in \cref{sec:convergence-statements}, prove them in \cref{sec:proof-convergence}, and conclude with the extension of Galatius--Randal-Williams' work in \cref{sec:grw-alexander-extension}.

\subsection{Statements of convergence results}\label{sec:convergence-statements}The convergence results are stated in the following setting: we fix a quadruple $(M,\partial_0M,N,e_{\partial_0})$ consisting of 
\begin{itemize}
	\item a compact topological $m$-manifold $M$,
	\item a compact codimension $0$ submanifold $\partial_0M\subset \partial M$ 
	\item a potentially noncompact topological $n$-manifold $N$ with $n\ge m$, and
	\item a locally flat embedding $e_{\partial_0}\colon \partial_0M\hookrightarrow \partial N$,
\end{itemize}
and consider the embedding calculus approximation (see \cref{rem:triad-pos-codim}).
\[\Emb^t_{\partial_0}(M,N)\lra T_\infty\Emb^t_{\partial_0}(M,N)\]
to the space of locally flat topological embeddings $e\colon M\hookrightarrow N$ with $e|_{\partial_0M}=e_{\partial_0}$ and $e^{-1}(\partial N)=\partial_0M$ (the latter condition can be dropped without effecting the homotopy type of the space of embeddings). If the quadruple  $(M,\partial_0M,N,e_{\partial_0})$ is \emph{smooth}; i.e.\,if $M$ and $N$ are smooth, $\partial_0M\subset M$ is a smooth submanifold, and $e_{\partial_0}$ a smooth embedding, we also consider the embedding calculus approximation 
\begin{equation}\label{equ:smooth-embcalc-con}
	\Emb^o_{\partial_0}(M,N)\lra T_\infty\Emb^o_{\partial_0}(M,N)
\end{equation}
to the space of \emph{smooth} embeddings with the same properties as before. By Goodwillie, Klein, and Weiss' convergence result, the smooth version \eqref{equ:smooth-embcalc-con} is known to be an equivalence when the relative handle dimension of $\partial_0 M\subset M$ is at most $n-3$, i.e.\,when $M$ can be built from a collar on $\partial_0M$ by attaching handles of index $\le n-3$ (for a reference, combine \cite[5.1]{GoodwillieWeiss} with \cite[Theorem B]{GoodwillieKlein} and use Remarks \ref{rem:triad-pos-codim} and \ref{rem:rems-on-pos-codim-calc} \ref{enum:pos-codim-as-manifold-calc}). Our strengthening and extensions of this result are easiest stated in codimension zero, i.e.\,when $m=n$, so we focus on this case first. 

\subsubsection{Convergence in codimension $0$}
In the codimension $0$ case $m=n$, the condition in Goodwillie, Klein, and Weiss' convergence result (that the relative handle dimension of $\partial_0 M\subset M$ is at most $n-3$) says equivalently that $M$ can be built from a collar on $\partial_1 M\coloneqq \partial M\backslash\interior(\partial_0M)$ by attaching $(\ge3)$-handles. We will weaken this condition to only assuming that $\partial_1M\subset M$ is \emph{a tangential $2$-type equivalence}, meaning that the map $\fib(\partial_1M\ra\BO)\ra\fib(M\ra\BO)$ between the fibres of the classifiers of the stable tangent bundles is an equivalence on fundamental groupoids (this is equivalent to the definition of tangential $2$-type equivalence used in \cite[5.1.1]{KKDisc}). For instance, if $M$ is spin, this is equivalent to $\partial_1M\subset M$ being an equivalence on fundamental groupoids (see \cref{rem:rem-smooth-conv} \ref{enum:rem-smooth-conv-2type} below). In these terms, our improved convergence result in codimension $0$ reads as:

\begin{thm}[Smooth convergence in codimension $0$]\label{thm:conv-smooth-codim0}Fix a smooth quadruple $(M,\partial_0M,N,e_{\partial_0})$ with $m=n\ge5$. If $\partial_1M\subset M$ is a tangential $2$-type equivalence, the map 
\[
	\Emb^o_{\partial_0}(M,N)\lra T_\infty\Emb^o_{\partial_0}(M,N)
\]
is an equivalence.
\end{thm}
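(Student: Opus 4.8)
The plan is to reduce \cref{thm:conv-smooth-codim0} to the known convergence result of Goodwillie--Klein--Weiss (which requires that $M$ be built from a collar on $\partial_1 M$ by attaching handles of index $\ge 3$) by a handle-trading argument, combined with the naturality properties of embedding calculus established in \cref{sec:emb-calc}. First I would pick a handle decomposition of $M$ relative to a collar on $\partial_1 M$ and trade down the $0$- and $1$-handles. The hypothesis that $\partial_1 M \subset M$ is a tangential $2$-type equivalence is exactly what is needed to cancel the $0$- and $1$-handles (up to possibly creating new $3$-handles): an equivalence on fundamental groupoids lets one cancel $0$- and $1$-handles against $1$- and $2$-handles by the usual Whitehead/handle-trading arguments in dimension $n \ge 5$ (here is where $n\ge 5$ is used, via the Whitney trick), and the condition on $w_2$ (encoded in the $2$-type equivalence) ensures that the relevant embedded $2$-spheres used to cancel have trivial normal bundle so the trade goes through. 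This produces a submanifold $M' \subset M$, obtained from a collar on $\partial_1 M$ by attaching handles of index $\ge 3$, together with an isotopy equivalence $M' \simeq M$ rel $\partial_1 M$; crucially this does \emph{not} change $M$ up to the relevant equivalence, so by the isotopy-invariance of the spaces of embeddings (\cref{rem:triads}) and of $T_\infty$, we may replace $M$ by $M'$.

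\textbf{Applying the known result.} With $M$ now built from a collar on $\partial_1 M$ using only handles of index $\ge 3$ --- equivalently, of relative handle dimension $\le n - 3$ from the $\partial_0 M$ side --- the convergence result of Goodwillie, Klein, and Weiss applies directly and gives that $\Emb^o_{\partial_0}(M',N) \to T_\infty \Emb^o_{\partial_0}(M',N)$ is a weak equivalence. Transporting back along the isotopy equivalence $M \simeq M'$ (which induces compatible equivalences on both the embedding spaces and their Taylor towers, using the functoriality of $E$ on the bordism category from \cref{sec:relation-embcalc}) yields the statement for the original $M$. I would state the handle-trading step carefully as a lemma --- that a compact $M$ with $\partial_1 M \subset M$ a tangential $2$-type equivalence and $n = \dim M \ge 5$ admits a handle decomposition rel a collar on $\partial_1 M$ with handles only of index $\ge 3$ --- since this is the geometric heart of the argument and is presumably already available in the literature on smoothing theory and handle calculus (it is closely related to the handle-trading used in \cite{KKDisc}).

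\textbf{Main obstacle.} The main difficulty is the handle-trading lemma itself, specifically making the cancellation of $1$-handles precise: cancelling a $1$-handle requires finding an embedded $2$-sphere (or $2$-disk meeting $\partial_1 M$) that geometrically cancels it, and for this one needs (a) the algebraic intersection data to vanish, which follows from $\pi_1$-surjectivity, (b) the Whitney trick to upgrade algebraic to geometric cancellation, which needs $n \ge 5$ and $\pi_1$-injectivity, and (c) the normal bundle of the Whitney $2$-sphere to be trivial, which is where the $w_2$-condition in the definition of tangential $2$-type equivalence enters. One must also be careful that trading a $1$-handle may introduce a $3$-handle, which is harmless since $3 \ge 3$, but one should verify no lower-index handles are created in the process. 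A secondary, more bookkeeping-type issue is ensuring all the isotopy equivalences are compatible with the embedding-calculus towers; this is handled cleanly by the fact that $E\colon \ncBordInf^{\oo}(d) \to \ModInf_\bullet(E_d^{\oo})$ is a functor (\cref{sec:relation-embcalc}), so it automatically sends the isotopy equivalence $M' \hookrightarrow M$ to an equivalence, and the naturality of the comparison map $\Emb \to T_\infty\Emb$ does the rest.
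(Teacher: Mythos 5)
There is a genuine gap in the reduction step. You claim that the tangential $2$-type hypothesis lets you find $M'\subset M$, isotopy equivalent to $M$ rel $\partial_1M$, that is built from a collar on $\partial_1M$ using only handles of index $\ge 3$. That is false in general: Wall's handle trading eliminates the $0$- and $1$-handles using only that $\partial_1M\subset M$ is an equivalence on fundamental groupoids (no $w_2$ condition is needed for this), but it leaves you with handles of index $\ge 2$, and the $2$-handles cannot be traded away under the stated hypothesis. Eliminating them would require $\pi_2(\partial_1M)\to\pi_2(M)$ to be surjective, which a tangential $2$-type equivalence does not give. Concretely, $M=\partial_1M\times[0,1]\,\natural\,(S^2\times D^{m-2})$ satisfies the hypothesis of \cref{thm:conv-smooth-codim0} whenever $\partial_1M\times[0,1]$ does, yet $\oH_2(M,\partial_1M)\neq 0$, so no handle decomposition rel $\partial_1M$ with only $(\ge 3)$-handles exists. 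If your lemma were true, the theorem would be an immediate corollary of Goodwillie--Klein--Weiss and would not be an improvement of it; the whole content of the statement lies in handling the $2$-handles that survive.

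What the tangential $2$-type hypothesis actually buys (this is \cref{lem:trivial-handles}) is that the surviving $2$-handles can be made \emph{trivially attached}: the attaching embeddings $S^1\times D^{m-2}\hookrightarrow\partial_1M$ are null-isotopic. Here the $\pi_1$-part of the hypothesis gives a nullhomotopy of the attaching circle in $\partial_1M$, general position upgrades it to a concordance, and the $w_2$-part (via stability, since $2k<m$) kills the normal-framing obstruction in $\pi_2(\BO(m-2))$ to extending the concordance to the whole handle. The remaining work is then not a replacement of $M$ but a two-step dévissage: factor $M=M_{\le 2}\cup_K M_{>2}$ with $M_{\le 2}$ built from trivial $2$-handles and $M_{>2}$ from $(\ge 3)$-handles; convergence for $M_{>2}$ is Goodwillie--Klein--Weiss; and for the fibre term $\Emb^{\oo}_K(M_{\le 2},N\backslash e)$ one glues on externally cancelling $3$-handles to embed it in a trivial cobordism $\overline{M}\cong\partial_1M\times[0,1]$, and compares the two isotopy-extension fibre sequences (one classical, one for embedding calculus) for $\overline{M}=M^-_{\le2}\cup_K M_{\le 2}$, where both $\overline{M}$ and $M^-_{\le 2}$ are covered by the known convergence result. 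Your functoriality remarks about $E$ and the towers are fine, but they are not the issue; without the triviality statement and the doubling trick the argument does not go through.
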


\begin{rem}\label{rem:rem-smooth-conv}Some remarks on the statement of \cref{thm:conv-smooth-codim0}:
\begin{enumerate}[leftmargin=*]
	\item \label{enum:rem-smooth-conv-2type}Since $(w_1, w_2)\colon \BO\ra K(\bfZ/2,1)\times K(\bfZ/2,2)$ is an equivalence on $2$-truncations, the condition that $\partial_1M\subset M$ is an equivalence on tangential $2$-types is equivalent to:
	\begin{enumerate}[label=(\alph*)]
		\item $\partial_1M\subset M$ is an equivalence on fundamental groupoids, and
		\item at all basepoints, $w_2\colon \pi_2(M)\ra\bfZ/2$ is trivial if and only if $w_2\colon \pi_2(\partial_1M)\ra\bfZ/2$ is.
	\end{enumerate}
	Note that the second condition can be rephrased in terms of the components of universal covers of $\partial_1M$ and $M$ being spin or not.
	\item For $n\le 2$, the map \eqref{equ:smooth-embcalc-con} is an equivalence without assumptions on $\partial_1 M\subset M$, by \cite{KKsurfaces}.
	\item If the inclusion $\partial_0 M\subset M$ has relative handle dimension $\le d-3$, the convergence result of Goodwillie, Klein, and Weiss not only shows that \eqref{equ:smooth-embcalc-con} an equivalence, but also that the connectivity of the maps $T_k\Emb^o_{\partial_0}(M,N)\ra T_{k-1}\Emb^o_{\partial_0}(M,N)$ in the tower whose limit is $\Emb^o_{\partial_0}(M,N)\simeq T_{\infty}\Emb^o_{\partial_0}(M,N)$ increases in $k$, so in particular for $i\ge1$ the tower of homotopy groups $\{\pi_i(T_k\Emb^o_{\partial_0}(M,N),\iota)\}_{k\ge 1}$ based at a fixed embedding $\iota \in \Emb_{\partial_0}(M,N)$ is eventually constant and thus Mittag--Leffler (equivalently, the Bousfield--Kan spectral sequence based at $\iota$ Mittag--Leffler converges; see \cite[IX.5.6]{BousfieldKan}). In the more general situation of \cref{thm:conv-smooth-codim0} and the generalisations to positive codimension and topological embeddings below, the connectivity of the maps between the finite stages need not increase, but the tower $\{\pi_i(T_k\Emb^o_{\partial_0}(M,N),\iota)\}_{k\ge 1}$ is still Mittag--Leffler for $i \geq 1$. In fact, this is the case whenever the map \eqref{equ:smooth-embcalc-con} is an equivalence, by the following argument: $\pi_{i-1}(\Emb^o_{\partial_0}(M,N),\iota)$ is countable for $i\ge1$ as (e.g.\,as a result of \cite[Lemma 8.7]{KKDisc}, using that embedding spaces are second countable locally weakly-contractible when equipped with the smooth Whitney topology, because they are open subsets in the space of all smooth maps and the latter has these properties) so by \cite[9.3.1]{BousfieldKan} the $\lim^1$-term $\lim^1_k\pi_i(T_k\Emb^o_{\partial_0}(M,N),\iota)$ is countable. Since $\pi_i(T_k\Emb^o_{\partial_0}(M,N),\iota)$ is countable for all $k \geq 1$ (do an induction over $k$ and use the description of the layers of the tower), this implies that the tower $\{\pi_i(T_k\Emb^o_{\partial_0}(M,N),\iota)\}_{k\ge 1}$ is Mittag--Leffler by \cite[Theorem 2]{McGibbonMoller}.
	\end{enumerate}
\end{rem}

\noindent Combining \cref{thm:conv-smooth-codim0} with smoothing theory for embedding calculus, we will prove a similar result in the topological setting. Note that in view of \cref{rem:rem-smooth-conv} \ref{enum:rem-smooth-conv-2type}, the condition that $\partial_1M\subset M$ is a tangential $2$-type equivalence makes sense without smooth structures.

\begin{thm}[Topological convergence in codimension $0$]\label{thm:conv-top-codim0}For a quadruple $(M,\partial_0M,N,e_{\partial_0})$ with $m=n\ge5$, the target $N$ being smoothable and $\partial_1M\subset M$ a tangential $2$-type equivalence, the map 
\[
	\Emb^t_{\partial_0}(M,N)\lra T_\infty\Emb^t_{\partial_0}(M,N)
\]
is an equivalence. Moreover, for $n=5$, the smoothability condition on $N$ can be weakened to only assuming that the topological tangent bundle $N\ra \BTop(5)$ lifts along $\BO(5)\ra\BTop(5)$.
\end{thm}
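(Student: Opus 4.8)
\textbf{Proof plan for \cref{thm:conv-top-codim0}.} The plan is to deduce the topological convergence statement from the smooth one (\cref{thm:conv-smooth-codim0}) by a ``smoothing theory'' argument, using the smoothing theory pullback for embedding calculus from \cref{thm:smoothing-theory-emb-calc} (more precisely its consequence \cref{thm:smoothing-theory-emb-calc-compact} and the discussion of smoothing theory for embedding calculus with tangential structures). The key point is that both the source space of topological embeddings and its embedding calculus approximation are, after fixing tangential data, pulled back from the corresponding \emph{smooth} objects, so that the smooth equivalence of \cref{thm:conv-smooth-codim0} propagates to an equivalence in the topological setting.

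First I would reduce to the case where $N$ is smooth. Assuming $N$ smoothable, fix a smooth structure on $N$; by the Kirby--Siebenmann product structure theorem (for $n\ge5$) any such smooth structure restricts near $\partial N$ to one for which $e_{\partial_0}(\partial_0 M)\subset \partial N$ is a smooth submanifold, after an isotopy. The manifold $M$ need not be smoothable, but this is harmless: the relevant comparison is between \emph{topological} embedding calculus for $M$ into $N$ and its factorisation through tangential-$2$-type data, and the smoothing theory square of \cref{thm:smoothing-theory-emb-calc-compact} (or its tangential-structure enhancement in \cref{sec:layers-tangential-structures}) expresses $T_\infty\Emb^\ot_{\partial_0}(M,N)$ as a pullback of $T_\infty\Emb^\oo_{\partial_0}(-,-)$-type data over cospan categories of spaces over $\BO(n)$ resp.\ $\BTop(n)$, provided a smooth structure on the \emph{target} is available --- which is exactly what we have arranged. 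Concretely, by \cref{thm:smoothing-theory-emb-calc} the square relating $\ncBordInf^{\oo}(n)\to \ModInf_k^\un(E_n^{\oo})\to \Cosp(\cS_{/\BO(n)})$ to its topological analogue is a pullback on mapping categories for $n\ne4$; taking mapping spaces and using \cref{prop:bottom-manifold-layer} we get, for each $k$ (including $k=\infty$), a homotopy pullback square
\[
\begin{tikzcd}[row sep=0.35cm]
T_k\Emb^\oo_{\partial_0}(M',N') \rar\dar & T_k\Emb^\ot_{\partial_0}(M',N')\dar\\
\Map^{/\BO(n)}_{\partial_0}(M',N') \rar & \Map^{/\BTop(n)}_{\partial_0}(M',N')
\end{tikzcd}
\]
where $M'$ is a smoothing of $M$ \emph{if one exists}; but since we only need the statement for one fixed topological $M$, I would instead run the argument with tangential structures: apply the tangential-structure refinement of smoothing theory to the change of tangential structure $\BO(n)\to\BTop(n)$, giving that $T_\infty\Emb^\ot_{\partial_0}(M,N)$ is the pullback of $T_\infty\Emb^{\theta}_{\partial_0}((M,\ell_M),(N,\ell_N))$ (embedding calculus with $\oo^*(\ot)$-structure, which \emph{is} defined for any topological $M$, $N$ via the lifted tangent bundles) over the mapping spaces of maps over $\BTop(n)$. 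The same pullback expresses $\Emb^\ot_{\partial_0}(M,N)$ itself, by classical smoothing theory: the square \eqref{equ:embeddings-smoothing-theory} (extended to noncompact $N$ and triads exactly as in the proof of \cref{thm:smoothing-theory-emb-calc}) is a homotopy pullback for $n\ge5$, so $\Emb^\ot_{\partial_0}(M,N)$ is the pullback of $\Emb^\oo_{\partial_0}(M_{\mathrm{sm}},N)$-type data --- again interpreted via tangential structures to avoid requiring $M$ itself to be smooth.

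Second, having arranged both $\Emb^\ot_{\partial_0}(M,N)$ and $T_\infty\Emb^\ot_{\partial_0}(M,N)$ as pullbacks of the corresponding \emph{smooth-with-tangential-structure} objects over the \emph{same} cospan of mapping spaces over $\BTop(n)$, it suffices to check that the map between the smooth objects, $\Emb^\oo_{\partial_0}((M_{\mathrm{sm}},\ell),(N,\ell))\to T_\infty\Emb^\oo_{\partial_0}((M_{\mathrm{sm}},\ell),(N,\ell))$, is an equivalence, and compatibly so. But a tangential structure of the form $\oo^*(\ot)$ on $M$ is the data of a \emph{smoothing} of $M$ up to contractible choice (by smoothing theory, since $n\ge5$), and on each such component the map is the plain smooth embedding calculus map $\Emb^\oo_{\partial_0}(M_{\mathrm{sm}},N)\to T_\infty\Emb^\oo_{\partial_0}(M_{\mathrm{sm}},N)$ --- which is an equivalence by \cref{thm:conv-smooth-codim0}, because the tangential $2$-type condition on $\partial_1 M\subset M$ is a condition on the underlying topological manifold and so passes to any smoothing. (Note $M$ is compact so this is a finite/discrete set of smoothings to check, and the hypotheses of \cref{thm:conv-smooth-codim0} are met on each.) The ``moreover'' clause for $n=5$ is handled identically, observing that the only place smoothability of $N$ was used is to choose a smooth structure on the target, and for $n=5$ the Kirby--Siebenmann obstruction to smoothing a $5$-manifold vanishes precisely when the topological tangent bundle lifts to $\BO(5)$ (a $5$-manifold is smoothable iff its $\mathrm{KS}$-class in $H^4(N;\bfZ/2)$ vanishes, which for $5$-manifolds is equivalent to the $\BTop(5)\to\BTop$ tangent classifier lifting to $\BO(5)$, and the given hypothesis supplies such a lift, hence a smoothing by Kirby--Siebenmann).

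The main obstacle I anticipate is purely bookkeeping rather than conceptual: carefully organising the tangential-structure version of the smoothing theory pullback so that it applies to a \emph{non-smoothable} source manifold $M$ (embedding calculus for $\Emb^{\theta}$ as set up in \cref{sec:towers-tangential-structures-for-emb} requires a $\theta$-structure on $M$, which exists here because $\ell_M$ is just a lift of the topological tangent bundle along $\oo^*(\ot)\to \ot$, always available), and matching up the two pullback squares --- the one for the spaces of embeddings (from classical smoothing theory) and the one for the towers (from \cref{thm:smoothing-theory-emb-calc}) --- over the \emph{same} base, compatibly with the embedding calculus comparison maps. The compatibility is built into the functor $E\colon \ncBordInf^{\ot}(d)\to \ModInf_\bullet^\un(E_d^{\ot})$ and the commuting square of \cref{thm:extension-e-topological}, so once the diagrams are set up the equivalence follows formally; the care is in the reductions to compact $M$, to empty boundary condition, and to smooth target, all of which are the same moves already carried out in the proof of \cref{thm:smoothing-theory-emb-calc} and can be cited. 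A minor additional point is that $\Emb^\ot_{\partial_0}(M,N)$ for noncompact $N$ should first be exhausted by compact pieces of $N$ containing the image, exactly as in the source-manifold exhaustion argument there, to legitimately invoke the compact smoothing theory statements.
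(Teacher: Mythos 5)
Your argument for the main statement is essentially the paper's: fix a smooth structure on $N$ (hence on $\interior(\partial_0M)$ via $e_{\partial_0}$), and combine the smoothing-theory pullback squares of \cref{thm:smoothing-theory-emb-calc} with \cref{thm:conv-smooth-codim0}. The paper packages the "non-smoothable $M$" issue more cleanly than your tangential-structure detour: since the left square in \eqref{equ:smoothing-theory-diagram} is a pullback on mapping categories and the comparison functor $\ModInf^\un(E^{\oo}_m)_{E_{\partial_0M\times I}}\to\ModInf^\un(E^{\ot}_m)_{E_{\partial_0M\times I}}$ is a right-fibration, \cref{lem:criterion-mapping-space-equivalence} reduces the claim directly to smooth convergence for every smoothing of $M$ relative to the fixed one on $\partial_0M$ (and if $M$ admits no such smoothing, both mapping spaces are empty). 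No separate theory of $\oo^*(\ot)$-structures is needed, and your compactness/exhaustion worries are already absorbed into the proof of \cref{thm:smoothing-theory-emb-calc}.

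The genuine gap is in your treatment of the $n=5$ addendum. You assert that a lift of $N\ra\BTop(5)$ along $\BO(5)\ra\BTop(5)$ yields a smooth structure on $N$ "by Kirby--Siebenmann". This is false when $\partial N\neq\varnothing$ (which is forced whenever $\partial_0M\neq\varnothing$): smoothing a $5$-manifold with boundary requires first smoothing its $4$-dimensional boundary, and smoothing theory fails in dimension $4$ --- there are closed formally smoothable $4$-manifolds with vanishing Kirby--Siebenmann invariant that are nonetheless not smoothable (by Donaldson/Furuta-type obstructions), and crossing such a manifold with an interval produces a formally smoothable, non-smoothable $5$-manifold with boundary. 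The addendum is precisely about this situation, so your proof does not cover it. The paper's workaround is to invoke the Freedman--Quinn sum-stable smoothing theorem to smooth $\partial N\sharp(S^2\times S^2)^{\sharp g}$ for some $g$, apply the already-established smoothable case to $N\natural(S^2\times D^3)^{\natural g}$, and then exhibit the comparison map for $N$ as a retract of the one for $N\natural(S^2\times D^3)^{\natural g}$, with an additional tube/isotopy-extension argument when $\partial N\backslash\partial_0M\subset\partial N$ fails to be $0$-connected so that the connected sums can be performed away from $\partial_0M$. Some device of this kind is unavoidable for the addendum.
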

	
\subsubsection{Smooth convergence in positive codimension}
We continue by stating the generalisation of \cref{thm:conv-smooth-codim0} to positive codimension. Note that for a smooth quadruple $(M,\partial_0M,N,e_{\partial_0})$ the normal bundle of an embedding $e\in\Emb^o_{\partial_0}(M,N)$ depends only on the image of $e$ in $T_\infty \Emb^o_{\partial_0}(M,N)$, since it can be recovered as the image under the composition 
\[
	\smash{T_\infty \Emb^o_{\partial_0}(M,N)\ra T_1 \Emb^o_{\partial_0}(M,N)\simeq \Map_{\partial_0}^{{/\BO(m)}}(M,N\times_{\BO(n)}\BO(n;m))\ra \Map(M,\BO(n-m))}.
\]
Here we used \cref{lem:positive-codim-t1-fact} and the equivalence $\BO(n;m) \simeq \BO(m) \times \BO(n-m)$. As a result, the map $\Emb^o_{\partial_0}(M,N)\ra T_\infty\Emb^o_{\partial_0}(M,N)$ decomposes as a disjoint union
\[
	\textstyle{\bigsqcup_{[\xi]}\Emb^o_{\partial_0}(M,N)_{[\xi]}\lra \bigsqcup_{[\xi]}T_\infty\Emb^o_{\partial_0}(M,N)_{[\xi]}}
\] 
indexed over isomorphism classes $[\xi]\in[M,\BO(n-m)]$ of $(n-m)$-dimensional vector bundles over $M$; here the $[\xi]$-subscripts indicate the collection of components with normal bundle isomorphic to $\xi$. We write $D(\xi)$ and $S(\xi)$ for the closed disc- and sphere bundle of a vector bundle $\xi$.

\begin{thm}[Smooth convergence]\label{thm:conv-smooth-general}Fix a smooth quadruple $(M,\partial_0M,N,e_{\partial_0})$ and $[\xi] \in[M,\BO(n-m)]$. If $n\ge5$ and $D(\xi)|_{\partial_1M}\cup S(\xi)\subset D(\xi)$ is a tangential $2$-type equivalence, then the map
\[
	\Emb^o_{\partial_0}(M,N)_{[\xi]}\ra T_\infty\Emb^o_{\partial_0}(M,N)_{[\xi]}
\] 
is an equivalence.
\end{thm}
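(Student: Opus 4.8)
The strategy is to reduce \cref{thm:conv-smooth-general} to the codimension $0$ case \cref{thm:conv-smooth-codim0} applied to the closed disc bundle $D(\xi)$, exactly the kind of reduction the authors allude to in the introduction (``questions about positive codimension embeddings and their embedding calculi can often be reduced to the codimension $0$ case''). The key point is that an embedding $M\hookrightarrow N$ with normal bundle $\xi$ is, by the tubular neighbourhood theorem, the same data (up to contractible choice) as a codimension $0$ embedding $D(\xi)\hookrightarrow N$ that is ``fibrewise linear near $M$'', and these comparisons are compatible with the embedding calculus towers.

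\textbf{Step 1: geometric comparison of embedding spaces.} First I would fix the relevant codimension $0$ quadruple. Let $M'\coloneq D(\xi)$, let $\partial_0M' \coloneq D(\xi)|_{\partial_0 M}$, and let $\partial_1 M' \coloneq D(\xi)|_{\partial_1 M}\cup S(\xi)$, so that $\partial M' = \partial_0 M' \cup \partial_1 M'$; the boundary embedding $e_{\partial_0}'$ is the one induced by $e_{\partial_0}$ and a tubular neighbourhood of $e_{\partial_0}(\partial_0 M)$ inside $\partial N$. By the smooth tubular neighbourhood theorem (in its parametrised form, relative to the boundary), the restriction map from $\Emb^o_{\partial_0'}(D(\xi),N)$, suitably interpreted as embeddings which are fibrewise linear in a neighbourhood of the zero section, to $\Emb^o_{\partial_0}(M,N)_{[\xi]}$ is an equivalence; more precisely, one has a zig-zag of equivalences relating $\Emb^o_{\partial_0}(M,N)_{[\xi]}$ with a suitable ``linearised'' or ``fibrewise'' version of $\Emb^o_{\partial_0'}(D(\xi),N)$, and the latter is a union of components of $\Emb^o_{\partial_0'}(D(\xi),N)$ cut out by a condition on the image in the first stage of the tower. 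This is a standard argument (see e.g.\ the use of tubular neighbourhoods in Goodwillie--Weiss and in \cite{GoodwillieKlein}); the care needed is that one must keep track of the fibrewise-linear condition near $M$.

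\textbf{Step 2: matching the towers.} Next I would argue that under the equivalence of Step 1, the embedding calculus tower for $\Emb^o_{\partial_0}(M,N)_{[\xi]}$ is identified with (a union of components of, and hence a retract of the towers appearing in) the embedding calculus tower for the codimension $0$ space $\Emb^o_{\partial_0'}(D(\xi),N)$. The cleanest way to see this is via manifold calculus and descent: by \cref{rem:rems-on-pos-codim-calc}\ref{enum:pos-codim-as-manifold-calc} the positive-codimension tower agrees with the manifold calculus tower of $\Emb_\partial(-,N)$ on $\ncBordInf(m)_{\partial_0 M,\varnothing}$, and the fibrewise-linear disc-bundle description together with \cref{lem:tk-descent} (descent for complete Weiss $k$-covers) and the fact that on the building blocks $[0,1)\times \partial_0 M' \sqcup S\times\bfR^n$ the two $k$th stages agree (Yoneda), shows that the $k$th stage of the $M$-tower is computed by the $k$th stage of the $D(\xi)$-tower. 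Concretely, open sets of $D(\xi)$ that are fibrewise over open sets of $M$ and the combinatorics of Weiss covers carry over. I expect this bookkeeping to be the most delicate part of the write-up, but it is entirely formal given the machinery already set up in \cref{sec:emb-calc}.

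\textbf{Step 3: invoking codimension $0$ convergence and concluding.} Once Steps 1--2 are in place, apply \cref{thm:conv-smooth-codim0} to the smooth quadruple $(D(\xi), \partial_0 M', N, e_{\partial_0}')$: it has $\dim = n \geq 5$, and the hypothesis that $D(\xi)|_{\partial_1 M}\cup S(\xi) = \partial_1 M' \subset D(\xi) = M'$ is a tangential $2$-type equivalence is precisely what is assumed. Hence $\Emb^o_{\partial_0'}(D(\xi),N) \to T_\infty\Emb^o_{\partial_0'}(D(\xi),N)$ is an equivalence. Restricting to the relevant union of components (the fibrewise-linear ones, which map to $\Emb^o_{\partial_0}(M,N)_{[\xi]}$ under Step 1 and whose towers match under Step 2) yields that $\Emb^o_{\partial_0}(M,N)_{[\xi]} \to T_\infty\Emb^o_{\partial_0}(M,N)_{[\xi]}$ is an equivalence, which is the assertion. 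The main obstacle, as noted, is Step 2: making rigorous that the manifold-calculus (equivalently, embedding-calculus) tower is insensitive to replacing $M$ by its disc bundle $D(\xi)$ in a way compatible with the component decomposition — this requires care with the fibrewise-linear condition and with which components of the codimension $0$ tower one restricts to, but no genuinely new ideas beyond descent and the Yoneda argument already used repeatedly in \cref{sec:emb-calc}.
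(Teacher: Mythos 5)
Your overall strategy is the one the paper uses: reduce to the codimension $0$ theorem applied to the disc bundle $D(\xi)$ with boundary decomposition $\partial_0 D(\xi)=D(\xi)|_{\partial_0M}$ and $\partial_1 D(\xi)=D(\xi)|_{\partial_1M}\cup S(\xi)$, which is exactly how the hypothesis is phrased. But the comparison mechanism in your Steps 1--2 is not correct as stated. The restriction map $\Emb^\oo_{\partial_0}(D(\xi),N)\ra \Emb^\oo_{\partial_0}(M,N)_{[\xi]}$ is \emph{not} an equivalence, nor is its source (or any ``fibrewise linear'' sublocus equivalent to it) a union of components of the target: by the tubular neighbourhood theorem its fibre over an embedding $e$ is the space of bundle isomorphisms $\xi\cong\nu_e$ rel $\partial_0M$, a torsor over $\Map_{\partial_0}(M,\oO(n-m))$, which is far from contractible. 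For the same reason the towers do not ``match'': already the first stages differ ($\Map^{/\BO(n)}_{\partial_0}(D(\xi),N)$ versus $\Map^{/\BO(m)}_{\partial_0}(M,N\times_{\BO(n)}\BO(n;m))_{[\xi]}$), so the goal of Step 2 as you formulate it is unattainable.

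The correct statement — which the paper imports from \cite[Lemma 2.7]{KKsurfaces} — is that restriction fits into a commutative diagram
\[
	\begin{tikzcd}[row sep=0.3cm] \Emb_{\partial_0}^\oo(D(\xi),N) \rar \dar & T_\infty \Emb_{\partial_0}^\oo(D(\xi),N) \rar \dar & \Map^{/\BO(n)}_{\partial_0}(D(\xi),N) \dar \\
	\Emb_{\partial_0}^\oo(M,N)_{[\xi]} \rar & T_\infty \Emb_{\partial_0}^\oo(M,N)_{[\xi]} \rar & \Map^{/\BO(m)}_{\partial_0}(M,N \times_{\BO(n)} \BO(n;m))_{[\xi]}
	\end{tikzcd}
\]
in which both squares are \emph{pullbacks}; the non-contractible fibres of the vertical maps then cancel when comparing the left horizontal maps. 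One additionally needs that the rightmost vertical map is surjective on $\pi_0$ onto the $[\xi]$-components (this follows from $\BO(n;m)\simeq\BO(m)\times\BO(n-m)$), since a pullback square only transfers the equivalence to the components that are hit; your write-up does not address this. Your descent-plus-Yoneda sketch in Step 2 is roughly the right toolkit for \emph{proving} the left pullback square, but you must aim it at the pullback property of the restriction maps rather than at an identification of the two towers. With those corrections, Step 3 goes through verbatim.
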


\begin{rem}\label{rem:codim-three-smooth}Some remarks on the statement of \cref{thm:conv-smooth-general}:
\begin{enumerate}[leftmargin=*]
	\item\label{enum:pos-codim-condition-alternative} The tangential $2$-type condition is equivalent to the following two conditions:
	\begin{enumerate}[label=(\alph*)]
		\item\label{enum:admissible-a} the map $\partial_1M\cup_{S(\xi)|_{\partial_1M}}S(\xi)\ra M$ is an equivalence on fundamental groupoids and
		\item\label{enum:admissible-b} at all basepoints, the map $w_2(M)+w_2(\xi)\colon \pi_2(M)\ra\bfZ/2$ is trivial if and only if its precomposition with the map $\pi_2(\partial_1M\cup_{S(\xi)|_{\partial_1M}}S(\xi))\ra \pi_2(M)$ is trivial.
	\end{enumerate}
	Indeed, note that the projection induces an equivalence of pairs $(D(\xi),D(\xi)|_{\partial_1M}\cup S(\xi))\simeq (M,\partial_1M\cup_{S(\xi)|_{\partial_1M}}S(\xi))$, and  that $T(D(\xi))$ is the pullback of $TM\oplus \xi$ along the projection, so $w_2(TM\oplus \xi)=w_2(M)+w_2(\xi)+w_1(M)w_1(\xi)$. On the Hurewicz image the cup product $w_1(M)w_1(\xi)$ vanishes and thus $w_2(D(\xi))\colon \pi_2(D(\xi))\cong\pi_2(M)\ra \bfZ/2$ agrees with $w_2(M)+w_2(\xi)\colon \pi_2(M)\ra \bfZ/2$, so the claimed characterisation follows from \cref{rem:rem-smooth-conv} \ref{enum:rem-smooth-conv-2type}. This in particular implies that the condition only depends on the underlying spherical fibration of $\xi$.
	\item \label{enum:pos-codim-gkw} The statement of \cref{thm:conv-smooth-general} includes Goodwillie, Klein, and Weiss' convergence result in codimension $n-m\ge 3$, since in this case the condition in  \cref{thm:conv-smooth-general} is always satisfied: this follows from the characterisation in \ref{enum:pos-codim-condition-alternative} and the fact that the map  $\partial_1M\cup_{S(\xi)|_{\partial_1M}}S(\xi)\ra \partial_1M\cup_{\partial_1M}M\simeq M$ is $2$-connected, because it is part of a map of pushout squares whose other maps are  $\id\colon \partial_1M\ra \partial_1M$, $S(\xi)|_{\partial_1M}\ra \partial_1M$ and $S(\xi)\ra M$ which are all $2$-connected (the latter two because the fibres are spheres of dimension $\ge2$). In general the bottom right map in a map between pushout squares is $n$-connected if this holds for the other three maps (one way to see this is to use the groupoid version of the Seifert--van Kampen theorem together with the Mayer--Vietoris sequence with local coefficients).
\end{enumerate}
\end{rem}

\noindent In some cases, the tangential $2$-type condition on in \cref{thm:conv-smooth-general} can be further simplified:

\begin{lem}\label{lem:simpler-cond}In the situation of \cref{thm:conv-smooth-general}, if the codimension is $n-m=2$, $M$ is $1$-connected, and $\partial_0M=\partial M$ then $D(\xi)|_{\partial_1M}\cup S(\xi)=S(\xi)\subset D(\xi)$ is a tangential $2$-type equivalence if and only if 
\begin{enumerate}
	\item the Euler class $e(\xi)\in \oH^2(M;\bfZ)$ is indivisible and 
	\item the second Stiefel--Whitney class $w_2(M)\in  \oH^2(M;\bfZ/2)$ is nontrivial.
\end{enumerate}
\end{lem}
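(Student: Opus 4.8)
The plan is to analyse the tangential $2$-type condition from \cref{thm:conv-smooth-general} directly in the codimension $2$, simply-connected setting with $\partial_0M=\partial M$. Since $M$ is $1$-connected and $\partial_0M=\partial M$, we have $\partial_1M=\varnothing$, so $D(\xi)|_{\partial_1M}\cup S(\xi)=S(\xi)$ as claimed; thus the relevant inclusion is $S(\xi)\subset D(\xi)$ where $\xi$ is a rank $2$ vector bundle over $M$. By \cref{rem:codim-three-smooth} \ref{enum:pos-codim-condition-alternative}, this is a tangential $2$-type equivalence if and only if (a) the map $S(\xi)\ra M$ is an equivalence on fundamental groupoids, and (b) at all basepoints $w_2(M)+w_2(\xi)\colon \pi_2(M)\ra\bfZ/2$ is trivial if and only if its precomposition with $\pi_2(S(\xi))\ra\pi_2(M)$ is trivial. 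The task is to translate (a) and (b) into the two stated cohomological conditions.

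First I would handle condition (a). The sphere bundle $S(\xi)\ra M$ of a rank $2$ bundle is an $S^1$-bundle, classified by the Euler class $e(\xi)\in \oH^2(M;\bfZ)$ (using $w_1(\xi)=0$, which holds since $M$ is $1$-connected so $\oH^1(M;\bfZ/2)=0$). From the Gysin sequence (or the long exact sequence of the fibration with the $\pi_1$-action, using $1$-connectivity of $M$), the fibre inclusion $S^1\hookrightarrow S(\xi)$ induces on $\pi_1$ a map $\bfZ\ra\pi_1(S(\xi))$ whose cokernel is $\pi_1(M)=1$; the kernel is generated by the image of the connecting map, which is multiplication by $e(\xi)$ viewed appropriately — more precisely, $\pi_1(S(\xi))\cong \bfZ/e(\xi)$ where I abuse notation for the subgroup of $\oH^2(M;\bfZ)$ generated by $e(\xi)$. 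Hence $S(\xi)\ra M$ is $\pi_1$-injective-and-surjective (an equivalence on fundamental groupoids, here just $\pi_1(S(\xi))=1$) if and only if the fibre class dies, i.e.\ if and only if $e(\xi)$ is indivisible in $\oH^2(M;\bfZ)$ (equivalently, $e(\xi)$ generates a direct summand, or there is a class in $\oH_2(M;\bfZ)$ pairing to $1$ with it). So (a) $\iff$ indivisibility of $e(\xi)$, which is condition (1).

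Next, assuming (a) holds so that $S(\xi)$ is $1$-connected, I would analyse (b) via the map $\pi_2(S(\xi))\ra\pi_2(M)$. From the homotopy long exact sequence of $S^1\ra S(\xi)\ra M$, since $\pi_2(S^1)=0$ and $\pi_1(S^1)=\bfZ$, we get an exact sequence $0\ra\pi_2(S(\xi))\ra\pi_2(M)\xra{\partial}\bfZ$, and the connecting homomorphism $\partial$ is (Poincaré-dually / via the Euler class) the map $x\mapsto \langle e(\xi),h(x)\rangle$ where $h$ is the Hurewicz map $\pi_2(M)\ra\oH_2(M;\bfZ)$. Thus the image of $\pi_2(S(\xi))\ra\pi_2(M)$ is exactly $\ker\partial=\{x\in\pi_2(M): \langle e(\xi),h(x)\rangle=0\}$. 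Now $w_2(\xi)$ is the mod $2$ reduction of $e(\xi)$, so on the Hurewicz image $w_2(\xi)\colon\pi_2(M)\ra\bfZ/2$ equals $x\mapsto \langle e(\xi),h(x)\rangle\bmod 2$. Since $e(\xi)$ is indivisible, there exists $x_0\in\pi_2(M)$ with $\langle e(\xi),h(x_0)\rangle$ odd (here I use that the image of $h$ together with torsion spans enough of $\oH_2$; more carefully, indivisibility of $e(\xi)$ gives a homology class pairing to an odd integer, and by the Hurewicz theorem for the $1$-connected space $M$ that homology class is spherical). Hence $w_2(\xi)\colon\pi_2(M)\ra\bfZ/2$ is \emph{nontrivial}, while its restriction to $\mathrm{im}(\pi_2(S(\xi)))=\ker\partial$ is \emph{trivial} by construction. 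Therefore condition (b) --- that $w_2(M)+w_2(\xi)$ is trivial on $\pi_2(M)$ iff trivial on $\mathrm{im}(\pi_2(S(\xi)))$ --- becomes: $w_2(M)+w_2(\xi)$ is trivial on $\pi_2(M)$ iff $w_2(M)$ is trivial on $\ker\partial$ (using additivity and $w_2(\xi)|_{\ker\partial}=0$). I would then check that, because $w_2(\xi)$ separates $\pi_2(M)$ into the odd and even parts with respect to $\langle e(\xi),h(-)\rangle$, the class $w_2(M)+w_2(\xi)$ is trivial on all of $\pi_2(M)$ precisely when $w_2(M)$ is \emph{nontrivial} (it must cancel the nontrivial $w_2(\xi)$, which forces $w_2(M)=w_2(\xi)$ on the Hurewicz image, a nonzero functional); and in that case $w_2(M)|_{\ker\partial}=w_2(\xi)|_{\ker\partial}=0$, so (b) holds. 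Conversely if $w_2(M)$ is trivial then $w_2(M)+w_2(\xi)=w_2(\xi)$ is nontrivial on $\pi_2(M)$ but trivial on $\ker\partial$, so (b) fails. Hence (b) $\iff$ $w_2(M)$ nontrivial, giving condition (2). Combining, (a) and (b) together are equivalent to (1) and (2), proving the lemma.

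\textbf{Main obstacle.} The delicate point is the bookkeeping in the last step: matching "$w_2(M)+w_2(\xi)$ trivial on $\pi_2(M)$" with "$w_2(M)$ nontrivial" requires carefully using that $w_2(\xi)$, as a functional on $\pi_2(M)$, is the reduction mod $2$ of the nonzero (by indivisibility) functional $\langle e(\xi),h(-)\rangle$, together with the fact that on the Hurewicz image $w_2$ is determined by its values on $\pi_2$. One must be slightly careful about torsion in $\oH_2(M;\bfZ)$ and about whether "indivisible in $\oH^2(M;\bfZ)$" translates cleanly to "pairs to an odd integer with a spherical class"; invoking the universal coefficient theorem and Hurewicz for the $1$-connected $M$ resolves this, but it is the step most prone to sign/parity slips. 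Everything else (Gysin/homotopy exact sequences of an $S^1$-bundle, the identification of the connecting map with the Euler class, the reduction of $e(\xi)$ to $w_2(\xi)$) is standard.
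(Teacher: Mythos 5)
Your overall strategy is the same as the paper's: reduce to the two conditions (a) and (b) of \cref{rem:codim-three-smooth}~(i), identify (a) with indivisibility of $e(\xi)$ via the homotopy exact sequence of $S^1\to S(\xi)\to M$ together with the Hurewicz isomorphism, and then analyse (b) using that for the $1$-connected spaces $M$ and $S(\xi)$ a degree-two mod $2$ class is determined by its values on $\pi_2$. The first half of your argument is correct and matches the paper.

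There is, however, a genuine error in your treatment of condition (b). You assert that ``$w_2(M)+w_2(\xi)$ is trivial on all of $\pi_2(M)$ precisely when $w_2(M)$ is nontrivial.'' Triviality of $w_2(M)+w_2(\xi)$ on $\pi_2(M)$ is equivalent to $w_2(M)=w_2(\xi)$ in $\oH^2(M;\bfZ/2)$, which implies $w_2(M)\neq 0$ but is not implied by it: nothing prevents $w_2(M)$ and $w_2(\xi)$ from being distinct nonzero classes (e.g.\ when $\oH^2(M;\bfZ/2)$ has rank $\geq 2$). Your case analysis therefore omits the case $0\neq w_2(M)\neq w_2(\xi)$, and in that case your argument would wrongly conclude that the left-hand side of the biconditional in (b) holds. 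The conclusion of the lemma is still true in the missing case, but to see it you must show that \emph{both} sides of the biconditional in (b) are then false, i.e.\ that $w_2(M)$ restricts nontrivially to $\ker\partial=\mathrm{im}(\pi_2(S(\xi))\to\pi_2(M))$. This is exactly the point the paper supplies via the $\bfZ/2$-Gysin sequence: the kernel of $\pi^*\colon \oH^2(M;\bfZ/2)\to\oH^2(S(\xi);\bfZ/2)$ (equivalently, the set of functionals on $\pi_2(M)$ vanishing on $\ker\partial$) is exactly $\{0,w_2(\xi)\}$, since the preceding term in the Gysin sequence is $\oH^0(M;\bfZ/2)\xra{\cup\, w_2(\xi)}\oH^2(M;\bfZ/2)$. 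With that input, (b) becomes ``$w_2(M)=w_2(\xi)$ iff $w_2(M)\in\{0,w_2(\xi)\}$,'' which fails exactly when $w_2(M)=0$ (using $w_2(\xi)\neq 0$ from indivisibility of $e(\xi)$), completing the proof. So the gap is a single missing-but-essential observation rather than a wrong route, and your own setup (the image of $\pi_2(S(\xi))$ is $\ker\partial$ and $w_2(\xi)$ is the mod $2$ reduction of the surjection $\langle e(\xi),h(-)\rangle$) already contains everything needed to prove it.
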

\begin{proof}We use the characterisation of the tangential $2$-type condition from \cref{rem:codim-three-smooth} \ref{enum:pos-codim-condition-alternative}. First we show that condition \ref{enum:admissible-a} is under the stated assumptions equivalent to the indivisibility of $e(\xi)\in \oH^2(M;\bfZ)$. The long exact sequence of $S(\xi)\ra M$ shows that $S(\xi)$ is $1$-connected if and only if the boundary morphism $\pi_2(M)\ra \pi_1(S^1)\cong\bfZ$ is surjective. The latter agrees with the Hurewicz isomorphism $\pi_2(M)\cong \oH_2(M)$ followed by evaluation of $e(\xi)$, so is surjective if and only if $e(\xi)$ is indivisible. This leaves us with showing that if \ref{enum:admissible-a} holds, then \ref{enum:admissible-b} is equivalent to $w_2(M)\neq 0$. Since $M$ and $S(\xi)$ are both $1$-connected, \ref{enum:admissible-b} is equivalent to the statement \begin{equation}\label{equ:sw-2-equivalence}
		\smash{\Big(w_2(M)=w_2(\xi)\in \oH^2(M;\bfZ/2)\Big) \Leftrightarrow\Big( \pi^*(w_2(M))=\pi^*(w_2(\xi))\in\oH^2(S(\xi);\bfZ/2)\Big)}
\end{equation} 
where $\pi\colon S(\xi)\ra M$ is the projection. Using that $e(\xi)$ and $w_2(\xi)$ agree modulo $2$, the $\bfZ/2$-Gysin sequence shows that the right hand side holds if and only if $w_2(M)\in\oH^2(M;\bfZ/2)$ is a multiple of $w_2(\xi)$. As $w_2(\xi)$ is nontrivial because of \ref{enum:admissible-a} and $e(\xi)=w_2(\xi)$ modulo $2$, this implies that \eqref{equ:sw-2-equivalence} is equivalent to the nontriviality of $w_2(M)$, as claimed.
\end{proof}

\begin{ex}\label{ex:convergence-cp2n}For an explicit example of convergence not captured by previous results, note that \cref{lem:simpler-cond} applies to the normal bundle of the inclusion $\bfC P^{2n}\subset \bfC P^{2n+1}$, so \cref{thm:conv-smooth-general} shows that embedding calculus converges for the components of the space of embeddings $\Emb^o(\bfC P^{2n},\bfC P^{2n+1})$ whose underlying immersion are regularly homotopic to the inclusion $\bfC P^{2n}\subset \bfC P^{2n+1}$.
\end{ex}

\subsubsection{Topological convergence in positive codimension}
We now turn to the analogue of \cref{thm:conv-smooth-general} in the topological category. Since not all topological embeddings admit normal bundles, there is no analogue of the map $\BO(n;m)\ra\BO(n-m)$ in the topological case, but we can use instead that there is a map $\BTop(n;m)\ra \BG(n-m) \coloneqq \BhAut(S^{n-m-1})$ resulting from $\bfR^{n}\backslash \bfR^{m}\simeq S^{n-m-1}$, and that the condition in \cref{thm:conv-smooth-general} only depends on the underlying spherical fibration of $\xi$ by \cref{rem:codim-three-smooth} \ref{enum:pos-codim-condition-alternative}. More precisely, for a quadruple $(M,\partial_0M,N,e_{\partial_0})$ we consider the composition 
\vspace{-0.1cm}
\begin{equation}\label{equ:comp-to-spherical}
	\hspace{-0.2cm}\begin{tikzcd}[row sep=0.3cm,column sep=0cm] T_\infty \Emb^t_{\partial_0}(M,N) \rar &[-20pt] T_1 \Emb^t_{\partial_0}(M,N)\simeq \Map_{\partial_0}^{/\BTop(m)}(M,N\times_{\BTop(n)}\BTop(n;m))  \ar[draw=none]{ld}[name=X, anchor=center]{} \ar[rounded corners, to path={ -- ([xshift=2ex]\tikztostart.east)	|- (X.center) \tikztonodes	-| ([xshift=-2ex]\tikztotarget.west) -- (\tikztotarget)}]{dl} \\
	 \Map^{/\BTop(m)}(M,\BTop(n;m)) \rar & \Map(M,\BG(n-m)); \end{tikzcd}
 \end{equation}
here we used \cref{lem:positive-codim-t1-fact} for the equivalence. Pulling back the decomposition of $\Map(M,\BG(n-m))$ into path components we arrive at a decomposition of $\Emb^t_{\partial_0}(M,N)\ra T_\infty\Emb^t_{\partial_0}(M,N)$ as \begin{equation}\label{equ:decomp-spherical-fibration}
	\smash{\textstyle{\bigsqcup_{[\xi]}\Emb^t_{\partial_0}(M,N)_{[\xi]}\lra \bigsqcup_{[\xi]}T_\infty\Emb^t_{\partial_0}(M,N)_{[\xi]}}}
\end{equation}
where $[\xi]$ runs over equivalence classes of $(n-m-1)$-dimensional spherical fibrations over $M$. The topological analogue of \cref{thm:conv-smooth-general} is phrased in terms of this decomposition:

\begin{thm}[Topological convergence]\label{thm:conv-top-general}Fix a quadruple $(M,\partial_0M,N,e_{\partial_0})$ and $[\xi]\in [M,\BG(n-m)]$. If $n\ge5$, $N$ is smoothable, and $S(\xi)\cup_{S(\xi)|_{\partial_1}}\partial_1M\ra M$ satisfies the condition in \cref{rem:codim-three-smooth} \ref{enum:pos-codim-condition-alternative}, then 
\[
	\Emb^t_{\partial_0}(M,N)_{[\xi]}\lra T_\infty\Emb^t_{\partial_0}(M,N)_{[\xi]}
\] 
is an equivalence. Moreover, for $n=5$, the smoothability condition on $N$ can be weakened to only assuming that the topological tangent bundle $N\ra\BTop(5)$ lifts along $\BO(5)\ra\BTop(5)$.\end{thm}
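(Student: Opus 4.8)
\textbf{Proof plan for Theorem \ref{thm:conv-top-general}.}
The plan is to reduce the topological convergence statement to the smooth one (Theorem \ref{thm:conv-smooth-general}) via classical smoothing theory together with the smoothing-theory pullback for embedding calculus established in \cref{sec:smoothing-embcalc}, exactly as was done in codimension zero for Theorem \ref{thm:conv-top-codim0}. The first step is to upgrade the smoothing-theory square of \cref{thm:smoothing-theory-emb-calc-compact} (and its noncompact variant from \cref{thm:smoothing-theory-emb-calc} \ref{enum:smoothing-theory-emb-calc-ii}) from the equidimensional bordism categories to the variable-dimension bordism categories $\ncBordInf^{\oo}$ and $\ncBordInf^{\ot}$ of \cref{sec:positive-codim}. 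Concretely, since positive-codimension embedding calculus is built from the tower $\ModInf^\un_\bullet(E^{\oo}_m)$ (respectively $\ModInf^\un_\bullet(E^{\ot}_m)$) on relative mapping spaces via the functor $E\colon\ncBordInf^c\to\ModInf^\un_\bullet(E^c)\xra{\iota^*}\ModInf^\un_\bullet(E^c_m)$, I would apply \cref{thm:smoothing-theory-morita} to the operadic right-fibration $E^{\oo}_m\to E^{\ot}_m$ together with classical smoothing theory (\cite[Theorem IV.10.1]{KirbySiebenmann} for the essential-surjectivity part and \cite[Corollary 3.2]{BurgheleaLashofHomotopy} for the fully-faithfulness part) applied to the target manifold $N$. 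The point is that for $d\neq 4$ the outer square in \eqref{equ:smoothing-theory-diagram} induces pullbacks on all relative mapping spaces, so for a smoothable target $N$ of dimension $n\geq 5$ one obtains a pullback square
\[
\begin{tikzcd}[row sep=0.4cm]
\Emb^\oo_{\partial_0}(M,N)\rar\dar&\Emb^\ot_{\partial_0}(M,N)\dar\\
T_k\Emb^\oo_{\partial_0}(M,N)\rar&T_k\Emb^\ot_{\partial_0}(M,N)
\end{tikzcd}
\]
for each $1\leq k\leq\infty$, where $M$ is allowed to be topological but $N$ has been given a smooth structure. Here one uses that the bottom layers only see tangent-bundle data over $\BO$ and $\BTop$, and the standard fact that relative maps over $\BTop(m)$ out of a fixed topological $M$ are detected after restriction to $N$ smoothed, since the obstruction spaces $\Map(M,\Top(n)/\oO(n))$ are the relevant ones and $\Top(n)/\oO(n)$ is $(n+1)$-connected for $n\geq 5$ by \cite[Essay V]{KirbySiebenmann}.

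The second step is the actual reduction. Given the pullback square above for $k$ and $k=\infty$ and passing to the limit, it suffices to prove that $\Emb^\oo_{\partial_0}(M,N)\to T_\infty\Emb^\oo_{\partial_0}(M,N)$ is an equivalence after giving $N$ (and $M$, if $M$ happens to be smoothable — but here we do not need $M$ smoothable, only $N$, because the left column of the square remains meaningful only after smoothing $M$ as well; so the correct statement is: first pull back along $N$ smoothed to reduce to the case where \emph{$N$ is smooth}, then observe that both $\Emb^\ot_{\partial_0}(M,N)$ and $T_\infty\Emb^\ot_{\partial_0}(M,N)$ are, by the topological smoothing theory for $M$ relative to $N$, pulled back from their smooth counterparts with $M$ replaced by a smoothing, which exists over the relevant $2$-skeleton — more carefully, one applies the argument component-by-component indexed by the spherical fibration $[\xi]$, using that over a fixed component the lift of the tangent data to $\BO$ picks out a compatible smooth structure on a regular neighbourhood). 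Thus on the component indexed by $[\xi]$ one reduces to Theorem \ref{thm:conv-smooth-general} applied to a smooth quadruple realizing $[\xi]$, whose hypothesis is precisely that $S(\xi)\cup_{S(\xi)|_{\partial_1M}}\partial_1M\to M$ satisfies the condition of \cref{rem:codim-three-smooth} \ref{enum:pos-codim-condition-alternative}, which by that remark depends only on the underlying spherical fibration and hence is exactly the hypothesis of the present theorem.

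The third step handles the borderline dimension $n=5$. Here $\Top(5)/\oO(5)$ need not be highly connected in the range needed to smooth all of $N$, but it is $3$-connected and the obstruction to lifting $N\to\BTop(5)$ to $\BO(5)$ lives in $H^*(N;\pi_*(\Top(5)/\oO(5)))$; assuming such a lift exists (the weakened hypothesis), Kirby--Siebenmann's product structure theorem together with the stable range still produces a smooth structure on a neighbourhood of any given compact subset of $N$, and since $M$ is compact one can apply the exhaustion argument from the proof of Theorem \ref{thm:smoothing-theory-emb-calc} \ref{enum:smoothing-theory-emb-calc-ii} (writing embedding spaces as limits over compact exhaustions of $N$ and commuting the limit past the tower, using that restriction maps are Kan fibrations by isotopy extension \cite{EdwardsKirby}) to reduce to the case of a smoothable, in fact smooth, compact piece of $N$. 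I expect the main obstacle to be precisely this bookkeeping in the $n=5$ case: one must carefully check that the pullback square of embedding calculus towers is compatible with the compact exhaustion of the noncompact target and that smoothing $N$ only near a compact set suffices, which requires combining the noncompact smoothing-theory pullback of \cref{thm:smoothing-theory-emb-calc} \ref{enum:smoothing-theory-emb-calc-ii} with the descent property of \cref{lem:tk-descent} and the limit-of-Kan-fibrations argument; everything else is a formal consequence of \cref{thm:smoothing-theory-morita}, classical smoothing theory, and Theorem \ref{thm:conv-smooth-general}.
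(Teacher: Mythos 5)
Your proposal correctly describes the strategy the paper uses for the \emph{codimension-zero} topological convergence result (\cref{thm:conv-top-codim0}): there one really does smooth $N$, invoke the smoothing-theory pullback of \cref{thm:smoothing-theory-emb-calc}, and quote the smooth result over all smoothings of $M$. But for positive codimension this reduction does not go through, and the paper does something genuinely different. The central obstruction is that there is no smoothing-theory pullback relating $\Emb^\oo_{\partial_0}(M,N)$ and $\Emb^\ot_{\partial_0}(M,N)$ in positive codimension: a locally flat topological embedding need not admit a normal vector bundle (or even an open normal microbundle, by Rourke--Sanderson, as recalled before \cref{def:thickenings}), the component decompositions of the smooth and topological sides are indexed by different data ($[M,\BO(n-m)]$ versus $[M,\BG(n-m)]$, and the comparison map is in general neither injective nor surjective), and $M$ itself is not assumed smoothable. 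Your square of towers with vertical maps $\Emb^c_{\partial_0}(M,N)\to T_k\Emb^c_{\partial_0}(M,N)$ is therefore not a pullback, and the connectivity claim you use to justify it ($\Top(n)/\oO(n)$ being $(n+1)$-connected for $n\ge 5$) is false: only the stabilisation map $\Top(n)/\oO(n)\to\Top/\oO$ is $(n+1)$-connected, while $\Top/\oO$ has $\pi_3=\bfZ/2$ and $\pi_n\cong\Theta_n$ in higher degrees.

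What the paper does instead is reduce positive codimension to \emph{codimension zero in the topological category}, i.e.\ to \cref{thm:conv-top-codim0}, by replacing $M$ with a codimension-zero ``tubular neighbourhood substitute'' inside the target: a \emph{mapping cylinder $\xi$-thickening} $(W,\partial_0W,\pi,h)$ in the sense of \cref{def:thickenings}. Producing such a $W$ realising the given class $\xi$ is the real geometric content and is entirely absent from your proposal: one first uses topological immersion theory to produce an immersion $M\looparrowright N$ with the prescribed normal data and takes its induced neighbourhood, then shrinks it to a compact mapping cylinder neighbourhood using Pedersen/Quinn-type results on topological handlebody theory (\cref{lem:shrink-to-mc}); in codimension $\le 2$ one instead uses that normal disc bundles exist because $\BO(n-m)\times\BTop(m)\to\BTop(n;m)$ is highly connected there. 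Given $W$, \cref{lem:proof-with-thickening} sets up the topological analogue of the diagram \eqref{equ:smooth-thickening}, verifies via manifold-calculus descent and immersion theory that the relevant squares are pullbacks, and applies \cref{thm:conv-top-codim0} to $W$ (whose hypothesis on $\partial_1W\subset W$ is exactly the tangential $2$-type condition on the spherical fibration). The $n=5$ addendum is then inherited from the codimension-zero case, where it is handled by the sum-stable smoothing theorem and a retract argument, not by smoothing $N$ near a compact set. To repair your write-up you would need to abandon the positive-codimension smoothing-theory pullback and instead supply the thickening construction.
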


\begin{rem}\label{rem:simpler-cond-top} Some remarks on the statement of \cref{thm:conv-top-general}:
\begin{enumerate}[leftmargin=*]
	\item By the argument in \cref{rem:codim-three-smooth} \ref{enum:pos-codim-gkw}, the condition on $\xi$ in \cref{thm:conv-top-general} is redundant if the codimension satisfies $n-m\ge3$, so \cref{thm:conv-top-general} in particular gives $\smash{\Emb^t_{\partial_0}(M,N) \simeq T_\infty\Emb^t_{\partial_0}(M,N)}$ in codimension $\ge3$ as long as $N$ is smoothable and of dimension $\ge5$. 
	\item The simplification of the tangential $2$-type condition in \cref{lem:simpler-cond} only used the underlying spherical fibration of $\xi$, so it also applies to the situation of \cref{thm:conv-top-general}.
\end{enumerate}
\end{rem}

\subsection{Proofs of convergence}\label{sec:proof-convergence}It is time to prove the promised convergence results.

\subsubsection{Proof of \cref{thm:conv-smooth-codim0}}The proof is by reduction to the convergence result by Goodwillie, Klein, and Weiss. The key ingredient for this reduction is the following proposition. It involves the notion of a \emph{tangential $k$-type equivalence} for the inclusion $P\subset M$ of a submanifold with trivial normal bundle, which says that the map $\fib(P\ra\BO)\ra\fib(M\ra\BO)$ involving the stable tangent bundles is an equivalence on Postnikov $(k-1)$-truncations, or equivalently that there exists a factorisation $M\ra B\ra\BO$ such that the maps $M\ra B$ and $P\subset M\ra B$ are both $k$-connected.

\begin{lem}\label{lem:trivial-handles}Let $M$ be a compact smooth $m$-manifold and $\partial_1M\subset \partial M$ a compact smooth codimension $0$ submanifold, such that the inclusion $\partial_1M\subset M$ is a tangential $k$-type equivalence for some $0\le k<\min(m-2,m/2)$. Then $M$ can be obtained from $\partial_1M\times [0,1]$ by attaching trivial $k$-handles and arbitrary $>k$-higher handles.
\end{lem}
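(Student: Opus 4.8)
The plan is to reduce to a standard handle-cancellation statement using Morse theory and the Whitney trick, carefully keeping track of tangential structures. First I would choose a handle decomposition of $M$ relative to the collar $\partial_1 M \times [0,1]$; such a decomposition exists by smoothing theory in the appropriate dimension (or by standard Morse theory, since we are in the smooth category throughout), giving $M = \partial_1 M \times [0,1] \cup (\text{handles of indices } 0,1,\ldots,m)$. The goal is to manipulate this decomposition so that all handles of index $\le k$ become trivial (i.e.\ their attaching spheres are isotopic to unknotted spheres bounding discs in the previous stage, with trivialised normal data), and so that no handles of index $\le k$ other than these trivial ones appear.

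\textbf{Key steps.} The argument proceeds in the usual order of handle trading. (1) \emph{Eliminating low-index handles below the middle range.} Using that $\partial_1 M \subset M$ is a tangential $k$-type equivalence, one factors $M \to B \to \BO$ with $\partial_1 M \to B$ and $M \to B$ both $k$-connected; then by Smale--Wallace handle trading (trading $i$-handles for $(i+1)$-handles, valid because $2 \le i \le m-2$ is guaranteed once $k < \min(m-2, m/2)$), one removes all handles of index $< k$, at the cost of introducing handles of index up to $\le k+1$, using that the relevant relative homotopy/homology groups vanish. This is the standard argument from the proof of the $h$- and $s$-cobordism theorems adapted to the tangential setting. (2) \emph{Making the index-$k$ handles trivial.} After step (1), the index-$k$ handles have attaching spheres $S^{k-1} \hookrightarrow \partial(\partial_1 M \times [0,1] \cup (\text{lower handles}))$ which are null-homotopic (again by the tangential $k$-type equivalence hypothesis, which forces the relative $\pi_k$ to vanish after the lower handles are dealt with) and, crucially, have \emph{trivial} normal bundle: the tangential condition on the stable normal/tangent bundle, combined with $k < m/2$ so that stable and unstable triviality agree for $(k-1)$-spheres in an $(m-1)$-manifold, gives an actual trivialisation. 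Since $k-1 < m/2 - 1 \le (m-1)/2$, such a null-homotopic embedded sphere with trivial normal bundle bounds an embedded disc by the Whitney trick (the dimension hypothesis $k < m/2$ is exactly what is needed; note $k < m-2$ handles this range), so the $k$-handle is trivial. (3) \emph{Removing higher-index obstructions to keeping index-$k$ handles trivial.} One checks that handles of index $>k$ introduced during the trading do not interfere, and that the trivial $k$-handles can be slid off to the bottom of the decomposition; this is bookkeeping with handle slides.

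\textbf{Main obstacle.} The delicate point is step (2): upgrading ``stably trivial normal bundle and null-homotopic attaching sphere'' to ``geometrically trivial handle''. This requires simultaneously (a) the Whitney trick to embed a bounding disc, which needs $2k \le m$ roughly (hence the hypothesis $k < m/2$) and simple-connectivity control on the ambient manifold after lower handles, and (b) destabilisation of the normal bundle data, which needs $k-1$ small compared to $m-1$. Both are encoded in $k < \min(m-2, m/2)$, but one must be careful that the tangential $k$-type equivalence hypothesis — which is a statement about stable bundles — actually delivers the \emph{unstable} trivialisation of the normal bundle of the attaching sphere and the correct framing of the handle; this is where the bound $k < m/2$ (rather than merely $k < m-2$) is essential, and verifying it cleanly is the crux. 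I expect the rest (steps (1) and (3)) to be routine applications of the handle-trading techniques from \cite[Essay V]{KirbySiebenmann} and standard Morse theory, so the write-up will concentrate on step (2).
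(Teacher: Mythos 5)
Your overall strategy (handle-trade away the handles of index $<k$ via Wall's geometric connectivity theorem, then argue that the remaining $k$-handles can be made trivially attached) is the same as the paper's, and your step (1) is essentially identical to what the paper does. But your step (2) — which you correctly identify as the crux — contains a genuine gap. A trivially attached handle is not the same as an attaching \emph{sphere} with trivial normal bundle: the normal bundle of the attaching sphere $S^{k-1}\subset\partial_1M$ is trivialised by definition of a handle attachment. What you must produce is a null-isotopy (equivalently, in this range, a null-concordance) of the whole thickened embedding $S^{k-1}\times D^{m-k}\hookrightarrow\partial_1M$, i.e.\ an embedded bounding disc \emph{together with} an extension of the normal framing over it. The obstruction to extending the framing over a chosen bounding disc is a class $[u]\in\pi_k(\BO(m-k))$, which by stability ($2k<m$) is detected in $\pi_k(\BO)$ as the stable normal bundle of the $k$-sphere obtained by gluing the concordance to the core of the handle in $M$. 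This class is \emph{not} automatically zero, and no amount of destabilisation makes it so; your assertion that the stable tangential data "gives an actual trivialisation" skips exactly the hard point. The paper instead cancels $[u]$ by hand: the tangential $k$-type hypothesis (via a factorisation $M\to B\to\BO$ with both $\partial_1M\to B$ and $M\to B$ $k$-connected, hence surjective on $\pi_k$) produces a class in $\pi_k(\partial_1M)$ hitting $-[u]$, which by immersion theory and general position is represented by an embedded sphere disjoint from the concordance; an embedded connected sum then kills the obstruction. This is precisely where the hypothesis enters at level $k$ rather than $k-1$, and it is absent from your outline.

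Two smaller points. First, the Whitney trick is the wrong tool here and would in any case be unavailable: no simple-connectivity is assumed. In the range $2k<m$ guaranteed by $k<m/2$, plain general position already makes a null-homotopy of the attaching sphere (which exists because the sphere bounds its handle core in $M$ and $\pi_{k-1}(\partial_1M)\to\pi_{k-1}(M)$ is injective) into an embedded concordance; one also needs the standard fact that null-concordance implies null-isotopy in this codimension. Second, your claim that the tangential hypothesis "forces the relative $\pi_k$ to vanish" is not what is used (and is false in general, since the $k$-handles contribute to $\pi_k(M,\partial_1M)$); the correct input is injectivity on $\pi_{k-1}$ for the null-homotopy and surjectivity on $\pi_k$ towards $B$ for the framing correction.
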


\begin{proof}
Fix a closed collar $c(\partial_1M)\subset M$ of $\partial_1M$. The hypothesis implies that $\partial_1 M\subset M$ is an equivalence on $(k-1)$-truncations, so in particular it is $(k-1)$-connected. As $k\le m-3$ so $k-1\le m-4$, it follows from handle trading  \cite[Theorem 3]{WallGeometrical} that $M$ can be obtained from $c(\partial_1M)$ by attaching handles of index $\ge k$, so the claim follows from showing that we can choose the $k$ handles to be trivially attached to $c(\partial_1M)$, that is, the attaching embedding $e\colon \sqcup^g S^{k-1}\times D^{m-k}\hookrightarrow \partial_1M$ is null-isotopic, i.e.\,isotopic to one that extends to an embedding $\sqcup^g D^k \times D^{m-k-1} \hookrightarrow \partial_1 M$ (by isotopy extension, $e$ then extends itself as well). It suffices to show this in the case $g=1$; if $g>1$ then isotope the attaching embedding of each handle separately, using that any null-isotopy is generically disjoint from the other handles, since $2k-1<m-1=\dim(\partial_1M)$. We thus assume $g=1$.

\medskip

\noindent Next we observe that $e\colon S^{k-1}\times D^{m-k}\hookrightarrow \partial_1M$ is null-isotopic if and only if it is null-concordant. The non-obvious direction is implied by either an elementary general position argument (given an concordance $E \colon [0,1] \times S^{k-1} \hookrightarrow [0,1] \times \partial_1 M$, the composition $\pr_2 \circ E$ generically only has intersections between slices $E|_{\{t\} \times S^{k-1}}$ for distinct $t$, using $2k \leq m-1$), or by concordance-implies-isotopy \cite[\S 2]{HudsonIsotopy} using that $k-1 \leq (m-1)-3$ holds by assumption.

\medskip

\noindent To construct a null-concordance, note that since $\partial_1 M\subset M$ is an equivalence on $(k-1)$-truncations and the attaching sphere $e_0\coloneqq e|_{S^{k-1}\times\{0\}}$ is nullhomotopic in $M$ since it bounds a handle, it is also nullhomotopic in $\partial_1 M$, so we may pick an arbitrary nullhomotopy of $e_0$ in $\partial_1 M$. This gives a homotopy $[0,1] \times S^{k-1} \times \{0\} \to [0,1] \times \partial_1 M$ between $\smash{e_0=e|_{S^{k-1} \times \{0\}}}$ and $\smash{i|_{S^{k-1} \times \{0\}}}$ for some embedding $i \colon D^k \times D^{m-k-1}\hookrightarrow \partial_1M$. By general position using that $k<2d$, we can assume that this homotopy is given by an embedding $E \colon [0,1] \times S^{k-1} \times \{0\} \hookrightarrow [0,1] \times \partial_1 M$ (that is, a concordance), so it remains to extend $E$ to an embedding on $[0,1] \times S^{k-1} \times D^{m-k}$, or equivalently that we can extend the given trivialisations of the normal bundle on $\{0,1\} \times S^{k-1}\times\{0\}$ to $[0,1] \times S^{k-1}\times\{0\}$. By potentially reversing the orientation of the second term in the domain of $i$, we can do so over an interval connecting both boundary components and it then remains to extend it over a $k$-disc. The obstruction to doing so is represented by the homotopy class of a map $u\colon S^k \to \BO(m-k)$ given by the normal bundle of an embedded sphere $e_u\colon S^k\hookrightarrow M$ obtained from gluing together $E$, $i$, and the bounding handle in $M$. By stability, since $2k<m$, the map $u$ is trivial if its stabilisation $u\colon S^k\ra \BO$ is. The latter is the composition of $e_u$ with the stable normal bundle of $M$, so since $\partial_1M\subset M$ is an equivalence on tangential $k$-types, we find an element in $ \pi_k(\partial_1 M)$ which maps under $\partial_1M\subset M\ra \BO$ to $-[u]$. By immersion theory and general position, this element can be represented by the normal bundle of an embedded sphere $e' \colon S^d \hookrightarrow [0,1] \times \partial_1 M$ disjoint from the image of $E$, so we can take the embedded connected sum of it with $E$ to remove the obstruction.\end{proof}

Using this proposition, we now prove \cref{thm:conv-smooth-codim0}. Given a smooth quadruple $(M,\partial_0M,N,e_{\partial_0})$ as in the statement, we view $M$ as a bordism $M\colon \partial_1M\leadsto \partial_0M$ of manifolds with boundary and apply \cref{lem:trivial-handles} for $k=2$ (note that $2<\min(m-2,m/2)$ holds exactly for $d\ge5$) to factor $M$ as a composition of bordisms $M_{\le 2}\colon \partial_1M\leadsto K$ and $M_{> 2}\colon K\leadsto \partial_0M$ where $M_{\le 2}$ is obtained from $\partial_1M$ by attaching trivial $2$-handles and $M_{>2}$ from $K$ by attaching handles of index $>2$. Fixing an embedding $e\in \Emb^o_{\partial_0M}(M_{>2},N)$ and abbreviating $N\backslash e\coloneqq N\backslash e(M_{>2}\backslash K)$, we consider the commutative diagram
\[
	\begin{tikzcd}
	\Emb^o_{K}(M_{\le 2},N\backslash e)\rar{\ext}\dar{\circled{3}}& \Emb^o_{\partial_0M}(M,N)\rar{\res}\dar{\circled{2}}&\Emb^o_{\partial_0M}(M_{>2},N)\dar{\circled{1}}\\
	T_\infty\Emb^o_{K}(M_{\le 2},N\backslash e)\rar{\ext}& T_\infty\Emb^o_{\partial_0M}(M,N)\rar{\res}&T_\infty\Emb^o_{\partial_0M}(M_{>2},N)
	\end{tikzcd}
\]
induced by restriction and extension by the identity (see \cref{rem:comp-and-gluing}). The map $\circled{1}$ is an equivalence by the convergence of embedding calculus in handle codimension $\ge3$ since $M_{>2}$ is obtained from $\partial_0M$ by attaching handles of index $<m-2$ (read the handle structure backwards). The upper row is a fibre sequence by ordinary isotopy extension, and the bottom row is a fibre sequence by isotopy extension for embedding calculus (see \cite[6.1, 6.5]{KnudsenKupers} and \cite[4.4]{KKDisc}). To show that $\circled{2}$ is an equivalence, it thus suffices to show that $\circled{3}$ is an equivalence for all choices of $e$. For this, note that since $M_{\le 2}\colon K\leadsto \partial_1 M$ is obtained by attaching \emph{trivial} $2$-handles to $\partial_1 M$, we can attach cancelling $3$-handles to $K$ to build a bordism $M^-_{\le 2}\colon \partial_1M\leadsto K$ such that the composed bordism $\smash{\overline{M}\coloneqq M^-_{\le 2}\cup_{K}M_{\le 2}\colon \partial_1M\leadsto \partial_1M}$ is trivial, i.e.\,diffeomorphic to $\partial_1M \times[0,1]$. Writing $\smash{\overline{N\backslash e}\coloneqq M^-_{\le 2}\cup_{e(K)}(N\backslash e)}$, we consider the diagram 
\[
	\begin{tikzcd}
	\Emb^o_{K}(M_{\le 2},N\backslash e)\rar{\ext}\dar{\circled{3}}& \Emb^o_{\partial_1 	M}(\overline{M},\overline{N\backslash e})\rar{\res}\dar{\circled{4}}&\Emb^o_{\partial_1 M}(M^-_{\le 2},\overline{N\backslash e})\dar{\circled{5}}\\
	T_\infty\Emb^o_{K}(M_{\le 2},N\backslash e)\rar{\ext}& \Emb^o_{\partial_1 M}(\overline{M},\overline{N\backslash e})\rar{\res}&T_\infty\Emb^o_{\partial_1 M}(M^-_{\le 2},\overline{N\backslash e}).\end{tikzcd}
\]
The maps $\circled{4}$ and $\circled{5}$ are an equivalence by the convergence of embedding calculus in handle codimension $\ge3$ since $M^-_{\le 2}$ is obtained from $\partial_1 M$ by attaching handles of index $m-3$ and $\overline{M}$ is diffeomorphic to a collar on $\partial_1M$. For the same reason as above, this is a map of fibre sequences, so $\circled{3}$ is an equivalence as well and the proof is finished.

\subsubsection{Proof of \cref{thm:conv-top-codim0}}
To deduce \cref{thm:conv-top-codim0} from \cref{thm:conv-smooth-codim0}, note that a smooth structure of $N$ induces a smooth structure on $\partial N$ and thus one on $\interior(\partial_0M)$ via $e_{\partial_0}$ such that $e_{\partial_0}$ becomes a smooth embedding. We fix such a choice. Up to removing $M$ and $N$ up to isotopy equivalence with $M'\coloneqq \interior(M)\cup\interior(\partial_0M)$ and $N'\coloneqq \interior(N)\cup\interior(e_{\partial_0}(N))$ we can consider $M$ and $N$ as nullbordisms of $\interior(\partial_0 M)$ in $\ncBordInf^{\ot}(m)$ (see \cref{rem:triads}). Now consider the diagram of categories (we omit $\interior(-)$, $\varnothing$, and $E_\varnothing$ from the notation for brevity)
\[
	\begin{tikzcd}[row sep=0.3cm]
	\ncBordInf^{\oo}(m)_{\partial_0M}\dar\rar &\ModInf^\un(E^{\oo}_m)_{E_{\partial_0M\times I}}\dar\rar&(\cS_{/\BO(m)})_{\partial_0M\times I/}\dar\\
	\ncBordInf^t(m)_{\partial_0M}\rar&\ModInf^\un(E^{\ot}_m)_{E_{\partial_0M\times I}}\rar&(\cS_{/\BTop(m)})_{\partial_0M\times I/}
	\end{tikzcd}
\]
obtained from \eqref{equ:smoothing-theory-diagram} by taking mapping categories from $\partial_0M$ to $\varnothing$. By  \cref{thm:smoothing-theory-emb-calc} both squares are pullbacks for $m \neq 4$. Since the rightmost vertical functor is a right-fibration and right-fibrations are stable under pullbacks, the middle vertical map is a right-fibration. The map in question is the map on mapping spaces of the bottom-left horizontal functor between $M$ and $N$, viewed as nullbordisms of $\partial_0M$. The chosen smoothing of $N$ and $e_{\partial_0}\colon \partial_0 M\hookrightarrow \partial N$ lifts $N\in \ncBordInf^t(m)_{\partial_0M}$ to $\ncBordInf^{\oo}(m)_{\partial_0M}$, so by an application of \cref{lem:criterion-mapping-space-equivalence} it suffices to show that the map $\Emb^{\oo}_{\partial_0}(M,N) \to T_\infty\Emb^{\oo}_{\partial_0}(M,N)$ induced by the top-left horizontal functor in the diagram on mapping spaces is an equivalence for all choices of smoothings of $M$ extending the given one on $\partial_0 M$. This holds by \cref{thm:conv-smooth-codim0}.

\medskip

\noindent To prove the addendum regarding the case $m=n=5$, recall that any chosen lift $N\ra\BO(5)$ of the topological tangent bundle is induced by a smooth structure on $N$ by smoothing theory \emph{as long as} one already knows that the restriction of this lift to $\partial N$ is induced by a smooth structure on $\partial N$. Smoothing theory fails in dimension $4$, so the latter is not automatic. However, by the sum-stable smoothing theorem \cite[p.~125]{FreedmanQuinn} there \emph{is} a smooth structure on $\partial N \sharp (S^2 \times S^2)^{\sharp g}$ for some $g\ge0$ inducing the given lift. Let us assume for the moment that $\partial N \backslash \partial_0 M \subset \partial N$ is $0$-connected, so all connected sums with $S^2 \times S^2$'s can be taken at $\partial N \backslash \partial_0 M$. Then we have a commutative diagram
\begin{equation}\label{equ:retract-diagram-conv}
	\begin{tikzcd}[row sep=0.3cm] \Emb^t_{\partial_0}(M,N) \rar{\inc} \dar & \Emb^t_{\partial_0}(M,N \natural (S^2 \times D^3)^{\natural g}) \rar{\inc} \dar{\simeq} &  \Emb^t_{\partial_0}(M,N) \dar \\
	T_\infty \Emb^t_{\partial_0}(M,N) \rar{\inc} & T_\infty \Emb^t_{\partial_0}(M,N \natural (S^2 \times D^3)^{\natural g}) \rar{\inc} &  T_\infty \Emb^t_{\partial_0}(M,N) \end{tikzcd}
\end{equation}
induced by the inclusion of $N$ into $N \natural (S^2 \times D^3)^{\natural g}$ and the inclusion of $N \natural (S^2 \times D^3)^{\natural g}$ into $N \natural (D^5)^{\natural g} \cong N$. The middle arrow is an equivalence by the previous case, since $N \natural (S^2 \times D^3)^{\natural g}$ is smoothable by the above discussion. Both rows are compatibly homotopic to the identity, so the outer vertical map is an equivalence because it is a retract of one. To deal with the case where $\partial N \backslash \partial_0 M \subset \partial N$ is not $0$-connected, note that $\partial_1 M \subset M$ is an equivalence of tangential 2-types, so in particular $0$-connected. We thus find embedded tubes $\sqcup^k D^4 \times [0,1] \hookrightarrow M$ connecting each component of $\partial_0 M$ to a component of $\partial_1 M$. Setting $M'$ to be the closure of the complements of these tubes in $M$ and $\partial_0M'\coloneqq M'\cap\partial_0M$, we consider the commutative diagram
\[
	\begin{tikzcd}[row sep=0.3cm]
	\Emb^t_{\sqcup^k D^4\times\{0\}\cup \partial D^4\times [0,1]}(\sqcup^k D^4 \times [0,1],N\backslash e)\rar{\ext}\dar& \Emb^t_{\partial_0}(M,N)\rar{\res}\dar& \Emb^t_{\partial_0}(M',N)\dar\\
	T_\infty\Emb^t_{\sqcup^k D^4\times\{0\}\cup \partial D^4\times [0,1]}(\sqcup^k D^4 \times [0,1],N\backslash e)\rar{\ext}&  T_\infty\Emb^t_{\partial_0}(M,N)\rar{\res}& T_\infty\Emb^t_{\partial_0}(M',N)\end{tikzcd}
\]
for embeddings $e\in\Emb^t_{\partial_0}(M',N)$. Since $\partial N \backslash \partial_0 M' \to \partial N$ is $0$-connected and $\partial_1M'\subset M'$ an equivalence on tangential $2$-types by general position the right-hand vertical map is an equivalence by the previous case and the same also holds after replacing $M'$ by $M'\sqcup (D^d)^{\sqcup r}$ for all $r\ge0$. As $D^4 \times [0,1]$ is a collar on $D^4\times\{0\}\cup \partial D^4\times [0,1]$, the left-hand vertical map is an equivalence as well. By isotopy extension for topological embedding calculus (see \cref{sec:topological-isotopy-extension}), this is a map of fibre sequences, so the middle vertical arrow is an equivalence as well and we conclude the claim.

\begin{rem}[Connectivity of the maps to the finite stages]
\label{rem:top-convergence-rate}\,
\begin{enumerate}[leftmargin=*]
	\item\label{enum:connectivity-t-k} Replacing the Morita category $\ModInf(E_{m}^{c})$ for $c\in\{\oo,\ot\}$ by its truncated analogue $\ModInf_k(E_{m}^{c})$ for some fixed $k\ge1$, arguing as in the first part of the previous proof shows that under the hypotheses of \cref{thm:conv-top-codim0}, the map $\smash{ \Emb^\ot_{\partial_0}(M,N) \to T_k \Emb^\ot_{\partial_0}(M,N)}$ is $r$-connected for some fixed $r\ge0$ if this is the case for its smooth analogue $\smash{\Emb^\oo_{\partial_0}(M,N) \to T_k \Emb^\ot_{\partial_0}(M,N)}$ with respect to all smoothings of $M$ relative to a fixed smoothing of $\partial_0M$. If we assume that $\partial_1 M \subset M$ is $2$-connected and $m \geq 6$, then $M$ admits with respect to any smoothing relative to $\partial_0M$ a smooth handle decomposition relative to $\partial_0 M$ with only handles of index $\le m-3$ by handle trading \cite[Theorem 3]{WallGeometrical}. Then \cite[Corollary 2.5, 5.1]{GoodwillieWeiss} implies that  $\smash{\Emb^\oo_{\partial_0}(M,N)\simeq T_\infty\Emb^\oo_{\partial_0}(M,N) \to T_k \Emb^\oo_{\partial_0}(M,N)}$ is at least $(k-m+4)$-connected, so the same is true for $\smash{\Emb^\ot_{\partial_0}(M,N)\simeq T_\infty\Emb^\ot_{\partial_0}(M,N) \to T_k \Emb^\ot_{\partial_0}(M,N)}$. 
	\item \label{enum:connectivity-t-k-dim-5} The conclusions of \ref{enum:connectivity-t-k} also hold for $m=5$, but this requires an additional argument, since even if $\partial_1M\subset M$ is $2$-connected, $M$ need not have a handle decomposition relative to $\partial_0M$ with only $\le m-3=2$-handles for $m=5$. However by \cite[Theorem 1.2]{QuinnStable}, there \emph{is} such a handle decomposition after replacing $M$ by $M\natural (S^2\times D^3)^{\natural k}$ for some $k$ (the boundary connected sums are taken at discs in $\partial_1M$). Then one can argue as in the final part of the proof of \cref{thm:conv-top-codim0}, using the retract diagram \eqref{equ:retract-diagram-conv} with $T_\infty\Emb^t_{\partial_0}(-,-)$ replaced by $T_k\Emb^c_{\partial_0}(-,-)$ for $c\in\{o,t\}$.
	\item  One should also be able to obtain the connectivity estimates in \ref{enum:connectivity-t-k} and \ref{enum:connectivity-t-k-dim-5} for the maps $T_\infty\Emb^c_{\partial_0}(M,N)\ra T_k\Emb^c_{\partial_0}(M,N)$ directly from the layer identification in \cref{cor:layer-description-explicit} which would have the advantage that it gives a connectivity estimate for $T_\infty\Emb^\ot_{\partial_0}(M,N) \to T_k \Emb^\ot_{\partial_0}(M,N)$ \emph{without} the assumption that $N$ is smoothable, in which case it is not known (but likely) that the map $\smash{\Emb^\ot_{\partial_0}(M,N)\ra T_\infty\Emb^\ot_{\partial_0}(M,N)}$ is an equivalence. 
\end{enumerate}
\end{rem} 

\subsubsection{Proof of \cref{thm:conv-smooth-general}} We reduce smooth convergence in positive codimension to codimension zero as in the proof of \cite[Lemma 2.9]{KKsurfaces}. In the notation used here, Lemma 2.7 loc.cit.~provides a commutative diagram consisting of two pullback squares
\begin{equation}\label{equ:smooth-thickening}
	\begin{tikzcd}[row sep=0.3cm] \Emb_{\partial_0}^\oo(D(\xi),N) \rar \dar &[-5pt] T_\infty \Emb_{\partial_0}^\oo(D(\xi),N) \rar \dar &[-5pt] \Map^{/\BO(n)}_{\partial_0}(D(\xi),N) \dar \\
	\Emb_{\partial_0}^\oo(M,N)_{[\xi]} \rar & T_\infty \Emb_{\partial_0}^\oo(M,N)_{[\xi]} \rar & \Map^{/\BO(m)}_{\partial_0}(M,N \times_{\BO(n)} \BO(n;m))_{[\xi]}.\end{tikzcd}
\end{equation}
Here $\partial_0 D(\xi)\coloneqq D(\xi)|_{\partial_0M}$, and the additional subscript $(-)_{[\xi]}$ indicates we restrict to those components that map to $[\xi] \in \Map_{\partial_0}(M,\BO(n-m))$. This makes the rightmost vertical map surjective on components, so the same holds for the other two vertical maps. Hence the bottom-left horizontal map is an equivalence if and only if the top-left horizontal map is. The latter holds by \cref{thm:conv-smooth-codim0} because $D(\xi)$ and $N$ are of the same dimension and $\partial(D(\xi))\backslash \interior(D(\xi)|_{\partial_0M})=D(\xi)|_{\partial_1M}\cup_{S(\xi)|_{\partial_1M}} S(\xi)\subset D(\xi)$ is a tangential $2$-type equivalence by assumption.

\subsubsection{Proof of \cref{thm:conv-top-general}} We try to proceed analogously to the smooth case, but will encounter additional difficulties due to a lack of topological normal bundles. Let us be more precise about this: for a locally flat $m$-dimensional submanifold $M\subset W$ in a $n$-dimensional manifold $W$, the map $T^{\ot}M\times T^{\ot}W|_{M}\colon M\ra \BTop(n)\times \BTop(m)$ has a preferred lift $\nu(W;M)$ along $\BTop(n;m)\ra \BTop(n)\times \BTop(m)$.
An \emph{open normal bundle} of $M$ in $W$ is a lift of $\nu(W;M)$ along $\BTop(n-m)\times\BTop(m)\ra \BTop(n;m)$, a \emph{closed normal bundle} is a lift along the map $\BHomeo(D^{n-m})\times\BTop(m)\ra \BTop(n;m)$ induced by the map $\BHomeo(D^{n-m})\ra\BTop(n-m)$ that takes interiors, and a \emph{normal vector bundle} is a lift along  $\BO(n-m)\times\BTop(m)\ra \BTop(n;m)$. There are examples of locally flat submanifolds $M\subset W$ which do not admit an open normal bundle \cite{RourkeSanderson}, and hence also no closed or vector bundle one. Conversely to the existence of normal bundles, given a lift
\begin{equation}\label{equ:lift-xi}
\begin{tikzcd}
&\BTop(n;m)\dar\\
M\arrow[ur,dashed,"\xi"]\arrow["T^{\ot}M",swap,r]&\BTop(m),
\end{tikzcd}
\end{equation}
one can ask whether there is a manifold $W$ containing $M$ that ``realises'' $\xi$, in the following senses:

\begin{dfn}\label{def:thickenings}Fix a lift $\xi$ as in \eqref{equ:lift-xi}.
\begin{enumerate}[leftmargin=*]
	\item A \emph{$\xi$-thickening} of $M$ is a $n$-dimensional manifold $W$ with a codimension $0$ submanifold $\partial_0W\subset \partial W$ such that $W$ and $\partial_0W$ contain $M$ and $\partial_0M$ as a locally flat submanifold respectively such that $\partial W\cap M=\partial_0M$, together with a homotopy between $\nu(W;M)$ and $\xi$ over $\BTop(m)$.
	\item A \emph{compact $\xi$-thickening} is a $\xi$-thickening such that $W$ and $\partial_0W$ are compact.
	\item\label{enum:mcg-thickening} A \emph{mapping cylinder $\xi$-thickening} is a  compact $\xi$-thickening for which there exists a map $\pi\colon \partial_1W\coloneqq \partial W\backslash \interior(\partial_0W)\ra M$ with $\pi^{-1}(\partial_0M)=\partial_0W\cap\partial_1W$ and a homeomorphism $h\colon \cyl(\pi)\cong W$ extending $\id_M$ such that $h^{-1}(\partial_0W)=\cyl(\pi_{\partial_0W\cap\partial_1W})$ where $\pi_{\partial_0W\cap\partial_1W}\colon \partial_0W\cap\partial_1W\ra \partial_0M$ is the restriction (see \cref{fig:mapping-cylinder} for an example).
\end{enumerate}
\end{dfn}

\begin{figure}
	\begin{tikzpicture}[scale=1.5]
		\foreach \n in {0,...,10} 
		{
			\draw [thin,black!20!white] (0.2*\n,1) -- (0.2*\n,0.05);
		}
		\foreach \n in {1,...,5} 
		{
			\draw [thin,black!20!white] (2+0.2*\n,1) -- (2+0.01*\n,0.05-0.005*\n);
		}
		\foreach \n in {1,...,9} 
		{
			\draw [thin,black!20!white] (3,1-.2*\n) -- (2+0.05,0.025-0.005*\n);
		}
		\foreach \n in {1,...,5} 
		{
			\draw [thin,black!20!white] (2+0.2*\n,-1) -- (2+0.01*\n,-0.05+0.005*\n);
		}
		\foreach \n in {0,...,10} 
		{
			\draw [thin,black!20!white] (0.2*\n,-1) -- (0.2*\n,-0.05);
		}
		\draw [ultra thick,green!50!black] (0,0) -- (2,0);
		\draw [ultra thick] (0,1) -- (3,1) -- (3,-1) -- (0,-1);
		\draw (0,1) -- (0,-1);
		\node at (1.5,-1) [below] {$\partial_1 W$};
		\node at (0,.5) [left] {$\partial_0 W$};
		\node at (0,0) [left] {$\partial_0 M$};
		\node at (1,0) [below] {$M$};
		\node at (2,0) [below] {$\partial_1 M$};
		\node at (1.5,.5) [fill=white] {$W$};
	\end{tikzpicture}
	\caption{A mapping cylinder thickening $W$ of $M$ as in \cref{def:thickenings} \ref{enum:mcg-thickening}. The grey lines indicate the map $\pi \colon \partial_1 W \to M$. Note that $\pi$ need not be injective.}
	\label{fig:mapping-cylinder}
\end{figure}
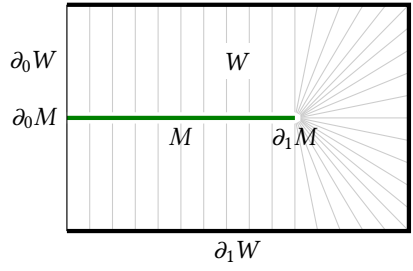

\begin{rem}\label{rem:codim-2-thickening}
Note that if $\xi$ lifts to $\BTop(n-m)\times \BTop(m)\ra \BTop(n;d)$, then the total space of the associated $\bfR^{n-m}$-bundle gives a $\xi$-thickening (strictly speaking only after removing a collar of $\partial_1M$ from $M$), and if $\xi$ lifts along $\BHomeo(D^{n-m})\times \BTop(m)\ra \BTop(n;m)$ then the total space of the associated $D^{n-m}$-bundle gives a mapping cylinder $\xi$-thickening. In particular, if $n-m\le 2$, then any $\xi$ has a mapping cylinder thickening, since the map $\BO(n-m)\times \BTop(m)\ra \BTop(n;m)$ is $(m+1)$-connected in this case (see \cite[Lemma 8.16]{KKDisc}, which relies on \cite[Theorem B]{KirbySiebenmannCodim} and \cite[9.3A, 9.3D]{FreedmanQuinn}). For $n \leq 4$ this is true in any codimension $n-m$ using the existence and uniqueness of smooth structures for $n \leq 3$ \cite{Moise} and \cite[9.3A, 9.3D]{FreedmanQuinn} for $n=4$.
\end{rem}

As a preparation to proving \cref{thm:conv-top-general}, we record two facts related to $\xi$-thickenings:

\begin{lem}\label{lem:shrink-to-mc}Fix a lift $\xi$ as in \eqref{equ:lift-xi}. If $n\ge5$ and $n-m\ge 3$, then any $\xi$-thickening of $M$ contains a mapping cylinder $\xi$-thickening of $M$.
\end{lem}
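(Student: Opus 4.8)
\textbf{Proof plan for \cref{lem:shrink-to-mc}.}
The plan is to take an arbitrary $\xi$-thickening $W$ of $M$, with $\partial_1W=\partial W\backslash\interior(\partial_0W)$, and to shrink it ``toward $M$'' using Quinn's theory of mapping cylinder neighbourhoods, exactly as in \cref{rem:alternative-models-layers} \ref{enum:latch-fm-bdy-topological}. The point is that $M\subset W$ is a locally flat submanifold of codimension $n-m\ge3$, so if additionally $n\ge 8$ (say) the pair $M\subset \interior(W)$ satisfies the hypotheses of \cite[3.1.1]{QuinnI}: $M$ is a closed, locally triangulable subspace (being a manifold) of a manifold of dimension $\ge 8$, has empty interior, is an ANR by \cite[Theorem 0.6.1]{DavermanVenema}, and is $1$-LC in $\interior(W)$ by Proposition 1.3.3 loc.cit.\ since local flatness in codimension $\ge3$ implies the $1$-LC condition. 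Hence $M$ has a \emph{mapping cylinder neighbourhood} $N\subset\interior(W)$: a codimension $0$ submanifold containing $M$, closed as a subspace, together with a map $\pi\colon\partial N\ra M$ and a homeomorphism $N\cong\cyl(\pi)$ relative to $M$. One then takes $W'$ to be $N$ together with an appropriate collar piece of $\partial_0W$, arranges $\partial_0W'\coloneq W'\cap\partial_0W$ and $\partial_1W'\coloneq \partial W'\backslash\interior(\partial_0W')$, and checks that $W'$ is a mapping cylinder $\xi$-thickening of $M$: it is compact (since $N$ is, as $M$ is compact), it is by construction of the form $\cyl(\pi)$ compatibly with the decomposition of its boundary into $\partial_0W'$ and $\partial_1W'$, and the normal data $\nu(W';M)$ agrees with $\nu(W;M)\simeq\xi$ because $W'$ is an open subset of $W$ containing $M$ so the topological tangent bundle and the lift $\nu(-;M)$ are unchanged.

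First I would handle the boundary: the submanifold $M$ meets $\partial W$ exactly in $\partial_0M$, and we want the mapping cylinder neighbourhood to restrict to a mapping cylinder neighbourhood of $\partial_0M$ inside $\partial_0W$. The clean way to do this is to first pass from $W$ to $W^\circ\coloneq W\backslash(\text{open collar of }\partial_1 W)$ — which does not change $M$ up to the isotopy equivalence relevant for the statement, and is again a $\xi$-thickening — and then to work in the interior, using the collar $\partial_0M\times[0,1)$ (coming from $\partial_0M\subset M$) together with a compatible collar of $\partial_0W$ to produce a neighbourhood of the form $(\text{mapping cylinder nbhd of }\partial_0M)\times[0,1)\cup(\text{mapping cylinder nbhd of }M\text{ in }\interior(W^\circ))$, glued along $\partial_0M$. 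Concretely I would apply \cite[3.1.1]{QuinnI} to the pair $\partial_0M\subset\interior(\partial_0W^\circ)$ (a locally flat codimension $n-m$ submanifold of a manifold of dimension $\ge n-1$) and to $M\subset\interior(W^\circ)$, and then glue the two mapping cylinder neighbourhoods along collars, using the uniqueness up to concordance of mapping cylinder neighbourhoods from \cite[p.~285]{QuinnI} to match them on the overlap $\partial_0M\times(0,1)$. The dimension bound ``$n\ge 8$'' from a literal reading of \cite[3.1.1]{QuinnI} is absorbed into the stated hypothesis ``$n\ge5$'' by the usual stabilisation trick: \cite[3.1.1]{QuinnI} in the stated form applies to ambient dimension $\ge n-m+5$, which given $n-m\ge3$ and $n\ge5$ means we only need $n\ge\max(5,n-m+5)$; for $n-m\in\{3,4\}$ this forces $n\ge8$, so for $5\le n\le 7$ one additionally invokes that $n-m\le 2$ is excluded and that in the remaining range one can reduce to the smooth case using \cite{Moise} for $n\le3$ and \cite[9.3]{FreedmanQuinn} for $n=4$ — but those never occur here since $n\ge5$ and $n-m\ge3$ already gives $n\ge8$. (I would double-check this arithmetic and state the exact codimension/dimension bookkeeping cleanly.)

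The main obstacle is the boundary bookkeeping: making sure the mapping cylinder structure on $N$ can be chosen \emph{compatibly} with the product-collar structure near $\partial_0M$ and near $\partial_0W$, so that the resulting $W'$ genuinely satisfies Definition \ref{def:thickenings}\ref{enum:mcg-thickening} — in particular that $h^{-1}(\partial_0W')=\cyl(\pi_{\partial_0W'\cap\partial_1W'})$ with $\pi_{\partial_0W'\cap\partial_1W'}$ the restriction of $\pi$. This is where one has to be careful that Quinn's construction is local and that the uniqueness statement (mapping cylinder neighbourhoods are concordant, hence ambient isotopic rel $M$ by \cite[p.~285]{QuinnI}) lets one normalise the two pieces to agree on the collar region. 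Once the neighbourhood is set up compatibly, verifying that $\nu(W';M)\simeq\xi$ is immediate from $M\subset W'\subset W$ being open inclusions containing $M$, and compactness of $W'$ and $\partial_0W'$ follows from compactness of $M$, $\partial_0M$, and the mapping cylinder neighbourhoods (which can be taken with compact closure since $M$ is compact). I expect everything else to be routine.
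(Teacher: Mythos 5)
Your overall route—shrinking an arbitrary $\xi$-thickening to a mapping cylinder neighbourhood of $M$ via Quinn-type neighbourhood theory—is legitimate and is in fact flagged in the paper as an alternative: the paper's own proof runs through Pedersen's Theorems 14 and 15 instead, and then remarks that \cite[3.1.1]{QuinnI} contains a similar result whose statement already includes the relative (boundary) case, which would let you avoid the collar-gluing and concordance-matching you sketch. So the choice of main tool is fine, and the $\nu(W';M)\simeq\xi$ and compactness verifications at the end are indeed routine.

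The genuine gap is in your dimension bookkeeping. You assert that ``$n\ge 5$ and $n-m\ge 3$ already gives $n\ge 8$'', so that the low-dimensional cases never occur. This is false: take $n=5$ and $m\le 2$ (or $n=6,7$ with $m\le n-3$); these satisfy both hypotheses of the lemma and have $n<8$. What your inequality $n\ge n-m+5$ actually encodes is the extra assumption $m\ge 5$, which is nowhere in the statement. Consequently the cases $n\in\{5,6,7\}$ are dismissed on false grounds rather than proved, and $n=5$ is precisely the delicate case: the paper's proof has to (i) argue that the $n\ge 6$ hypothesis in \cite[Theorem 15]{Pedersen} can be weakened to $n\ge 5$ for manifolds, and (ii) invoke four-dimensional input (\cite{KirbyStable}, \cite[2.2.2, 2.3.1]{QuinnIII}, i.e.\ Freedman--Quinn technology) to verify the handlebody and stability hypotheses on the boundary of a $5$-manifold. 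Any version of Quinn's \cite[3.1.1]{QuinnI} you use will have an analogous low-dimensional boundary issue, so your proof needs a dedicated argument for $5\le n\le 7$ (in particular $n=5$) rather than the claim that these cases are vacuous. A secondary, smaller point: as written you apply \cite[3.1.1]{QuinnI} to $M\subset\interior(W)$, but $M$ is not contained in the interior since $\partial_0M\subset\partial_0W$; your collar-splitting does address this, but it would be cleaner (and is what the paper suggests) to invoke the relative form of the neighbourhood theorem directly.
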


\begin{proof}Given a $\xi$-thickening $\partial_0W\subset W$ of $M$, we like to apply \cite[Theorem 14, 15]{Pedersen} to find compact codimension $0$-submanifolds $W'\subset W$ and $\partial_0W'\subset \partial_0W$ such that $\partial_0W\subset \partial W$, and $M\subset W'$, $\partial_0M\subset \partial_0W'$ are locally flat submanifolds with $\partial W\cap M=\partial_0M$, and such that $\partial_0W\subset W$ is homeomorphic to a mapping cylinder as in the definition of mapping cylinder $\xi$-thickening. To extract this from loc.cit.\,we use, firstly, that the ``handlebody'' and ``stable hypothesis'' in Theorem 14 loc.cit.\,are satisfied as a result of \cite[III.2.1]{KirbySiebenmann} and \cite[2.3.1]{QuinnIII} for the former and the latter by \cite{KirbyStable} and \cite[2.2.2]{QuinnIII}  (the $4$-dimensional results are needed to deal with boundaries of $5$-manifolds), secondly, that the assumption $n\ge6$ in \cite[Theorem 15]{Pedersen} can be weakened to $n\ge5$ in the case of manifolds, and finally that the proofs of \cite[Theorem 14, 15]{Pedersen} in the case $\partial_0M=\partial M$ can be extended to the more general case by slightly adapting the handle inductions involved. Since $W'\subset W$ is codimension $0$, we have $\nu(W';M)\simeq\nu(W;M)$ and as $W$ is a $\xi$-thickening we have $\nu(W';M)\simeq\xi$, so $(W',\partial_0W')$ is mapping cylinder $\xi$-thickening as required. 

\medskip

\noindent We point out for the interested reader that \cite[Theorem 3.6]{Johnson}, \cite[3.1.1]{QuinnI}, \cite[4.1]{QuinnControl}, and \cite[Theorem 2.1]{Edwards} contain results similar to the ones by Pedersen used above. In particular, the statement of \cite[3.1.1]{QuinnI} includes the relative case needed above.
\end{proof}

\begin{lem}\label{lem:relative-connectivity}
If $n\ge m\ge 0$, the map $(-)\times\bfR \colon \BTop(n-1,m-1)\ra\BTop(n,m)$ is $(m+1)$-connected.

\end{lem}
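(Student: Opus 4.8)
\textbf{Proof strategy for \cref{lem:relative-connectivity}.} The plan is to compare the fibre sequences defining $\Top(n-1,m-1)$ and $\Top(n,m)$ and reduce the claim to a connectivity statement about spaces of embeddings of Euclidean spaces. Recall that $\Top(n,m)\le \Top(n)$ is the subgroup of homeomorphisms of $\bfR^n$ that fix $\bfR^m=\bfR^m\times\{0\}^{n-m}$ pointwise, and that by the discussion around \eqref{equ:fix-or-move} and \eqref{equ:product-maps-top} there is an identification $\Top(n;m)\simeq \Top(m)\times\Top(n,m)$ via restriction and the product map. So it suffices to show that $(-)\times\bfR\colon \BTop(n-1,m-1)\ra\BTop(n,m)$ is $(m+1)$-connected; equivalently, that the map of topological groups $\Top(n-1,m-1)\ra\Top(n,m)$ sending $\varphi$ to $\varphi\times\id_{\bfR}$ is $m$-connected.

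First I would identify $\Top(n,m)$ with the fibre of a fibration built from embedding spaces. In the proof of \cref{lem:positive-codim-t1-fact} we already used that restriction induces an equivalence $\Top(n)/\Top(n,m)\xra{\simeq}\Emb^t(\bfR^m,\bfR^n)$, where the latter is the space of locally flat topological embeddings. Similarly $\Top(n-1)/\Top(n-1,m-1)\simeq\Emb^t(\bfR^{m-1},\bfR^{n-1})$. These fit into a commutative ladder of fibre sequences
\[
\begin{tikzcd}[row sep=0.3cm]
\Top(n-1,m-1)\rar\dar & \Top(n-1)\rar\dar & \Emb^t(\bfR^{m-1},\bfR^{n-1})\dar\\
\Top(n,m)\rar & \Top(n)\rar & \Emb^t(\bfR^{m},\bfR^{n})
\end{tikzcd}
\]
where the middle vertical map is $(-)\times\bfR$ and the right vertical map is $(-)\times\bfR$ as well (crossing an embedding with the identity of $\bfR$). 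Taking the induced map on vertical homotopy fibres over the identity, it suffices by the five lemma applied to the long exact sequences of homotopy groups to control the connectivity of the two outer vertical maps.

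The middle map $\Top(n-1)\ra\Top(n)$, $\varphi\mapsto\varphi\times\id$, has long been known to be highly connected in a stable range; more to the point, what I actually need is a \emph{relative} statement, and the cleanest route is to observe that $\mathrm{hofib}$ of $(-)\times\bfR\colon \Top(n-1)\ra\Top(n)$ is, by the topological version of the Morlet--Burghelea--Lashof analysis, equivalent to $\Top(n)/\Top(n-1)\simeq\Emb^t(\bfR^{n-1},\bfR^n)/\!/\Top(n-1)$-type data, which is $(n-2)$-connected (this is essentially the statement that $\Top(n-1)\ra\Top(n)$ is $(n-1)$-connected, a consequence of the Kirby--Siebenmann analysis of $\Top/\Top(n)$ together with stability; cf.\ \cite[Essay V]{KirbySiebenmann}). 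Since $n-1\ge m$, this map is $m$-connected. Thus the main point reduces to the right-hand vertical map: I would show that $(-)\times\bfR\colon\Emb^t(\bfR^{m-1},\bfR^{n-1})\ra\Emb^t(\bfR^m,\bfR^n)$ is $(m+1)$-connected. For this one uses the homotopy-theoretic model of spaces of locally flat embeddings of Euclidean spaces: by immersion-theory-type results in the topological category (the topological Smale--Hirsch theorem, or more directly the analysis of $\Emb^t(\bfR^m,\bfR^n)$ in terms of $\Top(n;m)$-structures), $\Emb^t(\bfR^m,\bfR^n)$ is equivalent to the space of bundle maps / to $\Top(n)/\Top(n,m)$, and crossing with $\bfR$ corresponds on this model to the map $\Top(n-1)/\Top(n-1,m-1)\ra\Top(n)/\Top(n,m)$, whose connectivity can be read off from the two previous inputs by another five-lemma argument — so in fact the three connectivity statements are intertwined and must be bootstrapped together.

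The honest way to organise this, and the step I expect to be the main obstacle, is therefore the simultaneous induction: one proves by induction on $n-m$ (or on $n$) that $(-)\times\bfR$ is $(m+1)$-connected on \emph{all three} of $\BTop(n-1,m-1)\ra\BTop(n,m)$, on the relevant quotient spaces, and on the ambient $\BTop(n-1)\ra\BTop(n)$, feeding the ladder of fibre sequences above into itself. The base case $m=0$ is trivial since $\BTop(n-1,-1)$ should be interpreted as $\BTop(n-1)$ and the statement becomes the (known) $1$-connectivity of $\BTop(n-1)\ra\BTop(n)$, i.e.\ $\pi_0$ and $\pi_1$ agree, which holds. The delicate dimension is, as always in this paper, $n=5$ (equivalently handle arguments near dimension $4$): here one invokes \cite[9.3A, 9.3D]{FreedmanQuinn} and the stable homeomorphism results to push the handle-theoretic input through, exactly as in the proof of \cref{lem:shrink-to-mc} and in \cite[Lemma 8.16]{KKDisc}. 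Once the connectivity of $\BTop(n-1)\ra\BTop(n)$ and of the embedding-space map are in hand in the required range, the conclusion for $\BTop(n-1,m-1)\ra\BTop(n,m)$ is a formal consequence of the five lemma applied to the ladder, and one is done. I would also double-check the edge case $n=m$ separately: there $\Top(n,n)$ is trivial, $\Top(n-1,n-1)$ is trivial, and the statement reduces to $\BTop(n-1)\ra\BTop(n)$ being $(n+1)$-connected restricted to a point, which is vacuous — so the content is really concentrated in $n-m\ge 1$.
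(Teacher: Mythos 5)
Your overall skeleton agrees with the paper's: both arguments run the ladder of fibre sequences $\Top(n,m)\to\Top(n)\to\Top(n)/\Top(n,m)\simeq \Emb^{\ot}(\bfR^m,\bfR^n)$ against its $(n-1,m-1)$-analogue and feed in the $(n-1)$-connectivity of $\BTop(n-1)\to\BTop(n)$ from Kirby--Siebenmann. But there is a genuine gap in your treatment of the third column. You correctly observe that the identification $\Emb^{\ot}(\bfR^m,\bfR^n)\simeq\Top(n)/\Top(n,m)$ makes the connectivity of the right-hand vertical map and of the left-hand vertical map "intertwined", and you propose to resolve this by a simultaneous induction. This cannot work as stated: the ladder only ever relates the three maps at the \emph{same} pair of indices $(n,m)$ versus $(n-1,m-1)$, so the five lemma leaves you with one equation in two unknowns (the fibre map and the quotient map), and the inductive hypothesis at $(n-1,m-1)$ versus $(n-2,m-2)$ concerns a different map and injects no new information. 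Your proposed base cases ($m=0$, or $n=m$) do not help either, since the induction never propagates across the quotient column. The tautological identification of the quotient with the embedding space contains no connectivity content, and "topological Smale--Hirsch" is not applicable here because the inclusion $\bfR^m\subset\bfR^n$ has positive codimension, where immersion theory does not compute spaces of embeddings.

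What is missing is an \emph{independent} estimate for the connectivity of $\Top(n-1)/\Top(n-1,m-1)\to\Top(n)/\Top(n,m)$. The paper obtains this (for $n\ge 5$ and $n-m\ge 3$) from the Haefliger--Millett theorem that the forgetful map $\Top(n)/\Top(n,m)\to G(n)/G(n-m)$ is $(2n-m-3)$-connected, combined with Freudenthal's suspension theorem to control $G(n-1)/G(n-1-m+1)\to G(n)/G(n-m)$; this is the hard geometric input that breaks your circle, and nothing in your outline substitutes for it. The remaining cases $n-m\le 2$ or $n\le 4$ are handled in the paper not by the ladder at all but by the $(m+1)$-connectivity of $\BO(n-m)\times\BTop(m)\to\BTop(n;m)$ (codimension $\le 2$ unknotting, via Kirby--Siebenmann and Freedman--Quinn), which you gesture at for $n=5$ but do not isolate as a separate case. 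To repair your argument, replace the bootstrap by these two external inputs.
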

\begin{proof}For $n-m\le 2$ or $n \leq 4$, this follows from \eqref{equ:fix-or-move} and the $(m+1)$-connectivity of the map $\BO(n-m)\times \BTop(m)\ra \BTop(n;m)$ mentioned in \cref{rem:codim-2-thickening}. For $n \geq 5$ and $n-m\ge 3$, we use that the forgetful map $\Top(n)/\Top(n,m)\ra G(n)/G(n-m)$ is $(2n-m-3)$-connected by work of Haefliger and Millett \cite[p.\,147]{LashofEmbedding}, so as $\BG(n-1)\ra \BG(n)$ is $(n-1)$-connected by Freudenthal's suspension theorem, and $2(n-1)-(m-1)-3\ge n-2$, it follows that $\Top(n-1)/\Top(n-1,m-1)\ra \Top(n)/\Top(n,m)$ is $(n-2)$-connected. As $\BTop(n-1)\ra\BTop(n)$ is $(n-1)$-connected \cite[V.§5.(4)]{KirbySiebenmann} and $n-m\ge 3$, it follows that $\BTop(n-1,m-1)\ra \BTop(n,m)$ is $(m+1)$-connected.\end{proof}

To prove \cref{thm:conv-top-general}, we refine the decomposition \eqref{equ:decomp-spherical-fibration} to be indexed over the components of the space $\smash{\Map^{/\BTop(m)}(M,\BTop(n;m))}$ in \eqref{equ:comp-to-spherical} as opposed to those of $\smash{\Map(M,\BG(n-m))}$. For the rest of this section, we fix a class $[\xi]\in \pi_0\Map^{/\BTop(m)}(M,\BTop(n;m))$ such that the target in
\begin{equation}\label{equ:xi-decomposition}
	\Emb^{\ot}_{\partial_0}(M,N)_{[\xi]}\lra T_\infty\Emb^{\ot}_{\partial_0}(M,N)_{[\xi]}
\end{equation}
is nonempty. \cref{thm:conv-top-general} follows once we show that this map is an equivalence if $\xi$ satisfies the assumption in the statement. If $m=n$ this is \cref{thm:conv-top-codim0}, so we assume $m<n$. First we prove:

\begin{lem}\label{lem:proof-with-thickening}If $n\ge5$, $N$ is smoothable, and there exists a mapping cylinder $\xi$-thickening $(W,\partial_0W,\pi,h)$ of $M$ such that $\partial_1W\subset W$ is a tangential $2$-type equivalence, then \eqref{equ:xi-decomposition} is an equivalence.
\end{lem}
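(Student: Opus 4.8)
The plan is to deduce \cref{lem:proof-with-thickening} from the codimension zero convergence result \cref{thm:conv-top-codim0} applied to the mapping cylinder $\xi$-thickening $W$, exactly mimicking the strategy used in the smooth case (proof of \cref{thm:conv-smooth-general}) via the thickening construction from \cite{KKsurfaces}. The two ingredients I expect to need are, firstly, an analogue of the double pullback diagram \eqref{equ:smooth-thickening} with $D(\xi)$ replaced by the mapping cylinder thickening $W$ and with smooth embeddings replaced by locally flat topological ones, and secondly, an identification of the relevant boundary of $W$ with a tangential $2$-type equivalence, which is hypothesised.

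\textbf{Key steps.} First I would set up the topological analogue of the diagram \eqref{equ:smooth-thickening}: a commutative diagram consisting of two pullback squares
\[
	\begin{tikzcd}[row sep=0.3cm] \Emb^{\ot}_{\partial_0 W}(W,N) \rar \dar & T_\infty \Emb^{\ot}_{\partial_0 W}(W,N) \rar \dar & \Map^{/\BTop(n)}_{\partial_0 W}(W,N) \dar \\
	\Emb^{\ot}_{\partial_0}(M,N)_{[\xi]} \rar & T_\infty \Emb^{\ot}_{\partial_0}(M,N)_{[\xi]} \rar & \Map^{/\BTop(m)}_{\partial_0}(M,N \times_{\BTop(n)} \BTop(n;m))_{[\xi]}
	\end{tikzcd}
\]
where $\partial_0 W = \partial_0 W \cap \partial_1 W$ in the notation of \cref{def:thickenings}, the vertical maps are restriction along $M \subset W$ (using the mapping cylinder structure $h\colon \cyl(\pi)\cong W$ to identify $W$-embeddings extending a fixed one on a collar of $M$ with $M$-embeddings up to isotopy equivalence, as in \cref{rem:triads}), and the $[\xi]$-subscript picks out the components mapping to $\xi$ under \eqref{equ:comp-to-spherical}. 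The right-hand square is a pullback because the space of normal data of a locally flat submanifold of dimension $m$ in a manifold of dimension $n$ is classified by $\BTop(n;m)$ and the mapping cylinder structure makes restriction along $M\subset W$ into the appropriate fibration; this is the content one needs to extract from the argument in \cite[Lemma 2.7]{KKsurfaces}, carried out in the topological category. The left-hand square being a pullback follows from the compatibility of embedding calculus with restriction along codimension zero inclusions and isotopy extension for topological embedding calculus (see \cref{sec:topological-isotopy-extension}, \cref{lem:tk-descent}), exactly as in loc.cit.

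Next, since the rightmost vertical map is surjective on components (a lift of $\xi$ over $\BTop(m)$ to the normal data of $M$ in $N$ always exists once $\Emb^{\ot}_{\partial_0}(M,N)_{[\xi]}$ is nonempty, and the two agree on $\partial_0 M$), all three vertical maps are surjective on components, so the bottom-left horizontal map is an equivalence if and only if the top-left horizontal map is one. The top-left map is $\Emb^{\ot}_{\partial_0 W}(W,N) \to T_\infty \Emb^{\ot}_{\partial_0 W}(W,N)$, which is a codimension zero embedding calculus approximation with $\dim W = \dim N = n \geq 5$ and $N$ smoothable (for $n=5$ one uses the weaker lifting hypothesis on $N \to \BTop(5)$). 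Its source and target satisfy the hypotheses of \cref{thm:conv-top-codim0} provided $\partial W \backslash \interior(\partial_0 W) = \partial_1 W \subset W$ is a tangential $2$-type equivalence, which is precisely what we assumed. So \cref{thm:conv-top-codim0} gives the equivalence and we are done.

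\textbf{Main obstacle.} The step I expect to require the most care is the construction and verification that the right-hand square above is genuinely a pullback in the topological category. In the smooth case, \cite[Lemma 2.7]{KKsurfaces} uses the disc bundle $D(\xi)$ and the tubular neighbourhood theorem in an essential way to relate embeddings of $M$ with embeddings of its disc-bundle thickening. Here there is no disc bundle, only a mapping cylinder thickening, and one must check that the map from the space of locally flat embeddings of $W$ extending $e_{\partial_0}$ on $\partial_0 W$ to the space of locally flat embeddings of $M$ (extending $e_{\partial_0}$) together with a compatible lift of normal data over $\BTop(n;m)$ is an equivalence. This should follow from the parametrised uniqueness of mapping cylinder neighbourhoods (the relative versions of results of Pedersen, Quinn, and Edwards invoked in the proof of \cref{lem:shrink-to-mc}), combined with parametrised isotopy extension for locally flat embeddings \cite{EdwardsKirby}, but assembling these into the statement that the square commutes and is cartesian — at the level of spaces, coherently — is the technical heart of the argument. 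The compatibility with embedding calculus on both sides (so that the \emph{left} square is also a pullback) then follows formally from the descent and isotopy extension properties already established, since $M \subset W$ has codimension zero.
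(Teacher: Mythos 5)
Your overall strategy is the paper's: set up the topological analogue of \eqref{equ:smooth-thickening} with the mapping cylinder $\xi$-thickening $W$ in place of $D(\xi)$, observe that the top-left map is a codimension-zero approximation to which \cref{thm:conv-top-codim0} applies thanks to the hypothesis on $\partial_1W\subset W$, and transport the equivalence down the diagram. However, there are two genuine gaps. First, to even write down the top row you need a locally flat embedding $e_{\partial_0 W}\colon \partial_0 W\hookrightarrow \partial N$ extending $e_{\partial_0 M}$; this does not come for free from the thickening and is not addressed in your proposal. The paper constructs it by first producing an extension to an \emph{immersion} via immersion theory, which amounts to a lifting problem against $\BTop(n-1;m-1)\ra\BTop(n;m)$ over $\BTop(m-1)\ra\BTop(m)$, solved by obstruction theory using the $(m+1)$-connectivity statement of \cref{lem:relative-connectivity}; one then shrinks the mapping cylinder $\partial_0W=h(\cyl(\pi_{\partial_0W\cap\partial_1W}))$ into the neighbourhood of $\partial_0M$ on which the immersion is an embedding.

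Second, your justification of the left square is wrong as stated: $M\subset W$ has codimension $n-m>0$, not zero, so the left square is not ``formal from descent and isotopy extension'' -- it is essentially the content of the lemma. The paper instead proves that the \emph{right} and the \emph{outer} squares are cartesian and deduces the left one. For the right square it runs topological manifold calculus on the presheaf $U\mapsto\Emb^{\ot}_{\partial_0W}(h(\cyl(\pi_U)),N)$ over a complete Weiss $\infty$-cover of $M$ by collars-plus-discs, the geometric input being an explicit isotopy equivalence, relative to a neighbourhood of $U$, between $h(\cyl(\pi_U))$ and a standard thickening $U'\cong \partial_0W\times[0,\infty)\sqcup \ul{k}\times\bfR^n$ (no parametrised uniqueness of mapping cylinder neighbourhoods is invoked, and it is not clear such a result is available in the generality you would need). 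For the outer square it identifies the rightmost vertical map with the restriction map of locally flat immersion spaces via immersion theory, so that by isotopy extension the map on vertical fibres becomes $\Emb^{\ot}_{\partial_0}(W,N\,\mathrm{rel}\,M)\ra\Imm^{\ot}_{\partial_0}(W,N\,\mathrm{rel}\,M)$, which is an equivalence because a compact family of immersions that are embeddings on $M\cup\partial_0W$ is an embedding on a neighbourhood thereof, into which $W$ can be isotoped. Your proposal gestures at the right ingredients but conflates the two squares and omits the steps that carry the actual content.
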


\begin{proof}Assuming for a moment that there exists an embedding $e_{\partial_0W}\colon \partial_0W\hookrightarrow \partial N$ extending $e_{\partial_0M}\colon \partial_0M\hookrightarrow \partial M$, we consider the following topological analogue of the diagram \eqref{equ:smooth-thickening} 
\[
	\hspace{-0.6cm}\begin{tikzcd}[column sep=0.2cm,row sep=0.3cm] \Emb_{\partial_0}^t(W,N) \rar{\simeq} \dar & T_\infty \Emb_{\partial_0}^t(W,N) \rar \dar & \Map^{/\BTop(n)}_{\partial_0}(W,N)\simeq \Map^{/\BTop(n;m)}_{\partial_0}(M,N\times_{\BTop(n)} \BTop(n;m))\dar \\
	\Emb_{\partial_0}^t(M,N)_{[\xi]} \rar & T_\infty \Emb_{\partial_0}^t(M,N)_{[\xi]} \rar & \Map^{/\BTop(m)}_{\partial_0}(M,N \times_{\BTop(n)} \BTop(n;m))_{[\xi]},\end{tikzcd}
\]
where the top-right equivalence is induced by the fact that $M\subset W$ is an equivalence since $W=h(\cyl(\pi))$, and that $M\subset W\ra \BTop(n)$ factors as $\nu(W;M)$ followed by $\BTop(n;m)\ra \BTop(n)$. The top left map is an equivalence by \cref{thm:conv-top-codim0}, by the assumption on $W$. The rightmost vertical map is induced by $\BTop(n;m)\ra \BTop(m)$ and surjects onto the components indicated with the $[\xi]$ subscript since $\nu(W;M)\simeq \xi$ because $W$ is a $\xi$-thickening. If we knew that
\begin{enumerate}
	\item \label{enum:top-conv-i}the right square is a pullback and
	\item\label{enum:top-conv-ii} the outer square is a pullback,
\end{enumerate}
then it would follow that the middle vertical map is also surjective on components and that the left square is a pullback, so the bottom left horizontal map would be an equivalence as claimed. To finish the proof, we are thus left to justify \ref{enum:top-conv-i} and \ref{enum:top-conv-ii}, as well as
\begin{enumerate}[resume]
	\item \label{enum:top-conv-iii} an extension  $e_{\partial_0W}$ of $e_{\partial_0M}$ as assumed above exists.
\end{enumerate}
The argument for \ref{enum:top-conv-i} is the essentially same as that in the proof of \cite[Lemma 2.7]{KKsurfaces}: applying topological manifold calculus (see \cref{sec:manifold-calc}) to the presheaf $\smash{M\supset U\mapsto\Emb^{\ot}_{\partial_0 W}(h(\cyl(\pi_{U})),N)}$ on the poset $\cO(M)$ of open subsets of $M$ containing a collar on $\partial_0M$ where $\pi_{U}\colon \pi^{-1}(U)\ra U$ is the restriction, arguing as in the cited proof reduces the claim to finding a complete Weiss $\infty$-cover $\cU \subset \cO(M)$ (see \cref{sec:descent}) such that for all $U \in \cU$ the maps $\smash{\Emb^{\ot}_{\partial_0}(U,N) \to T_\infty\Emb^{\ot}_{\partial_0}(U,N)}$ and $\smash{\Emb^{\ot}_{\partial_0W}(h(\cyl(\pi_{U})),N) \to T_\infty\Emb^{\ot}_{\partial_0 W}(h(\cyl(\pi_{U})),N)}$ are equivalences.

\medskip

\noindent We now produce such a cover. By gluing on external collars, we may assume that there is a collar $\partial_0 W \times [0,\infty)$ of $\partial_0 W$ which is respected by the mapping cylinder $\xi$-thickening. We then take $\cU$ to be the complete Weiss $\infty$-cover $\cU \subset \cO$ given by those open subsets $U \subset M$ that are the disjoint union of $k$ open $m$-discs in $M$ and a collar on $\partial_0 M$, such that (a) the collar is given as $\partial_0 M \times [0,\epsilon)\subset \partial_0 W \times [0,\infty)$ for some $\epsilon>0$, and (b) each of the open $m$-discs is contained in a chart exhibiting $M$ as locally flat. Since $U$ is a disjoint union of collar and some discs, the map $\smash{\Emb^{\ot}_{\partial_0}(U,N) \to T_\infty\Emb^{\ot}_{\partial_0}(U,N)}$ is an equivalences, so we are left to argue that $\smash{\Emb^{\ot}_{\partial_0W}(h(\cyl(\pi_{U})),N) \to T_\infty\Emb^{\ot}_{\partial_0 W}(h(\cyl(\pi_{U})),N)}$ is one as well. To do so, we show that $h(\cyl(\pi_{U}))$ is isotopy equivalent relative to $\partial_0 W$ to a subset $U'\subset W$ that clearly has this property. Namely, we choose $U' \subset W$ so that $U' \cap \partial W = \partial_0 W$, $U' \cap M = U$ and there is a homeomorphism of pairs $(U',U) \cong (\partial_0 W \times [0,\infty),\partial_0 M \times [0,\infty)) \sqcup \ul{k} \times (\bfR^n,\bfR^m)$. To prove that $U'$ and $h(\cyl(\pi_U))$ are isotopy equivalent, note that by construction, there is an equality $U' \cap M = h(\cyl(\pi_U)) \cap M$. This induces a bijection between their components so it suffices to construct the isotopy equivalence between each component of $U'$ and the corresponding component of $h(\cyl(\pi_U))$ separately. For the collar components there is nothing to do, as they are equal as a consequence of the assumption that the mapping cylinder $\xi$-thickening respects the fixed collar. To give the construction for the disc components we may assume $k=1$ and $\partial_0 W = \varnothing$, and identify $U$ with $\bfR^m$ and $U'$ with $\bfR^{n} = \bfR^m \times \bfR^{n-m}$. Then there exist for all integers $N \geq 0$ a real number $\delta_N>0$ such that $\{(x,y)\in \bfR^m \times \bfR^{n-m}\mid |x| \leq N+1,\,|y|\leq \delta_N\} \subset h(\cyl(\pi_U))$, and picking a continuous function $\rho \colon \bfR_{\geq 0} \to \bfR_{\geq 1}$ so that $1/\rho(x) < \delta_{\lfloor x \rfloor}$ the self-embedding of $\bfR^{n}$ given by $(x,y) \mapsto (x,y/\rho(|x|))$ has image in $h(\cyl(\pi_U))$ and is isotopic to the identity. Similarly, there is for all $N \geq 0$ an $\epsilon_N$ such that $h(\{(z,t) \mid |\pi(z)| \leq N+1,\,|z| \geq 1-\epsilon_N\})$ is contained in $U'$, and picking a continuous function $\sigma \colon \bfR_{\geq 0} \to \bfR_{\geq 1}$ so that $1/\sigma(x) < \epsilon_{\lfloor x \rfloor}$ the self-embedding of $h(\cyl(\pi_U))$ given by $h(z,t) \mapsto h(z,(1-1/\sigma(|x|))t)$ has image in $U'$ and is isotopic to the identity (see \cref{fig:isotopy-piu}).
	
\medskip
	
\noindent To show \ref{enum:top-conv-ii}, we identify the rightmost vertical map (without the $[\xi]$-subscript) via topological immersion theory \cite{Lees, Gauld,LashofImmersion, Kurata} with the restriction map $\smash{\Imm^\ot_{\partial_0}(W,N)\ra \Imm^\ot_{\partial_0}(M,N)}$. Here the domain is the space of topological immersions $f\colon M\looparrowright N$ with $f|_{\partial_0M}=e_{\partial_0 M}$ and $f^{-1}(\partial N)=\partial_0M$. The target is defined similarly, using $W$ and $e_{\partial_0W}$. By isotopy extension, the map on vertical fibres over an embedding $e\in\Emb^{\ot}_{\partial_0}(W,M)$ of the outer square thus agrees with the forgetful map $\Emb_{\partial_0}^t(W,N\,\mathrm{rel}\,M) \to \Imm_{\partial_0}^t(W,N\,\mathrm{rel}\,M)$ between the spaces of embeddings respectively immersions that extend the given embeddings $e_{\partial_0 W}, e$ on $\partial_0 W$ and $M$. This is an equivalence because any compact family of locally flat immersions that are embeddings on $M \cup \partial_0 W$ consists of embeddings on an open neighbourhood $V$ of $M \cup \partial_0 W$, and we can isotope the mapping cylinder neighbourhood $W$ into $V$ fixing a neighbourhood of $M \cup \partial_0 W$. 

\medskip
	
\noindent We are left to show \ref{enum:top-conv-iii}. It suffices to construct an extension of $e_{\partial_0M}\colon \partial_0M\hookrightarrow \partial N$ to an immersion $f_{\partial_0M}\colon \partial_0W\looparrowright \partial N$, since the latter will be an embedding in a neighbourhood of $\partial_0M$ in $ \partial_0W$ and we can isotope $\partial_0W=h(\cyl(\pi_{\partial_0W\cap\partial_1W}))$ into any neighbourhood of ${\partial_0M}$. Translated via immersion theory as recalled above, the underlying immersion of $e_{\partial_0M}$ corresponds to a map $\partial_0M\ra \partial N \times_{\BTop(n-1)}\BTop(n-1;m-1)$ over $\BTop(m-1)$ and the task is to extend it to a map over $\BTop(n-1;m-1)$ with respect to the map $\nu(\partial_0W;\partial_0M)\colon \partial_0M\ra \BTop(n-1;m-1)$, in other words to solve the dashed lifting problem in
\[
	\begin{tikzcd}
	\{0,1\}\times\partial_0M\dar{\subset}\rar&\BTop(n-1;m-1)\dar\rar &\BTop(n;m)\dar\\
	\left[0,1\right]\times\partial_0M\arrow[dashed,ur]\arrow[dotted,rru]\rar&\BTop(m-1)\rar&\BTop(m)
	\end{tikzcd}
\]
The class $\xi$ induces a dotted lift as indicated, and this can be extended to dashed lift as required by obstruction theory, since $\partial_0M$ is $(m-1)$-dimensional and the map between vertical fibres of the right hand square is $(m+1)$-connected by \cref{lem:relative-connectivity}.
\end{proof}

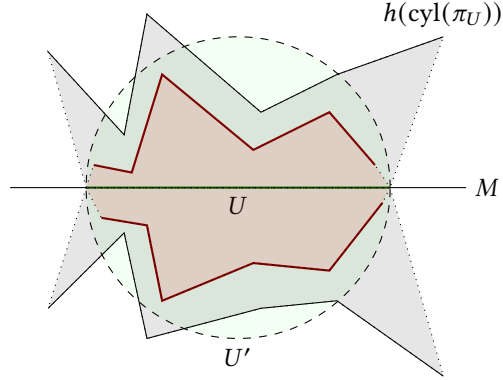
\begin{figure}
	\begin{tikzpicture}
		\draw [dashed,fill=green!5!white] (0,0) circle (2cm);
		\draw (-3,0) -- (3,0);
		\draw [thick,green!50!black] (-2,0) -- (2,0);
		\node at (3,0) [right] {$M$};
		\node at (0,-2) [below] {$U'$};
		\node at (0,0) [below] {$U$};
		
		\draw [dotted,fill=black,fill opacity=.1](-2,0) -- (-2.5,1.8) -- (-1.5,.7) -- (-1.2,2.3) -- (0.3,1) -- (1.3,1.5) -- (2.7,2) -- (2,0) -- cycle;
		\node at (2.7,2) [above] {$h(\cyl(\pi_U))$};
		\draw (-2.5,1.8) -- (-1.5,.7) -- (-1.2,2.3) -- (0.3,1) -- (1.3,1.5) -- (2.7,2);
		\draw [dotted,fill=red,fill opacity=.1](-2,0) -- (-1.9,.3) -- (-1.4,.2) -- (-1,1.5) -- (0.2,.5) -- (1.2,1) -- (1.8,.3) -- (2,0) -- cycle;
		\draw [thick,red!50!black](-1.9,.3) -- (-1.4,.2) -- (-1,1.5) -- (0.2,.5) -- (1.2,1) -- (1.8,.3);
		
		\draw [dotted,fill=black,fill opacity=.1](-2,0) -- (-2.5,-1.6) -- (-1.5,-.6) -- (-1.2,-2) -- (0.3,-1.6) -- (1.3,-1.5) -- (2.7,-2.5) -- (2,0) -- cycle;
		\draw (-2.5,-1.6) -- (-1.5,-.6) -- (-1.2,-2) -- (0.3,-1.6) -- (1.3,-1.5) -- (2.7,-2.5);
		\draw [thick,red!50!black] (-1.8,-.4) -- (-1.2,-.5) -- (-1,-1.5) -- (0.2,-1) -- (1.2,-1.1) -- (1.9,-0.2);
		\draw [dotted,fill=red,fill opacity=.1](-2,0) -- (-1.8,-.4) -- (-1.2,-.5) -- (-1,-1.5) -- (0.2,-1) -- (1.2,-1.1) -- (1.9,-0.2) -- (2,0) -- cycle;
	\end{tikzpicture}
	\caption{By pushing along the interval direction of the mapping cylinder increasingly much as we approach the boundary of $U$, we can isotope $h(\cyl(\pi_U))$ (the dark region) into $U'$ (the green region), with image given by the red region.}
	\label{fig:isotopy-piu}
\end{figure}

To conclude the proof of \cref{thm:conv-top-general}, we are left to show:
\begin{lem}
A mapping cylinder $\xi$-thickening as in \cref{lem:proof-with-thickening} always exists.
\end{lem}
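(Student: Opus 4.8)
The goal is to produce, starting from the given class $[\xi]\in\pi_0\Map^{/\BTop(m)}(M,\BTop(n;m))$ with $T_\infty\Emb^{\ot}_{\partial_0}(M,N)_{[\xi]}\neq\varnothing$ and the hypothesis that $S(\xi)\cup_{S(\xi)|_{\partial_1M}}\partial_1M\to M$ satisfies the condition in \cref{rem:codim-three-smooth}~\ref{enum:pos-codim-condition-alternative}, a mapping cylinder $\xi$-thickening $(W,\partial_0W,\pi,h)$ of $M$ with $\partial_1W\subset W$ a tangential $2$-type equivalence. We split into the two cases $n-m\le 2$ and $n-m\ge 3$, reflecting the two cases in \cref{lem:shrink-to-mc} and \cref{rem:codim-2-thickening}.

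First I would treat $n-m\le 2$. By \cref{rem:codim-2-thickening}, since the map $\BO(n-m)\times\BTop(m)\to\BTop(n;m)$ is $(m+1)$-connected, the classifying map $\xi\colon M\to\BTop(n;m)$ lifts (uniquely up to contractible choice) along this map, so $\xi$ is realised by an actual vector bundle $\eta$ of rank $n-m$ over $M$; its associated disc bundle $D(\eta)$ is a mapping cylinder $\xi$-thickening, with $\partial_1 W=D(\eta)|_{\partial_1M}\cup S(\eta)$ and $\pi\colon\partial_1W\to M$ the projection. It remains to verify $\partial_1W\subset W$ is a tangential $2$-type equivalence. Since $W=D(\eta)$ deformation retracts onto $M$ and $\nu(W;\partial_1W)$ is trivial (it is a collar), the pair $(W,\partial_1W)$ is equivalent to $(M,\,\partial_1M\cup_{S(\eta)|_{\partial_1M}}S(\eta))$, and the stable tangent bundle of $W$ restricted to $\partial_1W$ is classified by the composite with $M\to\BO$; this is exactly the data in \cref{rem:codim-three-smooth}~\ref{enum:pos-codim-condition-alternative}, which by that remark depends only on the underlying spherical fibration $S(\xi)$ of $\eta$. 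So the hypothesis on $S(\xi)$ gives precisely the tangential $2$-type equivalence, via the characterisation in \cref{rem:rem-smooth-conv}~\ref{enum:rem-smooth-conv-2type}.

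Next I would treat $n-m\ge 3$ (so automatically $n\ge m+3\ge 3$; the relevant range for \cref{lem:shrink-to-mc} is $n\ge 5$, which holds by hypothesis). Here $\xi$ need not lift to a vector bundle, so I would first construct \emph{some} $\xi$-thickening $W_0$ of $M$ — for instance by using topological immersion theory \cite{Lees,Gauld,LashofImmersion,Kurata}: the lift $\xi$ corresponds to a formal immersion datum, hence (since $\dim M<n$) to an honest immersion $M\looparrowright \bfR^n$ with normal data $\xi$, whose image has a regular (mapping-cylinder) neighbourhood in $\bfR^n$ by \cite[3.1.1]{QuinnI}, or alternatively by thickening $M$ inside the total space of a microbundle classified by $\xi$ and appealing to Kister--Mazur; the details here are routine given the existence statements already invoked in the proof of \cref{thm:conv-top-general}. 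Then \cref{lem:shrink-to-mc} (applicable since $n\ge 5$ and $n-m\ge 3$) replaces $W_0$ by a mapping cylinder $\xi$-thickening $W$. Finally, the tangential $2$-type computation is identical to the codimension $\le 2$ case: $W$ deformation retracts onto $M$, the pair $(W,\partial_1W)$ is equivalent to $(M,\,S(\xi)\cup_{S(\xi)|_{\partial_1M}}\partial_1M)$ by \cref{def:thickenings}~\ref{enum:mcg-thickening}, and the condition in \cref{rem:codim-three-smooth}~\ref{enum:pos-codim-condition-alternative}, which by construction only depends on the spherical fibration $S(\xi)$, holds by hypothesis; hence $\partial_1W\subset W$ is a tangential $2$-type equivalence.

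\textbf{Main obstacle.} The delicate point is the construction of an initial $\xi$-thickening in the range $n-m\ge 3$: one must know that every formal normal-bundle datum $\xi\colon M\to\BTop(n;m)$ over a locally flat situation is genuinely realised by a codimension-$0$ thickening respecting $\partial_0M$ and the given embedding $e_{\partial_0}$ on the boundary. This is where topological immersion theory and the regular-neighbourhood results of Quinn/Pedersen/Edwards (already cited in the proof of \cref{lem:shrink-to-mc}) do the work, including the $4$- and $5$-dimensional subtleties for boundary behaviour; once the thickening exists, passing to a mapping cylinder one is pure \cref{lem:shrink-to-mc}, and the tangential $2$-type bookkeeping is formal. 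In the codimension $\le 2$ case there is essentially no obstacle, since the relevant map of classifying spaces is highly connected and $\xi$ lifts to a vector bundle outright.
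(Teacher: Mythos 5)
Your proposal follows essentially the same route as the paper: in codimension $\le 2$ lift $\xi$ to a vector bundle via the $(m+1)$-connectivity of $\BO(n-m)\times\BTop(m)\to\BTop(n;m)$ and take the disc bundle, and in codimension $\ge 3$ produce a $\xi$-thickening from an immersion and shrink it to a mapping cylinder one via \cref{lem:shrink-to-mc}. Two points need repair, though. First, immersion theory does not give you an immersion $M\looparrowright\bfR^n$ from the datum $\xi$ (that would require the underlying map $M\to\BTop(n)$ to be nullhomotopic); the correct move, which is what the paper does, is to use the assumed nonemptiness of $T_\infty\Emb^{\ot}_{\partial_0}(M,N)_{[\xi]}$, hence of the $[\xi]$-component of $\Map^{/\BTop(m)}_{\partial_0}(M,N\times_{\BTop(n)}\BTop(n;m))$, to obtain via immersion theory an immersion $M\looparrowright N$ realising $\xi$, and then take its induced neighbourhood. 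Second, in the codimension $\ge 3$ case your verification of the tangential $2$-type condition identifies $(W,\partial_1W)$ with $(M,\,S(\xi)\cup_{S(\xi)|_{\partial_1M}}\partial_1M)$; this identification is not part of the mapping-cylinder data coming out of \cref{lem:shrink-to-mc} and would need a separate argument. The paper avoids this entirely by observing that $\partial_1W\subset W$ is equivalent to the inclusion of the complement of a locally flat submanifold of codimension $\ge 3$, hence automatically $2$-connected, so no hypothesis on $\xi$ is needed in this case (consistent with \cref{rem:codim-three-smooth} and \cref{rem:simpler-cond-top}).
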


\begin{proof}For $n-m\le 2$ or $n \leq 4$, we can use the thickening from \cref{rem:codim-2-thickening} whose inclusion $\partial_1W\subset W$ is given by $D(\xi')|_{\partial_1M}\cup_{S(\xi')_{\partial_1M}}S(\xi')\subset D(\xi')$ where $(\xi',T^{\ot} M)\colon M\ra\BO(n-m)\times\BTop(m)$ is a lift of $\xi$. Since the composition $\BO(n-m)\ra \BTop(n;m)\ra \BG(n-m)$ is the forgetful map, $\smash{D(\xi')|_{\partial_1M}\cup_{S(\xi')_{\partial_1M}}S(\xi')\subset D(\xi')}$ is a tangential $2$-type equivalence if the conditions in \cref{rem:rem-smooth-conv} \ref{enum:rem-smooth-conv-2type} are satisfied for the $S^{n-m-1}$-fibration underlying $\xi$. This holds by assumption.

\medskip

\noindent For $n-m\ge 3$, we proceed differently: since $\smash{\Map_{\partial_0}^{\BTop(m)/}(M,N\times_{\BTop(n)}\BTop(n;m))_{[\xi]}}$ was assumed to be nonempty, immersion theory as recalled in the proof \cref{lem:proof-with-thickening} ensures that there exists an immersion  $f\colon M\looparrowright N$ with $f|_{\partial_0M}=e_{\partial_0 M}$ and $f^{-1}(\partial N)=\partial_0M$ whose induced lift $M\ra \BTop(n;m)$ of $T^{\ot}M\colon M\ra\BTop(m)$ is equivalent to $\xi$. Every such immersion admits an \emph{induced neighbourhood}, meaning that there is a $n$-dimensional manifold $U$ together with a codimension $0$ submanifold $\partial_0U\subset \partial U$ such that $U$ and $\partial_0U$ contain $M$ and $\partial_0M$ as locally flat submanifold with $\partial U\cap M=\partial_0M$ and such that $f\colon M\looparrowright N$ extends to an immersion $F\colon U\looparrowright N$ with $F^{-1}(\partial N)=\partial_0U$ (see \cite[Section 3]{Kurata}, following \cite[\S 7]{HaefligerPoenaru}). Since $F$ extends $f$, it follows that this induced neighbourhood is a $\xi$-thickening, and applying \cref{lem:shrink-to-mc}, we can shrink it to a mapping cylinder $\xi$-thickening $(W,\partial_0W)$. Since $W\cong \cyl(\pi \colon \partial_1W\ra M)$, the inclusion $\partial_1W\subset M$ is equivalent to $W\backslash M\subset W$, so it is a tangential $2$-type equivalence because it is even $2$-connected since it is the inclusion of the complement of a locally flat submanifold of codimension $\ge3$ (see e.g.\,\cite[Theorem 2]{EilenbergWilder} for a reference for this fact that applies to topological manifolds).
\end{proof}
	
\subsection{The Alexander trick for homology $4$-spheres} \label{sec:grw-alexander-extension} As a sample application of our improved convergence results, we extend the results in \cite{GRWContractible} on one-sided $h$-cobordisms and homeomorphisms of contractible manifolds from dimensions $d\ge6$ to $d=5$:

\begin{thm}\label{thm:grw-extended}
Theorems A--C in \cite{GRWContractible} as well as the results in Section 4 loc.cit.\,hold for $d=5$.
\end{thm}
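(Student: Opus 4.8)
The strategy is to revisit the arguments in \cite{GRWContractible} and locate precisely where the dimension restriction $d\ge 6$ is used, and then verify that each such use can be replaced by an input that is now available in dimension $d=5$. The two places where the restriction enters in loc.\,cit.\,are: first, the convergence of smooth embedding calculus in the relevant range, which was invoked via \cite{GoodwillieKlein, GoodwillieWeiss} and requires handle codimension $\ge 3$; and second, the use of the $s$-cobordism theorem and Quinn's handle-smoothing results, which hold in dimension $5$ provided the fundamental group is ``good'' in the sense of Freedman--Quinn---and for contractible manifolds or manifolds arising as neighbourhoods of the spaces in question, the relevant fundamental groups are trivial or at least of the required type. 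The first point is exactly what \cref{thm:conv-smooth-codim0} (and its positive-codimension versions \cref{thm:conv-smooth-general}) now supplies: a convergence statement for $\Emb^o_{\partial_0}(M,N)\to T_\infty\Emb^o_{\partial_0}(M,N)$ in dimension $d=5$ under the weaker hypothesis that $\partial_1 M\subset M$ is a tangential $2$-type equivalence, which is satisfied in all the situations considered in \cite{GRWContractible} because the relevant inclusions there are $2$-connected (the manifolds are contractible or the one-sided $h$-cobordisms in question have $2$-connected inclusions).

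\textbf{Key steps.} First I would isolate the list of statements in \cite{GRWContractible}: Theorems A--C and the auxiliary results of Section~4, and for each one extract the ``geometric input'' (handle decompositions, $s$-cobordism theorem, smoothing theory) and the ``embedding-calculus input'' (convergence of the Taylor tower). Second, for the embedding-calculus input, I would check that in each application the source manifold $M$ (or its relevant submanifold inclusion $\partial_1 M\subset M$) is $2$-connected, hence a tangential $2$-type equivalence, so that \cref{thm:conv-smooth-codim0} applies with $d=5$; where positive codimension appears I would instead cite \cref{thm:conv-smooth-general} after checking the analogous $2$-connectivity of $D(\xi)|_{\partial_1 M}\cup S(\xi)\subset D(\xi)$, which for a trivial or $2$-connected situation follows from \cref{rem:codim-three-smooth}\ref{enum:pos-codim-condition-alternative}. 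Third, for the geometric input I would invoke the $5$-dimensional versions of the relevant tools: Freedman--Quinn's $s$-cobordism theorem and handle-smoothing in dimension $5$ for trivial (or good) fundamental groups, together with Quinn's stable handle-decomposition results, exactly the ingredients already cited in the proof of \cref{thm:conv-top-general} and in \cref{rem:top-convergence-rate}\ref{enum:connectivity-t-k-dim-5}. Fourth, I would then run the arguments of \cite{GRWContractible} verbatim with these substitutions, paying attention to any place where a connectivity estimate on the maps $T_k\to T_{k-1}$ (rather than just convergence of the whole tower) is needed; here \cref{rem:top-convergence-rate} and the Mittag--Leffler discussion in \cref{rem:rem-smooth-conv}\ref{enum:rem-smooth-conv-2type} provide what is required, since the relevant homotopy and homology groups remain countable and the towers remain Mittag--Leffler.

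\textbf{Main obstacle.} The principal difficulty is not any single deep new input but rather the bookkeeping: one must go through \cite{GRWContractible} carefully and confirm that \emph{every} invocation of dimension $d\ge 6$ is of one of the two types above, and in particular that none of them secretly uses a fact (e.g.\ uniqueness of smooth structures, Whitney-trick arguments in the ``wrong'' dimension, or a handle-cancellation requiring $d\ge 6$) that genuinely fails or becomes conditional in dimension $5$. I expect the subtlest point to be the $4$-dimensional boundary phenomena: several of the $5$-dimensional manifolds involved have $4$-manifold boundaries, where smoothing theory and handle theory are delicate, so one has to make sure that the boundary conditions imposed (the manifolds being contractible, or the $h$-cobordisms being products near the relevant part of the boundary) are strong enough that the Freedman--Quinn machinery and Quinn's controlled results apply---this is precisely the kind of care already exercised in the proof of \cref{thm:conv-top-codim0} in the case $m=n=5$, and I would model the argument on that. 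Once this verification is complete, the theorem follows by simply citing the modified arguments together with \cref{thm:conv-smooth-codim0} and \cref{thm:conv-smooth-general} in place of the Goodwillie--Klein--Weiss convergence result.
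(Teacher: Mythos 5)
Your overall strategy — locate each use of $d\ge 6$ in \cite{GRWContractible} and replace the Goodwillie--Klein--Weiss convergence input by \cref{thm:conv-smooth-codim0} — is exactly how the paper handles Theorems B, C and the results of Section 4: for a one-sided $h$-cobordism $C\colon B\leadsto B'$ the inclusion $B'\subset C$ is a homotopy equivalence by definition, hence a tangential $2$-type equivalence, so the smooth convergence theorem applies in dimension $5$ and the rest of the arguments in loc.\,cit.\ go through verbatim. For that part your proposal matches the paper.

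For Theorem A, however, you have misidentified the actual obstruction, and your proposed fix would not close it. The dimension restriction in GRW's proof of Theorem A does not come from the $s$-cobordism theorem or from handle decompositions; it comes from the very first step, where a compact contractible \emph{topological} $d$-manifold $\Delta$ is equipped with a smooth structure via smoothing theory (automatic for $d\ge 6$). For $d=5$ this fails: the boundary $\partial\Delta$ is a topological homology $4$-sphere, and neither Freedman--Quinn machinery nor ``good fundamental group'' hypotheses give you a smooth structure on it, so $\Delta$ need not be smoothable and the reduction to \emph{smooth} embedding calculus is unavailable. The paper's proof instead runs the GRW reduction entirely in the topological category, reducing Theorem A to the contractibility of $\Emb^{\ot}_{\partial\Delta}(\Delta\setminus\interior(D^5),\Delta)$ and hence to the \emph{topological} convergence statement, and then invokes the $d=5$ addendum of \cref{thm:conv-top-codim0}, which requires only that the topological tangent bundle $\Delta\to\BTop(5)$ lifts to $\BO(5)$ — trivially true since $\Delta$ is contractible. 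Your proposal gestures at \cref{thm:conv-top-codim0} only for ``boundary bookkeeping'' but never commits to the topological route, and your plan to supply the missing geometric input via the $5$-dimensional $s$-cobordism theorem is doubly off: it is not needed for Theorems A--C, and the \emph{smooth} $s$-cobordism theorem genuinely fails in dimension $5$ (which is precisely why the classification of one-sided $h$-cobordisms, a corollary GRW deduce for $d\ge 6$, does \emph{not} extend to $d=5$). You should restructure the Theorem A argument around topological embedding calculus and the formal-smoothability hypothesis rather than around smoothing $\Delta$.
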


\begin{proof}Fix a compact $d$-dimensional one-sided  $h$-cobordism $C \colon B \leadsto B'$ in the sense of Definition 1.1 loc.cit., a $d$-manifold $M$, and an embedding $e\colon B\hookrightarrow \partial M$. The ``first proof'' of Theorem C in loc.cit.\,in dimensions $d\ge6$ proceeds in two steps: it first reduces the statement to $\Emb^\oo_B(C,M) \to T_\infty \Emb^\oo_B(C,M)$ being an equivalence and then uses that this is true, by Goodwillie, Klein, and Weiss' convergence results. The first step goes through for $d\ge 3$, the second step needs $d\ge6$. For $d=5$, we can use \cref{thm:conv-smooth-codim0} instead, since the inclusion $B'\cong \partial M\backslash \mathrm{int}(B)\subset M$ is an equivalence on tangential $2$-types because it is an equivalence by assumption. Using this extension of Theorem C loc.cit.\,to dimension $d=5$, the proof of Theorem B loc.cit.\,applies verbatim. The same holds for the proof of Theorem A on the contractibility of $\Homeo_\partial(\Delta)$ for compact contractible topological $d$-manifolds $\Delta$ \emph{as long as $\Delta$ admits a smooth structure}. In loc.cit.\,it is used that this is automatic for $d\ge6$ by smoothing theory but this is not available for $d=5$ if we do not already have a smooth structure on the boundary, so we need to argue differently: 

\medskip

\noindent The reduction of Theorem A to Theorem C in loc.cit.\,goes through in the topological category and reduces the claim to the space of topological embeddings $\Emb^t_{\partial \Delta}(\Delta\backslash\interior(D^d),\Delta)$ being contractible. A mild adaptation of the argument in the proof of Theorem C loc.cit.\,further reduces the latter statement to showing that topological embedding calculus approximation $\Emb^t_{\partial \Delta}(\Delta\backslash\interior(D^d),\Delta)\ra T_\infty\Emb^t_{\partial \Delta}(\Delta\backslash\interior(D^d),\Delta)$ is an equivalence. By \cref{thm:conv-top-codim0}, for this it suffices that $\Delta$ is \emph{formally} smoothable, i.e.\,that $\Delta\ra \BTop(5)$ lifts to $\BO(5)$. This holds trivially since $\Delta$ is contractible.

\medskip

\noindent The deduction of the results in Section 4 loc.cit.\,from Theorem C works verbatim for $d=5$.
\end{proof}

\begin{rem}Galatius and Randal-Williams observe in \cite[p.~2]{GRWContractible} that their Theorem C combined with the $s$-cobordism theorem recovers the classification of one-sided $h$-cobordisms in dimension $d\ge 6$. The $s$-cobordism theorem fails smoothly for $d=5$, so \cref{thm:grw-extended} does not yield the analogous result in that dimension. It does, however, prove that a $5$-dimensional one-sided $h$-cobordism has a one-sided inverse (for $h$-cobordisms this is originally due to Stallings \cite[Theorem 4]{Stallings}), since  any one-sided $h$-cobordism $C \colon B \leadsto B'$ of dimension $d\ge5$ embeds by \cref{thm:grw-extended} into $B\times [0,1]$ and the complement gives a one-sided inverse.\end{rem}

\appendix

\section{Truncation of unital operads}\label{sec:truncation} In this appendix, we establish the results announced in \cref{sec:truncation-operads}: we introduce the categories $\Opd_{\le k}^{\un}$ of $k$-truncated unital operads, show that they fit into a converging tower 
\begin{equation}\label{equ:truncation-tower-app}
	\Opd^{\le \bullet,\un}=\Big(\Opd^\un=\Opd^{\infty,\un} \lra \cdots \lra  \Opd^{\le 2,\un}\lra \Opd^{\le 1,\un}\simeq\Cat\Big)\in\Tow(\Cat),
\end{equation}
and prove that all functors in this tower admit explicit right adjoints which are fully faithful. 

\begin{rem}The results in this appendix were independently proven as part of a preprint by Dubey--Liu \cite[Theorem 5.17]{DubeyLiu} which appeared while this work was finalised. \end{rem}

\subsection{Dendroidal Segal spaces}In this appendix, we use a model of the category of unital operads $\Opd^\un$ different to the one used in the body of this work: \emph{complete dendroidal closed Segal spaces}. We summarise the relevant definitions and results.
\subsubsection{Operads and dendroidal spaces}\label{sec:op-dend}
Recall from \cite[Section 3.2]{HeutsMoerdijk} the category $\Omega$ of (finite and rooted) trees. The inclusion of linear trees gives a fully faithful functor $i\colon \Delta\rightarrow \Omega$ from the simplicial category (see p.\,96 loc.cit.). A \emph{dendroidal space} $X$ is a space-valued presheaf on $\Omega$. A \emph{complete dendroidal Segal space} is a dendroidal space $X$ which 
\begin{enumerate}[leftmargin=*]
	\item satisfies the \emph{Segal condition}: for any tree $\smash{T=T_1\cup_eT_2}$ obtained by grafting a tree $T_2$ along its root onto a leaf $e$ of a tree $T_1$, the map $\smash{X(T)\ra X(T_1)\times_{X(e)}X(T_2)}$ induced by the inclusions $e\subset T_i\subset T$ for $i=1,2$ is an equivalence (see Lemma 12.7 loc.cit.), and
	\item is \emph{complete}: $i^*X\in\PSh(\Delta)$ is complete Segal space in the sense of Rezk (see p.\,491 loc.cit.).
\end{enumerate}

\noindent By combining results of Barwick \cite[Section 10]{Barwick} and Chu--Haugseng--Heuts \cite[Theorem 1.1]{ChuHaugsengHeuts}, there is an equivalence of categories
\begin{equation}\label{equ:dendroidal-lurie-comp}
	\PSh(\Omega)^{\seg,\com}\simeq \Opd
\end{equation}
where $\PSh(\Omega)^{\seg,\com}\subset\PSh(\Omega)$ is the full subcategory on the complete dendroidal Segal spaces. Recall that the nerve $N\colon \Cat\ra\PSh(\Delta)$ (i.e.\,the Yoneda embedding followed by restriction along the inclusion $\Delta\subset\Cat$) induces an equivalence 
\begin{equation}\label{equ:rezk-equ}
	\Cat\simeq \PSh(\Delta)^{\seg,\com}
\end{equation} 
onto the full subcategory $\PSh(\Delta)^{\seg,\com}\subset \PSh(\Delta)$ of complete Segal spaces \cite[Theorem 4.11]{JoyalTierney}. Via this equivalence and \eqref{equ:dendroidal-lurie-comp}, the functor $(-)^{\col}\colon \Opd\ra \Cat$ from \cref{sec:operads-intro} agrees with the restriction $i^\ast\colon \PSh(\Omega)^{\seg,\com}\ra \PSh(\Delta)^{\seg,\com}$. In particular, the core $(\cO^{\col})^{\simeq}\in\cS$ of the category of colours of an operad $\cO\in\Opd$ agrees with the value $X_{\cO}(\eta)\in \cS$ of the corresponding complete dendroidal Segal space $X_{\cO}\in\PSh(\Omega)^{\seg,\com}$ at the unique tree $i([0])\coloneqq \eta\in\Omega$ without vertices \cite[p.\,92]{HeutsMoerdijk}. The space of multi-operations $\Mul_{\cO}((c_i)_{i\in S};c)$ can be recovered as follows: writing $C_{S}$ for the \emph{$S$-corolla}---the tree with one vertex and leaves indexed by $S$ (see p.\,93 loc.cit.)---we have
\begin{equation}\label{equ:dendroidal-multiops}
	\textstyle{\Mul_\cO((c_i)_{i\in S}; c)\simeq  \fib_{((c_i)_{i\in S},c)}\Big(X_{\cO}(C_S)\ra (\bigsqcap_{s\in S}X_{\cO}(\eta))\times X_{\cO}(\eta)\Big)}
\end{equation}
where the map we take fibres of is induced by the maps $\eta\ra C_S$ in $\Omega$ corresponding to the inclusion of the leaves and the root. To obtain the operadic composition, given finite sets $S\in \Fin$ and $S_s\in \Fin$ for $s\in S$, one considers the unique leaf-preserving morphism $l\colon C_{\sqcup_{s\in S} S_s}\ra T_S$ where $T_S$ is the result of of grafting $C_{T_{s}}$ for $s\in S$ to $C_S$ at the $s$th leaf. Using the Segal property, this gives a map $X_{\cO}(C_S)\times_{\bigsqcap_{s\in S}X_\cO(\eta)}\bigsqcap_{s\in S}X_{\cO}(C_{S_s})\ra X_{\cO}(C_{\sqcup_{s\in S} S_s})$ which corresponds to the operadic composition. From \eqref{equ:dendroidal-multiops} one sees in particular that $\cO$ is unital if and only if the map $X(C_0)\ra X(\eta)$ induced by the inclusion $\eta\ra C_0$ of the root is an equivalence. Writing $\PSh(\Omega)^{\seg,\com,\wc}\subset \PSh(\Omega)^{\seg,\com}$ for the full subcategory on those $X$ for which this is the case, \eqref{equ:dendroidal-lurie-comp} thus restricts to an equivalence \begin{equation}\label{equ:dendroidal-lurie-comp-unital}
	\PSh(\Omega)^{\seg,\com,\wc}\simeq \Opd^{\un}.
\end{equation}

\subsubsection{Unital operads and closed dendroidal spaces}\label{sec:op-dend-un}
Following \cite[p.\,92, 97]{HeutsMoerdijk}, we write $\cOmega\subset \Omega$ for the full subcategory of \emph{closed trees}, i.e.\,trees with no leaves. A \emph{closed dendroidal space} is a space-valued presheaf on $\cOmega$. The inclusion $\iota\colon \cOmega\hookrightarrow \Omega$ has a left adjoint $\cl\colon  \Omega\ra \cOmega$ induced by sending a tree $T$ to its \emph{closure} $\smash{\overline{T}}$, obtained from $T$ by adding a new vertex at the end of each leaf (see p.\,98 loc.cit.). Left Kan extension along $\iota$ induces an equivalence
\begin{equation}\label{equ:weakly-closed-equ}
	\iota_!\colon \PSh(\cOmega)\xlra{\simeq } \PSh(\Omega)^\wc
\end{equation}
from the category of closed dendroidal spaces to the category to the full subcategory $\PSh(\Omega)^\cl\subset \PSh(\Omega)$ of \emph{weakly closed} dendroidal spaces, which are dendroidal spaces $X$ for which the map $\smash{X(\overline{T})\ra X(T)}$ induced by the inclusion $T\ra\overline{T}$ is an equivalence for all trees. Moreover, this left Kan extension $\iota_!$ agrees with the restriction $\cl^*$ along the closure functor (see p.\,510 loc.cit.). For dendroidal \emph{Segal} spaces $X$, the condition of being closed is equivalent to the condition considered above, namely that $X$ maps $\eta\ra\overline{\eta}=C_0$ to an equivalence (see Proposition 12.49 loc.cit.). Combining \eqref{equ:dendroidal-lurie-comp-unital} and \eqref{equ:weakly-closed-equ}, we thus obtain an equivalence
\begin{equation}\label{equ:untial-op-dend}
	\PSh(\cOmega)^{\seg,\com}\simeq  \Opd^{\un}
\end{equation} 
where $\PSh(\cOmega)^{\seg,\com}\subset \PSh(\cOmega)$ is the full subcategory of closed dendroidal spaces whose associated dendroidal space is complete and satisfies the Segal condition.

\begin{rem}\label{rem:poset-model} One advantage of $\cOmega$ over $\Omega$ is that there is the following equivalent way to think about $\cOmega$ (see \cite[p.\,97]{HeutsMoerdijk}): $\cOmega$ is equivalent to the category of finite posets $(E,\le)$ with a unique maximal element such that for every $e\in E$ the subposet $E_{\le e}\coloneqq \{x\in E\mid e\le x\}$ is totally ordered. Morphisms are given by map of posets $\varphi\colon (E,\le) \ra (E',\le)$ that preserve the incompatibility relation, i.e.\,for $e,e'\in E$ with $e'\not\ge e\not\le e'$, we have $\varphi(e')\not\ge \varphi(e)\not\le \varphi(e')$. For a closed tree $T$, the associated poset $(E(T),\le )$ is given by its set of edges with the relation that $e\le e'$ if $e'$ is ``closer to the root'', i.e.\,if the unique shortest path from the root to $e$ contains $e'$ (see page 92 loc.cit.). Writing $\oP(E(T))$ for the poset of subsets of $E(T)$ ordered by inclusion, we have a functor $\oP(E(T))\ra \cOmega_{/T}$ sending $A\subset E(T)$ to its \emph{downward-completion} $T_A\coloneqq \{e\in E(T)|\exists a\in A\colon e\ge a\}\cup\{\text{root}\}\subset E(T)$ which we can think of as a tree with a map to $T$.\end{rem}

\subsection{Truncated operads} Writing $\cOmega_{\leq k}\subset \cOmega$ for the full subcategory on those closed trees all of whose vertices have at most $k$ incoming edges, we have inclusions of full subcategories $\smash{\cOmega_{\leq 1} \subset \cOmega_{\leq 2} \subset \cdots \subset \cOmega_{\le \infty}\eqcolon\cOmega}$. The first subcategory has a simple description: it is isomorphic to $\Delta$ induced by corestricting the composition $(\cl\circ\iota)\colon \smash{ \Delta\ra \Omega\ra\cOmega}$. The tower of full subcategories induces by restriction a tower of categories that factors $(-)^\col\colon \Opd^{\un}\ra\Cat$
\begin{equation}\label{equ:truncated-operads}
	{\Opd^{\un}\overset{\eqref{equ:untial-op-dend}}{\simeq} \PSh(\cOmega_{\le \infty})^{\seg,\com} \lra \cdots \lra \PSh(\cOmega_{\leq 2})^{\seg,\com} \lra \PSh(\cOmega_{\leq 1})^{\seg,\com}\overset{\eqref{equ:rezk-equ}}{\simeq}}\Cat
\end{equation}
where $\smash{\PSh(\cOmega_{\leq k})^{\seg,\com}\subset \PSh(\cOmega_{\leq k})}$ is the full subcategory on those presheaves that satisfy the analogue of the Segal and completeness condition for trees in $\cOmega_{\leq k}$ (note that the completeness condition for a dendroidal space only depends on its restriction to $\cOmega_{\leq 1}$, but the Segal condition does not). The tower \eqref{equ:truncated-operads} is our preferred model for truncation of unital operads, meaning that we \emph{define} \eqref{equ:truncation-tower-app} as \eqref{equ:truncated-operads}. In particular, the category of \emph{$k$-truncated unital operads} is
\[
	\Opd^\un_{\leq k} \coloneqq \PSh(\cOmega_{\leq k})^{\seg,\com}\quad\text{for }1\le k\le \infty.
\]
By mimicking the dendroidal description of spaces of multi-operations and operadic composition from \cref{sec:op-dend-un}, a $k$-truncated operad $\cO\in \Opd^\un_{\leq k}$ has a category of colours $\cO^{\col}$, spaces of multi-operations $\Mul_{\cO}((c_i)_{i\in S};c)$ for all sets $S\in \Fin_{\le k}$ of cardinality at most $k$ and composition maps between them that satisfy the classical axioms of an operad up to higher coherent homotopy. 

\begin{lem}The tower \eqref{equ:truncated-operads} in $\Cat$ converges.\end{lem}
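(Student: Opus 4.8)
The statement to prove is that the tower \eqref{equ:truncated-operads} of categories
\[
	\Opd^{\un} \simeq \PSh(\cOmega_{\le \infty})^{\seg,\com} \lra \cdots \lra \PSh(\cOmega_{\le 2})^{\seg,\com}\lra \PSh(\cOmega_{\le 1})^{\seg,\com}\simeq \Cat
\]
converges, i.e.\ that the induced functor $\Opd^{\un}\to \lim_k \Opd^{\un}_{\le k}$ is an equivalence. The strategy is to identify this tower with a tower obtained by restricting presheaf categories along the filtration $\cOmega_{\le 1}\subset \cOmega_{\le 2}\subset\cdots$ of $\cOmega$, and to use that $\PSh(-)\colon \Cat^{\op}\to\Cat$ preserves limits together with the fact that $\cOmega = \bigcup_{k\ge 1}\cOmega_{\le k}$ is a filtered colimit in $\Cat$.

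\textbf{Step 1: identify the tower on the nose.} First I would observe that $\cOmega = \colim_k \cOmega_{\le k}$ expresses $\cOmega$ as a filtered (in fact sequential) colimit of its full subcategories in $\Cat$; this is because every closed tree has only finitely many vertices, so every tree and every morphism between trees lies in $\cOmega_{\le k}$ for $k$ large enough (one can cite \cite[03DE]{LurieKerodon} as in the proof of \cref{prop:tower}). Since $\PSh(-) = \Fun((-)^{\op},\cS)\colon \Cat^{\op}\to\Cat$ is a right adjoint (to $\Fun(-,\cS)^{\op}$, exactly as recalled in the proof of \cref{prop:tower} \ref{enum:tower-convergence}), it sends this colimit to a limit, so the functor
\[
	\PSh(\cOmega)\lra \lim_k \PSh(\cOmega_{\le k})
\]
is an equivalence, with the maps in the tower given by restriction along the inclusions $\cOmega_{\le k-1}\subset \cOmega_{\le k}$.

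\textbf{Step 2: pass to the Segal--complete subcategories.} The remaining point is that this equivalence restricts to one between the full subcategories of objects satisfying the Segal and completeness conditions; equivalently, that an object $X\in\PSh(\cOmega)$ lies in $\PSh(\cOmega)^{\seg,\com}$ if and only if each restriction $X|_{\cOmega_{\le k}}$ lies in $\PSh(\cOmega_{\le k})^{\seg,\com}$. For this I would argue as follows. The completeness condition depends only on the restriction to $\cOmega_{\le 1}\simeq\Delta$ (it is Rezk-completeness of $i^*X$), so it is detected by any single stage with $k\ge 1$. The Segal condition for $X$ at a grafting $T = T_1\cup_e T_2$ in $\cOmega$ involves only the values of $X$ at $T$, $T_1$, $T_2$ and $e$; choosing $k\ge 2$ large enough that all of $T,T_1,T_2,e$ lie in $\cOmega_{\le k}$, this instance of the Segal condition for $X$ coincides with the corresponding instance for $X|_{\cOmega_{\le k}}$. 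Conversely, every grafting decomposition within a $\cOmega_{\le k}$ is in particular a grafting decomposition in $\cOmega$. Hence $X$ satisfies the (full) Segal condition precisely when every $X|_{\cOmega_{\le k}}$ satisfies the $\cOmega_{\le k}$-Segal condition. Since limits of full subcategories are computed as the full subcategory on objects all of whose images lie in the given subcategories, this shows that the equivalence of Step~1 carries $\PSh(\cOmega)^{\seg,\com}$ isomorphically onto $\lim_k \PSh(\cOmega_{\le k})^{\seg,\com}$, which is exactly the convergence statement under the identifications \eqref{equ:untial-op-dend} and \eqref{equ:rezk-equ}.

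\textbf{Expected main obstacle.} The argument is essentially formal once the dendroidal model is in place, so I do not anticipate a serious obstacle; the one point that needs genuine care is Step~2, namely being precise that the Segal condition is "local" in the tree, i.e.\ that each Segal equivalence only constrains finitely many values of the presheaf and is therefore visible at a finite stage of the tower. The subtlety flagged in the excerpt---that the completeness condition depends only on $\cOmega_{\le 1}$ while the Segal condition genuinely uses higher stages---is exactly what makes this work: completeness is automatically inherited, and Segalness is detected stagewise because each grafting lives in some $\cOmega_{\le k}$. A small bookkeeping check is that the restriction functors in the tower \eqref{equ:truncated-operads} really are the ones obtained from Step~1 (rather than some a priori different factorisation of $(-)^{\col}$), but this holds by construction since both are given by restriction of presheaves along the inclusions $\cOmega_{\le k-1}\subset\cOmega_{\le k}$.
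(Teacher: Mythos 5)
Your proposal is correct and follows essentially the same route as the paper's own proof: first the identification $\PSh(\cOmega)\simeq\lim_k\PSh(\cOmega_{\le k})$ via the filtered colimit $\cOmega=\bigcup_k\cOmega_{\le k}$ and the fact that $\PSh(-)$ preserves limits, then the observation that completeness depends only on the restriction to $\cOmega_{\le 1}$ while each instance of the Segal condition is detected at a finite stage. Nothing to add.
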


\begin{proof}By the same argument as for \cref{prop:tower} \ref{enum:tower-convergence}, the map $\smash{\PSh(\cOmega_{\leq \infty})\ra \lim_k\PSh(\cOmega_{\leq k})}$ is an equivalence, so it suffices to show that this equivalence restricts to an equivalence when imposing the completeness and Segal conditions. For the former, this follows from the observation that for $X\in\PSh(\cOmega_{\leq \infty})$ being complete only depends on $X|_{\cOmega_{\leq 1}}$. For the latter, it follows by observing that the Segal condition resulting from a grafting of a fixed tree $T$ depends only on $X|_{\cOmega_{\leq k}}$ for a choice of finite $k$ such that all vertices in $T$ have $\le k$ incoming edges.
\end{proof}
The strategy of proof for the following result was kindly suggested to us by Gijs Heuts:

\begin{thm}\label{thm:truncation}For $1\le k\le j\le \infty$, truncation $\tau^\ast\colon \Opd^\un_{\le j} \ra \Opd^\un_{\leq k}$ has a fully faithful right adjoint 
\[
	\tau_\ast\colon \Opd^\un_{\le k} \lra \Opd^\un_{\leq j}.
\] 
For $\cO\in \Opd^\un_{\le j}$, a finite set $S\in \Fin_{\le k}$, and colours $c,c_i\in \cO^{\col}$ for $i\in S$, there is an equivalence
\[
	\Mul_{(\tau_*\cO)}((c_i)_{i\in S}; c) \simeq \lim_{T \subseteq S,\,\lvert T\rvert \leq k} \Mul_{\cO}((c_i)_{i\in T}; c)
\]
where the limit is induced by restriction along the inclusion of corollas $C_T\ra C_S$ for $T\subset S$.
\end{thm}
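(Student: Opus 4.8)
The plan is to work entirely within the dendroidal model and construct the right adjoint as a right Kan extension, then verify it preserves the Segal and completeness conditions, then compute its values on corollas. Write $\tau\colon \cOmega_{\le k}\hookrightarrow \cOmega_{\le j}$ for the inclusion of full subcategories (so $\tau^*$ on presheaves is genuinely restriction). Ordinary presheaf theory gives an adjunction $\tau^*\dashv \tau_*$ on $\PSh(\cOmega_{\le k})\to \PSh(\cOmega_{\le j})$ with $\tau_*$ the right Kan extension along $\tau$, and since $\tau$ is fully faithful, $\tau_*$ is fully faithful and the counit $\tau^*\tau_*\simeq \mathrm{id}$ is an equivalence. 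The work is then: (i) if $X\in\PSh(\cOmega_{\le k})$ satisfies the Segal and completeness conditions, so does $\tau_*X\in\PSh(\cOmega_{\le j})$, so that the adjunction restricts to the full subcategories $\PSh(\cOmega_{\le k})^{\seg,\com}\subset\PSh(\cOmega_{\le k})$ and $\PSh(\cOmega_{\le j})^{\seg,\com}\subset\PSh(\cOmega_{\le j})$; and (ii) compute $(\tau_*X)(C_S)$ for an $S$-corolla with $|S|\le j$. Once (i) holds, fully faithfulness of the restricted right adjoint and the equivalence of the counit are inherited from the unrestricted statement.

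\textbf{Key steps.} First I would record the limit formula for the right Kan extension: for a closed tree $T\in\cOmega_{\le j}$,
\[
	(\tau_*X)(T)\simeq \lim_{(T\to T')\in (\cOmega_{\le k})_{T/}} X(T'),
\]
where $(\cOmega_{\le k})_{T/}=\cOmega_{\le k}\times_{\cOmega_{\le j}}(\cOmega_{\le j})_{T/}$ is the category of maps out of $T$ into $k$-truncated closed trees. The crucial combinatorial input is to understand this slice category. Using the poset description of $\cOmega$ from \cref{rem:poset-model} — a closed tree is a finite poset $(E,\le)$ with unique maximum and totally ordered up-sets, and morphisms preserve incompatibility — one sees that a map $T\to T'$ with $T'$ $k$-truncated is, roughly, a way of "collapsing" the high-valence vertices of $T$ down to valence $\le k$; the initial such collapses are obtained by replacing each vertex of valence $>k$ by iterated grafting of valence-$\le k$ corollas. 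I would isolate a cofinal subcategory of $(\cOmega_{\le k})_{T/}$ — for the corolla $T=C_S$ this should be (the opposite of) the poset of subsets $T\subseteq S$ with $|T|\le k$, via the inclusions $C_T\to C_S$, generalising \cref{lem:simplified-right-kan} — and verify cofinality by the usual criterion (\cite[4.1.3.1]{LurieHTT}): for each object $(C_S\to T')$ the comma category of factorisations through some $C_T\to C_S$ with $|T|\le k$ is weakly contractible, indeed has an initial object given by the image of the underlying map of edge-sets. This yields the claimed formula $\Mul_{\tau_*\cO}((c_i)_{i\in S};c)\simeq\lim_{T\subseteq S,|T|\le k}\Mul_\cO((c_i)_{i\in T};c)$ after applying the fibre description \eqref{equ:dendroidal-multiops} and using that taking fibres commutes with the limit.

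\textbf{Preservation of the conditions.} For completeness: this depends only on $X|_{\cOmega_{\le 1}}$, and $\tau_*$ is the identity on that part since $\cOmega_{\le 1}\subset \cOmega_{\le k}$ and the restriction of $\tau_*X$ to $\cOmega_{\le 1}$ is $\tau^*\tau_*X\simeq X$ restricted further; so $\tau_*X$ is complete iff $X$ is. For the Segal condition on $\tau_*X$: given a grafting $T=T_1\cup_e T_2$ of closed trees in $\cOmega_{\le j}$, I would show the slice $(\cOmega_{\le k})_{T/}$ decomposes compatibly with the gluing — a $k$-truncation of $T$ restricts to $k$-truncations of $T_1$ and $T_2$ agreeing on $e$, and conversely such compatible data glue — which, combined with the Segal condition for $X$ over $k$-truncated trees and the fact that limits commute with the pullback $X(-)\times_{X(e)}X(-)$, gives the Segal equivalence for $\tau_*X$. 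This is where I expect the genuine work to lie: one must be careful that the slice category over a grafted tree really is (pro-)equivalent to the fibre product of the slice categories over the pieces, and that the edge $e$ contributes only a single copy (it has valence $\le 1$, hence survives any truncation), so the "boundary" matching is along $X(e)$ with no derived corrections.

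\textbf{Main obstacle.} The hard part will be the combinatorics of the slice categories $(\cOmega_{\le k})_{T/}$ and establishing the cofinality statements — both the corolla computation and, more delicately, the compatibility of these slices with grafting that drives the Segal-preservation argument. Everything else (the formal adjunction, fully faithfulness of $\tau_*$, the completeness clause, and reading off multi-operations from the corolla values) is routine once that combinatorial backbone is in place. A clean way to organise the grafting argument may be to reduce to the generating case where $T$ has a single vertex of valence $>k$ and express a minimal $k$-truncation of $T$ as an explicit iterated grafting of corollas, then check the slice over this single-vertex case by hand using the poset model; the general case follows by induction on the number of high-valence vertices, using that grafting-compatibility is stable under composition.
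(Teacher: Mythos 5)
Your overall architecture — restrict, right Kan extend, check that locality is preserved, and compute on corollas by a cofinality argument — is sound in principle, and your step (i) is in fact logically equivalent to the paper's key claim (the paper instead descends the \emph{left} adjoint through the localisation by showing that $L_k\circ\tau^*$ sends the Segal and completeness maps of $\cOmega_{\le j}$ to equivalences, which is the dual formulation of "$\tau_*$ preserves local objects"). However, there are two concrete problems. First, a variance error that runs through the whole proposal: the right Kan extension of a \emph{presheaf} along $\cOmega_{\le k}\hookrightarrow\cOmega_{\le j}$ is a limit over (the opposite of) the \emph{over}-category $(\cOmega_{\le k})_{/T}$ of $k$-truncated trees mapping \emph{into} $T$, not over the category of maps out of $T$ as in your displayed formula (compare the proof of \cref{lem:simplified-right-kan}, which you cite, where the cofinality is checked in $(\Env(\cO)_{\le\ell})_{/(c_s)_{s\in S}}$). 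With your under-category the answer is simply wrong: for $k=1$ the closed $2$-corolla $\overline{C}_2$ admits \emph{no} maps to linear trees in the poset model of \cref{rem:poset-model}, since its two leaf edges are incomparable and incomparability must be preserved; the limit over the empty category is a point, contradicting the formula $\Mul_{\tau_*\cO}((c_1,c_2);c)\simeq\Map(c_1,c)\times\Map(c_2,c)$ you are trying to prove. Your proposed "cofinal" subcategory of inclusions $C_T\to C_S$ and the factorisation-through-the-image argument live in the over-category and so do not even embed into the slice you wrote down, and your combinatorial picture of objects of the slice as "collapses of high-valence vertices" (and of minimal $k$-truncations as iterated graftings of small corollas) again describes maps \emph{out of} $T$, which are irrelevant here.

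Second, even after fixing the variance, the Segal-preservation step — which you correctly identify as the crux — is not supplied. The over-category $(\cOmega_{\le k})_{/(T_1\cup_eT_2)}$ does \emph{not} decompose as a fibre product of the over-categories of $T_1$ and $T_2$: a $k$-truncated tree mapping into a grafting can straddle the grafting edge $e$, so the naive "restrict-and-glue" description of the slice fails. What is actually needed is the resolution of the restricted representable $\tau^*y(T)$ as the colimit of the $\tau^*y(T_A)$ over proper subsets $A$ of the set of leaves of $T$ (the paper's \cref{lem:resolve-by-leaves}, proved by showing the mapping spaces form a strongly cartesian cube of component inclusions), followed by an induction on the number of leaves to reduce the Segal map for an arbitrary grafting in $\cOmega_{\le j}$ to generating Segal maps in $\cOmega_{\le k}$. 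This resolution is also exactly what makes the corolla computation work ($\tau^*y(\overline{C}_S)\simeq\colim_{T\subseteq S,\,|T|\le k}\tau^*y(\overline{C}_T)$, then map into the local object $X$). Your proposal does not contain this idea, and without it neither the Segal preservation nor the corolla formula goes through. Note also that the paper's route via the localisation calculus (descend $L_k\circ\tau^*$, then the right adjoint is automatically $L_j\circ\tau_*\circ j$) lets one work exclusively with colimits of representables and never requires proving directly that the presheaf-level $\tau_*$ lands in local objects; if you keep your right-adjoint-side formulation you will still need the identical combinatorial input.
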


\noindent As preparation, we show the following lemma in which we use the notation of \cref{rem:poset-model}:

\begin{lem}\label{lem:resolve-by-leaves}Fix trees $S\in\cOmega_{\le k}$ and $T\in \cOmega\backslash \cOmega_{\le k}$ and consider the subset $E(T)_{\min}\subset E(T)$ of minimal elements in $(E(T),\le)$ (i.e.~the leaves). Then the map
\[
	\colim_{A\subsetneq E(T)_{\min}}\Map_{\cOmega}(S,T_A)\lra  \Map_{\cOmega}(S,T)
\]
induced by the functor $T_{(-)}\colon \oP(E(T))\ra \cOmega_{/T}$ is an equivalence. 
\end{lem}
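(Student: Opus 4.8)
The claim is a combinatorial statement about mapping spaces (in fact discrete sets, since $\cOmega$ is an ordinary category) in the category of closed trees, so I would work entirely with the poset model of $\cOmega$ from \cref{rem:poset-model}: a closed tree is a finite poset $E$ with a unique maximal element such that every up-set $E_{\geq e}$ is totally ordered, and morphisms are poset maps preserving incompatibility. Unwinding, a morphism $S \to T$ in $\cOmega$ is completely determined by where it sends the maximal edge and the edges just below, and the whole map factors through the downward-completion $T_A$ of a subset $A \subseteq E(T)$ precisely when the image of $E(S)$ lies inside $T_A$. Since any map $\varphi \colon S \to T$ has image a down-set that is again a closed tree, it factors through $T_{A}$ for $A = \mathrm{im}(\varphi) \cap E(T)_{\min}$ (the set of leaves of $T$ that are $\leq$ some edge in the image), and this is the \emph{smallest} such $A$. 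This already shows the map in the statement is surjective on $\pi_0$; the content is that the indexing poset of factorisations-through-proper-leaf-subsets is either empty (handled separately) or weakly contractible, after restricting to a fixed factoring morphism.

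\textbf{Key steps.} First I would reduce to computing, for a fixed morphism $\varphi \colon S \to T$, the nerve of the poset $\mathcal{F}_\varphi$ of proper subsets $A \subsetneq E(T)_{\min}$ together with a factorisation $S \to T_A \xrightarrow{\iota_A} T$ of $\varphi$; by the $\mathrm{colim}$-over-$\cOmega_{/T}$ description this poset computes the homotopy fibre of the asserted map over $\varphi$, so it suffices to show $\mathcal{F}_\varphi$ is weakly contractible whenever it is nonempty, and that $\mathcal{F}_\varphi$ is nonempty exactly when $\varphi$ does not hit every leaf of $T$. Second, the key point: because $T \notin \cOmega_{\leq k}$, it has a vertex $v$ with more than $k$ incoming edges; since $S \in \cOmega_{\leq k}$, the map $\varphi$ cannot be surjective onto the incoming edges at \emph{any} such overfull vertex, so in particular $\mathrm{im}(\varphi)$ misses at least one leaf of $T$ lying above such a vertex — this gives nonemptiness. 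Third, for weak contractibility I would identify $\mathcal{F}_\varphi$ with (a slice of) the poset of down-sets of $E(T)_{\min}$ containing the canonical minimal factoring set $A_{\min}(\varphi)$ and not equal to all of $E(T)_{\min}$; the point is that each $A$ with $A_{\min}(\varphi) \subseteq A \subsetneq E(T)_{\min}$ carries a \emph{unique} factorisation of $\varphi$ (since $T_A \hookrightarrow T$ is a monomorphism of trees, being the inclusion of a down-set), so $\mathcal{F}_\varphi$ is isomorphic as a poset to $\{A : A_{\min}(\varphi) \subseteq A \subsetneq E(T)_{\min}\}$ ordered by inclusion. This poset has a minimum element $A_{\min}(\varphi)$ and is nonempty (by step two), hence is weakly contractible — in fact a cone — so we are done.

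\textbf{Remaining subtlety and the main obstacle.} The one place that needs care is verifying that $T_A \hookrightarrow T$ is genuinely a monomorphism in $\cOmega$ and that a factorisation of $\varphi$ through it, if it exists, is unique: this requires checking that the downward-completion $T_A$ of a subset $A$ of the leaf set is a well-defined object of $\cOmega$ (a closed tree with the totally-ordered-up-set property) and that the resulting inclusion reflects the incompatibility relation, so that precomposition $\Map(S,T_A) \to \Map(S,T)$ is injective. I expect this to be routine but fiddly in the poset language; it is essentially the statement that subtrees of a tree spanned by a choice of leaves behave like subobjects. The main obstacle is thus organising step three cleanly: one must be sure that the functor $T_{(-)}\colon \oP(E(T)) \to \cOmega_{/T}$, when restricted and passed to the colimit, really does pick out exactly the poset of leaf-down-sets and no identifications get collapsed — i.e.\ that distinct $A$'s with the same $T_A$ don't occur, which forces me to work with down-sets of $E(T)_{\min}$ rather than arbitrary subsets (two subsets of the leaf set have the same downward-completion iff they have the same down-closure inside $E(T)_{\min}$ under the induced order, which for leaves is trivial — leaves are pairwise incomparable — so in fact $A \mapsto T_A$ is already injective on subsets of $E(T)_{\min}$, simplifying matters). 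Once that bookkeeping is pinned down, the contractibility is immediate from the existence of the minimum $A_{\min}(\varphi)$, and the ``empty iff $\varphi$ surjects onto leaves'' dichotomy falls out of the overfull-vertex argument.
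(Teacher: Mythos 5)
Your overall architecture --- each $\Map_{\cOmega}(S,T_A)\to\Map_{\cOmega}(S,T)$ is injective, so the colimit maps to $\Map_{\cOmega}(S,T)$ with fibre over $\varphi$ the nerve of the poset of proper $A\subsetneq E(T)_{\min}$ through which $\varphi$ factors, and one wins by showing this poset is a nonempty up-set with a minimum --- is the same as the paper's, which phrases it as ``the colimit is the union of the $\Map_{\cOmega}(S,T_A)$, and every $\varphi$ lies in one of them''. But two of your inputs to this strategy are off. Minor: your candidates for the minimal factoring set do not work. The set $\mathrm{im}(\varphi)\cap E(T)_{\min}$ is not the same as ``the leaves of $T$ lying below some edge of the image''; the former need not give a factorisation at all (an image edge need not lie above any image leaf), and the latter is \emph{all} of $E(T)_{\min}$ whenever the root lies in the image. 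The paper instead uses $A_\varphi=\{\overline{s}\}_{s\in E(S)_{\min}}$ where $\overline{s}\in E(T)_{\min}$ is a chosen leaf with $\overline{s}\le\varphi(s)$; this visibly gives a factorisation. Also, your justification that the factoring poset has a minimum (uniqueness of lifts, injectivity of $A\mapsto T_A$) is not the relevant point: what is needed is that the family of factoring $A$'s is closed under intersection, i.e.\ that $\Map_{\cOmega}(S,T_A)\cap\Map_{\cOmega}(S,T_B)=\Map_{\cOmega}(S,T_{A\cap B})$, which is what the paper asserts; note that $T_A\cap T_B$ can strictly contain $T_{A\cap B}$ as subsets of $E(T)$ (an edge may lie above a leaf of $A$ and a leaf of $B$ without lying above any leaf of $A\cap B$), so this step is not disposed of by remarking that leaves are pairwise incomparable.

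The genuine gap is in your nonemptiness step, which is the only place the hypotheses $S\in\cOmega_{\le k}$, $T\notin\cOmega_{\le k}$ enter and hence is the heart of the lemma. You assert that since $T$ has a vertex with $k+1$ incoming edges $t_1,\dots,t_{k+1}$ and $S\in\cOmega_{\le k}$, ``$\varphi$ cannot be surjective onto the incoming edges at any such overfull vertex'', and deduce that $\mathrm{im}(\varphi)$ misses a leaf, hence the fibre is nonempty. Each link here fails. (i) Hitting all of $t_1,\dots,t_{k+1}$ only produces $k+1$ pairwise incomparable edges in $E(S)$, which is entirely consistent with $S\in\cOmega_{\le k}$ (a binary tree has arbitrarily large antichains); ruling this out requires transporting the \emph{corolla} structure, not merely the antichain, from $T$ back to $S$. (ii) Even if $\mathrm{im}(\varphi)$ misses a leaf $a_0$, this does not yield a factorisation through $T_{E(T)_{\min}\setminus\{a_0\}}$: the image may contain an edge on the branch above $a_0$ whose only leaf-descendant is $a_0$, and such an edge lies in no $T_A$ with $a_0\notin A$. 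What actually has to be proved is that the paper's $A_\varphi$ is a \emph{proper} subset, and the paper's argument for this is the nontrivial part of the proof: assuming $A_\varphi=E(T)_{\min}$, pick a $(k+1)$-corolla $t_1,\dots,t_{k+1}<t$ in $T$, pull each $t_i$ back to a leaf $s_i$ of $S$ with $\overline{s_i}\le t_i$, pass to the first common branch point $s$ of the $s_i$ in $S$ and its immediate predecessors $s_i'$ above the $s_i$, and show via the order- and incomparability-preservation of $\varphi$ that the $s_i'$ are pairwise distinct, producing a $(k+1)$-corolla in $S$ and a contradiction. Nothing in your proposal supplies this argument, and it is not routine; as written, the proposal does not prove the lemma.
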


\begin{proof}From the poset-perspective on $\cOmega$ (see \cref{rem:poset-model}) it is clear that for $A\subset E(T)$, the map $\Map_{\cOmega}(S,T_A)\ra \Map_{\cOmega}(S,T)$ is an inclusion of components and for $A,B\subset T$ we have $\Map_{\cOmega}(S,T_A)\cap \Map_{\cOmega}(S,T_B)=\Map_{\cOmega}(S,T_{A\cap B})$, so by the same argument as in the middle part of the proof of \cref{thm:layers-tower}, the colimit in the statement is given by the union, so it suffices to show that any map $\varphi\colon S\ra T$ factors through $T_A$ for some $A\subsetneq E(T)_{\min}$. 

\medskip
	
\noindent To show this, we choose each $s\in E(S)_{\min}$ a minimal edge $\overline{s}\in E(T)_{\min}$ with $\overline{s}\le \varphi(s)$ and set $A_\varphi$ to be the union of all the $\overline{s}$. Since for any $s'\in S$ there is a minimal edge $s$ with $s\le s'$, we have $\overline{s}\le \varphi(s)\le \varphi(s')$ so $\varphi(s')$ is contained in the downward completion $T_{A_\varphi}$ of $A_\varphi$ and thus $\varphi$ factors through $T_{A_\varphi}$, so we are left with arguing that $A_\varphi$ is a \emph{proper} subset of $E(T)_{\min}$. We assume the contrary and argue by contradiction: using that $T\not\in\cOmega_{\le k}$, choose pairwise different edges $t_1,\ldots t_{k+1}\in E(T)$ and an edge $t\in E(T)$ such for each $i$ we have $t_i< t$ for all $i$ and the $t_i$ is minimal with respect to this property (in other words: a sub-$(k+1)$-corolla in $T$). Since we assumed $A_\varphi = E(T)_{\min}$ we find $s_i\in E(S)_{\min}$ with $\overline{s_i}\le t_i$. Note that the $\overline{s_i}$ are pairwise different (and hence also the $s_i$) since the $t_i$ are incomparable and $E(T)_{\ge \overline{s_i}}$ is totally ordered. Now choose $s\in E(S)$ with $s_i<s$ for all $i$ and $s$ is maximal with this property, and choose for each $i$ an edge $s_i'\in E(S)$ with $s_i\le s_i'<s$ that is maximal with respect to this property. We now claim that the $s_i'$ are pairwise different so span a sub-$(k+1)$-corolla in $S$ which would contradict $S\in\cOmega_{\le k}$. In a moment, we will show that  $t_i\ge \varphi(s_i')$. Assuming this for now, we can finish the proof: if $s_i'=s_j'$ for $i\neq j$ then  $t_i\ge \varphi(s_i')=\varphi(s_i')\le t_j$, so since $E(T)_{\ge \varphi(s_i')}$ is totally ordered $t_i$ and $t_j$ are comparable which is not the case. To show the remaining claim that $t_i\ge \varphi(s_i')$, we use $s_i'\ge s_i$, so $\varphi(s_i')\ge \varphi(s_i)\ge \overline{s_i}$. Because $t_i\ge \overline{s_i}$ and $E(T)_{\ge \overline{s_i}}$ is totally ordered, we have  $t_i\ge \varphi(s_i')$ or $t_i<\varphi(s_i')$ but in the latter case we conclude $t\le \varphi(s_i')$ so as $E_{\ge t}(T)$ is totally ordered, it follows that all the $\varphi(s_i')$s are comparable which is a contradiction since the $s_i'$ are incomparable and $\varphi$ preserves incomparable elements.
\end{proof}

\begin{proof}[Proof of \cref{thm:truncation}] From the discussion in Sections \ref{sec:op-dend} and \ref{sec:op-dend-un}, we see that the category $\Opd^\un_{\leq k} = \PSh(\cOmega_{\leq k})^{\seg,\com}$ can be written as a reflexive localisation \[\PSh(\cOmega_{\leq k})^{\seg,\com} \simeq \PSh(\cOmega_{\le k})[W_{\seg,k}^{-1},W_{\com,k}^{-1}]\] 
that inverts two classes of morphisms: interpreting trees as representable presheaves, the first class $W_{\seg,k}$ consists of the maps $T_1 \cup_{e} T_2 \to T$ as in the Segal condition with $\smash{T,T_1,T_2\in \cOmega_{\le k}}$ and the second class $W_{\com,k}$ consists of the two maps given by applying $(\cl^*\circ i_!) \colon \PSh(\Delta) \to \PSh(\cOmega)$ to the two endpoint inclusions $\ast \to J$ into the nerve of the groupoid $J$ with two objects and a unique isomorphism between them, followed by restriction along $\cOmega_{\le k}\subset \cOmega$ . Note that before localisation the restriction $\tau^*\colon \PSh(\cOmega_{\le j}) \lra \PSh(\cOmega_{\leq k})$ has a right adjoint $\tau_\ast$ given by right Kan extension, and that the localisation $L_k\colon \PSh(\cOmega_{\leq k})\ra \PSh(\cOmega_{\leq k})^{\seg,\com}$ has a right adjoint given by the subcategory inclusion $j\colon \PSh(\cOmega_{\leq k})^{\seg,\com}\hookrightarrow \PSh(\cOmega_{\leq k})$, so the composition $(\tau_\ast\circ j)$ is a right adjoint to $(L_k\circ\tau^*)$. To prove that the asserted right adjoint exists, it thus suffices (see e.g.\,\cite[Proposition 7.1.14]{Cisinki}) to show that $(L_k \circ \tau^*)$ sends the morphisms in $W_{\seg,j}$ and $W_{\com,j}$ to equivalences in $\PSh(\cOmega_{\leq k})^{\seg,\com}$; then $(L_j\circ\tau_*\circ j)$ is the desired right-adjoint. Since $\tau^*(W_{\com,j})=W_{\com,k}$ by definition, we only need to show that for $T_1 \cup_{e} T_2 \to T$ in $W_{\seg,j}$, the map $L_k(\tau^*(T_1 \cup_{e} T_2))\ra  L_k(\tau^*(T))$ is an equivalence in $\PSh(\cOmega_{\leq k})^{\seg,\com}$. We will prove this by an induction on $\lvert E(T)_{\min}\rvert$. If $\lvert E(T)_{\min}\rvert\le k$, then we have $T,T_1,T_2\in \cOmega_{\le k}$, so $\tau^*(T_1 \cup_{e} T_2))\ra \tau^*(T)$ is in $W_{\seg,k}$, so becomes an equivalence after applying $L_k$. If $\lvert E(T)_{\min}\rvert>k$, then if $T\in \cOmega_{\le k}$ the previous argument applies, so we may assume $T\not\in \cOmega_{\le k}$ in which case \cref{lem:resolve-by-leaves} together with the fact that colimits of presheaves are computed objectwise shows that $\tau^*(T)$ is the colimit of the diagram $E(T)_{\min}\supsetneq A\mapsto \tau^*(T_A)$. The same argument also shows that $\tau^*(T_1 \cup_{e} T_2))$ is the colimit of $E(T)_{\min}\supsetneq A\mapsto \tau^*((T_1\cap T_A)\cup_{e\cap T_A}(T_2\cap T_A))$, so the map $\tau^*(T_1 \cup_{e} T_2)\ra  \tau^*(T)$ is a colimit of maps of the form $\tau^*(T_A)\ra \tau^*((T_1\cap T_A)\cup_{e\cap T_A}(T_2\cap T_A))$ whose value under $L_k$ is an equivalence by induction since $\lvert E(T_A)_{\min}\rvert<\lvert E(T)_{\min}\rvert$. As $L_k$ is a left adjoint, so preserves colimits, this implies the claim. 

\medskip
	
\noindent By construction, the unit of the constructed adjunction is obtained from the unit of the adjunction $\tau^*\dashv\tau_*$ before localisation by restricting along $j$ and applying $L_k$. Since the unit before localisation is an equivalence as $\cOmega_{\le k}\subset \cOmega_{\le j}$ is fully faithful, the unit of the constructed adjunction is an equivalence too, so the right adjoint is fully faithful as claimed. To justify the final assertion about spaces of operations, note that for $X\in \PSh(\cOmega_{\leq k})^{\seg,\com}$ and $S\in \Fin$ with $\lvert S\rvert\le j$, we have $\smash{L_j(\tau_*(j(X)))(\overline{C}_S) \simeq\Map_{\PSh(\cOmega_{\leq k})^{\seg,\com}}(L_j(\tau^*(\overline{C}_S)),X)}$ by the Yoneda lemma and adjunction. An iterated application of  \cref{lem:resolve-by-leaves} shows that $\smash{\tau^*(\overline{C}_S)\simeq \colim_{T\subset S, \lvert T\rvert\le k}(\tau^*(\overline{C}_T))}$. As $\smash{\overline{C}_T\in\cOmega_{\le k}}$ so $\smash{\overline{C}_T}$ is $L_j$-local, we conclude $L_j(\tau_*(j(X)))(\smash{\overline{C}_S})\simeq \holim_{T\subset S, \lvert T\rvert\le k}X(\smash{\overline{C}_T})$ which gives the claimed identification of the spaces of multi-operations by taking fibres of the corresponding map to the appropriate product of $X(\smash{\overline{\eta}})$s.\end{proof}

\bibliographystyle{amsalpha}
\bibliography{literature}

\bigskip

 \end{document}